\newcommand{\Q}{\mathbf{Q}}
\newcommand{\C}{\mathbf{C}}
\newcommand{\N}{\mathbf{N}}
\newcommand{\Z}{\mathbf{Z}}
\newcommand{\D}{\mathscr{D}}
\newcommand{\DB}{\mathscr{D}}
\renewcommand{\O}{\mathscr{O}}
\newcommand{\R}{\mathbf{R}}
\newcommand{\A}{\mathscr{A}}
\newcommand{\E}{\mathscr{E}}
\newcommand{\Rhyp}{\mathbb{R}}
\newcommand{\Hyp}{\mathbb{H}}
\renewcommand{\P}{\mathbb{P}}
\newcommand{\LieU}{\mathfrak{u}}
\newcommand{\gl}{\mathfrak{gl}}
\newcommand{\Sy}{\mathfrak{S}}
\renewcommand{\epsilon}{\varepsilon}
\newcommand{\Hom}{\mathrm{Hom}}
\newcommand{\GL}{\mathrm{GL}}
\newcommand{\Mat}{\mathrm{Mat}}
\renewcommand{\Im}{\mathrm{Im}}
\newcommand{\im}{\mathrm{im}}
\newcommand{\Tr}{\mathrm{Tr}}
\newcommand{\sgn}{\mathrm{sgn}}
\newcommand{\id}{\mathrm{id}}
\newcommand{\rel}{\mathrm{rel}}
\newcommand{\Spec}{\mathrm{Spec}}
\newcommand{\Sp}{\mathrm{Sp}}
\newcommand{\Spwf}{\mathrm{Spwf}}
\newcommand{\Tot}{\mathrm{Tot}}
\newcommand{\Fil}{\mathrm{Fil}}
\newcommand{\Ch}{\mathrm{Ch}}
\newcommand{\univ}{\mathrm{univ}}
\newcommand{\Cone}{\mathrm{Cone}}
\newcommand{\Sing}{\mathscr{C}}
\renewcommand{\top}{\mathrm{top}}
\newcommand{\alg}{\mathrm{alg}}
\newcommand{\rig}{\mathrm{rig}}
\newcommand{\syn}{\mathrm{syn}}
\newcommand{\dR}{\mathrm{dR}}
\newcommand{\spec}{\mathrm{sp}}
\newcommand{\colim}{\mathrm{colim}}
\newcommand{\coker}{\mathrm{coker}}
\renewcommand{\smash}{\wedge}
\renewcommand{\subset}{\subseteq}
\newcommand{\trivcof}{\overset{\sim}{\hookrightarrow}}
\newcommand{\isom}{\simeq}
\newcommand{\del}{\partial}
\newcommand{\ol}[1]{\overline{#1}}
\newcommand{\ul}[1]{\underline{#1}}
\newcommand{\<}{\langle}
\renewcommand{\>}{\rangle}
\renewcommand{\projlim}{\underleftarrow{\lim}}
\newcommand{\dirlim}{\underrightarrow{\lim}}
\renewcommand{\dot}{\bullet}
\newtheorem{thm}{Theorem}[chapter]
\newtheorem*{thm*}{Theorem}
\newtheorem{cor}[thm]{Corollary}
\newtheorem{lemma}[thm]{Lemma}
\newtheorem{propn}[thm]{Proposition}
\newtheorem{prop}[thm]{Proposition}
\theoremstyle{definition}
\newtheorem{dfn}[thm]{Definition}
\newtheorem*{dfn*}{Definition}
\theoremstyle{remark}
\newtheorem{rem}[thm]{Remark}
\newtheorem{rems}[thm]{Remarks}
\newtheorem*{ex*}{Example}
\newtheorem{ex}[thm]{Example}
\newtheorem*{conv}{Convention}
\numberwithin{equation}{chapter}
\date{\today}
\begin{document}
\thispagestyle{empty}
\setcounter{page}{-1}

\begin{center}

\vspace*{1cm}

{\Large The Relative Chern Character and Regulators}

\vspace{4cm}

\textsc{Dissertation zur Erlangung des Doktorgrades}

\textsc{der Naturwissenschaften (Dr. rer. nat.)}

\textsc{an der Naturwissenschaftlichen Fakult\"at I -- Mathematik der Universit\"at Regensburg}

\vspace{4cm}

vorgelegt von

Georg Tamme

aus Sinzing

2010

\end{center}

\newpage

\thispagestyle{empty}

\vspace*{1cm}
\vfill

Promotionsgesuch eingereicht am: 4. Februar 2010

\medskip

Die Arbeit wurde angeleitet von: Prof. Dr. Guido Kings

\medskip

Pr\"ufungsausschuss:

Prof. Dr. Helmut Abels (Vorsitzender)

Prof. Dr. Guido Kings (1. Gutachter)

Prof. Amnon Besser, Ben Gurion University (2. Gutachter)

Prof. Dr. Klaus K\"unnemann

Prof. Dr. Uwe Jannsen (Ersatzpr\"ufer)

\cleardoublepage

\cleardoublepage

\tableofcontents

\chapter*{Introduction}

The starting point for the study of regulators is Dirichlet's regulator for a number field $F$. If $r_1$ (resp. $2r_2$) is the number of real (resp. complex) embeddings of $F$, one has the regulator map $r: \O_F^{\times} \to H \subset \R^{r_1+r_2}$ from the group of units in the ring of integers $\O_F$ of $F$ to a hyperplane in $\R^{r_1+r_2}$. Its kernel is finite and its image is a lattice, whose covolume is Dirichlet's regulator $R_F$. In the late $19^{\text{th}}$ century, Dedekind related this regulator to the residue at $s=1$ of the zeta function $\zeta_F(s)$ of the number field. Using the meromorphic continuation and the functional equation of $\zeta_F$ proved by Hecke one can formulate this relation in the class number formula
\[
\lim_{s\to 0} \zeta_F(s)s^{-(r_1+r_2-1)} = -\frac{hR_F}{w},
\]
where $h$ is the class number of $F$, $w$ is the number of roots of unity and the left hand side is the leading coefficient of the Taylor expansion of $\zeta_F$ at $s=0$.

In the 1970's Quillen introduced higher algebraic $K$-groups $K_i(\O_F)$, $i \geq 0$, generalizing $K_1(\O_F) = \O_F^{\times}$ and showed, that they are finitely generated. Borel constructed higher regulators $r_n:K_{2n-1}(\O_F) \to \R^{r_2}$ (resp. $\R^{r_1+r_2}$), if $n\geq 2$ is even (resp. odd). He was able to prove, that the kernel of $r_n$ is finite and its image is a lattice, whose covolume is a rational multiple of the leading coefficient of the Taylor expansion of $\zeta_F$ at the point $1-n$.

In the following, the construction of regulators was extended to the case of $K_2$ of a curve by Bloch, and then to all smooth projective varieties over $\Q$ by Beilinson. In this context the regulator maps for the variety $X$,
\[
K_i(X) \to H^{2n-i}_{\D}(X_{\R}, \R(n)),
\]
have values in the Deligne-Beilinson cohomology of $X$ and are obtained by composing the natural map $K_i(X) \to K_i(X_{\C})$ with the Chern character map $\Ch^{\D}_{n,i}\colon K_i(X_{\C}) \to H^{2n-i}_{\D}(X_{\C}, \R(n))$.\footnote{There is a natural action of complex conjugation on $H^{2n-i}_{\D}(X_{\C}, \R(n))$ and $K_i(X)$ lands in the ivariant part of this action, which by definition is $H^{2n-i}_{\D}(X_{\R}, \R(n))$.}
Beilinson establishes a whole system of conjectures relating these regulators to the leading coefficients of the Taylor expansions of the $L$-functions of $X$ at the integers \cite{Beilinson}. 

He also sketches a proof of the fact, that in the case of a number field, his regulator maps coincide with Borel's regulator maps. Then Borel's theorem implies Beilinson's conjectures in this case. Many details of this proof were given by Rapoport in \cite{Rapoport}.
With a completely different strategy, based on the comparison of Cheeger-Simons Chern classes with Deligne-Beilinson Chern classes, Dupont, Hain and Zucker \cite{DHZ} tried to compare both regulators and gave good evidence for their conjecture, that Borel's regulator is in fact twice Beilinson's regulator. Later on Burgos \cite{Burgos} worked out Beilinson's original argument and proved, that the factor is indeed $2$.
\smallskip

Nowadays there exists also a $p$-adic analogue of the above conjectures. Thanks to Perrin-Riou \cite{PerrinRiou} one has a conjectural picture about the existence and properties of $p$-adic $L$-functions, so that one can formulate a $p$-adic Beilinson conjecture for smooth projective varieties over a $p$-adic field. There the Deligne-Beilinson cohomology is replaced by (rigid) syntomic cohomology and the regulator maps by the corresponding rigid syntomic Chern character. 

In \cite{HK} Huber and Kings show, that one can also construct a $p$-adic Borel regulator parallel to the classical Borel regulator, and relate it to the syntomic regulator by an analogue of Beilinson's comparison argument.

\smallskip

In a different direction, Karoubi \cite{Kar87} constructed Chern character maps (resp. relative Chern character maps) on the algebraic (resp. relative) $K$-theory of any real, complex or even ultrametric Banach algebra with values in continuous cyclic homology, where relative $K$-theory is the homotopy fibre of the map from algebraic to topological $K$-theory. In the case, that the Banach algebra is just $\C$, Hamida \cite{HamidaBorel} related Karoubi's relative Chern character to the Borel regulator for $\C$\footnote{After a suitable renormalization, the Borel regulator of any number field $F$ factors through $K_{2n-1}(F) \to \prod_{\sigma:F\hookrightarrow\C} K_{2n-1}(\C)$ followed by the Borel regulator for $\C$.}. In the $p$-adic case Karoubi also conjectured a relation with $p$-adic polylogarithms for $p$-adic fields.

\smallskip

This is the starting point of this thesis. As Karoubi pointed out, the $p$-adic Borel regulator should be directly connected with his relative Chern character in the case, where the ultrametric Banach algebra is just a finite extension of $\Q_p$. In the preprint \cite{Tamme}, I was able to make this relation precise. Later on I realized, that there should be a comparison result for a suitably generalized ``geometric'' version of Karoubi's relative Chern character for smooth quasiprojective varieties over the ring of integers in a finite extension of $\Q_p$ on the one hand and the rigid syntomic Chern character on the other hand, and that one should get the comparison result of Huber and Kings as a corollary of this. In fact, Besser formulated such a conjecture in 2003 \cite{BesserTalk}. In the following, I developed a strategy to prove this conjectural relation, but did not succeed due to technical problems with rigid syntomic cohomology.

Nevertheless, this strategy works in the analogue complex situation to give a proof of the following theorem:
\begin{thm*}
Let $X$ be a smooth variety of finite type over $\C$. For any $i >0$ the diagram
\[\xymatrix{
K_i^{\rel}(X) \ar[r]\ar[d]^{(-1)^{n-1}\Ch_{n,i}^{\rel}} & K_i(X) \ar[d]^{\Ch_{n,i}^{\DB}}\\
H^{2n-i-1}(X, \C)/\Fil^nH^{2n-i-1}(X, \C) \ar[r] & H^{2n-i}_{\DB}(X, \Q(n))
}
\]
commutes. 
\end{thm*}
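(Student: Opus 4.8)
The plan is to realize the square not as an isolated identity but as the left-hand square of a \emph{morphism of two long exact sequences}, and thereby reduce its commutativity to the compatibility of a single de Rham--type Chern character with both the algebraic and the topological theories. First I would recall the two defining sequences. Relative $K$-theory is by definition the homotopy fibre of the map $K(X) \to K^{\top}(X)$ from algebraic to topological $K$-theory, giving
\[
\cdots \to K_i^{\rel}(X) \to K_i(X) \to K_i^{\top}(X) \to \cdots,
\]
whose first arrow is the top horizontal map of the diagram. On the other side, Deligne--Beilinson cohomology sits in an exact sequence
\[
\cdots \to H^{2n-i-1}(X,\C)/\Fil^n \to H^{2n-i}_{\DB}(X,\Q(n)) \to H^{2n-i}(X,\Q(n)) \to \cdots,
\]
whose first arrow is the bottom horizontal map. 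The assertion is then that $(-1)^{n-1}\Ch^{\rel}_{n,i}$, $\Ch^{\DB}_{n,i}$, and a topological (Betti) Chern character $K_i^{\top}(X)\to H^{2n-i}(X,\Q(n))$ together constitute a map between these two sequences; the claimed square is precisely its leftmost part.

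The key step is to produce this morphism already at the level of complexes (or spectra), rather than only on homotopy groups, so that commutativity becomes automatic. Since $H^{2n-i}_{\DB}(X,\Q(n))$ is itself a cone, it carries a built-in ``relative'' structure, and I would exhibit $\Ch^{\DB}_{n,i}$ as the map induced on total objects by a single de Rham (Chern--Weil) Chern character, with $\Ch^{\rel}_{n,i}$ the induced map on fibres. Karoubi's relative Chern character factors through relative continuous cyclic homology, which in the complex-analytic setting computes $H^{*-1}(X,\C)/\Fil^{*}$; identifying this target with the Hodge-filtered de Rham term appearing in the Deligne complex is what allows both vertical maps to be seen as two faces of one and the same de Rham Chern character, applied to an algebraic and to a smooth model respectively.

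Naturality then lets me pull the entire configuration back from the universal situation on a simplicial model of $\BGL(\C)$, where both Chern characters are restrictions of universal classes constructed from invariant polynomials via Chern--Weil theory and its transgression. By the splitting principle and the projective bundle formula it suffices to verify the compatibility on these universal classes.

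The main obstacle will be the explicit comparison of the differential-form representatives. Karoubi's class is a secondary, Chern--Simons-type form measuring the difference between a holomorphic (algebraic) connection and the flat connection furnishing the topological trivialization, whereas the connecting map in the Deligne complex is described by a different, cone-theoretic recipe; matching the two requires a careful transgression computation, and it is exactly here that the normalizing sign $(-1)^{n-1}$ is forced, encoding the discrepancy between the cyclic-homology normalization and the Deligne-cohomology one. Once the two transgression formulas are shown to agree on the universal classes, commutativity of the diagram for every smooth $X$ and every $i>0$ follows by naturality.
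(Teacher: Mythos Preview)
Your outline identifies the right endpoint --- reduce to the universal bundle on $B_{\dot}\GL_r(\C)$ --- but it skips the central difficulty and therefore does not constitute a proof.  You propose to exhibit the square as part of a morphism of long exact sequences and to ``produce this morphism already at the level of complexes''.  That is exactly what one cannot do directly, and the paper is explicit about why: the Deligne--Beilinson Chern character class $\Ch_n^{\DB}(E)$ is defined for \emph{every} algebraic bundle, but the secondary class $\Ch_n^{\rel}(T,E,\alpha)$ is only defined for bundles equipped with a topological trivialization $\alpha$.  There is thus no class on the universal bundle $E^{\univ}/B_{\dot}\GL_r(\C)$ whose pullback gives $\Ch_n^{\rel}(T,E,\alpha)$, so naturality alone cannot reduce the comparison to $B_{\dot}\GL_r(\C)$.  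Your phrase ``matching the two requires a careful transgression computation'' conceals a genuine structural obstacle, not merely a normalization check.

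The paper's solution is a new construction, not a computation.  For an algebraic bundle $E$ with principal bundle $p:E_{\dot}\to X_{\dot}$ one introduces auxiliary groups
\[
H^{E,*}_{\rel}(X_{\dot},n)=\Hyp^*\bigl(\Cone(\Fil^nA^*(\ol X_{\dot},\log D_{\dot})\xrightarrow{p^*\circ\iota_A} A^*(E_{\dot}))\bigr)
\]
and proves that there is a \emph{unique} functorial ``refined'' class $\widetilde\Ch_n^{\rel}(E)\in H^{E,2n-1}_{\rel}(X_{\dot},n)$ mapping to $\Ch_n(E)\in\Fil^nH^{2n}(X_{\dot},\C)$; uniqueness holds because $H^i(E_{\dot}\GL_r(\C),\C)=0$ for $i>0$, so that in the universal case $H^{E^{\univ},2n-1}_{\rel}(B_{\dot}\GL_r(\C),n)\cong\Fil^nH^{2n}(B_{\dot}\GL_r(\C),\C)$.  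A topological trivialization $\alpha$ is then used only at the very end, as a section $\alpha^*$ of $p^*$, to pull the refined class down to $H^{2n-1}_{\rel}(X_{\dot},n)$, and one checks (via an explicit representative) that $-\alpha^*\widetilde\Ch_n^{\rel}(E)$ agrees with the Chern--Weil secondary class.  The comparison with $\Ch_n^{\DB}$ is then carried out by introducing a parallel group $H^{E,*}_{\DB}(X_{\dot},\Q(n))$, showing $\widetilde\Ch_n^{\rel}(E)\mapsto(-1)^n p^*\Ch_n^{\DB}(E)$ on the universal level (where both sides are determined by $\Ch_n(E^{\univ})$), and applying $\alpha^*$.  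This mechanism --- refined classes in a bundle-dependent cohomology, uniquely pinned down by functoriality, then specialized via $\alpha^*$ --- is the missing idea in your proposal.
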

The interest in this result relies on the fact, that the relative Chern character is quite explicit in nature, and, that for projective $X$ the map from relative to algebraic $K$-theory is rationally surjective. Combined with the comparison of the relative Chern character with Borel's regulator, this gives a new proof of Burgos' theorem, that Borel's regulator is twice Beilinson's regulator. 

These results are contained in part I of this thesis.
In part II we give a construction of the relative Chern character for smooth affine varieties over the ring of integers $R$ in a finite extension of $\Q_p$, and prove, that, when the variety is $\Spec(R)$ itself, this essentially gives the $p$-adic Borel regulator.

\smallskip

Let us now describe the contents of the different chapters in more detail.

\smallskip

Karoubi's construction of the relative Chern character for a Banach (or Fr\'echet) algebra $A$ relies on a Chern-Weil theory for $\GL(A)$-bundles on simplicial sets using de Rham--Sullivan differential forms. In the first chapter we adapt this formalism to the geometric case of simplicial complex manifolds (if $A$ is the algebra of functions on a manifold $X$, Karoubi's bundles on the simplicial set $S$ correspond in our geometric setting to bundles on the simplicial manifold $X\otimes S$).
This is similar to the simplicial Chern-Weil theory developped by Dupont (\cite{Dup76}, \cite{DupLNM}) except for the consequent use of what we call \emph{topological morphisms} of simplicial manifolds (compatible families of morphisms defined on $\Delta^p \times X_p$ for a simplicial manifold $X_{\dot}$) and \emph{topological bundles}. 
The use of topological morphisms and bundles is motivated by the fact, that the relative $K$-theory of an affine scheme may be described in terms of (algebraic, hence) holomorphic bundles on certain simplicial varieties together with a trivialization of the underlying \emph{topological} bundle. The relative Chern character will then be given by certain secondary characteristic classes for such bundles. %These are constructed in section \ref{sec:SecondaryClasses}. 

When one now wants to compare regulators on $K$-theory, one has by construction of these regulator maps to compare characteristic classes of certain bundles on simplicial varieties (or manifolds). This is often easy, when these classes exist and are functorial for \emph{all} (algebraic) bundles, since then it suffices to consider the universal case $B_{\dot}\GL$ and there the comparison result in question follows from the simple structure of the cohomology of $B_{\dot}\GL$. In our case one immediately arrives at the problem, that, whereas the Deligne-Beilinson Chern character classes are defined for every algebraic bundle, the relative Chern character classes are \emph{not}. Note, that it is exactly this kind of problem, that also arises in \cite{DHZ}.

The solution to this problem in our case is contained in the second chapter. It also yields a refinement of the secondary classes constructed in chapter 1 for \emph{algebraic} bundles, which are topologically trivialized, taking the Hodge filtration into account. The basic idea is to construct another kind of characteristic classes, which exist for all (algebraic) bundles, and from which, in the case of a topologically trivialized bundle, one can get the secondary classes constructed before by some simple procedure. These are the so called \emph{refined} Chern character classes, which live in a cohomology group, that depends on the bundle. In some sense, they have a \emph{primary} component, which is the de Rham Chern character class, and a \emph{secondary} component, which comes from the canonical trivialization of the pullback of a principal bundle to itself. Since these classes are obtained from the universal case simply by functoriality, it is clear, that they are well behaved with respect to the Hodge filtration. These classes then give the secondary classes in the topologically trivialized case simply by pulling back with a topological section of the corresponding principal bundle, which corresponds to a topological trivialization of the bundle itself. With these refined classes the above strategy then gives the comparison of secondary and Deligne-Beilinson Chern character classes.

In chapter 3, after constructing a good simplicial model for the relative $K$-theory, we construct the relative Chern character and compare it with the Deligne-Beilinson Chern character, first in the smooth affine case, and then for all smooth varieties of finite type using Jouanolou's trick. Since our construction of the relative Chern character differs slightly from Karoubi's one, we reprove the relation between the relative Chern character for $\Spec(\C)$ and Borel's regulator, using the explicit description of van Est's isomorphism due to Dupont. This then gives the comparison of Beilinson's and Borel's regulator for $\Spec(\C)$ (and hence for number fields).

\smallskip

In part II we try to carry the constructions and results from the first part over to the $p$-adic setting. Since rigid analytic spaces are not well suited for de Rham cohomology (and hence for Chern-Weil theory) due to convergence problems caused by integration, we make systematic use of the theory of dagger spaces developped by Grosse-Kl{\"o}nne \cite{GK}. After recalling some basic facts and notations in chapter 4, we show in chapter 5, that the simplicial Chern-Weil theory in the style of Dupont also works for simplicial dagger spaces, replacing the standard simplex $\Delta^p$ by the dagger space $\Sp(K\<x_0, \dots, x_p\>^{\dag}/(\sum x_i-1))$. This also gives a notion of topological morphisms in the $p$-adic setting and we construct secondary classes for topologically trivialized bundles as in the complex case.

In chapter 6 we construct the refined and secondary classes for algebraic bundles. This is a little bit harder than in the complex case, since we dot not have nice functorial complexes computing the different cohomology groups at hand.

The last chapter contains the construction of the relative Chern character in the $p$-adic case. Karoubi and Villamayor \cite{KV} defined topological $K$-theory for ultrametric Banach algebras using rings of convergent power series. Since dagger algebras are not Banach algebras, we first of all show, that one can calculate the topological $K$-theory of the completion of a dagger algebra, which is a Banach algebra, also in terms of the dagger algebra and \emph{overconvergent} power series. Then we can construct relative $K$-theory and the relative Chern character as before. Finally, we compare the relative Chern character in the case of the ring of integers in a finite extension of $\Q_p$ with the $p$-adic Borel regulator using the explicit description of the Lazard isomorphism due to Huber and Kings.

We will give some remarks on the problems encountered when trying to compare the relative Chern character  with the syntomic Chern character in a seperate introduction to part II.

\smallskip

In the appendix, we collect some mostly well-known facts used in the main body of the text, for which we couldn't find a good reference.

\smallskip

I should point out, that this whole work owes much to the ideas of Dupont and Karoubi.

\section*{Acknowledgements}

I would like to thank my advisor Guido Kings. He introduced me to the world of regulators and brought my attention to Karoubi's relative Chern character. He patiently listened to all my problems and questions, and often turned my thoughts to new directions.

It's a pleasure to thank Amnon Besser for some inspiring discussions during the ``Minerva school on $p$-adic Methods in Arithmetic Algebraic Geometry'' in Jerusalem 2009 and his interest for my work.

I am grateful to Annette Huber for pointing out a stupid mistake during presenting her my results. 

Furthermore I would like to thank my colleagues for the nice working atmosphere and especially Volker Neumaier, with whom I could discuss many mathematical problems.

Finally, I want to express my gratitude to my parents and my family. My wife Verena supported and encouraged me constantly and I want to thank her and my children Clara and Henrike for a wonderful non-mathematical life.

\section*{Notations and Conventions}
\subsection*{Homological algebra}
If $A$ is a cochain complex and $k$ an integer, $A[k]$ denotes the complex $A$ shifted $k$ times to the left, i.e. $A[k]^n= A^{n+k}$ with differential $d_{A[k]} = (-1)^k d_A$.

Let $f: A \to B$ be a morphism of cochain complexes. We define the \emph{Cone} of $f$ to be the complex $\Cone(f)$ which in degree $n$ is $A^{n+1} \oplus B^n$ with differential $d(a,b) = (-da,db-f(a))$. There is a short exact sequence of complexes
\[
 0 \to B \to \Cone(f) \to A[1] \to 0,
\]
where the maps are given by $b \mapsto (0,b)$ resp. $(a,b) \mapsto a$.

\subsection*{Simplicial objects}
We denote by $\Delta$ the category of finite ordered sets $[p] = \{0, 1, \dots, p\}$ with morphisms the increasing maps $[p]\to [q]$. A \emph{simplicial} resp. \emph{cosimplicial object} in a category $\mathscr C$ is contra- resp. covariant functor $X: \Delta \to \mathscr C$. We usually denote $X([p])$ by $X_p$ resp. $X^p$. We denote by $\delta^i: [p-1] \to [p]$, $i=0, \dots, p$ the strictly increasing map with $i \not\in \im(\delta^i)$. The induced map $X_p \to X_{p-1}$ of a simplicial object is denoted by $\del_i$ and called the $i$-th face operator. Similarly, $\sigma^i : [p+1]\to [p]$, $i=0, \dots p$ is the increasing surjective map with $\sigma^i(i) = \sigma^i(i+1)$. The induced map $X_p \to X_{p+1}$ is denoted by $s_i$ and called the $i$-th degeneracy map. We denote the corresponding maps on a cosimplicial object by $\delta^i: X^{p-1} \to X^p$ resp $\sigma^i: X^{p+1} \to X^p$.

If $C^{\dot}$ is a cosimplicial object in an abelian category, the associated cochain complex is by definition the complex $\dots \to C^{p-1} \xrightarrow{d} C^{p} \to \dots$ with $d = \sum_{i=0}^p (-1)^i\delta^i$.

\part{The complex theory}

\chapter{Simplicial Chern-Weil theory}

\section{De Rham cohomology of simplicial complex manifolds}\label{sec:SimplDeRham}

This section mainly recalls Dupont's computation of the de Rham cohomology of simplicial manifolds and adapts it to the case of complex manifolds, thereby fixing notations. This is fundamental for the Chern-Weil theory on simplicial manifolds.

For an arbitrary complex manifold $Y$, we denote by $\O_Y$ the sheaf of holomorphic functions, by $\Omega^n_Y$ the sheaf of holomorphic $n$-forms on $Y$ and by $\Omega^n(Y)$ its global sections.

Let $X_{\dot}$ be a simplicial complex manifold.  The sheaves $\Omega^n_{X_p}$, $p\in\N$, together with the pullback maps $\phi_X^*: \Omega^n_{X_p} \to \Omega^n_{X_q}$ for every increasing map $[p] \to [q]$ yield a sheaf\footnote{Cf. \cite[(5.1.6)]{HodgeIII} for the notion of a sheaf on an simplicial topological space.} 
$\Omega^n_{X_{\dot}}$ on the simplicial manifold $X_{\dot}$. With the usual differential we get the complex $\Omega^*_{X_{\dot}}$ of sheaves on $X_{\dot}$.
The \emph{(holomorphic) de Rham cohomology} is defined as the hypercohomology
\[
 \Hyp^*(X_{\dot}, \Omega^*_{X_{\dot}}).
\]

For an arbitrary complex manifold $Y$, we denote by $\A^n_Y$ the sheaf of smooth complex valued $n$-forms on $Y$ and by $\A^n(Y)$ its global sections. 
More precisely, $\A^*_Y$ is the total complex associated with the double complex $(\A^{p,q}_Y, \del, \bar\del)$, where $\A^{p,q}_Y$ is the sheaf of $(p,q)$-forms on $Y$, and for each $p$
\[
 \Omega^p_Y \hookrightarrow \A^{p,0}_Y \xrightarrow{\bar\del} \A^{p,1}_Y \to \dots
\]
is a resolution of $\Omega^p_Y$ by fine sheaves. Thus, if we denote by $\Omega^{\geq r}_Y$ the naive filtration of the holomorphic de Rham complex, then $\Hyp^*(Y, \Omega^{\geq r}_Y)$ may be computed as the cohomology of the complex $\Fil^r\A^*(Y) := \bigoplus_{p+q=*, p \geq r} \A^{p,q}(Y)$.

\smallskip

Similarly in the simplicial case: If $X_{\dot}$ is a simplicial complex manifold, then 
\[
\Hyp^*(X_{\dot}, \Omega^{\geq r}_{X_{\dot}}) = H^*(\Tot\, \Fil^r\A^*(X_{\dot})),
\]
where $\Tot\,\Fil^r \A^*(X_{\dot})$ is the total complex associated with the cosimplicial complex $[p]\mapsto \Fil^r\A^*(X_p)$ (cf. \cite[(5.2.7)]{HodgeIII}).
For the purpose of simplicial Chern-Weil theory we need another version of the simplicial de Rham complex. Let 
\[
\Delta^p := \left\{ (x_0, \dots, x_p) \in \R^{p+1}\;\big|\; x_i \geq 0, \sum\nolimits_{i=0}^p x_i = 1\right\} \subset \R^{p+1}
\]
denote the affine standard simplex. Then $[p]\mapsto \Delta^p$ is a cosimplicial topological space with coface operators $\delta^i : \Delta^{p-1} \to \Delta^p, (x_0, \dots, x_p) \mapsto (x_0, \dots, x_{i-1}, 0, x_i, \dots, x_{p-1})$. A function (or form) on $\Delta^p$ is called smooth, if it extends to a smooth function (form) on a neighbourhood of $\Delta^p$ in $\{\sum x_i = 1\}\subset \R^{p+1}$. We recall from \cite{Dup76}:
\begin{dfn}
A smooth simplicial $n$-form on a simplicial complex manifold $X_{\dot}$ is a family $\omega = (\omega_p)_{p\geq 0}$, where $\omega_p$ is a smooth $n$-form on $\Delta^p\times X_p$, and the compatibility condition
\[
 (\delta^i\times 1)^*\omega_p = (1 \times \del_i)^*\omega_{p-1} \quad\text{ on }\quad \Delta^{p-1} \times X_p
\]
$i = 0, \dots, p$, $p \geq 0$, is satisfied.
The space of smooth simplicial $n$-forms on $X_{\dot}$ is denoted by $A^n(X_{\dot})$.
\end{dfn}
Dupont considers real valued forms, but this makes no significant difference. 

The exterior derivative $d$ and the usual wedge product applied component-wise make $A^*(X_{\dot})$ into a commutative differential graded $\C$-algebra.

Next, $A^*(X_{\dot})$ is naturally the total complex of the double complex $(A^{k,l}(X_{\dot}), d_{\Delta}, d_X)$, where $A^{k,l}(X_{\dot})$ consists of the forms $\omega$ of type $(k,l)$, that is, $\omega_p$ is locally of the form $\sum_{I,J} f_{I,J} dx_{i_1} \smash \dots \smash dx_{i_k} \smash dy_{j_1} \smash \dots \smash dy_{j_l}$, where $x_0, \dots, x_p$ are the barycentric coordinates on $\Delta^p$ and the $y_j$ are (smooth) local coordinates on $X_p$, $d_{\Delta}$ resp. $d_X$ denote the exterior derivative in $\Delta$- resp. $X$-direction. On the other hand we have the double complex $(\A^{k,l}(X_{\dot}), \delta, d_X)$, where $\A^{k,l}(X_{\dot}) = \A^l(X_k)$ and $\delta: \A^{k,l}(X_{\dot}) \to \A^{k+1,l}(X_{\dot})$ is given by $\sum_{i=0}^k (-1)^i \del_i^*$.
Dupont proves \cite[Theorem 2.3]{Dup76}:
\begin{thm}\label{thm:Dupont}
For each $l$ the two chain complexes $(A^{*, l}(X_{\dot}), d_{\Delta})$ and $(\A^{*,l}(X_{\dot}), \delta)$ are naturally chain homotopy equivalent.

In fact, there are natural maps $I : A^{k,l}(X_{\dot}) \rightleftarrows \A^{k,l}(X_{\dot}) : E$ and chain homotopies $s: A^{k,l}(X_{\dot}) \to A^{k-1,l}(X_{\dot})$, such that
\begin{eqnarray}
\label{eq:1} I \circ d_{\Delta} = \delta \circ I, &\quad& I \circ d_X = d_X \circ I,\\
\label{eq:2} d_{\Delta} \circ E = E \circ \delta, &\quad&  E\circ d_X = d_X \circ E,\\
\label{eq:3} I\circ E = \id,\\
\label{eq:4} E\circ I - \id = s\circ d_{\Delta} + d_{\Delta} \circ s, &\quad& s\circ d_X = d_X \circ s.
\end{eqnarray}
\end{thm}

We need a filtered version of this theorem. First of all, observe that we have a natural decomposition
$\A^n(\Delta^p \times X_p) = \bigoplus_{k+l+m=n} \A^{k,l,m}(\Delta^p \times X_p)$, where $\A^{k,l,m}(\Delta^q \times X_p)$ consists of the forms of type $(k,l,m)$, i.e., which are locally of the form
\[
 \sum_{|I|=k, |J|=l, |K|=m} f_{I,J,K} dx_{i_1} \wedge \dots \wedge dx_{i_k} \wedge d\zeta_{j_1} \wedge\dots\wedge d\zeta_{j_l} \wedge d\bar\zeta_{k_1} \wedge\dots\wedge d\bar\zeta_{k_m},
\]
where $x_0, \dots, x_p$ are as usual the barycentric coordinates on $\Delta^p$ and the $\zeta_j$ are \emph{holomorphic} coordinates on $X_p$. Since the simplicial structure maps of $X_{\dot}$ are holomorphic, this direct sum decomposition is respected by the pullback maps $(\delta^i \times \id)^*$ resp. $(\id \times \del_i)^*$, and thus leads to a direct sum decomposition $A^n(X_{\dot}) = \bigoplus_{k+l+m=n} A^{k,l,m}(X_{\dot})$. Then $A^*(X_{\dot})$ is the total complex associated with the triple complex $(A^{k,l,m}(X_{\dot}), d_{\Delta}, \del_X, \bar\del_X)$ and we write $\Fil^rA^*(X_{\dot}) = \bigoplus_{k+l+m=*, l\geq r} A^{k,l,m}(X_{\dot})$. Similarly to the above, we also have the triple complex $(\A^{k,l,m}(X_{\dot}), \delta, \del_X, \bar\del_X)$ with $\A^{k,l,m}(X_{\dot}) = \A^{l,m}(X_k)$.
\begin{thm}\label{thm:filteredDupont}
Let $X_{\dot}$ be a simplicial complex manifold.
For each $l,m\geq 0$ the two complexes $(A^{*,l,m}(X_{\dot}), d_{\Delta})$ and $(\A^{*,l,m}(X_{\dot}), \delta)$ are naturally chain homotopy equivalent.

In fact, the maps $I, E$ and $s$ in theorem \ref{thm:Dupont} induce maps $I : A^{k,l,m}(X_{\dot}) \rightleftarrows \A^{k,l,m}(X_{\dot}) : E$ and $s: A^{k,l,m}(X_{\dot}) \to A^{k-1,l,m}(X_{\dot})$, such that
\begin{align}
\label{eq:5} I \circ d_{\Delta}& = \delta \circ I,&  I \circ \del_X& = \del_X \circ I,& I \circ \bar\del_X& = \bar\del_X \circ I,\\
\label{eq:6} d_{\Delta} \circ E& = E \circ \delta,&  E\circ \del_X& = \del_X \circ E,& E\circ \bar\del_X& = \bar\del_X \circ E,\\
\label{eq:7} I\circ E &= \id, \\
\label{eq:8} E\circ I - \id &= s d_{\Delta} + d_{\Delta} s,&  s\circ \del_X &= \del_X \circ s,& s\circ \bar\del_X& = \bar\del_X \circ s.
\end{align}
In particular, we get natural isomorphisms 
\[
 \Hyp^*(X_{\dot}, \Omega^{\geq r}_{X_{\dot}}) \cong H^*(\Tot\; \Fil^r\A^*(X_{\dot})) \cong H^*(\Fil^r A^*(X_{\dot})).
\]
\end{thm}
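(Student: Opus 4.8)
The goal is to upgrade Theorem \ref{thm:Dupont} (the unfiltered Dupont homotopy equivalence) to a version that respects the holomorphic type decomposition, and then to read off the filtered cohomological isomorphisms. The key observation is that the maps $I$, $E$, $s$ of Theorem \ref{thm:Dupont} are built out of integration over, and fibre-integration along, the simplex direction $\Delta^p$, and hence act only on the $d_\Delta$-part of a form while leaving the $X_p$-direction untouched. Therefore the plan is \emph{not} to reprove anything from scratch, but to check that these three operators are compatible with the refined type decomposition $A^n(X_{\dot}) = \bigoplus_{k+l+m=n} A^{k,l,m}(X_{\dot})$ introduced just before the statement.

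\emph{First step.} I would verify that $I$, $E$, and $s$ preserve the trigrading, i.e. that they send $A^{k,l,m}$ into $\A^{k,l,m}$ (resp.\ back, resp.\ into $A^{k-1,l,m}$). The point is that all three operators are defined fibrewise over $X_p$ by operations purely in the barycentric coordinates $x_0,\dots,x_p$ on $\Delta^p$: $I$ integrates the $x$-variables over the simplex, $E$ reconstructs a simplicial form from a cochain, and $s$ is a homotopy operator in the $x$-direction. None of them differentiates, integrates, or otherwise touches the holomorphic coordinates $\zeta_j$ or the antiholomorphic coordinates $\bar\zeta_j$ on $X_p$. Consequently the bidegree $(l,m)$ in the $X$-direction is preserved, and only the $\Delta$-degree $k$ changes (and only for $s$). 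Since $\A^{k,l,m}(X_{\dot}) = \A^{l,m}(X_k)$ by definition, this is exactly the statement that $I,E,s$ restrict to the trigraded pieces.

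\emph{Second step.} Granting this compatibility, the identities \eqref{eq:5}--\eqref{eq:8} follow immediately from their unfiltered counterparts \eqref{eq:1}--\eqref{eq:4}. The relations involving $d_\Delta$ and $\delta$ in \eqref{eq:5}--\eqref{eq:8} are literally the old relations \eqref{eq:1}--\eqref{eq:4} restricted to a graded summand. The new relations $I\circ\del_X = \del_X\circ I$, $I\circ\bar\del_X = \bar\del_X\circ I$ and their analogues for $E$ and $s$ are a refinement of the single relation ``commutes with $d_X$'' from Theorem \ref{thm:Dupont}: one splits $d_X = \del_X + \bar\del_X$, notes that since $I,E,s$ act only in the $\Delta$-direction they commute with \emph{any} differential operator acting only in the $X$-direction, and separates the commutation $[\,\cdot\,,d_X]=0$ into its $(+1,0)$- and $(0,+1)$-type components, which must vanish independently because $I,E,s$ preserve the $(l,m)$-bidegree.

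\emph{Final step and the expected obstacle.} To obtain the displayed isomorphisms I would apply the filtered homotopy equivalence levelwise and pass to total complexes. The filtration $\Fil^r$ is by the holomorphic degree $l\geq r$, which by the first step is preserved by $I,E,s$; hence these operators induce a filtered chain homotopy equivalence between $\Tot\,A^{*}(X_{\dot})$ with its $\Fil^r$ and $\Tot\,\A^{*}(X_{\dot})$ with its $\Fil^r$. A filtered homotopy equivalence induces isomorphisms on the cohomology of each filtration step, giving $H^*(\Fil^rA^*(X_{\dot})) \cong H^*(\Tot\,\Fil^r\A^*(X_{\dot}))$; the remaining identification with $\Hyp^*(X_{\dot}, \Omega^{\geq r}_{X_{\dot}})$ is the simplicial Hodge-filtration computation already recorded in the text before Theorem \ref{thm:Dupont}. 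I expect the only genuinely delicate point to be \textbf{bookkeeping with the total complex and the homotopy}: one must check that the homotopy $s$, which lowers $k$ by one, assembles across all simplicial levels into a well-defined homotopy on the total complex compatible with the remaining differentials $\del_X,\bar\del_X$ and the cosimplicial differential $\delta$, and that the sign conventions from the triple complex do not obstruct this. This is routine once the trigraded compatibility of step one is in hand, so the substantive content really lies in that first verification that Dupont's operators are ``holomorphically blind'' in the $X$-direction.
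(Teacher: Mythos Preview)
Your approach is correct and essentially identical to the paper's: verify that $I$, $E$, $s$ preserve the $(l,m)$-bidegree, then deduce \eqref{eq:5}--\eqref{eq:8} by splitting the old identities into type components. The paper carries out your ``first step'' explicitly by recalling the formulas for $I$, $E$, $s$ in terms of the elementary homotopy operators $h_{(j)}$ and proving a lemma that $h_{(j)}$ sends $\A^{k,l,m}(\Delta^p\times Y)$ to $\A^{k-1,l,m}(\Delta^p\times Y)$.

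One small correction to your heuristic: it is not quite true that $E$ acts ``purely in the barycentric coordinates.'' The formula for $E(\omega)_p$ involves the pullback $\phi_X^*\omega$ along the simplicial structure maps $\phi_X\colon X_p\to X_k$, so $E$ genuinely touches the $X$-direction. The reason $E$ nevertheless preserves the $(l,m)$-type is that the $\phi_X$ are \emph{holomorphic}; the paper notes this explicitly. Your argument for $I$ and $s$ (which are built from the $h_{(j)}$'s and really do act only on the simplex factor) is fine as stated.
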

\begin{proof}
We recall the constructions of the maps $I, E$ and $s$ of theorem \ref{thm:Dupont}.

Let again $Y$ be an arbitrary complex manifold. Let $e_0, \dots, e_p$ denote the standard basis of $\R^{p+1}$ and $x_0, \dots, x_p$ the barycentric coordinates on $\Delta^p$. For each $j = 0, \dots, p$ define the operator $h_{(j)} : \A^n(\Delta^p \times Y) \to \A^{n-1}(\Delta^p \times Y)$ as follows: Let $g: [0,1] \times \Delta^p \to \Delta^p$ be the homotopy $g(s,t) = s\cdot e_j + (1-s)\cdot t$. Then $h_{(j)}(\omega) := \int_0^1 i_{\del/\del s}((g\times \id_Y)^*\omega)ds$, where $i_{\del/\del s}$ denotes the interior multiplication w.r.t the vector field $\frac{\del}{\del s}$.
\begin{lemma}
$h_{(j)}$ maps $\A^{k,l,m}(\Delta^p \times Y)$ to $\A^{k-1,l,m}(\Delta^p \times Y)$.
\end{lemma}
\begin{proof} The question being local on $Y$, we may assume $Y$ to be an open subset of some affine space $\C^N$ with holomorphic coordinates $\zeta_1, \dots, \zeta_N$. It is enough to consider a form of type
\[
\omega = f dx_{i_1} \wedge \dots \wedge dx_{i_k} \wedge d\zeta_{j_1} \wedge\dots\wedge d\zeta_{j_l} \wedge d\bar\zeta_{k_1} \wedge\dots\wedge d\bar\zeta_{k_m}
\]
with a smooth function $f$. Then $(g\times \id_Y)^*\omega = f\circ (g\times \id_Y) g^*(dx_{i_1} \wedge \dots \wedge dx_{i_k}) \wedge d\zeta_{j_1} \wedge\dots\wedge d\zeta_{j_l} \wedge d\bar\zeta_{k_1} \wedge\dots\wedge d\bar\zeta_{k_m}$ and 
\begin{multline*}
h_{(j)}(\omega) = \left(\int_0^1 f\circ (g\times \id_Y) i_{\del/\del s}(g^*(dx_{i_1} \wedge \dots \wedge dx_{i_k}))ds\right) \\ \wedge d\zeta_{j_1} \wedge\dots\wedge d\zeta_{j_l} \wedge d\bar\zeta_{k_1} \wedge\dots\wedge d\bar\zeta_{k_m}
\end{multline*}
is of type $(k-1, l, m)$.
\end{proof}
The map $I: A^{k,l}(X_{\dot}) \to \A^l(X_k)$ of theorem \ref{thm:Dupont} is now defined by the formula
\begin{equation}\label{eq:Int}
I(\omega) = (-1)^k(e_k \times \id_{X_k})^*(h_{(k-1)} \circ \dots \circ h_{(0)})(\omega_k).
\end{equation}
It follows from the lemma, that $I$ maps $A^{k,l,m}(X_{\dot})$ to $\A^{k,l,m}(X_{\dot}) = \A^{l,m}(X_k)$. Comparing types, the equalities \eqref{eq:5} follow from \eqref{eq:1}. 

\smallskip

Next we come to the definition of $E$. For $\omega \in \A^l(X_k)$ the simplicial form $E(\omega)$ is given by a $(k,l)$-form on $\Delta^p \times X_p$ for all $p \geq 0$. For $p < k$ this form is 0 and for $p \geq k$ it is given by $E(\omega)_p =$
\[
  =k! \sum_{\phi:[k]\hookrightarrow[p]}\left(\sum_{j=0}^k (-1)^j x_{\phi(j)}dx_{\phi(0)} \smash \dots \smash
\widehat{(dx_{\phi(j)})}\smash \dots \smash dx_{\phi(k)} \right) \smash \phi_X^*\omega.
\]
Here the sum runs over all strictly increasing maps $\phi : [k] \to [p]$ and $\phi_X : X_p \to X_k$ denotes the corresponding structure map of the simplicial manifold. Since $\phi_X$ is holomorphic, we see, that $E$ indeed induces a map $\A^{k,l,m}(X_{\dot}) \to A^{k,l,m}(X_{\dot})$. Again, the equalities \eqref{eq:6} follow from \eqref{eq:2}.

\smallskip

Finally, if $\omega \in A^{k,l}(X_{\dot})$ then $s(\omega)$ is given by the family
\begin{multline*}
s(\omega)_p = \sum_{i=0}^{k-1} i!\sum_{\phi:[i]\hookrightarrow [p]} \left(\sum_{j=0}^i (-1)^j x_{\phi(j)}dx_{\phi(0)} \smash \dots 
\smash \widehat{(dx_{\phi(j)})}\smash \dots \smash dx_{\phi(i)} \right)\\
 \smash h_{(\phi(i))} \circ \dots \circ h_{(\phi(0))}(\omega_p),
\end{multline*}
$p \geq 0$, and it follows from the above lemma, that $s(\omega)\in A^{k-1,l,m}(X_{\dot})$ if $\omega \in A^{k,l,m}(X_{\dot})$. Again, the identities \eqref{eq:7} and \eqref{eq:8} follow from \eqref{eq:3} and \eqref{eq:4}.
\end{proof}

\begin{rem}
The map $I$ in \eqref{eq:Int} is just given by integrating forms on $\Delta^k \times X_k$ over $\Delta^k$, where the orientation of $\Delta^k$ is given by the $k$-form $dx_1\wedge \dots \wedge dx_k$ \cite[Ch. 1, Exercise 3]{DupLNM}:
\[
I(\omega) = \int_{\Delta^k} \omega_k \quad \text{if}\quad \omega \in A^{k,l}(X_{\dot}).
\]
\end{rem}

\section{Bundles on simplicial manifolds}
\label{sec:Bundles}

Similar to Karoubi \cite[Ch. V]{Kar87}, we define bundles via their transition functions. This viewpoint is very well-suited for computations, and we will associate Chern-Weil theoretic characteristic classes with these bundles in the next section. To compare this construction with other approaches however, we have to study the precise relation of the bundles defined below with vector bundles. This is done in section \ref{sec:RelationVB}. The construction of Chern characters on relative $K$-groups in chapter \ref{ch:RelKandReg} naturally leads to the definition of \emph{topological bundles} in section \ref{sec:TopMorphAndBundles} below.

\begin{dfn}\label{def:ClassSpace}
The \emph{classifying simplicial manifold} for $\GL_r(\C)$ is the simplicial complex manifold $B_{\dot}\GL_r(\C)$, where
\[
 B_p\GL_r(\C) = \GL_r(\C) \times \dots \times \GL_r(\C) \quad (p \text{ factors}),
\]
with faces and degeneracies
\[
\del_i(g_1, \dots, g_p) = \begin{cases}
(g_2, \dots, g_p), & \text{if } i=0,\\
(g_1, \dots, g_ig_{i+1}, \dots, g_p), &\text{if } 1 \leq i \leq p-1,\\
(g_1, \dots, g_{p-1}), & \text{if } i = p,
\end{cases}
\]
\[
s_i(g_1, \dots, g_p) = (g_1, \dots, g_i, 1, g_{i+1}, \dots, g_p), \qquad i = 0, \dots, p.
\]
The \emph{universal principal $\GL_r(\C)$-bundle} is the simplicial complex manifold $E_{\dot}\GL_r(\C)$, where
\[
 E_p\GL_r(\C) = \GL_r(\C) \times \dots \times \GL_r(\C) \quad (p+1 \text{ factors}),
\]
with faces and degeneracies
\begin{gather}
\del_i(g_0, \dots, g_p) = (g_0, \dots, g_{i-1}, g_{i+1}, \dots, g_p), \qquad i = 0, \dots, p,\label{eq:DelEG}\\
s_i(g_0, \dots, g_p) = (g_0, \dots, g_i, g_i, \dots, g_p), \qquad i = 0, \dots, p. \label{eq:SEG}
\end{gather}
The canonical projection $p : E_{\dot}\GL_r(\C) \to B_{\dot}\GL_r(\C)$ is given in degree $p$ by
\[
 (g_0, \dots, g_p) \mapsto (g_0g_1^{-1}, \dots, g_{p-1}g_p^{-1}).
\]
\end{dfn}
Thus $B_{\dot}\GL_r(\C)$ is the quotient of $E_{\dot}\GL_r(\C)$ by the diagonal \emph{right} action of $\GL_r(\C)$.
Obviously $E_{\dot}\GL_r(\C)$ is a simplicial group and it operates from the left on $B_{\dot}\GL_r(\C) \cong E_{\dot}\GL_r(\C)/\GL_r(\C)$. Explicitly, this action is given by
\[
 (g_0, \dots, g_p) \cdot (h_1, \dots, h_p) = (g_0h_1g_1^{-1}, \dots, g_{p-1}h_pg_p^{-1}).
\]

\medskip

We define $B_{\dot}G$ and $E_{\dot}G$ in the same way, if $G$ is a discrete group, a group scheme, etc. 
\begin{dfn}
Let $X_{\dot}$ be a simplicial complex manifold. A \emph{holomorphic} $\GL_r(\C)$\emph{-bundle} on $X_{\dot}$ is a holomorphic morphism of simplicial complex manifolds
\[
 g : X_{\dot} \to B_{\dot}\GL_r(\C).
\]
We also denote such a bundle by $E/X_{\dot}$ and call $g$ the \emph{classifying map of} $E$. The \emph{universal $\GL_r(\C)$-bundle} $E^{\univ}$ is the bundle given by $\id : B_{\dot}\GL_r(\C) \to B_{\dot}\GL_r(\C)$.

A \emph{morphism} $\alpha: g \to h$ of $\GL_r(\C)$-bundles on $X_{\dot}$ is a morphism of simplicial complex manifolds
$\alpha: X_{\dot} \to E_{\dot}\GL_r(\C)$,
such that $\alpha \cdot g = h$ w.r.t the abovementioned action. Every morphism is an isomorphism.
\end{dfn}

\begin{rem}
Note, that $B_{\dot}\GL_r(\C)$ may also be viewed as (the $\C$-valued points of) a simplicial $\C$-scheme. In analogy with the above definition, we define an \emph{algebraic $\GL_r(\C)$-bundle} on a simplicial $\C$-\emph{scheme} $X_{\dot}$ to be a morphism $g: X_{\dot} \to B_{\dot}\GL_r(\C)$ of simplicial $\C$-schemes. 
\end{rem}

\begin{rem}\label{rem:MorphismBGL}
To give a holomorphic morphism $g:X_{\dot} \to B_{\dot}\GL_r(\C)$ is equivalent to give a morphism $g_1:X_1 \to \GL_r(\C)$ satisfying the cocycle condition $(g_1\circ \del_2) \cdot (g_1\circ \del_0) = g_1\circ \del_1$.

In fact, if $g : X_{\dot} \to B_{\dot}\GL_r(\C)$ is a morphism, the cocycle condition follows from the identities $\del_0 \circ g_2 = \mathrm{pr}_2 \circ g_2 = g_1 \circ \del_0$, $\del_2 \circ g_2 = \mathrm{pr}_1 \circ g_2 = g_1 \circ \del_2$ and $g_1 \circ \del_1 = \del_1 \circ g_2 = (\mathrm{pr}_1 \circ g_2)\cdot (\mathrm{pr}_2 \circ g_2) = (g_1 \circ \del_2)\cdot (g_1 \circ \del_0)$, where $\mathrm{pr}_1, \mathrm{pr}_2 : B_2\GL_r(\C) = \GL_r(\C) \times \GL_r(\C) \to B_1\GL_r(\C) = \GL_r(\C)$ denote the projections.

On the other hand, the composition $\del_0^{i-1} \circ \del_{i+1} \circ \del_{i+2} \circ\dots\circ \del_p : B_p\GL_r(\C) \to B_1\GL_r(\C)$ is given by the projection $\mathrm{pr}_i$ to the $i$-th factor. Hence $\mathrm{pr}_i \circ g_p = g_1 \circ \del_0^{i-1} \circ \del_{i+1} \circ \del_{i+2} \circ\dots\circ \del_p$ and $g_p : X_p \to B_p\GL_r(\C)$ is completely determined by $g_1$. One can check, that, given $g_1$, if one defines $g_p$ by the preceding formula, this indeed gives a morphism of simplicial manifolds $X_{\dot} \to B_{\dot}\GL_r(\C)$.
\end{rem}

\begin{ex}[Cf. section \ref{sec:RelationVB}]
Let $Y$ be an arbitrary complex manifold and $E$ a holomorphic vector bundle of rank $r$. Choose an open covering $\mathscr U = \{U_{\alpha}, \alpha\in A\}$ of $Y$ such that $E\big|_{U_{\alpha}}$ is trivial for each $\alpha \in A$. A set of transition functions $g_{\alpha\beta}: U_{\alpha} \cap U_{\beta} \to \GL_r(\C)$ defininig $E$ yields a holomorphic map $N_1\mathscr U = \coprod_{\alpha,\beta\in A} U_{\alpha} \cap U_{\beta} \to B_1\GL_r(\C) = \GL_r(\C)$
and the cocycle condition ensures, that this map extends uniquely to a holomorphic map $g : N_{\dot}\mathscr U \to B_{\dot}\GL_r(\C)$, where $N_{\dot}\mathscr U$ denotes the \v{C}ech nerve of $\mathscr U$, i.e. the simplicial manifold which in degree $p$ is given by $N_p\mathscr U = \coprod_{\alpha_0, \dots, \alpha_p \in A} U_{\alpha_0}\cap\dots\cap U_{\alpha_p}$. Thus we get a $\GL_r(\C)$-bundle on $N_{\dot}\mathscr U$ in the above sense.
\end{ex}

\begin{ex}\label{ex:SimplicialBundles}
Again let $Y$ be a complex manifold and in addition let $S$ be a simplicial set. Let $\O(Y)$ denote the ring of holomorphic functions on $Y$ and $G$ the constant simplicial group $\GL_r(\O(Y))$. Then a $G$-fibre bundle (``$G$-fibr\'e rep\'er\'e'') on $S$ in the sense of Karoubi \cite[5.1]{Kar87} may be defined as a morphism of simplicial sets $S \to B_{\dot}G$ (cf. the proof of {\it loc. cit.} Th\'eor\`eme 5.4). But $G = \GL_r(\O(Y))$ may be identified with the group of holomorphic maps $Y \to \GL_r(\C)$ and thus a morphism of simplicial sets $S \to B_{\dot}G$ is equivalent to a morphism of simplicial complex manifolds
$Y\otimes S \to B_{\dot}\GL_r(\C)$, where $Y\otimes S$ is the simplicial manifold given in degree $p$ by $\coprod_{\sigma \in S_p} Y$ with structure maps induced from those of $S$.
\end{ex}

\subsection{Relation with vector bundles}\label{sec:RelationVB}

The notion of a $\GL_r(\C)$-bundle on a simplicial manifold has the advantage, that it is very well suited for computations, at the drawback of being sometimes too rigid. For example, if $E$ is a $\GL_r(\C)$-bundle on $X_{\dot}$, we may form the associated projective bundle as a simplicial manifold $\P(E) \to X_{\dot}$, but the associated tautological bundle is \emph{not} a $\mathbb G_m$-bundle in the above sense.
There is the more flexible notion of vector bundles on simplicial manifolds (or schemes \dots), which we now recall (cf. \cite[Ex. 1.1]{GilletUCC}).

\begin{dfn}\label{def:VectorBundle}
A \emph{(holomorphic) vector bundle} on a simplicial complex manifold $X_{\dot}$ is a sheaf $\E_{\dot}$ of $\O_{X_{\dot}}$-modules, such that each $\E_p$ is locally free and for every $\phi: [p] \to [q]$ the associated map $\phi_X^*\E_p \to \E_q$ is an isomorphism.

A vector bundle $\E_{\dot}$ is called degreewise trivial, if each $\E_p$ is trivial, i.e. isomorphic to a free $\O_{X_p}$-module.
\end{dfn}
The precise relation between vector bundles and $GL_r(\C)$-bundles is described in the following lemmata. All this is certainly well-known, but I could not find an accurate reference.
\begin{lemma}\label{lemma:degwise_trivial_vb}
Let $X_{\dot}$ be a simplicial complex manifold. There is a natural 1--1 correspondence
\[
\begin{Bmatrix} \text{isomorphism classes of}\\\text{degreewise trivial}\\\text{holomorphic rank $r$ vector}\\ \text{bundles on $X_{\dot}$}\end{Bmatrix} \leftrightarrow \begin{Bmatrix} \text{isomorphism classes of}\\\text{holomorphic}\\ \text{$\GL_r(\C)$-bundles on $X_{\dot}$}\end{Bmatrix}. 
\]
\end{lemma}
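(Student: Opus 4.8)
The plan is to show that \emph{both} sides of the asserted correspondence are naturally parametrised by the same data, namely holomorphic maps $g_1\colon X_1\to\GL_r(\C)$ satisfying the cocycle condition $(g_1\circ\del_2)\cdot(g_1\circ\del_0)=g_1\circ\del_1$ of Remark~\ref{rem:MorphismBGL}, taken modulo a suitable equivalence. For the $\GL_r(\C)$-bundle side this is essentially Remark~\ref{rem:MorphismBGL}: such a bundle $g\colon X_{\dot}\to B_{\dot}\GL_r(\C)$ is the same as such a $g_1$. I would first analyse morphisms: since $E_{\dot}\GL_r(\C)$ is the $0$-coskeleton of $\GL_r(\C)$ by \eqref{eq:DelEG}--\eqref{eq:SEG}, a morphism $\alpha\colon X_{\dot}\to E_{\dot}\GL_r(\C)$ is determined by a single holomorphic map $\lambda:=\alpha_0\colon X_0\to\GL_r(\C)$, with $\alpha_1=(\lambda\circ\del_1,\lambda\circ\del_0)$. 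Unwinding the action of Definition~\ref{def:ClassSpace} in degree $1$, the condition $\alpha\cdot g=h$ becomes $h_1=(\lambda\circ\del_1)\,g_1\,(\lambda\circ\del_0)^{-1}$. Hence isomorphism classes of $\GL_r(\C)$-bundles are exactly cocycles $g_1$ modulo the relation $g_1\sim(\lambda\circ\del_1)\,g_1\,(\lambda\circ\del_0)^{-1}$.

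Next I would attach the same kind of cocycle to a degreewise trivial vector bundle $\E_{\dot}$. Choosing a trivialisation $\tau_0\colon\O_{X_0}^{\,r}\xrightarrow{\sim}\E_0$ and using the cartesian structure isomorphisms $\theta_{\del_0}\colon\del_0^*\E_0\xrightarrow{\sim}\E_1$ and $\theta_{\del_1}\colon\del_1^*\E_0\xrightarrow{\sim}\E_1$ of Definition~\ref{def:VectorBundle}, I set $g_1:=(\del_1^*\tau_0)^{-1}\circ\theta_{\del_1}^{-1}\circ\theta_{\del_0}\circ(\del_0^*\tau_0)$, an automorphism of $\O_{X_1}^{\,r}$, i.e.\ a holomorphic map $X_1\to\GL_r(\C)$. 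The cocycle condition on $X_2$ drops out of the functoriality $\theta_{\psi\phi}=\theta_\psi\circ\psi_X^*\theta_\phi$ of the structure isomorphisms applied to the three faces $\del_0,\del_1,\del_2\colon X_2\to X_1$, and replacing $\tau_0$ by $\tau_0\circ\lambda$ changes $g_1$ into $(\lambda\circ\del_1)^{-1}\,g_1\,(\lambda\circ\del_0)$, i.e.\ exactly by the relation found above. This gives a well-defined, and visibly natural (every step commutes with pullback along a morphism $Y_{\dot}\to X_{\dot}$), map from isomorphism classes of degreewise trivial bundles to isomorphism classes of $\GL_r(\C)$-bundles.

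For the inverse direction I would reconstruct $\E_{\dot}$ from $g_1$. The cleanest formulation is to produce it by descent from the datum $(\O_{X_0}^{\,r},g_1)$, equivalently to pull back a \emph{universal} degreewise trivial bundle $\E^{\univ}_{\dot}$ on $B_{\dot}\GL_r(\C)$ whose associated degree-one cocycle is the identity $\GL_r(\C)\to\GL_r(\C)$; note that this tautological $g_1=\id$ does satisfy the cocycle condition precisely because the face $\del_1$ on $B_2\GL_r(\C)$ is given by the group multiplication. One then sets $\E_{\dot}:=g^*\E^{\univ}_{\dot}$, whose associated cocycle is again $g_1$ by construction.

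The step I expect to be the main obstacle is verifying that these two assignments are mutually inverse on isomorphism classes, which amounts to the statement that a cartesian degreewise trivial $\E_{\dot}$ is determined up to isomorphism by its degree-one cocycle $g_1$ alone. This is a descent statement: because the $\theta_\phi$ are isomorphisms satisfying the functoriality relation, the whole family $(\E_p,\theta_\phi)_{p,\phi}$ is forced by $\E_0$, the edge datum $g_1$ on $X_1$, and the triangle condition on $X_2$, all higher coherences being automatic. Making this precise --- i.e.\ identifying cartesian vector bundles on $X_{\dot}$ with Grothendieck descent data along the truncation $X_2\rightrightarrows X_1\rightrightarrows X_0$ --- is the only genuinely simplicial point. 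Once it is in place, the two equivalence relations on cocycles coincide (both identify $g_1$ with $(\lambda\circ\del_1)\,g_1\,(\lambda\circ\del_0)^{-1}$ for holomorphic $\lambda\colon X_0\to\GL_r(\C)$), and the bijection of the lemma follows; the remaining verifications, that the structure maps of $\E^{\univ}_{\dot}$ are well defined and cartesian and that pullback recovers $g_1$, are routine unwindings of the definitions.
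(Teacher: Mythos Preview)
Your outline is correct and identifies the right invariants, but it takes a genuinely different route from the paper's proof. You reduce everything to degree~$1$ via Remark~\ref{rem:MorphismBGL} and the $0$-coskeleton description of $E_{\dot}\GL_r(\C)$, so that both sides become cocycles $g_1\colon X_1\to\GL_r(\C)$ modulo $g_1\sim(\lambda\circ\del_1)\,g_1\,(\lambda\circ\del_0)^{-1}$, and then name the remaining content as a descent statement along the $2$-truncation. The paper instead works explicitly at every simplicial level: from a trivialisation $\psi^{(0)}$ of $\E_0$ it manufactures trivialisations $\psi_i^{(p)}$ of each $\E_p$ via the vertex maps $\tau_i\colon[0]\to[p]$, reads off transition functions $g_{ij}^{(p)}$, and writes down $g_p=(g_{01}^{(p)},\dots,g_{p-1,p}^{(p)})$ directly; in the other direction it sets $\E_p=\O_{X_p}^r$ with specified structure maps and checks by hand that both composites are the identity on isomorphism classes.

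What your approach buys is conceptual clarity: you have correctly isolated the one non-formal step, namely that a cartesian degreewise trivial bundle is determined up to isomorphism by its degree-$1$ cocycle. What the paper's approach buys is that it actually \emph{proves} this step rather than naming it --- the explicit construction of $\E_{\dot}$ from $g$ and the verification that it recovers the original bundle is precisely the descent argument you defer. Your proposal would be complete once you either supply that descent argument directly (which is not hard: the compatibilities $\theta_{\psi\phi}=\theta_\psi\circ\psi_X^*\theta_\phi$ let you reconstruct all $\theta_\phi$ from $\theta_{\del_0},\theta_{\del_1}$ and the degeneracies) or unwind it into the paper's explicit verification.
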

\begin{proof}
Let $\E_{\dot}$ be a degreewise trivial holomorphic vector bundle on $X_{\dot}$. Fix an isomorphism $\psi^{(0)}:\E_0 \xrightarrow{\cong} \O_{X_0}^r$. For any $p\geq 0$ and any $i \in [p]$ let $\tau_i : [0]\to[p]$ denote the unique map, that sends $0$ to $i$. Then $\psi^{(0)}$ induces trivializations $\psi^{(p)}_i$ of $\E_p$ defined as the composition
\[
\E_p \xleftarrow{\cong} \tau_i^*\E_0 \xrightarrow{\tau_i^*\psi^{(0)}} \O_{X_p}^r.
\]
Then $\psi^{(p)}_i \circ (\psi^{(p)}_j)^{-1} : \O_{X_p}^r \to \O_{X_p}^r$ is given by a holomorphic map $g^{(p)}_{ij}: X_p \to \GL_r(\C)$. The required morphism $g:X_{\dot} \to B_{\dot}\GL_r(\C)$ is then given in degree $p$ by $(g^{(p)}_{01}, \dots, g^{(p)}_{p-1,p})$.

Let $\varphi: \E_{\dot} \to \E'_{\dot}$ be an isomorphism of degreewise trivial vector bundles on $X_{\dot}$. Fix trivializiations $\psi^{(0)}$, $\psi'^{(0)}$ of $\E_0$, $\E_0'$ respectively. They induce trivializations $\psi^{(p)}_i$, $\psi'^{(p)}_i$ and corresponding morphisms $g, g':X_{\dot} \to B_{\dot}\GL_r(\C)$ as above.
Then $\psi'^{(p)}_i\circ\varphi_p\circ(\psi^{(p)}_i)^{-1} : \O_{X_p}^r \to \O_{X_p}^r$ is given by a holomorphic map $\alpha^{(p)}_i : X_p \to \GL_r(\C)$. It follows from the constructions, that
\[
 \psi'^{(p)}_i(\psi'^{(p)}_j)^{-1} = \alpha^{(p)}_i \psi^{(p)}_i(\psi^{(p)}_j)^{-1} (\alpha^{(p)}_j)^{-1}
\quad\text{and}\quad \alpha^{(p)}_i = \alpha^{(0)}_0 \circ (\tau_i)_X.
\]
These conditions imply, that $\alpha : X_{\dot} \to E_{\dot}\GL_r(\C)$, given in degree $p$ by $(\alpha^{(p)}_0, \dots, \alpha^{(p)}_p)$, is a morphism of simplicial manifolds, that satisfies $\alpha\cdot g = g'$. This shows, that any isomorphism class of degreewise trivial rank $r$ vector bundles corresponds to a well defined isomorphism class of $\GL_r(\C)$-bundles.

On the other hand, given $g : X_{\dot} \to B_{\dot}\GL_r(\C)$, we define the associated vector bundle $\E_{\dot}$ as follows: Set $\E_p = \O_{X_p}^r$ for every $p \geq 0$. The structure maps $\del_i^*\E_{p-1} = \O_{X_p}^r \to \E_p = \O_{X_p}^r$ are given by $\id_{\O_{X_p}^r}$, if $i < p$, and by $(g^{(p)}_p)^{-1}$, if $i=p$. Here $g^{(p)}_p :X_p \to \GL_r(\C)$ is the $p$-th component of the map $g$ in simplicial degree $p$. The maps $s_i^*\E_{p+1} \to \E_p$ are given by $\id_{\O_{X_p}^r}$. One checks, that $\E_{\dot}$ is a well defined vector bundle.
% (it is just the pull back of the universal vector bundle on $B_{\dot}\GL_r(\C)$).

Now let $g,g' : X_{\dot} \to B_{\dot}\GL_r(\C)$ be two $\GL_r(\C)$-bundles and $\alpha : g \to h$ a morphism, i.e. $\alpha : X_{\dot} \to E_{\dot}\GL_r(\C)$ and $\alpha \cdot g = g'$. Denote the associated vector bundles by $\E, \E'$ respectively. Then $\alpha$ induces an isomorphism $\E \xrightarrow{\cong} \E'$ in degree $p$ by $\E_p = \O_{X_p}^r \xrightarrow{\alpha^{(p)}_p} \O_{X_p}^r = \E'_p$ where $\alpha^{(p)}_p : X_p \to \GL_r(\C)$ denotes the last component of the map given by $\alpha$ in degree $p$.
The diagrams
\[\xymatrix@C+1cm{
\del_i^*\E_{p-1} \ar[r]^{\alpha_{p-1}^{(p-1)} \circ \del_i} \ar[d] & \del_i^*\E'_{p-1} \ar[d]\\
\E_p \ar[r]^{\alpha^{(p)}} & \E'_p
}
\]
commute: for $i < p$ this is clear since $\del_i^*\E_{p-1} \to \E_p$ is the identity and for $i=p$ it follows from the relation $\alpha \cdot g = g'$.

Thus any isomorphism class of $\GL_r(\C)$-bundles gives a well defined isomorphism class of degreewise trivial vector bundles. We have to show that these constructions are inverse to each other.

Thus let $g: X_{\dot} \to B_{\dot}\GL_r(\C)$ be a $\GL_r(\C)$-bundle with associated vector bundle $\E_{\dot}$.
Let $\widetilde g : X_{\dot} \to B_{\dot}\GL_r(\C)$ be the morphism associated with $\E_{\dot}$ and the trivialization $\id : \E_0 = \O_{X_0}$ by the above construction.
We want to prove, that $g = \widetilde g$. Since any morphism $X_{\dot} \to B_{\dot}\GL_r(\C)$ is determined by its component in simplicial degree 1 (cf. remark \ref{rem:MorphismBGL}), it suffices to show, that $g_1 = \widetilde g_1$. By construction $\widetilde g_1$ is the matrix of the morphism
\[
\O_{X_1}^r = \tau_1^*\E_0 \xrightarrow{\cong} \E_1 \xleftarrow{\cong} \tau_0^*\E_0 = \O_{X_1}^r.
\]
But since $\tau_0 = \delta^1, \tau_1=\delta^0 : [0]\to [1]$ this is just $\del_0^*\E_0 \xrightarrow{\id} \E_1 \xleftarrow{g_1^{-1}} \del_1^*\E_0$, that is $g_1$.

It remains to show that, given a vector bundle $\E_{\dot}$ and a trivialization $\psi^{(0)} : \E_0 \to \O_{X_0}^r$ with associated $\GL_r(\C)$-bundle $g: X_{\dot} \to B_{\dot}\GL_r(\C)$, the bundle constructed above associated with $g$ is isomorphic to $\E$. But an isomorphism is given explicitly by the sequence of maps $\psi_p^{(p)}:\E_p \xrightarrow{\cong} \O_{X_p}^r$ constructed at the beginning. Again it follows immediately from the constructions, that this really defines a morphism of vector bundles.
\end{proof}

We fix some terminology.
Let $X_{\dot}$ be a simplicial complex manifold. A morphism $U_{\dot} \to X_{\dot}$ is an \emph{open covering} if each $U_p \to X_p$ is an open covering in the usual sense, i.e. $U_p = \coprod_{\alpha} U_{p,\alpha}$ where each $U_{p,\alpha}$ is an open subset of $X_p$ and $\bigcup_{\alpha} U_{p,\alpha} = X_p$. The \emph{\v{C}ech nerve} of $U_{\dot} \to X_{\dot}$ is the bisimplicial manifold $N(U_{\dot}) = N_{X_{\dot}}(U_{\dot})$ defined by
\[
N_{X_{\dot}}(U_{\dot})_{p,q} = (N_{X_p}(U_p))_q,
\]
where $(N_{X_p}(U_p))_{\dot}$ is the usual simplicial \v{C}ech nerve of $U_p \to X_p$, i.e. $(N_{X_p}(U_p))_q = U_p \times_{X_p} \dots \times_{X_p} U_p$ ($q+1$ factors) with structure maps as in \eqref{eq:DelEG}, \eqref{eq:SEG}.

The diagonal simplicial manifold of $N(U_{\dot})$ is denoted by $\Delta N(U_{\dot})$.

\begin{lemma}\label{lemma:covering_iso}
Let $U_{\dot} \to X_{\dot}$ be an open covering and $\mathscr F_{\dot}^*$ a complex of abelian sheaves on $X_{\dot}$. Then the natural maps
\[
\Hyp^*(X_{\dot}, \mathscr F_{\dot}^*) \xrightarrow{\cong} \Hyp^*(N(U_{\dot}), \mathscr F_{\dot}^*|_{N(U_{\dot})})
\xrightarrow{\cong} \Hyp^*(\Delta N(U_{\dot}), \mathscr F_{\dot}^*|_{\Delta N(U_{\dot})})
\]
are isomorphisms.
\end{lemma}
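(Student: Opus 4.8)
The plan is to establish the two displayed arrows separately, in each case by means of the hypercohomology spectral sequence of a simplicial space. Recall (cf.\ \cite[(5.2)]{HodgeIII}) that for any simplicial complex manifold $Y_{\dot}$ and complex of abelian sheaves $\mathscr G^*$ on it there is a spectral sequence
\[
E_1^{s,t} = \Hyp^t(Y_s, \mathscr G^*_s) \Rightarrow \Hyp^{s+t}(Y_{\dot}, \mathscr G^*),
\]
with $d_1$ the alternating sum $\sum_i (-1)^i \del_i^*$ of the face pullbacks, and this spectral sequence is functorial in $Y_{\dot}$. I would use this in the ``external'' simplicial direction throughout.

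For the first arrow, I would regard the \v{C}ech nerve $N(U_{\dot})$ as the simplicial object $[p] \mapsto (N_{X_p}(U_p))_{\dot}$ in simplicial manifolds. At each fixed $p$ the map $U_p \to X_p$ is an ordinary open covering, so classical cohomological descent for open coverings (see, e.g., \cite{HodgeIII}) gives an isomorphism
\[
\Hyp^*(X_p, \mathscr F^*_p) \xrightarrow{\;\cong\;} \Hyp^*\big((N_{X_p}(U_p))_{\dot}, \mathscr F^*_p|\big)
\]
for every $p$. Feeding $X_{\dot}$ and $N(U_{\dot})$ into the spectral sequence above in the external index $p$ (where the hypercohomology of the bisimplicial manifold $N(U_{\dot})$ is computed as the cohomology of the total complex of the associated double cosimplicial complex), the augmentations $(N_{X_p}(U_p))_{\dot} \to X_p$ are compatible with the face and degeneracy maps in that index and hence induce a morphism of spectral sequences whose effect on $E_1$ is precisely the level-wise descent isomorphism. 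Being an isomorphism on $E_1$, it is an isomorphism on the abutments, which is the first arrow.

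For the second arrow, I would again use that $N(U_{\dot})$ is bisimplicial, so that after choosing degreewise (say Godement) resolutions its hypercohomology is computed by the total complex of the resulting double cosimplicial complex. The Eilenberg--Zilber theorem provides a natural chain homotopy equivalence, via the shuffle and Alexander--Whitney maps, between this total complex and the complex attached to the diagonal cosimplicial object; the latter computes $\Hyp^*(\Delta N(U_{\dot}), \mathscr F^*|_{\Delta N(U_{\dot})})$. Thus restriction to the diagonal is an isomorphism.

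I expect the one substantial input to be cohomological descent for the open covering $U_p \to X_p$, which produces the level-wise isomorphism driving the first arrow; everything else --- functoriality of the spectral sequence, the identification of the $d_1$-differentials, and the Eilenberg--Zilber comparison for the bisimplicial nerve --- is formal. The only bookkeeping that needs genuine care is making the choices of resolutions in the two directions compatible, so that the maps of $E_1$-pages and the diagonal comparison are induced by honest morphisms of complexes rather than by merely abstract isomorphisms.
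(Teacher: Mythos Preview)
Your proposal is correct and follows essentially the same route as the paper. The paper's proof is terser only because it invokes the general statements from \cite{HodgeIII} directly: it cites (5.3.7) for cohomological descent of each $N_{X_p}(U_p)_{\dot}\to X_p$, (6.4.3) for the passage from level-wise descent to descent for the whole bisimplicial map (which is exactly your spectral-sequence comparison), and (6.4.2.2) for the Eilenberg--Zilber identification with the diagonal.
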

Here $\mathscr F_{\dot}^*|_{N(U_{\dot})}$ denotes the inverse image of the complex of sheaves $\mathscr F_{\dot}^*$ on $N(U_{\dot})$.
\begin{proof}
The second isomorphism follows from the theorem of Eilenberg-Zilber \cite[(6.4.2.2)]{HodgeIII}.

Each $N(U_{\dot})_{p,\dot} \to X_p$ is the nerve of an open covering and thus of cohomological descent \cite[(5.3.7)]{HodgeIII}. Hence $N(U_{\dot}) \to X_{\dot}$ is of cohomological descent ({\it loc. cit.} (6.4.3)), hence the result. 
\end{proof}

We need this in the situation where $\mathscr F_{\dot}^*$ is a complex of differential forms. Since we only consider open coverings, $\Omega^i_{X_\dot} |_{N(U_{\dot})} = \Omega^i_{N(U_{\dot})}$ and similarly for smooth forms.

\begin{lemma}\label{lemma:open_covering}
Let $\E_{\dot}$ be a vector bundle on $X_{\dot}$. Then there exists an open covering $f:U_{\dot} \to X_{\dot}$ such that $f^*\E_{\dot}$ is trivial in each degree.
\end{lemma}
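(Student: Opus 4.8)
The plan is to reduce the problem to trivialising $\E_{\dot}$ in degree $0$ and then to propagate a single cover of $X_0$ through all degrees by means of the vertex maps. The starting observation is that the very definition of a vector bundle on $X_{\dot}$ already exhibits each $\E_p$ as a pullback of $\E_0$: for the unique map $\tau_j : [0] \to [p]$ with $0 \mapsto j$, the defining condition (the same $\tau_i$ already used in the proof of Lemma \ref{lemma:degwise_trivial_vb}) provides an isomorphism $(\tau_j)_X^* \E_0 \xrightarrow{\cong} \E_p$. Hence $\E_p$ is trivial over any open $V \subseteq X_p$ contained in $(\tau_j)_X^{-1}(W)$ for an open $W \subseteq X_0$ trivialising $\E_0$. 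So I would first fix an open covering $\{W_\alpha\}_{\alpha \in A}$ of $X_0$ with $\E_0|_{W_\alpha}$ trivial for each $\alpha$.

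Next I would take as index the simplicial set $E_{\dot}A$, defined exactly as $E_{\dot}G$ in Definition \ref{def:ClassSpace} but for the discrete set $A$: in degree $p$ its simplices are the tuples $a = (a_0, \dots, a_p) \in A^{p+1}$, faces deleting and degeneracies repeating an entry, and for $\phi : [m] \to [p]$ the structure map sends $a$ to $(a_{\phi(0)}, \dots, a_{\phi(m)})$. For such a tuple put
\[
U_{p,a} := \bigcap_{j=0}^p (\tau_j)_X^{-1}(W_{a_j}) \subseteq X_p,
\]
a finite intersection of opens, and set $U_p := \coprod_{a \in A^{p+1}} U_{p,a}$ with the evident map $f_p : U_p \to X_p$. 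This is an open covering: given $x \in X_p$, each $(\tau_j)_X(x)$ lies in some $W_{a_j}$, so $x \in U_{p,a}$ for $a = (a_0, \dots, a_p)$. Moreover $\E_p$ is trivial on each $U_{p,a}$, since $U_{p,a} \subseteq (\tau_0)_X^{-1}(W_{a_0})$ and $\E_p \cong (\tau_0)_X^*\E_0$.

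The main point to verify --- and the step I expect to carry the real content --- is that the $U_{p,a}$ assemble into a \emph{simplicial} manifold $U_{\dot}$ over $X_{\dot}$. For $\phi : [m] \to [p]$ I would define the structure map $U_p \to U_m$ on the component $U_{p,a}$ as the restriction of $\phi_X : X_p \to X_m$, landing in the component indexed by $(a_{\phi(0)}, \dots, a_{\phi(m)})$. The required inclusion follows from the identity $\tau_{\phi(k)} = \phi \circ \tau_k$ of maps $[0] \to [p]$, giving $(\tau_k)_X \circ \phi_X = (\tau_{\phi(k)})_X$ and hence
\[
\phi_X^{-1}\big(U_{m,(a_{\phi(0)},\dots,a_{\phi(m)})}\big) = \bigcap_{k=0}^m (\tau_{\phi(k)})_X^{-1}(W_{a_{\phi(k)}}) \supseteq \bigcap_{j=0}^p (\tau_j)_X^{-1}(W_{a_j}) = U_{p,a},
\]
the inclusion holding because $\{\phi(0), \dots, \phi(m)\} \subseteq \{0, \dots, p\}$. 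Since the index maps $a \mapsto (a_{\phi(0)}, \dots, a_{\phi(m)})$ are exactly the structure maps of $E_{\dot}A$ and the $\phi_X$ satisfy the simplicial identities, so do the lifts; thus $f : U_{\dot} \to X_{\dot}$ is a morphism of simplicial complex manifolds which is an open covering in each degree, and $f^*\E_{\dot}$ is trivial in every degree. The only delicate bookkeeping is to keep the intersection defining $U_{p,a}$ finite --- this is precisely why I index by the vertices $\tau_j$ rather than by all structure maps out of $[p]$ --- so that each $U_{p,a}$ stays open while remaining small enough to trivialise $\E_p$ yet compatible enough across degrees to be simplicial.
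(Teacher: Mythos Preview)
Your proof is correct and is essentially the same construction as the paper's: the paper defines $U_p := U_0^{[p]} \times_{X_0^{[p]}} X_p$ via the vertex map $X_p \to X_0^{[p]}$ with components $(\tau_0)_X,\dots,(\tau_p)_X$, which unwinds to exactly your $\coprod_{a\in A^{p+1}} \bigcap_j (\tau_j)_X^{-1}(W_{a_j})$. The paper's fibered-product phrasing makes the simplicial structure implicit, whereas you verify it by hand via the identity $\tau_{\phi(k)} = \phi\circ\tau_k$; the content is identical.
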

\begin{proof}
Choose an open covering $f_0:U_0 \to X_0$ such that $f_0^*\E_0$ is trivial. Define 
\[
f_p\colon U_p := U_0^{[p]} \times_{X_0^{[p]}} X_p \xrightarrow{\mathrm{pr}_2} X_p,
\]
where the $i$-th component of the map $X_p \to X_0^{[p]}$ is the morphism $\tau_i : X_p \to X_0$ induced by $\tau_i : [0] \xrightarrow{0\mapsto i} [p]$. Explicitely, if $U_0 = \coprod_{\alpha \in A} V_{\alpha}$, then $U_p = \coprod_{\alpha_0, \dots, \alpha_p\in A} \tau_0^{-1}(V_{\alpha_0}) \cap \dots \cap \tau_p^{-1}(V_{\alpha_p})$. Since $\tau_i^*\E_0 \to \E_p$ is an isomorphism, $\E_p|_{\tau_i^{-1}(V_{\alpha_i})}$ is trivial, hence also $f_p^*\E_p$.
\end{proof}

\begin{rem}\label{rem:CohomInterpretation}
We have the usual isomorphism
\[
\{\text{isomorphism classes of holomorphic line bundles on } X_{\dot}\} \cong H^1(X_{\dot}, \O_{X_{\dot}}^*)
\]
(cf. \cite[example 1.1]{GilletUCC}). 
The cohomology class associated with a degreewise trivial line bundle $\mathscr L_{\dot}$ is easy to describe: $\mathscr L_{\dot}$ is classified by a map $g : X_{\dot} \to B_{\dot}\GL_1(\C)$. Its component in degree 1, $g_1 : X_1 \to \C^*$, viewed as an element of $\Gamma(X_1, \O^*_{X_1})$ is, by the cocycle condition of remark \ref{rem:MorphismBGL}, a cocycle of degree 1 in the complex $\Gamma^*(X_{\dot}, \O^*_{X_{\dot}})$ (the complex associated with the cosimplicial group $[p]\mapsto \Gamma(X_p, \O^*_{X_p})$). There is a natural map
$H^*(\Gamma^*(X_{\dot}, \O_{X_{\dot}}^*)) \to H^*(X_{\dot}, \O_{X_{\dot}}^*)$ (an edge morphism in the spectral sequence $E_1^{p,q} = H^q(X_p, \O_{X_p}^*) \Rightarrow H^{p+q}(X_{\dot}, \O_{X_{\dot}}^*)$) and the cohomology class associated with $\mathscr L_{\dot}$ is just the image of the class of $g_1$ under this map.
\end{rem}

\subsection{Topological morphisms and bundles}
\label{sec:TopMorphAndBundles}

The definition of a differential form on a simplicial complex manifold leads to the following notion of what we call \emph{topological morphisms}.
\begin{dfn}\label{def:TopMorph}
A \emph{topological morphism} of simplicial manifolds $f: Y_{\dot} \rightsquigarrow X_{\dot}$ is a family of smooth maps 
\[
f_p: \Delta^p \times Y_p \to X_p, \qquad p\geq 0,
\]
satisfying the following compatibility condition: For every increasing map $\phi: [p] \to [q]$ the diagram
\[\xymatrix@C+1cm@R-0.3cm{
& \Delta^q \times Y_q \ar[r]^-{f_q} & X_q \ar[dd]^{\phi_X}\\
\Delta^p \times Y_q \ar[ur]^{\phi_{\Delta} \times \id} \ar[dr]^{\id \times \phi_Y}\\
& \Delta^p \times Y_p \ar[r]^-{f_p} & X_p 
}
\]
commutes.
Here $\phi_{\Delta}, \phi_Y, \phi_X$ denote the (co)simplicial structure maps induced by $\phi$.
\end{dfn}

Every holomorphic or smooth morphism of simplicial (complex) manifolds $f : Y_{\dot} \to X_{\dot}$ induces a topological morphism $f: Y_{\dot} \rightsquigarrow X_{\dot}$ by composition with the natural projections $\Delta^p \times Y_p \to Y_p$. 

\begin{rem}\label{rem:PullbackByTopMorph}
Let $f : Y_{\dot} \rightsquigarrow X_{\dot}$ be a topological morphism. 
Then we have commutative diagrams
\[\xymatrix{
& \Delta^q \times Y_q \ar[rr]^{(\mathrm{pr}_{\Delta^q},f_q)} && \Delta^q \times X_q \\
\Delta^p \times Y_q \ar[ur]^{\phi_{\Delta} \times \id} \ar[dr]^{\id \times \phi_Y}
\ar[rr]^{(\mathrm{pr}_{\Delta^p},f_q\circ(\phi_{\Delta}\times\id))}
&& \Delta^p \times X_q \ar[ur]^{\phi_{\Delta} \times \id} \ar[dr]^{\id \times \phi_X}\\
& \Delta^p \times Y_p \ar[rr]^{(\mathrm{pr}_{\Delta^p},f_p)} && \Delta^p \times X_p 
}
\]
for every increasing $\phi: [p] \to [q]$. Now let $\omega = (\omega_p)_{p\geq 0}$ be a simplicial form on $X_{\dot}$. Define $f^*\omega := \left((\mathrm{pr}_{\Delta^p}, f_p)^*\omega_p\right)_{p\geq 0}$. From the above diagram (in the special case where $\phi=\delta^i : [p-1]\to [p]$) one sees, that $f^*\omega$ is a well defined simplicial form on $Y_{\dot}$, the \emph{pullback} of $\omega$ by $f$. Thus we have a well defined pull-back map $f^*: A^n(X_{\dot}) \to A^n(Y_{\dot})$.

In a similar way we define the composition of two topological morphisms.
\end{rem}

\begin{dfn}
Let $X_{\dot}$ be a simplicial manifold. A \emph{topological} $\GL_r(\C)$\emph{-bundle} on $X_{\dot}$ is a topological morphism of simplicial manifolds
\[
 g: X_{\dot} \rightsquigarrow B_{\dot}\GL_r(\C).
\]
A \emph{morphism} $\alpha: g \to h$ of topological $\GL_r(\C)$-bundles on $X_{\dot}$ is a topological morphism of simplicial manifolds
$
 \alpha: X_{\dot} \rightsquigarrow E_{\dot}\GL_r(\C),
$
such that $\alpha \cdot g = h$.
\end{dfn}

\begin{ex}\label{ex:Karoubi_top_bundles}
Let $S$ be a simplicial set, $A$ a complex Fr\'echet algebra and $A_{\dot}$ the simplicial algebra $\mathscr{C}^{\infty}(\Delta^{\dot}_{\R}) \widehat\otimes_{\pi} A$, where $\mathscr{C}^{\infty}$ denotes smooth complex valued functions and $\widehat\otimes_{\pi}$ the projectively completed tensor product over $\C$. The simplicial classifying set $B_{\dot}\GL_r(A_{\dot})$ for the simplicial group $\GL_r(A_{\dot})$ is by definition the diagonal of the bisimplicial set $([p],[q])\mapsto B_p\GL_r(A_q)$.
Karoubi defines a topological $\GL_r(A)$-bundle (= a ``$\GL_r(A_{\dot})$-fibr\'e rep\'er\'e'') on the simplicial set $S$ to be a morphism $S \to B_{\dot}\GL_r(A_{\dot})$ \cite[5.1, proof of 5.4 and 5.26]{Kar87}. 

In the special case, where $A$ is the ring of smooth complex valued functions $\mathscr C^{\infty}(Y)$ on a complex manifold $Y$, this gives a topological bundle on the simplicial manifold $Y\otimes S$ (cf. example \ref{ex:SimplicialBundles}) as follows:

First, there is a natural map $\mathscr C^{\infty}(\Delta^p) \widehat\otimes_{\pi} \mathscr C^{\infty}(Y) \to \mathscr C^{\infty}(\Delta^p \times Y)$. Next, $B_p\GL_r\left(\mathscr C^{\infty}(\Delta^p \times Y)\right)=\mathscr C^{\infty}(\Delta^p \times Y, B_p\GL_r(\C))$. Thus, a morphism of simplicial sets $f: S \to B_{\dot}\GL_r(A_{\dot})$ gives rise to a family of smooth morphisms
\[
 \Delta^p \times Y \xrightarrow{f(\sigma)} B_p\GL_r(\C), \qquad \sigma \in S_p, p \geq 0.
\]
The fact that $f$ is a morphism of simplicial sets is reflected in the fact, that for every increasing $\phi: [p] \to [q]$ and $\sigma \in S_q$ the diagram
\[\xymatrix@C+1cm{
\Delta^q \times Y \ar[r]^-{f(\sigma)} & B_q\GL_r(\C) \ar[d]^{\phi_{B_{\dot}G}}\\
\Delta^p \times Y \ar[u]^{\phi_{\Delta}\times \id}\ar[r]^-{f(\phi_S^*\sigma)} & B_p\GL_r(\C)
}
\]
commutes. Here $\phi_S^* : S_q \to S_p$ denotes the simplicial structure map induced by $\phi$.
Now the collection of maps $f(\sigma)$, $\sigma \in S_p$, defines a smooth morphism
\[
\widetilde f_p: \Delta^p \times (Y\otimes S)_p = \coprod_{\sigma \in S_p} \Delta^p \times Y \xrightarrow{\coprod f(\sigma)} B_p\GL_r(\C)
\]
and the commutativity of the above diagrams is equivalent to the fact, that the family of maps $\widetilde f_p$, $p\geq 0$, defines a topological morphism $Y\otimes S \rightsquigarrow B_{\dot}\GL_r(\C)$ in our sense.
\end{ex}

\section{Connections, curvature and characteristic classes}
\label{sec:connections}

In this section we define connections, the associated curvature and construct the Chern-Weil theoretic characteristic classes. This is done by carrying Ka\-rou\-bi's definitions and constructions from the case of simplicial sets \cite[Ch. 5]{Kar87} over to our geometric setting. The systematic use of this formalism was a fundamental idea of Karoubi. 

In order to define the notion of a connection, we have to introduce some more notation.
Any $p$-simplex $x$ in the classifying space $B_{\dot}\GL_r(\C)$ may be written as $x=(g_{01}, g_{12}, \dots, g_{p-1,p})$. Thus, if $(g_0, \dots, g_p) \in E_p\GL_r(\C)$ is a $p$-simplex lying over $x$, then $g_{01} = g_0g_1^{-1}$ etc. and we define $g_{ji} := g_jg_i^{-1}$ for any $0 \leq i,j \leq p$. 
If $g: X_{\dot} \rightsquigarrow B_{\dot}\GL_r(\C)$ is a topological $\GL_r(\C)$-bundle, we write $g_{ji}$ for the smooth maps $\Delta^p \times X_p \to \GL_r(\C)$ obtained in the above way. If $g$ is a holomorphic bundle then $g_{ji}$ factors through a holomorphic map $X_p \to \GL_r(\C)$ which, by abuse of notation, will also be denoted by $g_{ji}$.

\begin{dfn}\label{def:Connection}
A \emph{connection} in a topological $\GL_r(\C)$-bundle $g: X_{\dot} \rightsquigarrow B_{\dot}\GL_r(\C)$ is given by the following data:
For any $p \geq 0$ and any $i \in [p] = \{0, \dots, p\}$ a matrix valued $1$-form $\Gamma_i = \Gamma_i^{(p)} \in \A^1(\Delta^p \times X_p; \Mat_r(\C)) = \Mat_r(\A^1(\Delta^p \times X_p))$ subject to the  conditions
\begin{enumerate}
\item $(\phi_{\Delta}\times \id)^*\Gamma_{\phi(i)}^{(q)} = (\id \times \phi_X)^*\Gamma_i^{(p)}$ for any increasing map $\phi: [p] \to [q]$ and
\item $\Gamma_i = g_{ji}^{-1} dg_{ji} + g_{ji}^{-1}\Gamma_j g_{ji}$.
\end{enumerate}
Here $\Mat_r$ denotes $r\times r$-matrices. 
We view $g_{ji}$ as a matrix of smooth functions on $\Delta^p \times X_p$. Thus $dg_{ji}$ is a matrix valued 1-form on $\Delta^p \times X_p$.

If $g$ is a holomorphic bundle, we call the connection (partially) \emph{holomorphic}, if $\Gamma_i \in \A^{0,1,0}(\Delta^p \times X_p, \Mat_r(\C)) \subset \A^1(\Delta^p \times X_p; \Mat_r(\C))$ 
(cf. the discussion before theorem \ref{thm:filteredDupont}).
\end{dfn}

\begin{ex}\label{ex:standard_connection}
Every topological $\GL_r(\C)$-bundle $g: X_{\dot} \rightsquigarrow B_{\dot}\GL_r(\C)$ may be equipped with the \emph{standard connection}
given by
\[
 \Gamma_i = \sum_k x_k g_{ki}^{-1}dg_{ki},
\]
where $x_0, \dots, x_p$ denote the barycentric coordinates of $\Delta^p$. If $g$ is holomorphic, this connection is holomorphic. The conditions of the definition are easily verified by direct computation.
\end{ex}

\begin{ex}
This example shows, how the classical notion of a connection fits into our framework. It will not be needed later on.

Let $Y$ be an arbitrary complex analytic manifold, $E/Y$ a smooth complex vector bundle of rank $r$ and $\nabla$ an ordinary connection on $E$, i.e. a $\C$-linear map $\E \to \A^1_Y \otimes_{\mathscr C^{\infty}_Y} \E$ satisfying Leibnitz' rule, where $\E$ denotes the sheaf of smooth sections of $E$.
Choose an open covering $\mathscr U = \{U_{\alpha}\}_{\alpha \in A}$ such that $E|_{U_{\alpha}}$ is trivial for each $\alpha$. Denote the pullback $E|_{N_{\dot}\mathscr U}$ by $E'_{\dot}$, the corresponding simplicial sheaf by $\E_{\dot}'$. The pullback of $\nabla$ is a compatible family of connections on each $E'_p$. As in lemma \ref{lemma:degwise_trivial_vb}, choose a trivialization $\psi^{(0)} : \E'_0 \to \mathscr (\mathscr C^{\infty}_{N_0\mathscr U})^r$. This induces trivializations $\psi^{(p)}_i$, $i = 0, \dots, p$, of $\E'_p$, and $\psi^{(p)}_i\circ (\psi^{(p)}_j)^{-1}$ is given by the smooth transition function $g^{(p)}_{ij} : N_p\mathscr U \to \GL_r(\C)$. 
Then $E'_{\dot}$ is classified by the smooth morphism $g : N_{\dot}\mathscr U \to B_{\dot}\GL_r(\C)$, analogous to the holomorphic case. In particular $E'_{\dot}$ gives rise to a topological $\GL_r(\C)$-bundle on $N_{\dot}\mathscr U$.

With respect to the trivialization $\psi^{(p)}_i$ the connection is given by a matrix valued 1-form $\Gamma_i = \Gamma_i^{(p)} \in \A^1(N_p\mathscr U;\Mat_r(\C))$ (see e.g. \cite[1.8]{Kar87}). These forms satisfy the transformation rule $\Gamma_i = g_{ji}^{-1} dg_{ji} + g_{ji}^{-1}\Gamma_j g_{ji}$ ({\it loc. cit.}) and the compatibility condition $\Gamma_{\phi(i)}^{(q)} = \phi_{N_{\dot}\mathscr U}^*\Gamma_i^{(p)}$ for every increasing $\phi: [p] \to [q]$. Hence, the pullbacks of the $\Gamma_i$ to $\Delta^p \times N_p\mathscr U$ yield a connection in the above sense.
\end{ex}
This example also motivates the following definitions.

\begin{dfn}
The \emph{curvature} of the connection $\{\Gamma_i\}$ is defined as the family of matrix valued 2-forms
\[
R_i := R_i^{(p)} := d\Gamma_i^{(p)} + \left(\Gamma_i^{(p)}\right)^2 \in \A^2(\Delta^p \times X_p; \Mat_r(\C)),  
\]
$p \geq 0, i = 0, \dots p$.
\end{dfn}

\begin{rems}\label{rem:FunctorialityConnection}
(i) Let $g, h : X_{\dot} \rightsquigarrow Y_{\dot}$ be two bundles, $\alpha : g \to h$ a morphism of bundles and $\Gamma = \{\Gamma_i\}$ a connection on $h$ with curvature $\{R_i\}$. Then the \emph{pullback} $\alpha^*\Gamma$ \emph{of the connection} $\Gamma$ is defined by the family of forms
\[
 (\alpha^*\Gamma)_i = \alpha_i^{-1} d\alpha_i + \alpha_i^{-1} \Gamma_i \alpha_i,
\]
where $\alpha_i : \Delta^p \times X_p \to \GL_r(\C)$ is the $i$-th component of the morphism $\alpha$ in simplicial degree $p$.
The curvature of $\alpha^*\Gamma$ is given by the family of 2-forms $\alpha_i^{-1} R_i \alpha_i$.
\smallskip

(ii) If $E/X_{\dot}$ is a topological bundle on $X_{\dot}$ given by $g : X_{\dot} \rightsquigarrow B_{\dot}\GL_r(\C)$, and $f: Y_{\dot} \rightsquigarrow X_{\dot}$ is a topological morphism, the pullback $f^*E$ is given by $g\circ f$. If $\Gamma = \{\Gamma_i\}$ is a connection on $E$, the induced connection $f^*\Gamma$ on $f^*E$ is given by
\[
 (f^*\Gamma)_i^{(p)} = (\mathrm{pr}_{\Delta^p}, f_p)^*\Gamma_i^{(p)}.
\]
Consequently, its curvature is given by the family of forms $(\mathrm{pr}_{\Delta^p}, f_p)^*R_i^{(p)}$.

If $\Gamma$ is the standard connection on $E$, then $f^*\Gamma$ is the standard connection on $f^*E$, as follows directly from the definitions.
\end{rems}

\begin{lemma}\label{lem:CurvatureCompatible}
The forms $R_i$ satisfy $R_i = g_{ji}^{-1}R_jg_{ji}$.
\end{lemma}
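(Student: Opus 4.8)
The plan is to reduce the statement to the classical computation showing that curvature transforms by conjugation under the inhomogeneous transformation rule obeyed by a connection form. To lighten notation I would abbreviate $\gamma := g_{ji}$, regarded as a matrix of smooth functions on $\Delta^p \times X_p$, so that condition (ii) of Definition \ref{def:Connection} reads
\[
\Gamma_i = \gamma^{-1}d\gamma + \gamma^{-1}\Gamma_j\gamma.
\]
The two tools I would use repeatedly are the graded Leibniz rule $d(\alpha\beta) = (d\alpha)\beta + (-1)^{\deg\alpha}\alpha(d\beta)$ for matrix-valued forms and the identity $d(\gamma^{-1}) = -\gamma^{-1}(d\gamma)\gamma^{-1}$, obtained by differentiating $\gamma^{-1}\gamma = 1$.

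First I would expand $d\Gamma_i$ term by term. Differentiating $\gamma^{-1}d\gamma$ gives $-\gamma^{-1}(d\gamma)\gamma^{-1}(d\gamma)$, since $d(d\gamma) = 0$; differentiating $\gamma^{-1}\Gamma_j\gamma$ --- carefully tracking the sign coming from the odd-degree factor $\Gamma_j$ --- yields $-\gamma^{-1}(d\gamma)\gamma^{-1}\Gamma_j\gamma + \gamma^{-1}(d\Gamma_j)\gamma - \gamma^{-1}\Gamma_j(d\gamma)$. Next I would expand $\Gamma_i^2$: writing $A := \gamma^{-1}d\gamma$ and $B := \gamma^{-1}\Gamma_j\gamma$ and using $\gamma\gamma^{-1} = 1$ to collapse the inner products in $A^2, AB, BA, B^2$, one obtains $\Gamma_i^2 = \gamma^{-1}(d\gamma)\gamma^{-1}(d\gamma) + \gamma^{-1}(d\gamma)\gamma^{-1}\Gamma_j\gamma + \gamma^{-1}\Gamma_j(d\gamma) + \gamma^{-1}\Gamma_j^2\gamma$.

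Adding the two expansions, every summand still containing a factor $d\gamma$ cancels against its partner, and only $\gamma^{-1}(d\Gamma_j)\gamma + \gamma^{-1}\Gamma_j^2\gamma = \gamma^{-1}(d\Gamma_j + \Gamma_j^2)\gamma = \gamma^{-1}R_j\gamma$ survives, which is the assertion. The entire argument is local on $\Delta^p \times X_p$, so neither the simplicial structure nor the compatibility condition (i) of Definition \ref{def:Connection} enters; the curvature identity is a pointwise consequence of the gauge transformation law alone. I expect the only genuine obstacle to be bookkeeping the anticommutation signs in the graded Leibniz rule --- $\Gamma_j$ is odd while $\gamma^{\pm 1}$ are even --- but once these are handled correctly the cancellation is forced.
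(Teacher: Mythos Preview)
Your proof is correct and follows exactly the same direct computation as the paper's own proof: expand $d\Gamma_i$ and $\Gamma_i^2$ via the Leibniz rule and the identity $d(\gamma^{-1})=-\gamma^{-1}(d\gamma)\gamma^{-1}$, then observe that all terms containing $d\gamma$ cancel. The only difference is notational (your abbreviation $\gamma=g_{ji}$) and that you make the sign bookkeeping slightly more explicit.
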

\begin{proof}
Again, this follows directly from the definitions. We give the proof as a prototype for all the calculations of this type.

Using the formula $d(g^{-1}) = - g^{-1}(dg)g^{-1}$ and Leibnitz' rule we get
\begin{eqnarray*}
R_i &=& d\Gamma_i + \Gamma_i^2\\
&=& d\left(g_{ji}^{-1}dg_{ji} + g_{ji}^{-1}\Gamma_j g_{ji}\right) + \left(g_{ji}^{-1}dg_{ji} + g_{ji}^{-1}\Gamma_j g_{ji}\right)^2 \\
&=& -g_{ji}^{-1}(dg_{ji})g_{ji}^{-1} dg_{ji} + -g_{ji}^{-1}(dg_{ji})g_{ji}^{-1}\Gamma_j g_{ji} + g_{ji}^{-1}(d\Gamma_j)g_{ji} - g_{ji}^{-1}\Gamma_j dg_{ji}\\
&& + (g_{ji}^{-1}dg_{ji})^2 + g_{ji}^{-1}(dg_{ji})g_{ji}^{-1}\Gamma_j g_{ji} + g_{ji}^{-1}\Gamma_j g_{ji} g_{ji}^{-1} dg_{ji} + g_{ji}^{-1}\Gamma_j^2 g_{ji}\\
&=& g_{ji}^{-1} (d\Gamma_j) g_{ji} + g_{ji}^{-1}\Gamma_j^2 g_{ji}\\
&=& g_{ji}^{-1} R_j g_{ji}. 
\end{eqnarray*}
\end{proof}

\begin{dfn}
We define the $n$\emph{-th Chern character form} $\Ch_n(\Gamma)$ of the connection $\Gamma =\{\Gamma_i\}$ to be the family of forms $\frac{1}{n!} \Tr\left(\left(R_i^{(p)}\right)^n\right)$ on $\Delta^p\times X_p$, $p \geq 0$. According to lemma \ref{lem:CurvatureCompatible}, this form does not depend on $i$.
\end{dfn}
\begin{propn}\label{prop:CharClasses} Let $g: X_{\dot} \rightsquigarrow B_{\dot}\GL_r(\C)$ be  a topological bundle and $\Gamma$ a connection on $g$.
\begin{enumerate}
\item $\Ch_n(\Gamma)$ is a closed $2n$-form on $X_{\dot}$, i.e. belongs to $A^{2n}(X_{\dot})$ and $d\Ch_n(\Gamma) = 0$.
\item The cohomology class of $\Ch_n(\Gamma)$ does not depend on the connection chosen.
\item If the bundle $g$ and the connection are holomorphic, $\Ch_n(\Gamma) \in \Fil^nA^{2n}(X_{\dot})$. Moreover, the class of $\Ch_n(\Gamma)$ in $H^{2n}(\Fil^nA^*(X_{\dot})) = \Hyp^{2n}(X_{\dot}, \Omega^{\geq n}_{X_{\dot}})$ does not depend on the holomorphic connection chosen.
\item If $h : X_{\dot} \rightsquigarrow B_{\dot}\GL_r(\C)$ is a second bundle, and $\alpha : h \to g$ is a morphism, then $\Ch_n(\alpha^*\Gamma) = \Ch_n(\Gamma)$.
\item If $f: Y_{\dot} \rightsquigarrow X_{\dot}$ is a topological morphism, $\Ch_n(f^*\Gamma) = f^*\Ch_n(\Gamma)$.
\end{enumerate}
\end{propn}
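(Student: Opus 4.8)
The plan is to treat the two functoriality statements (iv) and (v) first, as they are formal consequences of the behaviour of the curvature under pullback, then to establish (i), and finally to prove (ii) together with its filtered refinement (iii) by a single transgression computation, which is where the actual work lies. For (iv), Remark~\ref{rem:FunctorialityConnection}(i) gives that the curvature of $\alpha^*\Gamma$ is $\alpha_i^{-1}R_i\alpha_i$, whence $\Tr\bigl((\alpha_i^{-1}R_i\alpha_i)^n\bigr) = \Tr\bigl(\alpha_i^{-1}R_i^n\alpha_i\bigr) = \Tr(R_i^n)$ by cyclic invariance of the trace, so that $\Ch_n(\alpha^*\Gamma) = \Ch_n(\Gamma)$. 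For (v), Remark~\ref{rem:FunctorialityConnection}(ii) gives that the curvature of $f^*\Gamma$ is $(\mathrm{pr}_{\Delta^p},f_p)^*R_i$; since $\Tr$ and wedge powers commute with pullback of forms, $\Ch_n(f^*\Gamma) = f^*\Ch_n(\Gamma)$ directly.

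For (i) I would first check that the family $\tfrac{1}{n!}\Tr\bigl((R_i^{(p)})^n\bigr)$ genuinely defines a simplicial form. Because $d$ and wedge products commute with the smooth pullbacks $(\phi_\Delta\times\id)^*$ and $(\id\times\phi_X)^*$, condition (1) of Definition~\ref{def:Connection} for the $\Gamma_i$ propagates to the curvature forms; applying $\Tr((-)^n)$ and invoking the index-independence of Lemma~\ref{lem:CurvatureCompatible} then yields the compatibility condition defining $A^{2n}(X_{\dot})$. The point at which index-independence is essential is the coface $\delta^0$, where the index $0$ is carried to $1$. Closedness is the standard Chern--Weil computation: the Bianchi identity $dR_i = [R_i,\Gamma_i]$ combined with cyclicity of the trace gives $d\Tr(R_i^n) = n\Tr\bigl(R_i^{n-1}[R_i,\Gamma_i]\bigr) = 0$.

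The heart of the matter is (ii) and (iii). Given connections $\Gamma^0,\Gamma^1$, I would first note that their affine combination $\Gamma_i^t = (1-t)\Gamma_i^0 + t\Gamma_i^1$ is again a connection for each $t$: condition (1) is linear in $\Gamma$, and the difference $\beta_i := \Gamma_i^1 - \Gamma_i^0$ transforms tensorially, $\beta_i = g_{ji}^{-1}\beta_j g_{ji}$, so condition (2) is preserved. Introducing $t\in[0,1]$ as an auxiliary variable with total differential $\tilde d = d + dt\,\partial_t$ and curvature $\tilde R_i = R_i^t + dt\wedge\beta_i$, the computation of (i) applied in this enlarged setting shows $\tfrac{1}{n!}\Tr(\tilde R_i^n)$ is $\tilde d$-closed; extracting the $dt$-component gives the transgression identity $\partial_t\Ch_n(\Gamma^t) = d\gamma(t)$ with $\gamma(t) = \tfrac{1}{(n-1)!}\Tr\bigl(\beta_i (R_i^t)^{n-1}\bigr)$, which is itself index-independent by the same cyclicity argument and hence simplicial. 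Integrating over $t$ yields $\Ch_n(\Gamma^1) - \Ch_n(\Gamma^0) = d\int_0^1\gamma(t)\,dt$, proving (ii). For (iii), a holomorphic connection has $\Gamma_i$ of type $(0,1,0)$, so every summand of $R_i$ has holomorphic degree $l\geq 1$ (cf.\ the types discussed before Theorem~\ref{thm:filteredDupont}); thus $R_i^n$ has $l\geq n$ and $\Ch_n(\Gamma)\in\Fil^nA^{2n}(X_{\dot})$. As $\Gamma^0,\Gamma^1$ and their affine combinations stay holomorphic, $\beta_i$ has type $(0,1,0)$ and $\gamma(t)$ has holomorphic degree $\geq n$, so the primitive lies in $\Fil^n$ and the class is unchanged in $H^{2n}(\Fil^nA^*(X_{\dot}))$.

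The main obstacle I anticipate is arranging the transgression so that it is simultaneously compatible with the simplicial structure and with the type decomposition --- in particular, confirming that adjoining the parameter $t$ does not disturb the bookkeeping of holomorphic degree in the $X$-directions, so that the filtered refinement (iii) drops out of the very same computation that proves (ii).
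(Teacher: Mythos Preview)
Your proof is correct and follows essentially the same approach as the paper. The only packaging difference is that the paper defers (ii) and (iii) to the secondary-form machinery of Lemmas~\ref{lem:RelChernCharForm} and~\ref{lem:ChRelHolom}: it works on $X_{\dot}\times\C$, forms the affine combination there, and applies the homotopy operator $K$ (integration over $t$) to get the transgression, whereas you carry out the equivalent Chern--Simons computation by hand with $t$ as a parameter and produce the explicit $\gamma(t)=\tfrac{1}{(n-1)!}\Tr(\beta_i(R_i^t)^{n-1})$ directly on $X_{\dot}$. The paper's route has the advantage that the secondary form $\Ch_n^{\rel}$ is exactly the object needed later in Section~\ref{sec:SecondaryClasses}, so the investment pays off; your route is slightly more self-contained for this proposition alone and sidesteps the need for the auxiliary filtration $\Fil'$ on $\A^*(\Delta^p\times X_p\times\C)$ that the paper introduces in Lemma~\ref{lem:ChRelHolom}.
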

\begin{proof}
(i) It follows from condition (i) in  definition \ref{def:Connection}, that $(\phi_{\Delta} \times \id_{X_q})^*\Tr((R^{(q)}_{\phi(i)})^n) = (\id_{\Delta^p} \times \phi_X)^*\Tr((R^{(p)}_i)^n)$, hence the forms $\frac{1}{n!} \Tr\left(\left(R_i^{(p)}\right)^n\right)$, $p \geq 0$, are indeed compatible and define $\Ch_n(\Gamma) \in A^{2n}(X_{\dot})$. 
For the closedness cf. the proof of \cite[th\'eor\`eme 1.19]{Kar87}.

(ii) This follows from a standard homotopy argument. See lemma \ref{lem:RelChernCharForm} with $\alpha = \id$ below.

(iii) With the notations of section \ref{sec:SimplDeRham} write 
\[
\Fil^i\A^*(\Delta^p \times X_p) = \bigoplus_{k+l+m=*, l\geq i} \A^{k,l,m}(\Delta^p \times X_p)
\]
and similarly for matrix valued forms.
These are subcomplexes and the product maps $\Fil^i \times \Fil^j$ to $\Fil^{i+j}$.
Now, if the connection is holomorphic, $\Gamma_i \in \Fil^1\A^1(\Delta^p \times X_p, \Mat_r(\C))$, hence $R_i = d\Gamma_i + \Gamma_i^2 \in \Fil^1\A^2(\Delta^p \times X_p; \Mat_r(\C))$ and then also $\Ch_n(\Gamma) \in \Fil^nA^{2n}(X_{\dot})$.

Again, the independence of the associated cohomology class of the holomorphic connection chosen follows from a (slightly more complicated) homotopy argument, see lemma \ref{lem:ChRelHolom} below.

(iv), (v) These follow directly from remarks \ref{rem:FunctorialityConnection} (i) and (ii) respectively.
\end{proof}

\begin{dfn}
If $E/X_{\dot}$ is a topological bundle, we write $\Ch_n(E)$ for the cohomology class of $\Ch_n(\Gamma)$ in $H^{2n}(A^*(X_{\dot})) = H^{2n}(X_{\dot}, \C)$, where $\Gamma$ is any connection on $E$. If $E$ is \emph{holomorphic}, we also denote by $\Ch_n(E)$ the class of $\Ch_n(\Gamma)$ in $\Hyp^{2n}(X_{\dot}, \Omega^{\geq n}_{X_{\dot}})$, where $\Gamma$ is any (partially) holomorphic connection.
\end{dfn}

\paragraph*{Characteristic classes of holomorphic vector bundles}
In order to compare our construction of characteristic classes with other approaches, we have to extend the definition of Chern character classes to vector bundles using the results of section \ref{sec:RelationVB}.

Let $\E_{\dot}$ be an arbitrary holomorphic vector bundle of rank $r$ on the simplicial manifold $X_{\dot}$. We construct its Chern character classes as follows: Choose an open covering $U_{\dot} \to X_{\dot}$ such that $\E_{\dot}|_{U_{\dot}}$ is degreewise trivial. Denote by $X'_{\dot}$ the diagonal simplicial manifold of the \v{C}ech nerve $N_{X_{\dot}}(U_{\dot})$. Then $\E|_{X'_{\dot}}$ is degreewise trivial, hence corresponds to a holomorphic $\GL_r(\C)$-bundle $E'/X'_{\dot}$. We define the $n$-th Chern character class $\Ch_n(\E_{\dot}) \in \Hyp^{2n}(X_{\dot},\Omega^{\geq n}_{X_{\dot}})$ of $\E_{\dot}$ to be the inverse image of $\Ch_n(E')$ under the isomorphism $\Hyp^{2n}(X_{\dot},\Omega^{\geq n}_{X_{\dot}}) \xrightarrow{\cong} \Hyp^{2n}(X'_{\dot},\Omega^{\geq n}_{X'_{\dot}})$ of lemma \ref{lemma:covering_iso}.
\begin{lemma}
$\Ch_n(\E_{\dot})  \in \Hyp^{2n}(X_{\dot},\Omega^{\geq n}_{X_{\dot}})$ is well defined.
\end{lemma}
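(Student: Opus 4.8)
The construction of $\Ch_n(\E_{\dot})$ depends a priori on two choices: the open covering $U_{\dot} \to X_{\dot}$ trivializing $\E_{\dot}$ degreewise (which exists by Lemma \ref{lemma:open_covering}), and, implicitly in passing from the degreewise trivial bundle $\E|_{X'_{\dot}}$ to a $\GL_r(\C)$-bundle $E'$ via Lemma \ref{lemma:degwise_trivial_vb}, the auxiliary trivialization. The plan is to show the resulting class is insensitive to both. The trivialization causes no trouble: Lemma \ref{lemma:degwise_trivial_vb} assigns a \emph{well-defined isomorphism class} of $\GL_r(\C)$-bundle to the isomorphism class of $\E|_{X'_{\dot}}$, and by Proposition \ref{prop:CharClasses}(iv) the class $\Ch_n(E')$ depends only on the isomorphism class of $E'$, since every morphism of bundles is an isomorphism. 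So only independence of the covering requires real work.

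Given two coverings $U_{\dot}, V_{\dot} \to X_{\dot}$ trivializing $\E_{\dot}$ degreewise, I would compare them through the common refinement $W_{\dot} := U_{\dot}\times_{X_{\dot}} V_{\dot} \to X_{\dot}$, which is again an open covering and still trivializes $\E_{\dot}$ degreewise. Write $X'_{\dot}, X''_{\dot}, X'''_{\dot}$ for the diagonal \v{C}ech nerves of $U_{\dot}, V_{\dot}, W_{\dot}$, and let $E', E'', E'''$ be the associated $\GL_r(\C)$-bundles. The projection $W_{\dot} \to U_{\dot}$ induces a holomorphic morphism of simplicial manifolds $\pi'\colon X'''_{\dot} \to X'_{\dot}$, compatible with the augmentations to $X_{\dot}$, and similarly $\pi''\colon X'''_{\dot}\to X''_{\dot}$. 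By symmetry it suffices to prove that the class built from $U_{\dot}$ agrees with the one built from $W_{\dot}$.

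The core of the argument combines naturality of descent with functoriality of $\Ch_n$. Since the augmentation $X'''_{\dot}\to X_{\dot}$ factors as $X'''_{\dot}\xrightarrow{\pi'} X'_{\dot}\to X_{\dot}$, functoriality of pullback forces the descent isomorphism $a'''\colon \Hyp^{2n}(X_{\dot},\Omega^{\geq n}_{X_{\dot}})\xrightarrow{\cong}\Hyp^{2n}(X'''_{\dot},\Omega^{\geq n}_{X'''_{\dot}})$ of Lemma \ref{lemma:covering_iso} to satisfy $a''' = \pi'^*\circ a'$, where $a'$ is the analogous isomorphism for $U_{\dot}$ and $\pi'^*$ is pullback along $\pi'$. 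On the other hand, pulling the classifying map of $E'$ back along $\pi'$ classifies $\pi'^*E'$, whose associated degreewise trivial vector bundle is $\pi'^*(\E|_{X'_{\dot}}) = \E|_{X'''_{\dot}}$; hence $\pi'^*E' \cong E'''$, and by the holomorphic versions of Proposition \ref{prop:CharClasses}(iv) and (v) we get $\pi'^*\Ch_n(E') = \Ch_n(\pi'^*E') = \Ch_n(E''')$ in $\Hyp^{2n}(X'''_{\dot},\Omega^{\geq n}_{X'''_{\dot}})$. From $a''' = \pi'^*\circ a'$ and the invertibility of $a'$ one deduces $(a''')^{-1}\circ\pi'^* = (a')^{-1}$, so that $(a''')^{-1}\Ch_n(E''') = (a''')^{-1}\pi'^*\Ch_n(E') = (a')^{-1}\Ch_n(E')$. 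This is exactly the statement that the class defined from $W_{\dot}$ equals the class defined from $U_{\dot}$; applying the same reasoning to $\pi''$ finishes the proof.

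The step that needs the most care is the naturality input, namely that the cohomological descent isomorphisms of Lemma \ref{lemma:covering_iso} are natural with respect to refinement maps, so that $a''' = \pi'^*\circ a'$; this is where the functoriality of the Eilenberg--Zilber and descent isomorphisms from \cite{HodgeIII} is genuinely used. One must also check, though this is routine, that pullback along the holomorphic morphism $\pi'$ respects the $(k,l,m)$-type decomposition and hence preserves the Hodge filtration $\Fil^n$, so that $\pi'^*$ is indeed defined on $\Hyp^{2n}(\,\cdot\,,\Omega^{\geq n})$ and the filtered form of Proposition \ref{prop:CharClasses}(v) applies.
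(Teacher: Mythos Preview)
Your proof is correct and follows essentially the same approach as the paper: pass to a common refinement $W_{\dot}=U_{\dot}\times_{X_{\dot}}V_{\dot}$, observe that the pullbacks of $E'$ and $E''$ to $X'''_{\dot}$ are isomorphic (both corresponding to $\E|_{X'''_{\dot}}$) and hence have the same Chern character class, and use the cohomological descent isomorphisms to conclude. You are more explicit than the paper about the naturality of the descent isomorphisms and about why the choice of trivialization in Lemma~\ref{lemma:degwise_trivial_vb} is harmless, but these are elaborations of the same argument rather than a different route.
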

\begin{proof}
Let $V_{\dot} \to X_{\dot}$ be a second open covering such that $\E_{\dot}|_{V_{\dot}}$ is degreewise trivial. Denote the diagonal of the associated \v{C}ech nerve by $X''_{\dot}$ and let $\E_{\dot}|_{X''_{\dot}}$ correspond to the holomorphic $\GL_r(\C)$-bundle $E''$.

Consider a common refinement $W_{\dot}$ of $U_{\dot}$ and $V_{\dot}$, e. g. $W_{\dot} = U_{\dot} \times_{X_{\dot}} V_{\dot}$, and denote the diagonal of the \v{C}ech nerve of $W_{\dot}$ by $X'''_{\dot}$. We have a commutative diagram
\[\xymatrix@R-1cm{
&X'_{\dot} \ar[dr] \\
X'''_{\dot} \ar[dr]\ar[ur] & & X_{\dot},\\
& X''_{\dot}\ar[ur]
}
\]
all maps inducing isomorphisms in cohomology. The pullbacks of $E'/X'_{\dot}$ and $E''/X''_{\dot}$ to $X'''_{\dot}$ both correspond to the vector bundle $\E_{\dot}|_{X'''_{\dot}}$, hence are isomorphic, hence have the same Chern character classes. The claim follows.
\end{proof}
In order to be able to apply the splitting principle later on, we need the
\begin{propn}[Whitney sum formula]
Let $0 \to \E'_{\dot} \to \E_{\dot} \to \E''_{\dot} \to 0$ be a short exact sequence of holomorphic vector bundles on $X_{\dot}$. Then $\Ch_n(\E_{\dot}) = \Ch_n(\E'_{\dot}) + \Ch_n(\E''_{\dot})$.
\end{propn}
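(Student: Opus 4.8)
The plan is to reduce, via the covering isomorphism of Lemma \ref{lemma:covering_iso}, to a situation where all three bundles are degreewise trivial and the extension is realized by block upper-triangular transition data; the formula then drops out of a trace computation with the standard connection, exactly in the style of Lemma \ref{lem:CurvatureCompatible}. Since the $\Ch_n$ are independent of the chosen covering and trivialization, we are free to normalize both.

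First I would choose an open covering $U_{\dot} \to X_{\dot}$ fine enough that the restrictions of $\E'_{\dot}$, $\E_{\dot}$ and $\E''_{\dot}$ are all degreewise trivial and that the given short exact sequence splits in each degree over $U_{\dot}$; this is possible because a short exact sequence of locally free sheaves with locally free quotient is locally split, and one may always refine by Lemma \ref{lemma:open_covering}. Passing to the diagonal $X'_{\dot}$ of the \v{C}ech nerve $N_{X_{\dot}}(U_{\dot})$, all three classes are computed on $X'_{\dot}$, so by Lemma \ref{lemma:covering_iso} it suffices to prove the identity there. Writing $r' = \mathrm{rk}\,\E'_{\dot}$, $r'' = \mathrm{rk}\,\E''_{\dot}$ and $r = r'+r''$, I would pick, degreewise, trivializations of $\E_{\dot}|_{X'_{\dot}}$ adapted to the subbundle, identifying $\E'_{\dot}$ with $\O^{r'}\oplus 0 \subset \O^{r'}\oplus\O^{r''} = \E_{\dot}$ and $\E''_{\dot}$ with the quotient $\O^{r''}$. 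With respect to such trivializations the classifying map $g$ of $E'/X'_{\dot}$ (in the sense of Lemma \ref{lemma:degwise_trivial_vb}) factors through $B_{\dot}P$, where $P\subset\GL_r(\C)$ is the parabolic of block upper-triangular matrices $\left(\begin{smallmatrix} a & b\\ 0 & c\end{smallmatrix}\right)$, and the two diagonal blocks are precisely the classifying maps $g'$, $g''$ of $\E'_{\dot}$ and $\E''_{\dot}$.

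Next I would equip $E/X'_{\dot}$ with the standard connection of Example \ref{ex:standard_connection}, $\Gamma_i = \sum_k x_k\, g_{ki}^{-1} dg_{ki}$. Since $g$ takes values in $P$, each $g_{ki} = g_k g_i^{-1}$ lies in $P$, so $g_{ki}^{-1} dg_{ki}$ is block upper-triangular, and a direct computation (as in the proof of Lemma \ref{lem:CurvatureCompatible}) shows its diagonal blocks are $(g'_{ki})^{-1} dg'_{ki}$ and $(g''_{ki})^{-1} dg''_{ki}$. Hence $\Gamma_i$ is block upper-triangular with diagonal blocks the standard connections $\Gamma'_i$, $\Gamma''_i$ of $\E'_{\dot}$ and $\E''_{\dot}$, and therefore the curvature $R_i = d\Gamma_i + \Gamma_i^2$ is again block upper-triangular with diagonal blocks $R'_i$, $R''_i$. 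Because $\Tr$ of a power of a block upper-triangular matrix only sees the diagonal blocks, one gets $\Tr(R_i^n) = \Tr((R'_i)^n) + \Tr((R''_i)^n)$, i.e. the identity $\Ch_n(\Gamma) = \Ch_n(\Gamma') + \Ch_n(\Gamma'')$ already at the level of forms in $A^{2n}(X'_{\dot})$. As $g$ is holomorphic its standard connection is holomorphic, and the block decomposition is preserved by $\del_X$ and $\bar\del_X$, so this is in fact an identity in $\Fil^n A^{2n}(X'_{\dot})$; passing to cohomology and using Proposition \ref{prop:CharClasses}(iii) together with the covering isomorphism yields the claim on $X_{\dot}$.

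The step I expect to be the main obstacle is the first one: arranging, coherently in all simplicial degrees on the \v{C}ech-nerve diagonal $X'_{\dot}$, trivializations adapted to the subbundle so that the classifying map genuinely lands in the parabolic $B_{\dot}P$ rather than merely splitting pointwise. Once the extension is in this block-triangular normal form, the remainder is the same elementary curvature bookkeeping as in Lemma \ref{lem:CurvatureCompatible}; the only additional point is compatibility with the Hodge filtration, which is automatic because the standard connection of a holomorphic bundle is holomorphic and the parabolic structure survives the filtration.
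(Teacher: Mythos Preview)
Your proposal is correct and follows essentially the same route as the paper: pass to the \v{C}ech-nerve diagonal, put the transition data in block upper-triangular form, and read off the identity from $\Tr(R_i^n)$ using the standard connection. The obstacle you flag is resolved exactly as in the proof of Lemma~\ref{lemma:degwise_trivial_vb}: one chooses the splitting and trivializations only in degree $0$ and then transports them to all degrees via the maps $\tau_i\colon [0]\to[p]$, which automatically makes the resulting $g_{ij}$ block upper-triangular with diagonal blocks $g'_{ij}$, $g''_{ij}$.
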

\begin{proof}
Without loss of generality we may assume, that $0 \to \E'_0 \to \E_0 \to \E''_0 \to 0$ is a split short exact sequence of free $\O_{X_0}$-modules.
In fact, choose an open covering $U_0 \to X_0$, such that $0 \to \E'_0|_{U_0} \to \E_0|_{U_0} \to \E''_0|_{U_0} \to 0$ is a split short exact sequence of free $\O_{U_0}$-modules. As in the proof of lemma \ref{lemma:open_covering} this induces an open covering $U_{\dot} \to X_{\dot}$ and we denote the diagonal of the corresponding \v{C}ech nerve by $X'_{\dot}$. 
Then $\Ch_n(\E_{\dot})$ maps to $\Ch_n(\E_{\dot}|_{X'_{\dot}})$ under the isomorphism $\Hyp^{2n}(X_{\dot}, \Omega^{\geq n}_{X_{\dot}}) \xrightarrow{\cong} \Hyp^{2n}(X'_{\dot}, \Omega^{\geq n}_{X'_{\dot}})$ and similarly for $\E'_{\dot}, \E''_{\dot}$.

Fix isomorphisms $\psi'^{(0)} : \E'_0 \xrightarrow{\cong} \O_{X_0}^s$, $\psi''^{(0)} : \E''_0 \xrightarrow{\cong} \O_{X_0}^{r-s}$ and a section of the secquence $0 \to \E'_0 \to \E_0 \to \E''_0 \to 0$. This yields compatible isomorphisms:
\[\xymatrix{
0 \ar[r] & \E'_0 \ar[d]^{\psi'^{(0)}}_{\cong} \ar[r] & \E_0 \ar[d]^{\psi^{(0)}}_{\cong} \ar[r] & \E''_0 \ar[d]^{\psi''^{(0)}}_{\cong} \ar[r] & 0\\
0 \ar[r] &\O_{X_0}^s \ar[r] & \O_{X_0}^s \oplus \O_{X_0}^{r-s} \ar[r] & \O_{X_0}^{r-s} \ar[r] & 0.
}
\]
As in the proof of lemma \ref{lemma:degwise_trivial_vb} we get compatible trivializations $\psi'^{(p)}_i, \psi^{(p)}_i$ and $\psi''^{(p)}_i$, $i = 0, \dots, p$, of $\E'_p, \E_p$ and $\E''_p$ respectively. Denote the transition functions $\psi'^{(p)}_i \circ (\psi'^{(p)}_j)^{-1}, \dots$ by $g'_{ij}, \dots$. From the compatibility of the trivializations it follows, that $g_{ij}$ is of the form
\[
\begin{pmatrix}
g'_{ij} & *\\ 0 & g''_{ij}
\end{pmatrix}.
\]
Denote the $\GL_r(\C)$-bundles associated with $\E'_{\dot}, \E_{\dot}$ and $\E''_{\dot}$ by $E', E$ and $E''$ respectively.
The standard connection $\Gamma^E$ on $E$ (cf. example \ref{ex:standard_connection}) is given by the family of matrix valued $1$-forms $\Gamma_i^E = \sum_k x_k g_{ki}^{-1}dg_{ki} = \bigl(\begin{smallmatrix} \Gamma_i^{E'}& * \\ 0 &\Gamma_i^{E''} \end{smallmatrix}\bigr)$ with curvature $R_i^E = d\Gamma_i^E + (\Gamma_i^E)^2 = \bigl(\begin{smallmatrix} R_i^{E'} & *\\ 0 & R_i^{E''}\end{smallmatrix}\bigr).$ Thus $\Ch_n(\Gamma^E) = \frac{1}{n!}\Tr((R^E_i)^n) = \frac{1}{n!} \left(\Tr((R_i^{E'})^n) + \Tr((R_i^{E''})^n)\right) = \Ch_n(\Gamma^{E'}) + \Ch_n(\Gamma^{E''})$ and the claim follows.
\end{proof}

\section{Secondary classes} 
\label{sec:SecondaryClasses}

In this section we construct secondary characteristic classes for holomorphic $\GL_r(\C)$-bundles, which are trivialized as topological bundles. These classes will be of fundamental interest in the following, since they appear in the construction of the relative Chern character on $K$-theory. The idea to consider these secondary classes and their construction is due to Karoubi.
The main technical tool is the homotopy operator from de Rham cohomology:
\begin{lemma}
Let $Y$ be an arbitrary complex (or differentiable) manifold. Let $K : \A^k(Y\times \C) \to \A^{k-1}(Y)$ be the standard homotopy operator of de Rham cohomology given by $\omega \mapsto \int_0^1 (i_{\del/\del t}\omega)dt$, where $t$ is the coordinate on $\C$. Then 
\[
 dK + Kd = i_1^* - i_0^* : \A^k(Y\times \C) \to \A^k(Y),
\]
where $i_0$ resp. $i_1 : Y \hookrightarrow Y \times \C$ denote the embeddings $y \mapsto (y,0)$ resp. $(y,1)$. 

If $f : Z \to Y$ is smooth, then $K \circ (f \times \id_{\C})^* = f^* \circ K : \A^k(Y\times \C) \to \A^{k-1}(Z)$. In particular, $K$ induces a homotopy operator $K:A^k(X_{\dot} \times \C) \to A^{k-1}(X_{\dot})$ for any simplicial manifold $X_{\dot}$, verifying the same properties.
\end{lemma}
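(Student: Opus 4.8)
The plan is to treat the three assertions in the order they are stated: the homotopy formula $dK+Kd = i_1^*-i_0^*$, its naturality, and the passage to simplicial manifolds. The first is the classical homotopy identity underlying the Poincaré lemma, the second is a routine compatibility of contraction and integration with pullback, and the last is the only step requiring genuine bookkeeping. Since $K$ involves only integration along the real segment $[0,1]\subset\R\subset\C$, I would argue for the first part exactly as for $Y\times[0,1]$: restricting along this segment, any $\omega\in\A^k(Y\times\C)$ may be written as $\alpha+dt\wedge\beta$, where $\alpha,\beta$ are $dt$-free forms whose coefficients depend on $y$ and $t$; then $i_{\del/\del t}\omega=\beta$ and $K\omega=\int_0^1\beta\,dt$. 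A short computation gives $d\omega = d_Y\alpha + dt\wedge(\del_t\alpha - d_Y\beta)$, so that $Kd\omega = \int_0^1(\del_t\alpha-d_Y\beta)\,dt$. The first summand telescopes by the fundamental theorem of calculus to $i_1^*\omega-i_0^*\omega$, and the second equals $-d_Y\!\int_0^1\beta\,dt = -dK\omega$; rearranging yields the claim. Alternatively one may integrate Cartan's formula $\mathcal L_{\del/\del t} = d\,i_{\del/\del t} + i_{\del/\del t}\,d$ along the flow in the $t$-direction, using $\tfrac{d}{dt}\,i_t^* = i_t^*\mathcal L_{\del/\del t}$.

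For naturality, let $f:Z\to Y$ be smooth. The point is that $f\times\id_{\C}$ is the identity on the $\C$-factor, hence preserves the vector field $\del/\del t$ and commutes with the projection to $\C$; therefore $(f\times\id_{\C})^*$ commutes with the contraction $i_{\del/\del t}$, and since $f$ does not involve the variable $t$, it also commutes with $\int_0^1(\cdot)\,dt$. Composing the two identities gives $K\circ(f\times\id_{\C})^* = f^*\circ K$ directly.

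Finally, to pass to a simplicial complex manifold $X_{\dot}$, I would recall that a simplicial form on the degreewise product $X_{\dot}\times\C$ is a compatible family $\omega_p\in\A^k(\Delta^p\times X_p\times\C)$. Applying $K$ in each degree with $\Delta^p\times X_p$ playing the role of $Y$ produces forms $(K\omega)_p\in\A^{k-1}(\Delta^p\times X_p)$, and the required simplicial compatibility $(\delta^i\times\id)^*(K\omega)_p = (\id\times\del_i)^*(K\omega)_{p-1}$ is precisely the naturality statement applied to the structure maps $\delta^i$ and $\del_i$, which act only on the $\Delta^p\times X_p$ factor and leave $\C$ and the $[0,1]$-direction untouched. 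Hence $K$ is well defined on $A^k(X_{\dot}\times\C)$, and because both the homotopy formula and naturality were established degreewise, they carry over to the induced operator. I do not expect a serious obstacle anywhere: the only place that genuinely needs care is this last compatibility check, and it is forced by the naturality proved in the second step.
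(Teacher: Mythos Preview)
Your proof is correct and follows the standard approach; the paper itself gives essentially no proof beyond ``standard and follows by straight forward computation'' for the homotopy formula and ``easy consequence of the defining formula'' for naturality, so your argument supplies exactly the details the paper omits. Your treatment of the simplicial case is also right and in fact more explicit than the paper, which does not spell out the degreewise compatibility check at all.
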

\begin{proof}
The first formula is standard and follows by straight forward computation. Also the naturality is an easy consequence of the defining formula.
\end{proof}

\paragraph*{Construction of secondary forms}

Let $E$ and $F$ be two topological bundles  on $X_{\dot}$, given by $g$ and $h : X_{\dot} \rightsquigarrow B_{\dot}\GL_r(\C)$ respectively, with connections $\Gamma^E$ resp. $\Gamma^F$. Assume, that $\alpha$ is a
morphism from $g$ to $h$.
There is a canonical $(2n-1)$-form $\Ch_n^{\rel}(\Gamma^E, \Gamma^F, \alpha)$, whose boundary is $\Ch_n(\Gamma^E) - \Ch_n(\Gamma^F)$. It is constructed as follows:

Let $\pi$ denote the projection $X_{\dot} \times \C \to X_{\dot}$. On $\pi^*E$ we have the connections $\pi^*\Gamma^E$ and $\pi^*\alpha^*\Gamma^F$. We can also consider the connection $\Gamma= t \pi^*\Gamma^E + (1-t)\pi^*\alpha^*\Gamma^F$ on $\pi^*E$, given by the family
\begin{equation}\label{eq:HomotopyConnection}
 \Gamma_i = t(\pi^*\Gamma^E)_i + (1-t)(\pi^*\alpha^*\Gamma^F)_i,
\end{equation}
where $t$ is the coordinate on $\C$. It is easy to see, that this family indeed defines a connection. Then $\Ch_n(\Gamma)$ is a closed $2n$-form on $X_{\dot} \times \C$ and we define 
\[ 
\Ch_n^{\rel}(\Gamma^E, \Gamma^F, \alpha) := K\left(\Ch_n(\Gamma)\right).
\]
\begin{lemma}\label{lem:RelChernCharForm}
 $d\Ch_n^{\rel}(\Gamma^E, \Gamma^F, \alpha) = \Ch_n(\Gamma^E) - \Ch_n(\Gamma^F).$
\end{lemma}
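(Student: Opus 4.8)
The plan is to apply the homotopy formula from the preceding lemma to the closed $2n$-form $\Ch_n(\Gamma)$ on $X_{\dot} \times \C$, where $\Gamma = t\pi^*\Gamma^E + (1-t)\pi^*\alpha^*\Gamma^F$ is the interpolating connection on $\pi^*E$. By definition $\Ch_n^{\rel}(\Gamma^E, \Gamma^F, \alpha) = K(\Ch_n(\Gamma))$, so I would compute $d$ of this directly using the identity $dK + Kd = i_1^* - i_0^*$ established in the homotopy operator lemma.

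First I would note that $\Ch_n(\Gamma)$ is a \emph{closed} $2n$-form on $X_{\dot} \times \C$: this is exactly part (i) of Proposition~\ref{prop:CharClasses} applied to the connection $\Gamma$ on the bundle $\pi^*E$ over $X_{\dot}\times \C$. Hence $d\Ch_n(\Gamma) = 0$, and the homotopy formula collapses to
\[
d\,\Ch_n^{\rel}(\Gamma^E, \Gamma^F, \alpha) = d\,K(\Ch_n(\Gamma)) = (dK + Kd)(\Ch_n(\Gamma)) = i_1^*\Ch_n(\Gamma) - i_0^*\Ch_n(\Gamma).
\]
Here $Kd$ contributes nothing precisely because of closedness, which is the one place the hypotheses of the proposition are really used.

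Next I would identify the two boundary terms by restricting the interpolating connection along the embeddings $i_0, i_1 : X_{\dot} \hookrightarrow X_{\dot}\times\C$. Since $t=1$ on the image of $i_1$, the pullback $i_1^*\Gamma$ is just $\Gamma^E$ (up to the canonical identification $i_1^*\pi^* = \id$), so $i_1^*\Ch_n(\Gamma) = \Ch_n(\Gamma^E)$ by the evident compatibility of the Chern character form with pullback along $i_1$ (a special case of Proposition~\ref{prop:CharClasses}(v)). Likewise $t=0$ on the image of $i_0$, so $i_0^*\Gamma = \alpha^*\Gamma^F$, giving $i_0^*\Ch_n(\Gamma) = \Ch_n(\alpha^*\Gamma^F)$; and by Proposition~\ref{prop:CharClasses}(iv) the morphism $\alpha$ does not change the Chern character form, so $\Ch_n(\alpha^*\Gamma^F) = \Ch_n(\Gamma^F)$. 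Substituting these two identifications yields exactly $d\,\Ch_n^{\rel}(\Gamma^E, \Gamma^F, \alpha) = \Ch_n(\Gamma^E) - \Ch_n(\Gamma^F)$.

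There is no serious obstacle here; the argument is the standard Chern--Simons transgression packaged through Dupont's simplicial de Rham formalism. The only points requiring a little care are bookkeeping: checking that $\Gamma$ genuinely is a connection on $\pi^*E$ (already asserted in the construction), that the homotopy operator $K$ commutes with the simplicial structure so that it really lands in $A^{2n-1}(X_{\dot})$, and that the restrictions of $\Gamma$ at $t=0,1$ agree on the nose with $\alpha^*\Gamma^F$ and $\Gamma^E$ rather than merely up to homotopy. These are all immediate from the definitions in \eqref{eq:HomotopyConnection} and Remark~\ref{rem:FunctorialityConnection}, so the proof is essentially a one-line application of $dK+Kd = i_1^* - i_0^*$ to a closed form.
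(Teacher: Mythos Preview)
Your proposal is correct and follows essentially the same approach as the paper: apply the homotopy identity $dK+Kd=i_1^*-i_0^*$ to the closed form $\Ch_n(\Gamma)$, then identify the restrictions $i_1^*\Gamma=\Gamma^E$ and $i_0^*\Gamma=\alpha^*\Gamma^F$, and finally invoke $\Ch_n(\alpha^*\Gamma^F)=\Ch_n(\Gamma^F)$. The paper's proof is just the compressed four-line version of exactly what you wrote.
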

\begin{proof}
Since $\Ch_n(\Gamma)$ is closed we have
\begin{eqnarray*}
dK\Ch_n(\Gamma) &=& i_1^*\Ch_n(\Gamma) - i_0^*\Ch_n(\Gamma) \\
&=& \Ch_n(i_1^*\Gamma) - \Ch_n(i_0^*\Gamma)\\
&=& \Ch_n(\Gamma^E) - \Ch_n(\alpha^*\Gamma^F)\\
&=& \Ch_n(\Gamma^E) - \Ch_n(\Gamma^F). %\qedhere
\end{eqnarray*}
\end{proof}

This lemma shows in particular, that the class of $\Ch_n(\Gamma^E)$ in $H^{2n}(A^*(X_{\dot})) = H^{2n}(X_{\dot}, \C)$ only depends on the isomorphism class of $E$ and not on the particular connection chosen, thus proving proposition \ref{prop:CharClasses} (ii).  To prove the independence of the holomorphic connection in part (iii) of this proposition we need the following
\begin{lemma}\label{lem:ChRelHolom}
Let $E$ and $F$ be two holomorphic bundles on $X_{\dot}$ with holomorphic connections $\Gamma^E$ resp. $\Gamma^F$, and let $\alpha : E \to F$ be a holomorphic morphism. Then $\Ch_n^{\rel}(\Gamma^E, \Gamma^F, \alpha) \in \Fil^nA^{2n-1}(X_{\dot})$.
\end{lemma}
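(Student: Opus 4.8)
The plan is to keep track of the Hodge type of every form occurring in the definition $\Ch_n^{\rel}(\Gamma^E,\Gamma^F,\alpha)=K(\Ch_n(\Gamma))$ and to check that the homotopy operator $K$ does not decrease the holomorphic degree \emph{in the $X_{\dot}$-direction}, i.e. the index $l$ in the decomposition into types $(k,l,m)$ from the discussion preceding Theorem \ref{thm:filteredDupont}. The first step is to observe that $\alpha^*\Gamma^F$ is again a holomorphic connection. Indeed, since $\alpha\colon E\to F$ is a holomorphic morphism its components $\alpha_i\colon X_p\to\GL_r(\C)$ are holomorphic, so $d\alpha_i=\del_X\alpha_i$ is of type $(0,1,0)$; combined with $\Gamma^F_i\in\A^{0,1,0}$ and the formula $(\alpha^*\Gamma^F)_i=\alpha_i^{-1}d\alpha_i+\alpha_i^{-1}\Gamma^F_i\alpha_i$ of Remark \ref{rem:FunctorialityConnection}(i), this gives $(\alpha^*\Gamma^F)_i\in\A^{0,1,0}(\Delta^p\times X_p;\Mat_r(\C))$.

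Next I would analyse the homotopy connection $\Gamma=t\,\pi^*\Gamma^E+(1-t)\,\pi^*\alpha^*\Gamma^F$ on $\pi^*E$ over $X_{\dot}\times\C$. Each $\Gamma_i$ is a combination, with the smooth coefficients $t$ and $1-t$, of the $(0,1,0)$-forms $\pi^*\Gamma^E_i$ and $\pi^*(\alpha^*\Gamma^F)_i$; hence $\Gamma_i$ again contains only $d\zeta_j$-differentials from $X_p$ and no $dx$, $d\bar\zeta_j$ or $dt$. Exactly as in the proof of Proposition \ref{prop:CharClasses}(iii), the curvature $R_i=d\Gamma_i+\Gamma_i^2$ then has $X_{\dot}$-holomorphic degree $l\geq 1$ in each of its components: the parts $\del_X\Gamma_i$ and $\Gamma_i^2$ have $l=2$, while $d_\Delta\Gamma_i$, $\bar\del_X\Gamma_i$ and the single new term $dt\wedge\del_t\Gamma_i$ (coming from the $t$-dependence of the coefficients) all have $l=1$ — note that the factor $dt$ contributes to the $\C$-direction, not to $l$. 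Consequently every monomial of $\Ch_n(\Gamma)=\tfrac1{n!}\Tr(R_i^n)$ is a product of $n$ such entries and therefore has $l\geq n$.

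Finally I would verify that $K=\int_0^1 i_{\del/\del t}(\,\cdot\,)\,dt$ preserves this lower bound on $l$. The operator $K$ only contracts against the vector field $\del/\del t$ on the $\C$-factor and integrates in the $t$-variable; since $i_{\del/\del t}$ annihilates $dx$, $d\zeta_j$ and $d\bar\zeta_j$, it kills every monomial with no $dt$ and, on a monomial carrying $dt$, it removes this $dt$ and integrates its coefficient over $[0,1]$ without touching the $d\zeta_j$-part. Hence the index $l$ of every surviving monomial is unchanged and still $\geq n$, so $\Ch_n^{\rel}(\Gamma^E,\Gamma^F,\alpha)=K(\Ch_n(\Gamma))\in\Fil^nA^{2n-1}(X_{\dot})$, which is the assertion.

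The type computation for the curvature is routine; the step that requires care, and which I regard as the main obstacle, is the last one. Its content is that the filtration index $l$ counts only the holomorphic differentials coming from $X_{\dot}$, whereas $K$ operates purely in the transverse $\C$-direction, so that contraction with $\del/\del t$ and integration in $t$ can never trade a $d\zeta_j$ for anything else. One must in particular keep the $\C$-direction differential $dt$ out of the bookkeeping for $l$: the only monomials of $\Ch_n(\Gamma)$ surviving $K$ are those containing a $dt$, and such a $dt$ can arise only from the factor $dt\wedge\del_t\Gamma_i$, whose remaining $X$-part already has $l=1$, so that together with the other $n-1$ curvature factors the total index is still forced to be $\geq n$.
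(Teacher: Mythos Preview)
Your proof is correct and follows essentially the same approach as the paper: the paper introduces an auxiliary filtration $\Fil'^i\A^*(\Delta^p\times X_p\times\C)$ by the holomorphic $X$-degree $|J|$, observes that it is multiplicative, stable under $d$, and mapped by $K$ into $\Fil^i\A^*(\Delta^p\times X_p)$, so that $\Gamma_i\in\Fil'^1$ immediately gives $\Tr(R_i^n)\in\Fil'^n$ and hence the result. Your explicit type-by-type analysis of the components of $d\Gamma_i$ and of the action of $K$ is the same argument unpacked.
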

\begin{proof}
Let $\Fil'^i\A^*(\Delta^p \times X_p \times \C)$ denote the subcomplex of forms, which are locally of the form
\[
\sum_{I, J, K, l, m, |J| \geq i} f_{I,J,K,l,m} dx_I \wedge d\zeta_J \wedge d\bar\zeta_K \wedge dt^l \wedge d\bar t^m,
\]
where $x_0, \dots, x_p$ are the barycentric coordinates on $\Delta^p$, the $\zeta_j$ are holomorphic local coordinates on $X_p$, $t$ is the coordinate on $\C$, $I, J, K$ are multiindices, $dx_I = dx_{i_1}\dots dx_{i_r}$, etc., and $l,m \in \{0,1\}$. The wedge product of forms maps $\Fil'^i \times \Fil'^j$ to $\Fil'^{i+j}$ and the homotopy operator $K$ maps $\Fil'^i\A^k(\Delta^p \times X_p \times \C)$ to $\Fil^i\A^k(\Delta^p \times X_p)$. 

Since $\alpha$ is holomorphic, so is the connection $\alpha^*\Gamma^F$ (cf. the formula in remark \ref{rem:FunctorialityConnection}). Obviously, the matrices
\[
 \Gamma_i = t(\pi^*\Gamma^E)_i + (1-t)(\pi^*\alpha^*\Gamma^F)_i
\]
belong to $\Fil'^1\A^1(\Delta^p \times X_p \times \C; \Mat_r(\C))$, hence the forms $\Tr(R_i^n)$ live in $\Fil'^n\A^{2n}(\Delta^p\times X_p \times \C)$ and $K(\Tr(R_i^n)) \in \Fil^n\A^{2n-1}(\Delta^p\times X_p)$ and the claim follows.
\end{proof}

We have the following naturality property for secondary forms:
Let again $E,F$ be topological bundles on $X_{\dot}$ with connections $\Gamma^E$ resp. $\Gamma^F$ and in addition let $f: Y_{\dot} \rightsquigarrow X_{\dot}$ be a topological morphism. Then we can consider the pullbacks $f^*E, f^*F$ with connections $f^*\Gamma^E, f^*\Gamma^F$ and the morphism $f^*\alpha: E \to F$ given by $\alpha \circ f : Y_{\dot} \to E_{\dot}\GL_r(\C)$.
\begin{lemma} \label{lem:PullbackChRel}
$\Ch_n^{\rel}(f^*\Gamma^{E}, f^*\Gamma^{F}, f^*\alpha) = f^*\Ch_n^{\rel}(\Gamma^E, \Gamma^F,\alpha)$.
\end{lemma}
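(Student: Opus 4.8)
The plan is to reduce everything to the naturality properties already established for the primary objects. Write $\pi\colon X_{\dot}\times\C\to X_{\dot}$ and $\pi_Y\colon Y_{\dot}\times\C\to Y_{\dot}$ for the two projections, and recall that by definition $\Ch_n^{\rel}(\Gamma^E,\Gamma^F,\alpha)=K(\Ch_n(\Gamma))$ with $\Gamma=t\,\pi^*\Gamma^E+(1-t)\pi^*\alpha^*\Gamma^F$, and similarly on the $Y$-side $\Ch_n^{\rel}(f^*\Gamma^E,f^*\Gamma^F,f^*\alpha)=K(\Ch_n(\Gamma'))$ with $\Gamma'=t\,\pi_Y^*f^*\Gamma^E+(1-t)\pi_Y^*(f^*\alpha)^*(f^*\Gamma^F)$. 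The topological morphism $f$ extends to a topological morphism $f\times\id_{\C}\colon Y_{\dot}\times\C\rightsquigarrow X_{\dot}\times\C$ (componentwise $(t_\Delta,y,t)\mapsto(f_p(t_\Delta,y),t)$), which is the identity on the $\C$-factor and satisfies $\pi\circ(f\times\id_{\C})=f\circ\pi_Y$.

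First I would establish the single key identity $(f\times\id_{\C})^*\Gamma=\Gamma'$. Since $f\times\id_{\C}$ fixes the coordinate $t$ and intertwines the projections as above, pulling back the defining formula for $\Gamma$ gives $(f\times\id_{\C})^*\Gamma=t\,\pi_Y^*f^*\Gamma^E+(1-t)\pi_Y^*f^*(\alpha^*\Gamma^F)$. Comparing with $\Gamma'$, the whole statement then hinges on the compatibility
\[
f^*(\alpha^*\Gamma^F)=(f^*\alpha)^*(f^*\Gamma^F),
\]
i.e. that pulling a connection back along a bundle morphism and then along a topological morphism agrees with first pulling it back along the topological morphism. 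This I would check by direct computation from the two formulas in remark \ref{rem:FunctorialityConnection}: writing $F_p=(\mathrm{pr}_{\Delta^p},f_p)$, one has $(f^*\alpha)_i=F_p^*\alpha_i$ and $(f^*\Gamma^F)_i=F_p^*\Gamma^F_i$, and since $F_p^*$ commutes with $d$, with matrix products, and with inversion, both sides equal $F_p^*(\alpha_i^{-1}d\alpha_i+\alpha_i^{-1}\Gamma^F_i\alpha_i)$.

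With $(f\times\id_{\C})^*\Gamma=\Gamma'$ in hand, I would apply the naturality of the Chern character forms, proposition \ref{prop:CharClasses}(v), to the topological morphism $f\times\id_{\C}$ to obtain $\Ch_n(\Gamma')=\Ch_n((f\times\id_{\C})^*\Gamma)=(f\times\id_{\C})^*\Ch_n(\Gamma)$. Finally I would invoke the naturality of the homotopy operator $K$: because $f\times\id_{\C}$ acts as the identity in the $\C$-direction in which $K$ integrates, the relation $K\circ(f\times\id_{\C})^*=f^*\circ K$ holds on simplicial forms (the simplicial version of the naturality in the homotopy-operator lemma above, applied in each degree via $F_p\times\id_{\C}$). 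Combining,
\[
f^*\Ch_n^{\rel}(\Gamma^E,\Gamma^F,\alpha)=f^*K(\Ch_n(\Gamma))=K\big((f\times\id_{\C})^*\Ch_n(\Gamma)\big)=K(\Ch_n(\Gamma'))=\Ch_n^{\rel}(f^*\Gamma^E,f^*\Gamma^F,f^*\alpha).
\]

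The only genuinely computational point, and the place where I expect to have to be careful, is the compatibility $f^*(\alpha^*\Gamma^F)=(f^*\alpha)^*(f^*\Gamma^F)$; everything else is formal bookkeeping with the already-proven naturalities of $\Ch_n$ and of $K$. A secondary point to verify is that the naturality of $K$, stated in the lemma for honest smooth maps, passes to topological morphisms, but this is immediate since the construction of $K$ and of $f\times\id_{\C}$ are compatible in each simplicial degree and leave the $\C$-factor untouched.
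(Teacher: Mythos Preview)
Your proposal is correct and follows essentially the same route as the paper: reduce to the identity $(f\times\id_{\C})^*\Gamma=\Gamma'$, which boils down to the compatibility $f^*(\alpha^*\Gamma^F)=(f^*\alpha)^*(f^*\Gamma^F)$ checked componentwise via remark~\ref{rem:FunctorialityConnection}, and then invoke the naturality of $\Ch_n$ and of $K$. The paper singles out exactly the same computational point as the only nontrivial step.
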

\begin{proof}
Denote the projections $Y_{\dot} \times \C \to Y_{\dot}$ and $X_{\dot} \times \C \to X_{\dot}$ by $\pi_Y$ and $\pi_X$ respectively. Then
\begin{eqnarray*}
 &&f^*\Ch_n^{\rel}(\Gamma^E, \Gamma^F, \alpha) =\\
&=& f^*K\left(\Ch_n(t\pi_X^*\Gamma^E + (1-t)\pi_X^*(\alpha^*\Gamma^F))\right)\\
&=& K\left( (f\times\id_{\C})^*\Ch_n(t\pi_X^*\Gamma^E + (1-t)\pi_X^*(\alpha^*\Gamma^F))\right)\\
&=& K\left( \Ch_n(t(f\times\id_{\C})^*\pi_X^*\Gamma^E + (1-t)(f\times\id_{\C})^*\pi_X^*(\alpha^*\Gamma^F))  \right)\\
&=& K\left( \Ch_n(t\pi_Y^*f^*\Gamma^E + (1-t)\pi_Y^*f^*(\alpha^*\Gamma^F)) \right)\\
&=& K\left( \Ch_n(t\pi_Y^*f^*\Gamma^E + (1-t)\pi_Y^*((f^*\alpha)^*(f^*\Gamma^F))) \right)\\
&=& \Ch_n^{\rel}(f^*\Gamma^E, f^*\Gamma^F, f^*\alpha).
\end{eqnarray*}
Here we used the fact, that $f^*(\alpha^*\Gamma^F) = (f^*\alpha)^*(f^*\Gamma^F)$. In fact, the connection on the left hand side is given by 
$
 (\mathrm{pr}_{\Delta^p}, f_p)^*(\alpha_i^{-1}d\alpha_i + \alpha_i^{-1}\Gamma^F_i\alpha_i),
$
which is equal to $(f^*\alpha)_i^{-1} d(f^*\alpha)_i + (f^*\alpha)_i^{-1}(f^*\Gamma^F)_i (f^*\alpha)_i$, that is, to the family defining the connection on the right hand side.
\end{proof}

\paragraph*{Secondary classes}
Now let $E$ and $F$ be two \emph{holomorphic} bundles on $X_{\dot}$ and $\alpha : E \to F$ a morphism of the underlying \emph{topological} bundles. Choose holomorphic connections $\Gamma^E$ and $\Gamma^F$ on $E$ and $F$, respectively. Then $\Ch_n^{\rel}(\Gamma^E, \Gamma^F, \alpha)$ is closed modulo $\Fil^nA^{*}(X_{\dot})$ and we can consider its cohomology class in $H^{2n-1}(A^*(X_{\dot})/\Fil^nA^*(X_{\dot})) = \Hyp^{2n-1}(X_{\dot}, \Omega^{<n}_{X_{\dot}})$.
\begin{propn}\label{prop:RelativeClasses}
The class of $\Ch_n^{\rel}(\Gamma^E, \Gamma^F, \alpha)$ in $\Hyp^{2n-1}(X_{\dot}, \Omega^{<n}_{X_{\dot}})$ does not depend on the holomorphic connections chosen. We denote this class by $\Ch_n^{\rel}(E, F, \alpha)$.
\end{propn}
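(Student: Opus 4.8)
The plan is to run a two-parameter homotopy argument, the extra parameter serving to interpolate between the two choices of holomorphic connection. First I would record a preliminary observation: connections on a fixed bundle form an affine space, since condition (ii) of Definition \ref{def:Connection} is affine and condition (i) is linear, and an affine combination with holomorphic coefficients of holomorphic connections is again holomorphic. Hence, given two pairs of holomorphic connections $(\Gamma^E_0,\Gamma^F_0)$ and $(\Gamma^E_1,\Gamma^F_1)$, the families $\Gamma^E_s := (1-s)\Gamma^E_0 + s\Gamma^E_1$ and $\Gamma^F_s := (1-s)\Gamma^F_0 + s\Gamma^F_1$ are holomorphic connections on the pullbacks of $E$ and $F$ to $X_{\dot}\times\C$, where $s$ is the coordinate on the new factor $\C$.

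Next I would work on $X_{\dot}\times\C_t\times\C_s$ and form the connection $\tilde\Gamma = t\,\Gamma^E_s + (1-t)\,\alpha^*\Gamma^F_s$ on the pullback of $E$, exactly as in the construction of the secondary form but now with the connections depending on $s$. Writing $K_t$ and $K_s$ for the homotopy operator $K$ applied in the $\C_t$- resp. $\C_s$-direction, set $\eta := K_t\,\Ch_n(\tilde\Gamma)$, a form on $X_{\dot}\times\C_s$. Since $\Ch_n(\tilde\Gamma)$ is closed, the homotopy formula for $K_t$ (as in Lemma \ref{lem:RelChernCharForm}), together with Proposition \ref{prop:CharClasses}(iv) to identify the restriction at $t=0$, gives
\[
d\eta = \Ch_n(\Gamma^E_s) - \Ch_n(\Gamma^F_s).
\]
Both terms on the right are Chern character forms of \emph{holomorphic} connections, hence lie in $\Fil'^n$, the filtration counting holomorphic degree in the $X$-direction only, as in the proof of Lemma \ref{lem:ChRelHolom}; thus $d\eta \in \Fil'^n A^{2n}(X_{\dot}\times\C_s)$. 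Note that $\eta$ itself is \emph{not} in $\Fil'^n$, because $\alpha^*\Gamma^F_s$ fails to be holomorphic, $\alpha$ being only a topological morphism; this is exactly why the secondary class is interesting, and the point at which I must resist applying Lemma \ref{lem:ChRelHolom} directly to $\eta$.

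Finally I would apply the homotopy formula for $K_s$ to $\eta$. Because restriction to $s=\epsilon$ commutes with $K_t$ (naturality of $K$) and with the formation of Chern forms (Proposition \ref{prop:CharClasses}(v), restriction being pullback along the morphism $i_{s=\epsilon}$), the endpoints are $i_{s=1}^*\eta = \Ch_n^{\rel}(\Gamma^E_1,\Gamma^F_1,\alpha)$ and $i_{s=0}^*\eta = \Ch_n^{\rel}(\Gamma^E_0,\Gamma^F_0,\alpha)$, so that
\[
\Ch_n^{\rel}(\Gamma^E_1,\Gamma^F_1,\alpha) - \Ch_n^{\rel}(\Gamma^E_0,\Gamma^F_0,\alpha) = d(K_s\eta) + K_s(d\eta).
\]
Since $K_s$ maps $\Fil'^n$ to $\Fil^n$ (recorded in the proof of Lemma \ref{lem:ChRelHolom}), the correction term $K_s(d\eta)$ lies in $\Fil^n A^{2n-1}(X_{\dot})$; hence modulo $\Fil^n$ the difference equals the coboundary $d(K_s\eta)$ in the quotient complex $A^*(X_{\dot})/\Fil^nA^*(X_{\dot})$, and the two secondary forms represent the same class in $H^{2n-1}(A^*(X_{\dot})/\Fil^nA^*(X_{\dot})) = \Hyp^{2n-1}(X_{\dot},\Omega^{<n}_{X_{\dot}})$. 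I expect the only genuinely delicate step to be this filtration bookkeeping: one must verify that the endpoint forms $\Ch_n(\Gamma^E_s),\Ch_n(\Gamma^F_s)$ and the correction $K_s(d\eta)$ land in the correct filtration step even though the intermediate form $\eta$ does not, and for this it is essential to use the $X$-only filtration $\Fil'$ and its compatibility with $K$ established in Lemma \ref{lem:ChRelHolom}.
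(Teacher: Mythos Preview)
Your argument is correct and is essentially the same two-parameter homotopy as in the paper: both form the connection $t\,\Gamma^E_s + (1-t)\,\alpha^*\Gamma^F_s$ on $X_{\dot}\times\C_t\times\C_s$, apply $K_t$ then $K_s$, and use the homotopy formula together with Lemma~\ref{lem:ChRelHolom} to see that the correction term lands in $\Fil^n$. The only cosmetic difference is that the paper identifies $K_s(d\eta)$ explicitly as $\Ch_n^{\rel}(\Gamma^E_1,\Gamma^E_0,\id)-\Ch_n^{\rel}(\alpha^*\Gamma^F_1,\alpha^*\Gamma^F_0,\id)$ and then invokes Lemma~\ref{lem:ChRelHolom} on each summand (after the small identity $\Ch_n^{\rel}(\alpha^*\widetilde\Gamma^F,\alpha^*\Gamma^F,\id)=\Ch_n^{\rel}(\widetilde\Gamma^F,\Gamma^F,\id)$), whereas you bypass this identification by passing from $\Ch_n(\alpha^*\Gamma^F_s)$ to $\Ch_n(\Gamma^F_s)$ via Proposition~\ref{prop:CharClasses}(iv) first and then appealing directly to the filtration property $K_s\colon\Fil'^n\to\Fil^n$.
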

\begin{proof}
Let $\widetilde\Gamma^E$ and $\widetilde\Gamma^F$ be other holomorphic connections on $E$ resp. $F$.
Denote by $\pi$ the projection $X_{\dot} \times \C \times \C \to X_{\dot}$, by $s,t$ the variables on $\C\times \C$ and consider the connection 
\[
\Gamma_{s,t} = (1-s)((1-t)\pi^*\Gamma^E + t \pi^*\alpha^*\Gamma^F) + s ((1-t)\pi^*\widetilde\Gamma^E + t\pi^*\alpha^*\widetilde\Gamma^F)
\]
on $\pi^*E$. Denote by $K_s$ resp. $K_t$ the homotopy operators with respect to $s$ resp. $t$ and consider the form
$K_s\circ K_t(\Ch_n(\Gamma_{s,t}))$. Then
\begin{eqnarray*}
&&d(K_s\circ K_t(\Ch_n(\Gamma_{s,t})))=\\
&=&  K_t(\Ch_n(\Gamma_{s,t}))|_{s=1} - K_t(\Ch_n(\Gamma_{s,t}))|_{s=0} - K_s(dK_t(\Ch_n(\Gamma_{s,t})))\\
&=& \Ch_n^{\rel}(\widetilde\Gamma^E, \widetilde\Gamma^F, \alpha) - \Ch_n^{\rel}(\Gamma^E, \Gamma^F, \alpha) - K_s(\Ch_n(\Gamma_{s,1}) - \Ch_n(\Gamma_{s,0}))\\
&=& \Ch_n^{\rel}(\widetilde\Gamma^E, \widetilde\Gamma^F, \alpha) - \Ch_n^{\rel}(\Gamma^E, \Gamma^F, \alpha) \\
&&- \Ch_n^{\rel}(\widetilde\Gamma^E, \Gamma^E, \id) + \Ch_n^{\rel}(\alpha^*\widetilde\Gamma^F, \alpha^*\Gamma^F, \id).
\end{eqnarray*}
Using remark \ref{rem:FunctorialityConnection} (i), it is not hard to see, that $\Ch_n^{\rel}(\alpha^*\widetilde\Gamma^F, \alpha^*\Gamma^F, \id) =\Ch_n^{\rel}(\widetilde\Gamma^F, \Gamma^F, \id)$. Then it follows from lemma \ref{lem:ChRelHolom}, that the last two summands above lie in $\Fil^n A^{2n-1}(X_{\dot})$, thus proving the claim.
\end{proof}

\paragraph*{Topologically trivialized bundles}
Now we specialize to the case, of a $\GL_r(\C)$-bundle together with a trivialization of the underlying topological bundle. On any simplicial manifold $Y_{\dot}$, we denote by $T$ or $T_r$ the trivial $\GL_r(\C)$-bundle, given by the constant map $1:Y_{\dot} \to \{1\} \in B_{\dot}\GL_r(\C)$.

\smallskip

Let $X_{\dot}$ be a simplicial complex manifold and $E/X_{\dot}$ a topological $\GL_r(\C)$-bundle given by $g: X_{\dot} \rightsquigarrow B_{\dot}\GL_r(\C)$. Then the morphisms of $\GL_r(\C)$-bundles $\alpha : T \to E$ are exactly the topological morphims $\alpha : X_{\dot} \rightsquigarrow E_{\dot}\GL_r(\C)$, such that $\alpha\cdot 1 = g$, or, equivalently, $p\circ\alpha=g$, where $p: E_{\dot}\GL_r(\C) \to B_{\dot}\GL_r(\C)$ denotes the projection.

This may be reformulated as follows:
The (holomorphic) $\GL_r(\C)$-bundle $p^*E^{\univ}$ given by $p : E_{\dot}\GL_r(\C) \to B_{\dot}\GL_r(\C)$ together with the trivialization $\id: E_{\dot}\GL_r(\C) \to E_{\dot}\GL_r(\C)$ is universal for bundles $E$ together with a trivialization $\alpha: T \to E$. In fact, if $E$ is any topological bundle on $X_{\dot}$ and $\alpha : T \to E$ a trivialization, then the pair $(E, \alpha)$ is the pullback $(\alpha^*p^*E^{\univ}, \alpha^*(\id_{E_{\dot}\GL_r(\C)}))$.

In particular, if we equip all bundles with the standard connection and write $\Ch_n^{\rel,\univ} := \Ch_n^{\rel}(\Gamma^T, \Gamma^{p^*E^{\univ}}, \id_{E_{\dot}\GL_r(\C)}) \in A^{2n-1}(E_{\dot}\GL_r(\C))$, then we have the following consequence of lemma \ref{lem:PullbackChRel}:
\begin{propn}\label{prop:ChRelUniv}
If $E/X_{\dot}$ is a topological $\GL_r(\C)$-bundle together with a trivialization $\alpha : T \to E$, then
\[
 \Ch_n^{\rel}(\Gamma^T, \Gamma^E, \alpha) = \alpha^*\Ch_n^{\rel, \univ},
\]
$\Gamma^T, \Gamma^E$ denoting the standard connections on $T, E$ respectively.
\end{propn}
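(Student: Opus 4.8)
The plan is to deduce this directly from the universal property of the pair $(p^*E^{\univ}, \id_{E_{\dot}\GL_r(\C)})$ recorded just above, together with the pullback formula for secondary forms (Lemma \ref{lem:PullbackChRel}) and the compatibility of the standard connection with pullback (Remark \ref{rem:FunctorialityConnection}(ii)). There is no substantial new computation involved; the statement is a formal consequence once the bookkeeping of the various pullbacks is carried out correctly.

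First I would read $\alpha$ as a topological morphism $\alpha: X_{\dot} \rightsquigarrow E_{\dot}\GL_r(\C)$ and recall from the preceding discussion that the pair $(E, \alpha)$ is the pullback along $\alpha$ of the universal pair $(p^*E^{\univ}, \id_{E_{\dot}\GL_r(\C)})$ living over $E_{\dot}\GL_r(\C)$; here $p^*E^{\univ}$ is the bundle classified by $p: E_{\dot}\GL_r(\C) \to B_{\dot}\GL_r(\C)$, and the trivial bundle $T$ over $E_{\dot}\GL_r(\C)$ is trivialized into it by the identity morphism.

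Next I would apply Lemma \ref{lem:PullbackChRel} with $f = \alpha$, with the two bundles being $T$ and $p^*E^{\univ}$ over $E_{\dot}\GL_r(\C)$, and with the morphism $\id_{E_{\dot}\GL_r(\C)}: T \to p^*E^{\univ}$. The lemma yields
\[
\Ch_n^{\rel}\!\left(\alpha^*\Gamma^T, \alpha^*\Gamma^{p^*E^{\univ}}, \alpha^*(\id_{E_{\dot}\GL_r(\C)})\right) = \alpha^*\Ch_n^{\rel,\univ}.
\]
To identify the left-hand side with $\Ch_n^{\rel}(\Gamma^T, \Gamma^E, \alpha)$ I would use three facts: the pullback $\alpha^*T$ is again the trivial bundle $T$ and $\alpha^*p^*E^{\univ}$ is $E$ (universal property); by Remark \ref{rem:FunctorialityConnection}(ii) the pullback of a standard connection is again the standard connection, so $\alpha^*\Gamma^T$ and $\alpha^*\Gamma^{p^*E^{\univ}}$ are precisely the standard connections $\Gamma^T$ and $\Gamma^E$; and finally $\alpha^*(\id_{E_{\dot}\GL_r(\C)}) = \id_{E_{\dot}\GL_r(\C)} \circ \alpha = \alpha$ by the definition of the pullback of a morphism in Lemma \ref{lem:PullbackChRel}. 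Substituting these identifications gives exactly the claimed equality.

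The only place demanding care — and the closest thing to an obstacle — is this last identification: one must check that pulling back the standard connection on $p^*E^{\univ}$ along $\alpha$ really produces the standard connection on $E$ on the nose (and not merely some gauge-equivalent connection), so that both sides are computed with the \emph{same} connection data and the result is an equality of forms rather than just of cohomology classes. This is guaranteed by combining the explicit formula $\Gamma_i = \sum_k x_k g_{ki}^{-1}dg_{ki}$ of Example \ref{ex:standard_connection} with the pullback formula $(f^*\Gamma)_i^{(p)} = (\mathrm{pr}_{\Delta^p}, f_p)^*\Gamma_i^{(p)}$ of Remark \ref{rem:FunctorialityConnection}(ii), which together show that the pullback of the standard connection is standard, exactly as asserted there.
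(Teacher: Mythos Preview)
Your proof is correct and follows exactly the approach the paper takes: the paper states the proposition as a direct consequence of Lemma~\ref{lem:PullbackChRel}, and you have simply spelled out the identifications (via the universal property and Remark~\ref{rem:FunctorialityConnection}(ii)) that make this application go through. In fact you have provided more detail than the paper itself, which leaves the verification that pulled-back standard connections are standard implicit.
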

\begin{rem}
This description will be needed in proposition \ref{prop:ComparisonOfRelClasses} to compare the class $\Ch_n^{\rel}(T,E,\alpha)$ for an algebraic bundle $E$ with a topological trivialization $\alpha$ with the class $\widetilde\Ch_n^{\rel}(T,E,\alpha)$ constructed by a completely different strategy. It will be this latter class, that can be compared with the Deligne-Beilinson Chern character class $\Ch_n^{\D}(E)$. Note, that this kind of ``universal'' description of relative Chern character forms, would not be possible in Karoubi's setting of bundles on simplicial sets.
\end{rem}

\chapter{Characteristic classes of algebraic bundles}
\label{ch:CharClassesAlg}

The heart of this chapter is the comparison of relative and Deligne-Beilinson Chern character classes in the last section. To do this, we first construct a refinement of the secondary classes of section \ref{sec:SecondaryClasses} for an \emph{algebraic} bundle on a simplicial variety $X_{\dot}$ together with a topological trivialization. These classes live in $H^{2n}(X_{\dot}, \C)/\Fil^nH^{2n}(X_{\dot},\C)$. Using the so called \emph{refined Chern character classes} constructed in section \ref{sec:RelativeChernCharClasses}, the comparison will be reduced to the comparison of (primary) Chern character classes, which is done in section \ref{sec:ChernClAlgBdl}. 
The first section recalls the definition of the Hodge filtration on the cohomology of a simplicial variety.

\section{Preliminaries}

By a \emph{variety} we will mean a smooth separated scheme of finite type over $\Spec(\C)$. We will always equip a variety with the analytic topology, and $\O_X, \Omega^*_X$ will denote the sheaves of holomorphic functions and differential forms respectively. 

Recall the following definitions and properties (see e. g. \cite[\S 3]{HodgeII}):
Let $X$ be a variety. According to Nagata and Hironaka there exists an open immersion $j : X \hookrightarrow \ol X$ in a smooth proper variety $\ol X$, such that $D = \ol X - X$ is a divisor with normal crossings. Such compactifications are called \emph{good compactifications}. The \emph{logarithmic de Rham complex}, denoted $\Omega^*_{\ol X}(\log D)$, is the smallest subcomplex of $j_*\Omega^*_X$, which contains $\Omega^*_{\ol X}$, is stable under the exterior product, and such that $df/f$ is a local section of $\Omega^*_{\ol X}(\log D)$, whenever $f$ is a local section of $j_*\O^*_X$, meromorphic along $D$.

There are quasiisomorphisms
\[
\Rhyp j_*\C \to \Rhyp j_*\Omega^*_X = j_*\Omega^*_X \leftarrow \Omega^*_{\ol X}(\log D)
\]
and in particular $H^*(X, \C) = \Hyp^*(\ol X, \Omega^*_{\ol X}(\log D))$. The complex $\Omega^*_{\ol X}(\log D)$ is filtered by the subcomplexes $\Fil^n\Omega^*_{\ol X}(\log D) := \Omega^{\geq n}_{\ol X}(\log D)$ and the natural maps $\Hyp^*(\ol X, \Fil^n\Omega^*_{\ol X}(\log D)) \to \Hyp^*(\ol X, \Omega^*_{\ol X}(\log D))$ are injective. The image of this map is by definition $\Fil^nH^*(X, \C)$, the $n$-th step of the \emph{Hodge filtration}. This definition does not depend on the chosen compactification and is functorial in $X$ \cite[(3.2.11)]{HodgeII}.

\smallskip
The cohomology of the complexes $\Fil^n\Omega^*_{\ol X}(\log D)$ may also be computed using $\mathscr C^{\infty}$-forms \cite[(3.2.3)]{HodgeII}: Write $\A^{p,q}_{\ol X}(\log D)$ for the subsheaf $\Omega^p_{\ol X}(\log D) \otimes_{\O_{\ol X}} \A^{0,q}_{\ol X}$ of $j_*\A^{p,q}_X$. This is a sheaf of $\mathscr C^{\infty}_{\ol X}$-modules, hence fine, hence acyclic for the global sections functor.
Let $\Fil^n\A^*_{\ol X}(\log D)$ be the complex of sheaves $\bigoplus_{p+q=*, p\geq n} \A^{p,q}_{\ol X}(\log D)$ (subcomplex of the total complex of $j_*\A^{*,*}_X$) and denote its global sections by $\Fil^n\A^*(\ol X, \log D)$. Then the natural map $\Fil^n\Omega^*_{\ol X}(\log D) \hookrightarrow \Fil^n\A^*_{\ol X}(\log D)$ is a quasiisomorphism and we get the isomorphisms 
\[
\Hyp^*(\ol X, \Fil^n\Omega^*_{\ol X}(\log D)) \cong H^*(\Fil^n\A^*(\ol X, \log D)) \cong \Fil^nH^*(X, \C).
\]

Now let $X_{\dot}$ be a simplicial variety. The mixed Hodge structure on $H^*(X_{\dot}, \Z)$ (in particular the Hodge filtration on $H^*(X_{\dot}, \C)$) may be constructed as follows (cf. \cite[1.2]{Soule}): Denote by $\Delta^{\mathrm{str}}$ the subcategory of the simplicial category $\Delta$ with the same objects $[p] = \{0, 1, \dots, p\}$, $p \geq 0$, but where the morphisms $[p] \to [q]$ are the \emph{strictly} increasing maps. A strict (co)simplicial object in a category $\mathscr C$ is a (covariant) contravariant functor $\Delta^{\mathrm{str}} \to \mathscr C$. Thus, a strict simplicial object is a ``simplicial object without degeneracies''. In particular, every simplicial object gives a strict simplicial object simply by restricting to the subcategory $\Delta^{\mathrm{str}}$. See the appendix \ref{app:StrictSimplicial} for facts on the cohomology of strict simplicial spaces.

For any strict simplicial variety $X_{\dot}$ one can inductively construct an open immersion $j : X_{\dot} \hookrightarrow \ol X_{\dot}$ into a proper\footnote{i.e. each $\ol X_p$ is proper over $\Spec(\C)$} strict simplicial variety $\ol X_{\dot}$, such that the complement $D_p := \ol X_p - X_p$ is a divisor with normal crossings for each $p$ \cite[1.2]{Soule}. Again we call $j$ a \emph{good compactification}.

Exactly as in \cite[(8.1.19)]{HodgeIII} (note, that (8.1.12), (8.1.13) and Th\'eor\`eme (8.1.15) of {\it loc. cit.} work equally well in the strict simplicial context) one constructs a functorial mixed Hodge structure on $H^*(X_{\dot}, \Z)$, which is independent of the chosen good compactification (cf. {\it loc. cit.} (8.3.3), in fact the verifications are even easier in the strict case).\footnote{If $j: X_{\dot}\hookrightarrow \ol X_{\dot}$ is a morphism of \emph{simplicial} varieties, this mixed Hodge structure (MHS) obviously coincides with the one constructed in {\it loc. cit.}. Hence they also coincide for general simplicial varieties (which might not admit a simplicial good compactification), as one sees considering a proper hypercovering $p: Z_{\dot} \to X_{\dot}$ as in {\it loc. cit.} (8.3.2) and noting that $p^*$ is a bijective morphism of MHSs for both possible MHSs on $H^*(X_{\dot}, \Z)$.} The Hodge filtration is again given as the image of the (injective) map $\Hyp^*(\ol X_{\dot}, \Omega^{\geq n}_{\ol X_{\dot}}(\log D_{\dot})) \hookrightarrow \Hyp^*(\ol X_{\dot}, \Omega^*_{\ol X_{\dot}}(\log D_{\dot})) = H^*(X_{\dot}, \C)$ and may be computed as the cohomology of the complex $\Tot\,\Fil^n\A^*(\ol X_{\dot}, \log D_{\dot})$.

To simplify notation we will often simply write $\Fil^n\A^*(\ol X_{\dot}, \log D_{\dot})$ and it will be clear from the context if this denotes the cosimplicial complex or the associated total complex.

\medskip

Note, that we have natural maps $\Omega^{\geq n}_{\ol X_{\dot}}(\log D_{\dot}) \to j_*\Omega^{\geq n}_{X_{\dot}}$. These induce morphisms
\[
\Hyp^*(\ol X_{\dot}, \Omega^{\geq n}_{\ol X_{\dot}}(\log D_{\dot})) = \Fil^nH^*(X_{\dot}, \C) \to \Hyp^*(X_{\dot}, \Omega^{\geq n}_{X_{\dot}}),
\]
which are obviously injective.

\begin{rem}
In the study of Chern character maps on higher $K$-theory, there naturally occur simplicial schemes of the form $X_{\dot} = X \otimes S$, where $X$ is a variety and $S$ a simplicial set. These are in general not of finite type.
Nevertheless, they admit a good compactifiaction $\ol X_{\dot}$ defined as $\ol X \otimes S$, where $X \overset{}{\hookrightarrow}  \ol X$ is a good compactification, and we can still consider the map $\Hyp^*(\ol X_{\dot}, \Omega^{\geq n}_{\ol X_{\dot}}(\log D_{\dot})) \to \Hyp^*(X_{\dot}, \Omega^*_{X_{\dot}})=H^*(X_{\dot}, \C)$. It follows from (an analogue of) lemma \ref{lemma:cohom_X_tensor_S}, that $\Hyp^k(\ol X_{\dot}, \Omega^{\geq n}_{\ol X_{\dot}}(\log D_{\dot})) \cong \bigoplus_{p+q=k} \Hom(H_p(S), \Hyp^q(\ol X, \Omega^{\geq n}_{\ol X}(\log D)))$ and similar for $H^k(X_{\dot}, \C)$, and one sees, that the above map is still injective. We will denote its image by $\Fil^nH^*(X_{\dot}, \C)$ also in this case.
\end{rem}

\begin{conv}
By a good compactification we will always mean a good compactification in the strict simplicial sense. Also, if no confusion can arise, we will denote the complement of any good compactification by $D_{\dot}$.
\end{conv}

\section{Chern classes of algebraic bundles}
\label{sec:ChernClAlgBdl}

Let $E$ be an algebraic $\GL_r(\C)$-bundle on the simplicial variety $X_{\dot}$, i.e. a morphism of simplicial varieties $g : X_{\dot} \to B_{\dot}\GL_r(\C)$. 
Since $E$ may be viewed as a holomorphic bundle, we have the classes $\Ch_n(E) \in \Hyp^{2n}(X_{\dot}, \Omega^{\geq n}_{X_{\dot}})$ constructed using Chern-Weil theory. On the other hand, one may also construct Chern (character) classes in $\Fil^nH^{2n}(X_{\dot}, \C)$ in the style of Grothendieck and Hirzebruch. We recall the construction, and show that these are (up to a sign) mapped to our classes under the natural map $\Fil^nH^{2n}(X_{\dot}, \C) \to \Hyp^{2n}(X_{\dot}, \Omega^{\geq n}_{X_{\dot}})$.

\subsection{The first Chern class of a line bundle}

Let $X$ be a complex manifold, or more generally a simplicial complex manifold. The group of isomorphism classes of \emph{holomorphic} line bundles on $X$ is $H^1(X, \O_X^*)$. 

\begin{dfn}
The first Chern class $c_1 : H^1(X, \O_X^*) \to \Hyp^2(X, \Omega^{\geq 1}_X)$ is the map on cohomology induced by the morphism of complexes $d\log : \O_X^*[-1] \to \Omega^{\geq 1}_X$.
\end{dfn}
\begin{lemma}
If $\mathscr L$ is an \emph{algebraic} line bundle on the \emph{variety} $X$, then $c_1(\mathscr L) \in \Fil^1H^2(X, \C) \subset \Hyp^2(X, \Omega^{\geq 1}_X)$.
\end{lemma}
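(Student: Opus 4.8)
The plan is to reduce everything to a good compactification $\ol X$, where the $d\log$ of the transition functions of an \emph{algebraic} line bundle is manifestly a \emph{holomorphic} $1$-form (hence lands in $\Omega^{\geq 1}$), and then to transport the resulting class in logarithmic cohomology back to $X$ via the natural map of the preliminaries.

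First I would extend $\mathscr L$ over a good compactification. Fix an open immersion $j : X \hookrightarrow \ol X$ with $\ol X$ smooth proper and $D = \ol X - X$ a normal crossings divisor. Since $X$ is smooth, $\mathscr L \cong \O_X(Z)$ for a divisor $Z$; taking the Zariski closure $\ol Z$ of $Z$ in $\ol X$ and putting $\ol{\mathscr L} := \O_{\ol X}(\ol Z)$ yields an algebraic line bundle on $\ol X$ with $j^*\ol{\mathscr L} \cong \mathscr L$, i.e.\ $\mathrm{Pic}(\ol X) \to \mathrm{Pic}(X)$ is surjective. Next I would compute the first Chern class upstairs. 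The transition functions of $\ol{\mathscr L}$ are sections of $\O_{\ol X}^*$, so the composite $\O_{\ol X}^*[-1] \xrightarrow{d\log} \Omega^{\geq 1}_{\ol X} \hookrightarrow \Omega^{\geq 1}_{\ol X}(\log D)$ carries the class $[\ol{\mathscr L}] \in H^1(\ol X, \O_{\ol X}^*)$ to a class $\tilde c \in \Hyp^2(\ol X, \Omega^{\geq 1}_{\ol X}(\log D)) = \Fil^1 H^2(X, \C)$.

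It then remains to check that $\tilde c$ maps to $c_1(\mathscr L)$ under the natural (injective) map $\Fil^1 H^2(X, \C) \to \Hyp^2(X, \Omega^{\geq 1}_X)$. This follows from the commutative square of complexes of sheaves on $\ol X$
\[
\xymatrix@C+0.8cm{
\O_{\ol X}^*[-1] \ar[r]^-{d\log} \ar[d] & \Omega^{\geq 1}_{\ol X}(\log D) \ar[d]\\
j_*\O_X^*[-1] \ar[r]^-{d\log} & j_*\Omega^{\geq 1}_X,
}
\]
whose vertical arrows are the restriction (adjunction) maps, the right one inducing precisely the natural map of the preliminaries. Applying $\Hyp^2(\ol X, -)$ and chasing $[\ol{\mathscr L}]$: along the top and right it becomes $\tilde c$ and then its image in $\Hyp^2(X, \Omega^{\geq 1}_X)$, while along the left it restricts to $[\ol{\mathscr L}]|_X = [\mathscr L]$ and then to $c_1(\mathscr L)$. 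Commutativity forces these to agree, so $c_1(\mathscr L)$ lies in the image of $\Fil^1 H^2(X, \C)$, which is the assertion.

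The main point requiring care is keeping this functoriality square honest: one must verify that $d\log$ genuinely commutes with both extension and restriction, and, more importantly, that the inclusion $\Fil^1 H^2(X, \C) \subset \Hyp^2(X, \Omega^{\geq 1}_X)$ used in the statement is exactly the map induced by $\Omega^{\geq 1}_{\ol X}(\log D) \to j_*\Omega^{\geq 1}_X$ (composed with the Leray edge map to $\Hyp^*(X, \Omega^{\geq 1}_X)$). The extension step is routine algebraic geometry but deserves a word, since the Hodge filtration is computed on $\Omega^{\geq 1}_{\ol X}(\log D)$ rather than on $j_*\Omega^{\geq 1}_X$, and it is there that the regularity of the transition functions of $\ol{\mathscr L}$ is what places the class in filtration level $\geq 1$.
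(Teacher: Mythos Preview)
Your proof is correct, but it takes a genuinely different route from the paper's.

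You extend the line bundle across a good compactification $\ol X$ using the surjectivity of $\mathrm{Pic}(\ol X)\to\mathrm{Pic}(X)$, and then observe that the transition functions of $\ol{\mathscr L}$ lie in $\O_{\ol X}^*$, so $d\log$ lands in $\Omega^{\geq 1}_{\ol X}\subset\Omega^{\geq 1}_{\ol X}(\log D)$; a commutative square transports this back to $c_1(\mathscr L)$. The paper instead replaces $X$ by the diagonal of the \v{C}ech nerve $X_{\dot}$ of a trivializing Zariski cover (invoking the covering lemmata and the fact that a bijective morphism of MHS is an isomorphism), so that $\mathscr L$ is classified by an honest map $g:X_{\dot}\to B_{\dot}\mathbb G_m$. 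The single cocycle $g^{(1)}\in\Gamma(X_1,\O_{X_1}^*)$ is then an \emph{algebraic} invertible function on an affine variety, hence meromorphic along the boundary of any good compactification of $X_1$, so $d\log(g^{(1)})$ is a logarithmic $1$-form and represents $c_1(\mathscr L)$ in $\Fil^1\A^*(\ol X_1,\log D_1)$.

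Your argument is more classical and self-contained; it avoids the simplicial machinery at the cost of the extension step and the small care you flagged about identifying the inclusion $\Fil^1H^2(X,\C)\subset\Hyp^2(X,\Omega^{\geq 1}_X)$ with the map induced by $\Omega^{\geq 1}_{\ol X}(\log D)\to Rj_*\Omega^{\geq 1}_X$. The paper's argument, by contrast, is tailored to its ambient framework: it never extends the bundle, relies only on the elementary fact that regular functions are meromorphic at infinity, and generalizes without change to the simplicial varieties that dominate the rest of the chapter.
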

\begin{proof}
By lemmata \ref{lemma:covering_iso} and \ref{lemma:open_covering} (keeping in mind, that a bijective morphism of mixed Hodge structures is an isomorphism) we may assume, that $X=X_{\dot}$ is a simplicial variety and that $\mathscr L$ is classified by a morphism of simplicial varieties $g^{(\dot)}: X_{\dot} \to B_{\dot}\mathbb G_m(\C)$. Then $g^{(1)} \in \Gamma(X_1, \O_{X_1}^*)$ represents a class in $H^1(\Gamma^*(X_{\dot}, \O_{X_{\dot}}^*))$, whose image in $H^1(X_{\dot}, \O_{X_{\dot}}^*)$ is the class of $\mathscr L$ (cf. remark \ref{rem:CohomInterpretation}). Thus $c_1(\mathscr L) \in \Hyp^2(X_{\dot}, \Omega^{\geq 1}_{X_{\dot}}) = H^2(\Tot \Fil^1\A^*(X_{\dot}))$ is the class represented by $(d\log(g^{(1)}))\oplus 0 \in \Gamma(X_1, \Omega^1_{X_1}) \oplus \Gamma(X_0, \Omega^2_{X_0}) \subset \Fil^1\A^1(X_1) \oplus \Fil^1\A^2(X_0)$. Let $\ol X_1$ be any good compactification of $X_1$, then $g^{(1)}$, being algebraic, is meromorphic along $\ol X_1 - X_1$, hence $d\log(g^{(1)}) \in \Fil^1\A^1(X_1, \log(\ol X_1 - X_1))$.
\end{proof}

With the above normalization, the first Chern class is the negative of the first Chern character class constructed in section \ref{sec:connections}:
\begin{lemma}\label{lemma:ChLineBundle}
Let $X_{\dot}$ be a simplicial complex manifold and $\mathscr L$ a holomorphic line bundle on $X_{\dot}$. Then 
\[
\Ch_1(\mathscr L_{\dot}) = -c_1(\mathscr L_{\dot})
\]
in $\Hyp^2(X_{\dot}, \Omega^{\geq 1})$.
\end{lemma}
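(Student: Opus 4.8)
The plan is to reduce to a degreewise trivial line bundle, compute the Chern--Weil form for the standard connection, and read off the sign from a single integration over $\Delta^1$.

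First I would reduce to the degreewise trivial case, exactly as in the proof of the preceding lemma. Using Lemmas \ref{lemma:covering_iso} and \ref{lemma:open_covering} one passes to an open covering $U_{\dot}\to X_{\dot}$ on the Čech-nerve-diagonal $X'_{\dot}$ of which $\mathscr{L}$ becomes degreewise trivial. Both classes are compatible with pullback along $X'_{\dot}\to X_{\dot}$: for $\Ch_1$ this is built into its definition for vector bundles, and for $c_1$ it is the functoriality of $d\log$; since this map induces an isomorphism on $\Hyp^2(-,\Omega^{\geq 1})$, it suffices to treat the degreewise trivial case. There $\mathscr{L}=\mathscr{L}_{\dot}$ is classified (Lemma \ref{lemma:degwise_trivial_vb}) by a morphism $g:X_{\dot}\to B_{\dot}\GL_1(\C)$, whose degree-one component I write $g_{01}\colon X_1\to\C^*$.

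Next I would equip the associated $\GL_1(\C)$-bundle with the standard (hence holomorphic) connection of Example \ref{ex:standard_connection}, $\Gamma_i=\sum_k x_k g_{ki}^{-1}dg_{ki}=\sum_k x_k\, d\log g_{ki}$. Because $r=1$ all forms are scalar-valued, so $\Gamma_i\wedge\Gamma_i=0$ and $R_i=d\Gamma_i$, giving $\Ch_1(\Gamma)=\Tr(R_i)=d\Gamma_i$. Each $d\log g_{ki}$ is a closed holomorphic $1$-form (type $(0,1,0)$), so the $\del_X$- and $\bar\del_X$-parts vanish and only the $\Delta$-differential survives: $d\Gamma_i=\sum_k dx_k\wedge d\log g_{ki}$, a form of pure type $(1,1,0)$ lying in $\Fil^1A^2(X_{\dot})$. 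Under Theorem \ref{thm:filteredDupont} the class $\Ch_1(\mathscr{L})\in\Hyp^2(X_{\dot},\Omega^{\geq 1})$ is then represented by $I(\Ch_1(\Gamma))$, and since $\Ch_1(\Gamma)\in A^{1,1,0}$ the map $I$ lands in $\A^{1,1,0}(X_{\dot})=\A^{1,0}(X_1)$ and is computed by integrating the degree-one component over $\Delta^1$. At $p=1$ one has $g_{00}=1$ and $g_{10}=g_{01}^{-1}$, so taking $i=0$ gives $\Ch_1(\Gamma)_1=dx_1\wedge d\log g_{10}=-dx_1\wedge d\log g_{01}$, and with $\int_{\Delta^1}dx_1=1$ this yields $I(\Ch_1(\Gamma))=-d\log g_{01}$.

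Finally I would compare this with $c_1$. By Remark \ref{rem:CohomInterpretation} and the preceding lemma, $c_1(\mathscr{L})$ is represented in $H^2(\Tot\,\Fil^1\A^*(X_{\dot}))$ by the \emph{same} function, namely $d\log g_{01}\in\A^{1,0}(X_1)$; comparing representatives gives $\Ch_1(\mathscr{L})=-c_1(\mathscr{L})$. The main obstacle here is not analytic but bookkeeping: the entire sign is produced by the interplay of the inverse convention $g_{10}=g_{01}^{-1}$ with the orientation of $\Delta^1$ in the integration formula for $I$, and the only real care needed is to verify that the representative chosen for $c_1$ via the edge morphism of Remark \ref{rem:CohomInterpretation} uses the same classifying function $g_{01}$ as the standard connection. (As a consistency check one can recompute with $i=1$, where $dx_0=-dx_1$ again produces $-dx_1\wedge d\log g_{01}$, confirming independence of $i$ as in Lemma \ref{lem:CurvatureCompatible}.)
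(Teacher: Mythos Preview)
Your argument is correct and follows essentially the same route as the paper's proof: reduce to a degreewise trivial line bundle classified by $g:X_{\dot}\to B_{\dot}\GL_1(\C)$, compute the Chern--Weil form for the standard connection, integrate over $\Delta^1$, and compare with the representative $d\log g^{(1)}$ of $c_1$ obtained in the preceding lemma. The only cosmetic differences are that you work with the index $i=0$ (the paper uses $i=1$) and that you drop the quadratic term $\Gamma_i^2$ at the outset via $r=1$, whereas the paper writes the full curvature formula (whose quadratic term vanishes anyway).
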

\begin{proof}
Again, we may assume that $\mathscr L_{\dot}$ is classified by a holomorphic morphism of simplicial manifolds $g^{(\dot)} : X_{\dot} \to B_{\dot}\mathbb G_m(\C)$. Then $\Ch_1(\mathscr L_{\dot})$ can be computed explicitely: We equip the $\mathbb G_m(\C)$-bundle $L$ classified by $g^{(\dot)}$ with the standard connection, given by the family of matrices $\Gamma_i^{(p)} = \sum_{k=0}^p x_k (g^{(p)}_{ki})^{-1}dg^{(p)}_{ki} = \sum_k x_k d\log(g^{(p)}_{ki})$, where the notations are as in section \ref{sec:connections}. The curvature is then given by $R_i^{(p)} = \sum_k dx_k d\log(g^{(p)}_{ki}) + \sum_{k,l} x_kx_l d\log(g^{(p)}_{ki}) d\log(g^{(p)}_{li})$. This form does not depend on $i$, and the first Chern character form $\Ch_1(L)$ of $L$ in $\Fil^1 A^2(X_{\dot})$ is given by the family $(\Ch_1(L)_p)_{p\geq 0} = (R_i^{(p)})_{p\geq 0}$.

The isomorphism $H^2(\Fil^1 A^*(X_{\dot})) \to H^2(\Fil^1 \A^*(X_{\dot})) = \Hyp^2(X_{\dot}, \Omega^{\geq 1}_{X_{\dot}})$ is given by $\omega = (\omega_p)_{p\geq 0} \mapsto (\int_{\Delta^1} \omega_1, \int_{\Delta^0} \omega_0) \in \Fil^1 \A^1(X_1) \oplus \Fil^1 \A^2(X_0)$.

Since $g^{(p)}_{ii}$ is the constant map $1$, $d\log(g^{(p)}_{ii})=0$ for all $p\geq 0, i=0, \dots p$ and in particular $\Ch_1(L)_0=0$. Next, $\Ch_1(L)_1= R_1^{(1)}=dx_0 d\log(g^{(1)}_{01})$, and hence
$\int_{\Delta^1} \Ch_1(L)_1 = -d\log(g^{(1)}_{01}) = -d\log(g^{(1)})$. Comparing with the computation in the proof of the last lemma, this concludes the proof.
\end{proof}

\subsection{Higher Chern classes}\label{sec:HigherChernClasses}
These are constructed in the style of Grothendieck using the splitting principle.

We begin with the case of holomorphic vector bundles on arbitrary complex manifolds. Thus let $\E$ be a holomorphic vector bundle of rank $r$ on a complex manifold $X$. Denote by $\pi : \P(\E) \to X$ the associated projective bundle   and by $\O(1)$ the tautological line bundle on $\P(\E)$. Write $\xi := c_1(\O(1)) \in \Hyp^2(\P(\E), \Omega^{\geq 1}_\P(\E))$.

\begin{lemma}\label{lem:CohomProjBundle1}
The map
\[
\bigoplus_{i=0}^{r-1} \pi^*(\underline{\;\;})\cup \xi^i : \bigoplus_{i=0}^{r-1} \Hyp^{m-2i}(X, \Omega^{\geq n-i}_X) \to \Hyp^m(\P(\E), \Omega^{\geq n}_{\P(\E)})
\]
is an isomorphism.
\end{lemma}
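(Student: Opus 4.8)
The plan is to realize the map as the hypercohomology of a single morphism of complexes of sheaves on $X$ and to prove that this morphism is an isomorphism in the derived category, a statement which can be checked locally on $X$.

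First I would pass to the derived pushforward. Writing $D^+(X)$ for the derived category of sheaves of $\C$-vector spaces on $X$ (with the analytic topology), the class $\xi = c_1(\O(1)) \in \Hyp^2(\P(\E), \Omega^{\geq 1}_{\P(\E)})$ may be regarded as a morphism $\C_{\P(\E)} \to \Omega^{\geq 1}_{\P(\E)}[2]$ in $D^+(\P(\E))$. Its $i$-th cup power, combined with the product $\Omega^{\geq a}_{\P(\E)} \otimes \Omega^{\geq b}_{\P(\E)} \to \Omega^{\geq a+b}_{\P(\E)}$ and with the pullback $\Omega^{\geq n-i}_X \to \Rhyp\pi_* \Omega^{\geq n-i}_{\P(\E)}$, yields a morphism $\Omega^{\geq n-i}_X[-2i] \to \Rhyp\pi_*\Omega^{\geq n}_{\P(\E)}$; summing over $i$ gives
\[
\Phi : \bigoplus_{i=0}^{r-1} \Omega^{\geq n-i}_X[-2i] \longrightarrow \Rhyp\pi_*\Omega^{\geq n}_{\P(\E)}.
\]
Using $\Hyp^m(\P(\E), \Omega^{\geq n}_{\P(\E)}) = \Hyp^m(X, \Rhyp\pi_*\Omega^{\geq n}_{\P(\E)})$ and the compatibility of cup products with $\Rhyp\pi_*$, the map induced by $\Phi$ on hypercohomology is exactly $\sum_i \pi^*(\underline{\;\;})\cup \xi^i$. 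So it suffices to show that $\Phi$ is an isomorphism in $D^+(X)$.

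This being local on $X$, I would choose a covering by opens $U$ over which $\E$ is trivial, so that $\pi^{-1}(U) \cong U\times \P^{r-1}$ holomorphically and $\O(1)$ restricts to the relative hyperplane bundle. On such a $U$ the filtered holomorphic de Rham complex of the product is the convolution of the two filtered factors, namely $\Omega^{\geq n}_{U\times\P^{r-1}} = \sum_{a+b=n} \mathrm{pr}_U^*\Omega^{\geq a}_U \wedge \mathrm{pr}_{\P}^*\Omega^{\geq b}_{\P^{r-1}}$, so a Künneth argument for $\Rhyp\mathrm{pr}_{U*}$ reduces the claim to the computation of $\Hyp^*(\P^{r-1}, \Omega^{\geq b}_{\P^{r-1}})$. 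Since $\P^{r-1}$ is projective, Hodge-to-de Rham degenerates and $\Hyp^k(\P^{r-1}, \Omega^{\geq b}) \hookrightarrow H^k(\P^{r-1},\C)$ has image $\Fil^b H^k$; as the Hodge numbers of $\P^{r-1}$ are concentrated in types $(i,i)$ for $0\le i\le r-1$, this group equals $\C$ when $k=2i$ with $b\le i\le r-1$ and vanishes otherwise, the nonzero group being spanned by $\xi^i$. Feeding this into the Künneth decomposition exhibits $\Rhyp\mathrm{pr}_{U*}\Omega^{\geq n}_{U\times\P^{r-1}} \cong \bigoplus_{i=0}^{r-1}\Omega^{\geq n-i}_U[-2i]$, and tracing the generators shows this isomorphism is precisely $\Phi|_U$. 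Hence $\Phi$ is an isomorphism locally, therefore globally, and the lemma follows.

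The main obstacle is the bookkeeping of the Hodge (naive) filtration through the Künneth isomorphism: one must verify that $\Omega^{\geq n}$ on the product really is the convolved filtration, and that the explicit cup-product morphism $\Phi$ — built from the global classes $\xi^i$ and from $\pi^*$ — coincides, after restriction to $U$, with the abstract Künneth splitting. The remaining ingredients (degeneration for $\P^{r-1}$, its Hodge numbers, and holomorphic local triviality of $\P(\E)$) are standard; the only delicate point is ensuring the filtration degrees match summand by summand, which is exactly why the $i$-th summand carries the shift $\Omega^{\geq n-i}_X$ against the filtration weight $i$ of $\xi^i$.
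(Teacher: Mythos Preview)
Your approach is correct and yields the same conclusion, but it differs from the paper's in how the pieces are assembled. The paper quotes Verdier's theorem, which gives the isomorphism
\[
\bigoplus_{i=0}^{r-1}\Omega^{p-i}_X[-i]\xrightarrow{\;\cong\;}\Rhyp\pi_*\Omega^p_{\P(\E)}
\]
in $D^+(X)$ for each single degree $p$, deduces the isomorphism $\bigoplus_i H^{q-i}(X,\Omega^{p-i}_X)\cong H^q(\P(\E),\Omega^p_{\P(\E)})$, and then runs the hypercohomology spectral sequences $E_1^{p,q}=H^q(\,\cdot\,,(\Omega^{\geq n})^p)\Rightarrow\Hyp^{p+q}(\,\cdot\,,\Omega^{\geq n})$ on both sides, checking (as in Gillet's Sublemma 2.5) that the map of the lemma is the abutment of a map which is an isomorphism on $E_1$. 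So the paper never asserts an isomorphism $\bigoplus_i\Omega^{\geq n-i}_X[-2i]\cong\Rhyp\pi_*\Omega^{\geq n}_{\P(\E)}$ in $D^+(X)$; it stays at the level of cohomology groups.

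You instead go for this stronger derived-category statement directly, by localizing to a trivializing open and invoking K\"unneth together with the Hodge theory of $\P^{r-1}$. That is a legitimate route --- essentially you are reproving Verdier's result and doing the spectral-sequence assembly in one stroke --- but be aware that your ``K\"unneth argument'' hides exactly the same work the paper does with the spectral sequence: $\Omega^{\geq n}_{U\times\P^{r-1}}$ is not a single external tensor product, so to compute its $\Rhyp\mathrm{pr}_{U*}$ you still have to filter by $\P$-degree and check degeneration. Your approach buys a more conceptual statement (an isomorphism in $D^+(X)$), at the cost of having to verify the compatibility between the abstract K\"unneth splitting and the concrete map $\Phi$ built from $\xi$; the paper's approach is more directly citable but slightly less transparent.
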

\begin{proof}
By abuse of notation, we still denote by $\xi$ the image of $c_1(\O(1))$ in $H^1(\P(\E), \Omega^1_{\P(\E)}) = \Hyp^1(X, \Rhyp\pi_*\Omega^1_{\P(\E)})$. In the derived category $D^+(X)$ of bounded below complexes of abelian sheaves on $X$ we have the isomorphism
\[
\bigoplus_{i=0}^{r-1} \Omega^{p-i}_X[-i] \xrightarrow{\bigoplus_{i=0}^{r-1} \pi^*(\underline{\;\;})\cup \xi^i} \Rhyp\pi_*\Omega^p_{\P(\E)}
\]
\cite[Th\'eor\`eme 2]{Verdier}. Thus we have isomorphisms $\bigoplus_{i=0}^{r-1} H^{q-i}(X, \Omega^{p-i}_X) \xrightarrow{\cong} H^q(\P(\E), \Omega^p_{\P(\E)})$. 
On the other hand, we have the hypercohomology spectral sequences
\begin{align*}
E_1^{p,q} = H^q(\P(\E), (\Omega^{\geq n}_{\P(\E)})^p) &\Longrightarrow \Hyp^{p+q}(\P(\E), \Omega^{\geq n}_{\P(\E)}) \quad\text{ and}\\
E_1^{p-i,q-i}= H^{q-i}(X, (\Omega^{\geq n-i}_X)^{p-i}) &\Longrightarrow \Hyp^{p+q-2i}(X, \Omega^{\geq n-i}).
\end{align*}
Similar as in the proof of \cite[Sublemma 2.5]{GilletRR}, one shows, that the map in the statement of the lemma is the abutment of a map of spectral sequences (after suitable reindexing), which on the $E_1$ terms is the isomorphism
\[
\bigoplus_{i=0}^{r-1} H^{q-i}(X, (\Omega^{\geq n-i}_X)^{p-i}) \xrightarrow{\cong} H^q(\P(\E), (\Omega^{\geq n}_{\P(\E)})^p),
\]
and thus is an isomorphism, too.
\end{proof}

By a spectral sequence argument as in \cite[Lemma 2.4]{GilletRR}  this extends to simplicial complex manifolds:
\begin{lemma}\label{lem:CohomProjBundle}
Let $X_{\dot}$ be a simplicial complex manifold and $\E_{\dot}$ a holomorphic vector bundle of rank $r$ on $X_{\dot}$. Then the map
\[
\sum_{i=0}^{r-1} \pi^*(\underline{\;\;})\cup \xi^i : \bigoplus_{i=0}^{r-1} \Hyp^{m-2i}(X_{\dot}, \Omega^{\geq n-i}_{X_{\dot}}) \to \Hyp^m(\P(\E_{\dot}), \Omega^{\geq n}_{\P(\E_{\dot})})
\]
is an isomorphism.
\end{lemma}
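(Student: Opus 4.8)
The plan is to deduce the statement from the single-manifold case, Lemma~\ref{lem:CohomProjBundle1}, by comparing both sides degreewise via the simplicial hypercohomology spectral sequence, exactly in the spirit of \cite[Lemma 2.4]{GilletRR}.

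First I would record the degreewise structure. Since $\P(\E_{\dot})$ and the tautological bundle $\O(1)$ are formed degreewise, $\P(\E_{\dot})$ is again a simplicial complex manifold with $\P(\E_{\dot})_p = \P(\E_p)$, the projection $\pi\colon \P(\E_{\dot}) \to X_{\dot}$ is given in degree $p$ by the ordinary projection $\pi_p\colon \P(\E_p) \to X_p$, and the class $\xi = c_1(\O(1)) \in \Hyp^2(\P(\E_{\dot}), \Omega^{\geq 1}_{\P(\E_{\dot})})$ restricts in degree $p$ to $\xi_p = c_1(\O(1)) \in \Hyp^2(\P(\E_p), \Omega^{\geq 1}_{\P(\E_p)})$.

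Next I would invoke the spectral sequence of a simplicial complex manifold (cf.\ \cite[(5.2.7)]{HodgeIII}; it is the one already used in remark~\ref{rem:CohomInterpretation}), which for a complex of sheaves $\mathscr F^*$ on a simplicial complex manifold $Y_{\dot}$ has the form $E_1^{p,q} = \Hyp^q(Y_p, \mathscr F^*|_{Y_p}) \Rightarrow \Hyp^{p+q}(Y_{\dot}, \mathscr F^*)$. Applied to $Y_{\dot} = \P(\E_{\dot})$ with $\mathscr F^* = \Omega^{\geq n}_{\P(\E_{\dot})}$ it yields a target spectral sequence with $E_1$-term $\Hyp^q(\P(\E_p), \Omega^{\geq n}_{\P(\E_p)})$; applied to each summand $\Hyp^{m-2i}(X_{\dot}, \Omega^{\geq n-i}_{X_{\dot}})$ of the source, and after the reindexing $q \mapsto q + 2i$ that places all summands in total degree $m$, it yields a source spectral sequence whose $E_1$-term is $\bigoplus_{i=0}^{r-1} \Hyp^{q-2i}(X_p, \Omega^{\geq n-i}_{X_p})$. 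Because $\pi$ is degreewise and $\xi$ is the restriction of a global simplicial class, the operation $\sum_{i} \pi^*(\underline{\;\;}) \cup \xi^i$ preserves the simplicial filtration, and by $\xi^i \in \Hyp^{2i}(-, \Omega^{\geq i})$ it also preserves the naive filtration on the coefficients, so it defines a morphism from the source to the target spectral sequence. On the $E_1$-page this morphism is, in each simplicial degree $p$, precisely the map $\bigoplus_{i=0}^{r-1} \pi_p^*(\underline{\;\;}) \cup \xi_p^i$ of Lemma~\ref{lem:CohomProjBundle1}, hence an isomorphism. By the comparison theorem for spectral sequences it is then an isomorphism on the abutments, which is the assertion.

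The only real work lies in the double bookkeeping: the cohomological shift $m \mapsto m-2i$ and the Hodge-filtration shift $n \mapsto n-i$ have to be tracked at once, so that after the reindexing the source $E_1$-page matches the target $E_1$-page term by term and the comparison map is exactly the degreewise isomorphism of Lemma~\ref{lem:CohomProjBundle1}. I expect this, together with checking that $\pi^*$ and cup product with $\xi$ genuinely respect the relevant filtrations, to be the main (though routine) obstacle. If $X_{\dot}$ is not of finite type, e.g.\ of the form $X \otimes S$, one need only note that the comparison is term by term on $E_1$ and fall back on the explicit description of the cohomology recalled earlier.
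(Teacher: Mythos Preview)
Your proposal is correct and follows essentially the same approach as the paper, which simply remarks that the result extends to simplicial complex manifolds by a spectral sequence argument as in \cite[Lemma 2.4]{GilletRR}. Your write-up just spells out the details of that argument.
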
 

Now assume that $X_{\dot}$ is a simplicial \emph{variety} and that $\E_{\dot}$ is an \emph{algebraic} vector bundle on $X_{\dot}$. Then $\xi = c_1(\O(1)) \in \Fil^1H^2(\P(\E_{\dot}), \C)$ and we have
\begin{lemma}\label{lem:HodgeFiltProjBundle}
The map
\[
\sum_{i=0}^{r-1} \pi^*(\underline{\;\;})\cup \xi^i : \bigoplus_{i=0}^{r-1} \Fil^{n-i}H^{m-2i}(X_{\dot}, \C) \to \Fil^nH^m(\P(\E_{\dot}), \C)
\]
is an isomorphism.
\end{lemma}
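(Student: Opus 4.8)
The plan is to deduce the filtered statement from the \emph{unfiltered} projective bundle isomorphism of Lemma~\ref{lem:CohomProjBundle}, together with the strictness of morphisms of mixed Hodge structures. First I would check that the map is well defined on the filtered pieces: $\pi^{*}$ preserves the Hodge filtration, and since $\xi = c_{1}(\O(1)) \in \Fil^{1}H^{2}(\P(\E_{\dot}),\C)$ and the cup product satisfies $\Fil^{a}\cup\Fil^{b}\subseteq\Fil^{a+b}$, the component $\pi^{*}(\underline{\;\;})\cup\xi^{i}$ carries $\Fil^{n-i}H^{m-2i}(X_{\dot},\C)$ into $\Fil^{(n-i)+i}H^{m}(\P(\E_{\dot}),\C)=\Fil^{n}H^{m}(\P(\E_{\dot}),\C)$. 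Thus the displayed map is the restriction to $\Fil^{n}$ of the $\C$-linear map $\Phi=\sum_{i}\pi^{*}(\underline{\;\;})\cup\xi^{i}\colon\bigoplus_{i}H^{m-2i}(X_{\dot},\C)\to H^{m}(\P(\E_{\dot}),\C)$, which by Lemma~\ref{lem:CohomProjBundle} (taken with $n=0$, so that $\Omega^{\geq -i}=\Omega^{*}$) is an \emph{isomorphism}.

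Next I would upgrade $\Phi$ to an isomorphism of mixed Hodge structures. The pullback $\pi^{*}$ is a morphism of mixed Hodge structures by functoriality, the cup product is a morphism of mixed Hodge structures, and $\xi=c_{1}(\O(1))$, being the first Chern class of an algebraic line bundle on the smooth simplicial variety $\P(\E_{\dot})$, is a Tate class, i.e.\ defines a morphism $\Q(-1)\to H^{2}(\P(\E_{\dot}),\Q)$ of mixed Hodge structures (this refines the mere inclusion $\xi\in\Fil^{1}$). Consequently each summand $\pi^{*}(\underline{\;\;})\cup\xi^{i}$ is a morphism of mixed Hodge structures $H^{m-2i}(X_{\dot})(-i)\to H^{m}(\P(\E_{\dot}))$, and hence so is $\Phi\colon\bigoplus_{i}H^{m-2i}(X_{\dot})(-i)\to H^{m}(\P(\E_{\dot}))$. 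Since $\Phi$ is bijective on underlying vector spaces and the category of mixed Hodge structures is abelian, $\Phi$ is an isomorphism of mixed Hodge structures, in particular strictly compatible with the Hodge filtrations. Restricting to $\Fil^{n}$ and using
\[
\Fil^{n}\bigl(H^{m-2i}(X_{\dot},\C)(-i)\bigr)=\Fil^{n-i}H^{m-2i}(X_{\dot},\C),
\]
this restriction is exactly the map of the lemma, and it is an isomorphism.

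The main obstacle is the passage from the $\C$-linear isomorphism $\Phi$ to an isomorphism of mixed Hodge structures, i.e.\ verifying that $\xi$ is genuinely a Tate class and that the formalism applies in the present generality (in particular for the non--finite type simplicial varieties $X_{\dot}=X\otimes S$, where I would first reduce to $X$ via the decomposition of the Remark following the definition of $\Fil^{n}H^{*}(X_{\dot},\C)$). To avoid invoking the full mixed Hodge formalism, there are two alternatives. The first uses the Gysin map $\pi_{*}$ of the smooth proper morphism $\pi$, which lowers the Hodge filtration by $r-1$: writing $\beta=\sum_{i}\pi^{*}a_{i}\cup\xi^{i}\in\Fil^{n}$, the projection formula gives $\pi_{*}(\xi^{r-1-k}\cup\beta)=a_{k}+\sum_{i>k}a_{i}\cup s_{i-k}(\E_{\dot})$ with Segre classes $s_{t}\in\Fil^{t}$, and a downward induction on $k$ (using $\xi\in\Fil^{1}$) shows $a_{k}\in\Fil^{n-k}$, which is exactly the surjectivity. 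The second parallels the proof of Lemma~\ref{lem:CohomProjBundle}: choose a good compactification $\ol{X}_{\dot}$, extend $\E_{\dot}$ to a vector bundle $\ol{\E}_{\dot}$ (inductively along the strict simplicial construction, after blowing up the boundary if necessary), take $\P(\ol{\E}_{\dot})\to\ol{X}_{\dot}$ as a good compactification of $\P(\E_{\dot})$ with boundary $\pi^{-1}(D_{\dot})$ (a normal crossings divisor since $\pi$ is smooth), establish the logarithmic Verdier isomorphism $\bigoplus_{i=0}^{r-1}\Omega^{p-i}_{\ol{X}}(\log D)[-i]\xrightarrow{\sim}\Rhyp\pi_{*}\Omega^{p}_{\P(\ol{\E})}(\log\pi^{-1}D)$, and run the hypercohomology spectral sequence argument of Lemmas~\ref{lem:CohomProjBundle1} and~\ref{lem:CohomProjBundle} verbatim on the logarithmic complexes computing $\Fil^{n}H^{*}$. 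Here the technical nuisance is the extension of $\E_{\dot}$ to a \emph{locally free} sheaf on $\ol{X}_{\dot}$; injectivity of the map, by contrast, is cheap, following already from the commutative square built from Lemma~\ref{lem:CohomProjBundle} and the injection $\Fil^{n}H^{*}\hookrightarrow\Hyp^{*}(X_{\dot},\Omega^{\geq n}_{X_{\dot}})$, so that in every approach the real content lies in surjectivity, i.e.\ in the strict compatibility of the Hodge filtration with the projective bundle decomposition.
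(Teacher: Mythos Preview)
Your main argument is essentially the paper's own proof: upgrade the Leray--Hirsch isomorphism to a morphism of mixed Hodge structures using that $\xi$ is a Tate class, and then restrict to $\Fil^n$. The only organizational difference is that the paper first reduces by a spectral sequence argument to the non-simplicial case (as in Lemma~\ref{lem:CohomProjBundle}) and there invokes the classical Leray--Hirsch isomorphism together with \cite[Corollaire~(9.1.3)]{HodgeIII} to justify that $\xi^i$ defines a morphism $\Q(-i)\to H^{2i}(\P(\E),\Q)$ of mixed Hodge structures, whereas you work directly in the simplicial setting; your identification of the Tate-class verification as the crux is exactly right, and your alternative routes (Gysin/Segre induction, logarithmic Verdier) are valid but not what the paper does.
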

\begin{proof}
As before, the simplicial case follows from the classical case. Hence let $\E$ be an algebraic vector bundle on the variety $X$. Then $\xi = c_1(\O(1)) = c_1^{\top}(\O(1))$ in $H^2(\P(\E), \C)$ (see below), where $c_1^{\top}(\O(1)) \in H^2(\P(\E), \Z(1))$ is the first topological Chern class of $\O(1)$. Then $\xi^i$ may be seen as a morphism of mixed Hodge structures $\Q(-i) \to H^{2i}(\P(\E), \Q)$ \cite[Corollaire (9.1.3)]{HodgeIII}. Thus the classical Leray-Hirsch isomorphism
\[
\sum_{i=0}^{r-1} \pi^*(\underline{\;\;})\cup \xi^i\colon \bigoplus_{i=0}^{r-1} H^{m-2i}(X, \Q)\otimes\Q(-i) \xrightarrow{\cong} H^m(\P(\E), \Q)
\]
is a morphism of mixed Hodge structures and the result follows by looking at $\Fil^n$.
\end{proof}

The higher Chern classes $c_n(\E) \in \Fil^nH^{2n}(X_{\dot}, \C)$ are now defined by the equation
\[
\sum_{i=0}^r \pi^*(c_{r-i}(\E_{\dot})) \cup c_1(\O_{\P(\E_{\dot})}(1))^i = 0
\]
and the condition $c_n(\E_{\dot}) = 0$ if $n > r$, $c_0(\E_{\dot}) = 1$.

As usual, one defines Chern character classes: Let $N_n \in \Z[X_1, \dots, X_n]$ be the $n$-th Newton polynomial, defined by $N_n(\sigma_1, \dots, \sigma_n) = Y_1^n + \dots + Y_n^n$, where $\sigma_i$ denotes the $i$-th elementary symmetric function in the indeterminates $Y_1, \dots, Y_n$. If now $\E_{\dot}$ is an algebraic vector bundle on the simplicial variety $X_{\dot}$ as above, 
\[
\widetilde\Ch_n(\E_{\dot}) := \frac{1}{n!} N_n(c_1(\E_{\dot}), \dots, c_n(\E_{\dot})) \in \Fil^nH^{2n}(X_{\dot}, \C).
\]

The theory of Chern classes and Chern character classes obtained in this way has the usual properties. In particular they are functorial and the Whitney sum formula holds \cite{GrothendieckClassesDeChern}.

\begin{prop}\label{prop:AlgChernCharClasses}
Let $\E_{\dot}$ be an algebraic vector bundle on the simplicial variety $X_{\dot}$. The natural morphism $\Fil^nH^{2n}(X_{\dot}, \C) \to \Hyp^{2n}(X_{\dot}, \Omega^{\geq n}_{X_{\dot}})$
maps $\widetilde\Ch_n(\E_{\dot})$ to $(-1)^n\Ch_n(\E_{\dot})$.
\end{prop}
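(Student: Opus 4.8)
The plan is to form, for each $n$, the defect class
\[
\phi_n(\E_{\dot}) := \iota\bigl(\widetilde\Ch_n(\E_{\dot})\bigr) - (-1)^n\Ch_n(\E_{\dot}) \in \Hyp^{2n}(X_{\dot}, \Omega^{\geq n}_{X_{\dot}}),
\]
where $\iota$ denotes the natural map $\Fil^nH^{2n}(X_{\dot}, \C) \to \Hyp^{2n}(X_{\dot}, \Omega^{\geq n}_{X_{\dot}})$, and to show that $\phi_n$ vanishes identically. The assignment $\E_{\dot} \mapsto \phi_n(\E_{\dot})$ is functorial (both Chern character classes and $\iota$ are) and additive on short exact sequences, since the Whitney sum formula holds for $\Ch_n$ (the Whitney sum proposition of section~\ref{sec:connections}) as well as for $\widetilde\Ch_n$ (via \cite{GrothendieckClassesDeChern}) while $\iota$ is linear. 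By the splitting principle it therefore suffices to check $\phi_n(L) = 0$ for line bundles $L$.

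For a line bundle $L$ the only nonzero Chern class is $c_1(L)$, so the $n$-th Newton polynomial collapses to a single Chern root and $\widetilde\Ch_n(L) = \frac{1}{n!}\,c_1(L)^n$ in $\Fil^nH^{2n}(X_{\dot}, \C)$. On the Chern--Weil side the curvature $R$ of the standard connection on $L$ is a scalar $2$-form, whence $\Tr(R^n) = R^n$ and $\Ch_n(L)$ is represented by $\frac{1}{n!}R^n$, i.e. by the $n$-fold wedge power of the representative $R$ of $\Ch_1(L)$. Since the wedge product on simplicial forms computes the cup product on $\Hyp^*(X_{\dot}, \Omega^{\geq \bullet}_{X_{\dot}})$ under the identification of Theorem~\ref{thm:filteredDupont}, this gives $\Ch_n(L) = \frac{1}{n!}\Ch_1(L)^n$. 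Because $\iota$ is multiplicative and identifies the Hodge-theoretic $c_1(L)$ with its Chern--Weil counterpart (both being given by $d\log$), Lemma~\ref{lemma:ChLineBundle} yields
\[
(-1)^n\Ch_n(L) = \frac{(-1)^n}{n!}\Ch_1(L)^n = \frac{(-1)^n}{n!}\bigl(-c_1(L)\bigr)^n = \frac{1}{n!}c_1(L)^n = \iota\bigl(\widetilde\Ch_n(L)\bigr),
\]
so that $\phi_n(L) = 0$.

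To make the reduction precise I would invoke the splitting principle in the form provided by Lemmas~\ref{lem:CohomProjBundle} and~\ref{lem:HodgeFiltProjBundle}: for the projective bundle $\pi : \P(\E_{\dot}) \to X_{\dot}$ the pullback $\pi^*$ is the $i=0$ summand of an isomorphism, hence injective on both $\Fil^nH^{2n}(-, \C)$ and $\Hyp^{2n}(-, \Omega^{\geq n})$, while $\pi^*\E_{\dot}$ contains the tautological line subbundle with locally free quotient of rank $r-1$. Iterating this construction produces a flag bundle $f : Y_{\dot} \to X_{\dot}$ with $f^*$ injective on $\Hyp^{2n}(-, \Omega^{\geq n}_{-})$ and such that $f^*\E_{\dot}$ admits a filtration with line-bundle quotients $L_1, \dots, L_r$. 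By additivity $\phi_n(f^*\E_{\dot}) = \sum_i \phi_n(L_i) = 0$, while functoriality gives $\phi_n(f^*\E_{\dot}) = f^*\phi_n(\E_{\dot})$; injectivity of $f^*$ then forces $\phi_n(\E_{\dot}) = 0$, which is the assertion.

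The one step that genuinely requires care is the multiplicativity identity $\Ch_n(L) = \frac{1}{n!}\Ch_1(L)^n$ for the Chern--Weil classes together with its compatibility with $\iota$: one must verify that the product $\Fil^i \times \Fil^j \to \Fil^{i+j}$ on simplicial forms from Theorem~\ref{thm:filteredDupont} induces the cup product on $\Hyp^*(X_{\dot}, \Omega^{\geq \bullet}_{X_{\dot}})$ and that $\iota$ respects this product, so that the single nontrivial geometric input---the identity $\Ch_1(L) = -c_1(L)$ of Lemma~\ref{lemma:ChLineBundle}---propagates to all $n$. Everything else is a formal consequence of functoriality, additivity, and the injectivity furnished by the projective bundle formulas.
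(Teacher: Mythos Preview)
Your proposal is correct and follows essentially the same route as the paper: reduce via the splitting principle (iterated projective bundles and Lemmas~\ref{lem:CohomProjBundle}, \ref{lem:HodgeFiltProjBundle}) to line bundles, use additivity from the Whitney sum formula on both sides, and for a line bundle reduce $\Ch_n(L)=\frac{1}{n!}\Ch_1(L)^n$ and $\widetilde\Ch_n(L)=\frac{1}{n!}c_1(L)^n$ to the case $n=1$ handled by Lemma~\ref{lemma:ChLineBundle}. The paper's argument is packaged slightly differently (it does not introduce a defect class $\phi_n$) but the content is identical; your caveat about multiplicativity is reasonable, and the paper likewise leaves this point implicit, remarking only that it ``follows directly from the construction''.
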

\begin{proof}
Repeated use of the projective bundle construction gives a morphism of simplicial varieties $\pi : Q_{\dot} \to X_{\dot}$, such that $\pi^*\E_{\dot}$ has a filtration, whose subquotients are line bundles, and such that both maps $\pi^*: \Fil^nH^{2n}(X_{\dot}, \C) \to \Fil^nH^{2n}(Q_{\dot}, \C)$ and $\pi^*: \Hyp^{2n}(X_{\dot}, \Omega^{\geq n}_{X_{\dot}}) \to \Hyp^{2n}(Q_{\dot}, \Omega^{\geq n}_{Q_{\dot}})$ are injective (lemmata \ref{lem:CohomProjBundle} and \ref{lem:HodgeFiltProjBundle}).

By the Whitney sum formula it is thus enough to show, that for a line bundle $\mathscr L_{\dot}$, $\widetilde\Ch_n(\mathscr L_{\dot})$ maps to $(-1)^n\Ch_n(\mathscr L_{\dot})$.
But $\widetilde\Ch_n(\mathscr L_{\dot})$ is just $\frac{1}{n!}c_1(\mathscr L_{\dot})^n$ and similarly  $\Ch_n(\mathscr L_{\dot}) = \frac{1}{n!}\left(\Ch_1(\mathscr L_{\dot})\right)^n$. Indeed, for the Chern character classes $\widetilde\Ch_n$ this follows from the explicit form of the Newton polynomials and the fact that $c_i(\mathscr L_{\dot}) = 0$ if $i > 1$, while for the classes $\Ch_n(\mathscr L_{\dot})$ it follows directly from the construction. Hence the claim follows from lemma \ref{lemma:ChLineBundle}.
\end{proof}

In particular we see, that the Chern character classes $\Ch_n(\E_{\dot})$ of an algebraic vector bundle indeed lie in $\Fil^nH^{2n}(X_{\dot}, \C) \subset \Hyp^{2n}(X_{\dot}, \Omega^{\geq n}_{X_{\dot}})$.

\section{Relative Chern character classes}
\label{sec:RelativeChernCharClasses}

In this section we construct refinements of the secondary classes constructed in Proposition \ref{prop:RelativeClasses} for algebraic bundles together with a trivialization of the associated topological bundle, which take the Hodge filtration into account.

Let $E$ be an algebraic $\GL_r(\C)$-bundle on the simplicial variety $X_{\dot}$ classified by $g : X_{\dot} \to B_{\dot}\GL_r(\C)$. Define the principal bundle $E_{\dot} \xrightarrow{p} X_{\dot}$ associated with $E$ by the pullback diagram
\[
\xymatrix@C+0.3cm{
E_{\dot} \ar[r]\ar[d]_p\ar@{}[dr]|{\lrcorner} & E_{\dot}\GL_{r}(\C) \ar[d]^p\\
X_{\dot} \ar[r]^-{g} & B_{\dot}\GL_{r}(\C).
}
\]
Choose a good compactification $j: X_{\dot} \hookrightarrow \ol X_{\dot}$ of strict simplicial varieties and write $D_p = \ol X_p - X_p$. We define the complex $\Fil^nA^*(\ol X_{\dot}, \log D_{\dot})$ as the quasi-pullback of the diagram
\[\xymatrix{
& A^*(X_{\dot})\ar[d]^I_{\text{qis}}\\
\Fil^n\A^*(\ol X_{\dot}, \log D_{\dot}) \ar[r]^-{\iota_{\A}} & \A^*(X_{\dot}),
}
\]
see Appendix \ref{app:HomolAlg}.
Then the natural projection $\Fil^nA^*(\ol X_{\dot}, \log D_{\dot}) \to \Fil^n\A^*(\ol X_{\dot}, \log D_{\dot})$ is a quasiisomorphism and the diagram
\[\xymatrix{
\Fil^nA^*(\ol X_{\dot}, \log D_{\dot}) \ar[d]_{\text{qis}} \ar[r]^-{\iota_A} & A^*(X_{\dot})\ar[d]^I_{\text{qis}}\\
\Fil^n\A^*(\ol X_{\dot}, \log D_{\dot}) \ar[r]^-{\iota_{\A}} & \A^*(X_{\dot}),
}
\]
is commutative up to canonical homotopy.

\begin{dfn}
Define \emph{relative cohomology groups}
\begin{align*}
&H^{E,*}_{\rel}(X_{\dot}, n) := H^*\left(\Cone(\Fil^nA^*(\ol X_{\dot}, \log D_{\dot}) \xrightarrow{p^*\circ\iota_A} A^*(E_{\dot}))\right) \quad\text{and}\\
&H^*_{\rel}(X_{\dot}, n) := H^*\left(\Cone(\Fil^nA^*(\ol X_{\dot}, \log D_{\dot}) \xrightarrow{\iota_A} A^*(X_{\dot}))\right).
\end{align*}
\end{dfn}
Note, that $H^*_{\rel}(X_{\dot}, n) \cong H^*(X_{\dot}, \C)/\Fil^nH^*(X_{\dot}, \C)$. As for the Hodge filtration one shows that this definition is (up to isomorphism) independent of the chosen good compactification.\footnote{To get canonically defined groups one should denote the groups in the definition by $H^*_{\rel}(X_{\dot},n)_{\ol X_{\dot}}$ and define $H^*_{\rel}(X_{\dot},n) := \dirlim_{\ol X_{\dot}}H^*_{\rel}(X_{\dot},n)_{\ol X_{\dot}}$, where the limit runs over the directed family of all good compactifications of $X_{\dot}$.} Obviously there is a morphism $p^* : H^*_{\rel}(X_{\dot}, n) \to H^{E,*}_{\rel}(X_{\dot}, n)$, which yields a morphism of long exact sequences
\begin{equation}\label{seq:RelCohomCSchemes}
%\begin{small}
\xymatrix@C-0.55cm{
\dots \ar[r] & H^{E,i-1}_{\rel}(X_{\dot}, n) \ar[r] & \Fil^nH^i(X_{\dot},\C) \ar[r] & H^i(E_{\dot},\C)  \ar[r] & H^{E,i}_{\rel}(X_{\dot}, n)\ar[r] & \dots\\
\dots \ar[r] & H^{i-1}_{\rel}(X_{\dot}, n) \ar[r]\ar[u]^{p^*} & \Fil^nH^i(X_{\dot},\C) \ar[r]\ar@{=}[u] & H^i(X_{\dot},\C) \ar[u]^{p^*} \ar[r] & H^i_{\rel}(X_{\dot}, n) \ar[u]^{p^*} \ar[r] & \dots.
}
\end{equation}
Let $f: Y_{\dot}\to X_{\dot}$ be a morphism of simplicial varieties and $E/X_{\dot}$ as before. Given good compactifications $X_{\dot}\hookrightarrow \ol X_{\dot}$ and $Y_{\dot} \hookrightarrow \ol Y_{\dot}$, we may construct inductively (similar as in \cite[1.2]{Soule}) a good compactification $\widetilde Y_{\dot}$ together with a morphism of compactifications $\widetilde Y_{\dot} \to \ol Y_{\dot}$, such that $f$ extends to a morphism $\widetilde Y_{\dot} \to X_{\dot}$. Hence we can define pullback maps $f^*: H^*_{\rel}(X_{\dot}, n) \to H^*_{\rel}(Y_{\dot}, n)$ and $f^* : H^{E,*}_{\rel}(X_{\dot}, n) \to H^{f^*E,*}_{\rel}(Y_{\dot}, n)$.
\begin{prop}\label{prop:CRefinedClasses}
There exists a class $\widetilde\Ch_n^{\rel}(E) \in H^{2n-1,E}_{\rel}(X_{\dot},n)$, which is mapped to the $n$-th Chern character class $\Ch_n(E)$ in $\Fil^nH^{2n}(X_{\dot}, \C)$, and which is functorial in $X$. Moreover, the assignment $E \mapsto \widetilde\Ch_n^{\rel}(E)$ is uniquely determined by these two properties.
\end{prop}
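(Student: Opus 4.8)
The plan is to reduce the whole statement to the universal bundle by functoriality. Every algebraic $\GL_r(\C)$-bundle $E$ on $X_{\dot}$ is the pullback $g^*E^{\univ}$ of the universal bundle $E^{\univ}$ on $B_{\dot}\GL_r(\C)$ along its classifying map $g$, and, by the defining pullback square, the associated principal bundle $E_{\dot}\to X_{\dot}$ is correspondingly the pullback of $E_{\dot}\GL_r(\C)\to B_{\dot}\GL_r(\C)$. Since the relative cohomology groups together with their long exact sequence \eqref{seq:RelCohomCSchemes} and the pullback maps are functorial for morphisms of simplicial varieties, it suffices to produce a \emph{unique} class $\widetilde\Ch_n^{\rel}(E^{\univ})\in H^{E^{\univ},2n-1}_{\rel}(B_{\dot}\GL_r(\C),n)$ lifting $\Ch_n(E^{\univ})$; one then simply \emph{defines} $\widetilde\Ch_n^{\rel}(E):=g^*\widetilde\Ch_n^{\rel}(E^{\univ})$.

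The construction in the universal case rests on the acyclicity of $E_{\dot}\GL_r(\C)$. Because this simplicial manifold carries an extra degeneracy, it is simplicially contractible (it is the standard model of a contractible space), so its de Rham cohomology is that of a point: $H^k(E_{\dot}\GL_r(\C),\C)=0$ for all $k>0$. Feeding this into the sequence \eqref{seq:RelCohomCSchemes} for $X_{\dot}=B_{\dot}\GL_r(\C)$, $E=E^{\univ}$ (so that $E_{\dot}=E_{\dot}\GL_r(\C)$), the two neighbouring terms $H^{2n-1}(E_{\dot}\GL_r(\C),\C)$ and $H^{2n}(E_{\dot}\GL_r(\C),\C)$ both vanish for $n\geq 1$, whence the natural map
\[
H^{E^{\univ},2n-1}_{\rel}(B_{\dot}\GL_r(\C),n)\xrightarrow{\;\cong\;}\Fil^nH^{2n}(B_{\dot}\GL_r(\C),\C)
\]
is an isomorphism. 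By Proposition \ref{prop:AlgChernCharClasses} the algebraic class $\Ch_n(E^{\univ})$ lies in $\Fil^nH^{2n}(B_{\dot}\GL_r(\C),\C)$, and we let $\widetilde\Ch_n^{\rel}(E^{\univ})$ be its unique preimage. Thus both existence and uniqueness of the universal lift come for free from the contractibility of $E_{\dot}\GL_r(\C)$.

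It remains to verify that $\widetilde\Ch_n^{\rel}(E):=g^*\widetilde\Ch_n^{\rel}(E^{\univ})$ has the two asserted properties and is characterized by them. That it maps to $\Ch_n(E)$ follows because the map from relative to filtered cohomology commutes with $g^*$ (functoriality of \eqref{seq:RelCohomCSchemes}) and because $g^*\Ch_n(E^{\univ})=\Ch_n(g^*E^{\univ})=\Ch_n(E)$ by functoriality of the primary classes. Functoriality in $X$ is immediate from $(g\circ h)^*=h^*\circ g^*$: for $h:Y_{\dot}\to X_{\dot}$ one has $h^*E=(g\circ h)^*E^{\univ}$, so $\widetilde\Ch_n^{\rel}(h^*E)=h^*\widetilde\Ch_n^{\rel}(E)$. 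For uniqueness, any functorial assignment lifting the primary class must, applied to $E^{\univ}$, coincide with $\widetilde\Ch_n^{\rel}(E^{\univ})$ by the isomorphism above, and functoriality then forces it to equal $g^*\widetilde\Ch_n^{\rel}(E^{\univ})$ for every $E$.

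The genuine work is hidden in the foundational bookkeeping rather than in this formal argument. First, one must justify that $H^{E,*}_{\rel}$, the sequence \eqref{seq:RelCohomCSchemes}, and the pullback maps $g^*$ are well-defined and strictly functorial for the simplicial varieties $B_{\dot}\GL_r(\C)$ and $E_{\dot}\GL_r(\C)$, which are not of finite type; this requires good compactifications in the strict simplicial sense in each degree and independence of the choices, the compatibility of the compactifications used to define $g^*$ being the delicate point. Second, one must establish carefully that the extra degeneracy of $E_{\dot}\GL_r(\C)$ indeed gives $H^{>0}(E_{\dot}\GL_r(\C),\C)=0$, for instance via the spectral sequence $E_1^{p,q}=H^q(E_p\GL_r(\C),\C)\Rightarrow H^{p+q}(E_{\dot}\GL_r(\C),\C)$ whose rows are contractible in the $p$-direction, and that $p^*$ on primary Chern character classes is compatible with the map appearing in the cone. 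These are the steps I expect to demand the most care.
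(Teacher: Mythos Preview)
Your proof is correct and takes essentially the same approach as the paper: reduce to the universal bundle, use the vanishing of $H^{>0}(E_{\dot}\GL_r(\C),\C)$ to see that the relative group in degree $2n-1$ is isomorphic to $\Fil^nH^{2n}(B_{\dot}\GL_r(\C),\C)$ via the long exact sequence, and then pull back. The paper's proof is in fact a one-sentence version of yours, deferring the acyclicity of $E_{\dot}\GL_r(\C)$ to a separate lemma proved via the spectral sequence and an explicit simplicial contracting homotopy (your ``extra degeneracy'' is exactly this).
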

\begin{proof}
Consider the universal situation: Since $H^i(E_{\dot}\GL_r(\C),\C)=0$ for all $i > 0$ by the following lemma, the natural map $H^{E^{\univ},2n-1}_{\rel}(B_{\dot}\GL_r(\C), n) \to \Fil^nH^{2n}(B_{\dot}\GL_r(\C),\C)$ is an isomorphism by the exactness of the top line in \eqref{seq:RelCohomCSchemes}, and the proposition follows.
\end{proof}
\begin{lemma}\label{lem:CohomEG}
Let $Y$ be any complex manifold. Define the simplicial manifold $E_{\dot}Y $ by $E_pY=Y\times \dots \times Y$ ($p+1$ factors) with faces and degeneracies as in \eqref{eq:DelEG} and \eqref{eq:SEG}. Then 
\[
H^n(E_{\dot}Y,\C) = 0, \text{ if } n>0, \quad H^0(E_{\dot}Y,\C) = \C.
\]
\end{lemma}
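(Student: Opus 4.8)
The plan is to compute $H^n(E_{\dot}Y,\C)$ by means of the standard first–quadrant spectral sequence of a simplicial manifold,
\[
E_1^{p,q} = H^q(E_pY, \C) \Longrightarrow H^{p+q}(E_{\dot}Y, \C),
\]
the analogue for the constant sheaf $\C$ of the spectral sequence used in Remark~\ref{rem:CohomInterpretation}; its differential $d_1\colon E_1^{p,q}\to E_1^{p+1,q}$ is the alternating sum $\sum_{i=0}^{p+1}(-1)^i\del_i^*$ of pullbacks along the face maps. It then suffices to show that for each fixed $q$ the cochain complex $(E_1^{\dot,q}, d_1)$ attached to the cosimplicial abelian group $[p]\mapsto H^q(E_pY,\C)$ is acyclic in positive degrees, with $H^0$ equal to the image of the coaugmentation $H^q(\mathrm{pt},\C)\to H^q(E_0Y,\C)$. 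Granting this, $E_2^{p,q}$ is concentrated in bidegree $(0,0)$, where it equals $\C$; the spectral sequence degenerates and the lemma follows.

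The geometric input I would supply is an \emph{extra degeneracy} on $E_{\dot}Y$. Assuming $Y$ nonempty, fix a point $y_0\in Y$ and define the holomorphic maps
\[
s_{-1}\colon E_pY \to E_{p+1}Y, \qquad (g_0,\dots,g_p)\mapsto (y_0, g_0,\dots,g_p).
\]
Together with the augmentation $E_{\dot}Y \to \mathrm{pt}$, these satisfy the extra–degeneracy identities
\[
\del_0\circ s_{-1} = \id, \qquad \del_i\circ s_{-1} = s_{-1}\circ \del_{i-1}\quad(1\le i\le p+1),
\]
which are immediate from the explicit description \eqref{eq:DelEG} of the faces: $\del_0$ deletes the inserted entry $y_0$, and for $i\ge 1$ deleting the $i$-th entry commutes with prepending $y_0$.

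Passing to cohomology, set $s := s_{-1}^*$ and $\delta^i := \del_i^*$, so that $[p]\mapsto H^q(E_pY,\C)$ becomes a coaugmented cosimplicial abelian group for which the relations above read $s\circ\delta^0=\id$ and $s\circ\delta^i=\delta^{i-1}\circ s$ for $i\ge 1$. A direct computation (the usual telescoping of the alternating sum) then gives $d_1\, s + s\, d_1 = \id$ on $E_1^{p,q}$ for every $p\ge 1$, so $s$ is a contracting homotopy and the complex is acyclic in positive degrees; the same relations, together with the augmentation, identify $H^0$ with $H^q(\mathrm{pt},\C)$. Since $H^q(\mathrm{pt},\C)$ is $\C$ for $q=0$ and $0$ for $q>0$, this is exactly the vanishing required in the first paragraph.

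The only genuinely delicate point is bookkeeping. One must verify the extra–degeneracy identities with the correct index ranges and, above all, track the variance when passing from the simplicial manifold $E_{\dot}Y$ to the cosimplicial abelian group $[p]\mapsto H^q(E_pY,\C)$, so that the geometric relations $\del_0 s_{-1}=\id$, $\del_i s_{-1}=s_{-1}\del_{i-1}$ translate into precisely the algebraic relations $s\delta^0=\id$, $s\delta^i=\delta^{i-1}s$ that feed the homotopy formula $d_1 s + s\, d_1=\id$. Once the indices and signs are aligned, everything else is formal.
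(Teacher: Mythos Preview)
Your proof is correct and follows essentially the same strategy as the paper: both compute via the spectral sequence $E_1^{p,q}=H^q(E_pY,\C)\Rightarrow H^{p+q}(E_{\dot}Y,\C)$ and use a chosen basepoint to contract each row $(E_1^{\dot,q},d_1)$. The only difference is organizational: you use a single extra degeneracy $s_{-1}(g_0,\dots,g_p)=(y_0,g_0,\dots,g_p)$ to get a contracting homotopy directly, whereas the paper builds a full simplicial homotopy $h_i(y_0,\dots,y_p)=(y_0,\dots,y_i,e,\dots,e)$ between $\id$ and the constant map $e$ and then compares with the spectral sequence of the one-point space; your version is slightly more economical, the paper's is framed in the standard language of \cite[\S 5]{May}.
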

\begin{proof}
We have a spectral sequence
\[
E_1^{pq}(E_{\dot}Y) = H^q(E_pY,\C) \Longrightarrow H^{p+q}(E_{\dot}Y,\C),
\]
where the differential $d_1 : H^q(E_pY,\C) \to H^q(E_{p+1}Y,\C)$ is given by the alternating sum $d_1=\sum_{i=0}^p (-1)^i\del_i^*$.

Choose a point $e\in Y$.
For $p\geq 0, i = 0, \dots, p$, define $h_i : E_pY \to E_{p+1}Y$, $(y_0, \dots, y_p) \mapsto (y_0, \dots, y_i, e, \dots, e)$. Then the $h_i$'s satisfy the formal properties defining a simplicial homotopy between the constant map $e$ (given by $(y_0, \dots, y_p) \mapsto (e,\dots, e)$) and the identity \cite[Definitions 5.1]{May}, in particular
\begin{align*}
&\del_0h_0=e, \del_{p+1}h_p=\id_{E_pY}, \\
& \del_ih_j = h_{j-1}\del_i, &&\text{if } i<j,\\
& \del_{j+1}h_{j+1} = \del_{j+1}h_j, \\
& \del_ih_j= h_j\del_{i-1}, &&\text{if } i > j+1.
\end{align*}
The $h_i$ induce maps on the $E_1$-term of the above spectral sequence satisfying dual properties. Hence we get a chain homotopy $s$ between the identity and $e^*$ on the complex $(E_1^{*,q}(E_{\dot}Y), d_1)$ for any $q\geq 0$ by setting $s(x)= \sum_{i=0}^{p-1} (-1)^i h_i^*(x), x \in E_1^{pq}(E_{\dot}Y) = H^q(E_pY,\C)$ (cf. \cite[Proposition 5.3]{May}).
It follows, that $e: * \to E_{\dot}Y$, where $*$ is the one point constant simplicial manifold, and $E_{\dot}Y \to *$ induce homotopy inverse chain homotopy equivalences $E_1^{*,q}(E_{\dot}Y) \leftrightarrows E_1^{*,q}(*)$. Hence $E_2^{p,q}(E_{\dot}Y) \cong E_2^{p,q}(*)$.

But obviously $E_1^{pq}(*) = 0$, if $q>0$, and $E_1^{*,0}(*)$ is the complex $\C \xleftarrow{0} \C \xleftarrow{\id} \C \xleftarrow{0} \dots$,
i.e. 
\[
E_2^{pq}(E_{\dot}Y) = E_2^{pq}(*) = \begin{cases} \C, &\text{if } p=q=0,\\ 0, &\text{else.}\end{cases}
\]
Now the claim follows.
\end{proof}
\begin{dfn}
If $X_{\dot}$ is a simplicial variety and $E/X_{\dot}$ an algebraic $\GL_r$-bundle, the class $\widetilde\Ch_n^{\rel}(E) \in H^{2n-1,E}_{\rel}(X_{\dot},n)$ is called the \emph{$n$-th refined Chern character class of $E$}.
\end{dfn}
Now assume, that the algebraic bundle $E/X_{\dot}$, classified by $g: X_{\dot}\to B_{\dot}\GL_r(\C)$, admits a topological trivialization $\alpha: T \to E$, i.e. a topological morphism $\alpha: X_{\dot} \rightsquigarrow E_{\dot}\GL_r(\C)$, such that $p \circ \alpha = g$. Since $E_{\dot}$ is the pullback of $E_{\dot}\GL_r(\C)$ along $g$, $\alpha$ induces a topological morphism $\alpha: X_{\dot} \rightsquigarrow E_{\dot}$, such that $p\circ\alpha = \id_{X_{\dot}}$. Hence we can also define a map $\alpha^*: H^{E,*}_{\rel}(X_{\dot}, n) \to H^*_{\rel}(X_{\dot}, n)$ left inverse to $p^*$. 
\begin{dfn}
Let $E$ be an algebraic bundle on the simplicial variety $X_{\dot}$ and $\alpha$ a trivialization of the underlying topological bundle. Then we define
\[
\widetilde\Ch_n^{\rel}(T,E,\alpha) = -\alpha^*\widetilde\Ch_n^{\rel}(E) \in H^{2n-1}_{\rel}(X_{\dot}, n) \cong H^{2n-1}(X_{\dot},\C)/\Fil^n.
\]
\end{dfn}

More generally, we also allow $X_{\dot}$ to be of the form $X\otimes S$ with a variety $X$ and a simplicial set $S$.

\begin{prop}\label{prop:ComparisonOfRelClasses}
The class $\widetilde\Ch_n^{\rel}(T, E, \alpha)$ is mapped to the class $\Ch_n^{\rel}(T, E, \alpha)$ by the natural map $H^{2n-1}(X_{\dot}, \C)/\Fil^nH^{2n-1}(X_{\dot}, \C) \to \Hyp^{2n-1}(X_{\dot}, \Omega^{<n}_{X_{\dot}})$.
\end{prop}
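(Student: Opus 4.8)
The plan is to prove the identity by representing $\widetilde\Ch_n^{\rel}(E)$ by an explicit cocycle whose secondary part is built from the universal relative form $\Ch_n^{\rel,\univ}$, to pull this cocycle back along the section $\alpha$, and finally to read off its image under the natural map using the filtered Dupont equivalence of Theorem \ref{thm:filteredDupont}. Throughout, the two constructions are controlled by their functoriality, so the real work takes place over the universal base.

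First I would construct a cocycle for $\widetilde\Ch_n^{\rel}(E)$ in the cone computing $H^{E,2n-1}_{\rel}(X_{\dot},n)$. Let $\tilde g\colon E_{\dot}\to E_{\dot}\GL_r(\C)$ be the top horizontal arrow in the defining pullback square of the principal bundle $p\colon E_{\dot}\to X_{\dot}$; it is the tautological trivialization of the pulled-back bundle $p^*E$. Equipping every bundle with its standard connection and setting $\eta:=-\tilde g^*\Ch_n^{\rel,\univ}\in A^{2n-1}(E_{\dot})$, Lemma \ref{lem:RelChernCharForm} together with $\Ch_n(\Gamma^T)=0$ gives $d\eta=p^*\Ch_n(\Gamma^E)$. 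Choosing a lift $\Phi$ of the Chern--Weil form $\Ch_n(\Gamma^E)$ to the quasi-pullback $\Fil^nA^*(\ol X_{\dot},\log D_{\dot})$ with $\iota_A\Phi=\Ch_n(\Gamma^E)$ (which exists because $\Ch_n(E)\in\Fil^nH^{2n}(X_{\dot},\C)$ by Proposition \ref{prop:AlgChernCharClasses}), the pair $(\Phi,\eta)$ is a cocycle mapping to $\Ch_n(E)$ under the edge map of \eqref{seq:RelCohomCSchemes}. Taking $\Phi=g^*\Phi^{\univ}$ and $\eta=\tilde g^*\eta^{\univ}$ pulled back from the analogous universal cocycle on $B_{\dot}\GL_r(\C)$, functoriality and the uniqueness clause of Proposition \ref{prop:CRefinedClasses} identify $[(\Phi,\eta)]$ with $\widetilde\Ch_n^{\rel}(E)$.

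Next I would apply $\alpha^*$. Since $p\circ\alpha=\id_{X_{\dot}}$, the induced map on cones is $(\Phi,\eta)\mapsto(\Phi,\alpha^*\eta)$; and because the $E_{\dot}\GL_r(\C)$-component of the section $\alpha\colon X_{\dot}\rightsquigarrow E_{\dot}$ is the original trivialization, one has $\tilde g\circ\alpha=\alpha$ as topological morphisms, so by Proposition \ref{prop:ChRelUniv} $\alpha^*\eta=-(\tilde g\alpha)^*\Ch_n^{\rel,\univ}=-\Ch_n^{\rel}(\Gamma^T,\Gamma^E,\alpha)=:-\omega_0$. Hence $\widetilde\Ch_n^{\rel}(T,E,\alpha)=-\alpha^*\widetilde\Ch_n^{\rel}(E)$ is represented by the cocycle $(-\Phi,\omega_0)$, whose $A^*(X_{\dot})$-component is precisely the secondary form $\omega_0$ representing $\Ch_n^{\rel}(T,E,\alpha)$ and whose log-component satisfies $\iota_A(-\Phi)=-\Ch_n(\Gamma^E)\in\Fil^nA^{2n}(X_{\dot})$. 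It then remains to evaluate the natural map to $\Hyp^{2n-1}(X_{\dot},\Omega^{<n}_{X_{\dot}})=H^{2n-1}(A^*(X_{\dot})/\Fil^nA^*(X_{\dot}))$ on this class. I would realize that map by the zig-zag coming from Theorem \ref{thm:filteredDupont}: pass from the $A$-cone to the cone built from $\A$-forms via $I$, use that holomorphic-log forms restrict into $\Fil^n\A^*(X_{\dot})$, and descend to the quotient. Tracking $(-\Phi,\omega_0)$ through it yields, after the quasiisomorphism $\bar I$, the class $[\bar\omega_0]=\Ch_n^{\rel}(T,E,\alpha)$, together with a correction $[\ol{\gamma_\Phi}]$ produced by the homotopy that witnesses commutativity of the quasi-pullback square, where $\gamma_\Phi$ is the connecting datum of $\Phi$.

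The technical heart of the argument, and the step I expect to be the main obstacle, is showing that this correction $[\ol{\gamma_\Phi}]$ vanishes. Because $\gamma_\Phi=g^*\gamma_{\Phi^{\univ}}$, it equals $g^*$ of a universal class lying in $\Hyp^{2n-1}(B_{\dot}\GL_r(\C),\Omega^{<n})$, so it suffices to prove that this universal group is zero. For that I would use that the odd de Rham cohomology $H^{2n-1}(B_{\dot}\GL_r(\C),\C)$ vanishes and that $H^{2n}(B_{\dot}\GL_r(\C),\C)$ is of pure Hodge type $(n,n)$ (so that $\Hyp^{2n}(\Omega^{\geq n})\hookrightarrow H^{2n}$); in the long exact sequence of $0\to\Omega^{\geq n}_{B_{\dot}\GL_r}\to\Omega^*_{B_{\dot}\GL_r}\to\Omega^{<n}_{B_{\dot}\GL_r}\to 0$ these two facts squeeze $\Hyp^{2n-1}(\Omega^{<n})$ to $0$, exactly the kind of vanishing already exploited via Lemma \ref{lem:CohomEG} in the proof of Proposition \ref{prop:CRefinedClasses}. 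This forces $[\ol{\gamma_\Phi}]=0$, and the image of $\widetilde\Ch_n^{\rel}(T,E,\alpha)$ is therefore $[\bar\omega_0]=\Ch_n^{\rel}(T,E,\alpha)$, as claimed.
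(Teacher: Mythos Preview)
Your overall strategy --- represent the refined class by an explicit cocycle built from $\Ch_n^{\rel,\univ}$, pull back along $\alpha$, and read off the image in $A^*(X_\dot)/\Fil^nA^*(X_\dot)$ --- is exactly the paper's. You also correctly isolate the only nontrivial issue: tracking the cocycle through the zig-zag to $A^*/\Fil^n$ produces, besides $[\bar\omega_0]$, a correction coming from the connecting datum $\gamma_{\Phi}$ of the quasi-pullback element $\Phi^{\univ}=(c,\Ch_n(\Gamma^{\univ}),\gamma_{\Phi^{\univ}})$.

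The gap is in how you dispose of that correction. You argue that $\Hyp^{2n-1}(B_\dot\GL_r(\C),\Omega^{<n})=0$ by invoking that $H^{2n}(B_\dot\GL_r(\C),\C)$ is pure of Hodge type $(n,n)$ and concluding $\Hyp^{2n}(\Omega^{\geq n})\hookrightarrow H^{2n}$. That implication is a non-sequitur: ``pure of type $(n,n)$'' is a statement about the \emph{Hodge} filtration defined via $\Omega^{\geq n}_{\ol X}(\log D)$ on a compactification, not about the \emph{naive} filtration $\Omega^{\geq n}_X$. For open (simplicial) varieties the map $\Hyp^{2n}(X,\Omega^{\geq n}_X)\to H^{2n}(X,\C)$ is in general not injective, so the long exact sequence argument does not squeeze $\Hyp^{2n-1}(\Omega^{<n})$ to zero. (The reference to Lemma~\ref{lem:CohomEG} is also off: that lemma concerns the contractibility of $E_\dot G$, which is a different vanishing.)

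The paper sidesteps this entirely by a much simpler observation that you are one line away from. Both $\iota_{\A}(c)$ and $I(\Ch_n(\Gamma^{\univ}))$ lie in $\Fil^n\A^{2n}(B_\dot G)$ and both represent $\Ch_n(E^{\univ})$ in $\Hyp^{2n}(B_\dot G,\Omega^{\geq n})$; hence their difference is exact \emph{inside} $\Fil^n\A^*$, so one may choose the connecting datum $\gamma_{\Phi^{\univ}}\in\Fil^n\A^{2n-1}(B_\dot G)$. With this choice $\overline{\gamma_{\Phi}}=0$ in $A^*/\Fil^n$ tautologically, and the image of $(-\Phi,\omega_0)$ is $[\bar\omega_0]=\Ch_n^{\rel}(T,E,\alpha)$ with no further argument required. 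Replacing your vanishing step by this choice of $\gamma$ makes the proof complete and coincide with the paper's.
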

\begin{proof}
Abbreviate $\GL_r(\C)$ to $G$. Let $g: X_{\dot} \to B_{\dot}G$ be the classifying map of $E$ and choose compatible good compactifications $B_{\dot}G \hookrightarrow \ol{B_{\dot}G}$ and $X_{\dot} \hookrightarrow \ol X_{\dot}$. 

Choose any representative $c$ of $\Ch_n(E^{\univ})$ in $\Fil^n\A^{2n}(\ol{B_{\dot}G}, \log D_{\dot})$. Then $\iota_{\A}(c) \in \A^{2n}(B_{\dot}G)$ lies in  $\Fil^n\A^{2n}(B_{\dot}G)$ and represents $\Ch_n(E^{\univ})$ considered as a class in $\Hyp^{2n}(B_{\dot}G, \Omega^{\geq n}_{B_{\dot}G})$. But this class is also represented by the form $I(\Ch_n(\Gamma^{\univ}))$, where $\Gamma^{\univ}$ denotes the standard connection on the universal bundle. Hence there exists $\eta \in \Fil^n\A^{2n-1}(B_{\dot}G)$ such that $d\eta= \iota_{\A}(c) - I(\Ch_n(\Gamma^{\univ}))$ and $ch_n := (c, \Ch_n(\Gamma^{\univ}), \eta)$ is a representative for $\Ch_n(E^{\univ})$ in $\Fil^nA^{2n}(\ol{B_{\dot}G}, \log D_{\dot})$. With this choice we have $p^*(\iota_A(ch_n)) = p^*\Ch_n(\Gamma^{\univ}) = -d\Ch_n^{\rel, \univ}$, where the form $\Ch_n^{\rel,\univ}$ was defined before proposition \ref{prop:ChRelUniv}.
Hence the universal refined class is represented by the cycle $(ch_n, -\Ch_n^{\rel,\univ})$.

Let $g': E_{\dot} \to E_{\dot}G$ be the map induced by $g$ on the principal bundles. Then $\widetilde\Ch_n^{\rel}(E)$ is represented by $(g^*ch_n, -g'^*\Ch_n^{\rel,\univ})$ and $\widetilde\Ch_n^{\rel}(T,E,\alpha)$ is represented by $(-g^*ch_n, \alpha^*g'^*\Ch_n^{\rel,\univ}) = (-g^*ch_n, \alpha^*\Ch_n^{\rel,\univ})=(-g^*ch_n, \Ch_n^{\rel}(\Gamma^T,\Gamma^E,\alpha))$, where on the left we view $\alpha$ as a morphism $X_{\dot}\rightsquigarrow E_{\dot}$, in the middle as a morphism $X_{\dot} \rightsquigarrow E_{\dot}G$, $\Gamma^T$ and $\Gamma^E$ denote the standard connections, and we used proposition \ref{prop:ChRelUniv}.
Now the natural map 
\begin{multline*}
H^*_{\rel}(X_{\dot}, n) = H^*\left(\Cone(\Fil^nA^*(\ol X_{\dot}, \log D_{\dot}) \xrightarrow{\iota_A} A^*(X_{\dot}))\right)  \\ \to \Hyp^*(X_{\dot}, \Omega^{<n}_{X_{\dot}}) = H^*(A^*(X_{\dot})/\Fil^nA^*(X_{\dot}))
\end{multline*}
is induced by the morphism of complexes
$\Cone(\Fil^nA^*(\ol X_{\dot}, \log D_{\dot}) \xrightarrow{\iota_A} A^*(X_{\dot})) \to A^*(X_{\dot})/\Fil^nA^*(X_{\dot})$, $(\omega, \eta) \mapsto \eta$. In particular $\widetilde\Ch_n^{\rel}(T, E, \alpha)$ is mapped to the class represented by $\Ch_n^{\rel}(\Gamma^T,\Gamma^E,\alpha)$, that is to $\Ch_n^{\rel}(T,E, \alpha)$.
\end{proof}

\section{Chern classes in Deligne-Beilinson cohomology}

Here we recall the definition of Deligne-Beilinson cohomology and Chern classes in Deligne-Beilinson cohomology. For the comparison with the relative Chern character classes in the next section, it is essential to have complexes computing Deligne-Beilinson cohomology of a simplicial variety, which behave well with respect to topological morphisms (in the appropriate sense). These are constructed in the first subsection.

\subsection{Definition of Deligne-Beilinson cohomology}
\label{sec:DefDBCohom}

Let $A$ be a subring of $\R$ and write $A(n):= (2\pi i)^nA \subset \C$.
Let $X_{\dot}$ be a simplicial algebraic variety and choose a good compactification $j : X_{\dot} \hookrightarrow \ol X_{\dot}$.

The \emph{Deligne-Beilinson cohomology} $H^*_{\DB}(X_{\dot}, A(n))$ of $X_{\dot}$ is by definition 
\[
\Hyp^*\left(\ol X_{\dot}, \Cone\left(\Rhyp j_*A(n) \oplus \Fil^n\Omega^*_{\ol X_{\dot}}(\log D_{\dot}) \xrightarrow{\epsilon - \iota} \Rhyp j_*\Omega^*_{X_{\dot}}\right)[-1]\right).
\]
This definition is independent of choices (cf. the definition of the mixed Hodge structure on $H^*(X_{\dot}, \Z)$).\footnote{As for the Hodge filtration, one could also define Deligne-Beilinson cohomology using simplicial varieties and suitable proper hypercoverings. This is the definition in \cite[\S 5]{EV}.} Since Deligne-Beilinson cohomology is constructed from a cone, we have long exact sequences
\begin{multline*}
\dots \to H^k_{\DB}(X_{\dot}, A(n)) \to H^k(X_{\dot}, A(n)) \oplus \Fil^nH^k(X_{\dot}, \C) \xrightarrow{\epsilon - \iota} H^k(X_{\dot}, \C) \\ \to H^{k+1}_{\DB}(X_{\dot}, A(n)) \to \dots
\end{multline*}

We need a concrete complex computing Deligne-Beilinson cohomology.
First some notation. For an arbitrary manifold $Y$ and an abelian group $G$ we denote by $\Sing^*(Y, G)$ the complex of smooth singular cochains with coefficients in $G$. 
The Theorem of de Rham asserts, that the natural map $\A^*(Y) \xrightarrow{\mathscr I} \Sing^*(Y, \C)$, which sends a differential $n$-form $\omega$ to the singular cochain sending any smooth $c : \Delta^n \to Y$ to $\int_{\Delta^n} c^*\omega$, is a quasiisomorphism (see e.~g. \cite[theorem 1.15]{DupLNM}).

For $A\subset \R$ as above, we define the complex of \emph{modified differential forms} $\widetilde\A^*(Y,A(n))$ to be the quasi-pullback of the diagram
\[\xymatrix{
 & \A^*(Y) \ar[d]^{\mathscr I}_{\text{qis}}\\
\Sing^*(Y, A(n)) \ar@{^{(}->}[r]^{\mathrm{incl}} & \Sing^*(Y, \C).
}
\]

Now let $X_{\dot}$ be a simplicial manifold. 
Let $\Sing^*(X_{\dot}, G)$ be the total comlex associated with the cosimplicial complex $[p] \mapsto \Sing^*(X_p, G)$. Then we have a natural isomorphism $H^*(X_{\dot}, G) = H^*(\Sing^*(X_{\dot}, G))$.

As in the case of de Rham cohomology, $H^*(X_{\dot}, G)$ may also be computed using \emph{compatible singular cochains}:
We define the complex of compatible singular cochains $C^*(X_{\dot}, G)$ in analogy with that of simplicial differential forms: 
\begin{multline*}
 C^n(X_{\dot}, G) := \big\{ (\sigma_p)_{p\geq 0} \;|\; \sigma_p \in \Sing^n(\Delta^p \times X_p, G),\\
 (\delta^i \times \id)^*\sigma_p = (\id \times \del_i)^*\sigma_{p-1}, i=0, \dots, p, p \geq 1\big\}
\end{multline*}
There is a natural quasiisomorphism $\Phi: C^*(X_{\dot}, G) \to \Sing^*(X_{\dot}, G)$ given as follows (cf. \cite[2.1.3]{Soule}): For a compatible $n$-cochain $\sigma = (\sigma_p)_{p \geq 0}$, define $\Phi(\sigma)_{p, n-p} \in \Sing^{n-p}(X_p, G)$ to be the cochain that sends a singular $(n-p)$-simplex $f : \Delta^{n-p} \to X_p$ to $\sigma_p(\id_{\Delta^p} \times f) \in G$. Here $\times$ denotes the cross product\footnote{defined using the shuffle-map, see e.g. \cite[Kap. V, 5.8]{Lamotke}} of singular chains, and $\id_{\Delta^p} : \Delta^p \to \Delta^p$ is the canonical singular $p$-chain.
Using the above compatibility condition and standard properties of the cross product, it is easy to see, that $\Phi$ is a chain map. 

\begin{lemma}
Integration over simplices induces an integration map $\mathscr I: A^*(X_{\dot}) \to C^*(X_{\dot}, \C)$
fitting in a commutative diagram
\[\xymatrix{
A^*(X_{\dot}) \ar[d]^I \ar[r]^{\mathscr I} & C^*(X_{\dot}, \C) \ar[d]^{\Phi}\\
\A^*(X_{\dot}) \ar[r]^{\mathscr I} & \Sing^*(X_{\dot}, \C).
}
\]
\end{lemma}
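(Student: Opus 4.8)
The plan is to define $\mathscr I$ componentwise and then reduce the commutativity of the square to the de Rham theorem together with a Fubini-type identity. First I would set $\mathscr I(\omega) := (\mathscr I(\omega_p))_{p\geq 0}$, where on the right $\mathscr I$ is the ordinary integration-over-simplices map $\A^n(\Delta^p\times X_p)\to \Sing^n(\Delta^p\times X_p,\C)$ recalled above, so that each $\mathscr I(\omega_p)$ indeed lies in $\Sing^n(\Delta^p\times X_p,\C)$. That this family is a compatible cochain, i.e. lands in $C^n(X_{\dot},\C)$, is immediate from the naturality $f^*\circ\mathscr I=\mathscr I\circ f^*$ of the ordinary de Rham map: applying it to $f=\delta^i\times\id$ and $f=\id\times\del_i$ and invoking the compatibility $(\delta^i\times\id)^*\omega_p=(\id\times\del_i)^*\omega_{p-1}$ of $\omega$ yields exactly the defining relation for $\mathscr I(\omega)$. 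That $\mathscr I$ is a chain map then follows componentwise from Stokes' theorem, which is precisely the statement that the ordinary $\mathscr I$ commutes with differentials.

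For the commutativity of the square it suffices, by definition of the total complex $\Sing^*(X_{\dot},\C)=\bigoplus_{k+l=*}\Sing^l(X_k,\C)$, to compare the two composites in each bidegree $(k,l)$ after evaluating on an arbitrary smooth singular simplex $f:\Delta^l\to X_k$. Writing $\omega=\sum_{k'+l'=n}\omega^{k',l'}$ with $\omega^{k',l'}\in A^{k',l'}(X_{\dot})$, the lower-left composite $\mathscr I\circ I$ sends $\omega$ in bidegree $(k,l)$ to the cochain
\[
f\longmapsto \int_{\Delta^l} f^*\Big(\int_{\Delta^k}\omega^{k,l}_k\Big),
\]
using $I(\omega^{k,l})=\int_{\Delta^k}\omega^{k,l}_k$ from the remark after Theorem \ref{thm:filteredDupont} together with the definition of the ordinary $\mathscr I$ on $X_k$. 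The upper-right composite $\Phi\circ\mathscr I$ sends $\omega$ in bidegree $(k,l)$, by the definition of $\Phi$, to the cochain
\[
f\longmapsto \mathscr I(\omega_k)(\id_{\Delta^k}\times f)=\int_{\id_{\Delta^k}\times f}\omega_k,
\]
where $\id_{\Delta^k}\times f$ denotes the cross product of the fundamental chain $\id_{\Delta^k}$ with $f$.

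The heart of the matter is to identify these two numbers. The cross product $\id_{\Delta^k}\times f$ is, by the shuffle description, precisely the sum of the $(k+l)$-simplices of the standard triangulation of the prism $\Delta^k\times\Delta^l$ composed with $\id_{\Delta^k}\times f$, with signs matching the product orientation; hence $\int_{\id_{\Delta^k}\times f}\omega_k=\int_{\Delta^k\times\Delta^l}(\id_{\Delta^k}\times f)^*\omega_k$. Since the first factor maps by the identity, the pullback of a summand $\omega^{k',l'}_k$ carries exactly $k'$ differentials along $\Delta^k$, so only the summand with $k'=k$ (and then $l'=l$) is of top degree along $\Delta^k$ and survives the integration. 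Applying Fubini and the compatibility of fibre integration over $\Delta^k$ with base change along $f$, namely $f^*\big(\int_{\Delta^k}\omega^{k,l}_k\big)=\int_{\Delta^k}(\id_{\Delta^k}\times f)^*\omega^{k,l}_k$, I would then obtain
\[
\int_{\Delta^k\times\Delta^l}(\id_{\Delta^k}\times f)^*\omega^{k,l}_k=\int_{\Delta^l} f^*\Big(\int_{\Delta^k}\omega^{k,l}_k\Big),
\]
which is exactly the value produced by the lower-left composite. This establishes the commutativity.

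The main obstacle I expect is the orientation and sign bookkeeping in the middle step: one must verify that the shuffle (cross) product of $\id_{\Delta^k}$ and $f$ reproduces the product orientation of $\Delta^k\times\Delta^l$, and that the Fubini reduction introduces no spurious sign relative to the convention fixing the orientation of $\Delta^k$ (given by $dx_1\wedge\dots\wedge dx_k$) that is used in the definition of $I$. Everything else is formal; it is this sign check that ultimately pins down that $\mathscr I\circ I$ and $\Phi\circ\mathscr I$ agree on the nose rather than merely up to a sign.
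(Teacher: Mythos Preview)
Your proposal is correct and follows essentially the same route as the paper: define $\mathscr I$ componentwise, use the shuffle description of the cross product $\id_{\Delta^k}\times f$ to rewrite $\Phi\circ\mathscr I$ as an integral over $\Delta^k\times\Delta^l$, and then apply Fubini to recover $\mathscr I\circ I$. The paper handles the orientation/sign check you flag by citing the standard fact (Eilenberg--MacLane) that the signed shuffle sum gives an oriented triangulation of $\Delta^k\times\Delta^l$; your explicit bidegree decomposition is not needed but does no harm, since only the $(k,l)$-component survives integration over $\Delta^k$ anyway.
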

\begin{proof}
The map $\mathscr I: A^*(X_{\dot}) \to C^*(X_{\dot}, \C)$ is just given by applying the de Rham integration map $\mathscr I$ component-wise.
It is clearly well defined, and we have only to check, that the diagram commutes. Thus let $\omega = (\omega_p)_{p\geq 0} \in A^n(X_{\dot})$ be a simplicial $n$-form and let $f: \Delta^{n-p} \to X_p$ be a singular $n-p$-simplex  of $X_p$. Then the singular chain $\id_{\Delta^p} \times f$ is given by $\sum_{\mu} \sgn(\mu) (\id_{\Delta^p} \times f) \circ \mu$, where $\mu$ runs over all $(p,n-p)$-shuffles and $\mu$ also denotes the  $n$-simplex $\mu : \Delta^n \to \Delta^p \times \Delta^{n-p}$ corresponding to the shuffle $\mu$. On the right hand side of the formula $\id_{\Delta^p} \times f$ means just the cartesian product of maps.

Hence $\Phi\circ \mathscr I(\omega)$ sends the singular simplex $f$ to $\sum_{\mu} \sgn(\mu) \int_{\Delta^n} \mu^*(\id_{\Delta^p}\times f)^*\omega_p$. But since the signed sum over all $(p,n-p)$-shuffles corresponds to a oriented decomposition of $\Delta^p \times \Delta^{n-p}$ in $n$-simplices (cf. \cite[Section 5]{EilenbergMacLane}), this last sum is equal to $\int_{\Delta^p \times \Delta^{n-p}} (\id_{\Delta^p} \times f)^*\omega_p = \int_{\Delta^{n-p}} f^*(\int_{\Delta^p} \omega_p)$, which is also the result of applying $\mathscr I\circ I(\omega)$ to $f$.
\end{proof}

As before we define modified complexes $\widetilde A^*(X_{\dot}, A(n))$ resp. $\widetilde\A^*(X_{\dot}, A(n))$ as the quasi-pullbacks of the diagrams $C^*(X_{\dot}, A(n)) \to C^*(X_{\dot}, \C) \xleftarrow{\mathscr I} A^*(X_{\dot})$ resp.
$\Sing^*(X_{\dot}, A(n)) \to \Sing^*(X_{\dot}, \C) \xleftarrow{\mathscr I} \A^*(X_{\dot})$.

\begin{lemma}
Let $X_{\dot}$ be a simplicial variety and $X_{\dot} \overset{j}{\hookrightarrow} \ol X_{\dot}$ a good compactification. The Deligne-Beilinson cohomology $H^*_{\DB}(X_{\dot}, A(n))$ is naturally isomorphic to the cohomology of the complexes
\begin{gather*}
\Cone\left(\widetilde A^*(X_{\dot}, A(n)) \oplus \Fil^nA^*(\ol X_{\dot}, \log D_{\dot}) \xrightarrow{\epsilon - \iota} A^*(X_{\dot}) \right)[-1] \quad  \text{or} \\ 
\Cone\left(\widetilde \A^*(X_{\dot}, A(n)) \oplus \Fil^n\A^*(\ol X_{\dot}, \log D_{\dot}) \xrightarrow{\epsilon - \iota} \A^*(X_{\dot}) \right)[-1].
\end{gather*}
\end{lemma}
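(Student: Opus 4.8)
The plan is to replace, one term at a time, the objects in the derived-category cone defining $H^*_{\DB}(X_\dot, A(n))$ by explicit complexes of global sections, keeping careful track of how the maps $\epsilon$ and $\iota$ are realised. Since $\A^*_{X_\dot}$ is a fine resolution of $\Omega^*_{X_\dot}$ and $\Fil^n\A^*_{\ol X_\dot}(\log D_\dot)$ a fine resolution of $\Fil^n\Omega^*_{\ol X_\dot}(\log D_\dot)$, we have $\Hyp^*(\ol X_\dot, \Rhyp j_*\Omega^*_{X_\dot}) = H^*(\A^*(X_\dot))$ and $\Hyp^*(\ol X_\dot, \Fil^n\Omega^*_{\ol X_\dot}(\log D_\dot)) = H^*(\Fil^n\A^*(\ol X_\dot, \log D_\dot))$, while $\Hyp^*(\ol X_\dot, \Rhyp j_* A(n)) = H^*(X_\dot, A(n)) = H^*(\Sing^*(X_\dot, A(n)))$. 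Under these identifications the inclusion $\iota$ is represented by the honest chain map $\Fil^n\A^*(\ol X_\dot, \log D_\dot) \to \A^*(X_\dot)$ (restriction of logarithmic forms to $X_\dot$), so the only point requiring care is the map $\epsilon$, which at the level of these complexes is a priori only a zig-zag $\Sing^*(X_\dot, A(n)) \xrightarrow{\mathrm{incl}} \Sing^*(X_\dot, \C) \xleftarrow{\mathscr I} \A^*(X_\dot)$ with the wrong-way arrow $\mathscr I$ a quasi-isomorphism by de Rham's theorem.

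The role of the modified complex $\widetilde\A^*(X_\dot, A(n))$ is exactly to rigidify this zig-zag. By definition it is the quasi-pullback of that diagram; since $\mathscr I$ is a quasi-isomorphism, the projection $\widetilde\A^*(X_\dot, A(n)) \to \Sing^*(X_\dot, A(n))$ is a quasi-isomorphism, so $\widetilde\A^*$ still computes $H^*(X_\dot, A(n))$, while the second projection $\widetilde\A^*(X_\dot, A(n)) \to \A^*(X_\dot)$ is now a genuine chain map inducing on cohomology precisely the comparison $\epsilon$ coming from $A(n) \hookrightarrow \C$. Feeding these compatible representatives into the cone, I would show that
\[
\Cone\left(\widetilde\A^*(X_\dot, A(n)) \oplus \Fil^n\A^*(\ol X_\dot, \log D_\dot) \xrightarrow{\epsilon - \iota} \A^*(X_\dot)\right)[-1]
\]
computes $H^*_{\DB}(X_\dot, A(n))$: one compares the two long exact cone sequences, the derived one and the explicit one, via a morphism which on the three non-relative terms is, respectively, the identity and the quasi-isomorphisms above and is compatible with the connecting maps $\epsilon-\iota$, so the five lemma yields the isomorphism on the relative terms. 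This is the step I expect to be the main obstacle, since it requires checking that the quasi-pullback genuinely turns $\epsilon$ into an honest chain map compatibly with the cone differential (that the relevant squares commute on the nose, not merely up to homotopy); the bookkeeping for quasi-pullbacks from Appendix \ref{app:HomolAlg} is what makes this work.

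It remains to pass from the smooth-form version to the simplicial-de-Rham version. Here I would invoke Dupont's theorem: the integration map $I$ of Theorem \ref{thm:Dupont} gives a quasi-isomorphism $A^*(X_\dot) \xrightarrow{\ \sim\ } \A^*(X_\dot)$, and its filtered refinement (Theorem \ref{thm:filteredDupont}, together with the very definition of $\Fil^nA^*(\ol X_\dot, \log D_\dot)$ as the quasi-pullback along $I$) gives a quasi-isomorphism $\Fil^nA^*(\ol X_\dot, \log D_\dot) \xrightarrow{\ \sim\ } \Fil^n\A^*(\ol X_\dot, \log D_\dot)$. Likewise $\Phi\colon C^*(X_\dot, A(n)) \to \Sing^*(X_\dot, A(n))$ is a quasi-isomorphism, and by the commutativity of the square relating $I$, $\mathscr I$ and $\Phi$ in the previous lemma these three maps assemble into a morphism from the quasi-pullback diagram defining $\widetilde A^*(X_\dot, A(n))$ to the one defining $\widetilde\A^*(X_\dot, A(n))$, hence induce a quasi-isomorphism $\widetilde A^*(X_\dot, A(n)) \xrightarrow{\ \sim\ } \widetilde\A^*(X_\dot, A(n))$. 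Thus $I$, this last map, and the filtered $I$ constitute a quasi-isomorphism between the two cone diagrams commuting with $\epsilon - \iota$; since a morphism of cones that is a quasi-isomorphism on all three terms is a quasi-isomorphism on the cone, the first complex of the lemma is quasi-isomorphic to the second, and both compute $H^*_{\DB}(X_\dot, A(n))$.
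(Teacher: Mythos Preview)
Your approach is essentially the same as the paper's and is correct in outline: identify the three terms of the derived cone with the explicit complexes $\widetilde\A^*(X_\dot,A(n))$, $\Fil^n\A^*(\ol X_\dot,\log D_\dot)$, $\A^*(X_\dot)$, and compare via the long exact sequence of the cone; then pass to the simplicial de Rham model via Dupont's integration map $I$ and the induced maps on the quasi-pullbacks.

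One correction in the second step. You assert that the maps $I$, the induced $\widetilde A^*\to\widetilde\A^*$, and the ``filtered $I$'' $\Fil^nA^*(\ol X_\dot,\log D_\dot)\to\Fil^n\A^*(\ol X_\dot,\log D_\dot)$ make the square with $\epsilon-\iota$ commute on the nose. This is true for the $\epsilon$-component, but not for the $\iota$-component: by the very definition of $\Fil^nA^*(\ol X_\dot,\log D_\dot)$ as a quasi-pullback, the square
\[
\xymatrix{
\Fil^nA^*(\ol X_\dot,\log D_\dot) \ar[r]^-{\iota_A}\ar[d] & A^*(X_\dot)\ar[d]^I\\
\Fil^n\A^*(\ol X_\dot,\log D_\dot) \ar[r]^-{\iota_\A} & \A^*(X_\dot)
}
\]
commutes only up to the \emph{canonical homotopy} built into the quasi-pullback (this is exactly what the paper states just after introducing $\Fil^nA^*$). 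So the comparison of the two cones uses a homotopy-commutative, not strictly commutative, square; one then invokes Lemma~\ref{lem:HomotopyOnCones} (or the explicit formula $(a,b)\mapsto(0,h(b))$) to get a well-defined map of cones, which is a quasi-isomorphism because it is so on each component. This is the content of the paper's first paragraph of proof. With that adjustment your argument goes through.
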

\begin{proof}
Using the fact, which follows from the constructions, that the diagram 
\[
\xymatrix{
\widetilde A^*(X_{\dot}, A(n)) \oplus \Fil^nA^*(\ol X_{\dot}, \log D_{\dot}) \ar[r]^-{\epsilon-\iota} \ar[d] & A^*(X_{\dot})\ar[d]^I\\
\widetilde \A^*(X_{\dot}, A(n)) \oplus \Fil^n\A^*(\ol X_{\dot}, \log D_{\dot}) \ar[r]^-{\epsilon-\iota} & \A^*(X_{\dot})
}
\]
commutes up to canonical homotopy, one constructs a map from the first complex to the second, which is a quasiisomorphism, since it is a quasiisomorphism on both components of the cone.

Furthermore, in the derived category $D^+(Ab)$ there are natural isomorphisms $\widetilde \A^*(X_{\dot}, A(n)) \simeq \Rhyp\Gamma(X_{\dot}, A(n)) \simeq \Rhyp\Gamma(\ol X_{\dot}, \Rhyp j_*A(n))$, $\A^*(X_{\dot}) \simeq \Rhyp\Gamma(\ol X_{\dot}, \Rhyp j_*\Omega^*_{X_{\dot}})$ and 
$\Fil^n\A^*(\ol X_{\dot}, \log D_{\dot}) \simeq \Rhyp\Gamma(\ol X_{\dot}, \Omega^{\geq n}_{\ol X_{\dot}}(\log D_{\dot}))$. Comparing with the definition of Deligne-Beilinson cohomology and using the long exact sequence of the cohomology of a cone, the claim follows.
\end{proof}

\begin{rem}
These complexes are also defined for simplicial schemes of the form $X \otimes S$ with an algebraic variety $X$ and a simplicial set $S$, and we use them to define the Deligne-Beilinson cohomology in this situation.
\end{rem}

The advantage of this description of the Deligne-Beilinson cohomology of simplicial varieties is, that we may define a pullback map $\alpha^* : \widetilde A^*(X_{\dot}, A(n)) \to \widetilde A^*(Y_{\dot}, A(n))$, whenever $\alpha: Y_{\dot} \rightsquigarrow X_{\dot}$ is a topological morphism:
\begin{lemma}
Let $\alpha: Y_{\dot} \rightsquigarrow X_{\dot}$ be a topological morphism of simplicial manifolds. Then there is a well defined pullback map $\alpha^*: \widetilde A^*(X_{\dot}, A(n)) \to \widetilde A^*(Y_{\dot}, A(n))$. It is compatible with the natural maps $\widetilde A^* \to A^*$.
\end{lemma}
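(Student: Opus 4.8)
The plan is to build $\alpha^*$ componentwise on the three complexes entering the quasi-pullback that defines $\widetilde A^*(\,\cdot\,, A(n))$, namely $A^*(\,\cdot\,)$, $C^*(\,\cdot\,, A(n))$ and $C^*(\,\cdot\,, \C)$, and then to invoke functoriality of the quasi-pullback (Appendix \ref{app:HomolAlg}). On the form component the map $\alpha^*: A^*(X_{\dot}) \to A^*(Y_{\dot})$ is already available from Remark \ref{rem:PullbackByTopMorph}. For the singular components, given a compatible cochain $\sigma = (\sigma_p)_{p\geq 0}$ with $\sigma_p \in \Sing^*(\Delta^p \times X_p, G)$, I set
\[
(\alpha^*\sigma)_p := (\mathrm{pr}_{\Delta^p}, \alpha_p)^*\sigma_p \in \Sing^*(\Delta^p \times Y_p, G),
\]
the pullback of the smooth singular cochain $\sigma_p$ along the smooth map $(\mathrm{pr}_{\Delta^p}, \alpha_p): \Delta^p \times Y_p \to \Delta^p \times X_p$; since $\alpha_p$ is smooth, composition with smooth singular simplices stays within smooth singular cochains. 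This is carried out for both $G = A(n)$ and $G = \C$.

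First I would check that $\alpha^*\sigma$ is again compatible. The two squares of the diagram in Remark \ref{rem:PullbackByTopMorph}, specialized to $\phi = \delta^i : [p-1] \to [p]$, say precisely that $(\mathrm{pr}_{\Delta^p}, \alpha_p)$ intertwines the coface map $\delta^i \times \id$ and the face map $\id \times \del_i$ through the common map $(\mathrm{pr}_{\Delta^{p-1}}, \alpha_p \circ (\delta^i \times \id))$. Pulling $\sigma_p$ and $\sigma_{p-1}$ back through these squares and invoking the compatibility relation $(\delta^i \times \id)^*\sigma_p = (\id \times \del_i)^*\sigma_{p-1}$ of $\sigma$ itself yields $(\delta^i \times \id)^*(\alpha^*\sigma)_p = (\id \times \del_i)^*(\alpha^*\sigma)_{p-1}$, exactly by the same computation as for differential forms. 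Because componentwise pullback of singular cochains commutes with the singular coboundary, $\alpha^*$ is a chain map on $C^*(\,\cdot\,, G)$; and since it is defined for arbitrary coefficients and is natural in $G$, it commutes with the coefficient inclusion $C^*(\,\cdot\,, A(n)) \hookrightarrow C^*(\,\cdot\,, \C)$.

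It then remains to see that $\alpha^*$ commutes with the integration map $\mathscr I : A^*(\,\cdot\,) \to C^*(\,\cdot\,, \C)$ introduced above, which is applied componentwise. On a single component this is the naturality of de Rham integration $\mathscr I : \A^*(Z) \to \Sing^*(Z, \C)$ under pullback along the smooth map $Z' = \Delta^p \times Y_p \to \Delta^p \times X_p = Z$: for a smooth singular simplex $c$ in $Z'$ one has $\mathscr I(\omega)\big((\mathrm{pr}_{\Delta^p}, \alpha_p)\circ c\big) = \int c^*(\mathrm{pr}_{\Delta^p}, \alpha_p)^*\omega = \mathscr I\big((\mathrm{pr}_{\Delta^p}, \alpha_p)^*\omega\big)(c)$, so the two maps agree on the nose. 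Hence the three maps $\alpha^*$ form a strict morphism between the defining diagrams of $\widetilde A^*(X_{\dot}, A(n))$ and $\widetilde A^*(Y_{\dot}, A(n))$, and functoriality of the quasi-pullback produces the desired $\alpha^* : \widetilde A^*(X_{\dot}, A(n)) \to \widetilde A^*(Y_{\dot}, A(n))$. Compatibility with the projections $\widetilde A^* \to A^*$ is then immediate, since on the $A^*$-component this $\alpha^*$ is by construction the pullback of Remark \ref{rem:PullbackByTopMorph}. None of the steps presents a genuine obstacle; the only point that needs care is the componentwise naturality of $\mathscr I$ along the non-product map $(\mathrm{pr}_{\Delta^p}, \alpha_p)$, which however is nothing but functoriality of de Rham integration.
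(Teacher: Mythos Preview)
Your argument is correct and follows essentially the same approach as the paper: define $\alpha^*$ componentwise on $A^*$, $C^*(\,\cdot\,,A(n))$, $C^*(\,\cdot\,,\C)$ via $(\mathrm{pr}_{\Delta^p},\alpha_p)^*$, verify compatibility with the inclusion $A(n)\hookrightarrow\C$ and with $\mathscr I$ by the standard naturality of de Rham integration, and conclude by functoriality of the quasi-pullback. You are in fact slightly more explicit than the paper in checking that $\alpha^*\sigma$ remains a compatible cochain, but the core computation (the commutativity square for $\mathscr I$) is identical.
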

\begin{proof}
By definition $\widetilde A^*(X_{\dot}, A(n))$ is the quasi-pullback of the diagram
$C^*(X_{\dot}, A(n)) \to C^*(X_{\dot}, \C) \xleftarrow{\mathscr I} A^*(X_{\dot})$. Obviously, $\alpha^*$ is well defined on each of the three complexes (cf. remark \ref{rem:PullbackByTopMorph}) and we only have to check, that it is compatible with the maps between them. This is is clear for the left hand map. For $\mathscr I$ this follows from the commutativity of the diagram
\[\xymatrix@C+1cm{
 \A^n(\Delta^p \times X_p) \ar[r]^{(\id_{\Delta^p},\alpha_p)^*} \ar[d]^{\mathscr I} & \A^n(\Delta^p \times Y_p) \ar[d]^{\mathscr I}\\
\Sing^n(\Delta^p \times X_p, \C) \ar[r]^{(\id_{\Delta^p}, \alpha_p)^*} & \Sing^n(\Delta^p \times Y_p, \C)
}
\]
which is established as follows: Let $\omega \in \A^n(\Delta^p \times X_p)$ and $\tau: \Delta^n \to \Delta^p\times Y_p$ be a smooth simplex. Then $(\id_{\Delta^p}, \alpha_p)^*\mathscr I(\omega)(\tau)= \int_{\Delta^n} ((\id_{\Delta^p},\alpha_p)\circ\tau)^*\omega=
\int_{\Delta^n} \tau^* ((\id_{\Delta^p},\alpha_p)^*\omega) = \mathscr I((\id_{\Delta^p},\alpha_p)^*\omega)(\tau)$.
\end{proof}

\subsection{Chern classes in Deligne-Beilinson cohomology}

There exists a theory of Chern (character) classes in Deligne-Beilinson cohomology for algebraic vector bundles on simplicial varieties (see \cite[\S 8]{EV}). We recall the relevant facts. To fix the normalizations we first of all recall the defintion of Chern classes in singular cohomology.

\begin{dfn}
Let $X$ be a (simplicial) complex manifold. The first Chern class $c_1^{\top}$ in singular cohomology (for holomorphic line bundles) is the connecting homomorphism
\[
c_1^{\top}: H^1(X, \O_X^*) \to H^2(X, \Z(1))
\]
associated with the short exact sequence of sheaves on $X$
\[
0 \to \Z(1) \to \O_X \xrightarrow{\exp} \O_X^* \to 0.
\]
\end{dfn}
\begin{rem}\label{rem:NormalizationChernClasses}
One can also use the sequence $0 \to \Z \to \O_X \xrightarrow{\exp(2\pi i\underline{\;\;})} \O_X^* \to 0$ to get integer valued Chern classes. This normalization for the first Chern class is also often used by algebraic geometers (e.g. \cite[Ch. I \S 1]{GH}). It differs from ours by the factor $2\pi i$. On the other hand topologists sometimes use yet another normalization: If $c_1^{\text{Milnor-Stasheff}}$ denotes the classical integer valued first Chern class as constructed e.g. in \cite{MS}, then $c_1^{\top} = -2\pi i c_1^{\text{Milnor-Stasheff}}$. This follows e.g. from \cite[Appendix C, Theorem (p. 306)]{MS} together with \cite[Ch. I \S1, Proposition (p. 141)]{GH}.

For later reference we note, that Burgos \cite{Burgos} uses topologists' normalization for his integer valued Chern classes $b_i$ and defines the ``twisted Chern classes'' $c_i^{\text{Burgos}} := (2\pi i)^i b_i$. 
In fact, the construction in \cite[section 4.2]{Burgos} is exactly the same as that in \cite[\S 14]{MS} (alternatively, one may look at the Chern-Weil theoretic approach in \cite[Proposition 5.27]{Burgos}). In particular, $c_1^{\top} = -c_1^{\text{Burgos}}$ and we have corresponding signs for the higher Chern and Chern character classes.
\end{rem}

The splitting principle also holds for singular cohomology and higher Chern classes $c_n^{\top}(\E) \in H^{2n}(X_{\dot}, \Z(n))$ and Chern character classes $\Ch_n^{\top}(\E) \in H^{2n}(X_{\dot}, \Q(n))$ for holomorphic vector bundles $\E$ are constructed as in section \ref{sec:HigherChernClasses}.

\begin{rem}\label{rem:CompatibilityChernClasses}
It is easy to see, that the diagram 
\[\xymatrix@R-0.4cm{
& H^2(X, \Z(1)) \ar[dr] \\
H^1(X, \O_X^*) \ar[ur]^{c_1^{\top}} \ar[dr]_{c_1} & & H^2(X, \C)\\
& \Hyp^2(X, \Omega^{\geq 1}_X) \ar[ur]
}
\]
commutes. In particular, if $\E$ is an algebraic vector bundle, the higher Chern (character) classes $c_n^{\top}(\E)$ resp. $\Ch_n^{\top}(\E)$ are mapped to  $c_n(\E)$ resp. $\widetilde\Ch_n(\E) \in \Fil^nH^{2n}(X, \C)$ under the natural map $H^{2n}(X, \Z(n)) \to H^{2n}(X, \C)$.
\end{rem}

The only thing we need to know (which is in fact easy to see using the long exact sequence of Deligne-Beilinson cohomology of $B_{\dot}\GL_r(\C)$) is:
Chern character classes $\Ch_n^{\DB}(\E)$ for \emph{algebraic} vector bundles $\E$ on (simplicial) varieties $X$ in Deligne-Beilinson cohomology $H^{2n}_{\DB}(X, \Q(n))$ are uniquely determined by the conditions, that they are functorial and compatible with the Chern character classes in singular cohomology under the natural map $H^{2n}_{\DB}(X, \Q(n)) \to H^{2n}(X, \Q(n))$ \cite[Prop. 8.2]{EV}.

\section[Comparison of Chern character classes]{Comparison of relative and Deligne-Beilinson Chern character classes}

Let $X_{\dot}$ be a simplicial algebraic variety and $A$ a subring of $\R$. There are natural morphisms
\[
H^{*-1}_{\rel}(X_{\dot}, n) = H^{*-1}(X_{\dot}, \C)/\Fil^nH^{*-1}(X_{\dot}, \C) \to H^*_{\DB}(X_{\dot}, A(n))
\]
induced on the defining cones by the maps in the commutative diagram
\[
\xymatrix{
\Fil^nA^*(\ol X_{\dot}, \log D_{\dot}) \ar[r]^-{\iota} \ar[d]_{\mathrm{incl.}} & A^*(X_{\dot}) \ar[d]^{-\id}\\
\widetilde A^*(X_{\dot}, A(n)) \oplus \Fil^nA^*(\ol X_{\dot}, \log D_{\dot}) \ar[r]^-{\epsilon-\iota} & A^*(X_{\dot}).
}
\]

Now let $E/X_{\dot}$ be an \emph{algebraic}  $\GL_r(\C)$-bundle, which may be viewed as an algebraic vector bundle,
and $\alpha : T \to E$ a trivialization of the associated topological bundle. 
Then we have the characteristic classes $\widetilde\Ch_n^{\rel}(T, E, \alpha) \in H^{2n-1}(X_{\dot}, \C)/\Fil^nH^{2n-1}(X_{\dot}, \C)$ and $\Ch_n^{\DB}(E) \in H^{2n}_{\DB}(X_{\dot}, \Q(n))$ and we may compare them by the above homomorphism.
\begin{thm}\label{thm:comparison}
$\widetilde\Ch_n^{\rel}(T, E, \alpha)$ is mapped to $(-1)^{n-1}\Ch_n^{\DB}(E)$ under the natural map $H^{2n-1}(X_{\dot}, \C)/\Fil^nH^{2n-1}(X_{\dot}, \C) \to H^{2n}_{\D}(X_{\dot}, \Q(n))$.
\end{thm}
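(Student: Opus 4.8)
The plan is to reduce the statement to the universal case on $B_{\dot}\GL_r(\C)$, paralleling the construction of both classes. Write $G=\GL_r(\C)$ and let $p\colon E_{\dot}\to X_{\dot}$ be the principal bundle of $E$. In analogy with the relative cohomology $H^{E,*}_{\rel}(X_{\dot},n)$ of Section \ref{sec:RelativeChernCharClasses}, I would introduce the evident Deligne--Beilinson analogue $H^{E,*}_{\DB}(X_{\dot},\Q(n))$, obtained by replacing the smooth singular and de Rham terms $\widetilde A^*(X_{\dot},\Q(n))$, $A^*(X_{\dot})$ in the cone defining $H^*_{\DB}$ by their counterparts on the principal bundle $E_{\dot}$ (leaving the Hodge term $\Fil^nA^*(\ol X_{\dot},\log D_{\dot})$ on the base). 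As in \eqref{seq:RelCohomCSchemes} there is a map $p^*\colon H^*_{\DB}(X_{\dot},\Q(n))\to H^{E,*}_{\DB}(X_{\dot},\Q(n))$, and I set $\widetilde\Ch_n^{\DB}(E):=p^*\Ch_n^{\DB}(E)$; the comparison map of the diagram preceding the theorem extends verbatim to a map $H^{E,2n-1}_{\rel}(X_{\dot},n)\to H^{E,2n}_{\DB}(X_{\dot},\Q(n))$.

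The heart of the matter is the universal identity $\widetilde\Ch_n^{\rel}(E)\mapsto(-1)^n\,\widetilde\Ch_n^{\DB}(E)$ under this map. By functoriality both sides are pulled back along the algebraic classifying map $g$ from $B_{\dot}G$, so it suffices to treat $E^{\univ}$. Here the vanishing $H^i(E_{\dot}G,\C)=0=H^i(E_{\dot}G,\Q(n))$ for $i>0$ (Lemma \ref{lem:CohomEG} and its $\Q(n)$-analogue, whose homotopy argument is coefficient-free) collapses the long exact sequences of both cones: exactly as in Proposition \ref{prop:CRefinedClasses} one gets $H^{E^{\univ},2n-1}_{\rel}(B_{\dot}G,n)\cong\Fil^nH^{2n}(B_{\dot}G,\C)$ and likewise $H^{E^{\univ},2n}_{\DB}(B_{\dot}G,\Q(n))\cong\Fil^nH^{2n}(B_{\dot}G,\C)$, and the comparison map becomes the identity on these Hodge groups. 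Under these isomorphisms $\widetilde\Ch_n^{\rel}(E^{\univ})$ corresponds to $\Ch_n(E^{\univ})=(-1)^n\widetilde\Ch_n(E^{\univ})$ by Proposition \ref{prop:AlgChernCharClasses}, whereas $\widetilde\Ch_n^{\DB}(E^{\univ})$ corresponds to the Hodge component $\widetilde\Ch_n(E^{\univ})$ of $\Ch_n^{\DB}(E^{\univ})$ (Remark \ref{rem:CompatibilityChernClasses}); the factor $(-1)^n$ is thus the whole content.

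Finally I would descend along the topological section. Since $\alpha\colon X_{\dot}\rightsquigarrow E_{\dot}$ satisfies $p\circ\alpha=\id$, it induces $\alpha^*\colon H^{E,*}_{\DB}(X_{\dot},\Q(n))\to H^*_{\DB}(X_{\dot},\Q(n))$ left inverse to $p^*$, and this is legitimate precisely because $\alpha$ lives over the fixed base $X_{\dot}$, so only the smooth forms and singular cochains on $E_{\dot}$ are pulled back while the Hodge term is untouched. Hence $\alpha^*\widetilde\Ch_n^{\DB}(E)=\Ch_n^{\DB}(E)$, and applying $\alpha^*$ to the universal identity (which commutes with the comparison map) together with the definition $\widetilde\Ch_n^{\rel}(T,E,\alpha)=-\alpha^*\widetilde\Ch_n^{\rel}(E)$ gives $\widetilde\Ch_n^{\rel}(T,E,\alpha)|_{\DB}=-(-1)^n\Ch_n^{\DB}(E)=(-1)^{n-1}\Ch_n^{\DB}(E)$. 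The sign $(-1)^{n-1}$ is the product of the $(-1)^n$ from $\Ch_n=(-1)^n\widetilde\Ch_n$ and the $-1$ built into $\widetilde\Ch_n^{\rel}(T,E,\alpha)$.

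I expect the main obstacle to be the secondary (intermediate Jacobian) part of the comparison. Matching only the primary invariants --- the Hodge class $\widetilde\Ch_n(E)$ and the singular class $\Ch_n^{\top}(E)$, which here even vanishes since $\alpha$ trivializes the topological bundle --- is insufficient, because the map $H^{2n}_{\DB}\to H^{2n}(X_{\dot},\Q(n))\oplus\Fil^nH^{2n}$ has a nontrivial kernel; it is the universal vanishing $H^{\mathrm{odd}}(B_{\dot}G,\C)=0$ and $H^{>0}(E_{\dot}G,-)=0$ that rigidifies the refined groups and pins this secondary part down. Verifying that $\alpha^*$ and the comparison map really extend to and commute on the refined groups, and that the two sign conventions combine correctly, are the remaining technical points; the explicit cocycle $((0,-g^*ch_n),-\Ch_n^{\rel}(\Gamma^T,\Gamma^E,\alpha))$ from the proof of Proposition \ref{prop:ComparisonOfRelClasses} gives a concrete cross-check of the signs.
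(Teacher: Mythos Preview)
Your proposal is correct and follows essentially the same route as the paper: you introduce the refined Deligne--Beilinson group $H^{E,*}_{\DB}(X_{\dot},\Q(n))$, reduce the comparison $\widetilde\Ch_n^{\rel}(E)\mapsto (-1)^n p^*\Ch_n^{\DB}(E)$ to the universal case on $B_{\dot}G$ via the isomorphisms to $\Fil^nH^{2n}(B_{\dot}G,\C)$ forced by the vanishing of $H^{>0}(E_{\dot}G,-)$, and then apply $\alpha^*$. The paper's argument is exactly this, with the commutative triangle $H^{E^{\univ},2n-1}_{\rel}\to H^{E^{\univ},2n}_{\DB}\to \Fil^nH^{2n}$ making your phrase ``the comparison map becomes the identity on these Hodge groups'' precise.
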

\begin{proof}
Let $X_{\dot} \overset{j}{\hookrightarrow} \ol X_{\dot}$ be a good compactification and denote by $E_{\dot} \xrightarrow{p} X_{\dot}$ the principal bundle associated with $E$. Define
\begin{multline*}
H^{E,*}_{\DB}(X_{\dot}, \Q(n)) :=\\
H^*\left(\Cone\left(\widetilde A^*(E_{\dot}, \Q(n)) \oplus \Fil^nA^*(\ol X_{\dot},\log D_{\dot}) \xrightarrow{\epsilon - p^*\circ\iota} A^*(E_{\dot})\right)[-1]\right).
\end{multline*}
Similar as in the case of relative cohomology groups, we have a natural map $p^*: H^*_{\DB}(X_{\dot}, \Q(n)) \to H^{E,*}_{\DB}(X_{\dot},\Q(n))$ and a left inverse $\alpha^*$ of $p^*$ for a topological trivialization $\alpha$ of $E$. Moreover, there is a natural map $H^{E,*-1}_{\rel}(X_{\dot}, n) \to H^{E,*}_{\DB}(X_{\dot}, \Q(n))$ fitting in a commutative diagram (in the obvious sense)
\[
\xymatrix{
H^{E,*-1}_{\rel}(X_{\dot}, n) \ar[r] \ar@/^1pc/[d]^{\alpha^*} & H^{E,*}_{\DB}(X_{\dot}, \Q(n)) \ar@/^1pc/[d]^{\alpha^*}\\
H^{*-1}_{\rel}(X_{\dot}, n) \ar[u]^{p^*} \ar[r] & H^*_{\DB}(X_{\dot}, \Q(n)). \ar[u]^{p^*}
}
\]
We claim, that the refined class $\widetilde\Ch_n^{\rel}(E)$ is mapped to $p^*\Ch_n^{\D}(E)$ by the upper horizontal map. Since both classes are functorial, it suffices to treat the case of the universal bundle $E^{\univ}/B_{\dot}\GL_r(\C)$. Write $G:= \GL_r(\C)$. Since the cohomology of $E_{\dot}G$ vanishes in positive degrees and the cohomology of $B_{\dot}G$ vanishes in odd degrees, we have the following commutative diagram with exact rows:
\[
\xymatrix@C-0.43cm{
% 0 \ar[r] & H^{E^{\univ},2n-1}_{\rel}(B_{\dot}G, \Q(n))\ar[d] \ar[r] & \Fil^nH^{2n}(B_{\dot}G, \C)\ar@{=}[d] \ar[r] & 0\\
0 \ar[r] & H^{E^{\univ},2n}_{\D}(B_{\dot}G, \Q(n)) \ar[r] & \Fil^nH^{2n}(B_{\dot}G, \C) \ar[r] & 0\\
0 \ar[r] & H^{2n}_{\D}(B_{\dot}G, \Q(n)) \ar[r]\ar[u]^{p^*} & {\scriptstyle H^{2n}(B_{\dot}G,\Q(n)) \oplus \Fil^nH^{2n}(B_{\dot}G, \C)} \ar[r]^-{\epsilon-\iota}\ar[u]^{\mathrm{pr}_2} & H^{2n}(B_{\dot}G, \C).
}
\]
By definition, $\Ch_n^{\DB}(E^{\univ})$ is mapped to $\Ch_n^{\top}(E^{\univ})$ in $H^{2n}(B_{\dot}G,\Q(n))$. Since $\epsilon(\Ch_n^{\top}(E^{\univ})) = (-1)^n\iota(\Ch_n(E^{\univ}))$ (cf. proposition \ref{prop:AlgChernCharClasses}), it follows from the above diagram, that $p^*\Ch_n^{\D}(E^{\univ})$ is mapped to $(-1)^n\Ch_n(E^{\univ})$ in $\Fil^nH^{2n}(B_{\dot}G,\C)$. From the defining property of the refined classes and the commutativity of the diagram
\[
\xymatrix{
H^{E^{\univ},2n-1}_{\rel}(B_{\dot}G,n) \ar[d]\ar[dr]\\
H^{E^{\univ},2n}_{\D}(B_{\dot}G, \Q(n)) \ar[r] & \Fil^nH^{2n}(B_{\dot}G, \C),
}
\]
it follows, that $\widetilde\Ch_n^{\rel}(E^{\univ})$ is mapped to $(-1)^np^*\Ch_n^{\DB}(E^{\univ})$, whence our claim.

But then $\widetilde\Ch_n^{\rel}(T,E,\alpha) =-\alpha^*\widetilde\Ch_n^{\rel}(E)$ is mapped to $(-1)^{n-1}\alpha^*p^*\Ch_n^{\DB}(E)=(-1)^{n-1}\Ch_n^{\DB}(E)$.
\end{proof}

\begin{rem}
Obviously, the theorem remains true in the case, where $X_{\dot}$ is of the form $X\otimes S$ and this is the case we will be interested in.
\end{rem}

\chapter{Relative $K$-theory and regulators}
\label{ch:RelKandReg}

Let $X= \Spec(A)$ be a smooth affine scheme of finite type over $\C$. Then the algebraic and topological $K$-theory of $X$ resp. its underlying complex manifold are given (for $i > 0$) by 
\[
K_i(X) = \pi_i(B\GL(A)^+) \quad\text{resp.}\quad K^{-i}_{\top}(X) = \pi_i(BU^X)
\]
and there is a natural morphism $B\GL(A)^+ \to BU^X$ in the homotopy category of spaces (section 6 of Gillet's article in \cite{GilletTopK}). We define the \emph{relative $K$-group} $K_i^{\rel}(X)$ as the $i$-th homotopy group of the homotopy fibre of this map. The goal of this chapter is to construct relative Chern character maps $\Ch_{n,i}^{\rel}: K_i^{\rel}(X) \to H^{2n-i-1}(X, \C)/\Fil^nH^{2n-i-1}(X, \C)$ and to compare these with the Chern character in Deligne-Beilinson cohomology. 
The construction is roughly as follows: We have the Hurewicz morphism from the relative $K$-groups to the homology of a certain space and we construct a simplicial set $\mathscr F$ (sections \ref{sec:TopK} and \ref{sec:RelK}), whose geometric realization admits a natural acyclic map to this space, hence has the same homology. By construction, there will be a canonical topologically trivialized bundle on the simplicial variety $X\otimes \mathscr F$, whose relative Chern character class induces the desired map on the homology of $\mathscr F$ (section \ref{sec:RelChernChar}). 
Using Jouanolou's trick we will extend the relative Chern character to non affine varieties in section \ref{sec:NonAffineVar}. The comparison with the Deligne-Beilinson Chern character is done in section \ref{sec:ComparisonChernCharacters}.
In the last section, we will apply this to the case $X=\Spec(\C)$ to get a new proof of Burgos' theorem \cite{Burgos}, that Borel's regulator is twice Beilinson's regulator.

Throughout, we work in the category of compactly generated Hausdorff spaces. All constructions (in particular (fibered) products) are carried out in this category.

\section{Topological $K$-theory} 
\label{sec:TopK}

Our first task is to give an adequate simplicial model for the topological $K$-groups of a manifold $X$ in terms of smooth maps $\Delta^p \times X \to \GL_r(\C)$ in order to be able to apply our theory of topological bundles.

Let $X$ be a finite dimensional CW complex. By the \emph{topological $K$-theory of $X$} we mean the representable complex $K$-theory of $X$, i.e. 
\[
K^{-i}_{\top}(X) = \pi_i(BU^X) = [X_+, \Omega^i BU] = [\Sigma^i (X_+), BU],
\]
where $BU$ is a classifying space for the infinite unitary group $U= \dirlim_r U(r)$, $BU^X$ is the space of continuous maps from $X$ to $BU$ with the compact-open topology, $X_+$ is the union of $X$ with a disjoint basepoint, $\Sigma^i$ is the $i$-fold reduced suspension, $\Omega$ the loop space and $[.,.]$ means based homotopy classes of based continuous maps.

\begin{lemma}\label{lem:HomotopyBU}
The natural map $\dirlim_r \pi_i(BU(r)^X) \to \pi_i(BU^X)$ is an isomorphism.
\end{lemma}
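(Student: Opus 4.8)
The plan is to compare connectivities: the key point is that the inclusions $BU(r) \hookrightarrow BU$ become more and more highly connected as $r \to \infty$, whereas passing to the mapping space out of the \emph{fixed} finite-dimensional complex $X$ only lowers connectivity by the constant $\dim X$. The finiteness of $\dim X$ is exactly the hypothesis that makes this work.

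First I would record the connectivity of $BU(r) \to BU$. From the fibrations $U(r) \hookrightarrow U(r+1) \to S^{2r+1}$ and the vanishing of $\pi_j(S^{2r+1})$ for $j \le 2r$, the long exact homotopy sequence shows that $U(r) \to U(r+1)$ is an isomorphism on $\pi_j$ for $j < 2r$ and a surjection for $j = 2r$; hence $U(r) \to U = \dirlim_s U(s)$ is $2r$-connected. Using $\pi_j(BU(r)) = \pi_{j-1}(U(r))$ (i.e. delooping, which raises connectivity by one) this gives that $BU(r) \to BU$ is $(2r+1)$-connected.

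Then I would invoke the standard principle that mapping out of a finite-dimensional complex shifts connectivity down by its dimension: if $f \colon Y \to Z$ is $m$-connected and $K$ is a CW complex with $\dim K = d$, then $f_* \colon Y^K \to Z^K$ is $(m-d)$-connected at every basepoint. (Sketch: replace $f$ by a fibration with $(m-1)$-connected fiber $F$; then $Y^K \to Z^K$ is again a fibration, whose fiber over a map $g$ is the space of sections of the pulled-back fibration over $K$, which by obstruction theory is $(m-1-d)$-connected, the obstructions lying in $H^{k}(K;\pi_{k-1}(F)) = 0$ for $k \le m$; the long exact sequence of the fibration then yields the claim.) Applying this with $Y = BU(r)$, $Z = BU$, $K = X$, $d = \dim X$ shows that $BU(r)^X \to BU^X$ is $(2r + 1 - d)$-connected, where I base everything at the constant map to $* \in BU$, which lies in every $BU(r)$ so that the statement is consistent across all $r$. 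Since the inclusions $BU(r) \hookrightarrow BU$ factor through the $BU(r+1)$, the induced maps on homotopy groups are compatible and assemble to the natural map of the lemma.

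Finally I would conclude by stabilisation: for a fixed $i$, choose $r_0$ with $2 r_0 + 1 - d > i$; then for all $r \ge r_0$ the map $\pi_i(BU(r)^X) \to \pi_i(BU^X)$ is an isomorphism, hence so are the transition maps $\pi_i(BU(r)^X) \to \pi_i(BU(r+1)^X)$, and therefore the colimit $\dirlim_r \pi_i(BU(r)^X)$ equals the common value $\pi_i(BU^X)$. I expect the only delicate step to be the mapping-space connectivity estimate—in particular the basepoint/component bookkeeping and the obstruction-theoretic bound on the section space—since once the two connectivity statements are in hand, everything else is a direct consequence of the fact that $2r+1 \to \infty$ while $d$ stays fixed.
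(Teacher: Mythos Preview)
Your argument is correct, but it follows a genuinely different route from the paper's proof. The paper exploits the concrete CW structure of $BU$: since $BU = \dirlim_r BU(r)$ is a colimit of Grassmannians, it has only finitely many cells in each dimension, so every finite skeleton already sits inside some $BU(r)$. Cellular approximation then lets one push any map (or homotopy) from the finite-dimensional complex $\Sigma^i(X_+)$ into a skeleton, hence into some $BU(r)$, giving surjectivity and injectivity directly.

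Your approach instead computes the connectivity of $BU(r) \to BU$ from the fibrations $U(r) \to U(r+1) \to S^{2r+1}$ and then applies the general principle that mapping out of a $d$-dimensional complex lowers connectivity by $d$. This is more quantitative: you get an explicit bound $r > (i + \dim X - 1)/2$ for when $\pi_i(BU(r)^X) \to \pi_i(BU^X)$ becomes an isomorphism, and the method applies uniformly to any sequence of spaces with increasing connectivity rather than relying on the specific cell structure of $BU$. The paper's argument, by contrast, is shorter and more elementary once one accepts the CW description of $BU$ and the cellular approximation theorem, and avoids the obstruction-theoretic input you need for the mapping-space connectivity estimate.
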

\begin{proof}
The classifying space $BU$ may be realized as the direct limit of Grassmannians $BU = \dirlim_r BU(r) = \dirlim_r \dirlim_n G_r(\C^n)$ where $G_r(\C^n)$ denotes the Grassmannian of complex $r$-planes in $\C^n$. It has the structure of a CW complex with only finitely many cells in each dimension (cf. \cite[Corollary 6.7 and Problem 6-C]{MS}). In particular, every skeleton lies already in some $BU(r)$.

Using the cellular approximation theorem, it follows, that any element of $\pi_i(BU^X) = [\Sigma^i(X_+), BU]$ may be represented by a map $\Sigma^i(X_+) \to BU(r)$ for a suitable $r$, thus showing the surjectivity.

Given two maps $f,g:\Sigma^i(X_+) \to BU(r)$ and a homotopy between the induced maps $f,g: \Sigma^i(X_+) \to BU$, the same argument shows, that there is an $s \geq r$ and a homotopy between the induced maps $f,g: \Sigma^i(X_+) \to BU(s)$, proving injectivity.
\end{proof}
This lemma reduces the description of topological $K$-theory to the description of the homotopy groups $\pi_i(BU(r)^X) = \pi_{i-1}(U(r)^X)$. Note, that the inclusion $U(r) \hookrightarrow \GL_r(\C)$ is a homotopy equivalence.
\smallskip

For any topological space $Y$, we denote by $S_{\dot}(Y)$ the simplicial set of singular simplices in $Y$. This is a Kan complex (a fibrant simplicial set). The functor $S_{\dot}$ is right adjoint to the geometric realization functor $|\;.\;|$ from simplicial sets to spaces. The natural map $|S_{\dot}(Y)| \to Y$ is a weak equivalence. 
If $S_{\dot}$ is any Kan complex, there is a natural isomorphism $\pi_i(S_{\dot}) \xrightarrow{\cong} \pi_i(|S_{\dot}|)$. In particular, we have canonical isomorphisms $\pi_i(S_{\dot}(Y)) \xrightarrow{\cong} \pi_i(|S_{\dot}(Y)|) \xrightarrow{\cong} \pi_i(Y)$. See \cite[\S 16]{May} for the proofs.

Now assume that $Y$ is a smooth manifold. Then it is well known, that there is also a homotopy equivalence $S^{\infty}_{\dot}(Y) \xrightarrow{\simeq} S_{\dot}(Y)$, where $S^{\infty}_{\dot}(Y)$ denotes the simplicial set of \emph{smooth} singular simplices. We want to extend this result to spaces of mappings between smooth manifolds.

Thus let $X$ and $Y$ be smooth manifolds. There is a natural homeomorphism $(Y^X)^{\Delta^p} \cong Y^{\Delta^p \times X}$, hence any singular $p$-simplex $\sigma$ of $Y^X$ may be viewed as a map $\Delta^p \times X \to Y$ and we call $\sigma$ \emph{smooth}, if this last map is smooth. Denote by $S^{\infty}_{\dot}(Y^X)$ the simplicial set of smooth singular simplices in $Y^X$.
\begin{prop}
The natural inclusion $i: S^{\infty}_{\dot}(Y^X) \hookrightarrow S_{\dot}(Y^X)$ is a homotopy equivalence. 
\end{prop}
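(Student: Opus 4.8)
The plan is to deduce everything from the relative Whitney approximation theorem, using the exponential adjunction $(Y^X)^{\Delta^p} \cong Y^{\Delta^p \times X}$ to convert all smoothness questions about the infinite-dimensional mapping space $Y^X$ into questions about honest smooth maps out of the finite-dimensional manifolds with corners $\Delta^p \times X$ (where, as in the conventions preceding Theorem \ref{thm:Dupont}, \emph{smooth} means extending to a smooth map on a neighbourhood of $\Delta^p$ in the affine hyperplane). Concretely, a simplex of $S^{\infty}_{\dot}(Y^X)$ is the same datum as a smooth map $\Delta^p \times X \to Y$, and a simplicial homotopy between two such is the same datum as a smooth map $\Delta^p \times \Delta^1 \times X \to Y$ with the prescribed restrictions. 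Thus, although $Y^X$ is not a manifold, every filling or approximation problem I shall encounter lives on a finite-dimensional domain and is governed by the following standard fact: if $M$ is a smooth manifold with corners, possibly non-compact, $C \subseteq M$ is closed, and $f : M \to Y$ is continuous and smooth on a neighbourhood of $C$, then $f$ is homotopic rel $C$ to a smooth map.

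First I would check that $S^{\infty}_{\dot}(Y^X)$ is a Kan complex. Given a horn $\Lambda^n_k \to S^{\infty}_{\dot}(Y^X)$, the compatible family of smooth simplices assembles, via the adjunction, into a map $\Lambda^n_k \times X \to Y$ that is smooth in the above sense. Since $\Lambda^n_k$ is a deformation retract of $\Delta^n$, this extends to a continuous map $\Delta^n \times X \to Y$; applying the relative approximation fact with $C = \Lambda^n_k \times X$ produces a smooth filler whose restriction to the horn is unchanged. Hence $S^{\infty}_{\dot}(Y^X)$ is Kan, and $S_{\dot}(Y^X)$ is Kan because it is the singular complex of a space. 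It therefore suffices, by the simplicial Whitehead theorem, to show that $i$ induces isomorphisms on all homotopy groups; since every path component of $Y^X$ contains a smooth map (approximate a representative $\Delta^0 \times X \to Y$), the relevant basepoints may all be taken to be smooth.

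Surjectivity and injectivity both follow from the same relative approximation. For surjectivity, a class in $\pi_n(S_{\dot}(Y^X))$ based at a smooth basepoint is represented by a simplex $\Phi : \Delta^n \to Y^X$, i.e.\ a continuous map $\Delta^n \times X \to Y$ carrying $\partial\Delta^n \times X$ to the (smooth) basepoint; approximating rel $C = \partial\Delta^n \times X$ yields a smooth representative, so $\pi_n(i)$ is onto. For injectivity, if two smooth simplices become simplicially homotopic rel boundary in $S_{\dot}(Y^X)$, the homotopy is a continuous map $\Delta^n \times \Delta^1 \times X \to Y$ already smooth on the closed set $C$ consisting of the two ends $\Delta^n \times \{0,1\} \times X$ together with $\partial\Delta^n \times \Delta^1 \times X$; approximating rel $C$ turns it into a smooth homotopy, so the two classes already agree in $\pi_n(S^{\infty}_{\dot}(Y^X))$. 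The Whitehead theorem then gives that $i$ is a homotopy equivalence.

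The main obstacle is not conceptual but lies in making the approximation statement applicable in exactly the generality needed: the domains $\Delta^p \times X$, and their products with $\Delta^1$, are manifolds with corners and, since $X$ need not be compact, non-compact, so one must invoke the version of Whitney approximation with a strictly positive continuous control function together with its rel-$C$ refinement. One must also verify that \emph{smooth} in the sense of extendability to a neighbourhood is preserved under these homotopies, so that the simplicial face and degeneracy identities continue to hold and the constructed fillers and homotopies genuinely lie in $S^{\infty}_{\dot}(Y^X)$. Once these routine points are secured, the combination of smooth horn-filling, relative approximation, and the Whitehead theorem yields the asserted homotopy equivalence.
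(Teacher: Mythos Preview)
Your route is genuinely different from the paper's. The paper does not invoke Whitehead at all: it constructs an explicit simplicial deformation retraction. For every singular simplex $\sigma$ it builds (by induction on dimension, following Lee's Lemma 16.7) a homotopy $H_\sigma\colon I\times\Delta^p\to Y^X$ from $\sigma$ to a smooth $\tilde\sigma$, chosen so that $H_{\phi^*\sigma}=H_\sigma\circ(\id_I\times\phi_\Delta)$ for all $\phi$ and $H_\sigma$ is constant when $\sigma$ is already smooth. Then $s(\sigma)=H_\sigma(1,\,\cdot\,)$ is a simplicial map, $s\circ i=\id$, and the $H_\sigma$ assemble into a simplicial homotopy $i\circ s\sim\id$. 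No Kan condition for $S^\infty_\dot(Y^X)$ is ever needed, and no Whitehead theorem is invoked.

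Your argument is essentially correct in outline, but the weight falls exactly where you call things ``routine,'' and one step is more delicate than you indicate. The version of relative Whitney approximation you quote needs the map to be smooth on an \emph{open neighbourhood} of $C$. In your Kan-filling step you assert that the assembled horn map $\Lambda^n_k\times X\to Y$ is ``smooth in the above sense''; but a priori you only know each face map extends smoothly to a neighbourhood of \emph{that face} in its own hyperplane, and these extensions need not glue to a smooth map on a neighbourhood of the whole horn in $\{\sum x_i=1\}\times X$ when the target $Y$ is not linear. This can be repaired (embed $Y$ in $\R^N$, take a tubular retraction, and do an inductive face-by-face extension), but that argument is of the same order of difficulty as the paper's direct construction, so you have not actually bought simplicity. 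The same issue recurs, more mildly, in your $\pi_n$ arguments: the continuous representative is equal to the smooth basepoint only \emph{on} $\partial\Delta^n\times X$, not on a neighbourhood, so you must first precompose with a collar-collapse to make it constant near the boundary before you may approximate rel $\partial\Delta^n\times X$.

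In short: your approach works once the Kan property is established, but establishing it is where the actual content lives; the paper's inductive construction of compatible smoothing homotopies is precisely a way to avoid ever having to prove that $S^\infty_\dot(Y^X)$ is Kan.
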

We have to approximate every singular simplex by a smooth one, in a compatible way. This is done in the following lemma (cf. \cite[Lemma 16.7]{Lee}). We denote by $I$ the unit interval $[0,1]$.
\begin{lemma}
For each singular $p$-simplex $\sigma : \Delta^p \to Y^X$ there exists a continuous map $H_{\sigma} : I \times \Delta^p \to Y^X$ such that the following properties hold:
\begin{enumerate}
\item $H_{\sigma}$ is a homotopy from $\sigma = H_{\sigma}(0, \,.\,)$ to a smooth $p$-simplex $\widetilde\sigma = H_{\sigma}(1,\,.\,)$.
\item For any increasing $\phi: [q] \to [p]$ we have $H_{\phi^*\sigma} = H_{\sigma} \circ (\id_I \times \phi_{\Delta})$, where as usual $\phi_{\Delta} : \Delta^q \to \Delta^p$ is the map induced by $\phi$.
\item If $\sigma$ is smooth, then $H_{\sigma}$ is the constant homotopy.
\end{enumerate}
\end{lemma}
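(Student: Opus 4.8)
The plan is to pass to the adjoint picture and build the homotopies by induction on the simplicial degree, with the relative form of Whitney's approximation theorem as the only analytic input. Via the adjunction $(Y^X)^{\Delta^p}\cong Y^{\Delta^p\times X}$ a singular $p$-simplex $\sigma$ corresponds to a continuous map $\hat\sigma\colon\Delta^p\times X\to Y$, and $\sigma$ is smooth exactly when $\hat\sigma$ is smooth on the manifold-with-corners $\Delta^p\times X$. Writing $\hat H_\sigma\colon I\times\Delta^p\times X\to Y$ for the adjoint of $H_\sigma$, condition (i) asks $\hat H_\sigma$ to be a homotopy from $\hat\sigma$ to a smooth map, while condition (ii) becomes the \emph{cosimplicial naturality} $\hat H_{\phi^*\sigma}=\hat H_\sigma\circ(\id_I\times\phi_\Delta\times\id_X)$, using $\widehat{\phi^*\sigma}=\hat\sigma\circ(\phi_\Delta\times\id_X)$. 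I would fix once and for all a closed Whitney embedding $Y\hookrightarrow\R^N$ with a tubular neighbourhood and smooth retraction, and invoke the \emph{relative} Whitney approximation theorem: a continuous map of smooth manifolds which is already smooth on a closed subset is homotopic, rel that subset, to a globally smooth map, the homotopy being constant when the map is smooth to begin with.

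The induction runs on the degree $p$, treating degenerate and nondegenerate simplices separately. For degenerate $\sigma$ the Eilenberg--Zilber decomposition lemma writes $\sigma=\eta^*\sigma'$ uniquely with $\eta\colon[p]\twoheadrightarrow[k]$ surjective and $\sigma'$ nondegenerate of degree $k<p$, and I simply \emph{define} $H_\sigma:=H_{\sigma'}\circ(\id_I\times\eta_\Delta)$. This is forced by (ii), and a short check using uniqueness of the decomposition together with the factorisation of every increasing map as a surjection followed by an injection shows that (ii) then holds for \emph{all} $\phi$, not merely faces and degeneracies; (iii) is inherited since $\sigma$ is smooth iff $\sigma'$ is. The base case $p=0$, where $\hat\sigma\colon X\to Y$ and there are no proper faces, is one application of the relative approximation theorem, with the constant homotopy chosen when $\hat\sigma$ is already smooth.

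For a nondegenerate $p$-simplex $\sigma$ the faces $(\delta^i)^*\sigma$ have degree $p-1$ and already carry homotopies satisfying (i)--(iii). By the cosimplicial identities among the cofaces $\delta^i$, condition (ii) makes these agree on overlaps, so they glue to a single $\hat H^{\partial}$ on $I\times\partial\Delta^p\times X$ whose time-$1$ slice is smooth on $\partial\Delta^p\times X$. Extending the boundary data $(\sigma$ at $t=0$, $\hat H^{\partial}$ on the sides$)$ over $I\times\Delta^p\times X$ by composing with a retraction onto $B:=(\{0\}\times\Delta^p)\cup(I\times\partial\Delta^p)$ yields a continuous $\bar F$ whose top slice $\bar F|_{t=1}$ is smooth on $\partial\Delta^p\times X$. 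I would then choose, via the relative approximation theorem, a smooth $\tilde s\colon\Delta^p\times X\to Y$ homotopic rel $\partial\Delta^p\times X$ to $\bar F|_{t=1}$, and fill the solid box with the \emph{exact} boundary data $(\sigma,\hat H^{\partial},\tilde s)$ prescribed on $\partial(I\times\Delta^p)\times X$. The crucial point is that this prescribed boundary map extends over the box: it is homotopic to $(\sigma,\hat H^{\partial},\bar F|_{t=1})$, which extends via $\bar F$, and extendability is a homotopy invariant because $\partial(I\times\Delta^p)\times X\hookrightarrow(I\times\Delta^p)\times X$ is a cofibration in compactly generated spaces. Any such filling gives $H_\sigma$ with time-$0$ slice $\sigma$, time-$1$ slice the smooth $\tilde s$, and \emph{exactly} $\hat H^{\partial}$ on the sides, so (i) and (ii) hold; when $\sigma$ is smooth all faces are smooth, $\hat H^{\partial}$ is constant, and the constant filling secures (iii).

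The main obstacle is the exact compatibility (ii), not any individual smoothing step. The naive recipe of first running the face homotopies and then smoothing the interior in extra time would reparametrise the homotopy on the sides and destroy the strict equality demanded by (ii). Avoiding this is exactly why I prescribe the \emph{full} box boundary—including an already smooth top—and fill the interior using homotopy-invariance of extendability, rather than appending a smoothing phase. The remaining verifications, namely that the face homotopies glue, that the forced definition on degenerate simplices satisfies (ii) for arbitrary increasing $\phi$, and that the relative approximation theorem applies over the manifold-with-corners $\Delta^p\times X$, are routine once this bookkeeping is in place.
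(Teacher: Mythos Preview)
Your proof is correct and follows essentially the same route as the paper: induction on the simplicial degree, the Eilenberg--Zilber decomposition to force the definition on degenerate simplices, and relative Whitney approximation for the nondegenerate ones. The only difference is expository: where you spell out the box-filling argument (glue the face homotopies over $I\times\partial\Delta^p\times X$, smooth the top slice rel boundary, then extend using the cofibration property), the paper simply cites \cite[Lemma~16.7]{Lee} for the case $X=\{*\}$ and observes that the same construction goes through verbatim with $\Delta^p$ replaced by $\Delta^p\times X$.
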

\begin{proof}
Note that it is enough to fulfill (ii) for the face and degeneracy operators. The $H_{\sigma}$ are constructed by induction on the dimension of $\sigma$. If $\sigma: X \to Y$ is a $0$-simplex, we choose any homotopy $H_{\sigma}$ to a smooth $\widetilde\sigma : X \to Y$, constant if $\sigma$ is already smooth. 

Now suppose that we have constructed $H_{\sigma'}$ for any $\sigma'$ of dimension $<p$, and let $\sigma : \Delta^p \to Y^X$ be a $p$-simplex. It may uniquely be written as $\sigma = \phi^*\tau = \tau \circ \phi_{\Delta}$, where $\phi$ is surjective and $\tau$ is non-degenerate \cite[Satz 3.9]{Lamotke}.
If $\sigma$ is smooth, we let $H_{\sigma}$ be the constant homotopy. If $\sigma$ is degenerated, $\phi\not= \id$ and $\tau$ is of dimension strictly less than $p$.
Clearly $\phi_{\Delta}$ is smooth and we define $H_{\sigma} := H_{\tau} \circ (\id_I \times \phi_{\Delta})$. Note, that, since $\phi_{\Delta}$ has a smooth section, if $\sigma$ is smooth, so is $\tau$, so that $H_{\sigma}$ is well defined.

If $\sigma: \Delta^p \to Y^X$ is non-degenerate (i.e. $\phi=\id, \tau=\sigma$) and not smooth, we construct $H_{\sigma}$ as in \cite[Lemma 16.7]{Lee} viewing $\sigma$ as a map $\Delta^p \times X \to Y$. Everything goes through word by word.

Condition (ii) is checked in {\it loc. cit.} for the face maps. By our construction, it is also satisfied for the degeneracies (use the unicity of the representation $\sigma = \phi^*\tau$).
\end{proof}

\begin{proof}[Proof of the proposition]
We define $s:S_{\dot}(Y^X) \to S^{\infty}_{\dot}(Y^X)$ by $\sigma \mapsto H_{\sigma}(1,\,.\,)$. Condition (ii) of the lemma ensures that this is a morphism of simplicial sets, and by (iii) $s \circ i = \id_{S^{\infty}_{\dot}(Y^X)}$.

We construct a simplicial homotopy $i \circ s \sim \id_{S_{\dot}(Y^X)}$ using the $H_{\sigma}$: For $i = 0, \dots, p$ let $\alpha_i : \Delta^{p+1} \to I \times \Delta^p$ be the affine singular simplex sending $e_0 \mapsto (0, e_0), \dots e_i \mapsto (0, e_i), e_{i+1} \mapsto (1, e_i), \dots e_{p+1} \mapsto (1, e_p)$, where $e_0, \dots,e_{p+1}$ is the standard basis of $\R^{p+2}$ (this is just the standard decomposition of $I\times \Delta^p$ in $(p+1)$-simplices), and define $h_i : S_p(Y^X) \to S_{p+1}(Y^X)$ as $h_i(\sigma) = H_{\sigma} \circ \alpha_i$. Again it follows from condition (ii) of the lemma and the computations (16.10) -- (16.12) in \cite{Lee}, that the $h_i$ form a simplicial homotopy $i \circ s \sim \id_{S_{\dot}(Y^X)}$ in the sense of \cite[Definition 5.1]{May}.
\end{proof}

We apply this proposition in the case $Y = \GL_r(\C)$. Obviously, $S^{\infty}_{\dot}(\GL_r(\C)^X)$ and $S_{\dot}(\GL_r(\C)^X)$ are simplicial groups. Define $G_{\dot} = \dirlim_r S^{\infty}_{\dot}(\GL_r(\C)^X)$ and let $B_{\dot}G_{\dot}$ be its classifying space (Appendix \ref{app:SimplGrps}). We have the following chain of natural isomorphisms
\begin{multline*}
\pi_i(B_{\dot}G_{\dot}) \cong \pi_{i-1}(G_{\dot}) \cong \dirlim_r \pi_{i-1}(S_{\dot}^{\infty}(\GL_r(\C)^X)) \cong \\ 
\dirlim_r \pi_{i-1}(S_{\dot}(\GL_r(\C)^X))  \cong \dirlim_r\pi_{i-1}(\GL_r(\C)^X) \cong \dirlim_r\pi_{i-1}(U(r)^X) \cong \\
\dirlim_r \pi_i(BU(r)^X) = \pi_i(BU^X) = K^{-i}_{\top}(X),
\end{multline*}
where we used the fact, that $BU(r)^X$ is a classifying space for $U(r)^X$ (cf. the argument in the proof of the lemma in section 6.1 of Gillet's article in \cite{GilletTopK}), and lemma \ref{lem:HomotopyBU}, 
and $B_{\dot}G_{\dot}$ is our simplicial model for the topological $K$-theory of $X$.

\section{Relative $K$-theory} 
\label{sec:RelK}

Now let $X=\Spec(A)$ be a smooth affine scheme of finite type over $\C$. By abuse of notation, we denote the associated complex manifold by the same letter. Note, that $X$ has the structure of a finite dimensional CW complex, so our above description of the topological $K$-theory of $X$ applies.

\paragraph*{The map from algebraic to topological $K$-theory}

There are natural continuous homomorphisms $A \to \Sing^{\infty}(X) \to \Sing(X)$ from the ring of algebraic functions on $X$ to that of smooth resp. continuous complex valued functions on $X$, where $A$ is equipped with the discrete topology, $\Sing(X)$ with the compact-open topology and $\Sing^{\infty}(X)$ with the induced topology. These induce $\GL_r(A) \to \GL_r(\Sing^{\infty}(X)) \to \GL_r(\Sing(X)) = \GL_r(\C)^X$. Note, that the simplicial set of singular simplices of $\GL_r(A)$ is just the constant simplicial group $\GL_r(A)$. Thus we have natural morphisms of simplicial groups $\GL_r(A) \to S^{\infty}_{\dot}(\GL_r(\C)^X)$ and, taking the limit $r \to \infty$, $\GL(A) \to G_{\dot} = \dirlim_r S^{\infty}_{\dot}(\GL_r(\C)^X)$. Hence we get a map on the classifying simplicial sets $B_{\dot}\GL(A) \to B_{\dot}G_{\dot}$.

The geometric realization $|B_{\dot}\GL(A)|$ is a classifying space for the discrete group $\GL(A)$. Its fundamental group is $\pi_1(|B_{\dot}\GL(A)|) = \GL(A)$ with maximal perfect subgroup the commutator subgroup $\GL(A)' = [\GL(A), \GL(A)]$. The \emph{algebraic $K$-groups} of $X$ are by definition
\[
K_i(X) = \pi_i(|B_{\dot}\GL(A)|^+), \qquad i>0,
\]
where $|B_{\dot}\GL(A)|^+$ denotes Quillen's plus-construction with respect to $\GL(A)'$. Up to homotopy equivalence, $|B_{\dot}\GL(A)|^+$ is uniquely determined by the fact, that there is an acyclic cofibration $f: |B_{\dot}\GL(A)| \to |B_{\dot}\GL(A)|^+$ with $\ker(\pi_1(f)) = \GL(A)'$ \cite[Theorem (5.1)]{Berrick}. Now $\pi_1(|B_{\dot}G_{\dot}|) = K^{-1}_{\top}(X)$ is abelian, hence the image of $\GL(A)'$ under the map induced from $|B_{\dot}\GL(A)| \to |B_{\dot}G_{\dot}|$ on fundamental groups vanishes. By {\it loc. cit.} (5.2) $|B_{\dot}\GL(A)| \to |B_{\dot}G_{\dot}|$ factors up to homotopy uniquely through $|B_{\dot}\GL(A)|^+$. On homotopy groups this gives the desired map
\[
K_i(X) = \pi_i(|B_{\dot}\GL(A)|^+) \to \pi_i(|B_{\dot}G_{\dot}|) = K^{-i}_{\top}(X), \qquad i>0.
\]
\begin{rem}
It is easy to see, that this is the same map as defined e. g. in section 6 of Gillet's article in \cite{GilletTopK}.
\end{rem}

\paragraph*{Relative $K$-theory}

We define $F$ and $\widetilde F$ by the following pull-back diagrams:
\begin{equation}\label{diag1}\begin{split}
\xymatrix{
 F \ar@{}[dr]|{\lrcorner}\ar[d]\ar[r] &  \widetilde F \ar@{}[dr]|{\lrcorner}\ar[r]\ar[d] & |E_{\dot}G_{\dot}| \ar[d]^{|p|}\\
|B_{\dot}\GL(A)| \ar[r] & |B_{\dot}\GL(A)|^+ \ar[r] & |B_{\dot}G_{\dot}|
}
\end{split}
\end{equation}
Since $p: E_{\dot}G_{\dot} \to B_{\dot}G_{\dot}$ is a Kan fibration (Appendix \ref{app:SimplGrps}), the realization $|p|$ is a Serre fibration and so are the other two vertical arrows induced by $|p|$. Then, since $|B_{\dot}\GL(A)| \to |B_{\dot}\GL(A)|^+$ is acyclic, so is $F \to \widetilde F$ \cite[(4.1)]{Berrick}. Since $|E_{\dot}G_{\dot}|$ is contractible (lemma \ref{lem:EGcontractible}), $\widetilde F$ is homotopy equivalent to the homotopy fibre of the map $|B_{\dot}\GL(A)|^+ \to |B_{\dot}G_{\dot}|$ and we define the \emph{relative $K$-groups}
\[
 K_i^{\rel}(X) := \pi_i(\widetilde F), \qquad i>0.
\]
By construction we have a long exact sequence
\[
 \dots \to K^{-i-1}_{\top}(X) \to K_i^{\rel}(X) \to K_i(X) \to K^{-i}_{\top}(X) \to \dots.
\]

We also need the following simplicial description of the homology of $\widetilde F$. Define $\mathscr F$ by the following pullback diagram of simplicial sets:
\[\xymatrix{
\mathscr F \ar@{}[dr]|{\lrcorner}\ar[d]\ar[r] &  E_{\dot}G_{\dot} \ar[d]^p\\
B_{\dot}\GL(A) \ar[r]  & B_{\dot}G_{\dot}
}
\]
Since the realization functor $|\;.\;|$ commutes with finite limits \cite[Theorem in Ch. III.3]{GabrielZisman}, the natural map $|\mathscr F| \to F$ is a homeomorphism, and, since $F \to \widetilde F$ is acyclic, we have isomorphisms in homology
\[
H_*(\mathscr F, \Z) \cong H_*(|\mathscr F|, \Z) \xrightarrow{\cong} H_*(F, \Z) \xrightarrow{\cong} H_*(\widetilde F, \Z).
\]

\section{The relative Chern character}
\label{sec:RelChernChar}

Let $X = \Spec(A)$ be as before.
We define relative Chern character maps
\[
\Ch_{n,i}^{\rel}: K_i^{\rel}(X) \to H^{2n-i-1}(X, \C)/\Fil^nH^{2n-i-1}(X, \C) 
\]
as follows: By definition, $K_i^{\rel}(X) = \pi_i(\widetilde F)$, and we have the Hurewicz map
$K_i^{\rel}(X) \to H_i(\widetilde F, \Z) \cong H_i(\mathscr F, \Z)$. It is thus enough to construct a homomorphism $H_i(\mathscr F, \Z) \to H^{2n-i-1}_{\rel}(X, n) = H^{2n-i-1}(X, \C)/\Fil^nH^{2n-i-1}(X, \C)$. We will use the following
\begin{lemma}\label{lemma:cohom_X_tensor_S}
Let $S$ be a simplicial set and $X$ an algebraic variety. Form the simplicial variety $X_{\dot}:=X\otimes S$ as in Example \ref{ex:SimplicialBundles}. Then we have natural isomorphisms
\begin{align*}
&H^k_{\rel}(X_{\dot}, n) \cong \bigoplus_{p+q=k} \Hom(H_p(S, \Z), H^q(X,\C)/\Fil^nH^q(X, \C)),\\
&H^k_{\DB}(X_{\dot}, \Q(n)) \cong \bigoplus_{p+q=k} \Hom(H_p(S, \Z), H_{\DB}^q(X, \Q(n))).
\end{align*}
\end{lemma}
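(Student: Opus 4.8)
The plan is to compute both cohomology theories by the standard spectral sequence of a simplicial variety and to exploit that, for $X_{\dot}=X\otimes S$, this spectral sequence is nothing but the one computing the cohomology of $S$ with coefficients in the cohomology of $X$. Write $\mathcal H^*$ for either $H^*_{\rel}(-,n)$ or $H^*_{\DB}(-,\Q(n))$. Each of these is the cohomology of the total complex of a cosimplicial complex $[p]\mapsto K_p^*$ (the relative complex $\Cone(\Fil^nA^*\to A^*)$, resp. the explicit cone complex computing $H^*_{\DB}$ from Section~\ref{sec:DefDBCohom}), and hence carries a first--quadrant spectral sequence
\[
E_1^{p,q}=\mathcal H^q(Y_p)\ \Longrightarrow\ \mathcal H^{p+q}(Y_{\dot}),\qquad d_1=\sum_{i}(-1)^i\del_i^*,
\]
exactly as in the proof of Lemma~\ref{lem:CohomEG}. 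I would apply this to $Y_{\dot}=X\otimes S$.

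First I would identify the $E_1$-page. Since $(X\otimes S)_p=\coprod_{\sigma\in S_p}X$ and cohomology turns coproducts into products, $E_1^{p,q}=\prod_{\sigma\in S_p}\mathcal H^q(X)$, which is precisely the group of $p$-cochains of the simplicial set $S$ with coefficients in the abelian group $\mathcal H^q(X)$; the structure maps of $X\otimes S$ are by definition induced from those of $S$, so $d_1$ is the simplicial cochain differential. Hence $E_2^{p,q}=H^p(S,\mathcal H^q(X))$.

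The heart of the argument is to show that the spectral sequence degenerates at $E_2$ and that the resulting filtration splits with no extension terms, and, in the relative case, that the Hodge filtration respects this splitting. I would do this at the level of complexes: for $Y_{\dot}=X\otimes S$ the defining cosimplicial complex is $[p]\mapsto\prod_{\sigma\in S_p}M^*_X=C^*(S,M^*_X)$, where $M^*_X$ is the corresponding complex of $\Q$- or $\C$-vector spaces attached to $X$ (namely $\A^*(X)$ and $\Fil^n\A^*(\ol X,\log D)$ in the relative case, and the Deligne--Beilinson cone complex of $X$ in the other). Being a complex of vector spaces over a field, $M^*_X$ is chain--homotopy equivalent to its cohomology equipped with the zero differential. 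The functor $C^*(S,-)$ is additive, hence preserves chain homotopies, so $\Tot C^*(S,M^*_X)$ is chain--homotopy equivalent to $\bigoplus_q C^*(S,\mathcal H^q(X))[-q]$; taking cohomology yields the desired direct sum decomposition and the degeneration for free. For $H^*_{\rel}$ I would run this both for $\A^*(X)$ and for $\Fil^n\A^*(\ol X,\log D)$; since the inclusion $\Fil^nH^q(X,\C)\hookrightarrow H^q(X,\C)$ of vector spaces splits, $C^*(S,-)$ carries it to a split inclusion of complexes, giving $\Fil^nH^k(X\otimes S,\C)=\bigoplus_{p+q=k}H^p(S,\Fil^nH^q(X,\C))$ and, for the quotient, $\bigoplus_{p+q=k}H^p(S,H^q(X,\C)/\Fil^n)$.

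Finally I would pass from $H^p(S,-)$ to $\Hom(H_p(S,\Z),-)$. In both cases the coefficient group $\mathcal H^q(X)$ is a $\Q$-vector space, hence divisible, hence an injective $\Z$-module, so $\Hom(-,\mathcal H^q(X))$ is exact and $H^p(S,\mathcal H^q(X))\cong\Hom(H_p(S,\Z),\mathcal H^q(X))$ by the universal coefficient theorem (the $\mathrm{Ext}^1$-term vanishing). Assembling the pieces gives the two asserted isomorphisms, natural in $S$ by functoriality of $C^*(S,-)$. The one genuinely delicate point, the \emph{main obstacle}, is the bookkeeping needed to see that the cone and quasi-pullback constructions defining $H^*_{\rel}$ and $H^*_{\DB}$ commute with $C^*(S,-)$ and respect the Hodge filtration, i.e. that $\Fil^nH^*(X\otimes S,\C)$ really is the product filtration; everything else (identification of $E_1$, degeneration, universal coefficients) is then formal once one is allowed to work over a field.
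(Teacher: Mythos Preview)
Your argument is correct and in the same spirit as the paper's, but the paper is more economical and sidesteps precisely the ``main obstacle'' you flag. Rather than treating $\A^*(X)$ and $\Fil^n\A^*(\ol X,\log D)$ separately and then worrying about whether the Hodge filtration behaves, the paper works directly with the single cone complex $\mathscr G^*(X)=\Cone(\Fil^n\A^*(\ol X,\log D)\to\A^*(X))$. Since each constituent is already a product over $S_p$, so is the cone: $\mathscr G^q(X_p)=\prod_{\sigma\in S_p}\mathscr G^q(X)=\Hom(\Z S_p,\mathscr G^q(X))$. Thus the total complex computing $H^*_{\rel}(X\otimes S,n)$ is literally the total Hom complex $\Hom_{\Z}(\Z S_*,\mathscr G^*(X))$, and the K\"unneth-type short exact sequence for such Hom complexes (Weibel, Ex.~3.6.1) gives
\[
0\to\bigoplus_{p+q=k-1}\mathrm{Ext}^1_{\Z}(H_p(S,\Z),H^q(\mathscr G^*(X)))\to H^k\to\bigoplus_{p+q=k}\Hom(H_p(S,\Z),H^q(\mathscr G^*(X)))\to 0,
\]
with the Ext term vanishing because $H^q(\mathscr G^*(X))\cong H^q(X,\C)/\Fil^n$ is a $\Q$-vector space. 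This is exactly your universal-coefficients endgame, reached without the detour through formality, spectral-sequence degeneration, or the separate splitting argument for $\Fil^n$. Your approach works, but the bookkeeping you worry about dissolves once you apply the argument to the cone itself.
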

\begin{proof}
The proof is the same in both cases and we restrict to the first one. 
Choose a good compactification $X \hookrightarrow \ol X$. This induces a good compactification $X_{\dot} \hookrightarrow \ol X\otimes S =: \ol X_{\dot}$ and $H^*_{\rel}(X_{\dot}, n)$ is the cohomology of the (cosimplicial) complex $\mathscr G^*(X_{\dot}):= \Cone(\Fil^n\A^*(\ol X_{\dot}, \log D_{\dot}) \xrightarrow{\iota} \A^*(X_{\dot}))$. Let $\mathscr G^*(X)$ be the complex $\Cone(\Fil^n\A^*(\ol X, \log D) \to \A^*(X))$. 
Obviously, $\mathscr G^q(X_p) = \prod_{\sigma \in S_p} \mathscr G^q(X) = \Hom(\Z S_p, \mathscr G^q(X))$ where $\Z S_p$ is the free abelian group generated by $S_p$ and $\Hom$ is in the category of abelian groups. We form the chain complex $\Z S_*$ with the usual differential $\sum (-1)^i \del_i$, the $\del_i$'s denoting the face operators of $S$. Its homology is by definition $H_*(S, \Z)$.
Then the total complex of $\mathscr G^*(X_{\dot})$ is just the total Hom complex $\Hom_{\Z}(\Z S_*, \mathscr G^*(X))$ \cite[2.7.4]{Weibel} and there is a short exact sequence
\begin{multline*}
 0 \to \bigoplus_{p+q=k-1} \mathrm{Ext}^1_{\Z}(H_p(S, \Z), H^q(\mathscr G^*(X))) \to
H^k(\Hom_{\Z}(\Z S_*, \mathscr G^*(X))) \to\\ 
\bigoplus_{p+q=k} \Hom(H_p(S,\Z), H^q(\mathscr G^*(X))) \to 0
\end{multline*}
({\it loc. cit.} Exer. 3.6.1).
Since the $H^q(\mathscr G^*(X)) \cong H^q(X, \C)/\Fil^nH^q(X,\C)$ are $\Q$-vector spaces, the Ext term vanishes and the claim follows.
\end{proof}
\begin{rems}\label{rem:ExplicitDescriptionCocycle}
(i) Now it follows, that the relative cohomology $H^k_{\rel}(X_{\dot}, n)$ is identified with $H^k(X_{\dot}, \C)/\Fil^nH^k(X_{\dot}, \C)$ also in the case $X_{\dot}=X\otimes S$.

(ii) A similar statement also holds for the group $\Hyp^k(X_{\dot}, \Omega^{<n}_{X_{\dot}})$, which is computed by the complex $A^*(X_{\dot})/\Fil^nA^*(X_{\dot})$. We have a commutative diagram
\[\xymatrix{
H^k(X_{\dot}, \C)/\Fil^nH^k(X_{\dot},\C) \ar[d]\ar[r] & \Hyp^k(X_{\dot}, \Omega^{<n}_{X_{\dot}}) \ar[d]\\
\Hom(H_p(S, \Z), H^{k-p}(X, \C)/\Fil^n) \ar[r] & \Hom(H_p(S, \Z), \Hyp^{k-p}(X, \Omega^{<n}_X))
}
\]
and the right vertical arrow is given explicitely as follows: A class in $\Hyp^k(X_{\dot}, \Omega^{<n}_{X_{\dot}})$ may be represented by a form $\omega \in A^k(X_{\dot})$, closed modulo $\Fil^nA^{k+1}(X_{\dot})$. The simplicial form $\omega$ is given by a family of $k$-forms on $\Delta^q \times (X\otimes S)_q$, $q\geq 0$, and in particular we can consider the restriction $\sigma^*\omega$ of $\omega_p$ to the copy of $\Delta^p \times X$ corresponding to $\sigma \in S_p$. 
Integration along $\Delta^p$ gives the $(k-p)$-form $\int_{\sigma} \omega = \int_{\Delta^p} \sigma^*\omega \in \A^{k-p}(X)$. By linearity this extends to a map $\Z S_p \to \A^{k-p}(X)$, $\sigma \mapsto \int_{\Delta^p} \sigma^*\omega$, which induces a well defined homomorphism $H_p(S, \Z) \to H^{k-p}(\A^*(X)/\Fil^n\A^*(X)) = \Hyp^{k-p}(X, \Omega^{<n}_X)$.
\end{rems}

We return to our smooth affine $\C$-scheme of finite type $X = \Spec(A)$.
To construct the relative Chern character map on $K$-theory we thus have to construct classes in $H^{2n-1}(X\otimes \mathscr F, \C)/\Fil^nH^{2n-1}(X\otimes \mathscr F, \C)$. This is achieved as follows. 
First write $G_{r,\dot} := S^{\infty}_{\dot}(\GL_r(\C)^X)$, so that $G_{\dot} = \dirlim_r G_{r,\dot}$, and define $\mathscr F_r$  by the cartesian diagram of simplicial sets
\begin{equation}\label{diag2}\begin{split}
 \xymatrix{
\mathscr F_r \ar@{}[dr]|{\lrcorner}\ar[d]\ar[r] & E_{\dot}G_{r,\dot} \ar[d]^p\\
B_{\dot}\GL_r(A) \ar[r] &  B_{\dot}G_{r,\dot}.
}
\end{split}
\end{equation}

Then $\mathscr F = \dirlim_r \mathscr F_r$, $H_*(\mathscr F, \Z) = \dirlim_r H_*(\mathscr F_r, \Z)$ and by the lemma  
\[
H^*(X\otimes \mathscr F, \C)/\Fil^n = \projlim_r H^*(X\otimes \mathscr F_r, \C)/\Fil^n.
\]

By construction, a $p$-simplex in the simplicial group $G_{r,\dot}$ is a smooth map $\Delta^p \times X \to \GL_r(\C)$, and a $p$-simplex in $E_{\dot}G_{r,\dot}$ may be viewed as a smooth map $\Delta^p \times X \to E_p\GL_r(\C)$.
On the other hand, every $p$-simplex in $B_{\dot}\GL_r(A)$ may be seen as a morphism of varieties $X \to B_p\GL_r(\C)$.
As in example \ref{ex:Karoubi_top_bundles} the above diagram (\ref{diag2}) then gives rise to a commutative diagram
\[\xymatrix{
& E_{\dot}\GL_r(\C)\ar[d]^p\\
X\otimes \mathscr F_r \ar@{~>}[ur]^{\alpha_r}\ar[r]^{g_r} & B_{\dot}\GL_r(\C),
}
\]
where $g_r$ is a morphism of simplicial varieties.

Phrased differently, if we denote by $E_r$ the algebraic bundle classified by $g_r : X\otimes \mathscr F_r \to B_{\dot}\GL_r(\C)$ and by $T_r$ the trivial $\GL_r(\C)$-bundle, we have the trivialization $\alpha_r : T_r \to E_r$ of the underlying topological bundles and corresponding relative Chern character classes $\widetilde\Ch_n^{\rel}(T_r, E_r, \alpha_r) \in H^{2n-1}_{\rel}(X\otimes\mathscr F_r,n)= H^{2n-1}(X\otimes \mathscr F_r, \C)/\Fil^nH^{2n-1}(X\otimes \mathscr F_r, \C)$. We claim, that these classes are compatible for different $r$.
\begin{lemma} The class $\widetilde\Ch_n^{\rel}(T_{r+1}, E_{r+1}, \alpha_{r+1})$ maps to $\widetilde\Ch_n^{\rel}(T_r, E_r, \alpha_r)$
under the natural map $H^{2n-1}_{\rel}(X\otimes \mathscr F_{r+1}, n) \to H^{2n-1}_{\rel}(X\otimes \mathscr F_r,n)$ induced by the inclusion $j_r : \GL_r(\C) \hookrightarrow \GL_{r+1}(\C)$ in the upper left corner.
\end{lemma}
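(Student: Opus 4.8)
The plan is to realize the map in the statement as pullback along a morphism of simplicial varieties and then to reduce, by functoriality, to a statement about the \emph{refined} classes of Proposition \ref{prop:CRefinedClasses}, which is ultimately controlled by the universal case.

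First I would record the geometric input. The inclusion $j_r$ induces compatible maps $E_{\dot}G_{r,\dot} \to E_{\dot}G_{r+1,\dot}$, $B_{\dot}\GL_r(A) \to B_{\dot}\GL_{r+1}(A)$ and $B_{\dot}G_{r,\dot} \to B_{\dot}G_{r+1,\dot}$, hence a morphism of the cartesian squares \eqref{diag2} and therefore a map $\mathscr F_r \to \mathscr F_{r+1}$ of simplicial sets and a morphism $\iota\colon X\otimes\mathscr F_r \to X\otimes\mathscr F_{r+1}$ of simplicial varieties, whose induced pullback on $H^{2n-1}_{\rel}(-,n)$ is exactly the natural map in the statement. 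By construction of the classifying maps and trivializations (Example \ref{ex:Karoubi_top_bundles}), $g_{r+1}\circ\iota$ equals the composite of $g_r$ with the map $B_{\dot}j_r\colon B_{\dot}\GL_r(\C)\to B_{\dot}\GL_{r+1}(\C)$ induced by $j_r$; thus $\iota^*E_{r+1}$ is the bundle obtained from $E_r$ by extending the structure group along $j_r$, i.e. $\iota^*E_{r+1}\cong E_r\oplus\underline{\C}$ as a vector bundle, $\iota^*T_{r+1}=T_r\oplus\underline{\C}$, and the trivialization satisfies $\iota^*\alpha_{r+1}=\bar\psi\circ\alpha_r$, where $\bar\psi\colon P_r\to\iota^*P_{r+1}$ is the map of associated principal bundles induced by $j_r$ (the new coordinate being trivialized by the constant section $1$). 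Here $P_r,P_{r+1}$ denote the principal bundles attached to $E_r,E_{r+1}$.

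Next I would reduce to a statement about refined classes. Unwinding the definition $\widetilde\Ch_n^{\rel}(T,E,\alpha)=-\alpha^*\widetilde\Ch_n^{\rel}(E)$, using functoriality of the refined class (so that the pullback of $\widetilde\Ch_n^{\rel}(E_{r+1})$ is $\widetilde\Ch_n^{\rel}(\iota^*E_{r+1})$, Proposition \ref{prop:CRefinedClasses}) together with naturality of the section-pullback maps $\alpha^*$ and the relation $\alpha_r^*\circ\bar\psi^*=(\iota^*\alpha_{r+1})^*$ coming from $\iota^*\alpha_{r+1}=\bar\psi\circ\alpha_r$, one obtains
\[
\iota^*\widetilde\Ch_n^{\rel}(T_{r+1},E_{r+1},\alpha_{r+1})=-\alpha_r^*\bigl(\bar\psi^*\widetilde\Ch_n^{\rel}(\iota^*E_{r+1})\bigr).
\]
Hence everything comes down to the identity $\bar\psi^*\widetilde\Ch_n^{\rel}(\iota^*E_{r+1})=\widetilde\Ch_n^{\rel}(E_r)$ in $H^{E_r,2n-1}_{\rel}(X\otimes\mathscr F_r,n)$, that is, to the stability of the refined class under adding a trivial summand. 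Finally I would prove this by reduction to the universal case and the vanishing of Lemma \ref{lem:CohomEG}: since both sides are functorial and $\iota^*E_{r+1}$ is classified by $g_r$ followed by $B_{\dot}j_r$, the identity is the $g_r$-pullback of the corresponding universal identity $\bar\psi^*\widetilde\Ch_n^{\rel}\bigl((B_{\dot}j_r)^*E^{\univ}_{r+1}\bigr)=\widetilde\Ch_n^{\rel}(E^{\univ}_r)$ over $B_{\dot}\GL_r(\C)$. There, by Lemma \ref{lem:CohomEG} and the exact sequence \eqref{seq:RelCohomCSchemes}, the edge map $H^{E^{\univ}_r,2n-1}_{\rel}(B_{\dot}\GL_r(\C),n)\xrightarrow{\cong}\Fil^nH^{2n}(B_{\dot}\GL_r(\C),\C)$ is an isomorphism; both classes map under it to $\Ch_n(E^{\univ}_r)$, using that $\bar\psi$ covers the identity on the base (so the maps to $\Fil^nH^{2n}$ are preserved) together with the Whitney sum formula and $\Ch_n(\underline{\C})=0$ for $n\ge 1$, whence $\Ch_n(E^{\univ}_r\oplus\underline{\C})=\Ch_n(E^{\univ}_r)$. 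Injectivity then forces equality, and pulling back by $g_r$ concludes.

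The main obstacle is conceptual bookkeeping rather than computation: one must set up carefully the principal-bundle map $\bar\psi$ and compare the \emph{bundle-dependent} relative cohomology groups $H^{\iota^*E_{r+1},*}_{\rel}$ and $H^{E_r,*}_{\rel}$, and one must verify that the topological trivializations are genuinely compatible, i.e. that $\iota^*\alpha_{r+1}=\bar\psi\circ\alpha_r$ (this is where the identity trivialization of the added coordinate enters). Once these naturalities are in place, the vanishing $H^{i}(E_{\dot}\GL_r(\C),\C)=0$ for $i>0$ makes the universal comparison immediate.
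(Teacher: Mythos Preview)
Your proof is correct and follows essentially the same route as the paper: both reduce via the compatibility of the trivializations (your $\iota^*\alpha_{r+1}=\bar\psi\circ\alpha_r$, the paper's $\alpha_{r+1}\circ j=j\circ\alpha_r$) to showing that the refined class of $E_{r+1}$ pulls back to that of $E_r$, and then settle this in the universal case using the isomorphism $H^{E^{\univ}_r,2n-1}_{\rel}(B_{\dot}\GL_r(\C),n)\cong\Fil^nH^{2n}(B_{\dot}\GL_r(\C),\C)$ from Lemma~\ref{lem:CohomEG} together with the Whitney sum formula for $E^{\univ}_r\oplus T_1$. The only difference is notational: you make the principal-bundle map $\bar\psi$ explicit, while the paper absorbs it into the single symbol $j^*$.
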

\begin{proof}
By abuse of notation, we write $j$ for all the morphisms induced by $j$. Then we have $\alpha_{r+1}\circ j = j\circ \alpha_r$ and hence we get a commutative diagram
\[
\xymatrix{
H^{E_{r+1}, 2n-1}_{\rel} (X\otimes \mathscr F_{r+1}, n) \ar[r]^-{j^*}\ar[d]_{\alpha_{r+1}^*} & H^{E_r,2n-1}_{\rel}(X\otimes \mathscr F_r, n) \ar[d]^{\alpha_r^*}\\
H^{2n-1}_{\rel}(X\otimes \mathscr F_{r+1},n) \ar[r]^{j^*} & H^{2n-1}_{\rel}(X\otimes \mathscr F_r, n).
}
\]
By construction it then suffices to show, that the refined class $\widetilde\Ch_n^{\rel}(E_{r+1}) \in H^{E_{r+1},2n-1}_{\rel}(X\otimes \mathscr F_{r+1}, n)$ is mapped to $\widetilde\Ch_n^{\rel}(E_{r})$ by $j^*$. By functoriality it is enough to show this for the universal bundles $E^{\univ}_{r+1}/B_{\dot}\GL_{r+1}(\C)$ and $E^{\univ}_r/B_{\dot}\GL_{r}(\C)$. But under the identification $H^{E^{\univ}_r, 2n-1}(B_{\dot}\GL_{r}(\C), n) \cong \Fil^nH^{2n}(B_{\dot}\GL_{r}(\C),\C)$ the $n$-th universal refined Chern character class ``is'' the $n$-th universal Chern character class $\Ch_n(E^{\univ}_r)$ and  $j^*\Ch_n(E^{\univ}_{r+1}) = \Ch_n(j^*E^{\univ}_{r+1}) = \Ch_n(E^{\univ}_r \oplus T_1) = \Ch_n(E^{\univ}_r)$, since $j: B_{\dot}\GL_r(\C) \hookrightarrow B_{\dot}\GL_{r+1}(\C)$ classifies the bundle $E_r^{\univ} \oplus T_1$ and the higher Chern classes of the trivial bundle $T_1$ vanish.
\end{proof}

\begin{dfn}
According to the preceding lemma, the family 
\[
(\widetilde\Ch_n^{\rel}(T_r, E_r, \alpha_r))_{r\geq 0}
\] 
defines a class in $H^{2n-1}(X\otimes \mathscr F, \C)/\Fil^nH^{2n-1}(X\otimes \mathscr F, \C)$. 
By lemma \ref{lemma:cohom_X_tensor_S} this class gives morphisms $H_i(\mathscr F, \Z) \to H^{2n-i-1}(X, \C)/\Fil^nH^{2n-i-1}(X, \C)$, $i = 0, \dots, 2n -1$. We define the relative Chern character $\Ch^{\rel}_{n,i}$ on $K_i^{\rel}(X)$ to be the composition
\begin{multline*}
\Ch_{n,i}^{\rel}: K_i^{\rel}(X) = \pi_i(\widetilde F) \xrightarrow{\mathrm{Hur.}} H_i(\widetilde F, \Z) \cong H_i(\mathscr F, \Z) \to \\
\to H^{2n-i-1}(X, \C)/\Fil^nH^{2n-i-1}(X, \C).
\end{multline*}
\end{dfn}

\begin{rems}
(i) For the construction of regulators, it would suffice to develop a theory of bundles, connections and characteristic classes only for simplicial varieties of the type $\Spec(A) \otimes S$ with a simplicial set $S$. In this case a $\GL_r$-bundle on $X\otimes S$ corresponds to a $\GL_r(A)$-fibre bundle on the simplicial set $S$. These bundles are the ones studied by Karoubi in \cite{Kar87}. To compare the relative Chern character with the Chern character in Deligne-Beilinson cohomology however, it is necessary to extend the theory to general simplicial varieties.

The idea to use relative Chern character classes (of bundles on simplicial sets) for the construction of a relative Chern character on $K$-theory is completely due to Karoubi.
\smallskip

(ii) We want to mention the relation to Karoubi's relative Chern character (\cite{Kar87}, \cite{CK}, see also example \ref{ex:Karoubi_top_bundles}). There the setup is a little bit different from ours. Let $A$ be a complex Fr{\'e}chet algebra and define the simplicial ring $A_{\dot}$ as $\Sing^{\infty}(\Delta^{\dot}) \widehat{\otimes}_{\pi} A$. Then $K^{-i}_{\top}(A)$ is by definition $\pi_i(B_{\dot}\GL(A_{\dot}))$ and $K_i^{\rel}(A)$ is by definition the $i$-th homotopy group of the homotopy fibre of $|B_{\dot}\GL(A)|^+ \to |B_{\dot}\GL(A_{\dot})|$. 

Let $\Omega_*(A)$ be the differential graded algebra of \emph{non-commutative differential forms} \cite[2.1]{CK}. The \emph{non-commutative de Rham homology} $\ol H_*(A)$ is the homology of the complex $\ol\Omega_*(A) := \Omega_*(A)/[\Omega_*(A),\Omega_*(A)]$, where we divide by the submodule generated by the graded commutators.\footnote{$\ol H_n(A) \cong \ker(\ol{HC}_n(A) \xrightarrow{B} H_{n+1}(A,A))$, where $\ol{HC}$ denotes reduced continuous cyclic homology, $H_*(A,A)$ is continuous Hochschild homology and $B$ is Connes' $B$-operator \cite[2.4]{CK}. Hence everything that follows, may also be formulated in terms of cyclic homology.}

Let $S$ be any simplicial set, and $E/S$ a $\GL_r(A_{\dot})$-fibre bundle on $S$ \cite[5.1]{Kar87}. 
Define $\Omega^*(S, A)$ to be the complex of de Rham--Sullivan forms on $S$ with coefficients in $\ol\Omega_*(A)$\footnote{If $A=\Sing^{\infty}(X)$ for a manifold $X$, this is a non-commutative analogue of Dupont's complex $A^*(X\otimes S)$.}. Thus an $n$-form in $\Omega^*(S,A)$ is a compatible family of $n$-forms $(\omega_{\sigma})_{\sigma\in S}$, where for each $p$-simplex $\sigma$ the form $\omega_{\sigma}$ lives in $\Omega^n(\sigma; A):= \bigoplus_{k+l=n} \A^k(\Delta^p) \widehat\otimes_{\pi} \ol\Omega_l(A)$.

Connections and curvature are defined as in our geometric situation using non-commutative differential forms. For example a connection is given by a family of matrices $\Gamma_i(\sigma) \in \Mat_r(\Omega^1(\sigma;A)), \sigma\in S_p, i=0, \dots, p$, satisfying similar relations as in definition \ref{def:Connection}. Then one constructs Chern character classes $\Ch_n(E) \in H^{2n}(\Omega^*(S, A)) \cong \bigoplus_{k+l=2n}\Hom(H_k(S), \ol H_l(A))$ in the same way as we did \cite[5.28]{Kar87}. 

Since each $\Omega^*(\sigma;A)$ is by definition the total complex associated with a double complex, the same is true for $\Omega^*(S,A)$. Hence we can filter $\Omega^*(S,A)$ with respect to the second index. 

If $E$ now is a $\GL_r(A)$-fibre bundle on $S$, it is easy to see, that it has well defined Chern character classes $\Ch_n(E) \in H^{2n}(\Fil^n\Omega^*(S,A)) \cong \bigoplus_{\substack{k+l=2n\\ k<l}}\Hom(H_k(S), \ol H_l(A)) \oplus \Hom(H_n(S), \ol Z_n(A))$, where $\ol Z_n(A)$ denotes the cycles of degree $n$ in $\ol\Omega_*(A)$.\footnote{Karoubi uses another subcomplex $\mathscr C^*(S,A)$ instead of $\Fil^n\Omega^*(S,A)$, which nevertheless has the same cohomology in degree $2n$.}

In the same way as we did in section \ref{sec:SecondaryClasses}, one can then construct secondary classes $\Ch_n^{\rel}(E,F,\alpha) \in H^{2n-1}(\Omega^*(S,A)/\Fil^n)$ for triples $(E,F,\alpha)$, where $E,F$ are $\GL_r(A)$-fibre bundles on $S$ and $\alpha$ is an isomorphism of the induced $\GL_r(A_{\dot})$-bundles.
% The cohomology group is $\bigoplus\_{\substack{k+l=2n-1\\k>l+1}} \Hom(H_k(S), \ol H_l(A)) \oplus \Hom(H_n(S), \ol\Omega_{n-1}(A)/\ol B_{n-1}(A))$ ($\ol B_{n-1}(A)$ denoting the boundaries in degree $n-1$) 

In \cite{Kar87} Karoubi uses a geometric interpretation of $K_i(A)$ and $K_i^{\rel}(A)$ in terms of ``virtual'' $\GL(A)$-bundles on $i$-spheres to define Chern character maps $\Ch_{n,i}$ on $K_i(A)$ and relative Chern character maps $\Ch_{n,i}^{\rel} : K_i^{\rel}(A) \to \ol H_{2n-1-i}(A)$, if $i>n$, and $\Ch_{n,n}^{\rel}: K_n^{\rel}(A) \to \ol\Omega_{n-1}(A)/\ol B_{n-1}(A)$, $\ol B_{n-1}(A)$ denoting the boundaries in degree $n-1$ \cite[6.21, 6.22]{Kar87}. 
Note, that one can write this in the following uniform way: $\Ch_{n,i}^{\rel} : K_i^{\rel}(A) \to H^{2n-i-1}(\ol\Omega^{<n}(A))$, $i = 0, \dots, 2n-1$, where $\ol \Omega^{<n}(A)$ denotes as usual the truncated complex.
It is not hard to see (cf. \cite[5.17]{Kar87}, \cite[Th\'eor\`eme 3.4]{CK}), that this construction is ``the same'' as the one we used via the Hurewicz map.

Now assume, that $A = \mathscr C^{\infty}(X)$  is the ring of smooth functions on a manifold $X$. Since the algebra of smooth complex-valued differential forms on $X$, $\A^*(X)$, is a differential graded algebra with $\A^0(X) = A$, there is a unique morphisms of DGAs $\Omega_*(A) \to \A^*(X)$, which is the identity in degree $0$. Hence Karoubi's relative Chern character induces morphisms $K_i^{\rel}(\mathscr C^{\infty}(X)) \to H^{2n-i-1}(\A^{<n}(X))$. Insofar, our relative Chern character is analogous to Karoubi's one. 

If $X$ is a smooth separated scheme of finite type over $\C$, one can construct a natural map $K_i^{\rel}(X) \to K_i^{\rel}(\mathscr C^{\infty}(X))$. Moreover, the relative Chern character $\Ch_{n,i}^{\rel} : K_i^{\rel}(X) \to H^{2n-i-1}(X,\C)/\Fil^n$ may be composed with the natural maps $H^{2n-i-1}(X,\C)/\Fil^n \to \Hyp^{2n-i-1}(X, \Omega^{<n}_X) \to \Hyp^{2n-i-1}(X, \A^{<n}_X) = H^{2n-i-1}(\A^{<n}(X))$, and it is clear from the constructions, that the diagram
\[
\xymatrix{
K_i^{\rel}(X) \ar[r] \ar[d]_{\Ch_{n,i}^{\rel}} & K_i^{\rel}(\mathscr C^{\infty}(X)) \ar[d]^{\Ch_{n,i}^{\rel}}\\
H^{2n-i-1}(X,\C)/\Fil^n \ar[r] & H^{2n-i-1}(\A^{<n}(X))
}
\]
commutes.
\end{rems}

\section[Comparison with the Deligne-Beilinson Chern character]{Comparison with the Chern character in Deligne-Beilinson cohomology}
\label{sec:ComparisonChernCharacters}

The Chern character in Deligne-Beilinson cohomology is constructed in exactly the same way as the relative Chern character above:

Let $X= \Spec(A)$ be a smooth affine $\C$-scheme of finite type as in the previous section. We have again the natural  morphisms of simplicial varieties $X \otimes B_{\dot}\GL_r(A) \to B_{\dot}\GL_r(\C)$. Call the corresponding algebraic bundle $G_r$.
As in the relative case the  Chern character classes $\Ch_n^{\DB}(G_r) \in H^{2n}_{\DB}(X\otimes B_{\dot}\GL_r(A), \Q(n))$ are compatible with respect to the maps $H^{2n}_{\DB}(X\otimes B_{\dot}\GL_{r+1}(A),\Q(n)) \xrightarrow{j_r^*} H^{2n}_{\DB}(X\otimes B_{\dot}\GL_r(A),\Q(n))$ and thus yield a well defined class in
$H^{2n}_{\DB}(X\otimes B_{\dot}\GL(A),\Q(n))$. This class in turn yields maps
$H_i(B_{\dot}\GL(A), \Z) \to H^{2n-i}_{\DB}(X, \Q(n))$ and, for $i > 0$, we define the Chern character maps $\Ch_{n,i}^{\DB}$ on $K$-theory to be the composition
\begin{multline*}
\Ch_{n,i}^{\DB}: K_i(X) = \pi_i(|B_{\dot}\GL(A)|^+) \xrightarrow{\mathrm{Hur.}} H_i(|B_{\dot}\GL(A)|^+, \Z) \cong\\ 
\cong H_i(B_{\dot}\GL(A), \Z) \to H^{2n-i}_{\DB}(X,\Q(n)).
\end{multline*}
\begin{rem}
This is the construction used by Soul\'e \cite[2.3]{SouleReg}. It is just a down to earth version of the more general constructions of Chern characters in \cite[\S 4]{SchneiderBeil} or \cite{GilletRR}.
\end{rem}

\begin{thm}\label{thm:ComparisonChernCharacters}
The diagram
\[\xymatrix{
K_i^{\rel}(X) \ar[r]\ar[d]^{(-1)^{n-1}\Ch_{n,i}^{\rel}} & K_i(X) \ar[d]^{\Ch_{n,i}^{\DB}}\\
H^{2n-i-1}(X, \C)/\Fil^nH^{2n-i-1}(X, \C) \ar[r] & H^{2n-i}_{\DB}(X, \Q(n))
}
\]
commutes. 
\end{thm}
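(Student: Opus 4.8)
The plan is to deduce the statement from the cohomological comparison already established in Theorem \ref{thm:comparison}, exploiting the fact that the relative and the Deligne--Beilinson Chern characters are built by one and the same recipe: a universal characteristic class living on $X\otimes S$ for an appropriate simplicial set $S$, converted by the K\"unneth-type isomorphism of Lemma \ref{lemma:cohom_X_tensor_S} into a homomorphism out of $H_*(S,\Z)$, and finally precomposed with the Hurewicz map. For the relative character one has $S=\mathscr F$ and the class $\widetilde\Ch_n^{\rel}(T,E,\alpha)$; for the Deligne--Beilinson character one has $S=B_{\dot}\GL(A)$ and the class $\Ch_n^{\DB}(G)$. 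The comparison map $K_i^{\rel}(X)\to K_i(X)$ is induced by the left vertical arrow $\phi\colon\mathscr F\to B_{\dot}\GL(A)$ of the pullback square defining $\mathscr F$.

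First I would record the two functoriality statements linking the columns. Writing $\Phi_r=\id_X\otimes\phi_r\colon X\otimes\mathscr F_r\to X\otimes B_{\dot}\GL_r(A)$, the classifying map $g_r$ of $E_r$ factors as $X\otimes\mathscr F_r\xrightarrow{\Phi_r}X\otimes B_{\dot}\GL_r(A)\to B_{\dot}\GL_r(\C)$, so $E_r=\Phi_r^*G_r$; by functoriality of the Deligne--Beilinson Chern character this gives $\Phi_r^*\Ch_n^{\DB}(G_r)=\Ch_n^{\DB}(E_r)$, and in the limit $\Phi^*\Ch_n^{\DB}(G)=\Ch_n^{\DB}(E)$. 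On the other hand, on homology the map $K_i^{\rel}(X)\to K_i(X)$ corresponds, under the chain of isomorphisms $H_i(\mathscr F,\Z)\cong H_i(F,\Z)\cong H_i(\widetilde F,\Z)$ (realization commutes with finite limits, so $|\mathscr F|\cong F$, and $F\to\widetilde F$ is acyclic) and $H_i(B_{\dot}\GL(A),\Z)\cong H_i(|B_{\dot}\GL(A)|^+,\Z)$ (the plus construction is acyclic), exactly to $\phi_*\colon H_i(\mathscr F,\Z)\to H_i(B_{\dot}\GL(A),\Z)$. This is the naturality of the Hurewicz map applied to the commutative left-hand square of \eqref{diag1}, together with the identification $|\mathscr F|\cong F$ over $|B_{\dot}\GL(A)|$.

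I would then assemble these via Theorem \ref{thm:comparison}. The natural map $H^{2n-1}(X\otimes\mathscr F,\C)/\Fil^n\to H^{2n}_{\DB}(X\otimes\mathscr F,\Q(n))$ is induced by a morphism of the defining cones and is therefore natural in the simplicial variable; hence it is compatible with the K\"unneth decomposition of Lemma \ref{lemma:cohom_X_tensor_S}, reducing on the degree-$i$ summand to the bottom horizontal map $b$ of the theorem. By (the $X\otimes S$ version of) Theorem \ref{thm:comparison} this map sends $\widetilde\Ch_n^{\rel}(T,E,\alpha)$ to $(-1)^{n-1}\Ch_n^{\DB}(E)=(-1)^{n-1}\Phi^*\Ch_n^{\DB}(G)$. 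Evaluating on the Hurewicz image $h(x)\in H_i(\mathscr F,\Z)$ of a class $x\in K_i^{\rel}(X)$, and using that pullback along $\Phi$ corresponds to precomposition with $\phi_*$ in the K\"unneth description, I obtain
\[
b\bigl((-1)^{n-1}\Ch_{n,i}^{\rel}(x)\bigr)=(-1)^{n-1}\cdot(-1)^{n-1}\,\overline{\Ch_n^{\DB}(G)}(\phi_*h(x))=\Ch_{n,i}^{\DB}(\iota(x)),
\]
where $\iota\colon K_i^{\rel}(X)\to K_i(X)$ is the top horizontal map; the two signs cancel and the diagram commutes.

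The main point requiring care is that the whole web of natural identifications commutes simultaneously: that the comparison map $K_i^{\rel}(X)\to K_i(X)$ really corresponds to $\phi_*$ under the homological identifications passing through the plus construction and the acyclic maps of \eqref{diag1}, and that the bottom comparison map is natural enough to be compatible both with pullback along $\Phi$ and with the K\"unneth splitting of Lemma \ref{lemma:cohom_X_tensor_S}, uniformly in $r$. There is no essential new difficulty beyond Theorem \ref{thm:comparison}; once the naturality of every ingredient---the K\"unneth isomorphism, the bottom comparison map, the two Chern characters, and the Hurewicz map---is in place, the commutativity is extracted from it by pure functoriality.
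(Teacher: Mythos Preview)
Your proposal is correct and follows essentially the same approach as the paper's proof. The paper argues more tersely but identically: it observes that $E_r$ is the pullback of $G_r$ along $X\otimes\mathscr F_r\to X\otimes B_{\dot}\GL_r(A)$, applies Theorem~\ref{thm:comparison} together with functoriality of $\Ch_n^{\DB}$ to see that $(-1)^{n-1}\widetilde\Ch_n^{\rel}(T_r,E_r,\alpha_r)$ and $\Ch_n^{\DB}(G_r)$ both land on $\Ch_n^{\DB}(E_r)$ in $H^{2n}_{\DB}(X\otimes\mathscr F_r,\Q(n))$, passes to the limit $r\to\infty$, and then invokes the commutativity of diagram~\eqref{diag1} to connect the homology statement back to the $K$-theory maps---exactly the ``web of natural identifications'' you spell out more explicitly.
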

\begin{proof}
This is now an easy consequence of theorem \ref{thm:comparison} and the constructions.

We use the notations of the last two sections. Then $E_r/X\otimes \mathscr F_r$ is just the pullback of $G_r/X\otimes B_{\dot}\GL_r(A)$ by the morphism $X\otimes \mathscr F_r \to X\otimes B_{\dot}\GL_r(A)$. It follows from theorem \ref{thm:comparison} and functoriality, that $(-1)^{n-1}\widetilde\Ch_n^{\rel}(T_r, E_r, \alpha_r) \in H^{2n-1}(X\otimes \mathscr F_r, \C)/\Fil^n$ and $\Ch_n^{\DB}(G_r) \in H^{2n}_{\DB}(X\otimes B_{\dot}\GL(A), \Q(n))$ are mapped to the same class in $H^{2n}_{\DB}(X\otimes \mathscr F_r, \Q(n))$, namely to $\Ch_n^{\DB}(E_r)$. It follows, that we have commutative diagrams
\[\xymatrix@C+1.7cm{
H_i(\mathscr F_r, \Z) \ar[r]^-{(-1)^{n-1}\widetilde\Ch_n^{\rel}(T_r, E_r, \alpha_r)}\ar[d]\ar[dr]^-{\Ch_n^{\D}(E_r)} & H^{2n-i-1}(X, \C)/\Fil^nH^{2n-i-1}(X, \C) \ar[d]\\
H_i(B_{\dot}\GL_r(A)), \Z) \ar[r]_-{\Ch_n^{\D}(G_r)} & H^{2n-i}_{\DB}(X, \Q(n)),
}
\]
where the arrows are induced by the specified classes.
Going to the limit $r \to \infty$ and using the commutativity of diagram \eqref{diag1} the claim follows.
\end{proof}

\section{Non affine varieties}
\label{sec:NonAffineVar}

Using Jouanolou's trick, we may extend the definition of the relative Chern character to all smooth separated schemes of finite type (= varieties) over $\C$ (cf. section 6.2 of Gillet's article in \cite{GilletTopK}).

A \emph{Jouanolou torsor} over a scheme $X$ is an affine scheme $W$ together with an affine map $W \to X$ such, that, for some vector bundle $E$ over $X$, $W$ is a torsor for $E$. According to Jouanolou and Thomason, every smooth separated scheme of finite type over a field admits a Jouanolou torsor \cite[Proposition 4.4]{WeibelKH}. 

Let $X$ be a variety over $\Spec(\C)$ and fix a Jouanolou torsor $\pi: W \to X$. Since $X$ is smooth, so is $W$, and Quillen's algebraic $K$-theory of locally free $\O^{\alg}_X$-modules of finite rank $K_*(X)$ is isomorphic to the $K$-theory of coherent $\O^{\alg}_X$-modules $K'_*(X)$ \cite[\S 7.1]{Quillen}. By {\it loc. cit.} \S 7 Proposition 4.1 $\pi^* : K_*(X) = K'_*(X) \to K_*(W) =  K'_*(W)$ is an isomorphism.

On the other hand $\pi : W \to X$ is a homotopy equivalence\footnote{If $\mathscr E$ denotes the sheaf of $\mathscr C^{\infty}$-sections of the underlying vector bundle $E$ of $W$, then $H^1(X,\mathscr E) = 0$, since $\mathscr E$ is fine. Hence, topologically, $W$ is a trivial torsor, i.e. $W \cong E$ over $X$.}, hence it also induces an isomorphism in topological $K$-theory $\pi^*: K^*_{\top}(X) \xrightarrow{\cong} K^*_{\top}(W)$.

\begin{dfn} 
Let $X$ be a variety over $\Spec(\C)$. We define the relative $K$-theory $K_i^{\rel}(X)$ for $i \geq 1$ by
\[
K_i^{\rel}(X) = K_i^{\rel}(W),
\]
where $W$ is any Jouanolou torsor over $X$. The map $K_i^{\rel}(X) \to K_i(X)$ is given by the composition $K_i^{\rel}(W) \to K_i(W) \xrightarrow{(\pi^*)^{-1}} K_i(X)$ and the map $K_i(X) \to K^{-i}_{\top}(X)$ is given by the composition $K_i(X) \xrightarrow{\pi^*} K_i(W) \to K^{-i}_{\top}(W) \xrightarrow{(\pi^*)^{-1}} K^{-i}_{\top}(X)$.
\end{dfn}
Of course, this is only well defined up to isomorphism:
If $\pi' : W' \to X$ is a second Jouanolou torsor, the fibre product $W'' = W \times_X W'$ is again a Jouanolou torsor over $X$ and we have isomorphisms $K_*(W) \xleftarrow{\cong} K_*(W'') \xrightarrow{\cong} K_*(W')$ and similar for topological $K$-theory. By the five lemma, also $K_i^{\rel}(W) \xleftarrow{\cong} K_i^{\rel}(W'') \xrightarrow{\cong} K_i^{\rel}(W')$. Moreover, the map $K_i(X) \to K^{-i}_{\top}(X)$ is well defined.

Since $\pi : W \to X$ is a homotopy equivalence, it induces an isomorphism $H^*(X, \Z) \xrightarrow{\cong} H^*(W, \Z)$. This is a morphism of mixed Hodge structures, hence an isomorphism of mixed Hodge structures. In particular $\pi^*$ also induces isomorphisms $\Fil^nH^*(X, \C) \to \Fil^nH^*(W, \C)$, $H^*(X, \C)/\Fil^nH^*(X, \C) \xrightarrow{\cong} H^*(W, \C)/\Fil^nH^*(W, \C)$ and $H^*_{\DB}(X, \Q(n)) \xrightarrow{\cong} H^*_{\DB}(W, \Q(n))$, which we use to define the relative Chern character and the Chern character in Deligne-Beilinson cohomology (this is the method used by Schneider in \cite[\S 4]{SchneiderBeil}).
\begin{dfn}
The Chern character in Deligne-Beilinson cohomology 
$
\Ch_{n,i}^{\DB} : K_i(X) \to H^{2n-i}_{\DB}(X, \Q(n))
$
is given by the composition 
\[
K_i(X) \xrightarrow{\pi^*} K_i(W) \xrightarrow{\Ch_{n,i}^{\DB}} H^{2n-i}_{\DB}(W, \Q(n)) \xrightarrow{(\pi^*)^{-1}} H^{2n-i}_{\DB}(X, \Q(n)).
\]
Similarly, the relative Chern character 
$
\Ch_{n,i}^{\rel} : K_i^{\rel}(X) \to H^{2n-i-1}(X, \C)/\Fil^n
$
is given by the composition
\begin{multline*}
K_i^{\rel}(X) = K_i^{\rel}(W) \xrightarrow{\Ch_{n,i}^{\rel}} H^{2n-i-1}(W, \C)/\Fil^nH^{2n-i-1}(W, \C) \xrightarrow{(\pi^*)^{-1}} \\ H^{2n-i-1}(X, \C)/\Fil^nH^{2n-i-1}(X, \C).
\end{multline*}
\end{dfn}
From the constructions it is clear that theorem \ref{thm:ComparisonChernCharacters} remains valid in this situation:
\begin{thm}
The diagram
\[\xymatrix{
K_i^{\rel}(X) \ar[r]\ar[d]^{(-1)^{n-1}\Ch_{n,i}^{\rel}} & K_i(X) \ar[d]^{\Ch_{n,i}^{\DB}}\\
H^{2n-i-1}(X, \C)/\Fil^nH^{2n-i-1}(X, \C) \ar[r] & H^{2n-i}_{\DB}(X, \Q(n))
}
\]
commutes. 
\end{thm}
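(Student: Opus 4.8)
The plan is to deduce the statement from the affine comparison theorem \ref{thm:ComparisonChernCharacters} by transport along the Jouanolou torsor used in the definitions immediately preceding it. So I would fix a Jouanolou torsor $\pi : W \to X$ with $W$ affine, and observe that by construction every corner of the square for $X$ is obtained from the corresponding corner for the affine variety $W$ via the pullback maps $\pi^*$: one has $K_i^{\rel}(X) = K_i^{\rel}(W)$ by definition, while the two vertical arrows are the maps $\Ch_{n,i}^{\rel}$ and $\Ch_{n,i}^{\DB}$ for $W$ post-composed with $(\pi^*)^{-1}$ on the target cohomology. All these $\pi^*$ are isomorphisms: on algebraic $K$-theory by Quillen's theorem for the torsor (cited in the definition), on topological $K$-theory and on $H^*(-,\Z)$ because $\pi$ is a homotopy equivalence, and hence also on $\Fil^nH^*$, on $H^*/\Fil^n$ and on $H^*_{\DB}(-,\Q(n))$, since $\pi^*$ is an isomorphism of mixed Hodge structures.

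The substance of the proof is then to check that the two \emph{horizontal} arrows of the square for $X$ are compatible, under these isomorphisms, with the corresponding horizontal arrows for $W$. For the top arrow this is built into the definition of $K_i^{\rel}(X) \to K_i(X)$ as the composition $K_i^{\rel}(W) \to K_i(W) \xrightarrow{(\pi^*)^{-1}} K_i(X)$; the factor $(\pi^*)^{-1}$ here cancels against the initial $\pi^*$ hidden in $\Ch_{n,i}^{\DB}$, so that chasing the square clockwise reproduces exactly the clockwise composite of the square for $W$ followed by $(\pi^*)^{-1}$. For the bottom arrow one uses that the natural morphism $H^{2n-i-1}(-,\C)/\Fil^n \to H^{2n-i}_{\DB}(-,\Q(n))$ is functorial in the variety, being induced on the defining cones by the functorial morphism of complexes exhibited at the start of the section on the comparison of Chern character classes; consequently it commutes with $(\pi^*)^{-1}$, so that the counterclockwise composite for $X$ likewise reduces to the counterclockwise composite for $W$ followed by $(\pi^*)^{-1}$.

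Granting these two compatibilities, both composites around the square for $X$ equal $(\pi^*)^{-1}$ applied to the corresponding composites around the square for $W$, and the latter agree by theorem \ref{thm:ComparisonChernCharacters} applied to the affine variety $W$; since $\pi^*$ is an isomorphism, the square for $X$ commutes. The only point requiring genuine care is the functoriality of the bottom comparison map, i.e. that it commutes with $\pi^*$ — and this is the step I would single out as the main (though mild) obstacle; it is immediate once one recalls that this map is defined at the level of complexes by morphisms that are natural in $X$. Well-definedness of the whole statement, namely its independence of the chosen torsor $W$, was already secured when the non affine Chern characters were defined (using a common refinement $W\times_X W'$ and the five lemma), so no further argument is needed there.
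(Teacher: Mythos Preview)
Your proposal is correct and is exactly the argument the paper has in mind; the paper itself offers no proof beyond the single sentence ``From the constructions it is clear that theorem \ref{thm:ComparisonChernCharacters} remains valid in this situation,'' and what you have written is precisely the unpacking of that sentence.
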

\begin{rem}
Note, that, if $X$ is smooth and projective, then for $i>0$ the map $K_i(X)\to K^{-i}_{\top}(X)$ has torsion image (cf. \cite[6.3]{GilletTopK}). Hence the map $K_i^{\rel}(X) \to K_i(X)$ is rationally surjective and thus the relative Chern character is in some sense the interesting part of the Deligne-Beilinson Chern character.
\end{rem}

\section[The regulators of Borel and Beilinson]{The case $X = \Spec(\C)$: The regulators of Borel and Beilinson}
\label{sec:Borel}

In this section we apply the above comparison result in the case $X=\Spec(\C)$ and obtain as a corollary a new proof of Burgos' theorem, that Borel's regulator is twice Beilinson's regulator. We first give a different description of the homotopy fibre of $B\GL_r(\C)^{\delta} \to B\GL_r(\C)$ and use it to give an explicit cocycle for the relative Chern character. This cocycle may then be compared with a representative of Borel's regulator in Lie algebra cohomology using the explicit description of the van Est isomorphism due to Dupont. At this point one sees how well-suited the Chern-Weil theoretic description of characteristic classes is for the computation of regulators.

Here and in the following we denote by $\GL_r(\C)^{\delta}$ the group of invertible complex $n\times n$-matrices equipped with the \emph{discrete} topology. 

\subsection{An explicit cocycle}
\label{sec:ExplicitCocycle}
In the notations of the previous sections we fix $A=\C$, $X = \Spec(\C)$. In particular, we have the simplicial groups $G_{r,\dot} = S_{\dot}^{\infty}(\GL_r(\C))$, whose realization is equivalent to $\GL_r(\C)$ with the usual topology, and the simplicial set $\mathscr F_r$, defined by diagram (\ref{diag2}) and homotopy equivalent to the homotopy fibre of $B_{\dot}\GL_r(\C) \to B_{\dot}G_{r,\dot}$. Recall that by construction the relative Chern character factors through the homology of the simplicial set $\mathscr F = \dirlim_r\mathscr F_r$. 

In the present situation there is another model for $\mathscr F_r$, that will be useful for us, see 
Appendix \ref{app:SimplGrps}\footnote{Actually, in the Appendix $\GL_r(\C)\backslash G_{r,\dot}$ is replaced by the isomorphic simplicial set $G_{r,\dot}/\GL_r(\C)$, the isomorphism being induced by $\sigma \mapsto \sigma^{-1}$. This is due to different conventions in the cited literature and I hope, it will not cause too much confusion.}:
We have a commutative diagram of simplicial sets
\[
\xymatrix{
& \mathscr F_r \ar@{}[dr]|{\lrcorner}\ar[d]\ar[r]_{\alpha_r} & E_{\dot}G_{r,\dot} \ar[d]^p\\
\GL_r(\C)\backslash G_{r,\dot} \ar@{.>}[ur]^{\eta_r}\ar@/^1.2cm/[urr]^{\beta_r}\ar[r]^-{\rho_r} & B_{\dot}\GL_r(\C)^{\delta} \ar[r] &  B_{\dot}G_{r,\dot},
}
\]
where $\beta_r$ is given in degree $p$ by $\beta_r(\sigma) = (\sigma(e_0)^{-1}\sigma, \dots, \sigma(e_p)^{-1}\sigma)$ and the map $\eta_r$, induced by $\beta_r$ and $\rho_r$, is a weak homotopy equivalence (lemma \ref{lem:HomotopyFibres}). 
Here $e_i$ denotes the $i$-th standard basis vector $(0, \dots, 1, \dots, 0)$ and $\sigma(e_i)$ is also the same as $\tau_i^*\sigma$ with $\tau:[0]\to [p], 0\mapsto i$.
This translates into a commutative diagram of topological morphisms of simplicial manifolds
\[\xymatrix{
&& E_{\dot}\GL_r(\C) \ar[d]^p\\
X\otimes \GL_r(\C)\backslash G_{r,\dot} \ar[r]^-{\eta_r} \ar@{~>}[urr]^-{\beta_r}& X\otimes\mathscr F_r \ar[r]^-{g_r} \ar@{~>}[ur]^{\alpha_r} & B_{\dot}\GL_r(\C).
}
\]

\begin{prop}\label{prop:ExplicitCocycle}
The composition 
\[
H_{2n-1}(\GL_r(\C)\backslash G_{r,\dot}, \Z) \xrightarrow{\cong} H_{2n-1}(\mathscr F_r, \Z) \xrightarrow{\widetilde\Ch_n^{\rel}(T_r, E_r, \alpha_r)} H^0(X, \C)/\Fil^n = \C
\]
is given by the cocycle
\[
\sigma \mapsto (-1)^n\frac{(n-1)!}{(2n-1)!} \Tr \int_{\Delta^{2n-1}}(\sigma^{-1}d\sigma)^{2n-1}.
\]
\end{prop}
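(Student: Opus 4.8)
The plan is to reduce the statement to an explicit de Rham computation and then integrate over the top simplex. First I would use Proposition~\ref{prop:ComparisonOfRelClasses} together with Remark~\ref{rem:ExplicitDescriptionCocycle}(ii): since $X=\Spec(\C)$ is a point, the natural map $H^0(X,\C)/\Fil^n\to\Hyp^0(X,\Omega^{<n}_X)$ is the identity of $\C$, so the value of $\widetilde\Ch_n^{\rel}(T_r,E_r,\alpha_r)$ on a homology class represented by $\sigma$ is obtained by integrating over $\Delta^{2n-1}$ the de Rham representative $\Ch_n^{\rel}(\Gamma^{T_r},\Gamma^{E_r},\alpha_r)$ of its image. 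Transporting $\sigma$ from $\GL_r(\C)\backslash G_{r,\dot}$ to $\mathscr F_r$ along the weak equivalence $\eta_r$ of Lemma~\ref{lem:HomotopyFibres}, and using $\alpha_r\circ\eta_r=\beta_r$ together with Proposition~\ref{prop:ChRelUniv} and Lemma~\ref{lem:PullbackChRel}, the form to integrate is $\beta_r^*\Ch_n^{\rel,\univ}$.

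The heart of the argument is the explicit evaluation of $\beta_r^*\Ch_n^{\rel,\univ}$. Writing $h_0,\dots,h_p$ for the $\GL_r(\C)$-coordinates on $E_p\GL_r(\C)$, so that $g_{ki}=h_kh_i^{-1}$, I would first simplify the pullback of the standard connection $\Gamma^{p^*E^{\univ}}_i=\sum_k x_k g_{ki}^{-1}dg_{ki}$ along the identity trivialization $\alpha=\id$: the formula of Remark~\ref{rem:FunctorialityConnection}(i) collapses $\alpha^*\Gamma^{p^*E^{\univ}}$ to $\sum_k x_k h_k^{-1}dh_k$, independent of $i$. Under $\beta_r$ one has $h_k=\sigma(e_k)^{-1}\sigma$, whence the cancellation $h_k^{-1}dh_k=\sigma^{-1}d\sigma=:\omega$ holds for every $k$; since $\sum_k x_k=1$ this yields $\beta_r^*(\alpha^*\Gamma^{p^*E^{\univ}})_i=\omega$, the pullback of the left Maurer--Cartan form, which satisfies $d\omega=-\omega^2$. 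The homotopy connection \eqref{eq:HomotopyConnection} then pulls back to $(1-t)\omega$, with curvature $R=-\,dt\wedge\omega-t(1-t)\omega^2$.

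It then remains to extract the secondary form. In $\Tr(R^n)$ only the terms linear in $dt$ contribute after applying $K$, and since $(dt\wedge\omega)^2=0$ and the trace is cyclic on even-degree forms, this $dt$-part is $n(-1)^n t^{n-1}(1-t)^{n-1}\,dt\wedge\Tr(\omega^{2n-1})$. Applying $K=\int_0^1 i_{\del/\del t}(\,\cdot\,)\,dt$ and evaluating $\int_0^1 t^{n-1}(1-t)^{n-1}\,dt=\frac{((n-1)!)^2}{(2n-1)!}$ gives, in top degree, $\beta_r^*\Ch_n^{\rel,\univ}=(-1)^n\frac{(n-1)!}{(2n-1)!}\Tr(\omega^{2n-1})$. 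Integrating this over $\Delta^{2n-1}$ and substituting $\omega=\sigma^{-1}d\sigma$ produces exactly the asserted cocycle.

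The step I expect to be the main obstacle is the bookkeeping in the second paragraph: checking that the standard connection, pulled back first along the identity trivialization and then along the topological morphism $\beta_r$, really collapses to $\sigma^{-1}d\sigma$ uniformly in $i$ and $k$ --- this Maurer--Cartan cancellation is what makes the curvature, and hence the whole integral, manageable --- together with keeping all signs consistent through $d\omega=-\omega^2$, the cyclicity of the trace, and the normalization of $K$. By contrast, the identification $H_{2n-1}(\GL_r(\C)\backslash G_{r,\dot})\cong H_{2n-1}(\mathscr F_r)$ and the reduction to the de Rham representative are routine applications of results already in place.
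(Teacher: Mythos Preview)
Your proposal is correct and follows essentially the same route as the paper. The only difference is presentational: the paper observes directly that, since $X$ is a point, the standard connection on $\eta_r^*E_r$ vanishes, so the pullback along $\beta_r$ is simply $\beta_i^{-1}d\beta_i=\sigma^{-1}d\sigma$; you instead compute the universal pullback connection $\sum_k x_k h_k^{-1}dh_k$ on $E_{\dot}\GL_r(\C)$ and then specialize via $\beta_r$, reaching the same Maurer--Cartan form and the identical curvature/Beta-integral computation.
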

\begin{rem}
Hamida obtained a similar result \cite{HamidaCR}.
\end{rem}

\begin{proof}
Since $r$ is fixed, we drop the subscript $r$ in the following. Since $X$ is proper, it makes no difference if we work with $\widetilde\Ch_n^{\rel}(T,E,\alpha)$ or with $\Ch_n^{\rel}(T,E,\alpha)$.
It is clear from the commutativity of the above diagram, that the composition in the statement of the proposition is induced by $\Ch_n^{\rel}(T, \eta^*E, \beta)$. This class can be computed explicitely: Since $X$ is a point, the standard connection on the bundle $\eta^*E$ is given by the zero matrix (cf. the formula in example \ref{ex:standard_connection}). Then the pullback to the trivial bundle via $\beta$ is given by $\beta_i^{-1}d\beta_i$ (see remark \ref{rem:FunctorialityConnection} (i)). 
$\beta_i$ is given on the $p$-simplex $\sigma \in \GL_r(\C)\backslash G_{r,p}$ by the matrix $\sigma(e_i)^{-1}\sigma \in G_{r,p}=\Sing^{\infty}(\Delta^p, \GL_r(\C))$, hence $\beta_i^{-1}d\beta_i = \sigma^{-1}d\sigma$ on the simplex $\sigma$.
We denote the corresponding simplicial form simply by $\sigma^{-1}d\sigma$.

By construction $\Ch_n^{\rel}(\Gamma^{T}, \Gamma^{\eta^*E}, \beta)$ is given by $\int_0^1 (i_{\del/\del t} \Ch_n(\Gamma))dt$, where $\Gamma$ is the connection given by $\Gamma_i = (1-t) \beta_i^{-1}d\beta_i = (1-t)\sigma^{-1}d\sigma$ on the trivial $\GL_r(\C)$-bundle on $(X\otimes (\GL_r(\C)\backslash G_{r,\dot})) \times \C$, $t$ denoting the coordinate on $\C$. 

The curvature of $\Gamma$ is given by
\begin{multline*}
R_i = d\Gamma_i + \Gamma_i^2 = -dt(\sigma^{-1}d\sigma) -(1-t)(\sigma^{-1}d\sigma)^2 + (1-t)^2(\sigma^{-1}d\sigma)^2\\
= -dt(\sigma^{-1}d\sigma) + (t^2-t)(\sigma^{-1}d\sigma)^2.
\end{multline*}
Hence $R_i^n = (t^2-t)^n(\sigma^{-1}d\sigma)^{2n} - ndt (t^2-t)^{n-1}(\sigma^{-1}d\sigma)^{2n-1}$ and 
\begin{eqnarray*}
\Ch_n^{\rel}(\Gamma^{T}, \Gamma^{\rho^*E}, \beta) &=& \frac{1}{n!}\int_0^1 i_{\del/\del t} \Tr(R_i^n) dt\\
&=& -\frac{n}{n!} \Tr\int_0^1 (t^2-t)^{n-1} (\sigma^{-1}d\sigma)^{2n-1}dt\\
&=& -\frac{1}{(n-1)!} (\int_0^1 (t^2-t)^{n-1} dt) \Tr\left((\sigma^{-1}d\sigma)^{2n-1}\right)\\
&=& (-1)^n\frac{(n-1)!}{(2n-1)!}\Tr\left((\sigma^{-1}d\sigma)^{2n-1}\right).
\end{eqnarray*}
Here we used that $\int_0^1 (t^2-t)^{n-1} dt = (-1)^{n-1}\int_0^1 t^{n-1}(1-t)^{n-1}dt = (-1)^{n-1} \mathrm B(n,n) = (-1)^{n-1}\frac{\Gamma(n)\cdot \Gamma(n)}{\Gamma(n+n)}=(-1)^{n-1}\frac{((n-1)!)^2}{(2n-1)!}$, where $\mathrm B$ is Euler's Beta function \cite[section 4.2]{Carlson}.
Now the claim follows from remark \ref{rem:ExplicitDescriptionCocycle}.
\end{proof}

\subsection{An explicit description of the van Est isomorphism}

Consider $\GL_r(\C)$ as a real Lie group with maximal compact subgroup $U(r)$. Denote the corresponding Lie algebras by $\gl_r$ resp. $\LieU_r$. If $V$ is a finite dimensional real vector space with a continuous $\GL_r(\C)$-action, the van Est isomorphism
\[
H^*(\gl_r, \LieU_r; V) \cong H^*_{\mathrm{cts}}(\GL_r(\C), V)
\]
relates relative Lie algebra cohomology with continuous group cohomology. 

Recall, that in general, if $G$ is a connected Lie group and $K \subset G$ a subgroup with Lie algebras $\mathfrak{g}$ and $\mathfrak{k}$ respectively, and $V$ a trivial $G$-module, the relative Lie algebra cohomology $H^*(\mathfrak g, \mathfrak k; V)$ is the cohomology of the complex $\A^*(G/K; V)^G$ of smooth $V$-valued differential forms on $G/K$, that are invariant under the left action of $G$ (see e. g. \cite[Example 5.39]{Burgos}).

To compare Borel's regulator with the relative Chern character, we need the following description of the composition of the van Est isomorphism with the natural map $H^*_{\mathrm{cts}}(\GL_r(\C), V) \to H^*_{\mathrm{grp}}(\GL_r(\C), V) = H^*(B_{\dot}\GL_r(\C)^{\delta}, V)$ from continuous to discrete group cohomology.
\begin{prop}\label{prop:ExplicitVanEst}
We have a commutative diagram
\[\xymatrix{
H^*_{\mathrm{cts}}(\GL_r(\C), V) \ar[r] & H^*(B_{\dot}\GL_r(\C)^{\delta}, V) \ar[r]^-{\rho_r^*} & H^*(\GL_r(\C)\backslash G_{r,\dot}, V),\\
H^*(\gl_r, \LieU_r; V) \ar[u]^{\mathrm{van~ Est}}_{\cong} \ar[r] &H^*(\gl_r; V) \ar[ur]_-{\phi}
}
\]
where $\phi$ is induced by the chain map $\phi$ sending a left invariant form $\omega$ to the simplicial cocycle
\begin{equation}\label{eq:cocycle}
\GL_r(\C) \backslash S^{\infty}_p(\GL_r(\C)) \ni \sigma \mapsto \int_{\Delta^p} \sigma^*\omega.
\end{equation}
\end{prop}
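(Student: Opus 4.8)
The plan is to prove the commutativity at the level of cochains, the essential input being Dupont's explicit description of van Est together with Stokes' theorem. Throughout write $G = \GL_r(\C)$ and $K = U(r)$, and recall that $H^*(\gl_r; V)$ is computed by the Chevalley--Eilenberg complex $(\A^*(G;V)^G, d)$ of left-invariant $V$-valued forms, on which the Lie algebra differential agrees with the de Rham differential; the map $H^*(\gl_r, \LieU_r; V) \to H^*(\gl_r; V)$ is pullback along the projection $\pi \colon G \to G/K$, which identifies $G$-invariant forms on the symmetric space $G/K$ with the $K$-basic left-invariant forms on $G$.

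First I would verify that the formula \eqref{eq:cocycle} defines a chain map. For $\omega \in \A^p(G;V)^G$ and a smooth $(p+1)$-simplex $\tau \colon \Delta^{p+1} \to G$, Stokes' theorem gives
\[
(\delta\,\phi(\omega))(\tau) = \sum_{i=0}^{p+1}(-1)^i \int_{\Delta^p}(\tau\circ\delta^i)^*\omega = \int_{\partial\Delta^{p+1}}\tau^*\omega = \int_{\Delta^{p+1}}\tau^*(d\omega) = \phi(d\omega)(\tau),
\]
using $\del_i\tau = \tau\circ\delta^i$, while left-invariance of $\omega$ makes $\int_{\Delta^p}\sigma^*\omega$ descend to the quotient $\GL_r(\C)\backslash G_{r,\dot}$. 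Hence $\phi$ induces the diagonal arrow on cohomology.

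Next I would invoke Dupont's explicit form of the van Est isomorphism: composed with the comparison map $H^*_{\mathrm{cts}}(G;V)\to H^*(B_{\dot}G^{\delta};V)$ to discrete group cohomology, it sends the class of a closed invariant form $\bar\omega$ on $G/K$ to the class of the inhomogeneous group cocycle assigning to $(g_1,\dots,g_p)$ the integral of $\bar\omega$ over the \emph{geodesic} simplex in $G/K$ with vertices $eK, g_1K, (g_1g_2)K, \dots$. Unwinding the definition of $\rho_r$ --- which by $p\circ\beta_r(\sigma) = (\sigma(e_0)^{-1}\sigma(e_1), \dots, \sigma(e_{p-1})^{-1}\sigma(e_p))$ records exactly the vertices of a smooth simplex $\sigma$ --- one sees that $\rho_r^*$ of this cocycle evaluates on $\sigma$ to the integral of $\bar\omega$ over the geodesic simplex in $G/K$ with vertices $\pi(\sigma(e_i))$. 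On the other hand, writing $\omega = \pi^*\bar\omega$ for the pullback appearing in \eqref{eq:cocycle}, we have $\phi(\omega)(\sigma) = \int_{\Delta^p}(\pi\circ\sigma)^*\bar\omega$, i.e. integration of $\bar\omega$ over the \emph{actual} smooth simplex $\pi\circ\sigma$, which has the same vertices.

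The two cochains thus differ only by replacing each smooth simplex $\pi\circ\sigma$ in $G/K$ by its geodesic straightening with the same vertices. The decisive point --- and the reason the \emph{relative} Lie algebra cohomology (relative to $K$) must appear --- is that $G/K$ is contractible, so every obstruction to a homotopy rel vertices vanishes and the canonical geodesic straightening is simplicially homotopic to the identity; for the closed form $\bar\omega$ the associated prism operator then produces an explicit coboundary, so the two cochains are cohomologous and the diagram commutes. I expect this straightening-and-prism step to be the main obstacle: one must check that the geodesic homotopies are natural enough to assemble into a genuine simplicial chain homotopy on $\GL_r(\C)\backslash G_{r,\dot}$, and that Dupont's normalization of van Est matches the orientation and sign conventions built into the integration map $I$ of Theorem \ref{thm:Dupont}. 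Granting this, both routes around the diagram induce the same homomorphism on cohomology, which is the assertion of the proposition.
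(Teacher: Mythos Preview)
Your strategy is correct in outline and the Stokes verification matches the paper exactly, but you take a different route through Dupont's work than the paper does, and precisely the step you flag as the ``main obstacle'' is the one the paper sidesteps.

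You invoke the \emph{geodesic simplex} realization of van Est and then need a straightening homotopy to compare the geodesic simplex on the vertices $\pi(\sigma(e_i))$ with the actual simplex $\pi\circ\sigma$. The paper instead uses Dupont's more primitive description: the composition $H^*(\gl_r,\LieU_r;V)\to H^*(B_{\dot}G^\delta;V)$ is $I\circ(\widetilde p^{\,*})^{-1}\circ\mathrm{pr}_2^*$, where $\widetilde p\colon B_{\dot}(G^\delta;G/K)=E_{\dot}G^\delta\times_{G^\delta}G/K\to B_{\dot}G^\delta$ is the projection of the associated $G/K$-bundle (a de Rham equivalence because $G/K$ is contractible). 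The key move is to construct an explicit \emph{topological morphism}
\[
\gamma\colon G\backslash G_{\dot}\rightsquigarrow B_{\dot}(G^\delta;G/K),\qquad
(t,\sigma)\longmapsto\bigl(\sigma(e_0)^{-1}\sigma(e_1),\dots,\sigma(e_{p-1})^{-1}\sigma(e_p),\ \sigma(e_p)^{-1}\sigma(t)K\bigr),
\]
lying over $\rho$. Then $\rho^*\circ I\circ(\widetilde p^{\,*})^{-1}\circ\mathrm{pr}_2^*=I\circ\gamma^*\circ\mathrm{pr}_2^*$ on cohomology, and since $\mathrm{pr}_2\circ\gamma$ in degree $p$ is just $(t,\sigma)\mapsto\sigma(e_p)^{-1}\sigma(t)K$, left-invariance of $\omega$ gives $I\circ\gamma^*\circ\mathrm{pr}_2^*(\omega)(\sigma)=\int_{\Delta^p}\sigma^*\omega$ on the nose. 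No straightening, no prism operator, no homotopy to build.

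What each approach buys: yours stays close to the classical picture (geodesic simplices, singular cochains) and would work in any framework where a straightening exists; the paper's exploits the simplicial-manifold machinery already set up in Chapter~1 (topological morphisms, the integration map $I$) and gets an equality of cochains rather than merely cohomologous ones, at the cost of introducing the auxiliary object $B_{\dot}(G^\delta;G/K)$. Your concern about sign and orientation conventions is handled automatically in the paper's version because everything passes through the single map $I$ of Theorem~\ref{thm:Dupont}.
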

\begin{proof}
The proof is based on the explicit description of the van Est isomorphism by Dupont in \cite[Proposition 1.5]{Dup76} and \cite[(proof of) Proposition 9.10]{DupLNM}.

First of all \eqref{eq:cocycle} is well defined, since $\omega$ is left invariant, and $\phi$ is a chain map by Stoke's theorem.

In the following, we use the abbreviations $G:= \GL_r(\C)$, $K := U(r)$, $G_{\dot} :=  G_{r,\dot} = S^{\infty}_{\dot}(\GL_r(\C))$ and write $G^{\delta}$ when we consider $G = \GL_r(\C)$ as a group with the \emph{discrete} topology.

Consider the simplicial manifold $E_{\dot}G^{\delta} \underset{G^{\delta}}{\times} G/K = B_{\dot}(G^{\delta}; G/K)$ (the bundle with fibre $G/K$ associated with the principal bundle $E_{\dot}G^{\delta} \to B_{\dot}G^{\delta}$). It is given in degree $p$ by $B_p(G^{\delta};G/K) = B_pG^{\delta} \times G/K$ with face operators
\[
\del_i(g_1, \dots, g_p, gK) = 
\begin{cases}
(g_2, \dots, g_p, gK), &  i = 0,\\
(g_1, \dots, g_ig_{i+1}, \dots, g_p, gK), & 0<i<p,\\
(g_1, \dots, g_{p-1}, g_pgK), & i=p.
\end{cases}
\]
Denote by $\widetilde p$ the canonical projection $B_{\dot}(G^{\delta}; G/K) \to B_{\dot}G^{\delta}$. Since $G/K$ is contractible, $\widetilde p$ induces an isomorphism in de Rham cohomology (cf. \cite[proof of proposition 9.10]{DupLNM}).

On the other hand, we have a commutative diagram
\[\xymatrix{
& B_{\dot}(G^{\delta}; G/K) \ar[d]^{\widetilde p}\\
G\backslash G_{\dot} \ar@{~>}[ur]^{\gamma} \ar[r]^{\rho} & B_{\dot}G^{\delta},
}
\]
where the topological morphism $\gamma$ is given in degree $p$ by $\Delta^p \times G\backslash G_p \to B_p(G^{\delta}; G/K)$, $(t, \sigma) \mapsto (\sigma(e_0)^{-1}\sigma(e_1), \dots, \sigma(e_{p-1})^{-1}\sigma(e_p), \sigma(e_p)^{-1}\sigma(t)K)$, and $\rho$ was defined in the previous subsection.

If now $\omega$ is a left invariant $V$-valued differential form, its pullbacks 
along the projections $\Delta^p \times B_p(G^{\delta};G/K) \to G/K$ give a well defined simplicial $V$-valued form on $B_{\dot}(G^{\delta};G/K)$, where $V$-valued simplicial forms are defined similar as in section \ref{sec:SimplDeRham}. Thus we get a natural map of complexes $\mathrm{pr}_2^*:\A^*(G/K;V)^G \to A^*(B_{\dot}(G^{\delta}; G/K);V)$.

On cohomology we have the commutative diagram
\[\xymatrix@C-0.2cm{
H^*(\A^*(G/K;V)^G) \ar[r]^-{\mathrm{pr}_2^*} & H^*(A^*(B_{\dot}(G^{\delta};G/K);V)) \ar[dr]^{\gamma^*}\\
H^*(\gl_r, \LieU_r;V)\ar@{=}[u] & H^*(A^*(B_{\dot}G^{\delta};V)) \ar[u]_{\cong}^{\widetilde p^*} \ar[r]^{\rho^*} \ar[d]_I^{\cong} & H^*(A^*(G\backslash G_{\dot}; V)) \ar[d]_I^{\cong}\\
& H^*(B_{\dot}G^{\delta};V) \ar[r]^{\rho^*} & H^*(G\backslash G_{\dot}; V).
}
\]
Here $I$ is the isomorphism of theorem \ref{thm:Dupont} (with $V$-coefficients) in the special case of a simplicial set considered as a simplicial manifold and is given by integration over the standard simplex.

It follows from the explicit description of the van Est isomorphism in \cite[Proposition 1.5]{Dup76} and \cite[Proposition 9.10 and the remark following it]{DupLNM}, that the composition $I \circ (\widetilde p^*)^{-1} \circ \mathrm{pr}_2^*$ is the same as the composition of the van Est isomorphism with the natural map from continuous to discrete group cohomology. 

Hence the composition $H^*(\gl_r, \LieU_r; V) \to H^*(G\backslash G_{\dot}; V)$ we are looking for is given by $I \circ \gamma^* \circ \mathrm{pr}_2^*$. Since $\mathrm{pr}_2 \circ \gamma$ is given in degree $p$ by $\Delta^p \times (G\backslash G_p) \to G/K$, $(t, \sigma) \mapsto \sigma(e_p)^{-1}\sigma(t)K$, an invariant form $\omega$ is sent by $I \circ \gamma^* \circ \mathrm{pr}_2^*$ to the simplicial cocycle
\[
\sigma \mapsto \int_{\Delta^p} (L_{\sigma(e_p)^{-1}} \circ \sigma)^*\omega = \int_{\Delta^p} \sigma^*\omega,
\]
where -- by abuse of notation -- we still denote by $\sigma$ the composition of $\sigma$ with the natural projection $G \to G/K$ and $L_g$ denotes the left translation with $g$.

Now it is obvious, that $I \circ \gamma^* \circ \mathrm{pr}_2^*$ factors through the map $\phi$ as claimed.
\end{proof}

\subsection{Comparison of the regulators}
First we give the definitions of the regulators we use.

\begin{dfn}
The \emph{Beilinson regulator} is by definition the Chern character with values in \emph{real} Deligne-Beilinson cohomology:
\[
r_{\mathrm{Be}}: K_{2n-1}(\C) \xrightarrow{\Ch_{n,2n-1}^{\DB}} H^1_{\D}(\Spec(\C), \Q(n)) \to H^1_{\DB}(\Spec(\C), \R(n)).
\]
Here $H^1_{\DB}(\Spec(\C), \R(n))$ is the cohomology in degree $1$ of the complex $\R(n) \to \C$, hence canonically isomorphic to $\C/\R(n)$ which in turn is isomorphic to $\R(n-1)$ via the projection $\pi_{n-1}: \C \to \R(n-1)$, $z\mapsto \frac{1}{2}(z + (-1)^{n-1}\bar z)$, and we will view $r_{\mathrm{Be}}$ as a map with values in $\R(n-1)$.
\end{dfn}

Next we shortly recall the construction of Borel's regulator (see e. g. \cite[Ch. 9]{Burgos}).

Let $G$ be $\GL_r(\C)$ viewed as real Lie group with maximal compact subgroup $K = U(r)$ with Lie algebras $\mathfrak g = \gl_r$ and $\mathfrak k = \LieU_r$ respectively. Let $\mathfrak g = \mathfrak k \oplus \mathfrak p$ be the corresponding Cartan decomposition. If $V$ is a finite dimensional real vector space with trivial $G$-action, there are canonical isomorphisms 
\[
H^*_{\mathrm{cts}}(G, V) \underset{\text{van Est}}{\cong} H^*(\mathfrak g, \mathfrak k; V) = H^*(\mathfrak g, \mathfrak k) \otimes V \cong (\bigwedge\nolimits^*\mathfrak p^{\vee})^K \otimes V.
\]
The right hand side is the complex of $K$-invariant alternating forms on $\mathfrak p$ with values in $V$.
It is computed as follows.

The complexification of $G=\GL_r(\C)$ is $G_{\C} = \GL_r(\C) \times \GL_r(\C)$ and the compact real form $U$ of $G_{\C}$ is $U(r) \times U(r)$ with Lie algebra $\LieU = \mathfrak k \oplus i\mathfrak p \subset \mathfrak g_{\C}$.

Now we have a chain of isomorphisms
\begin{align*}
H^{2n-1}(G, \R(n)) &\cong H^{2n-1}(K, \R(n)) &&\text{since $G/K$ is contractible}\\
&\cong H^{2n-1}(U/K, \R(n)) &&\text{since $K\cong U/K$}\\
&\cong H^{2n-1}(\LieU, \mathfrak k; \R(n)) &&\text{since $U$ is compact}\\
&\cong \Hom_{\mathfrak k}(\bigwedge\nolimits^{2n-1}(i\mathfrak p), \R(n)) \\
&\cong \Hom_{\mathfrak k}(\bigwedge\nolimits^{2n-1}\mathfrak p, \R(n-1)) &&\text{multiplication with $i^{2n-1}$}\\
&\cong H^{2n-1}(\mathfrak g, \mathfrak k; \R(n-1)) \\
&\cong H^{2n-1}_{\mathrm{cts}}(G, \R(n-1)) &&\text{van Est}
\end{align*}
and therefore natural maps
\begin{multline*}
H^{2n}(B_{\dot}\GL_r(\C), \R(n)) \xrightarrow{\text{suspension}} H^{2n-1}(\GL_r(\C), \R(n)) \\
\cong H^{2n-1}_{\mathrm{cts}}(\GL_r(\C), \R(n-1)) \to H^{2n-1}_{\mathrm{grp}}(\GL_r(\C), \R(n-1)).
\end{multline*}

The image $\mathrm{Bo}_n$ of the $n$-th universal Chern character class $\Ch_n^{\top}(E^{\univ}) \in H^{2n}(B_{\dot}\GL_r(\C), \R(n))$ in the group cohomology $H^{2n-1}_{\mathrm{grp}}(\GL_r(\C), \R(n-1))$ then induces (for $r$ large enough) the Borel regulator $r_{\mathrm{Bo}}$ via
\[
K_{2n-1}(\C) \xrightarrow{\text{Hur.}} H_{2n-1}(B_{\dot}\GL(\C)^{\delta}, \Z) \cong H_{2n-1}(B_{\dot}\GL_r(\C)^{\delta}, \Z) \xrightarrow{\mathrm{Bo}_n} \R(n-1).
\]

We also denote by $\mathrm{Bo}_n$ the image of $\Ch_n^{\top}(E^{\univ})$ in the relative Lie algebra $H^{2n-1}(\gl_r, \LieU_r; \R(n-1))$. We need Burgos' description of its image in absolute Lie algebra cohomology:
\begin{lemma}\label{lemma:ExplicitLieCocycle}
The image of $\mathrm{Bo}_n$ %under the natural map $H^{2n-1}(\gl_r, \LieU_r; \R(n-1)) \to H^{2n-1}(\gl_r, \R(n-1))$ 
in $H^{2n-1}(\gl_r, \R(n-1))$ is represented by the left invariant differential form 
\[
-2\frac{(n-1)!}{(2n-1)!}\pi_{n-1}\circ \Tr((g^{-1}dg)^{2n-1}),
\]
$g^{-1}dg$ denoting the Maurer-Cartan form on $\GL_r(\C)$ and $\pi_{n-1}$ the projection $\C\to \R(n-1)$.
\end{lemma}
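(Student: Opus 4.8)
The plan is to reconstruct, in the present normalization, the Lie-algebra computation carried out by Burgos \cite[Ch.~9]{Burgos}, and to pinpoint exactly where the scalar $-2$ and the projection $\pi_{n-1}$ enter. The guiding principle is that the image of a universal characteristic class under the suspension $H^{2n}(B_{\dot}\GL_r(\C), \R(n)) \to H^{2n-1}(\GL_r(\C), \R(n))$, followed by the van Est isomorphism, is computed by Chern--Weil transgression of the invariant polynomial defining the class; and that in absolute Lie algebra cohomology $H^{2n-1}(\gl_r; \R(n-1))$ classes are represented by left invariant (after averaging, bi-invariant) forms on $\GL_r(\C)$, regarded as a real Lie group.

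First I would represent $\Ch_n^{\top}(E^{\univ})$ by the invariant polynomial $\tfrac{1}{n!}\Tr(X^n)$ (up to the coefficient twist) and transgress it along the affine homotopy from the trivial connection to the Maurer--Cartan connection $g^{-1}dg$. Setting $\Gamma = (1-t)\,g^{-1}dg$ on the trivial bundle over $\GL_r(\C)\times\C$, the curvature is
\[
R = -\,dt\,(g^{-1}dg) + (t^2-t)(g^{-1}dg)^2,
\]
exactly as in the proof of Proposition \ref{prop:ExplicitCocycle}. Applying the homotopy operator $\int_0^1 i_{\del/\del t}(\,\cdot\,)\,dt$ to $\tfrac{1}{n!}\Tr(R^n)$ therefore reduces, as it does there, to the Beta-integral $\int_0^1 (t^2-t)^{n-1}\,dt = (-1)^{n-1}\tfrac{((n-1)!)^2}{(2n-1)!}$, and produces a bi-invariant $(2n-1)$-form proportional to $\tfrac{(n-1)!}{(2n-1)!}\Tr((g^{-1}dg)^{2n-1})$. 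This already accounts for the combinatorial constant $\tfrac{(n-1)!}{(2n-1)!}$ and for the appearance of the Maurer--Cartan form, and it is no accident that it is the very same integral as in the relative Chern character computation.

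The genuinely delicate part, and the one where I expect the bulk of the effort to lie, is to trace the coefficients through the chain of isomorphisms displayed just before the lemma so as to recover the precise scalar $-2$ and the projection $\pi_{n-1}$. Three effects must be combined. The coefficient conventions $\R(n) = (2\pi i)^n\R$ together with the step ``multiplication with $i^{2n-1}$'' are exactly what convert the a priori $\C$-valued form $\Tr((g^{-1}dg)^{2n-1})$ into its $\R(n-1)$-component $\pi_{n-1}\circ\Tr((g^{-1}dg)^{2n-1})$. The sign relating our $\Ch_n^{\top}$ to Burgos's twisted Chern character is recorded in Remark \ref{rem:NormalizationChernClasses}. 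Finally, a factor $2$ comes from the compact dual: since $\GL_r(\C)$ regarded as a real group has complexification $G_{\C} = \GL_r(\C)\times\GL_r(\C)$ with compact form $U = U(r)\times U(r)$, the class transgressed to $U/K$ (with $K = U(r)$) receives a contribution from each of the two factors, and it is precisely this doubling that is the source of the eventual ``factor $2$'' in the comparison of Borel's and Beilinson's regulators. Once these normalizations are aligned, the identity is exactly Burgos's computation \cite[Ch.~9]{Burgos}, and the lemma follows.
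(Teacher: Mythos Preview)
Your proposal and the paper's proof both ultimately defer to Burgos, but the paper takes a much shorter route. The paper's argument is essentially two lines: it observes that the given left-invariant form, evaluated at $1\in\GL_r(\C)$ where the Maurer--Cartan form is the identity $\gl_r\to\gl_r$, corresponds to the alternating multilinear form
\[
x_1\wedge\dots\wedge x_{2n-1}\;\longmapsto\;-2\,\frac{(n-1)!}{(2n-1)!}\,\pi_{n-1}\Bigl(\sum_{\tau\in\Sy_{2n-1}}\sgn(\tau)\,\Tr(x_{\tau(1)}\cdots x_{\tau(2n-1)})\Bigr),
\]
and then cites \cite[Proposition 9.26]{Burgos} directly, noting only that Burgos's cocycle differs by $(-1)^n$ because his twisted Chern character satisfies $\mathrm{ch}_n=(-1)^n\Ch_n^{\top}$ (Remark~\ref{rem:NormalizationChernClasses}). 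No transgression computation, no tracing through the chain of isomorphisms, is carried out in the paper itself.

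Your sketch is a reasonable roadmap to the computation \emph{behind} Burgos's Proposition~9.26 --- the Beta-integral transgression indeed explains the combinatorial constant, and the sign bookkeeping via Remark~\ref{rem:NormalizationChernClasses} is exactly what the paper invokes --- but the part you flag as delicate (the factor $2$ and the appearance of $\pi_{n-1}$ through the chain of isomorphisms involving $U=U(r)\times U(r)$) is left as a heuristic rather than carried out. Since both you and the paper end by citing Burgos, your longer route does not buy additional rigor here; it trades the paper's direct citation for a more conceptual but incomplete outline. If you want to keep your version, you should either complete the coefficient chase through $H^{2n-1}(K,\R(n))\cong H^{2n-1}(U/K,\R(n))\cong\dots$ in full, or simply cite \cite[Prop.~9.26]{Burgos} as the paper does and drop the intermediate sketch.
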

\begin{proof}
Obviously, the above form is left invariant. At the unit element the Maurer-Cartan form is just the identity $\gl_r \to \gl_r$. Hence the above form corresponds to the alternating form on $\gl_r$, that is given by
\[
x_1\wedge\dots\wedge x_{2n-1} \mapsto -2\frac{(n-1)!}{(2n-1)!}\pi_{n-1}\left(  
  \sum_{\tau\in\Sy_{2n-1}} \sgn(\tau)\Tr(x_{\tau(1)}\cdots x_{\tau(2n-1)}) \right),
\]
where $\Sy_{2n-1}$ denotes the symmetric group on $2n-1$ elements.

It follows from \cite[Proposition 9.26]{Burgos}, that this represents the image of $\mathrm{Bo}_n$ in $H^{2n-1}(\gl_r, \R(n-1))$. Remark that Burgos' cocycle differs from ours by the factor $(-1)^n$. This is explained by the fact, that Burgos uses another normalization of the Chern classes. His ``twisted Chern character class'' $\mathrm{ch}_n$ is $(-1)^n\Ch_n^{\top}$, cf. remark \ref{rem:NormalizationChernClasses}.
\end{proof}

\begin{thm}[Burgos \cite{Burgos}]
\[
r_{\mathrm{Bo}} =  2 r_{\mathrm{Be}}.
\]
\end{thm}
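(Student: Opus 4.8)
The plan is to reduce the identity $r_{\mathrm{Bo}} = 2r_{\mathrm{Be}}$ to a comparison of two explicit cocycles on the same homology group, namely the cocycle computed for the relative Chern character in Proposition~\ref{prop:ExplicitCocycle} and the one computed for Borel's class via van Est in Proposition~\ref{prop:ExplicitVanEst} and Lemma~\ref{lemma:ExplicitLieCocycle}. Fix $n\geq 1$ and abbreviate $G_r=\GL_r(\C)$. First I would reduce the Beilinson regulator to the relative Chern character. Since $\Spec(\C)$ is a point, Bott periodicity gives $K^{-(2n-1)}_{\top}(\Spec(\C))=\pi_{2n-1}(BU)=0$, so the long exact sequence of relative $K$-theory shows that $K_{2n-1}^{\rel}(\C)\to K_{2n-1}(\C)$ is surjective. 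Hence every $x\in K_{2n-1}(\C)$ has a lift $\widetilde x\in K_{2n-1}^{\rel}(\C)$, and Theorem~\ref{thm:ComparisonChernCharacters} together with the definition of $r_{\mathrm{Be}}$ yields
\[
r_{\mathrm{Be}}(x)=\pi_{n-1}\!\left((-1)^{n-1}\Ch_{n,2n-1}^{\rel}(\widetilde x)\right),
\]
where I use that the natural map $H^0(\Spec(\C),\C)/\Fil^n=\C\to H^1_{\DB}(\Spec(\C),\R(n))=\C/\R(n)\xrightarrow{\pi_{n-1}}\R(n-1)$ is exactly $\pi_{n-1}$, because $\pi_{n-1}$ annihilates $\R(n)$.

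Next I would arrange both regulators to be evaluated on one and the same class $c\in H_{2n-1}(G_r\backslash G_{r,\dot},\Z)$ for $r\gg 0$. On the relative side, $\Ch_{n,2n-1}^{\rel}$ factors through the Hurewicz image in $H_{2n-1}(\mathscr F_r)$, which is identified with $H_{2n-1}(G_r\backslash G_{r,\dot})$ via the weak equivalence $\eta_r$; let $c$ be the class with $\eta_{r*}(c)$ equal to the Hurewicz image $\bar x$ of $\widetilde x$. On the Borel side, $r_{\mathrm{Bo}}$ pairs $\mathrm{Bo}_n\in H^{2n-1}(B_{\dot}\GL_r(\C)^{\delta},\R(n-1))$ with the Hurewicz image $\hat x$ of $x$ in $H_{2n-1}(B_{\dot}\GL_r(\C)^{\delta})$. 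The key bookkeeping point is that these two Hurewicz images agree under the structure maps: the factorization $\rho_r=(\mathscr F_r\to B_{\dot}\GL_r(\C)^{\delta})\circ\eta_r$ together with the commutative square in diagram~\eqref{diag1} and the fact that both the plus construction and $F\to\widetilde F$ are homology isomorphisms give $\rho_{r*}(c)=\hat x$. By the projection formula, $r_{\mathrm{Bo}}(x)=\langle\mathrm{Bo}_n,\rho_{r*}(c)\rangle=\langle\rho_r^*\mathrm{Bo}_n,c\rangle$, so both regulators are now pairings of $c$ against explicit cocycles on $G_r\backslash G_{r,\dot}$.

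Finally I would compare the cocycles. Proposition~\ref{prop:ExplicitCocycle} gives
\[
\Ch_{n,2n-1}^{\rel}(\widetilde x)=(-1)^n\frac{(n-1)!}{(2n-1)!}\,\Tr\!\int_{\Delta^{2n-1}}(\sigma^{-1}d\sigma)^{2n-1}[c],
\]
while Lemma~\ref{lemma:ExplicitLieCocycle} combined with Proposition~\ref{prop:ExplicitVanEst} (noting that the Maurer--Cartan form $g^{-1}dg$ pulls back along a smooth simplex $\sigma$ to $\sigma^{-1}d\sigma$) shows that $\rho_r^*\mathrm{Bo}_n$ is represented by the cocycle
\[
\sigma\mapsto -2\frac{(n-1)!}{(2n-1)!}\,\pi_{n-1}\,\Tr\!\int_{\Delta^{2n-1}}(\sigma^{-1}d\sigma)^{2n-1}.
\]
Using the $\R$-linearity of $\pi_{n-1}$ and $(-1)^{n-1}(-1)^n=-1$, the first display gives
\[
r_{\mathrm{Be}}(x)=-\frac{(n-1)!}{(2n-1)!}\,\pi_{n-1}\,\Tr\!\int_{\Delta^{2n-1}}(\sigma^{-1}d\sigma)^{2n-1}[c]=\tfrac12\,r_{\mathrm{Bo}}(x),
\]
which is the assertion.

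The main obstacle is not the final comparison, which is immediate once the two cocycles are in hand, but the careful bookkeeping of identifications and signs: one must verify that the relative and Borel constructions genuinely factor through the same class $c$ (the compatibility $\rho_{r*}(c)=\hat x$ via $\eta_r$ and diagram~\eqref{diag1}), and one must track every normalization — the $(-1)^{n-1}$ from Theorem~\ref{thm:ComparisonChernCharacters}, the $(-1)^n$ in Proposition~\ref{prop:ExplicitCocycle}, the orientation of the cone map $\C\to H^1_{\DB}(\Spec(\C),\R(n))$, and the factor $-2$ built into Burgos' Lie-algebra cocycle. Only when all of these are pinned down consistently does the clean factor of $2$ relating $r_{\mathrm{Bo}}$ and $r_{\mathrm{Be}}$ emerge.
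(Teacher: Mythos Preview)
Your proposal is correct and follows essentially the same route as the paper: surjectivity of $K_{2n-1}^{\rel}(\C)\to K_{2n-1}(\C)$ from Bott periodicity, reduction via Theorem~\ref{thm:ComparisonChernCharacters} to comparing $\pi_{n-1}\circ((-1)^{n-1}\Ch_{n,2n-1}^{\rel})$ with $\tfrac12\mathrm{Bo}_n\circ\rho_{r*}$ on $H_{2n-1}(\GL_r(\C)\backslash G_{r,\dot})$, and then matching the explicit cocycles from Proposition~\ref{prop:ExplicitCocycle} against Proposition~\ref{prop:ExplicitVanEst} combined with Lemma~\ref{lemma:ExplicitLieCocycle}. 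The only cosmetic difference is that the paper phrases the reduction as commutativity of a single diagram rather than tracing a chosen class $c$, but the content is identical.
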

\begin{rem}
Beilinson \cite{Beilinson} proved, that both regulators coincide up to a non zero rational factor. Many details of Beilinson's proof were provided by Rapoport \cite{Rapoport}. Dupont, Hain, and Zucker \cite{DHZ} conjectured that the factor should be $2$. This was proven by Burgos using Beilinson's original argument and making all the normalizations and identifications precise.
\end{rem}

\begin{proof}
Since the odd topological $K$-theory of $\Spec(\C)$ vanishes, the map $K_{2n-1}^{\rel}(\C) \to K_{2n-1}(\C)$ is surjective. 
By construction of the regulators resp. the relative Chern character and the comparison result of theorem \ref{thm:ComparisonChernCharacters} it then suffices to show, that the diagram
\[\xymatrix@R-0.2cm{
H_{2n-1}(\GL_r(\C)\backslash G_{r,\dot}, \Z) \ar[r]^{\rho_r^*} \ar[d]_{(-1)^{n-1}\Ch^{\rel}_{n,2n-1}} & H_{2n-1}(B_{\dot}\GL_r(\C)^{\delta}, \Z) \ar@/^/[ddr]^{\frac{1}{2}\mathrm{Bo}_n}\\
H^0(\Spec(\C), \C)/\Fil^n \ar@{=}[d] \ar[r] & H^1_{\DB}(\Spec(\C), \R(n)) \ar@{=}[d] \\ 
\C \ar[r] & \C/\R(n) \ar[r]^{\pi_{n-1}}_{\cong}& \R(n-1)
}
\]
commutes. Note that by our constructions, the map $\C \to \C/\R(n)$ is really the projection.
\smallskip

According to proposition \ref{prop:ExplicitCocycle}, $\pi_{n-1} \circ ((-1)^{n-1}\Ch^{\rel}_{n,2n-1})$ is induced by the cocycle $\sigma \mapsto  -\pi_{n-1}\left(\frac{(n-1)!}{(2n-1)!} \Tr\int_{\Delta^{2n-1}} (\sigma^{-1}d\sigma)^{2n-1} \right)$.
\smallskip

On the other hand, by lemma \ref{lemma:ExplicitLieCocycle}, the image of $\mathrm{Bo}_n$ in $H^{2n-1}(\gl_r; \R(n-1))$ is given by the invariant differential
\[
-2\frac{(n-1)!}{(2n-1)!}\pi_{n-1}\circ \Tr((g^{-1}dg)^{2n-1}).
\]

Hence, by proposition \ref{prop:ExplicitVanEst}, the composition $\frac{1}{2}\mathrm{Bo}_n \circ \rho_r^*$ is induced by the cocycle
\[
\GL_r(\C)\backslash G_{r,\dot} \ni \sigma \mapsto -\frac{(n-1)!}{(2n-1)!}\pi_{n-1} \Tr \int_{\Delta^{2n-1}} (\sigma^{-1}d\sigma)^{2n-1},
\]
thus proving the theorem.
\end{proof}

\part{The $p$-adic theory}

\chapter*{Introduction}

As mentioned in the main introduction, the goal of this second part is to construct a relative Chern character for smooth affine $R$-schemes of finite type, where $R$ is a complete discrete valuation ring 
with field of fractions $K$ of characteristic $0$ and residue field $k$ of characteristic $p >0$, and compare it with the $p$-adic Borel regulator in the case of the ring of integers in a finite extension of $\Q_p$. Thus the structure of part II is parallel to that of part I. 

Let us only mention the following points: Whereas in the complex situation we had nice functorial complexes computing de Rham cohomology, namely the complex of $\mathscr C^{\infty}$-differential forms, this is not the case for dagger spaces. The de Rham cohomology of a dagger space $X$, $H^*_{\dR}(X/K) = \Hyp^*(X, \Omega^*_X)$, equals the cohomology of the complex $\Omega^*(X)$ in general only, if $X$ is acyclic for the cohomology of coherent sheaves. Thus one can compute the de Rham cohomology of a simplicial dagger space $X_{\dot}$ by simplicial differential forms in the style of Dupont only, if each $X_p$ is acyclic for the cohomology of coherent sheaves. For instance, this is the case, if each $X_p$ is affinoid or the dagger space associated with an affine $K$-scheme, e.g. the classifying space $B_{\dot}\GL_{r,K}^{\dag}$ or the universal principal bundle $E_{\dot}\GL_{r,K}^{\dag}$. This does not cause any problems for the construction of Chern-Weil theoretic classes, but, for example, a pullback map on de Rham cohomology for a topological morphism $Y_{\dot} \rightsquigarrow X_{\dot}$ is a priori only well defined, if each $X_p$ is acyclic for the cohomology of coherent sheaves. Thus everything works fine, if one restricts to the affine case, which is enough for the construction of regulators on $K$-theory, but things become more complicated if one wants a nice general theory.

Next a few comments on the relation with the syntomic Chern character.
For simplicity let us assume, that $R$ is the ring of integers in a finite \emph{unramified} extension $K$ of $\Q_p$. Let $X$ be a smooth affine $R$-scheme of finite type. There is a natural map from the algebraic de Rham cohomology of $X_K$ to the rigid cohomology of $X_k$ and on the latter, there is a natural Frobenius $\phi$. The rigid syntomic cohomology $H^*_{\syn}(X,n)$ of $X$ as developed systematically by Besser \cite{Besser}, is the cohomology of the complex
\[
\Cone\left(\Fil^n\Rhyp\Gamma_{\dR}(X_K/K) \xrightarrow{1-\frac{\phi}{p^n}} \Rhyp\Gamma_{\rig}(X_k/K)\right)[-1].
\]
On the other hand, the generic fibre $\hat X_K$ of the weak completion of $X$ (section \ref{sec:PrelimSpaces}) is a dagger space, whose de Rham cohomology is naturally isomorphic to the rigid cohomology of $X_k$ \cite[Kap. 8]{GK}. If we define relative cohomology groups $H^*_{\rel}(X,n) = H^*(\Cone(\Fil^n\Rhyp\Gamma_{\dR}(X_K/K) \to \Rhyp\Gamma_{\dR}(\hat X_K/K)))$, this enables us to construct a natural map $H^{*-1}_{\rel}(X,n) \to H^*_{\syn}(X,n)$ (induced by the natural map $H^*_{\dR}(\hat X_K/K) \to H^*_{\rig}(X_k/K)$ and $1-\frac{\phi}{p^n}:H^*_{\rig}(X_k/K) \to H^*_{\rig}(X_k/K)$) and conjecturally (cf. Besser's talk \cite{BesserTalk}), the diagram
\[
\xymatrix{
K_i^{\rel}(X) \ar[d]_{\Ch_{n,i}^{\rel}} \ar[r] & K_i(X) \ar[d]^{\Ch_{n,i}^{\syn}}\\
H^{2n-i-1}_{\rel}(X, n) \ar[r] & H^{2n-i}_{\syn}(X,n)
}
\]
commutes (up to a sign). 

One can try to prove this as in part I. What one has to prove is, that for an algebraic $\GL_r$-bundle $E$ on a smooth simplicial (affine) $R$-scheme $X_{\dot}$, whose induced bundle on the simplicial dagger space $(\widehat X_{\dot})_K$ is topologically trivialized by $\alpha$, the class $\widetilde\Ch_n^{\rel}(T,E,\alpha) \in H^{2n-1}_{\rel}(X_{\dot}, n)$ is mapped to $\Ch_n^{\syn}(E)\in H^{2n}_{\syn}(X_{\dot}, n)$. As in chapter \ref{ch:CharClassesAlg} (see chapter \ref{ch:RefinedSecClassesAlgBundles} for precise definitions) one constructs refined classes $\widetilde \Ch_n^{\rel}(E) \in H^{E,2n-1}_{\rel}(X_{\dot}, n)$ and a pullback map $\alpha^*\colon H^{E,2n-1}_{\rel}(X_{\dot}, n) \to H^{2n-1}_{\rel}(X_{\dot}, n)$, such that $\alpha^*\widetilde\Ch_n^{\rel}(E) = -\widetilde\Ch_n^{\rel}(T,E,\alpha)$. One may also define the groups $H^{E,*}_{\syn}(X_{\dot}, n)$ (cf. the proof of theorem \ref{thm:comparison}) and show, that in the diagram
\[
\xymatrix{
H^{E,2n-1}_{\rel}(X_{\dot}, n) \ar[r] \ar@/^1pc/[d]^{\alpha^*} & H^{E, 2n}_{\syn}(X_{\dot}, n) \ar@/^1pc/@{.>}[d]^{\exists ?}\\
H^{2n-1}_{\rel}(X_{\dot}, n) \ar[u]^{p^*} \ar[r] & H^{2n}_{\syn}(X_{\dot}, n) \ar[u]^{p^*}
}
\]
the refined class $\widetilde\Ch_n^{\rel}(E)$ is mapped to $p^*(\Ch_n^{\syn}(E))$ by the upper horizontal map. Thus it would suffice to construct the dotted arrow, which has to be a left inverse of $p^*$ and compatible with $\alpha^*$ on the left hand side. It is not hard to see, that one can construct a left inverse of $p^*$ on the right hand side, which is induced by $\alpha$, but I was not able to show the compatibility with $\alpha^*$ on the left hand side. The fundamental problem, which occurs, is that it is not clear, that the Frobenius map on the rigid cohomology of the special fibre of $X_{\dot}$ is compatible with the pullback by a topological morphism on the de Rham cohomology of the generic fibre of the weak completion under the identification mentioned above.

In the special case, where $X = \Spec(R)$, one can in fact use the above methods to compare the relative Chern character with the syntomic Chern character, but there the result also follows from the comparison of the relative Chern character with the $p$-adic Borel regulator, which is achieved in section \ref{sec:pAdicBorel}, and the comparison of the $p$-adic Borel regulator with the syntomic regulator in \cite{HK}.

\chapter{Preliminaries}

This chapter recalls the relevant definitions and facts about dagger spaces, that will be used in the subsequent chapters.

\section{Affinoid algebras}
\label{sec:PrelimAlgebras}

Let $R$ be a complete discrete valutation ring with maximal ideal $(\pi)$, perfect residue field $R/(\pi) = k$ of characteristic $p > 0$ and field of fractions $K$ of characteristic $0$. We denote by $|\;.\;|$ an absolute value on $K$.

We define the \emph{$K$-Tate algebra in $n$ variables} to be the algebra of power series converging on the unit disc
\[
T_n := K\<x_1,\dots, x_n\> = \{ \sum a_{\nu}x^{\nu} \in K[[x_1, \dots, x_n]]\;|\; |a_{\nu}| \xrightarrow{|\nu|\to\infty} 0\}.
\]
Here $\nu=(\nu_1, \dots, \nu_n)$ runs over the multiindices $\N_0^n$ and $x^{\nu}$ is by definition $x_1^{\nu_1}\cdots x_n^{\nu_n}$.
This is a $K$-Banach algebra with respect to the \emph{Gau{\ss} norm} $|\sum a_{\nu}x^{\nu}| = \max_{\nu} |a_{\nu}|$. The \emph{$R$-Tate algebra} is the subalgebra 
\[
R\<x_1, \dots,x_n\> = \{f\in K\<x_1, \dots, x_n\>\;|\; |f|\leq 1\},
\]
i.e. the algebra consisting of convergent power series with coefficients in $R$.

The \emph{$K$-Washnitzer algebra} $K\<x_1, \dots,x_n\>^{\dag}$ is the subalgebra of the $K$-Tate algebra consisting of \emph{overconvergent power series}, i.e.
\[
W_n := K\<x_1,\dots, x_n\>^{\dag} = \{ \sum a_{\nu} x^{\nu} \in K[[x_1, \dots, x_m]] \;|\; \exists \rho > 1 : |a_{\nu}|\rho^{|\nu|} \to 0\}.
\]
Finally the \emph{$R$-Washnitzer algebra} is the algebra of overconvergent power series with coefficients in $R$:
\[
R\<x_1, \dots,x_n\>^{\dag} = \{ f\in K\<x_1,\dots, x_n\>^{\dag}\;|\; |f| \leq 1\}.
\]

We will sometimes also use the algebras of power series converging on a disc of radius $\rho >0$
\[
T_n(\rho) := \{ \sum a_{\nu}x^{\nu} \in K[[x_1, \dots, x_n]]\;|\; |a_{\nu}|\rho^{|\nu|} \xrightarrow{|\nu|\to\infty} 0\}
\]
with norm $|\sum a_{\nu}x^{\nu}|_{\rho} := \max_{\nu} |a_{\nu}|\rho^{|\nu|}$. 
When we want to specify the names of the variables, we will sometimes denote this algebra by $K\<\rho^{-1}x_1,\dots, \rho^{-1}x_n\>$.
These are Banach algebras as well and the Washnitzer algebra (as an abstract algebra) may be written as the direct limit $W_n = \dirlim_{\rho\searrow 1} T_n(\rho) = \bigcup_{\rho > 1} T_n(\rho)$.

\medskip

A $K$- resp. \emph{$R$-affinoid algebra} is a homomorphic image of a $K$- resp. $R$-Tate algebra, a $K$- resp. \emph{$R$-dagger algebra} is a homomorphic image of a $K$- resp. $R$-Washnitzer algebra.
$R$-dagger algebras are also called \emph{weakly complete, weakly finitely generated $R$-algebras} (\emph{wcfg-algebras} for short).
All these algebras are Noetherian (\cite[5.2.6. Theorem 1]{BGR}, \cite[Korollar 1.3]{GK}, \cite[Theorem 2.1]{MW}).

\medskip

We also write $\ul x$ for the set of variables $x_1, \dots, x_n$. According to \cite[5.2.7/8]{BGR}  resp. \cite[Proposition 1.5]{GK}  all ideals in $K\<\ul x\>$ resp. $K\<\ul x\>^{\dag}$ are closed. The same is true for $T_n(\rho)$ \cite[section 2.1]{Berkovich}.
If $A$ is $K$-affinoid, hence of the form $K\<\ul x\>/I$ with an ideal $I$, then $A$ may be equipped with the residue norm of the Gau{\ss} norm. In fact, all the norms arising in this way are equivalent \cite[6.1]{BGR}. The corresponding statement for $K$-dagger algebras also holds \cite[Satz 1.9]{GK}. If $A$ is an $R$-affinoid or an $R$-dagger algebra, then $A_K := A \otimes_R K$ is a $K$-affinoid resp. $K$-dagger algebra.

\medskip

Every ideal $I$ in $R\<\ul x\>$ is finitely generated, hence $\pi$-adically separated and complete. In particular, $I$ is closed for the $\pi$-adic topology on $R\<\ul x\>$. Since the $\pi$-adic topology on $R\<\ul x\>$ coincides with the topology induced by the Gau{\ss} norm, we see that every $R$-affinoid algebra $A = R\<\ul x\>/I$ equipped with the residue norm is still an ultrametric Banach ring.

\medskip

By \cite[Proposition 1.11]{GK} $K$-dagger algebras are \emph{weakly complete} in the sense that, if $A$ is a $K$-dagger algebra and $a_1, \dots, a_n$ are power bounded elements of $A$, then the natural homomorphism $K[x_1,\dots, x_n] \to A, x_i\mapsto a_i$, admits a continuous extension to $K\<x_1,\dots, x_n\>^{\dag} \to A$.

Let $A$ be a $K$-dagger algebra and choose a representation $A = K\<\ul x\>^{\dag}/I$. then the completion $\hat A$ of $A$ is the $K$-affinoid algebra $K\<\ul x\>/IK\<\ul x\>$ \cite[Proposition 1.7]{GK}. Similarly, if $A = R\<\ul x\>^{\dag}/I$ is an $R$-dagger algebra, its $\pi$-adic completion is $\hat A = R\<\ul x\>/IR\<\ul x\>$.

If $A$ is an $R$-algebra, its $\pi$-adic completion is $\hat A = \projlim_n A/\pi^nA$.
If $A$ is of finite type, $\hat A$ is an $R$-affinoid algebra.
The \emph{weak completion} $A^{\dag}$ of $A$ is by definition the subalgebra of $\hat A$ consisting of all elements $z \in \hat A$ having a representation $z = \sum_{j=0}^{\infty} p_j(y_1, \dots, y_n)$, where $y_1, \dots, y_n \in A$, $p_j \in \pi^jR[x_1, \dots, x_n]$, and there exists a constant $c$ such that $\deg p_j \leq c\cdot(j+1)$ for all $j$ \cite[Definition 1.1]{MW}.
$A$ is called \emph{weakly complete} if $A \to A^{\dag}$ is bijective. The weak completion $A^{\dag}$ is always weakly complete \cite[Theorem1.2]{MW}.
Explicitely, if $A = R[\ul x]/I$, then $A^{\dag} = R\<\ul x\>^{\dag}/IR\<\ul x\>^{\dag} \subset \hat A = R\<\ul x\>/IR\<\ul x\>$.

\medskip

Morphisms of $R$- resp. $K$-dagger and affinoid algebras are morphisms of algebras. They are automatically continuous (clear for the $R$-case, \cite[6.1.3. Theorem 1]{BGR} resp. \cite[Proposition 1.8]{GK} in the $K$-case). All the four corresponding categories admit coproducts (cf. \cite[6.1.1. Proposition 11]{BGR}, \cite[Satz 1.19]{GK}). E.g. if $A= R\<\ul x\>^{\dag}/I$ and $B = R\<\ul Y\>^{\dag}/J$ are $R$-dagger algebras, their coproduct is given by $A \otimes_R^{\dag} B := R\<\ul x, \ul y\>^{\dag}/(I+J)$ (using \cite[Theorem 1.5]{MW} it is easy to check the universal property directly).

\section{Dagger spaces, weak formal schemes}
\label{sec:PrelimSpaces}

The general reference for dagger spaces is \cite{GK}. 
We do not recall all the details of the definition \cite[Kapitel 2]{GK}, which is parallel to the case of rigid spaces. If $A$ is a $K$-dagger algebra, we denote by $\Sp(A)$ the set of maximal ideals in $A$. This set is then endowed with a Grothendieck topology and a structure sheaf $\O_{\Sp(A)}$, so that $(\Sp(A), \O_{\Sp(A)})$ is a locally G-ringed space \cite[Proposition 2.9]{GK}, an \emph{affinoid $K$-dagger space}.  A general $K$-dagger space is then a locally G-ringed space $(X,\O_X)$, whose underlying Grothendieck topology is saturated (i.e. satisfying $(G_0), (G_1), (G_2)$ of \cite[9.1.2.]{BGR}), and which is locally isomorphic to an affinoid dagger space.

\medskip

There exists a \emph{``dagger analytification functor''} $(\,.\,)^{\dag}$ from the category of $K$-schemes of finite type to the category of $K$-dagger spaces \cite[Korollar 2.18]{GK}, more precisely: For any $K$-scheme $X$ there is an associated dagger space $X^{\dag}$ together with a morphism of locally G-ringed spaces $X^{\dag} \to X$, which is final in the category of all morphisms from a $K$-dagger space to $X$. It follows from this universal property, that $(\,.\,)^{\dag}$ commutes with products.

If $X$ is affine, $X^{\dag}$ may be described explicitely as follows (cf. \cite[1.13]{Bosch} for the rigid analogue and details of the proof): Choose a representation $A = K[\ul x]/I$ and $c\in K, |c|>1$. Define $K\<c^{-n}\ul x\>^{\dag}$ to be the algebra of power series, which are overconvergent on the disc of radius $|c|^n$, i.e.  series $\sum_{\nu} a_{\nu} \ul x^{\nu}$ satisfying $|a_{\nu}|\rho^{|\nu|} \xrightarrow{|\nu|\to \infty} 0$ for some $\rho > |c|^n$. Each $K\<c^{-n}\ul x\>^{\dag}$ may be identified with $K\<\ul x\>^{\dag}$ via $\ul x \mapsto c^n\cdot\ul x$, in particular is a $K$-dagger algebra. We have natural inclusions $K[\ul x] \subset K\<c^{-(n+1)}\ul x\>^{\dag} \subset K\<c^{-n}\ul x\>^{\dag}$ and hence
\[
K[\ul x]/I \to \dots \to K\<c^{-(n+1)}\ul x\>^{\dag}/(I) \to K\<c^{-n}\ul x\>^{\dag}/(I) \to \dots \to K\<\ul x\>^{\dag}/(I)
\]
inducing a sequence of inclusions as affinoid subdomains
\[
\Sp(K\<\ul x\>^{\dag}/(I)) \to \Sp(K\<c^{-1}\ul x\>^{\dag}/(I)) \to \Sp(K\<c^{-2}\ul x\>^{\dag}/(I)) \to \dots,
\]
whose union is $X^{\dag}$.

\bigskip

Next we recall the definition and basic properties of weakly formal ($R$-)schemes (\cite[Kapitel 3]{GK} and originally \cite{Mer}). Let $A$ be an $R$-dagger algebra and $\ol A = A/(\pi)$. Then $D(\bar f) \mapsto (A_f)^{\dag}$, where $f \in A$ is a preimage of $\bar f$ and $(A_f)^{\dag}$ denotes the weak completion of the localization $A_f$, is a sheaf of local rings on the topological space underlying $\Spec(\ol A)$. The corresponding locally ringed space is the \emph{affine weak formal $R$-scheme} $\Spwf(A)$. A general weak formal ($R$-)scheme is a locally ringed space, which locally is isomorphic to an affine weak formal $R$-scheme. 

\medskip

The weak completion of $R$-algebras of finite type induces a functor $\widehat{(\,.\,)}$ from the category of $R$-schemes of finite type to that of weak formal schemes. 

\medskip

On the other hand, if $A$ is an $R$-dagger algebra, then $A_K =A \otimes_R K$ is a $K$-dagger algebra, and we get the \emph{generic fibre functor} $(\,.\,)_K$ from weak formal schemes to $K$-dagger spaces sending $X = \Spwf(A)$ to $X_K=\Sp(A_K)$. Moreover, for any weak formal $R$-scheme $X$ there exists a natural morphism of ringed sites
\[
\spec: X_K \to X, \quad \text{such that } \spec_*\O_{X_K} \cong \O_X \otimes_R K,
\]
called the \emph{specialization map}.

\medskip

If $X$ is an $R$-scheme of finite type, there exists a natural morphism of dagger spaces $(\hat X)_K \to (X_K)^{\dag}$, which is an open immersion if $X$ is separated and an isomorphism if $X$ is proper over $R$ (cf. \cite[Proposition 0.3.5]{BerCohomRig}).

\chapter{Chern-Weil theory for simplicial dagger spaces}

In this chapter we formulate and prove the analogue of Dupont's theorem \ref{thm:Dupont} for simplicial dagger spaces and use it to develop Chern-Weil theory in this setting. 
Fix $R$ and $K$ as in the previous chapter.

\section{De Rham cohomology}
Let $X$ be a smooth $K$-dagger space (cf. \cite[p. 40]{GK} for the definition of smoothness) and $\Omega^1_X = \Omega^1_{X/\Sp(K)}$ the locally free sheaf of $1$-forms on $X$ \cite[Lemma 5.3]{GK}. We denote its global sections simply by $\Omega^1(X)$. If $U=\Sp(A) \subset X$ is affinoid, then $\Omega^1_X(U) = \Omega^1_U(U)=\Omega^1(A)$ is the \emph{universally finite} differential module, i.e. $d : A \to \Omega^1(A)$ is universal for $K$-derivations from $A$ in finite $A$-modules \cite[Lemma 5.1]{GK}.

We define the sheaf of $n$-forms as $\bigwedge^n_{\O_X} \Omega^1_X$ and get as usual the complex of sheaves $\Omega^*_X$. The \emph{de Rham cohomology of $X$} is by definition 
\[
H^*_{\dR}(X/K) := \Hyp^*(X, \Omega^*_X).
\]
As usual, if $X_{\dot}$ is a smooth simplicial dagger space, the sheaves $\Omega^n_{X_p}$ on $X_p$, $p\geq 0$, together with the pullback maps form a sheaf on the  simplicial dagger space $X_{\dot}$ and the de Rham cohomology of $X_{\dot}$ is by definition $H^*_{\dR}(X_{\dot}/K) := \Hyp^*(X_{\dot}, \Omega^*_{X_{\dot}})$.

\medskip

We need the analogue of Dupont's theorem in the dagger context.
The analogues of the standard simplices are the affinoid dagger spaces 
\[
\Delta^p := \Sp\bigl(K\<x_0, \dots, x_p\>^{\dag}\big/(\sum_i x_i -1)\bigr), \quad p \geq 0.
\]
Then $\Omega^1(\Delta^p) = \bigoplus_{i=0}^p \frac{K\<x_0, \dots, x_p\>^{\dag}}{\sum_i x_i -1}dx_i \big/(\sum_i dx_i)$.
In fact, it is easy to see, that $d: K\<x_0, \dots, x_p\>^{\dag}/(\sum_i x_i -1) \to \Omega^1(\Delta^p)$, $f\mapsto \sum_i \frac{\del f}{\del x_i}dx_i$, is universal for $K$-derivations of $K\<x_0, \dots, x_p\>^{\dag}/(\sum_i x_i -1)$ in finite modules (cf. \cite[2.2.5]{Berger}).

For any increasing map $\phi: [p] \to [q]$, we define $\phi_{\Delta}: \Delta^p \to \Delta^q$ by $K\<x_0, \dots, x_q\>^{\dag}\big/(\sum_i x_i -1) \ni x_i \mapsto \sum_{j:\phi(j)=i} x_j \in K\<x_0, \dots, x_p\>^{\dag}\big/(\sum_i x_i -1)$. This map is well defined, since the elements $\sum_{j:\phi(j)=i} x_j$ have norm $\leq 1$, hence are power bounded (cf. section \ref{sec:PrelimAlgebras}). In particular, $[p]\mapsto \Delta^p$ defines a cosimplicial dagger space. 

\begin{dfn}
A simplicial $n$-form on the simplicial dagger space $X_{\dot}$ is a family of $n$-forms $(\omega_p)_{p\geq 0}$, where $\omega_p \in \Omega^n(\Delta^p \times X_p)$ and for all $p \geq 0, i = 0, \dots p$
\[
(\delta^i \times 1)^*\omega_p = (1\times \del_i)^*\omega_{p-1} \text{ in } \Omega^n(\Delta^{p-1}\times X_p).
\]
The space of simplicial $n$-forms is denoted by $D^n(X_{\dot})$.
We get a commutative differential graded $K$-algebra $D^*(X_{\dot})$ by applying the wedge product and the exterior differential component-wise.
\end{dfn}

\begin{rems}\label{rem:DiffFormsDoubleComplex}
(i) Let $X$ and $Y$ be two dagger spaces and consider their product $X \times Y$ with projections $p_1:X\times Y\to X, p_2: X\times Y\to Y$. Then there is a natural isomorphism 
\[
\Omega^1_{X\times Y} = p_1^*\Omega^1_X \oplus p_2^*\Omega^1_Y.
\]
In fact, the question being local, it suffices to consider the case, where $X$ and $Y$ are affinoid, and there the result follows as in \cite[2.2.2.a)]{Berger}. Hence we get a decomposition
\[
\Omega^n_{X\times Y} = \bigoplus_{k+l=n} \Omega^{k,l}_{X\times Y}, \quad \text{where} \quad \Omega^{k,l}_{X\times Y} := p_1^*\Omega^k_X \otimes_{\O_{X\times Y}} p_2^*\Omega^l_Y.
\]
Obviously, the differential $\Omega^n_{X\times Y} \xrightarrow{d} \Omega^{n+1}_{X\times Y}$ sends $\Omega^{k,l}_{X\times Y}$ to $\Omega^{k+1,l}_{X\times Y} \oplus \Omega^{k,l+1}_{X\times Y}$ and we denote the two components of $d$ by $d_X$ and $d_Y$ respectively. Since $dd=0$, it follows, that $d_Xd_X = 0, d_Yd_Y=0, d_Xd_Y=-d_Yd_X$. In other words, $(\Omega^*_{X\times Y},d)$ is the total complex associated with the double complex $(\Omega^{*,*}_{X\times Y}, d_X,d_Y)$. This double complex is functorial in $X$ and $Y$.

\smallskip

(ii) If $X$ is a dagger space, the complex of global sections $\Omega^*(\Delta^p\times X)$ is the total complex associated with the double complex $(\Omega^{*,*}(\Delta^p\times X), d_{\Delta}, d_X)$. It follows, that if $X_{\dot}$ is a (strict) simplicial dagger space, then $D^*(X_{\dot})$ is the total complex associated with the double complex $(D^{*,*}(X_{\dot}), d_{\Delta}, d_X)$, where $D^{k,l}(X_{\dot})$ consists of those forms $\omega=(\omega_p)_{p\geq 0}$, such that $\omega_p \in \Omega^{k,l}(\Delta^p\times X_p)$ for all $p\geq 0$.

We denote by $\Fil^*D^*(X_{\dot})$ the filtration of $D^*(X_{\dot})$ with respect to the second index:
\[
\Fil^nD^*(X_{\dot}) = \bigoplus_{k+l=*, l\geq n} D^{k,l}(X_{\dot}).
\]
\end{rems}

Our goal is to construct a filtered homotopy equivalence $D^*(X_{\dot}) \to \Omega^*(X_{\dot})$ given by integration along the standard simplices, similar to the classical case. Here on the right hand side $\Omega^*(X_{\dot})$ denotes the total complex of the cosimplicial complex $[p]\mapsto \Omega^*(X_p) = \Gamma(X_p, \Omega^*_{X_p})$.

First we have to introduce some more notation: Let $I := \Sp(K\<t\>^{\dag})$. Then $\Omega^1(I) = K\<t\>^{\dag}dt$, $\Omega^n(I) = 0$, if $n > 1$. 
If $X = \Sp(A)$ is affinoid, then $I\times X= \Sp(A\<t\>^{\dag})$, where $A\<t\>^{\dag} := A \otimes^{\dag}_K K\<t\>^{\dag}$. Explicitely, if $A= K\<\ul x\>^{\dag}/I$, then $A\<t\>^{\dag} = K\<\ul x, t\>^{\dag}/(I)$.
\begin{lemma}\label{lem:FormalIntegration}
There exists a unique $A$-linear map $\int_0^1 \;.\;dt: A\<t\>^{\dag} \to A$, that sends $t^k$ to $\frac{1}{k+1}$.
If $f \in A\<t\>^{\dag}$, its formal derivative with respect to $t$, $\frac{\del f}{\del t} \in A\<t\>^{\dag}$, is well-defined and $\int_0^1 \frac{\del f}{\del t}dt = f(1) - f(0)$.
\end{lemma}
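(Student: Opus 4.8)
The plan is to construct $\int_0^1$ explicitly by termwise integration and to verify convergence using the mild $p$-adic growth of the denominators $1/(k+1)$; uniqueness and the fundamental theorem of calculus then follow by continuity. First I would fix a presentation $A = K\<\ul x\>^{\dag}/\mathfrak a$, so that $A\<t\>^{\dag} = K\<\ul x, t\>^{\dag}/\mathfrak a K\<\ul x, t\>^{\dag}$, and write a given element as $f = \sum_{k\ge 0} f_k t^k$ with $f_k \in K\<\ul x\>^{\dag}$. By definition of the Washnitzer algebra there is a radius $\rho > 1$ with $f \in T_{n+1}(\rho)$, and grouping the Gau{\ss} norm $|f|_\rho$ by powers of $t$ gives $|f_k|_\rho \le |f|_\rho\,\rho^{-k}$. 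I would then set $\int_0^1 f\,dt := \sum_{k\ge 0} f_k/(k+1)$.

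The crucial point is convergence of this series in $K\<\ul x\>^{\dag}$. Here the denominators $1/(k+1)$ are \emph{not} bounded $p$-adically, but their growth is at most polynomial: since $v_p(k+1) \le \log_p(k+1)$, one has $|k+1|^{-1} \le (k+1)^c$ for a constant $c>0$ depending only on the normalization of $|\,\cdot\,|$. Choosing any $\rho'$ with $1 < \rho' < \rho$, the estimate $|f_k/(k+1)|_{\rho'} \le (k+1)^c |f_k|_\rho \le |f|_\rho (k+1)^c \rho^{-k}$ tends to $0$, so the partial sums form a Cauchy sequence in the Banach algebra $T_n(\rho')$ and $\int_0^1 f\,dt$ lands in $T_n(\rho') \subset K\<\ul x\>^{\dag}$. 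One then checks that $\int_0^1$ is $K\<\ul x\>^{\dag}$-linear and maps $\mathfrak a K\<\ul x, t\>^{\dag}$ into $\mathfrak a$ (an element of the latter is a finite sum $\sum_j g_j \varphi_j$ with $g_j \in \mathfrak a$ independent of $t$, and $\int_0^1 g_j\varphi_j\,dt = g_j\int_0^1\varphi_j\,dt$), so it descends to the desired $A$-linear map $A\<t\>^{\dag}\to A$ with $t^k \mapsto 1/(k+1)$. The same estimate shows this map is continuous; since the truncations $\sum_{k\le N} f_k t^k \in A[t]$ converge to $f$ in the $\rho'$-topology, polynomials are dense, and continuity together with the prescribed values on $t^k$ forces uniqueness (among continuous $A$-linear maps).

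For the second assertion, termwise differentiation $\del f/\del t = \sum_{k\ge 0}(k+1)a_{k+1}t^k$ is harmless for overconvergence, precisely because integers satisfy $|k+1| \le 1$; thus $|(k+1)a_{k+1}|\rho^k \le |a_{k+1}|\rho^{k+1}\rho^{-1} \to 0$, and $\del f/\del t$ again lies in $A\<t\>^{\dag}$ with the same $\rho$. The evaluations $f(0) = a_0$ and $f(1) = \sum_{k\ge 0}a_k$ are the images of $f$ under the $K$-algebra maps $A\<t\>^{\dag}\to A$ sending $t$ to the power-bounded elements $0$ and $1$, which exist by weak completeness (Proposition 1.11 of \cite{GK}); $f(1)$ converges because $|a_k|\to 0$. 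Finally, applying the continuous map $\int_0^1$ to the convergent series for $\del f/\del t$ and interchanging sum and integral yields $\int_0^1 \del f/\del t\,dt = \sum_{k\ge 0}(k+1)a_{k+1}/(k+1) = \sum_{k\ge 1}a_k = f(1) - f(0)$.

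The main obstacle is the convergence estimate for $\int_0^1$: unlike the classical case, the factors $1/(k+1)$ can have large $p$-adic absolute value, so the argument genuinely relies on the overconvergence radius $\rho > 1$ to absorb the (at most polynomial) blow-up of the denominators. The derivative step, by contrast, is easy exactly because differentiation introduces integer factors of norm $\le 1$.
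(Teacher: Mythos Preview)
Your argument is correct and follows essentially the same route as the paper: reduce to the Washnitzer algebra, use the estimate $|f_k|_\rho \le |f|_\rho\,\rho^{-k}$ together with the polynomial bound $|1/(k+1)| \le (k+1)^c$ to get convergence, then descend modulo the ideal. One small simplification: you do not need to shrink from $\rho$ to $\rho'$, since $(k+1)^c\rho^{-k}\to 0$ already gives convergence in $T_n(\rho)$ itself; also, the paper does not spell out uniqueness, whereas you correctly note it holds among continuous $A$-linear maps by density of polynomials.
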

This is the crucial point, where overconvergence and hence dagger spaces come into play.
\begin{proof}
We first consider the case $A = W_n = K\<x_1, \dots, x_n\>^{\dag}$. Then $A\<t\>^{\dag} = W_{n+1} = K\<x_1, \dots, x_n, t\>^{\dag}$. If $f \in W_{n+1}$, there exists $\rho > 1$, such that $f \in T_{n+1}(\rho)$ (see section \ref{sec:PrelimAlgebras}). Write $f = \sum_{k=0}^{\infty} g_k t^k$, $g_k = \sum_{\nu\in \N_0^n} a_{k,\nu} \ul x^{\nu}$.
Then clearly $g_k \in T_n(\rho)$. Moreover
\[
|g_k|_{\rho} = \max_{\nu\in\N_0^n} |a_{k,\nu}|\rho^{|\nu|} = \rho^{-k}\max_{\nu\in\N_0^n} |a_{k,\nu}|\rho^{|\nu|+k}
\leq \rho^{-k}|f|_{\rho}.
\]
Hence $|g_k\frac{1}{k+1}|_{\rho} \leq |\frac{1}{k+1}|\cdot \rho^{-k}\cdot |f|_{\rho}$. But $|\frac{1}{k+1}|\cdot \rho^{-k}$ tends to $0$ as $k$ tends to infinity,\footnote{The valuation on $\Q$ induced by the valuation $|\;.\;|$ on $K$ is equivalent to the $p$-adic valuation $|\;.\;|_p$; hence there exists a constant $c>0$ such that $|\frac{1}{k+1}| = |\frac{1}{k+1}|_p^c\leq (k+1)^c$.}\label{footnote:Const} hence $\sum_{k=0}^{\infty} \frac{1}{k+1}g_k$ converges in $T_n(\rho) \subset W_n$. We define $\int_0^1 fdt := \sum_{k=0}^{\infty}\frac{1}{k+1}g_k$. 

Clearly $\frac{\del f}{\del t} = \sum_{k=0}^{\infty} (k+1)g_{k+1}t^k \in W_{n+1}$ is well-defined and $\frac{\del}{\del t} : W_{n+1} \to W_{n+1}$ is $W_n$-linear. The last formula of the assertion follows directly from the constructions.

In general, $A$ may be written as a quotient $A = W_n/I$. Then $A\<t\>^{\dag} = W_{n+1}/I\cdot W_{n+1}$ and by linearity we have $\int_0^1 (I\cdot W_{n+1})dt \subset I$. Hence, $\int_0^1\;.\;dt: W_{n+1} \to W_n$ induces the desired map $A\<t\>^{\dag} \to A$. Similarly $\frac{\del}{\del t}$ induces the morphism $\frac{\del}{\del t} : A\<t\>^{\dag} \to A\<t\>^{\dag}$ and the last formula of the assertion follows from the case of the Washnitzer algebra treated before.
\end{proof}
\begin{rem}\label{rem:ContIntegration}
For later reference we observe the following: Let $\rho > 1$ and $f= \sum_{k=0}^{\infty}g_kt^k\in T_{n+1}(\rho)$ be as in the above proof. There exists a constant $C > 0$, such that $|\frac{1}{k+1}|\rho^{-k} \leq C$ for all $k\in \N$. Hence $|\int_0^1 fdt|_{\rho} \leq C\cdot |f|_{\rho}$ and $\int_0^1\,.\,dt: T_{n+1}(\rho) \to T_{n}(\rho)$ is continuous.

Iterated application of the integration operator constructed in the lemma gives a $K$-linear morphism $K\<x_1, \dots, x_n\>^{\dag} \xrightarrow{\int_0^1\,.\,dx_n} K\<x_1,\dots, x_{n-1}\>^{\dag} \to \dots \to K$ and it follows from the above, that the induced map $T_n(\rho) \to K, f \mapsto \int_0^1\dots\int_0^1 fdx_1\dots dx_n$ is continuous.
\end{rem}

If $X$ is a dagger space, let $p: I \times X \to X$ denote the projection and $i_j : X \hookrightarrow I \times X$ the inclusion induced by $K\<t\>^{\dag} \to K$, $t \mapsto j$, $j=0,1$. We have the pullback map $i_j^* : \Omega^*_{I\times X} \to (i_j)_*\Omega^*_X$ and, applying $p_*$, $i_j^*: p_*\Omega^*_{I\times X} \to p_*(i_j)_*\Omega^*_X = \Omega^*_X$.
\begin{lemma}\label{lem:HomotopyOperator}
There is a natural $\O_X$-linear morphism 
\[
K: p_*\Omega^n_{I\times X} \to \Omega^{n-1}_X
\]
satisfying $dK+Kd= i_1^* -i_0^*$. 
\end{lemma}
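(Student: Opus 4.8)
The plan is to construct the homotopy operator $K$ by locally integrating along the fibre $I$, using the formal integration map $\int_0^1 \;.\;dt$ of Lemma \ref{lem:FormalIntegration}, and then to verify that these local constructions glue to a well-defined $\O_X$-linear morphism of sheaves satisfying the stated homotopy formula. The key point is that, although de Rham cohomology of dagger spaces is subtle, the operator $K$ itself is defined purely algebraically on differential forms, so the argument is formally parallel to the classical de Rham homotopy operator recalled in the first lemma of Section \ref{sec:SecondaryClasses}, with the ordinary integral replaced by the formal overconvergent integral.

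First I would reduce to the affinoid case. Since the assertion is local on $X$ and $K$ is required to be $\O_X$-linear, it suffices to define $K$ on $p_*\Omega^n_{I\times X}(U) = \Omega^n(I\times U)$ for affinoid opens $U = \Sp(A) \subset X$, compatibly with restriction. For such $U$ we have $I \times U = \Sp(A\<t\>^{\dag})$, and by Remark \ref{rem:DiffFormsDoubleComplex}(i) applied to $I \times U$ there is a decomposition $\Omega^n_{I\times U} = \Omega^{n}_U \oplus (\Omega^1_I \boxtimes \Omega^{n-1}_U)$; concretely every $\omega \in \Omega^n(I\times U)$ can be written uniquely as $\omega = \omega_0 + dt \wedge \omega_1$, where $\omega_0 \in A\<t\>^{\dag}\otimes_A \Omega^n(A)$ and $\omega_1 \in A\<t\>^{\dag}\otimes_A \Omega^{n-1}(A)$, using that $\Omega^1(I) = K\<t\>^{\dag}dt$ and $\Omega^{>1}(I)=0$. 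I then define
\[
K(\omega) := \int_0^1 \omega_1 \, dt \in \Omega^{n-1}(A),
\]
where $\int_0^1 \;.\;dt$ is applied to the $A\<t\>^{\dag}$-coefficients of $\omega_1$ via the $A$-linear map of Lemma \ref{lem:FormalIntegration} (extended $\Omega^{n-1}(A)$-linearly). The $\O_X$-linearity and the compatibility with restrictions to smaller affinoids are immediate from the $A$-linearity and uniqueness in Lemma \ref{lem:FormalIntegration}, so these local operators glue to a morphism of sheaves $K: p_*\Omega^n_{I\times X}\to \Omega^{n-1}_X$.

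Next I would verify the homotopy formula $dK + Kd = i_1^* - i_0^*$ by a direct computation on an affinoid $U$. Writing $\omega = \omega_0 + dt\wedge \omega_1$ and splitting the total differential as $d = d_\Delta$-style into a $t$-part and a $U$-part, one has $d\omega = d_U\omega_0 + \frac{\del \omega_0}{\del t}\,dt - dt\wedge d_U\omega_1$ (with the sign bookkeeping from moving $dt$ past $\omega_1$). Applying $K$ kills all terms without a leading $dt$ and leaves, by the fundamental theorem $\int_0^1 \frac{\del f}{\del t}\,dt = f(1)-f(0)$ of Lemma \ref{lem:FormalIntegration}, exactly $i_1^*\omega_0 - i_0^*\omega_0 - d_U\int_0^1\omega_1\,dt = i_1^*\omega - i_0^*\omega - dK(\omega)$, since $i_j^*(dt\wedge\omega_1)=0$. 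Rearranging gives the desired identity; this is the same bookkeeping as in the classical case and presents no real difficulty.

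The genuine subtlety — the step I expect to be the main obstacle — is ensuring that $\int_0^1 \;.\;dt$ really lands in the overconvergent algebra $A\<t\>^{\dag}\to A$ rather than merely in the completion, i.e. that dividing the coefficients $g_k$ by $k+1$ preserves overconvergence. This is precisely the content of Lemma \ref{lem:FormalIntegration} and the footnote estimate $|\tfrac{1}{k+1}| \le (k+1)^c$, which shows $|\tfrac{1}{k+1}|\rho^{-k}\to 0$; so the hard analytic input is already available and I would simply invoke it. The remaining care is to check that the splitting $\omega = \omega_0 + dt\wedge\omega_1$ is well-defined globally in $t$ and compatible with the double-complex structure of Remark \ref{rem:DiffFormsDoubleComplex}, and that $\frac{\del}{\del t}$ and $i_j^*$ respect the overconvergent structure, all of which follow from the same lemma. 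Naturality of $K$ in $X$ (needed implicitly later) follows formally from the uniqueness of the integration map and the functoriality of the product double complex.
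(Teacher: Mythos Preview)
Your proposal is correct and follows essentially the same approach as the paper: local construction on affinoids using the decomposition $\Omega^n(I\times U)=\Omega^{0,n}\oplus\Omega^{1,n-1}$ and the formal integral of Lemma~\ref{lem:FormalIntegration}, then a direct check of the homotopy identity. The only cosmetic difference is that the paper, before computing $dK+Kd$, further reduces via the surjection $\Omega^*(I\times\mathbf B^r)\twoheadrightarrow\Omega^*(I\times X)$ coming from a closed immersion $X\hookrightarrow\mathbf B^r$, so that the explicit check is done in free variables; your direct computation on the affinoid achieves the same end.
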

\begin{proof}
Let $U = \Sp(A) \subset X$ be open affinoid. Then $p^{-1}(U) = I \times U = \Sp(A\<t\>^{\dag})$. We have
\begin{align*}
p_*\Omega^n_{I\times X}(U) & = \Omega^n(I\times U) = \Omega^{0,n}(I\times U) \oplus \Omega^{1,n-1}(I\times U)\\
& = A\<t\>^{\dag} \otimes_A \Omega^n(A) \oplus A\<t\>^{\dag}dt \otimes_A \Omega^{n-1}(A).
\end{align*}
We define $K(U) : p_*\Omega^n_{I\times X}(U) \to \Omega^{n-1}(U)$ to be equal to the zero map on the first summand and $fdt\otimes \omega \mapsto (\int_0^1fdt)\cdot\omega$ on the second summand.

If $V = \Sp(B) \subset U$ is an admissible open affinoid, given by a morphism of dagger algebras $A \to B$, the maps $K(U)$ and $K(V)$ are clearly compatible with respect to the restriction map. Hence we get a well defined morphism of $\O_X$-modules $p_*\Omega^n_{I\times X} \to \Omega^{n-1}_X$. This map is clearly natural in $X$.

Finally, we show that 
\begin{equation}\label{eq:DiffAndInt}
dK+Kd_X = 0, \qquad Kd_I = i_1^* -i_0^*.
\end{equation}
This in particular implies the last formula of the claim.  Since \eqref{eq:DiffAndInt} is local on $X$, we may assume that $X = \Sp(A)$ is affinoid. Choose a presentation $A = K\<x_1,\dots, x_r\>^{\dag}/I$, i.e. a closed immersion $X \hookrightarrow \Sp(K\<x_1,\dots, x_r\>^{\dag}) =: \mathbf B^r$. By the naturality of $K$ we have a commutative diagram
\[
\xymatrix{
\Omega^n(I\times \mathbf B^r) \ar[d]_K \ar@{->>}[r] & \Omega^n(I \times X) \ar[d]^K\\
\Omega^{n-1}(\mathbf B^r) \ar@{->>}[r] & \Omega^{n-1}(X),
}
\]
where the horizontal maps are surjections. Hence it suffices to prove the claim for $X=\mathbf B^r$, where it can be checked by direct computation: An $n$-form on $I \times \mathbf B^r$ is a sum of forms of the types $g(\ul x, t)dtdx_{i_1}\dots dx_{i_{n-1}}$ and $f(\ul x,t) dx_{j_1} \dots dx_{j_n}$ with $g(\ul x,t), f(\ul x, t) \in K\<x_1,\dots, x_r, t\>^{\dag}$. Let us check the first formula for $\omega= g(\ul x, t) dtdx_{i_1}\dots dx_{i_{n-1}}$ as an example. Write $g(\ul x,t) = \sum_{k=0}^{\infty} g_k(\ul x)t^k$. Then $d_X\omega = -\sum_{j=1}^r\sum_{k=0}^{\infty} \frac{\del g_k(\ul x)}{\del x_j}t^k dtdx_jdx_{i_1}\dots dx_{i_{n-1}}$ and hence
\begin{align*}
K(d_X\omega) & = -\sum_{j=1}^r \sum_{k=0}^{\infty} \frac{1}{k+1}\frac{\del g_k(\ul x)}{\del x_j} dx_jdx_{i_1}\dots dx_{i_{n-1}}\\
 & = - \sum_{j=1}^r \frac{\del}{\del x_j} (\int_0^1 g(\ul x,t)dt)dx_jdx_{i_1}\dots dx_{i_{n-1}}\\
& = -dK(\omega).
\end{align*}
The remaining identities are shown similarly.
\end{proof}

Let $X_{\dot}$ be a simplicial dagger space. For each $l\geq 0$ we can consider 
the  cosimplicial group $[p] \mapsto \Omega^l(X_p)$. The associated complex is denoted by  $(\Omega^{*,l}(X_{\dot}), \delta)$.
\begin{thm}\label{thm:p-adicDupont}
Let $X_{\dot}$ be a simplicial dagger space. For each $l$ the two chain complexes $(D^{*, l}(X_{\dot}), d_{\Delta})$ and $(\Omega^{*,l}(X_{\dot}), \delta)$ are naturally chain homotopy equivalent.

In fact, there are natural maps $I : D^{k,l}(X_{\dot}) \rightleftarrows \Omega^{k,l}(X_{\dot}) : E$ and chain homotopies $s: D^{k,l}(X_{\dot}) \to D^{k-1,l}(X_{\dot})$ such that
\begin{align}
\label{eq:11} I \circ d_{\Delta} & = \delta \circ I,&  I \circ d_X & = d_X \circ I,\\
\label{eq:22} d_{\Delta} \circ E & = E \circ \delta,&  E\circ d_X & = d_X \circ E,\\
\label{eq:33} I\circ E & = \id,\\
\label{eq:44} E\circ I - id &= s\circ d_{\Delta} + d_{\Delta} \circ s,&  s\circ d_X & = d_X \circ s.
\end{align}
In particular we get isomorphisms $H^*(\Fil^nD^*(X_{\dot})) \cong H^*(\Omega^{\geq n}(X_{\dot}))$ and $H^*(D^*(X_{\dot})/\Fil^nD^*(X_{\dot})) \cong H^*(\Omega^{<n}(X_{\dot}))$ for any $n \geq 0$.
\end{thm}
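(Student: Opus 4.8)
The plan is to follow the blueprint of the complex case (Theorem \ref{thm:Dupont} and its filtered refinement Theorem \ref{thm:filteredDupont}) verbatim, replacing the integration of $\mathscr{C}^\infty$-forms over the affine simplex by the \emph{formal} integration operator just constructed in Lemma \ref{lem:FormalIntegration} and Lemma \ref{lem:HomotopyOperator}. The essential point is that everything in Dupont's construction of $I$, $E$, and $s$ is built out of one primitive operation: integration of a form on $\Delta^p \times X_p$ along a direction in the simplex, realized via the contraction-and-integrate operators $h_{(j)}$. In the archimedean setting these $h_{(j)}$ use $\int_0^1 i_{\partial/\partial s}\,ds$ along affine homotopies $g(s,t) = s\cdot e_j + (1-s)\cdot t$; the only analytic input is that the integral of a smooth form exists and is smooth. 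In the dagger setting Lemma \ref{lem:HomotopyOperator} supplies exactly the needed $\O_X$-linear operator $K : p_*\Omega^n_{I\times X} \to \Omega^{n-1}_X$ with $dK + Kd = i_1^* - i_0^*$, and this is the genuine substitute.

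Concretely, first I would define, for each $j = 0,\dots,p$, a dagger-analytic homotopy operator $h_{(j)} : \Omega^n(\Delta^p \times X_p) \to \Omega^{n-1}(\Delta^p \times X_p)$ by pulling back along the affine contraction toward the vertex $e_j$ and applying the formal integration of Lemma \ref{lem:FormalIntegration}; the key is that the contraction $g(s,t) = s\cdot e_j + (1-s)\cdot t$ is an algebraic (hence dagger-analytic) map $I \times \Delta^p \to \Delta^p$, so that $(g \times \id)^*\omega$ lands in $\Omega^n(I \times \Delta^p \times X_p)$ and $K$ applies. Then $I$ is defined by the analogue of formula \eqref{eq:Int}, namely $I(\omega) = (-1)^k(e_k\times\id)^*(h_{(k-1)}\circ\dots\circ h_{(0)})(\omega_k)$, the map $E$ by the same explicit polynomial formula $E(\omega)_p = k!\sum_{\phi}(\dots)\wedge\phi_X^*\omega$ (which is purely algebraic, involving only the barycentric coordinates $x_i$, and so transfers without change), and $s$ by the same iterated-$h_{(j)}$ formula. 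The identities \eqref{eq:11}--\eqref{eq:44} are then formal consequences of the relation $dK + Kd = i_1^* - i_0^*$ exactly as in Dupont's proof, and the compatibility with $d_X$ is automatic because $K$ is $\O_X$-linear (this is precisely what Lemma \ref{lem:HomotopyOperator} asserts) and because the double-complex splitting $\Omega^*_{\Delta^p\times X_p} = \bigoplus \Omega^{k,l}$ of Remark \ref{rem:DiffFormsDoubleComplex} is respected by all these operators.

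The main obstacle, and the only place where the $p$-adic setting requires real care rather than transcription, is convergence: in the archimedean case integration of any smooth form is unproblematic, whereas here one must know that repeated application of $h_{(j)}$ — each involving a $\int_0^1\,\cdot\,dt$ with the denominators $\tfrac{1}{k+1}$ — stays inside the overconvergent (dagger) algebras rather than merely the affinoid completions. This is exactly why overconvergence is indispensable, as flagged in the remark after Lemma \ref{lem:FormalIntegration}; the denominators $\tfrac{1}{k+1}$ have $p$-adic absolute value growing only polynomially in $k$ (footnote \ref{footnote:Const}), so by the estimate $|g_k|_\rho \leq \rho^{-k}|f|_\rho$ the integral of a series overconvergent on a disc of radius $\rho > 1$ remains overconvergent on the same disc. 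I would therefore insert, at the level of the lemma defining $h_{(j)}$, an explicit continuity/overconvergence estimate analogous to Remark \ref{rem:ContIntegration}, verifying that $h_{(j)}$ maps $\Omega^{k,l}(\Delta^p\times X_p)$ into $\Omega^{k-1,l}(\Delta^p\times X_p)$ (the analogue of the Lemma preceding \eqref{eq:Int}); once this is established the type-counting argument showing $h_{(j)}$ lowers the $\Delta$-degree by one and preserves the $X$-degree is identical to the complex case.

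Finally, the displayed isomorphisms follow formally: the chain homotopy equivalences $(D^{*,l}, d_\Delta) \simeq (\Omega^{*,l}, \delta)$ for each fixed $l$ assemble, via the filtration $\Fil^n D^*(X_{\dot})$ of Remark \ref{rem:DiffFormsDoubleComplex}(ii), into a filtered homotopy equivalence of total complexes; passing to the associated graded (in the second index $l$) reduces to the individual equivalences, and then a standard spectral-sequence or filtered-quasi-isomorphism argument yields $H^*(\Fil^n D^*(X_{\dot})) \cong H^*(\Omega^{\geq n}(X_{\dot}))$ and $H^*(D^*(X_{\dot})/\Fil^n D^*(X_{\dot})) \cong H^*(\Omega^{<n}(X_{\dot}))$ for all $n$. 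I expect this last step to be entirely routine once the filtered compatibility of $I$, $E$, and $s$ is in place.
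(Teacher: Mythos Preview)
Your proposal is correct and matches the paper's proof essentially line for line: define $h_{(j)}$ as $K \circ (g_j \times \id_Y)^*$ for the affine contraction $g_j$ to the vertex $e_j$, then transcribe Dupont's formulas for $I$, $E$, and $s$; the paper proves the analogue of Dupont's Lemma~2.9 for the $h_{(j)}$ and derives \eqref{eq:11}--\eqref{eq:44} from it. Your one over-cautious point is the extra overconvergence check for iterated $h_{(j)}$'s: this is unnecessary, since once Lemma~\ref{lem:HomotopyOperator} gives $K$ as a well-defined $\O_X$-linear operator on $p_*\Omega^n_{I\times X}$, each $h_{(j)}$ automatically lands in $\Omega^{n-1}(\Delta^p\times Y)$ and no further estimate is needed.
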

\begin{rem}\label{rem:AffinoidCovering}
In general, the natural map $H^*(\Omega^*(X_{\dot})) \to \Hyp^*(X_{\dot}, \Omega^*_{X_{\dot}}) = H^*_{\dR}(X_{\dot}/K)$ is \emph{not} an isomorphism. However, this will be the case as soon as each $X_p$ is acyclic for coherent sheaves \cite[(5.2.3)]{HodgeIII}, e.g. affinoid or a Stein space, e.g. the dagger space associated with an affine $K$-scheme of finite type (cf. \cite{GK} Lemma 4.3 and p. 25 for the definition of a Stein space).
\end{rem}
\begin{proof}[Proof of the Theorem]
We adapt Dupont's proof of Theorem \ref{thm:Dupont} \cite[Theorem 2.3]{Dup76}.

For any $j = 0, \dots, p$ consider the morphism $g_j : I \times \Delta^p \to \Delta^p$ given on dagger algebras by $K\<x_0,\dots, x_p\>^{\dag}/(\sum_i x_i-1) \to K\<x_0, \dots, x_p, t\>^{\dag}/(\sum_i x_i -1)$, $x_i \mapsto \delta_{ij} \cdot t + (1-t)\cdot x_i$, where $\delta_{ij}$ is the Kronecker delta. This is well-defined since the target elements are power bounded and $\sum_i (\delta_{ij}t + (1-t)x_i) = 1$ in $K\<x_0,\dots, x_p, t\>^{\dag}/(\sum_i x_i -1)$.
Thus $g_j$ is a homotopy between $\id_{\Delta^p}$ and the constant map $e_j:\Delta^p \to \Delta^p$ given by $x_i \mapsto \delta_{ij}$.

For any dagger space $Y$ we can now define the homotopy operator $h_{(j)}$ to be the composition
\[
h_{(j)}:\Omega^n(\Delta^p\times Y) \xrightarrow{(g_j\times\id_Y)^*} \Omega^n(I\times \Delta^p \times Y) \xrightarrow{K} \Omega^{n-1}(\Delta^p\times Y).
\]
We have the analogue of \cite[Lemma 2.9]{Dup76}:
\begin{lemma} The operators $h_{(j)}$, $j = 0, \dots, p$, satisfy
\begin{gather*}
h_{(j)}\circ d_{\Delta} + d_{\Delta} \circ h_{(j)} = (e_j\times \id_Y)^* - \id,\\
h_{(j)} \circ d_Y + d_Y \circ h_{(j)} = 0
\end{gather*}
and for $i = 0, \dots, p$
\begin{gather*}
(\delta^i \times \id_Y)^*\circ h_{(j)} = h_{(j)} \circ (\delta^i \times \id_Y)^*, \quad i > j,\\
(\delta^i \times \id_Y)^*\circ h_{(j)} = h_{(j-1)} \circ (\delta^i \times \id_Y)^*, \quad i < j.
\end{gather*}
\end{lemma}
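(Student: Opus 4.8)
The plan is to prove the four identities for the operators $h_{(j)}$ by direct reduction to the properties of $K$ established in Lemma \ref{lem:HomotopyOperator}, together with functoriality, exactly paralleling Dupont's argument \cite[Lemma 2.9]{Dup76}, but taking care that every manipulation is legitimate in the dagger-analytic setting. The essential inputs are the homotopy formula $dK + Kd = i_1^* - i_0^*$, the naturality $K\circ(f\times\id)^* = f^*\circ K$ (which follows from the $\O_X$-linearity and naturality of $K$ stated in Lemma \ref{lem:HomotopyOperator}), and the explicit description of the homotopies $g_j$ and the constant maps $e_j$.

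First I would establish the first formula. Recall $h_{(j)} = K\circ(g_j\times\id_Y)^*$ where $g_j:I\times\Delta^p\to\Delta^p$ is the homotopy between $\id_{\Delta^p}$ (at $t=0$) and $e_j$ (at $t=1$). Since $g_j$ is a morphism of dagger spaces, pullback along $g_j\times\id_Y$ commutes with the full exterior differential $d = d_\Delta + d_Y$, so $(g_j\times\id_Y)^*\circ d = d'\circ(g_j\times\id_Y)^*$ where $d' = d_\Delta + d_Y + d_I$ is the differential on $I\times\Delta^p\times Y$. Applying $K$ and using $Kd' + d'K$ component-wise via \eqref{eq:DiffAndInt} (namely $dK + Kd_X = 0$ with $d_X$ here the $\Delta\times Y$-differential, and $Kd_I = i_1^* - i_0^*$), I would compute $h_{(j)}d_\Delta + d_\Delta h_{(j)}$ by isolating the $\Delta$-component. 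The key point is that $(g_j\times\id_Y)^*$ turns the $d_\Delta$-differential on $\Delta^p\times Y$ into a combination of $d_\Delta$ and $d_I$ on the cylinder, and the $d_I$-part produces, through $Kd_I = i_1^* - i_0^*$, precisely the boundary terms $i_1^*(g_j\times\id)^* - i_0^*(g_j\times\id)^* = (e_j\times\id_Y)^* - \id$, since $g_j\circ i_1 = e_j$ and $g_j\circ i_0 = \id_{\Delta^p}$. The second formula, $h_{(j)}d_Y + d_Y h_{(j)} = 0$, is obtained the same way by isolating the $Y$-component and using that $g_j$ acts trivially in the $Y$-direction, so only the relation $dK + Kd_Y = 0$ contributes, with no boundary term.

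The last two compatibility formulas concern the interaction of the coface operators $\delta^i\times\id_Y$ with $h_{(j)}$. These follow from the naturality of $K$ together with the combinatorial identities relating $g_j$ and $g_{j-1}$ to the coface maps. Concretely, for $i>j$ one checks at the level of the cosimplicial structure that $g_j\circ(\delta^i\times\id_I) = (\delta^i)\circ g_j$ in the appropriate sense, so that $(\delta^i\times\id_Y)^*\circ K\circ(g_j\times\id_Y)^* = K\circ(g_j\times\id_Y)^*\circ(\delta^i\times\id_Y)^*$, giving the first compatibility; for $i<j$ the index shift $g_j$ versus $g_{j-1}$ appears because inserting the $i$-th vertex below the $j$-th distinguished vertex relabels it, yielding $(\delta^i\times\id_Y)^*\circ h_{(j)} = h_{(j-1)}\circ(\delta^i\times\id_Y)^*$. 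I would verify the relevant identities $g_j\circ(\delta^i_\Delta\times\id) = (\delta^i_\Delta\times\id)\circ g_j$ (resp. $g_{j-1}$) directly from the formula $x_i\mapsto \delta_{ij}t + (1-t)x_i$ defining $g_j$.

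The main obstacle is not conceptual but is to ensure that the naturality of $K$ genuinely applies to the morphisms $g_j\times\id_Y$, which are \emph{not} of the simple product form $f\times\id_\C$ appearing in the complex case but involve the overconvergent interpolation $x_i\mapsto\delta_{ij}t+(1-t)x_i$; I must confirm that $K$ as constructed in Lemma \ref{lem:HomotopyOperator}, being defined affinoid-locally and natural in $X$, commutes with pullback along any morphism in the $Y$-direction and behaves correctly when the cylinder variable $t$ is fed through $g_j$. This comes down to the fact that $\int_0^1\,.\,dt$ is characterized purely algebraically (Lemma \ref{lem:FormalIntegration}) and that $g_j$ is the identity on the $t$-coordinate up to the affine reparametrization, so the formal integration and formal $t$-derivative interact with $(g_j\times\id_Y)^*$ exactly as in the differentiable case. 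Once this compatibility is secured, the four formulas follow by the same bookkeeping as in \cite[Lemma 2.9]{Dup76}, and I expect no further difficulty.
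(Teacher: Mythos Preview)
Your approach is correct and essentially the same as the paper's, which likewise reduces the first identity to \eqref{eq:DiffAndInt} via $(g_j\times\id_Y)^*d_\Delta = d_{I\times\Delta}(g_j\times\id_Y)^*$ and $d_\Delta K = -Kd_\Delta$ (the latter being the first formula of \eqref{eq:DiffAndInt} with $X=\Delta^p\times Y$, restricted to the $\Delta$-component of $d_{\Delta\times Y}$), and then declares the remaining three identities ``formal computation''. Your closing worry is unnecessary, however: in $h_{(j)}=K\circ(g_j\times\id_Y)^*$ the pullback is applied \emph{before} $K$, so for the first two identities no compatibility between $K$ and $g_j$ is needed at all (and $t$ is never ``fed through'' $g_j$, whose target is $\Delta^p$); for the last two one only needs the naturality of $K$ in the base $X$ already stated in lemma~\ref{lem:HomotopyOperator}, together with the combinatorial identity $g_j^{(p)}\circ(\id_I\times\delta^i)=\delta^i\circ g_{j}^{(p-1)}$ (resp.\ $g_{j-1}^{(p-1)}$), which is immediate from the formula $x_l\mapsto\delta_{lj}t+(1-t)x_l$.
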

\begin{proof}
Since everything follows by formal computation, we only check the first statement.
Thus take $\omega \in \Omega^n(\Delta^p\times Y)$. Then
\begin{align*}
&\qquad h_{(j)} \circ d_{\Delta}(\omega) + d_{\Delta}\circ h_{(j)}(\omega) =\\
&= K((g_j\times \id_Y)^*d_{\Delta}\omega) + d_{\Delta}K((g_j\times\id_Y)^*\omega) \\
&= K(d_{I\times\Delta}(g_j\times\id_Y)^*\omega) - K(d_{\Delta}(g_j\times\id_Y)^*\omega) && \text{cf. \eqref{eq:DiffAndInt}}\\
&= K(d_I(g_j\times \id_Y)^*\omega)\\
&= (i_1^* - i_0^*)(g_j\times\id_Y)^*\omega &&\text{by \eqref{eq:DiffAndInt} again}\\
&= (e_j\times \id_Y)^*\omega - \omega.
\end{align*}
Here we used the naturality of the double complex of remark \ref{rem:DiffFormsDoubleComplex} (i) and applied the first formula of \eqref{eq:DiffAndInt} with $X= \Delta^p\times Y$ only for the $\Delta$-component $d_{\Delta}$ of the differential $d_{\Delta\times Y}$.
\end{proof}
We define the integration map $I : D^{k,l}(X_{\dot}) \to \Omega^l(X_k)$ as in the classical case:
\begin{equation}\label{eq:I}
I(\omega) = (-1)^k(e_k\times\id_{X_k})^*(h_{(k-1)}\circ\dots\circ h_{(0)})(\omega_k).
\end{equation}
Using the lemma and the compatibility condition of simplicial differential forms one checks \eqref{eq:11}.

Similarly, $E$ is defined by the same formula as in the classical case: If $\omega\in\Omega^l(X_k)$, the simplicial form $E(\omega) \in D^{k,l}(X_{\dot})$ is given on $\Delta^p \times X_p$ by $0$, if $p < k$, and else by
\begin{multline*}
E(\omega)_p =\\
=k! \sum_{\phi:[k]\hookrightarrow[p]}\left(\sum_{j=0}^k (-1)^j x_{\phi(j)}dx_{\phi(0)} \smash \dots \smash
\widehat{(dx_{\phi(j)})}\smash \dots \smash dx_{\phi(k)} \right) \smash \phi_X^*\omega.
\end{multline*}
Note, that this really defines a $(k+l)$-form on the dagger space $\Delta^p\times X_p$. It is easy to see that $E(\omega)$ defines a simplicial form on $X_{\dot}$ and that $E$ satisfies \eqref{eq:22} and \eqref{eq:33}.

Also the homotopy operator $s : D^{k,l}(X_{\dot}) \to D^{k-1,l}(X_{\dot})$ is defined by the same formula as in the complex situation, which again gives a well-defined differential form also in the dagger context. That $s(\omega)$ really defines a \emph{simplicial} differential form and that $s$ satisfies \eqref{eq:44} follows again from the above lemma.
(Most of the computations are also carried out in \cite[proof of Theorem 2.16]{DupLNM}.)
\end{proof}

\begin{rem}\label{rem:IntegrationStandardSimplex} 
If $Y$ is any dagger space, \eqref{eq:I} defines an operator $I\colon \Omega^n(\Delta^k \times Y) \to \Omega^{n-k}(Y)$, which we denote by $\int_{\Delta^k}$. It may also be described as follows:
Define a morphism $\psi:I^k = \Sp(K\<t_1, \dots, t_k\>^{\dag}) \to \Delta^k = \Sp(K\<x_0, \dots, x_k\>^{\dag}/(\sum_i x_i -1))$ by $x_i \mapsto t_1\cdots t_i(1-t_{i+1})$, $i=0, \dots, k$, where we let $t_{n+1}=0$.\footnote{This is the analogue of the diffeomorphism $[0,1]^k \to \Delta^k_{\R}$ given by $(s_1, \dots, s_k) \mapsto (1-s_1, s_1(1-s_2), s_1s_2(1-s_3), \dots, s_1\cdots s_{k-1}(1-s_k), s_1\cdots s_k)$.}
It follows directly from the definitions, that $\int_{\Delta^k}$ is simply the composition
\[
\Omega^n(\Delta^k\times Y) \xrightarrow{(\psi\times1)^*} \Omega^n(I^k \times Y) \xrightarrow{K^k} \Omega^{n-k}(Y).
\]

In particular we have the integration map $\int_{\Delta^n} : \Omega^n(\Delta^n) \to K$. For later use, we record the following continuity property of $\int_{\Delta^n}$.

Fix $\rho > 1$ and $1< \eta<\rho^{\frac{1}{n}}$. Since $|t_1\cdots t_i(1-t_{i+1})|_{\eta} \leq \eta^n < \rho = |x_i|_{\rho}$ the morphism $\psi$ above restricts to a continuous morphism of Banach algebras $K\<\rho^{-1}x_0, \dots, \rho^{-1}x_n\>/(\sum x_i -1) \to K\<\eta^{-1}t_1, \dots, \eta^{-1}t_n\>$ (see section \ref{sec:PrelimAlgebras} for the notations).

We have a natural map $K\<\rho^{-1}\ul x\>/(\sum_i x_i-1) \otimes_K \bigwedge^n_K\frac{\bigoplus_{i=0}^nKdx_i}{\sum_i dx_i} \to \Omega^n(\Delta^n)$ and by the above, the composition 
\begin{equation}\label{eq:ContIntegration}
K\<\rho^{-1}\ul x\>/(\sum_i x_i-1) \otimes_K \bigwedge^n_K\frac{\bigoplus_{i=0}^nKdx_i}{\sum_i dx_i}  \ni \omega \mapsto \int_{\Delta^n}\omega  \in K
\end{equation}
is equal to the composition
\begin{multline*}
K\<\rho^{-1}\ul x\>/(\sum_i x_i-1) \otimes_K \bigwedge^n_K\frac{\bigoplus_{i=0}^nKdx_i}{\sum_i dx_i} \xrightarrow{\psi^*} \\
K\<\eta^{-1}\ul t\>\otimes_K\bigwedge^n_K\left(\bigoplus_{i=1}^n Kdt_i\right) \to   \Omega^n(I^n) \xrightarrow{K^n} K.
\end{multline*}
Using the continuity of $\psi$  and remark \ref{rem:ContIntegration}, it follows, that \eqref{eq:ContIntegration} is continuous as well.
\end{rem}

\section{Simplicial bundles and connections}

Let $\GL_{r,K}^{\dag}$ be the dagger space associated with the affine $K$-scheme $\GL_{r,K}$. 
The following lemma is certainly well-known, but I could not find a reference.
\begin{lemma}
If $X$ is any dagger space, the morphisms of dagger spaces $X \to \GL_{r,K}^{\dag}$ are in one to one correspondence with the group $\GL_r(\O_X(X))$.
\end{lemma}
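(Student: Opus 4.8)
The plan is to reduce the statement to the universal property of the dagger analytification functor together with the standard adjunction between $\Spec$ and the global sections functor. Throughout, $\GL_{r,K}$ is the affine $K$-scheme $\Spec\bigl(K[x_{ij} \mid 1 \le i,j \le r][\det^{-1}]\bigr)$.

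First I would invoke the universal property of $(\,.\,)^{\dag}$ from \cite[Korollar 2.18]{GK}: the canonical morphism of locally G-ringed spaces $\GL_{r,K}^{\dag} \to \GL_{r,K}$ is final among morphisms from $K$-dagger spaces to the scheme $\GL_{r,K}$. Taking as object the dagger space $X$ and post-composing with this canonical map yields a natural bijection
\[
\Hom_{\mathrm{dagger}}(X, \GL_{r,K}^{\dag}) \xrightarrow{\cong} \Hom_{\mathrm{lgrs}/K}(X, \GL_{r,K}),
\]
where the right-hand side denotes morphisms of locally G-ringed spaces over $K$ from $X$ into the scheme $\GL_{r,K}$.

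Next, since $\GL_{r,K}$ is affine, I would apply the adjunction between $\Spec$ and global sections -- in its version for locally G-ringed spaces -- to identify
\[
\Hom_{\mathrm{lgrs}/K}(X, \GL_{r,K}) \cong \Hom_{K\text{-}\mathrm{alg}}\bigl(K[x_{ij}][\det^{-1}], \O_X(X)\bigr).
\]
The hypothesis that $X$ is a \emph{locally} G-ringed space (so that its stalks are local rings and morphisms are local on stalks) is precisely what guarantees that every $K$-algebra homomorphism on global sections lifts to an honest morphism of locally G-ringed spaces. Verifying that the classical scheme-theoretic argument -- morphisms of locally ringed spaces into an affine scheme correspond to ring maps on global sections -- transports faithfully to the G-topology is the step that needs a little care, and I expect it to be the main (if essentially formal) obstacle.

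Finally I would unwind the right-hand side concretely. A $K$-algebra homomorphism $\phi$ out of the localization $K[x_{ij}][\det^{-1}]$ is the same datum as a $K$-algebra homomorphism $K[x_{ij}] \to \O_X(X)$ sending $\det$ to a unit; such a map is freely determined by the images $a_{ij} := \phi(x_{ij}) \in \O_X(X)$, subject only to the single constraint that $\det\bigl((a_{ij})\bigr) \in \O_X(X)^{\times}$. The matrix $(a_{ij})$ is then exactly a general element of $\GL_r(\O_X(X))$, and this assignment is visibly a bijection, with inverse sending a matrix to the homomorphism it defines. Chasing the element $\id \in \GL_r(\O_X(X))$ and the group laws through the two displayed bijections moreover shows that the comultiplication on $\GL_{r,K}^{\dag}$ corresponds to matrix multiplication, so the correspondence is in fact an isomorphism of groups, which yields the claim.
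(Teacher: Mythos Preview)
Your argument is correct, and it is conceptually cleaner than the paper's. Both proofs hinge on the universal property of $(\,.\,)^{\dag}$, but they diverge at the second step.

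The paper first reduces to the affinoid case $X=\Sp(A)$ by the sheaf property, and then uses the explicit presentation $\GL_{r,K}^{\dag}=\bigcup_n \Sp(C_n)$ with $C_n=K\langle c^{-n}x_{ij},c^{-n}y\rangle^{\dag}/(\det\cdot y-1)$. Given a ring map $C\to A$, one chooses $n$ large enough that the images of the generators, rescaled by $c^{-n}$, are power bounded; weak completeness of $A$ then yields a unique extension $C_n\to A$, hence a genuine morphism of affinoid dagger spaces $\Sp(A)\to\Sp(C_n)\subset\GL_{r,K}^{\dag}$. The inverse direction only needs the easy half of the $\Spec$--$\Gamma$ adjunction (taking global sections). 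So the paper never appeals to the full statement that morphisms of locally G-ringed spaces into an affine scheme are controlled by global sections; it constructs the dagger morphism by hand.

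Your route replaces this hands-on construction by the general adjunction $\Hom_{\mathrm{lgrs}/K}(X,\Spec C)\cong\Hom_{K\text{-alg}}(C,\O_X(X))$, which you rightly flag as the only nontrivial step. This does hold---the classical proof (pull back the maximal ideal of each stalk to get a point of $\Spec C$, check continuity on basic opens $D(f)$, and assemble the sheaf map) goes through verbatim, since a dagger space has local stalks and its structure sheaf is a sheaf of local rings on a site. What your approach buys is uniformity: no reduction to the affinoid case, no explicit exhaustion of $\GL_{r,K}^{\dag}$, and no appeal to weak completeness. What the paper's approach buys is self-containedness: it stays entirely within the dagger category and avoids invoking a result that, while standard in spirit, is not stated for G-ringed spaces in the references at hand.
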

\begin{proof}
By the sheaf property of $\GL_r(\O_X)$ and of the morphisms $U \to \GL_{r,K}^{\dag}$, $U \subset X$ admissible open, it suffices to treat the case, where $X = \Sp(A)$ is affinoid. Let $C = K[x_{ij},y]/(\det(x_{ij})\cdot y -1)$ such that $\Spec(C) = \GL_{r,K}$. 
Fix $c \in K$ with $|c|>1$ and write $C_n = K\<c^{-n}x_{ij}, c^{-n}y\>^{\dag}/(\det(x_{ij})\cdot y-1)$. Then $\GL_{r,K}^{\dag} = \bigcup_{n\geq 0} \Sp(C_n)$ (cf. section \ref{sec:PrelimSpaces}).

The set $\GL_r(A)$ corresponds bijectively to the set of $K$-algebra homomorphisms $C \to A$.  Given such a morphism $\sigma: C \to A$, choose $n$ large enough, such that $|\sigma(x_{ij})|, |\sigma(y)| \leq |c|^n$. Then the elements $c^{-n}\sigma(x_{ij}), c^{-n}\sigma(y) \in A$ are power bounded and by the weak completeness of $A$ and the fact, that $C \subset C_n$ is dense, there exists a unique extension of $\sigma$ to a morphism $C_n \to A$ (cf. section \ref{sec:PrelimAlgebras}). This in turn gives a well-defined morphism of dagger spaces $\Sp(A) \to \Sp(C_n) \subset \GL_{r,K}^{\dag}$.

On the other hand, any morphism of dagger spaces $\Sp(A) \to \GL_{r,K}^{\dag}$ gives, composed with the morphism of locally G-ringed spaces $\GL_{r,K}^{\dag} \to \GL_{r,K}$, a morphism of locally G-ringed spaces $\Sp(A) \to \GL_{r,K} = \Spec(C)$ and on global sections a morphism of rings $C \to A$, i.e. an element of $\GL_r(A)$.

Using the uniqueness of the extension of $\sigma$ above it is now easy to see, that both constructions are inverse to each other.
\end{proof}

Now the formalism of sections \ref{sec:Bundles} and \ref{sec:connections} carries over to the setting of simplicial dagger spaces:

Let $X_{\dot}$ and $Y_{\dot}$ be simplicial dagger spaces. A \emph{topological morphism} $f: X_{\dot} \rightsquigarrow Y_{\dot}$ is a family of morphisms of dagger spaces $\Delta^p \times X_p \to Y_p$ satisfying a compatibility condition for every increasing map $\phi: [p] \to [q]$ as in definition \ref{def:TopMorph}. A \emph{topological $\GL_r$-bundle} on $X_{\dot}$ is a topological morphism $g : X_{\dot} \rightsquigarrow B_{\dot}\GL_{r,K}^{\dag}$. A \emph{morphism} $\alpha: g \to h$ of topological bundles on $X_{\dot}$ is a topological morphism $\alpha: X_{\dot} \rightsquigarrow E_{\dot}\GL_{r,K}^{\dag}$ satisfying $\alpha \cdot g = h$.
An \emph{analytic $\GL_r$-bundle} is a morphism of simplicial dagger spaces $X_{\dot} \to B_{\dot}\GL_{r,K}^{\dag}$. 
\smallskip

A \emph{connection} in a topological $\GL_r$-bundle $g: X_{\dot} \rightsquigarrow B_{\dot}\GL_{r,K}^{\dag}$ is given by the following data:
For any $p \geq 0$ and any $i \in [p] = \{0, \dots, p\}$ a matrix valued $1$-form $\Gamma_i = \Gamma_i^{(p)} \in \Mat_r(\Omega^1(\Delta^p \times X_p))$ subject to the  conditions
\begin{enumerate}
\item $(\phi_{\Delta}\times \id)^*\Gamma_{\phi(i)}^{(q)} = (\id \times \phi_X)^*\Gamma_i^{(p)}$ for any increasing map $\phi: [p] \to [q]$ and
\item $\Gamma_i = g_{ji}^{-1} dg_{ji} + g_{ji}^{-1}\Gamma_j g_{ji}$.
\end{enumerate}
The notations are the same as in section \ref{sec:connections}. By the previous lemma we view the morphism $g_{ji} : \Delta^p\times X_p \to \GL_{r,K}^{\dag}$ as an element of $\GL_r(\O_{\Delta^p\times X_p}(\Delta^p \times X_p))$, hence $dg_{ji} \in \Mat_r(\Omega^1(\Delta^p\times X_p))$.

A connection $\Gamma = \{\Gamma_i\}$ on an analytic bundle is called \emph{analytic} if $\Gamma_i \in \Omega^{0,1}(\Delta^p \times X_p)$ for all $p \geq 0, i = 0, \dots, p$.
For example, the standard connection (example \ref{ex:standard_connection}) on any analytic bundle is analytic.

\smallskip

The \emph{curvature} of the connection $\{\Gamma_i\}$ is defined as the family of matrix valued 2-forms
\[
 R_i := R_i^{(p)} := d\Gamma_i^{(p)} + \left(\Gamma_i^{(p)}\right)^2 \in \Mat_r(\Omega^2(\Delta^p \times X_p)),  
\]
$p \geq 0, i = 0, \dots p$.

\smallskip

We define the \emph{$n$-th Chern character form} $\Ch_n(\Gamma)$ of the connection $\Gamma =\{\Gamma_i\}$ to be the family of forms $\frac{1}{n!} \Tr\left(\left(R_i^{(p)}\right)^n\right)$ on $\Delta^p\times X_p$, $p \geq 0$. According to lemma \ref{lem:CurvatureCompatible}, this form does not depend on $i$. We have the analogue of proposition \ref{prop:CharClasses}:
\begin{prop}\label{prop:DaggerCharClasses}
Let $g: X_{\dot} \rightsquigarrow B_{\dot}\GL_{r,K}^{\dag}$ be  a topological bundle and $\Gamma$ a connection on $g$.
\begin{enumerate}
\item $\Ch_n(\Gamma)$ is a closed $2n$-form on $X_{\dot}$, i.e. belongs to $D^{2n}(X_{\dot})$ and $d\Ch_n(\Gamma) = 0$.
\item The cohomology class of $\Ch_n(\Gamma)$ does not depend on the connection chosen.
\item If the bundle $g$ and the connection $\Gamma$ are analytic, $\Ch_n(\Gamma) \in \Fil^nD^{2n}(X_{\dot})$. Moreover, the class of $\Ch_n(\Gamma)$ in $H^{2n}(\Fil^nD^*(X_{\dot}))$ does not depend on the analytic connection chosen.
\item If $h : X_{\dot} \rightsquigarrow B_{\dot}\GL_{r,K}^{\dag}$ is a second bundle, and $\alpha : h \to g$ is a morphism, then $\Ch_n(\alpha^*\Gamma) = \Ch_n(\Gamma)$.
\item If $f: Y_{\dot} \rightsquigarrow X_{\dot}$ is a topological morphism, $\Ch_n(f^*\Gamma) = f^*\Ch_n(\Gamma)$.
\end{enumerate}
\end{prop}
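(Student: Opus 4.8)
The plan is to transport the entire argument of Proposition \ref{prop:CharClasses} to the dagger setting, using Theorem \ref{thm:p-adicDupont} as the substitute for Theorem \ref{thm:Dupont} and Lemma \ref{lem:HomotopyOperator} (together with the integration operator of Lemma \ref{lem:FormalIntegration}) as the substitute for the complex homotopy operator. The key observation is that every statement to be proved is purely formal in the sense that it only uses the algebra of matrix-valued differential forms, the trace, the compatibility condition in Definition \ref{def:Connection}, and a homotopy operator — and all of these are now available verbatim.

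First I would prove (i). The fact that $\Ch_n(\Gamma) = \frac{1}{n!}\Tr((R_i)^n)$ is well-defined as an element of $D^{2n}(X_{\dot})$ follows exactly as in the complex case: condition (i) of the connection definition gives $(\phi_{\Delta}\times\id)^*\Tr((R^{(q)}_{\phi(i)})^n) = (\id\times\phi_X)^*\Tr((R^{(p)}_i)^n)$, so the family is compatible, and Lemma \ref{lem:CurvatureCompatible} (which is purely formal and holds verbatim here, since its proof only uses $d(g^{-1})=-g^{-1}(dg)g^{-1}$ and Leibniz) shows independence of $i$. For closedness, one uses the Bianchi identity $dR_i = R_i\Gamma_i - \Gamma_i R_i$ and the trace identity $\Tr(d(R_i^n)) = 0$; this is again a formal computation valid in any commutative differential graded algebra of matrix forms, so the argument of \cite[théorème 1.19]{Kar87} carries over unchanged.

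For (ii) and (iii) I would set up the secondary-form machinery of Section \ref{sec:SecondaryClasses} in the dagger context. Concretely, on $\pi^*E$ over $X_{\dot}\times I$ (with $I=\Sp(K\<t\>^{\dag})$ now playing the role of $X_{\dot}\times\C$) one forms the homotopy connection $\Gamma = t\,\pi^*\Gamma^E + (1-t)\,\pi^*\alpha^*\Gamma^F$ as in \eqref{eq:HomotopyConnection}, defines $\Ch_n^{\rel} := K(\Ch_n(\Gamma))$ using the operator $K$ of Lemma \ref{lem:HomotopyOperator}, and checks that $dK+Kd = i_1^*-i_0^*$ gives $d\Ch_n^{\rel} = \Ch_n(\Gamma^E)-\Ch_n(\Gamma^F)$; these are the dagger analogues of Lemmas \ref{lem:RelChernCharForm} and \ref{lem:ChRelHolom}, and their proofs are formal once $K$ is in hand. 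Taking $\alpha=\id$ yields (ii). For (iii), the analyticity hypothesis means $\Gamma_i\in\Omega^{0,1}(\Delta^p\times X_p)$, so $\Gamma_i\in\Fil^1$ and hence $R_i\in\Fil^1$, giving $\Ch_n(\Gamma)\in\Fil^nD^{2n}(X_{\dot})$; the filtered homotopy operator $K$ maps $\Fil'^n$ to $\Fil^n$ exactly as in Lemma \ref{lem:ChRelHolom}, yielding independence of the analytic connection in $H^{2n}(\Fil^nD^*(X_{\dot}))$. Parts (iv) and (v) follow immediately from the dagger analogues of Remarks \ref{rem:FunctorialityConnection} (i) and (ii), whose verifying computations are identical formal manipulations with $g^{-1}dg$ and pullbacks.

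The main obstacle is ensuring that the homotopy operator $K$ and the integration along $\Delta^p$ actually behave as required in the non-archimedean setting, since this is precisely where overconvergence is essential: without it the formal integral $\int_0^1 t^k\,dt = \frac{1}{k+1}$ need not converge, because $|1/(k+1)|$ can grow. This delicate point is exactly what Lemma \ref{lem:FormalIntegration} and Remark \ref{rem:ContIntegration} resolve, via the estimate $|1/(k+1)|\rho^{-k}\to 0$ coming from the comparison of the induced valuation on $\Q$ with the $p$-adic one. Thus, granting Lemmas \ref{lem:FormalIntegration} and \ref{lem:HomotopyOperator} and Theorem \ref{thm:p-adicDupont}, the remaining work is entirely formal and parallels Section \ref{sec:connections}, so I would organize the proof as a sequence of references to those earlier arguments, pausing only to note that each uses nothing beyond the differential graded structure of $D^*(X_{\dot})$, the compatibility condition, the trace, and the operator $K$.
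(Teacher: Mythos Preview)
Your proposal is correct and matches the paper's approach exactly: the paper does not give a separate proof of Proposition~\ref{prop:DaggerCharClasses} at all, but simply states it as ``the analogue of proposition~\ref{prop:CharClasses}'', relying on the reader to transport the formal arguments using the dagger homotopy operator $K$ of Lemma~\ref{lem:HomotopyOperator} and the filtered complex $D^*(X_{\dot})$ of Theorem~\ref{thm:p-adicDupont}, precisely as you outline. Your identification of overconvergence (via Lemma~\ref{lem:FormalIntegration}) as the one genuinely non-formal ingredient is also exactly right, and the secondary-form machinery you describe is set up in Section~\ref{sec:DaggerSecondaryClasses} just as you anticipate.
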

\begin{dfn}
If $E/X_{\dot}$ is a topological bundle and $\Gamma$ is any connection on $E$, we write $\Ch_n(E)$ for the image of the class of $\Ch_n(\Gamma)$ in $H^{2n}(D^*(X_{\dot}))$ under the natural map $H^{2n}(D^*(X_{\dot})) \to H^{2n}_{\dR}(X_{\dot}/K)$.
If $E$ and $\Gamma$ are analytic, we still write $\Ch_n(E)$ for the image of the class of $\Ch_n(\Gamma)$
in $H^{2n}(\Fil^nD^*(X_{\dot}))$ under the natural map $H^{2n}(\Fil^nD^*(X_{\dot})) \to \Hyp^{2n}(X_{\dot}, \Omega^{\geq n}_{X_{\dot}})$.
\end{dfn}

Also, we can construct \emph{Chern character classes of vector bundles} as in the complex case. Here we freely use some results of section \ref{sec:Bundles}, which were stated there for complex manifolds, but which obviously carry over to the setting of dagger spaces with the appropriate modifications (e.g. the coverings considered have all to be admissible):

Let $\E_{\dot}$ be a vector bundle of rank $r$ on the  simplicial dagger space $X_{\dot}$ (definition \ref{def:VectorBundle}). As in lemma \ref{lemma:open_covering} there exists a morphism of simplicial dagger spaces $U_{\dot} \to X_{\dot}$, such that each $U_p$ is a disjoint union $\coprod_{\alpha\in A} U_{p,\alpha}$, where $\{U_{p,\alpha}\}_{\alpha\in A}$ is an \emph{admissible} open covering of $X_p$,
and $\E_{\dot}|_{U_{\dot}}$ is degree-wise trivial. Define $X_{\dot}'$ to be the diagonal of the associated \v{C}ech nerve $N_{X_{\dot}}(U_{\dot})$. Then $\E_{\dot}' := \E_{\dot}|_{X_{\dot}'}$ is degree-wise trivial, too, hence corresponds to an analytic $\GL_r$-bundle $E'/X'_{\dot}$. Moreover $X_{\dot}' \to X_{\dot}$ induces an isomorphism in cohomology.
We define $\Ch_n(\E_{\dot})$ to be the inverse image of $\Ch_n(E')$ under the isomorphism $\Hyp^{2n}(X'_{\dot}, \Omega^{\geq n}_{X'_{\dot}}) \xrightarrow{\cong} \Hyp^{2n}(X_{\dot}, \Omega^{\geq n}_{X_{\dot}})$. 
As in the complex case one shows, that this class is well defined and that the Whitney sum formula holds.

\section{Secondary classes}
\label{sec:DaggerSecondaryClasses}
\label{SEC:DAGGERSECONDARYCLASSES}

Let again $X_{\dot}$ be a simplicial dagger space and  let $E, F$ be two topological bundles on $X_{\dot}$ with connection $\Gamma^E$ and $\Gamma^F$ respectively and $\alpha: E\to F$ a morphism. Recall that $I = \Sp(K\<t\>^{\dag})$ and let $\pi\colon X_{\dot}\times I \to X_{\dot}$ be the projection. We equip the bundle $\pi^*E$ with the connection $\Gamma = t\pi^*\Gamma^E + (1-t)\pi^*\alpha^*\Gamma^F$ as in \eqref{eq:HomotopyConnection}, which is obviously well-defined also in the present context. We define the secondary form
\[
\Ch_n^{\rel}(\Gamma^E, \Gamma^F, \alpha) = K(\Ch_n(\Gamma)) \in D^{2n-1}(X_{\dot})
\]
using the homotopy operator $K$ of lemma \ref{lem:HomotopyOperator} componentwise. It has the same formal properties as its complex counterpart (section \ref{sec:SecondaryClasses}).
\smallskip

Similar arguments as in the complex situation then show, that, if $E$ and $F$ are \emph{analytic} bundles equipped with \emph{analytic} connections, then $\Ch_n^{\rel}(E,F,\alpha)$ gives a well-defined cohomology class in $H^{2n-1}(D^*(X_{\dot})/\Fil^nD^*(X_{\dot})) = H^{2n-1}(\Omega^{<n}(X_{\dot}))$, independent of the chosen analytic connections. Its image in $\Hyp^{2n-1}(X_{\dot}, \Omega^{<n}_{X_{\dot}})$ is denoted by
\[
\Ch_n^{\rel}(E,F,\alpha).
\]

\section{Chern character classes for algebraic bundles}

We recall the construction of Chern character classes in algebraic de Rham cohomology and compare them with the classes constructed via Chern-Weil theory above. The construction is the same as in the complex case, only that holomorphic differential forms are replaced with algebraic differential forms.

Let $X_{\dot}$ be a separated smooth simplicial $K$-scheme of finite type. 
Choose a good compactification $j : X_{\dot} \hookrightarrow \ol X_{\dot}$, i.e. an open immersion of smooth strict simplicial schemes of finite type over $K$, such that $\ol X_{\dot}$ is proper over $K$ and each $D_p = \ol X_p - X_p$ is a divisor with normal crossings. 
We have the logarithmic de Rham complex $\Omega^*_{\ol X_{\dot}}(\log D_{\dot}) \subset j_*\Omega^*_{X_{\dot}}$ and $\Hyp^*(\ol X_{\dot}, \Omega^*_{\ol X_{\dot}}(\log D_{\dot})) \cong \Hyp^*(X_{\dot}, \Omega^*_{X_{\dot}}) = H^*_{\dR}(X_{\dot}/K)$ (cf. \cite[Lemma 3.4]{Jannsen}).
By definition, the Hodge filtration on $H^*_{\dR}(X_{\dot}/K)$ is given by
\[
\Fil^nH^*_{\dR}(X_{\dot}/K) = \Im(\Hyp^*(\ol X_{\dot}, \Omega^{\geq n}_{\ol X_{\dot}}(\log D_{\dot}))\to \Hyp^*(X_{\dot}, \Omega^*_{X_{\dot}})).
\]
This is independent of the chosen good compactification, and it follows by the Lefschetz principle and GAGA from the corresponding fact over $\C$, that the map $\Hyp^*(\ol X_{\dot}, \Omega^{\geq n}_{\ol X_{\dot}}(\log D_{\dot}))\to \Hyp^*(X_{\dot}, \Omega^*_{X_{\dot}})$ is injective (cf. \cite[(8.7.2)]{Katz}).

\medskip

The first Chern class of line bundles $c_1 : H^1(X_{\dot}, \O_{X_{\dot}}^*) \to \Hyp^2(X_{\dot}, \Omega^{\geq 1}_{X_{\dot}})$ is again induced from the morphism of complexes $d\log: \O_{X_{\dot}}^*[-1] \to \Omega^{\geq 1}_{X_{\dot}}$, and one checks as in the complex case, that the image in fact lies in $\Fil^1H^2_{\dR}(X_{\dot}/K) \subset \Hyp^2(X_{\dot}, \Omega^{\geq 1}_{X_{\dot}})$.

Also, higher Chern classes and Chern character classes 
\[
\widetilde\Ch_n(\E_{\dot}) \in \Fil^nH^{2n}_{\dR}(X_{\dot}/K)
\]
for algebraic vector bundles $\E_{\dot}$ on $X_{\dot}$ are constructed as in the complex case using the splitting principle. 
\medskip

Denote by $X^{\dag}_{\dot}$ the  simplicial dagger space associated with $X_{\dot}$.
There is a natural morphism of simplicial locally G-ringed spaces $\iota: X_{\dot}^{\dag} \to X_{\dot}$.
We have the following chain of morphisms in the derived category $D^+(\ol X_{\dot})$ of bounded below complexes of abelian sheaves on $\ol X_{\dot}$:
\[
\Omega^{\geq n}_{\ol X_{\dot}}(\log D_{\dot}) \to \Rhyp j_*\Omega^{\geq n}_{X_{\dot}} \to \Rhyp j_*\Rhyp\iota_* \Omega^{\geq n}_{X_{\dot}^{\dag}}
\]
and hence natural maps $\Fil^nH^*_{\dR}(X_{\dot}/K) \to \Hyp^*(X_{\dot}^{\dag}, \Omega^{\geq n}_{X_{\dot}^{\dag}})$.

Let $\E_{\dot}/X_{\dot}$ be an algebraic vector bundle of rank $r$ and denote the induced vector bundle on $X_{\dot}^{\dag}$ by $\E_{\dot}^{\dag}$.
\begin{prop}\label{prop:AlgDaggerChernClasses}
$\widetilde\Ch_n(\E_{\dot})$ is mapped to $(-1)^n\Ch_n(\E_{\dot}^{\dag})$ under the natural morphism $\Fil^nH^{2n}_{\dR}(X_{\dot}/K) \to \Hyp^{2n}(X_{\dot}^{\dag}, \Omega^{\geq n}_{X_{\dot}^{\dag}})$.
\end{prop}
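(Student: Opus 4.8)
The plan is to reduce Proposition \ref{prop:AlgDaggerChernClasses} to the line-bundle case by the splitting principle, exactly mirroring the proof of Proposition \ref{prop:AlgChernCharClasses} in the complex setting. The two sides of the comparison are both constructed via the same Grothendieck–Hirzebruch formalism (higher Chern classes defined through the projective bundle relation, then Newton polynomials), and the Chern-Weil side $\Ch_n(\E_{\dot}^{\dag})$ satisfies the Whitney sum formula by the last paragraph of the preceding section. So the only genuinely new input needed is compatibility of the splitting constructions with the analytification morphism $\iota\colon X_{\dot}^{\dag}\to X_{\dot}$ and the behaviour of the relevant cohomology isomorphisms.

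First I would set up the projective bundle $\pi\colon Q_{\dot}\to X_{\dot}$ obtained by iterating the construction of section \ref{sec:HigherChernClasses}, so that $\pi^*\E_{\dot}$ has a filtration with line bundle subquotients, and such that both pullbacks $\pi^*\colon \Fil^nH^{2n}_{\dR}(X_{\dot}/K)\to \Fil^nH^{2n}_{\dR}(Q_{\dot}/K)$ and $\pi^*\colon \Hyp^{2n}(X_{\dot}^{\dag},\Omega^{\geq n}_{X_{\dot}^{\dag}})\to \Hyp^{2n}(Q_{\dot}^{\dag},\Omega^{\geq n}_{Q_{\dot}^{\dag}})$ are injective. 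For the algebraic side this is the analogue of Lemmata \ref{lem:CohomProjBundle}, \ref{lem:HodgeFiltProjBundle}; for the analytic (dagger) side one needs the corresponding projective-bundle decomposition, which follows from Lemma \ref{lem:CohomProjBundle1} applied in the dagger context (the only ingredient there, Verdier's computation of $\Rhyp\pi_*\Omega^p$, has its rigid/dagger analogue, or one invokes it over $\C$ via Lefschetz and GAGA as is already done above for the injectivity of the Hodge filtration). Because analytification commutes with the projective bundle construction and with $c_1$ (the map $d\log$ is the same on both sides), the two splittings are compatible with $\iota^*$, so the whole comparison descends to line bundles.

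Next, by the Whitney sum formula on both sides — available algebraically by \cite{GrothendieckClassesDeChern} and for the Chern-Weil classes by the preceding section — it suffices to check that for a line bundle $\mathscr L_{\dot}$ the class $\widetilde\Ch_n(\mathscr L_{\dot})$ maps to $(-1)^n\Ch_n(\mathscr L_{\dot}^{\dag})$. Here one uses $\widetilde\Ch_n(\mathscr L_{\dot})=\tfrac{1}{n!}c_1(\mathscr L_{\dot})^n$ and $\Ch_n(\mathscr L_{\dot}^{\dag})=\tfrac{1}{n!}\Ch_1(\mathscr L_{\dot}^{\dag})^n$, so everything reduces to the relation $\Ch_1(\mathscr L_{\dot}^{\dag})=-c_1(\mathscr L_{\dot}^{\dag})$ in degree $1$. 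This is precisely the dagger analogue of Lemma \ref{lemma:ChLineBundle}, and I would reprove it by the same explicit Chern-Weil computation: equip the analytic $\mathbb{G}_m$-bundle with the standard connection $\Gamma_i^{(p)}=\sum_k x_k\, d\log(g^{(p)}_{ki})$, compute the curvature, and observe that the integration map $I$ of Theorem \ref{thm:p-adicDupont} (which is now genuinely integration over the dagger simplex $\Delta^k$ in the sense of Remark \ref{rem:IntegrationStandardSimplex}, normalized so that $\int_{\Delta^1}dx_0=-1$) sends the first Chern character form to $-d\log(g^{(1)})$, matching the representative of $c_1$ coming from $d\log$. The compatibility of $c_1$ with analytification is immediate since $d\log$ is defined identically over the scheme and its analytification, and $\iota^*$ commutes with taking the class of $g^{(1)}$.

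The main obstacle, as in the complex case but aggravated here, is making the comparison of the two cohomology theories precise at the level of functorial complexes: over $\C$ one had the fine resolution by $\mathscr{C}^\infty$-forms, whereas for dagger spaces there is no such global functorial de Rham complex (cf. Remark \ref{rem:AffinoidCovering}), and $H^*(\Omega^*(X_{\dot}))\to H^*_{\dR}(X_{\dot}/K)$ is an isomorphism only when each $X_p$ is acyclic for coherent sheaves. The care needed is therefore to run the entire argument after passing to the standard Čech-type covering (Lemma \ref{lemma:open_covering} and its dagger analogue) so that the simplicial pieces become affinoid or Stein, where Theorem \ref{thm:p-adicDupont} identifies the simplicial-form cohomology with hypercohomology; one must then check that the splitting-principle reductions and the integration computation are compatible with these covering isomorphisms. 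Since all of these compatibilities are already established in the affine/affinoid setting used for the construction of $\Ch_n(\E_{\dot}^{\dag})$, I expect no genuinely new difficulty beyond bookkeeping, and the proof should read essentially word-for-word as the proof of Proposition \ref{prop:AlgChernCharClasses}, with $\iota^*$ inserted and Theorem \ref{thm:p-adicDupont} replacing Theorem \ref{thm:filteredDupont}.
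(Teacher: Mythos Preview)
Your proposal is correct and follows essentially the same route as the paper: reduce to line bundles via the splitting principle (which requires the projective-bundle formula for $\Hyp^m(\P(\E_{\dot})^{\dag},\Omega^{\geq n})$, stated and sketched in the paper as a separate lemma using the dagger GAGA principle), then invoke the Whitney sum formula on both sides and finish with the explicit Chern--Weil computation of $\Ch_1$ for a line bundle as in Lemma~\ref{lemma:ChLineBundle}. Your final paragraph on the lack of global functorial complexes is more cautious than necessary---the paper works directly with hypercohomology throughout the splitting argument, so no passage to \v{C}ech coverings is needed at this stage.
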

\begin{proof}[Sketch of proof]
The proof is the same as in the complex case: First one checks the case of the first Chern character class of a line bundle as in lemma \ref{lemma:ChLineBundle}. As in proposition \ref{prop:AlgChernCharClasses} the general case is then reduced to the case already treated using the splitting principle:
\begin{lemma}
Let $X_{\dot}$ be a smooth simplicial $K$-scheme of finite type, $\E_{\dot}$ an algebraic vector bundle of rank $r$ on $X_{\dot}$ and $\P(\E_{\dot}) \xrightarrow{\pi} X_{\dot}$ the associated projective bundle. Write $\xi = c_1(\O(1)^{\dag}) \in \Hyp^2(\P(\E_{\dot})^{\dag}, \Omega^{\geq 1}_{\P(\E_{\dot})^{\dag}})$. Then 
\[
\sum_{i=0}^{r-1} \pi^*(\,.\,)\cup \xi^i:\bigoplus_{i=0}^{r-1} \Hyp^{m-2i}(X_{\dot}^{\dag}, \Omega_{X_{\dot}^{\dag}}^{\geq n-i}) \to \Hyp^m(\P(\E_{\dot})^{\dag}, \Omega_{\P(\E_{\dot})^{\dag}}^{\geq n})
\]
is an isomorphism.
\end{lemma}
\begin{proof}[Sketch of proof]
By the same spectral sequence arguments as in the complex situation (lemmata \ref{lem:CohomProjBundle1} and \ref{lem:CohomProjBundle}) one is reduced to show, that, if $X$ is an ordinary smooth $K$-scheme of finite type and $\E$ an algebraic vector bundle of rank $r$ on $X$, then
\[
\bigoplus_{i=0}^{r-1} \Omega^{p-i}_{X^{\dag}}[-i] \xrightarrow{\bigoplus_{i=0}^{r-1} \pi^*(\underline{\;\;})\cup \xi^i} \Rhyp\pi_*\Omega^p_{\P(\E)^{\dag}}
\]
is an isomorphism in $D^+(X^{\dag})$. This in turn can be shown exactly as in the complex analytic setting \cite[Th\'eor\`eme 2]{Verdier}. One only has to use the fact, that the GAGA-principle holds for the dagger analytification of proper $K$-schemes \cite[Korollar 4.5]{GK}. 
\end{proof}
\end{proof}

\begin{rem}\label{rem:KiehlThm}
According to \cite[Theorem 2.4]{Kiehl} and \cite[Korollar 4.6 together with Beispiel (iv) on p. 25]{GK}, the natural map $H^*_{\dR}(X_{\dot}/K) \to H^*_{\dR}(X_{\dot}^{\dag}/K)$ is an isomorphism. Hence it follows, that the morphism $\Fil^nH^*_{\dR}(X_{\dot}/K) \to \Hyp^*(X_{\dot}^{\dag}, \Omega^{\geq n}_{X_{\dot}^{\dag}})$ is injective.
In particular the Chern character classes $\Ch_n(E)$ of \emph{algebraic} $\GL_r$-bundles indeed lie in $\Fil^nH^{2n}_{\dR}(X_{\dot}/K) \subset \Hyp^{2n}(X_{\dot}^{\dag}, \Omega^{\geq n}_{X_{\dot}^{\dag}})$.
\end{rem}

\chapter{Refined and secondary classes for algebraic bundles}
\label{ch:RefinedSecClassesAlgBundles}

This chapter constructs refined and secondary classes for algebraic bundles in analogy to the constructions in section \ref{sec:RelativeChernCharClasses}. Due to the problems mentioned in the introduction to part II, this is more complicated than in the complex case and we restrict the construction of secondary classes to affine simplicial schemes. This will be enough for the construction of Chern character maps on $K$-theory.

There are several possible variants. The direct analogue of the secondary classes of section \ref{sec:RelativeChernCharClasses} are secondary classes for algebraic bundles on a simplicial $K$-scheme $X_{\dot}$ together with a topological trivialization of the induced bundle on the simplicial dagger space $X_{\dot}^{\dag}$. These classes are constructed in the first section and then compared with the Chern-Weil theoretic secondary classes in the second section. Since we will define topological and relative $K$-theory in chapter \ref{ch:UltrametricRelKAndRegulators} for $R$-schemes, the secondary classes needed for the construction of the relative Chern character on $K$-theory are classes for algebraic bundles on a simplicial $R$-scheme together with a topological trivialization of the induced bundle on the generic fibre $(\hat X_{\dot})_K$ of the weak completion of $X_{\dot}$. These are constructed in section \ref{sec:RelClassesRSchemes}.

\section{Construction}

Let $X_{\dot}$ be a smooth separated simplicial $K$-scheme of finite type. We denote the associated simplicial dagger space by $X_{\dot}^{\dag}$ and by $\iota$ the canonical morphism $\iota: X_{\dot}^{\dag} \to X_{\dot}$.

Let $E/X_{\dot}$ be an algebraic $\GL_r$-bundle and $E^{\dag}/X_{\dot}^{\dag}$ the associated analytic bundle, classified by $g^{\dag} : X_{\dot}^{\dag} \to B_{\dot}\GL_{r,K}^{\dag}$. Define the associated principal bundle $E_{\dot}^{\dag} \xrightarrow{p} X_{\dot}^{\dag}$ to be the pullback of the universal bundle $E_{\dot}\GL_{r,K}^{\dag} \xrightarrow{p} B_{\dot}\GL_{r,K}^{\dag}$ along $g^{\dag}$:
\[
\xymatrix@C+0.3cm{
E_{\dot}^{\dag} \ar[r]\ar[d]_p\ar@{}[dr]|{\lrcorner} & E_{\dot}\GL_{r,K}^{\dag} \ar[d]^p\\
X_{\dot}^{\dag} \ar[r]^-{g^{\dag}} & B_{\dot}\GL_{r,K}^{\dag}.
}
\]
\begin{rem}\label{rem:ExplicitStructurePrincipleBundles}
Since $E_p\GL_{r,K}^{\dag} \to B_p\GL_{r,K}^{\dag} \times \GL_{r,K}^{\dag}$, $(g_0, \dots, g_p) \mapsto (g_0g_1^{-1}, \dots, g_{p-1}g_p^{-1}, g_p)$ is an isomorphism over $B_p\GL_{r,K}^{\dag}$, we have an isomorphism 
\[
E_p^{\dag} \cong X_p^{\dag} \times \GL_{r,K}^{\dag}.
\]
\end{rem}

Choose a good compactification $j: X_{\dot} \hookrightarrow \ol X_{\dot}$ and write $D_p = \ol X_p - X_p$. We have natural morphisms
\[
\xymatrix{
\Omega^{\geq n}_{\ol X_{\dot}}(\log D_{\dot}) \ar[r] & j_*\Omega^*_{X_{\dot}} \ar[r] & j_*\iota_*\Omega^*_{X_{\dot}^{\dag}}\ar[r]^{p^*}\ar[d] & j_*\iota_*p_*\Omega^*_{E_{\dot}^{\dag}}\ar[d]\\
&& \Rhyp(j_*\iota_*)\Omega^*_{X_{\dot}^{\dag}} \ar[r] & \Rhyp(j_*\iota_*p_*)\Omega^*_{E_{\dot}^{\dag}}.
}
\]
\begin{dfn}
Define the \emph{relative cohomology groups}
\begin{align*}
&H^{E,*}_{\rel}(X_{\dot}, n) := \Hyp^*(\ol X_{\dot}, \Cone(\Omega^{\geq n}_{\ol X_{\dot}}(\log D_{\dot}) \to \Rhyp(j_*\iota_*p_*)\Omega^*_{E_{\dot}^{\dag}})) \text{ and}\\
&H^{*}_{\rel}(X_{\dot}, n) := \Hyp^*(\ol X_{\dot}, \Cone(\Omega^{\geq n}_{\ol X_{\dot}}(\log D_{\dot}) \to \Rhyp(j_*\iota_*)\Omega^*_{X_{\dot}^{\dag}})).
\end{align*}
\end{dfn}
\begin{rems}\label{rem:DefOfRelCohom}
(i) Here we represent $\Rhyp(j_*\iota_*)\Omega^*_{X_{\dot}^{\dag}}$ by $j_*\iota_*I^*_{X_{\dot}^{\dag}}$, where 
$\Omega^*_{X_{\dot}^{\dag}} \trivcof I^*_{X_{\dot}^{\dag}}$ is an injective quasiisomorphism of complexes of abelian sheaves on $X_{\dot}^{\dag}$ and each $I^k_{X_{\dot}^{\dag}}$ is injective (cf. Appendix \ref{app:StrictSimplicial}), and similarly for $\Rhyp(j_*\iota_*p_*)\Omega^*_{E_{\dot}^{\dag}}$.
Since $p_*$, the functor $p^{-1}$ being exact, maps injective sheaves to injectives, there exists a morphism $I^*_{X_{\dot}^{\dag}} \to p_*I^*_{E_{\dot}^{\dag}}$ making the diagram 
\[
\xymatrix{
\Omega^*_{X_{\dot}^{\dag}} \ar[r]^{p^*}\ar@{^{(}->}[d]^{\sim} & p_*\Omega^*_{E_{\dot}^{\dag}} \ar[d]\\
I^*_{X_{\dot}^{\dag}} \ar@{.>}[r] & p_*I^*_{E_{\dot}^{\dag}}
}
\]
commute, and this morphism is unique up to homotopy \emph{under} $\Omega^*_{X_{\dot}^{\dag}}$ (cf. lemma \ref{lem:HomotopyEquivCones}). Hence we get a map $\Rhyp(j_*\iota_*)\Omega^*_{X_{\dot}^{\dag}} = j_*\iota_*I^*_{X_{\dot}^{\dag}} \to j_*\iota_*p_*I^*_{E_{\dot}^{\dag}} = \Rhyp(j_*\iota_*p_*)\Omega^*_{E_{\dot}^{\dag}}$ and hence a map of the cones
$\Cone(\Omega^{\geq n}_{\ol X_{\dot}}(\log D_{\dot}) \to \Rhyp(j_*\iota_*)\Omega^*_{X_{\dot}^{\dag}}) \to  \Cone(\Omega^{\geq n}_{\ol X_{\dot}}(\log D_{\dot}) \to \Rhyp(j_*\iota_*p_*)\Omega^*_{E_{\dot}^{\dag}})$, which is well defined up to homotopy (cf. lemma \ref{lem:HomotopyOnCones}). We thus have a \emph{canonical} morphism 
\[
p^*: H^*_{\rel}(X_{\dot}, n) \to H^{E,*}_{\rel}(X_{\dot}, n).
\]
This morphism fits in a long exact sequence
\begin{equation}\label{seq:RelCohomKSchemes}
\begin{small}
\xymatrix@C-0.45cm{
\dots \ar[r] & H^{E,i-1}_{\rel}(X_{\dot}, n) \ar[r] & \Fil^nH^i_{\dR}(X_{\dot}/K) \ar[r] & H^i_{\dR}(E_{\dot}^{\dag}/K)  \ar[r] & H^{E,i}_{\rel}(X_{\dot}, n)\ar[r] & \dots\\
\dots \ar[r] & H^{i-1}_{\rel}(X_{\dot}, n) \ar[r]\ar[u]^{p^*} & \Fil^nH^i_{\dR}(X_{\dot}/K) \ar[r]\ar@{=}[u] & H^i_{\dR}(X_{\dot}^{\dag}/K) \ar[u]^{p^*} \ar[r] & H^i_{\rel}(X_{\dot}, n) \ar[u]^{p^*} \ar[r] & \dots.
}
\end{small}
\end{equation}
\smallskip

(ii) As for the Hodge filtration of the de Rham cohomology one shows, that the definition of the relative cohomology groups does up to isomorphism not depend on the particular choice of the compactification $\ol X_{\dot}$. Since the family of all good compactifications is directed, one could take a colimit over all good compactifications to get a definition independent of choices.
\smallskip

(iii) By remark \ref{rem:KiehlThm} $H^i_{\rel}(X_{\dot},n) \cong H^i_{\dR}(X_{\dot}/K)/\Fil^nH^i_{\dR}(X_{\dot}/K)$ similar to the complex case.
\end{rems}

If $f : Y_{\dot} \to X_{\dot}$ is a morphism of smooth simplicial $K$-schemes of finite type and $E/X_{\dot}$ as before, we can consider the pullback $f^*E$ and the associated principal bundle $f^*E_{\dot}^{\dag}$. Whereas in the complex case we had functorial complexes defining the cone for the relative cohomologies, we have to be a little bit careful with functoriality here.
\begin{lemma}
There are well defined pullback maps $f^*: H^{E,*}_{\rel}(X_{\dot}, n) \to H^{f^*E, *}_{\rel}(Y_{\dot}, n)$ and $f^*: H^*_{\rel}(X_{\dot}, n) \to H^*_{\rel}(Y_{\dot}, n)$
\end{lemma}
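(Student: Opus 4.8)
The plan is to imitate the construction of pullback maps from the complex case (cf.\ the discussion preceding \eqref{seq:RelCohomCSchemes}), the only genuine difficulty being that good compactifications are not functorial, so $f$ need not extend to a morphism between the chosen compactifications $\ol X_{\dot}$ and $\ol Y_{\dot}$. Accordingly, the first and decisive step is to produce a good compactification of $Y_{\dot}$ that receives $f$. Fix good compactifications $j_X : X_{\dot} \hookrightarrow \ol X_{\dot}$ and $j_Y : Y_{\dot} \hookrightarrow \ol Y_{\dot}$ computing the two relative cohomologies. Proceeding inductively over the strict simplicial degree exactly as in \cite[1.2]{Soule} --- in each degree forming the closure of the graph of the composite $Y_p \to X_p \hookrightarrow \ol X_p$ inside $\ol Y_p \times \ol X_p$, then blowing up to make the boundary a normal crossings divisor, compatibly with the face maps --- one obtains a good compactification $\widetilde Y_{\dot}$ of $Y_{\dot}$ together with a morphism of proper strict simplicial $K$-schemes $\bar f : \widetilde Y_{\dot} \to \ol X_{\dot}$ extending $j_X \circ f$. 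Writing $\widetilde D_{\dot} = \widetilde Y_{\dot} - Y_{\dot}$, the relation $\bar f(Y_{\dot}) \subseteq X_{\dot}$ gives $\bar f^{-1}(D_{\dot}) \subseteq \widetilde D_{\dot}$, so by the standard functoriality of the logarithmic de Rham complex for morphisms carrying the boundary into the boundary, pullback of logarithmic forms along $\bar f$ defines a morphism of complexes $\bar f^* : \Omega^{\geq n}_{\ol X_{\dot}}(\log D_{\dot}) \to \Omega^{\geq n}_{\widetilde Y_{\dot}}(\log \widetilde D_{\dot})$.

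Next I would assemble the pullback on the two cones. On the analytic part, $f$ induces a morphism of simplicial dagger spaces $f^{\dag} : Y_{\dot}^{\dag} \to X_{\dot}^{\dag}$; choosing injective resolutions as in remark \ref{rem:DefOfRelCohom}(i) for both $X_{\dot}$ and $Y_{\dot}$, this yields a map $\Rhyp(j_*\iota_*)\Omega^*_{X_{\dot}^{\dag}} \to \Rhyp(j_*\iota_*)\Omega^*_{Y_{\dot}^{\dag}}$ (the analogous functors for $Y_{\dot}$), canonical up to homotopy and compatible with $\bar f^*$ on the logarithmic complexes. For the $E$-decorated groups, the bundle $f^*E$ has associated principal bundle $f^*E_{\dot}^{\dag}$, and the evident map $f^*E_{\dot}^{\dag} \to E_{\dot}^{\dag}$ lying over $f^{\dag}$ induces the corresponding map of the $\Rhyp(j_*\iota_*p_*)$-complexes. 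Passing to cones --- and invoking lemma \ref{lem:HomotopyOnCones} so that the homotopy ambiguity does not affect the induced map on cohomology --- produces the desired maps $f^* : H^*_{\rel}(X_{\dot}, n) \to H^*_{\rel}(Y_{\dot}, n)$ and $f^* : H^{E,*}_{\rel}(X_{\dot}, n) \to H^{f^*E,*}_{\rel}(Y_{\dot}, n)$. By construction these are compatible with the maps $p^*$ and hence with the long exact sequences \eqref{seq:RelCohomKSchemes}.

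Finally I would verify that $f^*$ is independent of the auxiliary choices, so that it is well defined on the groups themselves, which by remark \ref{rem:DefOfRelCohom}(ii) do not depend on the compactification. Given two choices $(\widetilde Y_{\dot}, \bar f)$ and $(\widetilde Y'_{\dot}, \bar f')$, the family of good compactifications of $Y_{\dot}$ is directed, so one may dominate both by a common good compactification $\widetilde Y''_{\dot}$ carrying compatible morphisms to $\widetilde Y_{\dot}$, $\widetilde Y'_{\dot}$ and to $\ol X_{\dot}$. Functoriality of pullback for the logarithmic and analytic de Rham complexes, together with the homotopy-uniqueness statements of lemmas \ref{lem:HomotopyEquivCones} and \ref{lem:HomotopyOnCones}, then shows that the two induced maps agree under the canonical identifications of the relative cohomology groups; the same argument disposes of the dependence on the chosen injective resolutions.

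I expect the main obstacle to be precisely this first step and its well-definedness: the non-functoriality of good compactifications forces the inductive construction of $\widetilde Y_{\dot}$ together with the extension $\bar f$ over $\ol X_{\dot}$ in the strict simplicial category, and then the somewhat delicate check --- via the directed system of compactifications and the homotopy lemmas --- that the resulting map on cohomology is canonical. By contrast, the analytic pullbacks and the cone bookkeeping are routine once the correct diagrams of injective resolutions have been fixed.
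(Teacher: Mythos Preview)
Your proposal is correct and follows essentially the same route as the paper: first refine the compactification of $Y_{\dot}$ so that $f$ extends to $\bar f : \widetilde Y_{\dot} \to \ol X_{\dot}$, then use injective resolutions together with the homotopy lemmas \ref{lem:HomotopyOnCones} and \ref{lem:HomotopyEquivCones} to build the map of cones and check independence of choices. The paper is only slightly more explicit about the sheaf-theoretic bookkeeping --- it works first on $\ol X_{\dot}$ via $f^{\dag}_*$ and then pushes forward along $\ol f_*$, using in addition an injective resolution $J^*$ of the logarithmic complex on $\ol Y_{\dot}$ --- but the underlying argument is the same as yours.
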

\begin{proof}
The proof is the same for both maps, and we restrict to the second one.
Given good compactifications $Y_{\dot} \hookrightarrow \ol Y_{\dot}$ and $X_{\dot} \hookrightarrow \ol X_{\dot}$, whose complement will as usual be denoted simply by $D_{\dot}$, one can construct a good compactification $Y_{\dot}\hookrightarrow \ol Y_{\dot}'$ together with maps $\ol f: \ol Y_{\dot}' \to \ol X_{\dot}$ and $\ol Y_{\dot}' \to \ol Y_{\dot}$ fitting in a commutative diagram
\[
\xymatrix{
Y_{\dot} \ar[rr]^f \ar@{^{(}->}[d] \ar@{^{(}->}[dr] && X_{\dot} \ar@{^{(}->}[d]\\
\ol Y_{\dot} & \ol Y_{\dot}' \ar[l] \ar[r]^{\ol f} & \ol X_{\dot}.
}
\]
Hence we may assume without loss of generality, that the given morphism $f$ extends to a morphism $\ol f: \ol Y_{\dot} \to \ol X_{\dot}$ of the compactifications. Thus we have a commutative diagram
\[
\xymatrix{
Y_{\dot}^{\dag} \ar[r]^{\iota_Y} \ar[d]^{f^{\dag}} & Y_{\dot} \ar[d]^f \ar@{^{(}->}[r]^{j_Y} & \ol Y_{\dot}\ar[d]^{\ol f}\\
X_{\dot}^{\dag} \ar[r]^{\iota_X} & X_{\dot}  \ar@{^{(}->}[r]^{j_X} & \ol X_{\dot}.
}
\]
Choose injective resolutions $\Omega^*_{X_{\dot}^{\dag}} \trivcof I^*_{X_{\dot}^{\dag}}$ and similarly for $Y_{\dot}^{\dag}$. 
Since $f^{\dag}_*$ maps injective sheaves to injectives, the dotted arrow in the diagram
\[
\xymatrix{
\Omega^*_{X_{\dot}^{\dag}} \ar[d]\ar@{^{(}->}[r]^{\sim} & I^*_{X_{\dot}^{\dag}} \ar@{.>}[d]\\
f^{\dag}_*\Omega^*_{Y_{\dot}^{\dag}} \ar[r] & f^{\dag}_*I^*_{Y_{\dot}^{\dag}}
}
\]
exists and is unique up to homotopy under $\Omega^*_{X_{\dot}^{\dag}}$.
Applying $(j_X)_*(\iota_X)_*$ and composing with the natural maps $\Omega^{\geq n}_{\ol X_{\dot}}(\log D_{\dot}) \to (j_X)_*(\iota_X)_*\Omega^*_{X_{\dot}^{\dag}}$ resp. $\ol f_*\Omega^{\geq n}_{\ol Y_{\dot}}(\log D_{\dot}) \to (j_X)_*(\iota_X)_*f^{\dag}_*\Omega^*_{Y_{\dot}^{\dag}}$ we get a morphism
\begin{multline}\label{eq:Morph1}
\Cone\left(\Omega^{\geq n}_{\ol X_{\dot}}(\log D_{\dot}) \to (j_X)_*(\iota_X)_*I^*_{X_{\dot}^{\dag}}\right) \to\\
\Cone\left(\ol f_*\Omega^{\geq n}_{\ol Y_{\dot}}(\log D_{\dot}) \to (j_X)_*(\iota_X)_*f^{\dag}_*I^*_{Y_{\dot}^{\dag}}\right),
\end{multline}
which is well defined up to homotopy (lemma \ref{lem:HomotopyOnCones} again).

Choose an injective resolution $\Omega^{\geq n}_{\ol Y_{\dot}}(\log D_{\dot}) \trivcof J^*$. As before, the dotted arrow in the diagram
\[
\xymatrix{
\Omega^{\geq n}_{\ol Y_{\dot}}(\log D_{\dot}) \ar[d] \ar@{^{(}->}[r]^-{\sim} & J^* \ar@{.>}[d] \\
(j_Y)_*(\iota_Y)_*\Omega^*_{Y_{\dot}^{\dag}} \ar@{^{(}->}[r] & (j_Y)_*(\iota_Y)_*I^*_{Y_{\dot}^{\dag}}
}
\]
exists and is unique up to homotopy under $\Omega^{\geq n}_{\ol Y_{\dot}}(\log D_{\dot})$.
This induces a quasiisomorphism
\begin{multline*}
\Cone\left(\Omega^{\geq n}_{\ol Y_{\dot}}(\log D_{\dot}) \to (j_Y)_*(\iota_Y)_*I^*_{Y_{\dot}^{\dag}}\right) \trivcof
\Cone\left( J^* \to (j_Y)_*(\iota_Y)_*I^*_{Y_{\dot}^{\dag}}\right),
\end{multline*}
where the complex on the right hand side is well defined up to homotopy equivalence (cf. lemma \ref{lem:HomotopyEquivCones}).
Applying $\ol f_*$, we get the natural map
\begin{multline*}
\Cone\left(\ol f_*\Omega^{\geq n}_{\ol Y_{\dot}}(\log D_{\dot}) \to \ol f_*(j_Y)_*(\iota_Y)_*I^*_{Y_{\dot}^{\dag}}\right) \to\\
\ol f_*\Cone\left( J^* \to (j_Y)_*(\iota_Y)_*I^*_{Y_{\dot}^{\dag}}\right).
\end{multline*}
Composing this with \eqref{eq:Morph1} and noting that the last complex represents $\Rhyp\ol f_*\Cone\left(\Omega^{\geq n}_{\ol Y_{\dot}}(\log D_{\dot}) \to \Rhyp((j_Y)_*(\iota_Y)_*)\Omega^*_{Y_{\dot}^{\dag}}\right)$, we get the desired map $f^*$ on relative cohomology groups. Similar one shows, that the map on cohomology does not depend on the choices of $I^*_{X_{\dot}^{\dag}}, I^*_{Y_{\dot}^{\dag}}$ or $J$. 
\end{proof}
\begin{rem}
That there is a unique way of defining $f^*$ on $H^*_{\rel}(X_{\dot}, n)$, is clear from remark \ref{rem:DefOfRelCohom}(iii). Later on we have to use this lemma also in a slightly modified situation, where the conclusion of remark \ref{rem:DefOfRelCohom}(iii) no longer holds.
\end{rem}

Since lemma \ref{lem:CohomEG} applies equally in the dagger context, we have the analogue of proposition \ref{prop:CRefinedClasses}:
\begin{prop}\label{prop:RefinedClasses}
There exists a class $\widetilde\Ch_n^{\rel}(E) \in H^{2n-1,E}_{\rel}(X_{\dot},n)$, which is mapped to the $n$-th Chern character class $\Ch_n(E)$ in $\Fil^nH^{2n}_{\dR}(X_{\dot}/K)$, and which is functorial in $X$. Moreover, the assignment $E \mapsto \widetilde\Ch_n^{\rel}(E)$ is uniquely determined by these two properties.
\end{prop}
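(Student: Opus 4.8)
The plan is to follow the proof of Proposition \ref{prop:CRefinedClasses} in the complex case almost verbatim, the essential input being the long exact sequence \eqref{seq:RelCohomKSchemes}, the vanishing of the cohomology of $E_{\dot}\GL_{r,K}^{\dag}$, and the functoriality arranged in the preceding lemma. First I would treat the universal bundle $E^{\univ}/B_{\dot}\GL_{r,K}$, whose associated principal bundle is $E_{\dot}\GL_{r,K}^{\dag}$. By Remark \ref{rem:KiehlThm} the Chern character class $\Ch_n(E^{\univ})$ lies in $\Fil^nH^{2n}_{\dR}(B_{\dot}\GL_{r,K}/K)$. Since Lemma \ref{lem:CohomEG} holds equally in the dagger context, $H^i_{\dR}(E_{\dot}\GL_{r,K}^{\dag}/K)=0$ for all $i>0$. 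Feeding this vanishing into the top row of \eqref{seq:RelCohomKSchemes} with $X_{\dot}=B_{\dot}\GL_{r,K}$, the two neighbouring terms $H^{2n-1}_{\dR}(E_{\dot}\GL_{r,K}^{\dag}/K)$ and $H^{2n}_{\dR}(E_{\dot}\GL_{r,K}^{\dag}/K)$ vanish (both indices being positive), so
\[
H^{E^{\univ},2n-1}_{\rel}(B_{\dot}\GL_{r,K},n) \xrightarrow{\ \cong\ } \Fil^nH^{2n}_{\dR}(B_{\dot}\GL_{r,K}/K)
\]
is an isomorphism, and I would define $\widetilde\Ch_n^{\rel}(E^{\univ})$ to be the unique preimage of $\Ch_n(E^{\univ})$.

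For an arbitrary algebraic $\GL_r$-bundle $E/X_{\dot}$ classified by $g:X_{\dot}\to B_{\dot}\GL_{r,K}$, I would set $\widetilde\Ch_n^{\rel}(E):=g^*\widetilde\Ch_n^{\rel}(E^{\univ})$, using the pullback map $g^*\colon H^{E^{\univ},2n-1}_{\rel}(B_{\dot}\GL_{r,K},n)\to H^{E,2n-1}_{\rel}(X_{\dot},n)$ provided by the preceding lemma. This class maps to $\Ch_n(E)$ in $\Fil^nH^{2n}_{\dR}(X_{\dot}/K)$ because the square formed by $g^*$ and the natural maps to $\Fil^nH^{2n}_{\dR}$ commutes and because $\Ch_n$ is functorial, i.e. $g^*\Ch_n(E^{\univ})=\Ch_n(E)$; functoriality of the whole assignment then follows from the compatibility of the pullback maps. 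For uniqueness I would argue as in the complex case: any rule sending each bundle to a class in $H^{2n-1,E}_{\rel}$ lifting its Chern character class and functorial in $X_{\dot}$ must, applied to $E^{\univ}$, produce a lift of $\Ch_n(E^{\univ})$ along the displayed isomorphism; since that lift is unique the rule agrees with ours on $E^{\univ}$, and hence by functoriality on every bundle.

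The argument is essentially formal once the machinery is in place, so there is no serious obstacle. The only point requiring attention, and the place where the dagger situation genuinely differs from the complex one, is that the groups $H^{E,*}_{\rel}$ are built from injective resolutions and the morphisms between the defining cones are only well defined up to homotopy. This is precisely what the preceding lemma together with Remark \ref{rem:DefOfRelCohom} were designed to control, so I would simply invoke them to guarantee both that $g^*$ is a well-defined map of relative cohomology groups and that the comparison square above genuinely commutes.
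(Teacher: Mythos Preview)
Your proposal is correct and follows essentially the same approach as the paper: the paper simply remarks that Lemma~\ref{lem:CohomEG} applies equally in the dagger context and hence the proof of Proposition~\ref{prop:CRefinedClasses} carries over, which is precisely the argument you have spelled out in detail.
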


\begin{dfn}
If $X_{\dot}$ is a smooth separated simplicial $K$-scheme and $E/X_{\dot}$ an algebraic $\GL_r$-bundle, the class $\widetilde\Ch_n^{\rel}(E) \in H^{2n-1,E}_{\rel}(X_{\dot},n)$ is called the \emph{$n$-th refined Chern character class of $E$}.
\end{dfn}

Assume, that the bundle $E^{\dag}$ induced by $E$ on $X_{\dot}^{\dag}$ admits a topological trivialization $\alpha$, i.e. there exists a topological morphism $\alpha\colon X_{\dot}^{\dag} \rightsquigarrow E_{\dot}\GL_{r,K}^{\dag}$, such that $p\circ \alpha = g^{\dag}$, the classifying map of $E^{\dag}$. Then $\alpha$ induces a topological morphism $\alpha\colon X_{\dot}^{\dag} \rightsquigarrow E_{\dot}^{\dag}$ right inverse to $p: E_{\dot}^{\dag} \to X_{\dot}^{\dag}$. To define secondary classes as in section \ref{sec:RelativeChernCharClasses}, we have to define a pullback $\alpha^*\colon H^{E,*}_{\rel}(X_{\dot}, n) \to H^*_{\rel}(X_{\dot}, n)$. Again, this takes a little bit more work than in the complex analogue. 
For simplicity we restrict to the affine case (but see the remark below). This is enough for the construction of regulators.
\begin{lemma}\label{lemma:TopPullback}
In the above situation assume in addition that $X_{\dot}$ is affine. Then $\alpha$ induces compatible left inverses $\alpha^*$ of $p^*: H^*_{\dR}(X_{\dot}^{\dag}/K) \to H^*_{\dR}(E_{\dot}^{\dag}/K)$ and of $p^*: H^*_{\rel}(X_{\dot}, n) \to H^{E,*}_{\rel}(X_{\dot}, n)$.
\end{lemma}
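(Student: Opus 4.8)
The plan is to exploit the affineness hypothesis to replace the derived-functor definitions of the relative cohomology groups by concrete global-form models on which the topological morphism $\alpha$ actually acts, and then to read off the left-inverse property from the identity $p\circ\alpha=\id$. First I would record the consequences of affineness: since each $X_p$ is an affine $K$-scheme, the dagger space $X_p^{\dag}$ is Stein and hence acyclic for coherent sheaves, and by Remark \ref{rem:ExplicitStructurePrincipleBundles} together with the fact that $(\,.\,)^{\dag}$ commutes with products one has $E_p^{\dag}\cong X_p^{\dag}\times\GL_{r,K}^{\dag}=(X_p\times\GL_{r,K})^{\dag}$, again the dagger space of an affine $K$-scheme and therefore likewise acyclic. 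Consequently, by Remark \ref{rem:AffinoidCovering} combined with Theorem \ref{thm:p-adicDupont}, the integration map $I$ realises natural isomorphisms $H^*(D^*(X_{\dot}^{\dag}))\cong H^*_{\dR}(X_{\dot}^{\dag}/K)$ and $H^*(D^*(E_{\dot}^{\dag}))\cong H^*_{\dR}(E_{\dot}^{\dag}/K)$, and likewise for the filtered subcomplexes $\Fil^n D^*$, which compute $\Hyp^*(-,\Omega^{\geq n})$. Thus in the affine case the simplicial Dupont complexes $D^*$ are legitimate models for de Rham cohomology.

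Next I would treat the de Rham left inverse. The topological morphism $\alpha\colon X_{\dot}^{\dag}\rightsquigarrow E_{\dot}^{\dag}$ induces a pullback of simplicial forms $\alpha^*\colon D^*(E_{\dot}^{\dag})\to D^*(X_{\dot}^{\dag})$ (the dagger analogue of Remark \ref{rem:PullbackByTopMorph}), a morphism of complexes, while the structure morphism $p$, being an honest morphism of simplicial dagger spaces, induces $p^*$ in the opposite direction. Since $p\circ\alpha=\id_{X_{\dot}^{\dag}}$ as topological morphisms, functoriality of the pullback gives $\alpha^*\circ p^*=(p\circ\alpha)^*=\id$ on $D^*(X_{\dot}^{\dag})$. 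Passing to cohomology through the identifications above yields the required left inverse $\alpha^*$ of $p^*\colon H^*_{\dR}(X_{\dot}^{\dag}/K)\to H^*_{\dR}(E_{\dot}^{\dag}/K)$.

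For the relative groups I would set up cone models. Writing $F:=\Rhyp\Gamma(\ol X_{\dot},\Omega^{\geq n}_{\ol X_{\dot}}(\log D_{\dot}))$ for the fixed filtered part and $G_X$, $G_E$ for $\Rhyp\Gamma$ of $\Rhyp(j_*\iota_*)\Omega^*_{X_{\dot}^{\dag}}$ resp. $\Rhyp(j_*\iota_*p_*)\Omega^*_{E_{\dot}^{\dag}}$, one has $H^*_{\rel}(X_{\dot},n)=H^*(\Cone(F\xrightarrow{\psi_X}G_X))$ and $H^{E,*}_{\rel}(X_{\dot},n)=H^*(\Cone(F\xrightarrow{\psi_E}G_E))$, where by construction (Remark \ref{rem:DefOfRelCohom}(i)) $\psi_E=p^*\circ\psi_X$ up to the canonical homotopy realising $p^*$. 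In the affine case $G_X\simeq D^*(X_{\dot}^{\dag})$ and $G_E\simeq D^*(E_{\dot}^{\dag})$ by the first step, so $\alpha^*$ acts on the $G$-parts while being the identity on $F$. I would then define the cone map $(\id_F,\alpha^*)$; it is a chain map precisely because $\alpha^*\circ\psi_E=\alpha^*\circ p^*\circ\psi_X=\psi_X$, using $\alpha^*p^*=\id$ from the previous step. Post-composing the canonical $p^*=(\id_F,p^*)$ with $(\id_F,\alpha^*)$ gives $(\id_F,\alpha^*p^*)=(\id_F,\id)$, i.e. the identity, so $\alpha^*$ is a left inverse of $p^*\colon H^*_{\rel}(X_{\dot},n)\to H^{E,*}_{\rel}(X_{\dot},n)$. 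Because this map is literally the identity on the common summand $F$ and the de Rham $\alpha^*$ on the other summand, it automatically commutes with the vertical and horizontal arrows of \eqref{seq:RelCohomKSchemes}, which is exactly the asserted compatibility of the two left inverses.

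The main obstacle is the homotopy-coherence bookkeeping. The filtered complex $F$ and the maps $\psi_X,\psi_E,p^*$ are, as in the preceding lemma, only defined up to homotopy through chosen injective resolutions, and the topological morphism $\alpha$ — not being a morphism of dagger spaces — does not act on these resolutions at all; it acts only on the global Dupont complexes. The delicate part is therefore to choose chain-level representatives for which the relations $\psi_E=p^*\psi_X$ and $\alpha^*p^*=\id$ hold compatibly, so that $(\id_F,\alpha^*)$ is a genuine map of cones inducing the desired map on cohomology and inverting the canonical $p^*$; this is where the affine/Stein acyclicity and the explicit homotopies of Lemmata \ref{lem:HomotopyOnCones} and \ref{lem:HomotopyEquivCones} enter.
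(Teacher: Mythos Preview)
Your approach is essentially the same as the paper's: use Stein acyclicity to replace the derived models by global-form complexes, pull back along $\alpha$ there, and feed this into the cone. Two small points of divergence are worth noting. First, the paper works on the total complexes $\Omega^*(X_{\dot}^{\dag})$ and $\Omega^*(E_{\dot}^{\dag})$ rather than on $D^*$, defining the de Rham left inverse explicitly as $I\circ\alpha^*\circ E$ via the maps of Theorem~\ref{thm:p-adicDupont}; this is equivalent to your formulation since $I$ and $E$ are mutually inverse homotopy equivalences. Second, the homotopy-coherence issue you correctly flag as the main obstacle is resolved in the paper not by invoking Lemmata~\ref{lem:HomotopyOnCones} and~\ref{lem:HomotopyEquivCones} directly, but by the quasi-pullback trick of Appendix~\ref{app:HomolAlg}: one forms $A^*$ as the quasi-pullback of $\Gamma^*(\ol X_{\dot},J^*)\to \Gamma^*(X_{\dot}^{\dag},I^*_{X_{\dot}^{\dag}})\leftarrow \Gamma^*(X_{\dot}^{\dag},\Omega^*_{X_{\dot}^{\dag}})$, so that $A^*\to \Gamma^*(X_{\dot}^{\dag},\Omega^*)$ represents $F\to G_X$ up to canonical homotopy while the square with $p^*$ to $\Gamma^*(E_{\dot}^{\dag},\Omega^*)$ commutes \emph{strictly}. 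Then $\alpha^*$ acts on the global-form side and gives an honest map of cones with first component $\id_{A^*}$. This is precisely the chain-level rectification your last paragraph asks for; with it, your argument goes through verbatim.
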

\begin{proof}
Using Theorem \ref{thm:p-adicDupont} $\alpha$ induces a section of $p^*:\Omega^*(X_{\dot}^{\dag}) \to \Omega^*(E_{\dot}^{\dag})$, defined in the notation of the Theorem as the composition $I \circ\alpha^*\circ E$, which we also denote by $\alpha^*$.
Since $E^{\dag}_p = (X_p \times \GL_{r,K})^{\dag}$ (cf. remark \ref{rem:ExplicitStructurePrincipleBundles}) is the dagger space associated with an affine $K$-scheme, it is a Stein space and hence acyclic for the cohomology of coherent sheaves. Hence $\Omega^*(E_{\dot}^{\dag}) \to \Rhyp\Gamma(E_{\dot}^{\dag}, \Omega^*_{E_{\dot}^{\dag}})$ is a quasiisomorphism and on de Rham cohomology $\alpha^*$ is induced by the maps $\Rhyp\Gamma(E_{\dot}^{\dag}, \Omega^*_{E_{\dot}^{\dag}}) \xleftarrow{\sim} \Omega^*(E_{\dot}^{\dag}) \xrightarrow{\alpha^*} \Omega^*(X_{\dot}^{\dag}) \xrightarrow{\sim} \Rhyp\Gamma(X_{\dot}^{\dag}, \Omega^*_{X_{\dot}^{\dag}})$.

On  relative cohomology groups $\alpha^*$ is constructed as follows: 
Choose injective resolutions $\Omega^*_{E_{\dot}^{\dag}} \trivcof I^*_{E_{\dot}^{\dag}}$, $\Omega^*_{X_{\dot}^{\dag}} \trivcof I^*_{X_{\dot}^{\dag}}$ and $\Omega^{\geq n}_{\ol X_{\dot}}(\log D_{\dot}) \trivcof J^*$.
Then we get a commutative diagram
\[
\xymatrix{
\Omega^{\geq n}_{\ol X_{\dot}}(\log D_{\dot}) \ar@{^{(}->}[d]^{\sim} \ar[r] & j_*\iota_*\Omega^*_{X_{\dot}^{\dag}} \ar[d]\ar[r] & j_*\iota_* p_*\Omega^*_{E_{\dot}^{\dag}} \ar[d]\\
J^* \ar@{.>}[r] & j_*\iota_*I^*_{X_{\dot}^{\dag}} \ar@{.>}[r] & j_*\iota_*p_*I^*_{E_{\dot}^{\dag}}.
}
\]
Taking global sections we get the diagram
\[
\xymatrix{
A^* \ar[d]^{\sim} \ar[r] & \Gamma^*(X_{\dot}^{\dag}, \Omega^*_{X_{\dot}^{\dag}}) \ar[d]^{\sim} \ar[r]^{p^*} & \Gamma^*(E_{\dot}^{\dag}, \Omega^*_{E_{\dot}^{\dag}}) \ar[d]^{\sim}\\
\Gamma^*(\ol X_{\dot}, J^*) \ar[r] & \Gamma^*(X_{\dot}^{\dag}, I^*_{X_{\dot}^{\dag}}) \ar[r]^{p^*} & \Gamma^*(E_{\dot}^{\dag}, I^*_{E_{\dot}^{\dag}}),
}
\]
where $\Gamma^*$ denotes the total complex associated with the obvious (strict) cosimplicial complex, and $A^*$ is defined by requiring that the left hand square is a quasi-pullback. In particular the left hand square commutes up to \emph{canonical} homotopy, whereas the right hand square strictly commutes.
Hence we get quasiisomorphisms
\begin{align*}
&\Cone\left(A^* \to \Gamma^*(E_{\dot}^{\dag}, \Omega^*_{E_{\dot}^{\dag}})\right) \xrightarrow{\sim}  \Cone\left(\Gamma^*(\ol X_{\dot}, J^*) \to \Gamma^*(E_{\dot}^{\dag}, I^*_{E_{\dot}^{\dag}})\right),\\
&\Cone\left(A^* \to \Gamma^*(X_{\dot}^{\dag}, \Omega^*_{X_{\dot}^{\dag}})\right) \xrightarrow{\sim}  \Cone\left(\Gamma^*(\ol X_{\dot}, J^*) \to \Gamma^*(X_{\dot}^{\dag}, I^*_{X_{\dot}^{\dag}})\right).
\end{align*}
Note, that the upper complex on the right hand side, hence also the complex on the left hand side, represents $\Rhyp\Gamma(\ol X_{\dot}, \Cone(\Omega^{\geq n}_{\ol X_{\dot}}(\log D_{\dot}) \to \Rhyp(j_*\iota_*p_*)\Omega^*_{E_{\dot}^{\dag}}))$ and similar for $X_{\dot}^{\dag}$.
Clearly $\alpha^*$ induces a section of $p^*: \Cone(A^*\to \Gamma^*(X_{\dot}^{\dag}, \Omega^*_{X_{\dot}^{\dag}})) \to \Cone(A^* \to \Gamma^*(E_{\dot}^{\dag}, \Omega^*_{E_{\dot}^{\dag}})$, which gives the desired pullback on relative cohomology groups. This map is obviously compatible with the morphism $\alpha^*$ on de Rham cohomology constructed above.
\end{proof}
\begin{rem}
One can extend this to the case of separated smooth simplicial $K$-schemes of finite type as follows: Given such $X_{\dot}$ and an algebraic $\GL_r$-bundle $E/X_{\dot}$, topologically trivialized by $\alpha: X_{\dot}^{\dag} \rightsquigarrow E_{\dot}\GL_{r,K}^{\dag}$ as before, there exists a \emph{strict} simplicial scheme $U_{\dot} \to X_{\dot}$ such that each $U_p$ is a disjoint union of open affine subschemes of $X_p$, which cover $X_p$. Define $X'_{\dot,\dot}$ to be the \v{C}ech nerve of $U_{\dot}\to X_{\dot}$. Since $X_{\dot}$ is separated, $X'_{\dot,\dot}$ is affine, too. Moreover, the natural augmentation $X'_{\dot,\dot} \to X_{\dot}$ induces an isomorphism in cohomology.\footnote{We can not replace $X'_{\dot,\dot}$ by the diagonal simplicial scheme, since the Theorem of Eilenberg and Zilber fails for strict bisimplicial objects.} Now the pullback ${E'}^{\dag}_{\dot,\dot}$ of $E_{\dot}^{\dag}$ to $X'_{\dot,\dot}$ is a bisimplicial Stein space and ${E'}^{\dag}_{\dot,\dot}\to E_{\dot}^{\dag}$ induces an isomorphism in cohomology, too. By base change, $\alpha$ induces a topological morphism\footnote{defined similarly as in the simplicial case} $X'_{\dot,\dot} \to {E'}^{\dag}_{\dot,\dot}$, which, using the extension of Dupont's theorem \ref{thm:p-adicDupont} to the strict bisimplicial case, allows one to define the desired map $\alpha^*$ on the de Rham and relative cohomology groups as in the lemma.
\end{rem}

\begin{dfn}
Let $X_{\dot}$ be a smooth affine simplicial $K$-scheme of finite type, $E/X_{\dot}$ an algebraic $\GL_r$-bundle and $\alpha: X_{\dot}^{\dag} \rightsquigarrow E_{\dot}\GL_{r,K}^{\dag}$ a topological trivialization of the induced bundle $E^{\dag}/X_{\dot}^{\dag}$. Then we define
\[
\widetilde\Ch_n^{\rel}(T,E,\alpha) := -\alpha^*\widetilde\Ch_n^{\rel}(E) \in H^{2n-1}_{\rel}(X_{\dot}, n).
\]
\end{dfn}

\section{Comparison with the secondary classes of section \ref{sec:DaggerSecondaryClasses}}

We have to compare these classes with those already constructed in section \ref{sec:DaggerSecondaryClasses}.
To do this, we give an alternative construction of the latter along the above lines: If $E^{\dag}$ is an analytic $\GL_r$-bundle on the simplicial dagger space $X_{\dot}^{\dag}$ with associated principal bundle $E_{\dot}^{\dag} \xrightarrow{p} X_{\dot}^{\dag}$ as above, we define  
\begin{align*}
& H^{E^{\dag},*}_{\rel}(X_{\dot}^{\dag}, n) := \Hyp^*(X_{\dot}^{\dag}, \Cone(\Omega^{\geq n}_{X_{\dot}^{\dag}} \to \Rhyp p_*\Omega^*_{E_{\dot}^{\dag}})) \text{ and}\\
& H^*_{\rel}(X_{\dot}^{\dag}, n) := \Hyp^*(X_{\dot}^{\dag}, \Cone(\Omega^{\geq n}_{X_{\dot}^{\dag}} \to \Omega^*_{X_{\dot}^{\dag}})) \cong \Hyp^*(X_{\dot}^{\dag}, \Omega^{<n}_{X_{\dot}^{\dag}}).
\end{align*}

As in proposition \ref{prop:RefinedClasses} there exists a unique way to assign to every analytic $\GL_r$-bundle $E^{\dag}/X_{\dot}^{\dag}$ a \emph{refined} class 
\[
\Ch_n^{\rel}(E^{\dag}) \in H^{E^{\dag},2n-1}_{\rel}(X_{\dot}^{\dag},n),
\]
which is functorial with respect to $X_{\dot}^{\dag}$ and which by the natural morphism $H^{E^{\dag},2n-1}_{\rel}(X_{\dot}^{\dag},n) \to \Hyp^{2n}(X_{\dot}^{\dag}, \Omega^{\geq n}_{X_{\dot}^{\dag}})$ is mapped to the class $\Ch_n(E^{\dag})$ constructed in proposition \ref{prop:DaggerCharClasses}.

\medskip

Now assume in addition, that $X_{\dot}^{\dag}$ is Stein and that the bundle $E^{\dag}$ has a topological trivialization $\alpha$. As in lemma \ref{lemma:TopPullback} we have a map $\alpha^*: H^{E^{\dag},*}_{\rel}(X_{\dot}^{\dag}, n) \to H^*_{\rel}(X_{\dot}^{\dag},n)$ and we claim:
\begin{lemma}
$\Ch_n^{\rel}(T,E^{\dag},\alpha) = -\alpha^*\Ch_n^{\rel}(E^{\dag})$ in $\Hyp^{2n-1}(X_{\dot}^{\dag}, \Omega^{<n}_{X_{\dot}^{\dag}}) \cong H^{2n-1}_{\rel}(X_{\dot}^{\dag},n)$
\end{lemma}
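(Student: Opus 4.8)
The plan is to transcribe the argument of Proposition \ref{prop:ComparisonOfRelClasses} into the dagger setting, working throughout with the explicit Chern--Weil representatives furnished by Theorem \ref{thm:p-adicDupont}. Since $X_{\dot}^{\dag}$ is Stein, each $X_p^{\dag}$ and each $E_p^{\dag}\cong X_p^{\dag}\times\GL_{r,K}^{\dag}$ (remark \ref{rem:ExplicitStructurePrincipleBundles}) is acyclic for the cohomology of coherent sheaves, so by remark \ref{rem:AffinoidCovering} the Dupont complexes $\Fil^nD^*(X_{\dot}^{\dag})$, $D^*(E_{\dot}^{\dag})$ and the associated cone compute $H^{E^{\dag},*}_{\rel}(X_{\dot}^{\dag},n)$ and $\Hyp^*(X_{\dot}^{\dag},\Omega^{<n}_{X_{\dot}^{\dag}})$. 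Under the Stein hypothesis the map $\alpha^*$ of lemma \ref{lemma:TopPullback} is then induced, at the level of these complexes, by the form-level pullback $I\circ\alpha^*\circ E$, and the natural map to $\Hyp^{2n-1}(X_{\dot}^{\dag},\Omega^{<n}_{X_{\dot}^{\dag}})$ is induced by the cone projection $(\omega,\eta)\mapsto\eta$.

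First I would treat the universal bundle $E^{\univ}$ on $B_{\dot}\GL_{r,K}^{\dag}$ with its standard connection $\Gamma^{\univ}$. Setting $\Ch_n^{\rel,\univ}:=\Ch_n^{\rel}(\Gamma^{T},\Gamma^{p^{*}E^{\univ}},\id)\in D^{2n-1}(E_{\dot}\GL_{r,K}^{\dag})$ exactly as before proposition \ref{prop:ChRelUniv}, the dagger analogue of lemma \ref{lem:RelChernCharForm} together with proposition \ref{prop:DaggerCharClasses}(v) gives $d\,\Ch_n^{\rel,\univ}=\Ch_n(\Gamma^{T})-\Ch_n(p^*\Gamma^{\univ})=-p^*\Ch_n(\Gamma^{\univ})$, since the trivial bundle has zero curvature. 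Hence $(\Ch_n(\Gamma^{\univ}),-\Ch_n^{\rel,\univ})$ is a cocycle in the cone computing $H^{E^{\univ},2n-1}_{\rel}(B_{\dot}\GL_{r,K}^{\dag},n)$, and it projects to $\Ch_n(\Gamma^{\univ})$, i.e. to $\Ch_n(E^{\univ})$, under the natural map to $\Hyp^{2n}(B_{\dot}\GL_{r,K}^{\dag},\Omega^{\geq n})$. By the uniqueness in the analogue of proposition \ref{prop:RefinedClasses}, this cocycle represents the universal refined class $\Ch_n^{\rel}(E^{\univ})$.

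By functoriality, if $g^{\dag}:X_{\dot}^{\dag}\to B_{\dot}\GL_{r,K}^{\dag}$ classifies $E^{\dag}$ and $g':E_{\dot}^{\dag}\to E_{\dot}\GL_{r,K}^{\dag}$ is the induced map on principal bundles, then $\Ch_n^{\rel}(E^{\dag})$ is represented by $(g^{\dag*}\Ch_n(\Gamma^{\univ}),-g'^*\Ch_n^{\rel,\univ})$. Applying $\alpha^*$, which fixes the base component because $p\circ\alpha=\id$ and acts by pullback on the second component, and then negating, I obtain that $-\alpha^*\Ch_n^{\rel}(E^{\dag})$ is represented by $(-g^{\dag*}\Ch_n(\Gamma^{\univ}),\ \alpha^*g'^*\Ch_n^{\rel,\univ})$. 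Now $g'\circ\alpha$ is precisely $\alpha$ viewed as a topological morphism $X_{\dot}^{\dag}\rightsquigarrow E_{\dot}\GL_{r,K}^{\dag}$, so the dagger analogue of proposition \ref{prop:ChRelUniv} (a consequence of the naturality lemma \ref{lem:PullbackChRel} in the context of section \ref{sec:DaggerSecondaryClasses}) gives $\alpha^*g'^*\Ch_n^{\rel,\univ}=\Ch_n^{\rel}(\Gamma^{T},\Gamma^{E^{\dag}},\alpha)$. Projecting via $(\omega,\eta)\mapsto\eta$ then shows that $-\alpha^*\Ch_n^{\rel}(E^{\dag})$ maps to the class of $\Ch_n^{\rel}(\Gamma^{T},\Gamma^{E^{\dag}},\alpha)$ in $\Hyp^{2n-1}(X_{\dot}^{\dag},\Omega^{<n}_{X_{\dot}^{\dag}})$, which is by definition $\Ch_n^{\rel}(T,E^{\dag},\alpha)$, proving the claim.

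The main obstacle is not the formal manipulation, which copies the complex case, but verifying that the abstractly defined groups and maps agree with their Dupont-complex incarnations. Concretely, I must check that the injective-resolution definition of $H^{E^{\dag},*}_{\rel}$ via $\Rhyp p_*$ is computed, under the Stein hypothesis, by the explicit cone $\Cone(\Fil^nD^*(X_{\dot}^{\dag})\xrightarrow{p^*}D^*(E_{\dot}^{\dag}))$, and that the cochain-level pullback $\alpha^*=I\circ\alpha^*\circ E$ from lemma \ref{lemma:TopPullback} induces the claimed map on these cones, compatibly with the cone projection onto $D^*(X_{\dot}^{\dag})/\Fil^nD^*(X_{\dot}^{\dag})$. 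This compatibility --- essentially the commutativity, up to coherent homotopy, of the integration map $I$ of Theorem \ref{thm:p-adicDupont} with the hypercohomology comparison morphisms --- is exactly the point where acyclicity of the Stein spaces $X_p^{\dag}$ and $E_p^{\dag}$ is indispensable, and where some care with signs and with the well-definedness of the cone-level morphisms (lemmata \ref{lem:HomotopyOnCones}, \ref{lem:HomotopyEquivCones}) is needed.
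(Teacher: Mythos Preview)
Your proposal is correct and follows essentially the same approach as the paper: the paper's proof simply notes that since all the relevant simplicial dagger spaces $X_{\dot}^{\dag}$, $E_{\dot}^{\dag}$, $B_{\dot}\GL_{r,K}^{\dag}$, $E_{\dot}\GL_{r,K}^{\dag}$ are Stein, one may work with the functorial Dupont complexes $D^*(\,\cdot\,)$ and $\Fil^nD^*(\,\cdot\,)$, and then the computation is analogous to (indeed easier than, there being no compactification or quasi-pullback needed) the proof of proposition \ref{prop:ComparisonOfRelClasses}. You have spelled out precisely that analogue, including the explicit universal cocycle $(\Ch_n(\Gamma^{\univ}),-\Ch_n^{\rel,\univ})$ and the identification $\alpha^*g'^*\Ch_n^{\rel,\univ}=\Ch_n^{\rel}(\Gamma^T,\Gamma^{E^{\dag}},\alpha)$ via the dagger version of proposition \ref{prop:ChRelUniv}, and your closing paragraph correctly flags the only point requiring care.
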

\begin{proof}
Since all the simplicial dagger spaces $X_{\dot}^{\dag}, E_{\dot}^{\dag}, B_{\dot}\GL_{r,K}^{\dag}, E_{\dot}\GL_{r,K}^{\dag}$ are Stein, we can work with the functorial complexes $D^*(\,.\,)$ resp. $\Fil^nD^*(\,.\,)$. Then the claim follows by an analogue (even easier) computation as in the proof of proposition \ref{prop:ComparisonOfRelClasses}.
\end{proof}

\medskip

Now let $X_{\dot}$ be a smooth affine simplicial $K$-scheme with good compactification $j: X_{\dot} \hookrightarrow \ol X_{\dot}$, whose complement we denote as usual by $D_{\dot}$, and let $X_{\dot}^{\dag} \xrightarrow{\iota} X_{\dot}$ be the dagger analytification morphism. 
Since $\Omega^{\geq n}_{\ol X_{\dot}}(\log D_{\dot}) \to \Rhyp j_*\Rhyp\iota_*\Omega^*_{X_{\dot}^{\dag}}$ factors through $\Rhyp j_*\Rhyp\iota_*\Omega^{\geq n}_{X_{\dot}^{\dag}}$, we get a natural map
\[
H^*_{\rel}(X_{\dot}, n) \to H^*_{\rel}(X_{\dot}^{\dag}, n).
\]

If $E$ is an algebraic $\GL_r$-bundle on $X_{\dot}$ and $E^{\dag}$ the associated bundle on $X_{\dot}^{\dag}$, there is also a natural map
\[
H^{E,*}_{\rel}(X_{\dot}, n) \to H^{E^{\dag},*}_{\rel}(X_{\dot}^{\dag}, n).
\]
\begin{lemma}
The refined class $\widetilde\Ch_n^{\rel}(E) \in H^{E,2n-1}_{\rel}(X_{\dot},n)$ is mapped to $\Ch_n^{\rel}(E^{\dag}) \in H^{E^{\dag},2n-1}_{\rel}(X_{\dot}^{\dag}, n)$ by the above morphism. 
\end{lemma}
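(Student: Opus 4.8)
The plan is to reduce the statement to the universal bundle and then use the dagger form of the vanishing result Lemma \ref{lem:CohomEG}, exactly as both refined classes were defined in the first place. First I would recall that both classes are pulled back from the universal situation: if $g: X_{\dot} \to B_{\dot}\GL_{r,K}$ classifies $E$, then its dagger analytification $g^{\dag}: X_{\dot}^{\dag} \to B_{\dot}\GL_{r,K}^{\dag}$ classifies $E^{\dag}$, and by Proposition \ref{prop:RefinedClasses} together with the analytic refined class construction one has $\widetilde\Ch_n^{\rel}(E) = g^*\widetilde\Ch_n^{\rel}(E^{\univ})$ and $\Ch_n^{\rel}(E^{\dag}) = (g^{\dag})^*\Ch_n^{\rel}(E^{\univ,\dag})$, where $E^{\univ}/B_{\dot}\GL_{r,K}$ is the algebraic universal bundle and $E^{\univ,\dag}$ its analytification. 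The first thing I would verify is that the comparison morphism $H^{E,*}_{\rel}(X_{\dot}, n) \to H^{E^{\dag},*}_{\rel}(X_{\dot}^{\dag}, n)$ is natural with respect to this pullback, which one reads off from its construction on the defining cones. Granting this, it suffices to prove the identity universally, i.e.\ that the image of $\widetilde\Ch_n^{\rel}(E^{\univ})$ under the universal comparison map equals $\Ch_n^{\rel}(E^{\univ,\dag})$.

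Next I would exploit that in the universal case the primary class already determines the refined class. By the dagger form of Lemma \ref{lem:CohomEG}, applied to the Stein dagger space $\GL_{r,K}^{\dag}$, the de Rham cohomology $H^i_{\dR}(E_{\dot}\GL_{r,K}^{\dag}/K)$ vanishes for $i>0$; inserting this into the analytic long exact sequence shows that the natural map $H^{E^{\univ,\dag},2n-1}_{\rel}(B_{\dot}\GL_{r,K}^{\dag}, n) \to \Hyp^{2n}(B_{\dot}\GL_{r,K}^{\dag}, \Omega^{\geq n}_{B_{\dot}\GL_{r,K}^{\dag}})$ is an isomorphism for $n\geq 1$. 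Hence it is enough to compare the images of the two classes in $\Hyp^{2n}(\Omega^{\geq n})$. By its defining property, $\Ch_n^{\rel}(E^{\univ,\dag})$ maps to the Chern--Weil class $\Ch_n(E^{\univ,\dag})$. For the other class I would use the commutative square
\[
\xymatrix{
H^{E^{\univ},2n-1}_{\rel}(B_{\dot}\GL_{r,K}, n) \ar[r] \ar[d] & \Fil^nH^{2n}_{\dR}(B_{\dot}\GL_{r,K}/K) \ar[d]\\
H^{E^{\univ,\dag},2n-1}_{\rel}(B_{\dot}\GL_{r,K}^{\dag}, n) \ar[r] & \Hyp^{2n}(B_{\dot}\GL_{r,K}^{\dag}, \Omega^{\geq n}),
}
\]
whose commutativity again follows from the construction of the comparison map on cones: it sends $\widetilde\Ch_n^{\rel}(E^{\univ})$ first to $\Ch_n(E^{\univ}) \in \Fil^nH^{2n}_{\dR}$ and then, via the inclusion of Remark \ref{rem:KiehlThm}, to $\Ch_n(E^{\univ,\dag})$. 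Note that no sign intervenes here, because the right vertical arrow identifies the Chern--Weil class $\Ch_n(E^{\univ})$ with $\Ch_n(E^{\univ,\dag})$ directly; the factor $(-1)^n$ of Proposition \ref{prop:AlgDaggerChernClasses} concerns the distinct Grothendieck class $\widetilde\Ch_n$ and is irrelevant. Since both classes then have image $\Ch_n(E^{\univ,\dag})$ and the lower horizontal map is an isomorphism, they coincide.

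The main obstacle I anticipate is not the diagram-chase but the bookkeeping that makes the universal case legitimate and the comparison map genuinely natural. Because $B_{\dot}\GL_{r,K}$ is neither affine nor of finite type, I would have to justify that the relative cohomology groups and the long exact sequence \eqref{seq:RelCohomKSchemes} are available there, constructing a good compactification levelwise as in \cite{Soule}; and--more delicately, given the warnings in the text--that the comparison morphism commutes with $g^*$ and with the primary-cohomology map, which in the dagger setting requires compatibly chosen good compactifications and injective resolutions exactly as in the proof of the pullback lemma for $H^*_{\rel}$. Once the comparison map is set up functorially and compatibly with the maps to the Hodge-filtered pieces, the argument closes formally.
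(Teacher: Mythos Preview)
Your proof is correct and is essentially the paper's own argument, written out in full. The paper's proof is the single sentence ``This follows from the unicity and the defining property of the refined classes (cf.\ proposition \ref{prop:RefinedClasses})''; your reduction to the universal bundle, followed by the observation that in the universal case the refined-to-primary map is an isomorphism (by the vanishing of $H^*_{\dR}(E_{\dot}\GL_{r,K}^{\dag}/K)$) and that both refined classes hit $\Ch_n(E^{\univ,\dag})$ there, is precisely the content of that unicity statement. The caveats you raise about $B_{\dot}\GL_{r,K}$ not being of finite type and about naturality of the comparison map are exactly the routine bookkeeping the paper is suppressing.
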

\begin{proof}
This follows from the unicity and the defining property of the refined classes (cf. proposition \ref{prop:RefinedClasses}).
\end{proof}

Putting everything together we have now achieved the proof of
\begin{prop}\label{prop:ComparisonDaggerRelClasses}
Let $E$ be an algebraic $\GL_r$-bundle on the smooth affine simplicial $K$-scheme $X_{\dot}$ and $\alpha$ a topological trivialization of the associated analytic bundle $E^{\dag}$ on the simplicial dagger space $X_{\dot}^{\dag}$. Then
$\widetilde\Ch_n^{\rel}(T,E,\alpha)$ is mapped to $\Ch_n^{\rel}(T,E^{\dag},\alpha)$ by the natural map $H^{2n-1}_{\rel}(X_{\dot}, n) \to H^{2n-1}_{\rel}(X_{\dot}^{\dag}, n) = \Hyp^{2n-1}(X_{\dot}^{\dag}, \Omega^{<n}_{X_{\dot}^{\dag}})$.
\end{prop}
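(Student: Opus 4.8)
The plan is to obtain the statement by assembling the two comparison lemmas just proved together with the definitions of the two secondary classes; the only genuine point to check is that the left inverses $\alpha^*$ on the two sides are compatible with the analytification morphisms relating the relative cohomology of $X_{\dot}$ and of $X_{\dot}^{\dag}$.

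First I would record the two ingredients. By definition $\widetilde\Ch_n^{\rel}(T,E,\alpha) = -\alpha^*\widetilde\Ch_n^{\rel}(E)$ in $H^{2n-1}_{\rel}(X_{\dot}, n)$, while the first lemma of this section gives $\Ch_n^{\rel}(T,E^{\dag},\alpha) = -\alpha^*\Ch_n^{\rel}(E^{\dag})$ in $\Hyp^{2n-1}(X_{\dot}^{\dag}, \Omega^{<n}_{X_{\dot}^{\dag}}) \cong H^{2n-1}_{\rel}(X_{\dot}^{\dag}, n)$. The second lemma of this section shows that the natural map $H^{E,2n-1}_{\rel}(X_{\dot}, n) \to H^{E^{\dag},2n-1}_{\rel}(X_{\dot}^{\dag}, n)$ sends the refined class $\widetilde\Ch_n^{\rel}(E)$ to $\Ch_n^{\rel}(E^{\dag})$.

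The heart of the argument is then the commutativity of the square
\[
\xymatrix{
H^{E,2n-1}_{\rel}(X_{\dot}, n) \ar[r] \ar[d]_{\alpha^*} & H^{E^{\dag},2n-1}_{\rel}(X_{\dot}^{\dag}, n) \ar[d]^{\alpha^*}\\
H^{2n-1}_{\rel}(X_{\dot}, n) \ar[r] & H^{2n-1}_{\rel}(X_{\dot}^{\dag}, n),
}
\]
whose horizontal arrows are the analytification maps and whose vertical arrows are the left inverses of $p^*$ furnished by lemma \ref{lemma:TopPullback} and its dagger analogue. Granting this, I apply $-\alpha^*$ to the relation $\widetilde\Ch_n^{\rel}(E) \mapsto \Ch_n^{\rel}(E^{\dag})$ and chase the square: $\widetilde\Ch_n^{\rel}(T,E,\alpha) = -\alpha^*\widetilde\Ch_n^{\rel}(E)$ maps to $-\alpha^*\Ch_n^{\rel}(E^{\dag}) = \Ch_n^{\rel}(T,E^{\dag},\alpha)$, which is precisely the assertion.

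To establish commutativity of the square I would descend to the defining cone complexes. The key observation is that both vertical maps are induced, through Theorem \ref{thm:p-adicDupont} (via the composite $I \circ \alpha^* \circ E$), by one and the same section of $p^*$ on the generic-fibre de Rham complex $\Omega^*(E_{\dot}^{\dag}) \to \Omega^*(X_{\dot}^{\dag})$; this uses that $X_{\dot}$ is affine, so that each $X_p^{\dag}$ and each $E_p^{\dag} \cong (X_p \times \GL_{r,K})^{\dag}$ is Stein, hence acyclic for coherent cohomology (remarks \ref{rem:AffinoidCovering} and \ref{rem:ExplicitStructurePrincipleBundles}). The analytification maps, on the other hand, leave the $\Omega^{\geq n}_{\ol X_{\dot}}(\log D_{\dot})$-component of the cone unchanged and only replace the $K$-scheme representative of the target by its Stein representative. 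Choosing the injective resolutions on $\ol X_{\dot}$, $X_{\dot}^{\dag}$ and $E_{\dot}^{\dag}$ compatibly, exactly as in the proof of lemma \ref{lemma:TopPullback} and using that $p_*$ preserves injectives, I expect to identify the two composites around the square at the level of these representing complexes, up to the canonical homotopies built into the quasi-pullbacks and cones. Keeping track of these coherent homotopies — rather than any single hard estimate — is the step I expect to be the main obstacle; the remainder is the purely formal diagram chase sketched above.
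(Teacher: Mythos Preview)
Your proposal is correct and follows essentially the same route as the paper: the paper states the proposition immediately after the two lemmas you cite and simply writes ``Putting everything together we have now achieved the proof of'' it, leaving the compatibility of the two $\alpha^*$-maps with the analytification morphisms implicit. You have made explicit the commutative square that underlies this step and correctly identified that its commutativity comes down to the fact that both vertical arrows are induced by the same section $I\circ\alpha^*\circ E$ of $p^*$ on the global de Rham complexes (the Stein property guaranteeing these compute the relevant cohomologies), while the horizontal maps only change the $\Fil^n$-part of the cone; no further obstacle arises.
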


\section{Variant for $R$-schemes}
\label{sec:RelClassesRSchemes}

For the construction of the relative Chern  character on the relative $K$-theory of a smooth affine $R$-scheme, we need the following variant of the classes constructed above.

Let $X_{\dot}$ be a smooth simplicial $R$-scheme of finite type, $X_{K,\dot}$ its generic fibre, $X_{K,\dot}^{\dag}$ its associated dagger space, $\widehat X_{\dot}$ the weak completion of $X_{\dot}$ and $(\widehat X_{\dot})_K$ its generic fibre. Choose a good compactification $j : X_{K,\dot} \hookrightarrow \ol X_{K,\dot}$. Then we have the following picture:
\[
(\widehat X_{\dot})_K \underset{\substack{\text{admissible}\\\text{open}}}{\subset} X_{K,\dot}^{\dag} \xrightarrow{\iota} X_{K,\dot} \overset{j}{\hookrightarrow} \ol X_{K,\dot}.
\]

Let $E/X_{\dot}$ be an algebraic $\GL_r$-bundle classified by a morphism $g : X_{\dot} \to B_{\dot}\GL_{r,R}$. Let $E_{\dot} \xrightarrow{p} X_{\dot}$ be the associated principal bundle, i.e. the pullback of $E_{\dot}\GL_{r,R} \to B_{\dot}\GL_{r,R}$ along $g$, and denote by $(\widehat E_{\dot})_K$ the generic fibre of the weak completion of $E_{\dot}$. 
Since weak completion and generic fibre commute with base change, this is also the pullback of $E_{\dot}(\widehat \GL_{r,R})_K \to B_{\dot}(\widehat\GL_{r,R})_K$ along $\widehat g_K$.

On the other hand, $E$ induces analytic bundles $\hat E_K$ on $(\widehat X_{\dot})_K$ and $E_K^{\dag}$ on $X_{K,\dot}^{\dag}$, which are classified by $\widehat g_K : (\widehat X_{\dot})_K \to B_{\dot}(\widehat\GL_{r,R})_K \subset B_{\dot}\GL_{r,K}^{\dag}$ and $g_K^{\dag} : X_{K,\dot}^{\dag} \to B_{\dot}\GL_{r,K}^{\dag}$ respectively. If $(\hat E_K)_{\dot}$ (resp. $(E_K^{\dag})_{\dot}$) denotes the principal bundle associated with $\hat E_K$ (resp. $E_K^{\dag}$), then $(\widehat E_{\dot})_K \subsetneqq (\hat E_K)_{\dot} \subset (E_K^{\dag})_{\dot}$ are admissible open. The following picture, where we indicated the fibres of the bundles, might help to clarify the situation:
\[
\xymatrix@C+1cm{
(\widehat E_{\dot})_K \ar[r]^{\subset} \ar[dr]_{(\widehat\GL_{r,R})_K} & (\hat E_K)_{\dot} \ar[d]^{\GL_{r,K}^{\dag}} \ar[r]^{\subset}\ar@{}[dr]|{\lrcorner} & (E_K^{\dag})_{\dot}\ar[d]^{\GL_{r,K}^{\dag}}\\
& (\widehat X_{\dot})_K \ar[r]_{\subset} & X_{K,\dot}^{\dag}
}
\]

\medskip

We have  natural morphisms of complexes of sheaves
\[
\Omega^{\geq n}_{\ol X_{K,\dot}}(\log D_{\dot}) \to \Rhyp j_*\Rhyp\iota_*\Omega^*_{(\widehat X_{\dot})_K} \xrightarrow{p^*} \Rhyp j_*\Rhyp\iota_*\Rhyp p_* \Omega^*_{(\widehat E_{\dot})_K},
\]
where, by abuse of notation, the composition $(\widehat X_{\dot})_K \subset X_{K,\dot}^{\dag} \xrightarrow{\iota} X_{K,\dot}$ is still denoted by $\iota$,
and define 
\begin{align*}
H^*_{\rel}(X_{\dot}/R, n) & := \Hyp^*\left(\ol X_{K,\dot},\Cone\left(\Omega^{\geq n}_{\ol X_{K,\dot}}(\log D_{\dot}) \to \Rhyp j_*\Rhyp\iota_*\Omega^*_{(\widehat X_{\dot})_K}\right)\right),\\
H^{E,*}_{\rel}(X_{\dot}/R,n) & := \Hyp^*\left(\ol X_{K,\dot},\Cone\left(\Omega^{\geq n}_{\ol X_{K,\dot}}(\log D_{\dot}) \to \Rhyp j_*\Rhyp\iota_*\Omega^*_{(\widehat E_{\dot})_K}\right)\right).
\end{align*}

\medskip

The factorisations

\begin{minipage}{6cm}
\[
\xymatrix@C-1.7cm@R-0.5cm{
\Omega^{\geq n}_{\ol X_{K,\dot}}(\log D_{\dot}) \ar[rr] \ar[dr] && \Rhyp j_*\Rhyp\iota_* \Omega^*_{(\widehat X_{\dot})_K}\\
& \Rhyp j_*\Rhyp\iota_*\Omega^{\geq n}_{(\widehat X_{\dot})_K} \ar@{.>}[ur]
}
\]
\end{minipage} and
\begin{minipage}{6cm}
\[
\xymatrix@C-1.7cm@R-0.5cm{
\Omega^{\geq n}_{\ol X_{K,\dot}}(\log D_{\dot}) \ar[rr] \ar[dr] && \Rhyp j_*\Rhyp\iota_* \Omega^*_{(\widehat X_{\dot})_K}\\
& \Rhyp j_*\Rhyp\iota_*\Omega^*_{X_{K,\dot}^{\dag}} \ar@{.>}[ur]
}
\]
\end{minipage}
induce natural maps
\[
H^*_{\rel}(X_{\dot}/R, n) \to H^*_{\rel}((\widehat X_{\dot})_K, n)\quad \text{ and }\quad H^*_{\rel}(X_{K,\dot}, n) \to H^*_{\rel}(X_{\dot}/R, n)
\]
respectively. Similarly, we have a natural map 
\[
H^{E_K,*}_{\rel}(X_{K,\dot}, n) \to H^{E,*}_{\rel}(X_{\dot}/R, n)
\]
and in particular we can consider the image of the refined class $\widetilde\Ch_n^{\rel}(E_K)$ in $H^{E,2n-1}_{\rel}(X_{\dot}/R, n)$. It will be denoted by $\widetilde\Ch_n^{\rel}(E/R)$.

\begin{rem}\label{rem:RelCohomProper}
Assume, that $X_{\dot}$ is a smooth \emph{proper} simplicial $R$-scheme. In this case all the different relative cohomology groups coincide: The natural map $(\widehat X_{\dot})_K \to X_{K,\dot}^{\dag}$ is an isomorphism (see section \ref{sec:PrelimSpaces}) and a good compactification $j$ of $X_{K,\dot}$ is given by the identity. Hence
\[
H^*_{\rel}(X_{K,\dot}, n) = \Hyp^*\left(X_{K,\dot}, \Cone(\Omega^{\geq n}_{X_{K,\dot}} \to \Rhyp\iota_* \Omega^*_{X_{K,\dot}^{\dag}})\right)
\]
and 
\[
H^*_{\rel}(X_{\dot}/R, n) = \Hyp^*\left(X_{K,\dot}, \Cone(\Omega^{\geq n}_{X_{K,\dot}} \to \Rhyp\iota_* \Omega^*_{(\widehat X_{\dot})_K})\right)
\]
are isomorphic. Moreover, the last group is isomorphic to
\[
H^*_{\rel}((\widehat X_{\dot})_K, n)= H^*_{\rel}(X_{K,\dot}^{\dag}, n) = \Hyp^*\left(X_{K,\dot}^{\dag}, \Cone(\Omega^{\geq n}_{X_{K,\dot}^{\dag}} \to \Omega^*_{X_{K, \dot}^{\dag}})\right)
\]
by the GAGA-principle \cite[Korollar 4.5]{GK}.
\end{rem}

Now assume, that $X_{\dot}$ is \emph{affine} and that the bundle $\hat E_K$ admits a topological trivialization $\alpha$ such that $\alpha: (\hat X_{\dot})_K \rightsquigarrow E_{\dot}\GL_{r,K}^{\dag}$ factors through the admissible open subspace $E_{\dot}(\widehat\GL_{r,R})_K \subset E_{\dot}\GL_{r,K}^{\dag}$.
By base change, $\alpha$ induces a topological morphism $\alpha : (\hat X_{\dot})_K \rightsquigarrow (\hat E_{\dot})_K$. 
Similar as in lemma \ref{lemma:TopPullback} (use the fact, that, if $U$ is an affine $R$-scheme, then $\hat U_K$ is an affinoid dagger space, hence acyclic for coherent sheaves) we may then construct $\alpha^*: H^{E,*}_{\rel}(X_{\dot}/R, n) \to H^*_{\rel}(X_{\dot}/R, n)$ left inverse to $p^*$.
Hence we can define
\[
\widetilde\Ch_n^{\rel}(T,E,\alpha/R) := -\alpha^*\widetilde\Ch_n^{\rel}(E/R) \in H^{2n-1}_{\rel}(X_{\dot}/R, n).
\]

We record the following properties:
\begin{prop}\label{prop:ComparisonRelClassesRSchemes}
Let $E$ be an algebraic $\GL_r$-bundle on the smooth affine simplicial $R$-scheme $X_{\dot}$. We have induced bundles $E_K$ on $X_{K,\dot}$, $E_K^{\dag}$ on $X_{K,\dot}^{\dag}$ and $\hat E_K$ on $(\hat X_{\dot})_K$. Assume, that $\alpha: (\hat X_{\dot})_K \rightsquigarrow E_{\dot}(\widehat\GL_{r,R})_K$ is a topological trivialization of $\hat E_K$ as above.
\begin{enumerate}
\item  $\widetilde\Ch_n^{\rel}(T,E,\alpha/R)$ is mapped to $\Ch_n^{\rel}(T, \hat E_K, \alpha)$ by the natural map $H^{2n-1}_{\rel}(X_{\dot}/R,n) \to H^{2n-1}_{\rel}((\hat X_{\dot})_K,n)$.
\item If $\alpha$ extends to a topological trivialization $\alpha_K: X_K^{\dag} \rightsquigarrow E_{\dot}\GL_{r,K}^{\dag}$ of $E_K^{\dag}$,  $\widetilde\Ch_n^{\rel}(T,E_K,\alpha_K)$ is mapped to $\widetilde\Ch_n^{\rel}(T,E,\alpha/R)$
by the natural map $H^{2n-1}_{\rel}(X_{K,\dot}, n) \to H^{2n-1}_{\rel}(X_{\dot}/R, n)$.
\end{enumerate}
\end{prop}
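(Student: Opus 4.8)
The plan is to treat both parts in parallel, following the template of Propositions \ref{prop:ComparisonOfRelClasses} and \ref{prop:ComparisonDaggerRelClasses}. In each case the assertion reduces to two compatibilities: first, that the \emph{refined} classes are carried to one another by the natural maps relating the various flavours of relative cohomology; and second, that the topological pullbacks $\alpha^*$ commute with those natural maps. Granting these, the statements about the secondary classes follow formally by unwinding the definitions $\widetilde\Ch_n^{\rel}(T,E,\alpha/R) = -\alpha^*\widetilde\Ch_n^{\rel}(E/R)$, $\widetilde\Ch_n^{\rel}(T,E_K,\alpha_K) = -\alpha_K^*\widetilde\Ch_n^{\rel}(E_K)$, and $\Ch_n^{\rel}(T,\hat E_K,\alpha) = -\alpha^*\Ch_n^{\rel}(\hat E_K)$.

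For part (ii) the compatibility of the refined classes is immediate: $\widetilde\Ch_n^{\rel}(E/R)$ was \emph{defined} as the image of $\widetilde\Ch_n^{\rel}(E_K)$ under the natural map $\nu^E\colon H^{E_K,*}_{\rel}(X_{K,\dot},n) \to H^{E,*}_{\rel}(X_{\dot}/R,n)$. Writing $\nu\colon H^*_{\rel}(X_{K,\dot},n) \to H^*_{\rel}(X_{\dot}/R,n)$ for the restriction map on the base, it then remains to verify $\nu \circ \alpha_K^* = \alpha^* \circ \nu^E$. Since $(\widehat X_{\dot})_K \subset X_{K,\dot}^{\dag}$ is admissible open and $\alpha$ is by construction the restriction of $\alpha_K$ along this inclusion, I would trace through the chain-level definitions of the two pullbacks — each built from Theorem \ref{thm:p-adicDupont} and a choice of compatible injective resolutions — and check that the resulting square commutes up to the relevant homotopies.

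For part (i) I would first show that the principal-bundle analogue of the restriction map $H^*_{\rel}(X_{\dot}/R,n) \to H^*_{\rel}((\widehat X_{\dot})_K,n)$ sends $\widetilde\Ch_n^{\rel}(E/R)$ to the analytic refined class $\Ch_n^{\rel}(\hat E_K)$. Both classes are functorial in $X_{\dot}$ and map to the Chern character class $\Ch_n(\hat E_K)$ in $\Hyp^{2n}((\widehat X_{\dot})_K, \Omega^{\geq n}_{(\widehat X_{\dot})_K})$; by the uniqueness clause of Proposition \ref{prop:RefinedClasses} (valid verbatim in the dagger setting via Lemma \ref{lem:CohomEG}) they therefore agree. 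Combining this with the comparison lemma $\Ch_n^{\rel}(T,\hat E_K,\alpha) = -\alpha^*\Ch_n^{\rel}(\hat E_K)$ — legitimate because each $(\widehat X_p)_K$ is affinoid, hence acyclic for coherent sheaves — and with the compatibility $\mu \circ \alpha^* = \alpha^* \circ \mu^E$ of the pullbacks with the restriction maps $\mu, \mu^E$, part (i) follows.

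The main obstacle is the second compatibility in both parts, that the topological pullbacks commute with the restriction maps. The maps $\alpha^*$ are only defined at the chain level via Theorem \ref{thm:p-adicDupont} together with a choice of injective resolutions, and each is canonical merely up to homotopy (cf. Lemmata \ref{lem:HomotopyOnCones} and \ref{lem:HomotopyEquivCones}). The delicate point is to choose these resolutions coherently across $H^*_{\rel}(X_{K,\dot},n)$, $H^*_{\rel}(X_{\dot}/R,n)$, and $H^*_{\rel}((\widehat X_{\dot})_K,n)$ so that the squares relating pullback and restriction commute on cohomology. Here the acyclicity inputs are essential: for an affine $R$-scheme the generic fibre of its weak completion is affinoid and its dagger analytification is Stein, so in every case the complex of global sections computes hypercohomology and the Dupont comparison of Theorem \ref{thm:p-adicDupont} is available on all the principal bundles $(\widehat E_{\dot})_K$, $(\hat E_K)_{\dot}$ and $(E_K^{\dag})_{\dot}$, which by Remark \ref{rem:ExplicitStructurePrincipleBundles} are degreewise products with $\GL_{r,K}^{\dag}$.
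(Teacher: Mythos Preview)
Your proposal is correct and follows essentially the same route as the paper: for (i) you mirror the argument of Proposition~\ref{prop:ComparisonDaggerRelClasses} (uniqueness of refined classes plus the identity $\Ch_n^{\rel}(T,\hat E_K,\alpha)=-\alpha^*\Ch_n^{\rel}(\hat E_K)$), and for (ii) you reduce to the commutativity of the square relating $\alpha_K^*$ and $\alpha^*$, which is exactly what the paper invokes. The only difference is tone: the paper simply asserts the commutativity of the relevant diagram of pullbacks, whereas you flag the coherent choice of injective resolutions as a delicate point to be checked---but this is a matter of exposition, not of strategy.
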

\begin{proof}
(i) may be shown as the analogue proposition \ref{prop:ComparisonDaggerRelClasses}.

(ii) follows from the commutativity of the diagram
\[
\xymatrix{
H^{E_K,*}_{\rel}(X_{K,\dot}, n)\ar[r]\ar[d]_{\alpha_K^*} & H^{E,*}_{\rel}(X_{\dot}/R, n)\ar[d]^{\alpha*}\\
H^*_{\rel}(X_{K,\dot}, n)\ar[r] & H^*_{\rel}(X_{\dot}/R, n).
}
\]

\end{proof}

\chapter{Relative $K$-theory and regulators}
\label{ch:UltrametricRelKAndRegulators}

In this chapter we finally construct the relative Chern character for a smooth affine $R$-scheme\footnote{Since the dagger space associated with an affine $K$-scheme is not affinoid, I am not quite sure what the ``right'' definition of topological $K$-theory of a $K$-scheme is.}. First of all, we recall the definition of the topological $K$-groups of ultrametric Banach rings due to Karoubi and Villamayor and show, that one can similarly define topological $K$-groups for dagger algebras fitting in the context of the previous chapters. Having done this, we can define relative $K$-theory and the relative Chern character (sections 7.1 and 7.2) exactly as in the complex case. The comparison with the $p$-adic Borel regulator is done in section 7.4. 

\section{Topological $K$-theory of affinoid and dagger algebras}
\label{sec:UltrametricK}

Let $(A, |\;.\;|)$ be an ultrametric Banach ring, i.e. a ring $A$ together with a map $|\;.\;|:A \to \R_{\geq 0}$ such that $|x|=0$ iff $x=0$, $|x|=|-x|$, $|xy|\leq |x||y|$ and $|x+y| \leq \max\{|x|,|y|\}$, and such that $A$ is complete for the metric $(x,y) \mapsto |y-x|$. For example, any $K$- or $R$-affinoid algebra with a chosen norm is an ultrametric Banach ring. In \cite{KV} Karoubi and Villamayor define topological $K$-groups $K^{-*}_{\top}(A)$ for arbitrary Banach rings and sketch a particular approach for ultrametric Banach rings (using convergent power series instead of absolutely converging power series, see below), studied further by Adina Calvo \cite{Calvo}. For unitary Banach rings it may be formulated as follows: Define 
\begin{equation}\label{eq:DefA_dot}
A_p := A\<x_0, \dots, x_p\>/(\sum_i x_i -1),
\end{equation}
where
\[
A\<x_0,\dots, x_p\> = \{ \sum a_{\nu}x^{\nu} \in A[[x_0, \dots, x_p]]\;|\; |a_{\nu}| \xrightarrow{|\nu|\to\infty} 0\}.
\]
If $\phi : [p] \to [q]$ is an increasing map, we define $\phi^* : A_q \to A_p$ by $x_i \mapsto \sum_{j\in[p]:\phi(j)=i} x_j$. This is well defined, since $A\<x_0,\dots, x_p\>$ is complete (w.r.t. the Gau{\ss} norm) and the target elements are power bounded \cite[Proposition 1.4.3/1]{BGR}, and gives a simplicial ring $A_{\dot}$.
\begin{dfn}
The \emph{topological $K$-groups of $A$} are given by
\[
K^{-i}_{\top}(A) := \pi_i(B_{\dot}\GL(A_{\dot})) = \pi_{i-1}\GL(A_{\dot}), \quad i\geq 1.
\]
\end{dfn}
\begin{rems}\label{rem:top_K}
(i) It is clear from the definition, that the topological $K$-groups of an ultrametric Banach ring do not depend on the particular norm chosen. In particular, the topological $K$-groups of an $R$- or $K$-affinoid algebra are well defined.
\smallskip

(ii) For any simplicial group $G_{\dot}$, let $\bar G_p := \bigcap_{i=1}^p\ker(\del_i) \subset G_p$. Then $(\bar G_p, \del_0)_{p\geq 0}$ is a chain complex of (non abelian) groups, whose homology groups are the homotopy groups of $G_{\dot}$. Symmetrically, one can also use the chain complex $(\bigcap_{i=0}^{p-1} \ker(\del_i), \del_p)$ (see e.g. \cite[Proposition 17.4]{May}). 
\smallskip

(iii) The ring $A_{\dot}$ is additive contractible, i.e. the identity on $A_{\dot}$ is homotopic to the zero map by a homotopy which is compatible with the abelian group structure of $A_{\dot}$: The $1$-simplex $x_0 \in A_1$ corresponds to a map $f_{x_0}:\Delta[1] \to A_{\dot}$ such that $f_{x_0}(0) = \del_1(x_0) = 1, f_{x_0}(1) = \del_0(x_0) = 0$, where $0:= \delta^1$ and $1 := \delta^0 \in \Delta[1]_0= \Hom_{\Delta}([0],[1])$ are the two vertices of $\Delta[1]$\footnote{For $n\geq 0$ $\Delta[n]$ denotes the simplicial set $\Hom_{\Delta}(\,.\, ,[n]):\Delta^{\mathrm{op}} \to Sets$. Its geometric realisation is the standard simplex $\Delta^n\subset \R^{n+1}$. For any simplicial set $X_{\dot}$ there is a natural isomorphism $\Hom(\Delta[n],X_{\dot})= X_n$ (Yoneda lemma).}. 
The desired homotopy is then given by $A_{\dot}\times \Delta[1] \to A_{\dot}, (a, t) \mapsto f_{x_0}(t)\cdot a$.
In particular, the homotopy groups $\pi_*(A_{\dot})$ vanish. More generally the simplicial ring of $r\times r$ matrices $\Mat_r(A_{\dot})$ is additive contractible.
\smallskip

(iv) We also need the following refinement of the last remark. Equip $A\<x_0, \dots, x_p\>$ with the Gau{\ss} norm and $A_p$ with the residue semi-norm and denote it by $\|\,.\,\|$. It is easy to see, that for any $\phi : [p] \to [q]$, the induced homomorphism $\phi^* : A_q \to A_p$ is contractive, i.e. $\|\phi^*(f)\| \leq \|f\|$. Write $A_p^0 := \{f\in A_p\,|\, \|f\|\leq 1\}$.  It follows, that $A_{\dot}^0$ is a simplicial subring of $A_{\dot}$. 

The semi-norm on $\Mat_r(A_p)$ is defined to be the maximum of the semi-norms of the entries. Write $\Mat_r(A_p)^{00} := \{g \in \Mat_r(A_p)\,|\, \|g\| < 1\}$. Then $\Mat_r(A_{\dot})^{00}$ is a simplicial subgroup of $\Mat_r(A_{\dot})$, which is moreover an $A_{\dot}^0$-module.

Since $x_0 \in A_1^0$, the argument of the last remark shows, that $\Mat_r(A_{\dot})^{00}$ is additive contractible, too. 

\end{rems}

The definition given above is \emph{not} the one given by Karoubi--Villamayor and Calvo. Since the equivalence of both definitions is proved in the literature only in the case of \emph{discrete} Banach rings, we give a proof here.
\begin{prop}
The topological $K$-groups defined above coincide with those defined by Karoubi-Villamayor and Calvo for $i\geq 1$.
\end{prop}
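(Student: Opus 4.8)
The plan is to compare the two definitions of topological $K$-theory by showing that the simplicial group $\GL(A_{\dot})$ computes the same homotopy groups as the explicit construction of Karoubi--Villamayor and Calvo. Recall that their definition is based on the functor which sends a Banach ring $A$ to a simplicial group built from \emph{convergent} power series, and defines $K^{-i}_{\top}(A)$ via the homotopy groups of this object (equivalently, via a fibrant replacement or a specific model for the classifying space). The key observation is that both constructions use essentially the same rings $A_p = A\<x_0,\dots,x_p\>/(\sum_i x_i - 1)$, so the task is to identify the two resulting simplicial groups up to weak equivalence and to match up the homotopy groups.

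First I would recall the precise definition of Karoubi--Villamayor's $K^{-i}_{\top}$ in terms of the simplicial ring $A_{\dot}$: their groups are defined as $\pi_{i-1}$ of the simplicial group $\GL(A_{\dot})$, but computed through the associated Moore complex, or via the homotopy groups of a suitable realization. The main point to verify is that taking $\pi_i(B_{\dot}\GL(A_{\dot}))$ agrees with $\pi_{i-1}(\GL(A_{\dot}))$ and that the latter matches the Karoubi--Villamayor groups. For this I would use the standard fact, recalled in remark \ref{rem:top_K}(ii), that the homotopy groups of a simplicial group $G_{\dot}$ are computed by the Moore complex $(\bar G_p, \del_0)$ with $\bar G_p = \bigcap_{i=1}^p \ker(\del_i)$. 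Applying this to $G_{\dot} = \GL(A_{\dot})$ reduces everything to an explicit description of cycles and boundaries in this complex, which is exactly the description Karoubi--Villamayor and Calvo work with.

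The heart of the argument is then to check that the simplicial ring $A_{\dot}$ used here has the correct homotopical properties, namely that $\GL(A_{\dot})$ is a \emph{fibrant} simplicial group (a Kan complex, which is automatic for simplicial groups) and that its homotopy groups are computed correctly. I would invoke the additive contractibility statements of remark \ref{rem:top_K}(iii) and (iv): the simplicial ring $\Mat_r(A_{\dot})$ and the subgroup $\Mat_r(A_{\dot})^{00}$ are additively contractible, which controls the behaviour of the general linear group. The decomposition $\GL_r(A_p) = \GL_r(A) \cdot (1 + \Mat_r(A_p)^{00})$ coming from the residue seminorm lets one split the homotopy of $\GL(A_{\dot})$ into a discrete part (from $\GL(A)$ itself) and a contractible part (from the $00$-piece), matching the normalization in the Karoubi--Villamayor setup. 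The comparison of the two models then amounts to checking that both single out the same cycles modulo boundaries in the Moore complex.

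The main obstacle I anticipate is the bookkeeping around \emph{convergence} and the precise version of the power series rings: Karoubi--Villamayor originally work with rings of functions on the algebraic simplex using power series converging on the closed unit polydisc, and one must check that the ultrametric refinement (the Gau{\ss} norm and the resulting residue seminorm on $A_p$) produces exactly their simplicial object and not merely a weakly equivalent one. The subtlety is that different norm choices could a priori give different completions; here the fact (remark \ref{rem:top_K}(i)) that $K^{-*}_{\top}$ is independent of the chosen norm is what rescues us, but one still has to argue that the face and degeneracy maps are the same and that the additive contractibility of $\Mat_r(A_{\dot})^{00}$ is genuinely compatible with the contraction used by Calvo. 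Once this identification of simplicial objects is in place, the equality of homotopy groups for $i \geq 1$ follows formally from the Moore complex computation, since both definitions then compute $\pi_{i-1}$ of the same simplicial group.
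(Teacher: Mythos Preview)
Your proposal rests on a misunderstanding of what the Karoubi--Villamayor/Calvo definition actually is. You write that ``both constructions use essentially the same rings $A_p$'' and that the task reduces to checking that the two simplicial groups agree up to norm conventions. But Calvo's definition is \emph{not} $\pi_{i-1}(\GL(A_{\dot}))$ for some simplicial ring built from standard simplices. It is an iterated delooping: one sets $\ol K^{-1}(A)=\GL(A)/\GL'(A)$ (with $\GL'(A)$ generated by topologically nilpotent matrices, equivalently the null-homotopic matrices $\GL^0(A)$), introduces the path ring $EA=tA\<t\>$ and loop ring $\Omega A=\ker(p_1\colon EA\to A)$, and then defines $\ol K^{-i}(A):=\ol K^{-1}(\Omega^{i-1}A)$. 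So the comparison is between a simplicial homotopy group and an iterated loop construction, and that is genuine content; it does not reduce to matching up norms on the same object.

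The paper's argument therefore looks nothing like yours. One applies the path/loop construction to the whole simplicial ring $A_{\dot}$, proves that $\GL(E\Omega^iA_{\dot})$ is contractible via an explicit simplicial homotopy (sending $t\mapsto t(x_0+\dots+x_j)$), and from the short exact sequence $\GL(\Omega^{i+1}A_{\dot})\to\GL(E\Omega^iA_{\dot})\to\GL'(\Omega^iA_{\dot})$ deduces $\pi_n(\GL'(\Omega^iA_{\dot}))\cong\pi_{n-1}(\GL(\Omega^{i+1}A_{\dot}))$. A separate argument shows that the simplicial abelian group $\ol K^{-1}(\Omega^iA_{\dot})$ is \emph{constant} (because $\ol K^{-1}(A)\to\ol K^{-1}(A\<x\>)$ is an isomorphism), so $\pi_n(\GL'(\Omega^iA_{\dot}))\cong\pi_n(\GL(\Omega^iA_{\dot}))$ for $n\geq 1$. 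Iterating gives $\pi_n(\GL(A_{\dot}))\cong\pi_0(\GL(\Omega^nA_{\dot}))$, and a final Moore-complex computation identifies $\pi_0(\GL(\Omega^nA_{\dot}))$ with $\ol K^{-1}(\Omega^nA)=\ol K^{-n-1}(A)$. None of your steps --- the decomposition $\GL_r(A_p)=\GL_r(A)\cdot(1+\Mat_r(A_p)^{00})$, the worry about norm dependence, the hope that ``both single out the same cycles modulo boundaries'' --- engage with this structure. You need the path/loop machinery and the constancy of $\ol K^{-1}$ under adjoining a convergent variable; without those, there is no proof.
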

\begin{proof}
The agrument in the discrete case is due to Anderson \cite[Theorem 1.6]{Anderson}. First we recall Calvo's definition.  It is best, to work in the category of ultrametric Banach rings without unit. Let $A$ be such a ring. Then $\GL_r(A)$ is by definition the group of $r\times r$-matrices invertible w.r.t. the formal group law $(M,N)\mapsto M \odot N := MN+M+N$. If $A$ has a unit, this is clearly equivalent to the usual definition via $M \mapsto M+1$. Denote by $\GL'_r(A)$ the subgroup of $\GL_r(A)$ generated by the topologically nilpotent matrices. Define $\GL(A)$ and $\GL'(A)$ as the usual colimits. Then $\ol K^{-1}(A) := \GL(A)/\GL'(A)$. 

For any $A$ as above, the path ring $EA$ is the kernel of $p_0:A\<t\> \to A, t\mapsto 0$, i.e. $EA = tA\<t\>$, the loop ring $\Omega A$ is the kernel of $p_1:EA \to A,$ ``$t\mapsto 1$'', i.e. $\sum_i a_it^i\mapsto \sum_i a_i$. These are again ultrametric Banach rings and the higher Karoubi--Villamayor $K$-groups are defined by 
\[
\ol K^{-i}(A) := \ol K^{-1}(\Omega^{i-1}A).
\]

A matrix $M \in \GL_r(A)$ is called \emph{null-homotopic}, if there exists $\widetilde M \in \GL_r(EA)$, such that $p_1(\widetilde M) = M$. Two matrices $M,N$ are called \emph{homotopic}, if $M\odot N^{-1}$ is null-homotopic. Denote by $\GL_r^0(A)=\im\left(p_1: \GL_r(EA) \to \GL_r(A)\right)$ the group of null-homotopic matrices and let $\GL^0(A) = \colim_r \GL^0_r(A)$. It follows as in \cite[Appendice 3]{KV} that $\GL'(A) = \GL^0(A)$ (the arguments given there and in the cited references for unitary rings also work in the non-unitary context). In other words, $p_1:\GL(EA) \to \GL(A)$ induces an isomorphism
$\GL(EA)/\GL(\Omega A) \cong \GL'(A)$. 

Now let $A$ be an unitary Banach ring and form the simplicial Banach ring $A_{\dot}$ as in \eqref{eq:DefA_dot}. Applying the above isomorphism to the simplicial Banach ring $\Omega^iA_{\dot}$, we get an isomorphism of simplicial groups
\begin{equation}\label{eq:IsomGL'}
\GL(E\Omega^iA_{\dot})/\GL(\Omega^{i+1}A_{\dot}) \cong \GL'(\Omega^iA_{\dot}), \quad i\geq 0.
\end{equation}
We claim, that $\GL(E\Omega^iA_{\dot})$ is contractible. In fact, the ring morphisms $h_j : E\Omega^iA_n = t(\Omega^iA)\<t\> \to E\Omega^iA_{n+1}$, $j = 0, \dots, n$, given by the degeneracy $s_j$ on $\Omega^iA_n$ and by $h_j(t) = t(x_0+\dots +x_j)$ define a simplicial homotopy between $0$ and $\id_{E\Omega^iA_{\dot}}$ in the sense of \cite[\S 5]{May}. Hence they induce a contracting homotopy of $\GL(E\Omega^iA_{\dot})$. By \eqref{eq:IsomGL'} we get isomorphisms
\begin{equation}\label{eq:IsomOmega}
\pi_n(\GL'(\Omega^iA_{\dot})) \xrightarrow{\cong} \pi_{n-1}(\GL(\Omega^{i+1}A_{\dot})), \quad i\geq 0, n \geq 1.
\end{equation}
Next, by definition we have an isomorphism of simplicial groups 
\begin{equation}\label{eq:IsomK1}
\GL(\Omega^iA_{\dot})/\GL'(\Omega^iA_{\dot}) = \ol K^{-1}(\Omega^i A_{\dot})
\end{equation}
and we claim, that this last group is a \emph{constant} simplicial group. Since $A_n \cong A\<x_0,\dots, x_{n-1}\>$ it suffices to show, that for any Banach ring $A$ the inclusion $A \hookrightarrow A\<x\>$ induces an isomorphism on $\ol K^{-1}$. Since this inclusion is split by $x\mapsto 0$, the induced map on $\ol K^{-1}$ is an injection. Now consider $h: A\<x\> \to A\<x\>\<t\>=A\<x,t\>, x\mapsto tx$. Then $p_0\circ h(x) = 0, p_1\circ h(x) = x$. Hence, any $M\in \GL(A\<x\>)$ is homotopic to $M(0) \in \GL(A)\subset \GL(A\<x\>)$, the null-homotopy for $M\odot M(0)^{-1}$ being given by $h(M)\odot M(0)^{-1} \in \GL(EA\<x\>)$. It follows, that $\ol K^{-1}(A) \to \ol K^{-1}(A\<x\>)$ is also surjective.

Hence we get from \eqref{eq:IsomK1}, that 
\[
\pi_n(\GL'(\Omega^iA_{\dot})) \cong \pi_n(\GL(\Omega^iA_{\dot})), \quad i\geq 0, n\geq 1. 
\]
Combining this with \eqref{eq:IsomOmega}, we get 
\[
\pi_n(\GL(A_{\dot})) \cong \pi_{n-1}(\GL(\Omega A_{\dot})) \cong \dots \cong \pi_0(\GL(\Omega^nA_{\dot})), n\geq 0.
\]
Since the left hand side is $K^{-n-1}_{\top}(A)$ by definition, it suffices to show, that $\pi_0(\GL(\Omega^nA_{\dot})) = \ol K^{-n-1}(A) = \ol K^{-1}(\Omega^nA)$. We have isomorphisms $A_1 \cong A\<x\>,x_0\mapsto x, x_1\mapsto 1-x,$ and $A_0 \cong A, x_0 \mapsto 1$, under which $\del_0, \del_1$ are given by $p_0\colon x\mapsto 0, p_1\colon x\mapsto 1$ respectively. Hence by remark \ref{rem:top_K}(ii) 
\begin{align*}
\pi_0(\GL(\Omega^nA_{\dot})) &= \coker\left(p_1\colon \ker\bigl(p_0\colon \GL(\Omega^nA\<x\>)\to \GL(\Omega^nA)\bigr) \to \GL(\Omega^nA)\right)\\
& = \coker(p_1\colon \GL(E\Omega^nA) \to \GL(\Omega^nA))\\
& = \GL(\Omega^nA)/\GL^0(\Omega^nA) = \ol K^{-1}(\Omega^nA).\qedhere
\end{align*}
\end{proof}

Now let $R$ be a complete discrete valuation ring with maximal ideal $(\pi)$, perfect residue field $R/(\pi) =k$ of characteristic $p > 0$ and field of fractions $K$ of characteristic $0$.
We want to show, that the topological $K$-groups of affinoid algebras may also be computed using \emph{overconvergent} power series. More precisely: Define the $R$-dagger algebra $R_n^{\dag} := R\<x_0, \dots, x_n\>^{\dag}/(\sum_i x_i -1)$ and similarly the $K$-dagger algebra $K_n^{\dag}$. As above, we get simplicial $R$- resp. $K$-dagger algebras $R_{\dot}^{\dag}$ and $K_{\dot}^{\dag}$.
\begin{dfn}
Let $A$ be an $R$-dagger algebra. We define the topological $K$-groups
\[
K^{-i}_{\top}(A) := \pi_i(B_{\dot}\GL(A \otimes_R^{\dag} R_{\dot}^{\dag})) = \pi_{i-1}(\GL(A \otimes_R^{\dag} R_{\dot}^{\dag})), \quad i\geq 1.
\]
If $A$ is a $K$-dagger algebra, the definition is the same with $R$ replaced by $K$.
\end{dfn}

\begin{prop}
Let $A$ be an $R$- or $K$-dagger algebra, and $\hat A$ its completion, an $R$- resp. $K$-affinoid algebra. Then
\[
K^{-n}_{\top}(A) \cong K^{-n}_{\top}(\hat A),\quad  n\geq 1.
\]
\end{prop}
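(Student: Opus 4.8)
The plan is to show that the natural inclusion $A \hookrightarrow \hat A$ of a dagger algebra into its affinoid completion induces, in each simplicial degree $p$ and each rank $r$, a homomorphism $\GL_r(A\otimes_R^{\dag}R_p^{\dag}) \to \GL_r(\hat A_p)$; these assemble into a morphism of simplicial groups $\GL(A\otimes_R^{\dag}R_{\dot}^{\dag}) \to \GL(\hat A_{\dot})$, and it suffices to prove that this is a weak homotopy equivalence (then take $\pi_{i-1}$). The organizing observation is that overconvergent and convergent power series have the \emph{same} reduction modulo $\pi$: writing $\ol A = A/\pi A$, one has
\[
\ol{A\otimes_R^{\dag}R_p^{\dag}} \;=\; \ol{\hat A_p} \;=\; \ol A[x_0,\dots,x_p]/(\textstyle\sum_i x_i - 1) \;=:\; \ol A_p ,
\]
so that both simplicial rings reduce to one and the same simplicial $k$-algebra $\ol A_{\dot}$. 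I would therefore compare the two $\GL$'s by fitting them, via reduction mod $\pi$, into short exact sequences of simplicial groups over the common base $\GL(\ol A_{\dot})$.

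Next I would check that in each degree the reduction maps $\GL_r(\hat A_p)\to\GL_r(\ol A_p)$ and $\GL_r(A\otimes_R^{\dag}R_p^{\dag})\to\GL_r(\ol A_p)$ are surjective, with kernel the principal congruence subgroup $1+M$, $\ol M = 0$ (equivalently $M$ with entries in $\pi$ times the ring). Surjectivity in both cases reduces to the statement that any matrix $\equiv 1 \pmod\pi$ is invertible: one lifts $\ol g\in\GL_r(\ol A_p)$ and a reduction of its inverse arbitrarily, and inverts the resulting $1+M$ by a Neumann series $\sum_k(-M)^k$. For $\hat A_p$ this converges by $\pi$-adic completeness. \textbf{This is the point where overconvergence is essential and is the main obstacle:} in the dagger case I must show $\sum_k(-M)^k$ converges \emph{overconvergently}. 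Writing $M=\pi M'$ with $M'$ overconvergent of $R$-integral Gauss norm, I would use that $|M'|_\rho \to |M'|_1\le 1$ as $\rho\searrow 1$ to pick a radius $\rho_0>1$ with $|M|_{\rho_0}=|\pi|\,|M'|_{\rho_0}<1$; then the series converges in $T_n(\rho_0)\subset W_n$, proving $1+M\in\GL_r(A\otimes_R^{\dag}R_p^{\dag})$. (The semi-norm subtleties of the residue norm on a quotient I would route through "kernel of reduction mod $\pi$" rather than through "$\|\cdot\|<1$" to stay clean.)

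Finally I would observe that a surjection of simplicial groups is a Kan fibration with fibre its kernel (cf.\ \cite{May}), so reduction gives a commutative diagram of fibration sequences whose quotients agree identically. The two fibres $1+\pi\Mat_r((A\otimes_R^{\dag}R_{\dot}^{\dag}))$ and $1+\pi\Mat_r(\hat A_{\dot})$ are contractible simplicial \emph{sets}: the bijection $M\mapsto 1+M$ is a simplicial isomorphism onto the additive simplicial groups $\pi\Mat_r(\,\cdot_{\dot}\,)$, which are additively contractible by the scaling homotopy $(M,t)\mapsto f_{x_0}(t)\cdot M$ of Remark \ref{rem:top_K}(iv) (this homotopy preserves the $\pi$-part, since $f_{x_0}(t)$ is power bounded). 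Comparing the long exact homotopy sequences of the two fibrations over the shared base $\GL_r(\ol A_{\dot})$ and applying the five lemma, I conclude that the natural map induces isomorphisms $\pi_n\GL_r(A\otimes_R^{\dag}R_{\dot}^{\dag})\xrightarrow{\cong}\pi_n\GL_r(\hat A_{\dot})$ for all $n$; passing to the colimit over $r$ (which commutes with $\pi_n$) and reindexing via $K^{-i}_{\top}=\pi_{i-1}\GL$ gives $K^{-n}_{\top}(A)\cong K^{-n}_{\top}(\hat A)$ for $n\ge 1$. The argument for a $K$-dagger algebra is identical with $R$ replaced by $K$.
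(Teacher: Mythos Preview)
Your argument for the $R$-dagger case is correct and takes a genuinely different route from the paper's. The paper proceeds by direct approximation: given a cycle representing a class in $\pi_n\GL_r(\hat A_{\dot})$, it approximates that cycle by overconvergent matrices with the same face conditions (Lemma~\ref{lem:ApproxPoly}), shows the approximants are invertible in $\Mat_r(A_n)$ (Lemma~\ref{lem:InvertiblePoly}), and then checks that the class is unchanged using the contractibility of $\Mat_r(\hat A_{\dot})^{00}$; injectivity is handled analogously. Your organization via reduction mod~$\pi$ is more structural: both $\GL_r(A_{\dot})$ and $\GL_r(\hat A_{\dot})$ fibre (as surjections of simplicial groups) over the common base $\GL_r(\ol A_{\dot})$, the fibres $1+\pi\Mat_r(\,\cdot\,)$ are contractible simplicial sets (via the simplicial bijection $1+M\leftrightarrow M$ and the $x_0$-scaling homotopy), and the five lemma finishes. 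The overconvergent Neumann series you need to make $1+\pi M'$ invertible is essentially the content of the paper's Lemma~\ref{lem:InvertiblePoly}, so the analytic input is the same; what you gain is that you never have to track face conditions on individual cycles.

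There is, however, a genuine gap in your last sentence. For a $K$-dagger algebra, reduction modulo $\pi$ annihilates everything, so there is no common base $\GL_r(\ol A_{\dot})$ over which to compare; the phrase ``identical with $R$ replaced by $K$'' does not make sense here. The paper's approximation argument, by contrast, goes through verbatim in the $K$-case (one lifts to $K\langle\ul y,\ul x\rangle^{\dag}$ and uses that elements of Gau{\ss} norm $\le 1$ lie in $R\langle\ul y,\ul x\rangle^{\dag}$). Your framework can be salvaged for $K$-dagger algebras by replacing reduction mod~$\pi$ with the norm filtration---taking fibre $1+\Mat_r(\,\cdot\,)^{00}$ as in Remark~\ref{rem:top_K}(iv)---but then you must separately verify that the induced map on quotients is a degreewise bijection of simplicial sets, and the surjectivity step of that verification is precisely a density-plus-invertibility argument of the same flavour as Lemma~\ref{lem:InvertiblePoly}. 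So the $K$-case is recoverable, but not for free.
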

\begin{rem}
In \cite{Kar97} Karoubi states, that one can use ``indefinitely integrable power series'' to define the topological $K$-theory of ultrametric Banach algebras and uses these for the construction of the relative Chern character. The difference here is, that we do not use the full Banach algebra $\hat A$, but only the overconvergent part $A$ of it. 
\end{rem}

\begin{proof}[Proof of the proposition]
The proof is the same for $R$- and $K$-dagger algebras and we restrict to the case of $R$-dagger algebras.
Choose a representation $A = R\<\ul y\>^{\dag}/I$ and write $A_n := A \otimes_R^{\dag} R_n^{\dag} = R\<\ul y, x_0, \dots, x_n\>^{\dag}/(I, \sum_i x_i - 1)$. 

The completion of $A$ is given by $\hat A = R\<\ul y\>/(I)$ and the ring $(\hat A)_n$ appearing in the definition of the topological $K$-theory of $A$ is given by $(\hat A)_n = (R\<\ul y\>/(I))\<x_0, \dots, x_n\>/(\sum_i x_i -1) = R\<\ul y, x_0, \dots, x_n\>/(I, \sum_i x_i -1) = \widehat{(A_n)}$.

Since $\pi_n(\GL(A_{\dot})) = \dirlim_r \pi_n(\GL_r(A_{\dot}))$, it suffices to show, that for any $r\geq 1$ the natural map $\pi_n(\GL_r(A_{\dot})) \to \pi_n(\GL_r(\hat A_{\dot}))$ is an isomorphism.
\smallskip

We begin with the surjectivity. A class in $\pi_n(\GL_r(\hat A_{\dot}))$ is represented by a $g \in \GL_r(\hat A_n)$ such that $\del_i g=1$, $i = 0, \dots, n$. By lemma \ref{lem:ApproxPoly} below, there is a sequence of matrices $g_N \in \Mat_r(A_n)$ converging to $g$, where each $g_N$ satisfies $\del_i g_N = 1$, $i=0, \dots, 1$. Since $\GL_r(\hat A_n) \subset \Mat_r(\hat A_n)$ is open, $g_N \in \GL_r(\hat A_n)$ for $N$ large enough, and by lemma \ref{lem:InvertiblePoly} $g_N \in \GL_r(A_n)$. We claim, that for $N$ large enough, $[g_N] = [g]$ in $\pi_n(\GL_r(\hat A_{\dot}))$, thus showing surjectivity.

We use remark \ref{rem:top_K}(iv). Choose $N$ large enough, so that $g_Ng^{-1} -1 \in \Mat_r(\hat A_n)^{00}$. Since $\Mat_r(\hat A_{\dot})^{00}$ is contractible, there exists $h \in \Mat_r(\hat A_{n+1})^{00}$, such that $\del_0(h)=g_Ng^{-1}-1$, $\del_i(h) = 0$, $i>0$. Since $\| h\| < 1$, $1+h \in \GL_r(\hat A_{n+1})$.
Moreover, $\del_0(1+h) = g_Ng^{-1}$ and $\del_i(1+h) = 1$ for $i>0$, hence $[g_Ng^{-1}]=[1]$ and $[g_N] = [g]$ as claimed.
\smallskip

Next we prove the injectivity. Thus let $g \in \GL_r(A_n)$ with $\del_i(g) = 1$, $i=0, \dots, n$, and assume that there exists $h \in \GL_r(\hat A_{n+1})$, such that $\del_0(h) = g$, $\del_i(h) = 1$ if $i>0$. As in remark \ref{rem:top_K}(iii) $\pi_*(\Mat_r(A_{\dot})) =0$. Hence there exists a matrix $\widetilde h \in \Mat_r(A_{n+1})$ such that $\del_0(\widetilde h) = g$, $\del_i(\widetilde h) = 1$, $i=1, \dots, n+1$. Now we can apply lemma \ref{lem:ApproxPoly} to $h - \widetilde h$ to obtain a sequence of matrices $h_N \in \Mat_r(A_{n+1})$ converging to $h - \widetilde h$ and satisfying $\del_i(h_N) = 0$ for $i=0, \dots, n+1$ and all $N$. Then $h_N + \widetilde h \in \Mat_r(A_{n+1})$ converges to $h \in \GL_r(\hat A_{n+1})$, hence $h_N + \widetilde h \in \GL_r(A_{n+1})$ for $N$ large enough, again by the openness of $\GL_R(\hat A_{n+1})$ and lemma \ref{lem:InvertiblePoly}. Moreover $\del_0(h_N+\widetilde h) = g$, $\del_i(h_N+\widetilde h) = 1$, $i=1, \dots, n+1$, hence $[g]=[1]$ in $\pi_n(\GL_r(A_{\dot}))$.
\end{proof}

\begin{lemma}\label{lem:ApproxPoly}
We use the notations of the above proof.
Let $g \in \Mat_r(\hat A_n)$ be such that $\del_i g = 0$, $i = 0, \dots, n$. There exists a sequence of matrices $g_N \in \Mat_r(A_n)$, $N\geq 0$, which converges to $g$ in $\Mat_r(\hat A_n)$ and satisfies $\del_ig_N=0$, for $i=0, \dots, n$ and all $N$.
\end{lemma}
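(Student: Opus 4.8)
The statement is additive and entrywise, so the plan is first to reduce it to a single matrix entry, i.e. to proving that $\bigcap_{i=0}^n \ker(\del_i) \subseteq A_n$ is dense in $\bigcap_{i=0}^n \ker(\del_i) \subseteq \hat A_n$ for the Gau{\ss} norm. Recall that $\del_i \colon \hat A_n \to \hat A_{n-1}$ is induced by $\delta^i$ and is, up to the reindexing $x_j \mapsto x_j$ ($j<i$), $x_j \mapsto x_{j-1}$ ($j>i$), the substitution $x_i \mapsto 0$; in particular each $\del_i$ is contractive (remark \ref{rem:top_K}(iv)), and the same explicit description holds on $A_n$. Since $\hat A_n = \widehat{A_n}$ is the Gau{\ss}-norm completion of $A_n$ (as observed in the proof of the proposition) and polynomials are overconvergent, I would first produce a crude approximation $g'_N \in \Mat_r(A_n)$ with $g'_N \to g$ simply by truncating the convergent series representing the entries of $g$.

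These truncations will in general fail the face conditions, but the point is that $\epsilon_i^{(N)} := \del_i g'_N$ lies in $\Mat_r(A_{n-1})$ and tends to $\del_i g = 0$ as $N \to \infty$. The second step is to correct $g'_N$. By the simplicial identities the family $(\epsilon_i^{(N)})_{i=0}^n$ is compatible, $\del_i \epsilon_j^{(N)} = \del_{j-1}\epsilon_i^{(N)}$ for $i<j$, and so the full matching problem for the simplicial abelian group $\Mat_r(A_{\dot})$ can be solved: there exists $c_N \in \Mat_r(A_n)$ with $\del_i c_N = \epsilon_i^{(N)}$ for all $i$. Concretely, I would write down $c_N$ by the explicit inclusion--exclusion (Moore) filler, i.e. an alternating sum of degeneracies $s_j$ applied to the $\epsilon_i^{(N)}$ and their further faces $\del_j\epsilon_i^{(N)}$. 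Setting $g_N := g'_N - c_N \in \Mat_r(A_n)$ then gives $\del_i g_N = \epsilon_i^{(N)} - \epsilon_i^{(N)} = 0$ for every $i$, so that $g_N$ indeed lies in $\bigcap_i \ker\del_i$.

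The third and crucial step is to check that $g_N \to g$, for which I need $c_N \to 0$. Here I would invoke that every structure map $\phi^*$ is contractive (remark \ref{rem:top_K}(iv)): since $c_N$ is assembled from the $\epsilon_i^{(N)}$ and their iterated faces by degeneracies and signed sums, each summand of the filler has norm at most $\max_i \|\epsilon_i^{(N)}\|$, and the ultrametric inequality then yields $\|c_N\| \le \max_i \|\epsilon_i^{(N)}\|$. As $\epsilon_i^{(N)} \to 0$ this forces $c_N \to 0$, hence $g_N \to g$, completing the argument. (Note that all of $c_N$ stays overconvergent precisely because the $\epsilon_i^{(N)}$ do and because $s_j$ maps $A_{n-1}$ into $A_n$.)

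I expect this norm estimate for the filler to be the main obstacle, and the place where genuine care is needed: it is exactly where the contractivity of the $\phi^*$ — and thus the use of the Gau{\ss}/residue semi-norm rather than an arbitrary equivalent norm — is essential. In writing the full proof I would therefore record the explicit Moore filler formula and verify term by term that each contribution is an (iterated) face of $g'_N$ followed by degeneracies, so that the bound $\|c_N\| \le \max_i \|\epsilon_i^{(N)}\|$ becomes transparent. An alternative, which works cleanly when $\hat A$ is a domain, is to note $\ker\del_i = (x_i)$, factor $g = x_0\cdots x_n\, h$ and approximate $h$; but this relies on $\bigcap_i (x_i) = (x_0\cdots x_n)$, which can fail over a non-reduced affinoid algebra, so I would keep the contractive-filler argument as the primary route.
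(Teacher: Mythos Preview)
There is a real gap in the correction step. An ``inclusion--exclusion (Moore) filler'' built only from degeneracies and faces fills \emph{horns}, not full boundaries: any formula of the shape $c = \sum \pm\, s_{j_1}\!\cdots s_{j_k}\,\del_{l_1}\!\cdots\del_{l_m}\,\epsilon_i$ is natural in simplicial abelian groups, and would therefore solve the boundary-matching problem for every simplicial abelian group. This fails already for $n=1$ in the constant simplicial group $\Z$ (all structure maps are the identity, so any such expression has $\del_0 c = \del_1 c$, forcing $\epsilon_0 = \epsilon_1$). What makes the full boundary fillable here is that $\Mat_r(A_{\dot})$ is contractible, and any explicit filler must invoke the contracting homotopy --- concretely, multiplication by (pullbacks of) the $1$-simplex $x_0 \in A_1^0$, i.e.\ ring multiplication, not merely the additive simplicial structure. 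Your crucial norm estimate rests on precisely this piece; the contracting homotopy is indeed contractive since $\|x_0\|\le 1$ (remark~\ref{rem:top_K}(iv)), but you have to say so and use it.

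The paper bypasses the correction altogether by going up one degree. Using contractibility of $\Mat_r(\hat A_{\dot})$ it lifts $g$ to $h\in \Mat_r(\hat A_{n+1})$ with $\del_0 h = g$ and $\del_i h = 0$ for $i>0$. Under the presentation $\hat A_{n+1}\cong R\langle\ul y, x_1,\dots,x_{n+1}\rangle/(I)$ the conditions $\del_i h=0$ ($i>0$) read ``every coefficient $a_\nu$ with some $\nu_i=0$ lies in $(I)$'', so one may choose a representative $\tilde h$ with those coefficients zero; plain degree truncation $\tilde h\mapsto \tilde h_N$ then preserves $\del_i=0$ ($i>0$) automatically. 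Setting $g_N := \del_0 h_N$ yields the desired sequence, with $g_N\to g$ immediate because $\del_0$ is contractive. Your truncate-then-correct route can also be made to work once the contracting homotopy is put to use, but as written the filler you describe does not exist.
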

\begin{proof}
As in remark \ref{rem:top_K}(iii) $\pi_*(\Mat_r(\hat A_{\dot})) = 0$. Hence there exists $h \in \Mat_r(\hat A_{n+1})$, such that $\del_0(h) = g$, $\del_i(h) = 0$, $i > 0$.

We have an isomorphism $\hat A_n = R\<\ul y, x_1, \dots, x_n\>/(I)$ given by $x_0 \mapsto 1 - \sum_{i\geq 1} x_i, x_i \mapsto x_i, i>0$, and similar $\hat A_{n+1} = R\<\ul y, x_1, \dots, x_{n+1}\>/(I)$ . In terms of these isomorphisms $\del_i$, $i > 0$, is given by $x_j \mapsto x_j$, if $j < i$, $x_i \mapsto 0$ and $x_j \mapsto x_{j-1}$, if $j>i$. 

Represent $h$  by a power series 
\[
\tilde h = \sum_{\nu\in \N_0^{n+1}} a_{\nu}(\ul y)x_1^{\nu_1}\cdots x_{n+1}^{\nu_{n+1}} \in \Mat_r(R\<\ul y,x_1, \dots, x_{n+1}\>),
\]
where $a_{\nu}(\ul y) \in \Mat_r(R\<\ul y\>)$.
Let $i>0$. Since 
\[
\del_i\tilde h = \sum_{\nu\in\N_0^{n+1}\colon \nu_i=0} a_{\nu}(\ul y)x_1^{\nu_1}\cdots x_{i-1}^{\nu_{i-1}} x_i^{\nu_{i+1}} \cdots x_n^{\nu_{n+1}}
\]
represents $\del_ih=0$, $a_{\nu}(\ul y)$ has entries in $(I) \subset R\<\ul y\>$ as soon as  $\nu_i=0$ for one $i$, and 
we may assume without loss of generality, that $a_{\nu}(\ul y) = 0$ for those $\nu$.
Now let $\tilde h_N \in \Mat_r(R[\ul y,x_1, \dots, x_{n+1}])$ be the polynomial arising from $\tilde h$ by deleting all terms of total degree greater than $N$ and denote by $h_N$ its image in $\Mat_r(A_{n+1})$. By construction, we  have $\del_i(\tilde h_N) = 0$, if $i>0$, and for $N \to \infty$ the $\tilde h_N$ converge to $\tilde h$. Hence also $\del_i(h_N)=0$ for $i>0$ and the $h_N$ converge to $h$ in $\Mat_r(\hat A_{n+1})$. Now define $g_N := \del_0(h_N) \in \Mat_r(A_n)$. Then the $g_N$ converge to $\del_0(h) =g$, and $\del_i(g_N) = \del_i\del_0(h_N) = \del_0 \del_{i+1}(h_N) = 0$, $i=0, \dots, n$.
\end{proof}

\begin{lemma}\label{lem:InvertiblePoly}
Let $g \in \Mat_r(A_n)$ be a matrix, whose image in $\Mat_r(\hat A_n)$ is invertible. Then $g$ itself is invertible.
\end{lemma}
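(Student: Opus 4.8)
The plan is to reduce the matrix statement to a scalar one and then exploit overconvergence to run a Neumann-series argument \emph{inside} the dagger algebra $A_n$ itself. First I would use the determinant: over the commutative ring $A_n$ one has the adjugate identity $\operatorname{adj}(g)\cdot g=\det(g)\cdot 1$, and a matrix over a commutative ring is invertible precisely when its determinant is a unit. Since $\det(g)\in A_n$ and its image $\overline{\det(g)}=\det(\bar g)$ is a unit in $\hat A_n$ (as $\bar g$ is invertible there), it suffices to prove the scalar assertion: if $f\in A_n$ has image a unit in $\hat A_n$, then $f$ is already a unit in $A_n$. Granting this, $\det(g)^{-1}\in A_n$ and $g^{-1}=\det(g)^{-1}\operatorname{adj}(g)\in\Mat_r(A_n)$.

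For the scalar assertion, let $\|\,.\,\|$ denote the residue norm on $\hat A_n$ coming from the Gau{\ss} norm (section \ref{sec:PrelimAlgebras}); it restricts to the norm on the dense subring $A_n\subset\hat A_n$. Let $v\in\hat A_n$ be the inverse of the image of $f$. By density and continuity of multiplication I would choose $w\in A_n$ with $\|v-w\|$ so small that $\epsilon:=1-fw\in A_n$ satisfies $\|\epsilon\|=\|f(v-w)\|\le|\pi|<1$. Writing $\underline z=(\underline y,x_0,\dots,x_n)$ for the variables, I choose a lift $\tilde\epsilon\in R\<\underline z\>^{\dag}$ of $\epsilon$ with $|\tilde\epsilon|<1$ (possible since the residue norm is an infimum over lifts and $\|\epsilon\|<1$); because $\tilde\epsilon$ has coefficients in $R$, the bound $|\tilde\epsilon|<1$ forces $|\tilde\epsilon|\le|\pi|$, so every coefficient lies in $\pi R$ and $\tilde\eta:=\pi^{-1}\tilde\epsilon\in R\<\underline z\>^{\dag}$. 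Thus $\epsilon=\pi\eta$ with $\eta\in A_n$ and $|\tilde\eta|\le 1$.

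The crux is to show that the Neumann series $(fw)^{-1}=\sum_{k\ge 0}\epsilon^k=\sum_{k\ge 0}\pi^k\eta^k$, which converges $\pi$-adically in the completion $\hat A_n$, in fact lies in $A_n$; this is exactly where the dagger structure (as opposed to a mere Banach algebra) is indispensable, since for a general dense subring the statement would be false. As $\tilde\eta$ is overconvergent, $\tilde\eta\in T_m(\rho_0)$ for some $\rho_0>1$, where $m$ is the number of variables $\underline z$, and since $|\tilde\eta|_{\rho}\to|\tilde\eta|_1=|\tilde\eta|\le 1$ as $\rho\searrow 1$, I can fix $\rho\in(1,\rho_0]$ with $|\pi|\cdot|\tilde\eta|_{\rho}<1$. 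Then $|\pi^k\tilde\eta^k|_{\rho}\le(|\pi|\,|\tilde\eta|_{\rho})^k\to 0$, so the partial sums form a Cauchy sequence in the Banach space $(T_m(\rho),|\,.\,|_{\rho})$ and converge to an element $\zeta$; as each partial sum has coefficients in $R$ and $|\,.\,|_{\rho}$-convergence forces coefficientwise convergence, $\zeta\in R\<\underline z\>^{\dag}$. Its image $s\in A_n$ satisfies $(1-\epsilon)s=1$, i.e. $fws=1$, and by commutativity $ws$ is a two-sided inverse of $f$, so $f\in A_n^{\times}$. The only genuine difficulty is this overconvergence estimate; the determinant reduction and the norm bookkeeping are routine.
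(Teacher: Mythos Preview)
Your proof is correct and follows essentially the same strategy as the paper: approximate the inverse from the completion by an element of $A_n$, then invert the resulting near-identity element via a Neumann series, using overconvergence to ensure the series lands in the dagger algebra rather than merely in its completion. The paper carries this out directly for matrices and invokes weak completeness of $R\<\ul y,\ul x\>^{\dag}$ for the convergence step, whereas you first reduce to the scalar case via the adjugate and then spell out the convergence explicitly in some $T_m(\rho)$; these are cosmetic differences, not substantive ones.
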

\begin{proof}
Equip $R\<\ul y,\ul x\>$ with the Gau{\ss} norm and $A_n, \hat A_n, \Mat_r(A_n)$, etc., with the induced norms.

Let $h \in \GL_r(\hat A_n)$ be the inverse of $g$. Since $A_n$ is dense in $\hat A_n$, we may approximate $h$ by matrices $h_N \in \Mat_r(A_n)$. Then $h_N\cdot g \xrightarrow{N\to \infty} 1$ in $\Mat_r(A_n)$, and, for $N$ large enough, $\|1- h_Ng\| \leq |\pi|$ (recall that $\pi$ is a uniformizer for $R$). Then we can represent $1-h_Ng$ by a matrix of power series $f_N$ in $\Mat_r(R\<\ul y, \ul x\>^{\dag})$, with $\|f_N\| \leq |\pi|$. Then $f_N = \pi f_N'$, where $f_N'$ is a matrix of power series with $\|f_N'\| \leq 1$. Since $R\<\ul y, \ul x\>^{\dag}$ is weakly complete (cf. section \ref{sec:PrelimAlgebras}), the series $\sum_{k=0}^{\infty} \pi^k(f_N')^k = \sum_{k=0}^{\infty} f_N^k$ converges in $\Mat_r(R\<\ul y,\ul x\>^{\dag})$ and defines an inverse of $1 - f_N$. Hence its image $e_N$ in $\Mat_r(A_n)$ is an inverse of $1- (1 - h_Ng) = h_Ng$. Then $e_N\cdot h_N$ is a left inverse of $g$ in $\Mat_r(A_n)$. By the same argument applied to $1 - gh_N$, $g$ also possesses a right inverse, hence is invertible in $\Mat_r(A_n)$.
\end{proof}

\begin{prop}\label{prop:TopAlgK}
Let $A$ be an $R$-affinoid or $R$-dagger algebra and assume, that $A/\pi A$ is regular. Then
\[
K^{-n}_{\top}(A) \cong K_n(A/\pi A), \quad n\geq 1.
\]
\end{prop}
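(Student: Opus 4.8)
The plan is to reduce the topological $K$-theory of the dagger (or affinoid) algebra $A$ to the algebraic $K$-theory of its special fibre $A/\pi A$ by comparing the simplicial rings $A_{\dot}^{\dag} = A \otimes_R^{\dag} R_{\dot}^{\dag}$ (resp.\ $\hat A_{\dot}$) with the simplicial ring $(A/\pi A)\otimes_{k} k_{\dot}$, where $k_{\dot}$ is the polynomial analogue built from $(A/\pi A)[x_0,\dots,x_p]/(\sum_i x_i - 1)$. By the previous proposition it suffices to treat, say, the $R$-dagger case, since $K^{-n}_{\top}(A)\cong K^{-n}_{\top}(\hat A)$. So first I would fix a presentation $A = R\<\ul y\>^{\dag}/I$ and write $A_n^{\dag} = R\<\ul y, x_0,\dots,x_n\>^{\dag}/(I, \sum_i x_i - 1)$.

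The key geometric input is that reduction modulo $\pi$ should turn ``overconvergent functions on the rigid $n$-simplex'' into ``polynomial functions on the affine $n$-simplex over $k$''. Concretely, the reduction map $R\<\ul y, \ul x\>^{\dag} \to (A/\pi A)[\ul y, \ul x]$ induced by $R\<\ul z\>^{\dag}\twoheadrightarrow k[\ul z]$ (an overconvergent power series has a well-defined reduction which is a polynomial, because overconvergence forces the coefficients outside a bounded region into $(\pi)$) should yield a morphism of simplicial rings $A_{\dot}^{\dag} \to (A/\pi A)_{\dot}^{\mathrm{alg}}$, where the target in degree $n$ is $(A/\pi A)[x_0,\dots,x_n]/(\sum x_i - 1)$. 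The homotopy groups of $\GL((A/\pi A)_{\dot}^{\mathrm{alg}})$ compute exactly $K_n(A/\pi A)$ for $n\geq 1$, since $[p]\mapsto (A/\pi A)[x_0,\dots,x_p]/(\sum x_i-1)$ is the simplicial ring underlying the standard simplicial-polynomial model whose homotopy groups give $K$-theory (here the regularity hypothesis on $A/\pi A$ is what guarantees homotopy invariance, i.e.\ that $K_n(A/\pi A)\cong K_n((A/\pi A)[\ul x])$, via Quillen's theorem that $K_*$ of a regular ring is homotopy invariant). Thus I would identify $\pi_{n-1}(\GL((A/\pi A)_{\dot}^{\mathrm{alg}}))\cong K_n(A/\pi A)$ as the target, and the whole problem reduces to showing that $\pi_{n-1}(\GL(A_{\dot}^{\dag})) \to \pi_{n-1}(\GL((A/\pi A)_{\dot}^{\mathrm{alg}}))$ is an isomorphism.

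For that isomorphism I would run the same approximation-and-lifting scheme used in the proof of the preceding comparison proposition, which is the real engine here. Surjectivity: a class over $A/\pi A$ is represented by a polynomial matrix $\bar g$ with $\del_i\bar g = 1$; lift each coefficient to $R\<\ul y,\ul x\>^{\dag}$ to get $g \in \GL_r(A_n^{\dag})$ with $\del_i g = 1$ (using that matrices congruent to an invertible one modulo $\pi$ are invertible, the dagger analogue of Lemma~\ref{lem:InvertiblePoly}), and check the lift is independent of choices up to homotopy by the additive contractibility of $\Mat_r(A_{\dot}^{\dag})^{00}$ (remark~\ref{rem:top_K}(iv) carries over to the dagger setting because $\|\,.\,\| < 1$ entries have convergent geometric inverses by weak completeness). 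Injectivity: if a lift $g$ becomes null-homotopic mod $\pi$, realize the null-homotopy over $A/\pi A$ by a polynomial matrix, lift it, and correct the error term by a contraction in $\Mat_r(A_{\dot}^{\dag})^{00}$, exactly as in Lemma~\ref{lem:ApproxPoly}. The main obstacle I anticipate is not the $K$-theoretic bookkeeping but pinning down the reduction functor precisely: one must verify that reduction mod $\pi$ sends $R\<\ul z\>^{\dag}$ onto $k[\ul z]$ (not merely into $k\<\ul z\>$), which rests on the overconvergence condition $|a_\nu|\rho^{|\nu|}\to 0$ for some $\rho>1$ forcing $|a_\nu|<1$ for all but finitely many $\nu$, and that this reduction is compatible with the cosimplicial face maps $x_i\mapsto \sum_{\phi(j)=i} x_j$ so as to give a genuine map of simplicial rings. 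Once that compatibility is established, together with the fact that the two correction lemmas transfer verbatim to the overconvergent situation, the isomorphism on homotopy groups follows formally, completing the proof.
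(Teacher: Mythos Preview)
Your proposal is correct and follows essentially the same route as the paper: reduce modulo $\pi$ to the polynomial simplicial ring $(A/\pi A)_{\dot}$, use the approximation/lifting techniques of the preceding proposition (Lemmas~\ref{lem:ApproxPoly} and~\ref{lem:InvertiblePoly}) to identify $\pi_{n-1}(\GL(A_{\dot}))$ with $\pi_{n-1}(\GL((A/\pi A)_{\dot}))$, and then invoke regularity so that Karoubi--Villamayor $K$-theory of $A/\pi A$ agrees with Quillen's $K_*$. The paper compresses this into a two-line citation of Calvo and Gersten plus the phrase ``similar methods as above'', while you have correctly unpacked what those methods are.

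One small simplification: your concern that overconvergence is needed to ensure $R\<\ul z\>^{\dag}/(\pi) = k[\ul z]$ is unnecessary. Already for the ordinary Tate algebra, $|a_\nu|\to 0$ with $a_\nu\in R$ forces $a_\nu\in(\pi)$ for almost all $\nu$, so $R\<\ul z\>/(\pi)\cong k[\ul z]$ as well; no overconvergence is required for this step, which is why the argument works uniformly in the affinoid and dagger cases.
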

\begin{proof}
This follows from Calvo's Proposition 2.1 \cite{Calvo} and Gersten's result, that Karoubi--Villamayor theory for discrete noetherian regular rings coincides with  Quillen's $K$-theory \cite[Proposition 3.14]{Gersten}. In fact, similar methods as above show that $\pi_{n-1}(\GL(A_{\dot})) = \pi_{n-1}(\GL((A/\pi A)_{\dot}))$, where $(A/\pi A)_n = (A/\pi A)[x_0, \dots, x_n]/(\sum x_i -1)$, and the right hand side is the Karoubi--Villamayor $K$-group $K^{-n}(A/\pi A)$.
\end{proof}

\section{Relative $K$-theory}

Let $R$ be as before. Let $X = \Spec(A)$ be an affine $R$-scheme of finite type. Let $\hat A$ denote the $\pi$-adic completion of $A$, an $R$-affinoid algebra, and $A^{\dag} \subset \hat A$ the weak completion of $A$, an $R$-dagger algebra. We define the topological $K$-groups of $X$ to be
\[
K^{-i}_{\top}(X) := K^{-i}_{\top}(\hat A) = K^{-i}_{\top}(A^{\dag}) = \pi_i(B_{\dot}\GL(A^{\dag}\otimes_R^{\dag} R_{\dot}^{\dag})), \quad i\geq 1.
\]
Recall that $K_i(X) = \pi_i(|B_{\dot}\GL(A)|^+)$. Since $\pi_1(B_{\dot}\GL(A^{\dag}\otimes_R^{\dag} R_{\dot}^{\dag})) = K^{-1}_{\top}(\hat A)$ is abelian, the natural morphism $|B_{\dot}\GL(A)| \to |B_{\dot}\GL(A^{\dag}\otimes_R^{\dag} R_{\dot}^{\dag})|$ factors up to homotopy uniquely through $|B_{\dot}\GL(A)| \to |B_{\dot}\GL(A)|^+$. We abbreviate the simplicial group $\GL(A^{\dag}\otimes_R^{\dag} R_{\dot}^{\dag})$ by $G_{\dot}$. As in the complex case (cf. section \ref{sec:RelK}), we define the space $\widetilde F$ and the simplicial set $\mathscr F$ by the pullback diagrams
\[\xymatrix{
\widetilde F \ar@{}[dr]|{\lrcorner}\ar[r]\ar[d] & |E_{\dot}G_{\dot}| \ar[d]^{|p|}  & \mathscr F \ar@{}[dr]|{\lrcorner}\ar[r]\ar[d] & E_{\dot}G_{\dot}\ar[d]^p\\
|B_{\dot}\GL(A)|^+ \ar[r] & |B_{\dot}G_{\dot}|, & B_{\dot}\GL(A) \ar[r] & B_{\dot}G_{\dot}.
}
\]
Again there is an acyclic map $|\mathscr F| \to \widetilde F$. We define the relative $K$-groups
\[
K_i^{\rel}(X) := \pi_i(\widetilde F), \quad i\geq 1.
\]

\section{The relative Chern character}

Let $X=\Spec(A)$ be a smooth affine $R$-scheme of finite type. 
Recall, that $\hat X_K= \Sp(A^{\dag}\otimes_R K)$ denotes the generic fibre of the weak completion of $X$. 
The relative cohomology groups $H^*_{\rel}(X/R, n)$ are defined as in the simplicial case (section \ref{sec:RelClassesRSchemes}).

We want to construct relative Chern character maps
\[
\Ch_{n,i}^{\rel} : K_i^{\rel}(X) \to H^{2n-i-1}_{\rel}(X/R, n). 
\]

This is done as in the complex case: First of all, define the simplicial set $\mathscr F_r$ by the pullback diagram
\begin{equation}\label{diag:UltraRelK}\begin{split}
\xymatrix{
\mathscr F_r \ar@{}[dr]|{\lrcorner}\ar[r]\ar[d] & E_{\dot}\GL_r(A^{\dag} \otimes_R^{\dag} R_{\dot}^{\dag}) \ar[d]^p\\
B_{\dot}\GL_r(A) \ar[r] & B_{\dot}\GL_r(A^{\dag} \otimes_R^{\dag} R_{\dot}^{\dag}),
}
\end{split}
\end{equation}
so that $\mathscr F = \dirlim_r \mathscr F_r$.
\begin{lemma}\label{lem:MatrixMorphism}
Any matrix $g\in \GL_r(A^{\dag}\otimes_R^{\dag} R_p^{\dag})$ induces a morphism of dagger spaces
\[
\Delta^p \times \hat X_K \to (\widehat\GL_{r,R})_K.
\]
\end{lemma}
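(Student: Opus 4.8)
The plan is to translate the assertion into a homomorphism of $K$-dagger algebras and to produce it by the same mechanism as the earlier lemma identifying morphisms into $\GL_{r,K}^{\dag}$, the essential new point being that the target is now the smaller weakly completed model $(\widehat{\GL_{r,R}})_K$. First I would set $B := A^{\dag}\otimes_R^{\dag}R_p^{\dag}$, an $R$-dagger algebra, and identify the source. Choosing a presentation $A^{\dag}=R\<\ul z\>^{\dag}/I$ gives $B=R\<\ul z,x_0,\dots,x_p\>^{\dag}/(I,\sum_i x_i-1)$, whence $B_K=K\<\ul z,x_0,\dots,x_p\>^{\dag}/(I,\sum_i x_i-1)$. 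This is visibly the coproduct of the $K$-dagger algebras $(A^{\dag})_K=K\<\ul z\>^{\dag}/I$ and $(R_p^{\dag})_K=K_p^{\dag}$; since products of affinoid dagger spaces correspond to such coproducts and $(R_p^{\dag})_K$ is the algebra of $\Delta^p$, this yields a natural identification $\Sp(B_K)\cong\Delta^p\times\hat X_K$. Dually, writing $\GL_{r,R}=\Spec(C)$ with $C=R[x_{ij},y]/(\det(x_{ij})y-1)$, the weak completion formula gives $(\widehat{\GL_{r,R}})_K=\Sp\bigl(K\<x_{ij},y\>^{\dag}/(\det(x_{ij})y-1)\bigr)$.

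Next I would construct the dual algebra map. The matrix $g\in\GL_r(B)$ has inverse $g^{-1}\in\Mat_r(B)$, so all entries $g_{ij}$ and $\det(g)^{-1}=\det(g^{-1})$ lie in $B$. I would send $x_{ij}\mapsto g_{ij}$ and $y\mapsto\det(g)^{-1}$. Because $B$ is an $R$-dagger algebra, every element of $B$ has residue norm $\leq 1$ in $B_K$ and is therefore power bounded; hence by the weak completeness of the $K$-dagger algebra $B_K$ (GK, Proposition 1.11) the polynomial assignment extends to a continuous homomorphism $K\<x_{ij},y\>^{\dag}\to B_K$. The defining relation is sent to $\det(g)\det(g)^{-1}-1=0$, so the map descends to $K\<x_{ij},y\>^{\dag}/(\det(x_{ij})y-1)\to B_K$. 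Passing to associated dagger spaces turns this into a morphism $\Delta^p\times\hat X_K=\Sp(B_K)\to(\widehat{\GL_{r,R}})_K$, and one checks directly from the construction that it is the one induced by $g$.

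The one point that needs genuine care — and the reason the construction lands in the weakly completed affinoid $(\widehat{\GL_{r,R}})_K$ rather than merely in the analytification $\GL_{r,K}^{\dag}$ — is the power-boundedness of the generators. In the earlier lemma a morphism into $\GL_{r,K}^{\dag}$ had to be rescaled by a factor $c^{-n}$ precisely because its matrix entries could have norm $>1$; here that difficulty disappears because $g$ is invertible already over the integral dagger algebra $B$, which forces both $g_{ij}$ and $\det(g)^{-1}$ to have norm $\leq 1$. I expect this integrality observation to be the crux, the remaining work (the explicit presentations, and the compatibility of the generic-fibre functor with the dagger tensor product used to identify the source) being routine. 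Alternatively, one could phrase the whole argument over $R$: the matrix $g$ defines an $R$-algebra map $C\to B$, $x_{ij}\mapsto g_{ij}$, $y\mapsto\det(g)^{-1}$, which extends to $C^{\dag}\to B$ by the universal property of weak completion and so gives a morphism $\Spwf(B)\to\widehat{\GL_{r,R}}$ of weak formal $R$-schemes whose generic fibre is the asserted morphism.
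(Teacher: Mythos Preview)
Your proof is correct and essentially the same as the paper's. In fact, the alternative you sketch at the end --- extend the $R$-algebra map $C\to B$ to $C^{\dag}\to B$ by the universal property of weak completion (the paper cites \cite[Theorem~1.5]{MW}) and then pass to generic fibres --- is precisely the route the paper takes; your main argument over $K$ via power-boundedness in $B_K$ is just the $K$-side translation of the same extension step.
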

\begin{proof}
First of all $(\widehat\GL_{r,R})_K = \Sp(K\<x_{ij}, y\>^{\dag}/(\det(x_{ij})y-1))$ and $\hat X_K = \Sp(A^{\dag} \otimes_R K)$. The matrix $g$ is determined by a morphism of $R$-algebras
\[
R[x_{ij},y]/(\det(x_{ij})y-1) \to A^{\dag} \otimes_R^{\dag} R_p^{\dag}.
\]
Since $A^{\dag} \otimes_R^{\dag} R_p^{\dag}$ is an $R$-dagger algebra as well, this morphism extends uniquely to a morphism
\[
R\<x_{ij},y\>^{\dag}/(\det(x_{ij})y-1) \to A^{\dag} \otimes_R^{\dag} R_p^{\dag}
\]
\cite[Theorem 1.5]{MW}, which in turn induces a morphism of dagger spaces
\[
\Sp((A^{\dag} \otimes_R^{\dag} R_p^{\dag})\otimes_R K) \to \Sp(K\<x_{ij},y\>^{\dag}/(\det(x_{ij})y-1)),
\]
that is,
\[
\hat X_K \times \Delta^p \to (\widehat\GL_{r,R})_K. \qedhere
\]
\end{proof}

Hence the above diagram \eqref{diag:UltraRelK} gives rise to a morphism of simplicial $R$-schemes 
\[
g_r : X \otimes \mathscr F_r \to B_{\dot}\GL_{r,R},
\]
together with a commutative diagram
\[
\xymatrix@C+1cm{
& E_{\dot}(\widehat\GL_{r,R})_K \ar[d]^p\\
\hat X_K \otimes \mathscr F_r \ar[r]^{(\hat g_r)_K} \ar@{~>}[ur]^{\alpha_r} & B_{\dot}(\widehat\GL_{r,R})_K
}
\]
of topological morphisms of dagger spaces.
Thus we are exactly in the situation of section \ref{sec:RelClassesRSchemes} and have relative Chern character classes
\[
\widetilde\Ch_n^{\rel}(T_r,E_r,\alpha_r/R) \in H^{2n-1}_{\rel}(X\otimes \mathscr F_r/R, n),\footnote{Here we tacitly extended the definition of relative cohomology to the case of simplicial schemes of the form $X\otimes S$,  which obviously does make sense.}
\]
where we denote by $T_r$ the trivial $\GL_r$-bundle and by $E_r$ the algebraic $\GL_r$-bundle classified by $g_r$.
Similar as for the complex analogue one shows, that these classes are compatible for different $r$:
\begin{lemma}
The class $\widetilde\Ch_n^{\rel}(T_{r+1},E_{r+1},\alpha_{r+1}/R)$ is mapped to the class $\widetilde\Ch_n^{\rel}(T_r,E_r,\alpha_r/R)$ by the natural map $H^{2n-1}_{\rel}(X\otimes \mathscr F_{r+1}/R, n) \to H^{2n-1}_{\rel}(X\otimes \mathscr F_r/R, n)$ induced by the inclusion $j : \GL_r \hookrightarrow \GL_{r+1}$ in the upper left corner.
\end{lemma}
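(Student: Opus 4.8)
The plan is to transcribe, step for step, the proof of the complex analogue given after the definition of $\widetilde\Ch_n^{\rel}(T_r,E_r,\alpha_r)$ in Section~\ref{sec:RelChernChar}, replacing the holomorphic/de Rham data by their dagger counterparts and using the constructions of Section~\ref{sec:RelClassesRSchemes}. Writing $j$ for every morphism induced by the inclusion $\GL_r\hookrightarrow\GL_{r+1}$ (so in particular $\mathscr F_r\to\mathscr F_{r+1}$, the induced bundle maps $E_r\to E_{r+1}$, and the induced maps on the various cohomology groups), the first step is to record that the topological trivializations are compatible with stabilization, i.e.\ $\alpha_{r+1}\circ j=j\circ\alpha_r$ as topological morphisms into $E_{\dot}(\widehat\GL_{r+1,R})_K$. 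By naturality of the pullback maps $\alpha^*$ constructed in Section~\ref{sec:RelClassesRSchemes} and of the functoriality maps established just above, this should yield a commutative square
\[
\xymatrix{
H^{E_{r+1},2n-1}_{\rel}(X\otimes\mathscr F_{r+1}/R,n)\ar[r]^-{j^*}\ar[d]_{\alpha_{r+1}^*}
& H^{E_r,2n-1}_{\rel}(X\otimes\mathscr F_r/R,n)\ar[d]^{\alpha_r^*}\\
H^{2n-1}_{\rel}(X\otimes\mathscr F_{r+1}/R,n)\ar[r]^-{j^*}
& H^{2n-1}_{\rel}(X\otimes\mathscr F_r/R,n).
}
\]
Since $\widetilde\Ch_n^{\rel}(T_r,E_r,\alpha_r/R)=-\alpha_r^*\widetilde\Ch_n^{\rel}(E_r/R)$, commutativity of this square reduces the claim to showing that the refined class $\widetilde\Ch_n^{\rel}(E_{r+1}/R)$ is carried to $\widetilde\Ch_n^{\rel}(E_r/R)$ by $j^*$.

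Next I would pass to the universal situation. Because $\widetilde\Ch_n^{\rel}(E/R)$ is by definition the image of the refined class $\widetilde\Ch_n^{\rel}(E_K)$ on the generic fibre, and the latter is functorial in the base $K$-scheme by Proposition~\ref{prop:RefinedClasses}, it suffices to prove $j^*\widetilde\Ch_n^{\rel}(E^{\univ}_{r+1})=\widetilde\Ch_n^{\rel}(E^{\univ}_r)$ for the universal bundles $E^{\univ}_r/B_{\dot}\GL_{r,K}$, where $j:B_{\dot}\GL_{r,K}\hookrightarrow B_{\dot}\GL_{r+1,K}$ classifies $E^{\univ}_r\oplus T_1$. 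Here the dagger analogue of Lemma~\ref{lem:CohomEG} gives $H^i_{\dR}(E_{\dot}\GL^{\dag}_{r,K}/K)=0$ for $i>0$, so the top row of the exact sequence~\eqref{seq:RelCohomKSchemes} makes the natural map $H^{E^{\univ},2n-1}_{\rel}(B_{\dot}\GL_{r,K},n)\to\Fil^nH^{2n}_{\dR}(B_{\dot}\GL_{r,K}/K)$ an isomorphism; under it the universal refined class corresponds precisely to the Chern character class $\Ch_n(E^{\univ}_r)$.

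The argument then closes with the computation $j^*\Ch_n(E^{\univ}_{r+1})=\Ch_n(j^*E^{\univ}_{r+1})=\Ch_n(E^{\univ}_r\oplus T_1)=\Ch_n(E^{\univ}_r)$, using functoriality of the de Rham Chern character class, the Whitney sum formula, and the vanishing of the higher Chern classes of the trivial bundle $T_1$, all established in the section on Chern character classes of algebraic bundles. I expect the genuine work to lie not in this universal computation, which is formally identical to the complex case, but in the very first step: verifying that the square above actually commutes. Since the groups $H^{E,*}_{\rel}(X/R,n)$ and their pullback maps $\alpha^*$ and $j^*$ are defined only through choices of injective resolutions and mapping cones, the hard part will be to track the identity $\alpha_{r+1}\circ j=j\circ\alpha_r$ through those homotopy-theoretic constructions (Lemmas~\ref{lem:HomotopyOnCones} and~\ref{lem:HomotopyEquivCones}, together with the affine acyclicity exploited in Lemma~\ref{lemma:TopPullback}), checking that all auxiliary homotopies can be chosen compatibly so that the diagram commutes in cohomology rather than merely up to an uncontrolled homotopy.
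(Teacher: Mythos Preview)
Your proposal is correct and follows exactly the approach indicated by the paper, which for this lemma gives no proof at all beyond the sentence ``Similar as for the complex analogue one shows''; you have faithfully transcribed the proof of that complex analogue (the lemma in Section~\ref{sec:RelChernChar}) into the $R$-scheme setting of Section~\ref{sec:RelClassesRSchemes}. Your final paragraph is more cautious than the paper itself: the author evidently regards the commutativity of the $\alpha^*$--$j^*$ square as routine (it is asserted without comment already in the complex case), so while your instinct to scrutinize the homotopy-theoretic bookkeeping is sound, the paper does not treat it as the locus of genuine work.
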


The other input we need for the definition of Chern character maps is 
\begin{lemma}
Let X be a smooth separated $R$-scheme of finite type and $S$ a simplicial set. Then we have natural isomorphisms
\[
H^k_{\rel}(X\otimes S/R, n) \cong \bigoplus_{p+q=k} \Hom(H_p(S), H^q_{\rel}(X/R, n)).
\]
\end{lemma}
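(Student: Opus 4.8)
The statement is the $R$-scheme analogue of Lemma~\ref{lemma:cohom_X_tensor_S}, and I would follow exactly the same strategy as in the complex case, only adapting the functorial complexes to the dagger setting. The key point is that the relative cohomology $H^*_{\rel}(X/R,n)$ is computed by a cone built from complexes which, in the variables of the simplicial set $S$, behave like $\Hom_{\Z}(\Z S_*,-)$. Concretely, I would first choose a good compactification $X_K \hookrightarrow \ol X_K$ with complement $D$, inducing the good compactification $X_K\otimes S \hookrightarrow \ol X_K\otimes S =: \ol X_{K,\dot}$ with complement $D_{\dot}$ as in the remark following the construction of the Hodge filtration. Then $H^*_{\rel}(X\otimes S/R,n)$ is the hypercohomology of the cone defining the group in section~\ref{sec:RelClassesRSchemes}, namely
\[
\Cone\bigl(\Omega^{\geq n}_{\ol X_{K,\dot}}(\log D_{\dot}) \to \Rhyp j_*\Rhyp\iota_*\Omega^*_{(\widehat{X\otimes S})_K}\bigr).
\]

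Next I would identify this hypercohomology with the cohomology of a total $\Hom$-complex. Since $(X\otimes S)_p = \coprod_{\sigma\in S_p} X$, and weak completion, generic fibre, and dagger analytification all commute with disjoint unions, each term in the cosimplicial complex computing the cone in degree $p$ is a product $\prod_{\sigma\in S_p}\mathscr G^q(X)$, where $\mathscr G^*(X)$ denotes the analogous cone complex $\Cone(\Omega^{\geq n}_{\ol X_K}(\log D)\to \Rhyp j_*\Rhyp\iota_*\Omega^*_{(\widehat X)_K})$ for the single scheme $X$, whose cohomology is $H^*_{\rel}(X/R,n)$. As in the complex case, $\prod_{\sigma\in S_p}\mathscr G^q(X) = \Hom_{\Z}(\Z S_p,\mathscr G^q(X))$, so the total complex of the cosimplicial complex is the total $\Hom$ complex $\Hom_{\Z}(\Z S_*,\mathscr G^*(X))$ in the sense of \cite[2.7.4]{Weibel}. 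The universal coefficient short exact sequence \cite[Exer.~3.6.1]{Weibel} then gives
\[
0 \to \bigoplus_{p+q=k-1}\mathrm{Ext}^1_{\Z}(H_p(S),H^q_{\rel}(X/R,n)) \to H^k_{\rel}(X\otimes S/R,n) \to \bigoplus_{p+q=k}\Hom(H_p(S),H^q_{\rel}(X/R,n)) \to 0.
\]
Because the groups $H^q_{\rel}(X/R,n)$ are $K$-vector spaces (being cohomologies of complexes of $K$-modules), hence divisible, the $\mathrm{Ext}^1_{\Z}$-terms vanish and the middle map is the asserted isomorphism.

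The main thing to verify carefully, and the only genuine obstacle, is the reduction of the hypercohomology of the cone to the cohomology of the single total $\Hom$-complex built from $\mathscr G^*(X)$. In the complex case this was immediate because $\mathscr C^\infty$-forms give honest functorial complexes computing everything; in the dagger setting the right-hand term of the cone involves $\Rhyp j_*\Rhyp\iota_*$ applied to $\Omega^*_{(\widehat X_{\dot})_K}$, and I must check that passing to the representing injective resolutions (as in remark~\ref{rem:DefOfRelCohom}(i)) is compatible with the product decomposition over $S_p$. Since $\iota_*$ and $j_*$ preserve injectives and each $X$ is affine so that $(\widehat X)_K$ is affinoid and hence acyclic for coherent sheaves (remark~\ref{rem:AffinoidCovering}), the derived functors may be computed by global sections and the decomposition over $\sigma\in S_p$ is respected; this is where affineness of $X$ is used. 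Once this identification is in place, the rest is the formal $\Hom$-complex argument, identical in both cases, so I would simply remark that the proof is the same as that of Lemma~\ref{lemma:cohom_X_tensor_S}.
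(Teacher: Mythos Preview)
Your overall strategy is the same as the paper's: reduce to a total $\Hom$-complex $\Hom_{\Z}(\Z S_*, \mathscr G^*(X))$ and finish with the universal coefficient sequence, using that the $H^q_{\rel}(X/R,n)$ are $K$-vector spaces. That part is fine.

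There is, however, a small but real slip in your technical justification. You assert that ``this is where affineness of $X$ is used,'' but the lemma is stated for $X$ smooth separated of finite type, not affine, and the paper's proof makes no use of affineness. Your appeal to the acyclicity of the affinoid $(\widehat X)_K$ for coherent sheaves is a red herring: that statement concerns the global sections of $\Omega^q$ on the dagger space, whereas what you actually need is to compute hypercohomology on $\ol X_K$ of a cone of sheaves, and then identify it degree-wise over $S$. The paper handles this cleanly and without any affineness hypothesis: choose once and for all a complex $I^*$ of injective sheaves on $\ol X_K$ representing $\Cone(\Omega^{\geq n}_{\ol X_K}(\log D)\to \Rhyp j_*\Rhyp\iota_*\Omega^*_{\hat X_K})$, so that $\Gamma(\ol X_K,I^*)$ computes $H^*_{\rel}(X/R,n)$; then form the complex $I^*\otimes S$ on $\ol X_K\otimes S$. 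Each $I^q\otimes S_p$ is injective (a product of injectives), and $I^*\otimes S$ represents the simplicial cone because $\Rhyp j_*\Rhyp\iota_*\Omega^*_{(\widehat{X\otimes S})_K}$ can be computed degree-wise by \cite[(5.2.5)]{HodgeIII}. Hence $H^*_{\rel}(X\otimes S/R,n)$ is the cohomology of the total complex of $[p]\mapsto \Gamma(\ol X_K\otimes S_p, I^*\otimes S_p)=\prod_{\sigma\in S_p}\Gamma(\ol X_K,I^*)$, and now your $\Hom$-complex argument applies verbatim. So drop the affineness claim and replace your acyclicity remark by this injective-resolution step; otherwise your argument matches the paper's.
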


\begin{proof}
Choose a good compactification $j : X_K \hookrightarrow \ol X_K$ with complement $D$. This induces a good compactification $j: X_K\otimes S \hookrightarrow \ol X_K \otimes S$. Denote the natural morphisms $\hat X_K \to X_K$ and $\hat X_K \otimes S \to X_K \otimes S$ by $\iota$. Choose a complex $I^*$ of injective sheaves on $\ol X_K$ representing $\Cone(\Omega^{\geq n}_{\ol X_K}(\log D) \to \Rhyp j_*\Rhyp\iota_*\Omega^*_{\hat X_K})$. 
Thus $\Gamma(\ol X_K, I^*)$ is a complex computing $H^*_{\rel}(X/R,n)$.

In an obvious way, $I^*$ induces a complex of sheaves $I^*\otimes S$ on the simplicial scheme $\ol X_K \otimes S$, which represents $\Cone(\Omega^{\geq n}_{\ol X_K\otimes S}(\log D\otimes S) \to \Rhyp j_*\Rhyp\iota_*\Omega^*_{\hat X_K \otimes S})$, since $\Rhyp j_*\Rhyp\iota_*\Omega^*_{\hat X_K\otimes S}$ can be computed ``degree-wise'' \cite[(5.2.5)]{HodgeIII}.
Moreover, each $I^q\otimes S_p$ is an injective sheaf on $\ol X_K \otimes S_p$, and hence the relative cohomology $H^*_{\rel}(X\otimes S/R,n)$ is just the cohomology of the total complex associated with the cosimplicial complex $[p]\mapsto \Gamma(\ol X_K \otimes S_p, I^*\otimes S_p) = \prod_{\sigma\in S_p} \Gamma(\ol X_K, I^*)$. Now the claim follows as in lemma \ref{lemma:cohom_X_tensor_S}.
\end{proof}

Putting everything together, we can now define:
\begin{dfn*}
Let $X=\Spec(A)$ be a smooth affine $R$-scheme of finite type. The \emph{relative Chern character} 
\[
\Ch_{n,i}^{\rel}: K_i^{\rel}(X) \to H^{2n-i-1}_{\rel}(X/R, n)
\]
is given by the composition
\begin{multline*}
K_i^{\rel}(X) = \pi_i(\widetilde F) \xrightarrow{\text{Hur.}} H_i(\widetilde F, \Z) \cong H_i(\mathscr F,\Z) =\\
\dirlim_r H_i(\mathscr F_r, \Z) \xrightarrow{\dirlim_r\widetilde\Ch_n^{\rel}(T_r,E_r,\alpha_r/R)} H^{2n-i-1}_{\rel}(X/R, n).
\end{multline*}
\end{dfn*}

\begin{rem}
The construction of the relative Chern character for ultrametric Banach algebras is also due to Karoubi \cite{KarCR, Kar97}. Instead of overconvergent power series he uses indefinitely integrable power series and in contrast to our construction his relative Chern character takes values in the cohomology of the truncated de Rham complex of the \emph{rigid} space $\Sp(\hat A_K)$. That is, he does not take logarithmic singularities or overconvergence into account.
\end{rem}

\section[Comparison with the $p$-adic Borel regulator]{The case $X = \Spec(R)$: Comparison with the $p$-adic Borel regulator}
\label{sec:pAdicBorel}

In this section we study in more detail the situation $X = \Spec(R)$, where $R$ is the ring of integers in a finite extension of $\Q_p$. This will be used to compare the relative Chern character with the $p$-adic Borel regulator.
Thus, throughout this section we fix a finite extension $K$ of $\Q_p$ with ring of integers $R \subset K$, uniformizer $\pi\in R$ and residue field $k$.

Similar as for the relative Chern character for $\Spec(\C)$, the Chern-Weil theoretic description of secondary classes yields an explicit cocycle defining the relative Chern character on the simplicial set $\GL(R_{\dot}^{\dag})/\GL(R)$ (the homotopy fibre of $B_{\dot}\GL(R) \to B_{\dot}\GL(R_{\dot}^{\dag})$). Since the explicit description of the Lazard isomorphism due to Huber and Kings describes the map from locally analytic group cohomology to Lie algebra cohomology (in contrast to Dupont's description of the van Est isomorphism) and since the $p$-adic Borel regulator is defined by a Lie algebra cocycle, we could take a locally analytic cocycle on $B_{\dot}\GL(R)$, which induces the relative Chern character on $\GL(R_{\dot}^{\dag})/\GL(R)$, and then check, that it is mapped to the Lie algebra cocycle defining the $p$-adic Borel regulator by the explicit Lazard map. Since it is not so easy, to find such a locally analytic cocycle, we use the following approach. The Lazard isomorphism factors through the locally analytic group cohomology of $U(R) := \ker(\GL(R) \to \GL(k))$. We construct a section $\nu$ of the map $\GL(R_{\dot}^{\dag})/\GL(R) \to B_{\dot}\GL(R)$, which is only defined on $B_{\dot}U(R)\subset B_{\dot}\GL(R)$, and show, that it induces a surjection $\nu_*: H_*(B_{\dot}U(R), \Q) \to H_*(\GL(R_{\dot}^{\dag})/\GL(R), \Q)$. This is done by showing, that the map $H_*(\GL(R_{\dot}^{\dag})/\GL(R), \Q) \to H_*(B_{\dot}\GL(R), \Q)$ is in fact an isomorphism (section 7.4.1). Via $\nu$ our explicit cocycle for the relative Chern character gives a group cocycle on $U(R)$ and we show in section 7.4.3, that it is in fact locally analytic. Hence we can apply the Lazard map and show that this cocycle is (up to a constant) mapped to the Lie algebra cocycle defining the $p$-adic Borel regulator (section 7.4.4). By the surjectivity of $\nu_*$ this implies the desired comparison.

\subsection{The homology of the fibre of $B_{\dot}\GL(R) \to B_{\dot}\GL(R_{\dot}^{\dag})$.}

We proceed as in the complex case. We abbreviate the simplicial group $\GL_r(R_{\dot}^{\dag}) =: G_{r,\dot}$, $\GL(R_{\dot}^{\dag}) = G_{\dot} = \dirlim_r G_{r,\dot}$. 
Again we have a homotopy equivalence $\eta_r: G_{r,\dot}/\GL_r(R) \to \mathscr F_r = B_{\dot}\GL_r(R) \times_{B_{\dot}G_{r,\dot}} E_{\dot}G_{r,\dot}$ of two models for the homotopy fibre of the map $B_{\dot}\GL_r(R) \to B_{\dot}G_{r,\dot}$ (lemma \ref{lem:HomotopyFibres}). 

\begin{thm}\label{thm:RationalHomolIsom}
The natural map 
\[
\rho_*: H_*(G_{\dot}/\GL(R), \Q) \to H_*(B_{\dot}GL(R), \Q)
\]
is an isomorphism.
\end{thm}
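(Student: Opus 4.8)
The plan is to realise $\rho$ as the fibre inclusion of the fibration
\[
G_\dot/\GL(R) \xrightarrow{\ \rho\ } B_\dot\GL(R) \longrightarrow B_\dot G_\dot ,
\]
which comes (exactly as in the complex case, via lemma \ref{lem:HomotopyFibres}) from the inclusion $\GL(R) \hookrightarrow G_\dot = \GL(R_\dot^{\dag})$, and to run the Serre spectral sequence with $\Q$-coefficients. The entire argument rests on the base $B_\dot G_\dot$ being \emph{rationally invisible}. First I would compute its homotopy: by definition $\pi_i(B_\dot G_\dot) = K^{-i}_{\top}(R)$, and since $R/\pi R = k$ is a finite, hence regular, field, Proposition \ref{prop:TopAlgK} identifies this with $K_i(k)$. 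By Quillen's computation of the $K$-theory of finite fields, $K_i(k)$ is a \emph{finite} group for every $i \geq 1$ (for instance $K_1(k) = k^{\times}$, $K_{2j}(k) = 0$, $K_{2j-1}(k) = \Z/(q^j-1)$). Thus $B_\dot G_\dot$ is a connected space with all positive homotopy groups finite; its universal cover is simply connected with finite homotopy groups, so the Serre mod-$\mathcal C$ Hurewicz theorem (for the class of finite abelian groups) makes its reduced integral homology finite in each degree, hence rationally acyclic. Because $\pi_1(B_\dot G_\dot) = k^{\times}$ is finite and group homology of a finite group with coefficients in any $\Q$-module vanishes in positive degrees ($\Q[k^{\times}]$ is semisimple), a Cartan--Leray argument then yields $H_p(B_\dot G_\dot, \mathcal L) = 0$ for all $p>0$ and \emph{every} local system $\mathcal L$ of $\Q$-vector spaces.

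With this input the Serre spectral sequence
\[
E^2_{p,q} = H_p\bigl(B_\dot G_\dot,\, \mathcal H_q(G_\dot/\GL(R),\Q)\bigr) \Longrightarrow H_{p+q}(B_\dot\GL(R),\Q)
\]
collapses onto its bottom row, and the fibre inclusion $\rho_*$ is identified with the projection of $H_q(G_\dot/\GL(R),\Q)$ onto the coinvariants $H_q(G_\dot/\GL(R),\Q)_{\pi_1(B_\dot G_\dot)}$ of the monodromy action. Everything therefore reduces to showing that the monodromy action of $\pi_1(B_\dot G_\dot) = \pi_0(G_\dot) = k^{\times}$ on $H_*(G_\dot/\GL(R),\Q)$ is \emph{trivial}; this is the step I expect to be the crux.

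For that step I would exploit the group structure. Since $G_0 = \GL(R_0^{\dag}) = \GL(R)$, every class in $\pi_0(G_\dot)$ is represented by some $g \in \GL(R)$, and the holonomy of such a class on the fibre $G_\dot/\GL(R)$ is the standard left translation $L_g$. The key observation is the coset identity $g\,x\,\GL(R) = (gxg^{-1})\,\GL(R)$, valid precisely because $g \in \GL(R)$, which shows that $L_g$ agrees with the self-map of $G_\dot/\GL(R)$ induced by the inner automorphism $c_g$ of $G_\dot$ (which preserves the subgroup $\GL(R)$). Inner automorphisms induce maps homotopic to the identity on classifying spaces, compatibly on $B_\dot\GL(R)$ and $B_\dot G_\dot$, so the induced self-map of the homotopy fibre $G_\dot/\GL(R)$ is homotopic to the identity and acts trivially on homology. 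Triviality of the monodromy makes the coinvariants map $\rho_*$ an isomorphism, which is the assertion. The only real subtlety to watch is the bookkeeping of the $\pi_1$-action (local coefficients) in the spectral sequence, but this is exactly what the finiteness of $\pi_0(G_\dot)$ together with the inner-automorphism argument is designed to neutralise.
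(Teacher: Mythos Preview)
Your overall strategy matches the paper's: run the Serre spectral sequence for the fibration $G_\dot/\GL(R)\to B_\dot\GL(R)\to B_\dot G_\dot$, show the base is $\Q$-acyclic, and show the monodromy is trivial. Your acyclicity argument via $\pi_i(B_\dot G_\dot)\cong K_i(k)$ being finite and mod-$\mathcal C$ theory is correct; the paper instead uses the weak equivalence $|B_\dot G_\dot|\simeq |B_\dot\GL(k)|^+$ and the observation that each $\GL_r(k)$ is a finite group, so $H_*(B_\dot\GL(k),\Q)=\dirlim_r H_*(\GL_r(k),\Q)$ vanishes in positive degrees. These are equivalent, though the paper's route avoids invoking Quillen's computation of the higher $K$-groups of $k$.

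The gap is in your monodromy argument. You correctly observe that the monodromy of $[g]\in\pi_1(B_\dot G_\dot)$ is left translation $L_{g^{-1}}$ on $G_\dot/\GL(R)$ and that, since $g\in\GL(R)$, this coincides with the map induced by the inner automorphism $c_{g^{-1}}$. But the assertion that ``$Bc_g\simeq\id$ on $B_\dot\GL(R)$ and $B_\dot G_\dot$ compatibly, hence $c_g\simeq\id$ on the homotopy fibre'' is \emph{false} in general. The standard homotopy $\id\simeq Bc_g$ on the total space, restricted to the fibre, lies over the loop in the base corresponding to $g$ itself; unwinding this only yields the tautology $c_g\simeq\mu_{[g]}$ on the fibre, not $c_g\simeq\id$. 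For a concrete counterexample in the discrete prototype, take $G=S_3$, $H=\langle(12)\rangle$, $g=(12)\in H$: then $Bc_g\simeq\id$ on both $BH$ and $BG$, compatibly, yet the induced map $c_g=L_{(12)}$ on the fibre $G/H$ swaps the cosets $(13)H$ and $(23)H$.

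What rescues the situation in the paper is a feature specific to $\GL$ after stabilisation. The paper shows that the map $\sigma\mapsto\bigl(\begin{smallmatrix}g^{-1}\sigma&\\&1\end{smallmatrix}\bigr)$ from $G_{r,\dot}/\GL_r(R)$ to $G_{2r,\dot}/\GL_{2r}(R)$ equals left multiplication by $\bigl(\begin{smallmatrix}g^{-1}&\\&g\end{smallmatrix}\bigr)$ (right-multiplying by $\bigl(\begin{smallmatrix}1&\\&g\end{smallmatrix}\bigr)\in\GL_{2r}(R)$ is harmless in the quotient). By the Whitehead lemma this matrix is a product of elementary matrices $e_{ij}(a)$, and each $e_{ij}(a)$ is connected to $1$ in $G_{2r,\dot}$ by the explicit $1$-simplex $e_{ij}(ax_1)\in\GL_{2r}(R_1^\dag)$. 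Thus $\bigl(\begin{smallmatrix}g^{-1}&\\&g\end{smallmatrix}\bigr)$ lies in the identity component of $G_{2r,\dot}$, and left multiplication by it is genuinely homotopic to the identity. This is precisely the step your inner-automorphism shortcut skips.
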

The proof uses the Serre spectral sequence for the homotopy fibration $G_{\dot}/\GL(R) \to B_{\dot}\GL(R) \to B_{\dot}\GL(R_{\dot}^{\dag})$.
We have to study this in more detail. Write $G := G_0 = \GL(R)$. 
By lemma \ref{lem:HomotopyFibres} we have a diagram
\begin{equation}\label{diag:hofib}\begin{split}
\xymatrix@C+1cm{
G_{\dot}/G \ar@{=}[d] \ar[r]^{\sigma\mapsto (\sigma, \dots, \sigma)} & E_{\dot}G_{\dot}/G \ar@{->>}[r]^p & B_{\dot}G_{\dot},\\
G_{\dot}/G \ar[r]^-{\rho} & E_{\dot}G/G \cong B_{\dot}G \ar[u]^{\mathrm{incl.}}_{\sim} \ar[ur]
}
\end{split}
\end{equation}
where the inclusion $E_{\dot}G/G \to E_{\dot}G_{\dot}/G$ is a homotopy equivalence and the left square commutes up to homotopy. 
Since $E_{\dot}G_{\dot}/G \overset{p}{\twoheadrightarrow} B_{\dot}G_{\dot}$ is a Kan fibration with fibre $G_{\dot}/G$, we have the associated Serre spectral sequence \cite[Kap. VI, \S 6]{Lamotke}
\begin{equation}\label{eq:SerreSS}
E^2_{p,q} = H_p(B_{\dot}G_{\dot}, \mathscr{H}_q(p, \Q)) \Longrightarrow H_{p+q}(E_{\dot}G_{\dot}/G, \Q).
\end{equation}

Here $\mathscr{H}_q(p, \Q)$ denotes the $q$-th homology local system of the fibration $p$ with rational coefficients.

\begin{lemma} 
The action of $\pi_1(B_{\dot}G_{\dot})$ on $H_q(G_{\dot}/G, \Q)$ is trivial for every $q \geq 0$.
\end{lemma}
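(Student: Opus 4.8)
The plan is to identify the monodromy representation explicitly and then reduce its triviality to the familiar fact that inner automorphisms act trivially on homology. First I would invoke the standard description of the Serre spectral sequence of an associated bundle (as in \cite[Kap.~VI, \S 6]{Lamotke}): since the fibration $p\colon E_{\dot}G_{\dot}/G \twoheadrightarrow B_{\dot}G_{\dot}$ is the bundle with fibre $G_{\dot}/G$ associated to the universal principal $G_{\dot}$-bundle $E_{\dot}G_{\dot} \to B_{\dot}G_{\dot}$, the action of $\pi_1(B_{\dot}G_{\dot}) \cong \pi_0(G_{\dot})$ on $H_q(G_{\dot}/G, \Q)$ is precisely the one induced by the left translation action of $G_{\dot}$ on its homogeneous object $G_{\dot}/G$.

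Next I would exploit the concrete nature of $G_{\dot}$. Because $R_0^{\dag} = R\<x_0\>^{\dag}/(x_0 - 1) = R$, we have $G_{\dot,0} = \GL(R) = G$, and the canonical surjection $G_{\dot,0} \twoheadrightarrow \pi_0(G_{\dot})$ shows that every monodromy class is represented by left translation $L_g$ for some $g \in G$. For such a $g$, the quotient being taken on the right by $G$ yields $L_g(\sigma G) = g\sigma G = (g\sigma g^{-1})G$, since $gG = G$; hence $L_g$ coincides on $G_{\dot}/G$ with the map $c_g$ induced by the inner automorphism $\sigma \mapsto g\sigma g^{-1}$ of the simplicial group $G_{\dot}$, which preserves the constant subgroup $G$.

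It therefore remains to prove that $c_g$ acts as the identity on $H_q(G_{\dot}/G, \Q)$. This is the simplicial analogue of the classical statement that an inner automorphism induces the identity on the (co)homology of a group, and I would establish it by writing down an explicit simplicial homotopy (equivalently, a chain homotopy) from $c_g$ to $\id_{G_{\dot}/G}$, built degreewise from the fixed element $g$ together with the face and degeneracy structure, in the same spirit as the homotopy $Bc_g \simeq \id$ produced in \cite[\S 5]{May}; one then checks that this homotopy is compatible with the right $G$-action and so descends to the quotient $G_{\dot}/G$.

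The main obstacle is exactly this last step: verifying that conjugation on the \emph{coset} object $G_{\dot}/G$ (rather than on a group or its classifying space) is homotopic to the identity, i.e.\ constructing the homotopy in a way that respects both the simplicial identities and the passage to the quotient by $G$. Once triviality of each $c_g$ is in hand, the local system $\mathscr{H}_q(p,\Q)$ is constant, and the spectral sequence \eqref{eq:SerreSS} acquires constant coefficients $H_q(G_{\dot}/G,\Q)$, which is what is needed for the proof of Theorem \ref{thm:RationalHomolIsom}.
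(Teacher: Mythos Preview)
Your identification of the monodromy action with left translation $L_g$ by $g\in G=\GL(R)$, and the rewriting $L_g(\sigma G)=g\sigma g^{-1}G=c_g(\sigma G)$, is correct and is essentially what the paper does. The gap is in your final step.

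The analogy with the classical $Bc_g\simeq\id$ result does not carry over. That argument is specific to the bar construction: one inserts the element $g$ into a tuple $(h_1,\dots,h_n)$ using the extra simplicial direction of $B_\bullet$. The object $G_{\dot}/G$ has no such combinatorial structure; a $p$-simplex is just a coset $\sigma G$ with $\sigma\in G_p$. On this object $c_g=L_g$, and $L_g$ is homotopic to the identity precisely when $g$ lies in the identity component of $G_{\dot}$. Here $\pi_0(G_{\dot})=K^{-1}_{\top}(R)\cong K_1(k)=k^\times\neq 0$, so this fails for general $g$, and no ``homotopy built degreewise from $g$'' can exist. (For a sanity check: if $G_{\dot}$ were constant equal to some $H\supsetneq G$, then $G_{\dot}/G$ is a discrete set on which $L_g$ permutes points nontrivially, so your argument cannot be purely formal.)

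The paper's proof uses a feature of $\GL$ that you did not invoke: the stable structure and the Whitehead lemma. One passes from $G_{r,\dot}/\GL_r(R)$ to $G_{2r,\dot}/\GL_{2r}(R)$ and observes that, modulo right multiplication by $\bigl(\begin{smallmatrix}1&\\&g\end{smallmatrix}\bigr)\in\GL_{2r}(R)$, the map $\sigma\mapsto g^{-1}\sigma$ becomes left multiplication by $\bigl(\begin{smallmatrix}g^{-1}&\\&g\end{smallmatrix}\bigr)$. By the Whitehead lemma this matrix is a product of elementary matrices $e_{ij}(a)$, and each $e_{ij}(a)$ \emph{is} connected to $1$ in $G_{2r,\dot}$ via the $1$-simplex $e_{ij}(ax_1)\in\GL_{2r}(R_1^{\dag})$. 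Thus the obstruction in $\pi_0$ is killed only after stabilizing, and this is where the actual content of the lemma lies.
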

All the simplicial sets occuring have natural base points represented by $1 \in G_0=G$ or the single element in $B_0G_0$. They will all be denoted by the same symbol $1$ and all constructions depending on base points (like fibres) are made with respect to these without further reference.
\begin{proof}
Recall the operation $H_q(G_{\dot}/G, \Q) \times \pi_1(B_{\dot}G_{\dot}) \to H_q(G_{\dot}/G, \Q)$: Denote by $\Delta[1]$ the simplicial set $\Hom_{\Delta}(\,.\,, [1])$ with vertices $0 := \delta^1$ and $1 := \delta^0 \in \Delta[1]_0$. Any class $[g]$ in $\pi_1(B_{\dot}G_{\dot})$ is represented by a 1-simplex $g \in B_1G_{\dot}$, which corresponds to the unique morphism $g: \Delta[1] \to B_{\dot}G_{\dot}$ sending $\id_{[1]} \in \Delta[1]_1$ to $g$. Consider the diagram
\begin{equation}
\begin{split}
\xymatrix@R+0.5cm{
G_{\dot}/G \ar[r]^{\phi} \ar@{^{(}->}[d]_{(\id, 0)} & E_{\dot}G_{\dot}/G \ar@{->>}[d]^p\\
G_{\dot}/G \times \Delta[1] \ar@{.>}[ur]^h \ar[r]^-{g\circ\mathrm{pr}_2} & B_{\dot}G_{\dot},
}
\end{split}
\end{equation}
where $\phi$ is the inclusion of the fibre of $p$ induced by $\sigma \mapsto (\sigma, \dots, \sigma)$.
The dotted arrow exists by the homotopy lifting property for Kan fibrations \cite[Kap. I, Satz 6.5]{Lamotke}. The restriction of $h$ to $G_{\dot}/G \times 1$ factors through the fibre $G_{\dot}/G$, and hence induces a map $\hat g : G_{\dot}/G \to G_{\dot}/G$. Now the action of $[g] \in \pi_1(B_{\dot}G_{\dot})$ on $H_q(G_{\dot}/G, \Q)$ is given by the homomorphism $H_q(\hat g)$ \cite[Kap. VI, 5.3]{Lamotke}.

We want to make this explicit. Since $G_{\dot}/G$ is obviously connected, the natural map $\pi_1(B_{\dot}G) = G \to \pi_1(B_{\dot}G_{\dot})$ is surjective. Thus we may choose the representative $g$ in $G \subset G_1$. Consider $(1,g^{-1}) \in E_1G_{\dot} = G_1 \times G_1$. Then $p(1,g^{-1}) = g$, $\del_0(1,g^{-1}) = g^{-1}$ and $\del_1(1,g^{-1}) = 1$. Hence $(1,g^{-1})$ corresponds to a morphism $\tilde g: \Delta[1] \to E_{\dot}G_{\dot}$ sending $0$ to $1$ and $1$ to $g^{-1}$ and such that $p \circ \tilde g = g$.

Recall that $E_{\dot}G_{\dot}$ is a simplicial group which operates from the left on $E_{\dot}G_{\dot}/G$ and the projection $p$ is equivariant for this action. Then it makes sense to consider the map $h : G_{\dot}/G \times \Delta[1] \to E_{\dot}G_{\dot}/G$ defined as $h = (\tilde g \circ \mathrm{pr}_2)\cdot (\phi\circ\mathrm{pr}_1)$. We claim that $h$ makes the above diagram commutative. First $p \circ h = (\tilde g \circ \mathrm{pr}_2)\cdot (p\circ\phi\circ\mathrm{pr}_1) = (\tilde g\circ\mathrm{pr}_2)\cdot 1 = p\circ\tilde g \circ\mathrm{pr}_2 = g\circ\mathrm{pr}_2$, i.e. the lower triangle commutes. Next, for $\sigma \in G_p/G$ we have $h(\sigma, 0) = \tilde g(0) \cdot \phi(\sigma) = 1\cdot \phi(\sigma) = \phi(\sigma)$, i.e. the upper triangle commutes. 

On the other hand, $h(\sigma, 1) = \tilde g(1)\cdot \phi(\sigma) = g^{-1}\cdot\phi(\sigma) = (g^{-1}\sigma, \dots, g^{-1}\sigma) = \phi(g^{-1}\sigma)$ and hence the action of the class $[g]$ on $H_q(G_{\dot}/G, \Q)$ is induced by the map $\hat g : G_{\dot}/G \to G_{\dot}/G, \sigma \mapsto g^{-1}\sigma$. We want to show that this map induces the identity map on homology.

Recall that $G_{r,\dot} = \GL_r(R_{\dot}^{\dag})$ and $\dirlim_r G_{r,\dot} = G_{\dot}$ and hence $H_*(G_{\dot}/G, \Q) = \dirlim_r H_*(G_{r,\dot}/\GL_r(R), \Q)$. For $r$ big enough we have $g \in \GL_r(R)$ and it clearly suffices to show that $\hat g: G_{r,\dot}/\GL_r(R) \to G_{2r,\dot}/\GL_{2r}(R)$, $\sigma \mapsto \left(\begin{smallmatrix} g^{-1}\sigma & \\ &1 \end{smallmatrix}\right)$, is homotopic to the identity and hence induces the identity on homology. But since $\left(\begin{smallmatrix} 1&\\&g\end{smallmatrix}\right)$ is an element of $\GL_{2r}(R)$, this last map is the same as the left multiplication by $\left(\begin{smallmatrix} g^{-1} &\\ & g\end{smallmatrix}\right)$. By the Whitehead lemma (see e.g. \cite[(1.9)]{Berrick}) this matrix is a product of elementary matrices. An elementary matrix is a matrix of the form $e_{ij}(a)$, $i \not= j$, with $1$'s on the diagonal and $a\in R$ in the $(i,j)$-slot. Clearly, every elementary matrix is homotopic to the identity matrix, more precisely $e_{ij}(ax_1) \in \GL_{2r}(R_1^{\dag}) = G_{2r,1}$ satisfies $\del_0e_{ij}(ax_1) = e_{ij}(a), \del_1e_{ij}(ax_1)=1$. It follows, that there exists a matrix $H \in G_{2r,1}$ such that $\del_0H = \left(\begin{smallmatrix} g^{-1} & \\&g\end{smallmatrix}\right)$, $\del_1H = 1$. Again, $H$ corresponds to a morphism $H : \Delta[1] \to G_{2r,\dot}$ such that $H(0) = 1, H(1) = \left(\begin{smallmatrix} g^{-1}&\\&g\end{smallmatrix}\right)$. The required homotopy $G_{r,\dot}/\GL_r(R) \times \Delta[1] \to G_{2r,\dot}/\GL_{2r}(R)$ is now given by $(\sigma, \tau) \mapsto H(\tau)\cdot \sigma$.
\end{proof}

\begin{cor}
In the spectral sequence \eqref{eq:SerreSS}
\[
E^2_{p,q} = \begin{cases} H_q(G_{\dot}/G, \Q), & \text{ if } p=0,\\ 0 & \text{ else.} \end{cases}
\]
\end{cor}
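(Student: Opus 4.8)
The plan is to combine the preceding lemma with a computation of the rational homology of the base $B_{\dot}G_{\dot}$. Since the preceding lemma shows that $\pi_1(B_{\dot}G_{\dot})$ acts trivially on $H_q(G_{\dot}/G, \Q)$ for every $q$, the homology local system $\mathscr{H}_q(p, \Q)$ appearing in the spectral sequence \eqref{eq:SerreSS} is constant with stalk $H_q(G_{\dot}/G, \Q)$. As the coefficients are $\Q$-vector spaces (hence flat over $\Z$, so all $\mathrm{Tor}$-terms vanish), the universal coefficient theorem then gives
\[
E^2_{p,q} = H_p(B_{\dot}G_{\dot}, \Q) \otimes_{\Q} H_q(G_{\dot}/G, \Q).
\]
Thus everything reduces to showing that $H_p(B_{\dot}G_{\dot}, \Q)$ vanishes for $p > 0$ and equals $\Q$ for $p = 0$.

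To compute this, I would identify the homotopy groups of the base with topological $K$-theory. By construction $\pi_i(B_{\dot}G_{\dot}) = \pi_{i-1}(G_{\dot}) = K^{-i}_{\top}(R)$, and since $R$ is the ring of integers in a finite extension of $\Q_p$ with finite residue field $k$, Proposition \ref{prop:TopAlgK} gives $K^{-i}_{\top}(R) \cong K_i(k)$ for $i \geq 1$. By Quillen's computation of the $K$-theory of finite fields, the groups $K_i(k)$ are finite for all $i \geq 1$; hence every homotopy group $\pi_i(B_{\dot}G_{\dot})$, $i \geq 1$, is finite.

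Finally I would invoke a Serre class argument. The simplicial set $B_{\dot}G_{\dot}$ is connected and, being the classifying space of a simplicial group, is a loop space ($\Omega B_{\dot}G_{\dot} \simeq G_{\dot}$), hence simple; so the mod-$\mathcal{C}$ Hurewicz theorem for $\mathcal{C}$ the Serre class of finite abelian groups applies and shows inductively that all reduced integral homology groups $\widetilde H_p(B_{\dot}G_{\dot}, \Z)$ are finite. Tensoring with $\Q$ yields $H_p(B_{\dot}G_{\dot}, \Q) = 0$ for $p > 0$ and $H_0(B_{\dot}G_{\dot}, \Q) = \Q$, and substituting into the tensor product above produces the claimed form of $E^2_{p,q}$.

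The one point requiring genuine care is this last step: one must confirm that $B_{\dot}G_{\dot}$ is nilpotent (which the loop-space structure guarantees, so that $\pi_1$ acts nilpotently on the higher homotopy) in order that vanishing of rational homology really follow from finiteness of all homotopy groups. This is the main obstacle; the identification of the coefficient system as constant and the passage through topological $K$-theory are then essentially formal given the earlier results.
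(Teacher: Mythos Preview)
Your first two steps (trivial action $\Rightarrow$ constant local system $\Rightarrow$ tensor decomposition of $E^2$) match the paper exactly. The difference lies in how you kill $H_p(B_{\dot}G_{\dot},\Q)$ for $p>0$. The paper does not pass through homotopy groups and a Serre-class argument; instead it observes directly that $|B_{\dot}G_{\dot}| \simeq |B_{\dot}\GL(k_{\dot})| \simeq |B_{\dot}\GL(k)|^+$ (using proposition~\ref{prop:TopAlgK} and the Gersten comparison of Karoubi--Villamayor with Quillen $K$-theory), whence $H_*(B_{\dot}G_{\dot},\Q)\cong H_*(B_{\dot}\GL(k),\Q)=\dirlim_r H_*(\GL_r(k),\Q)$, which vanishes in positive degrees because each $\GL_r(k)$ is a finite group. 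This is shorter and avoids any nilpotence hypothesis.

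There is also a genuine slip in your argument: the relation $\Omega B_{\dot}G_{\dot}\simeq G_{\dot}$ shows that $G_{\dot}$ is a loop space, not that $B_{\dot}G_{\dot}$ is one; so your stated reason for simplicity is backwards. The conclusion that $B_{\dot}G_{\dot}$ is simple is nevertheless true, e.g.\ because the weak equivalence above identifies it with the H-space $|B_{\dot}\GL(k)|^+$---but once you have that weak equivalence in hand, the paper's direct homology argument is already finished and the Serre-class machinery becomes unnecessary.
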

\begin{proof}
Since $\pi_1(B_{\dot}G_{\dot})$ acts trivially on $H_q(G_{\dot}/G, \Q)$ and we are working with rational coefficients, we have an isomorphism 
\[
E^2_{p,q}=H_p(B_{\dot}G_{\dot}, \mathscr{H}_q(p,\Q)) \cong H_p(B_{\dot}G_{\dot},\Q) \otimes_{\Q} H_q(G_{\dot}/G,\Q)
\]
\cite[Kap. VI, 8.1]{Lamotke}. 

Recall that $k$ denotes the residue field of $R$, and define the simplicial ring $k_{\dot}$ by $k_p = k[x_0, \dots, x_p]/(\sum_i x_i -1)$ with the usual structure maps. Then $\pi_*(B_{\dot}\GL(k_{\dot}))$ is the Karoubi-Villamayor $K$-theory of $k$. The geometric realization of the natural map $B_{\dot}\GL(k) \to B_{\dot}\GL(k_{\dot})$ factors through $|B_{\dot}\GL(k)|^+$, which gives the isomorphism between Quillen's $K_*(k)$ and the Karoubi-Villamayor $K$-groups of $k$.

With proposition \ref{prop:TopAlgK} it follows, that we have weak equivalences $|B_{\dot}G_{\dot}| = |B_{\dot}\GL(R_{\dot}^{\dag})| \xrightarrow{\sim} |B_{\dot}\GL(k_{\dot})| \xleftarrow{\sim} |B_{\dot}\GL(k)|^+$. Hence we get isomorphisms $H_*(B_{\dot}G_{\dot}, \Q) \cong H_*(B_{\dot}\GL(k), \Q)= \dirlim_r H_*(B_{\dot}\GL_r(k), \Q)$. But since $H_*(B_{\dot}\GL_r(k),\Q)$ is just group homology of the finite group $\GL_r(k)$ with rational coefficients, it vanishes in positive degrees and equals $\Q$ in degree $0$. Now the claim follows.
\end{proof}

\begin{proof}[Proof of the theorem]
It follows from the corollary, that the edge morphism $E^2_{0,q} = H_q(G_{\dot}/G, \Q) \to H_q(E_{\dot}G_{\dot}/G, \Q)$ is an isomorphism. By \cite[Kap. VI, 6.7 b)]{Lamotke}, this is just the homomorphism induced by the inclusion $G_{\dot}/G \to E_{\dot}G_{\dot}/G$. Since the inclusion $B_{\dot}G \cong E_{\dot}G/G \hookrightarrow E_{\dot}G_{\dot}/G$ is a homotopy equivalence and diagram \eqref{diag:hofib} is homotopy commutative, it follows, that $\rho : G_{\dot}/G \to B_{\dot}G$ also induces an isomorphism in rational homology.
\end{proof}

Define $U_r(R) := \ker(\GL_r(R) \to \GL_r(k)) = 1 + \pi\Mat_r(R)$ and $U(R) = \dirlim_r U_r(R) = \ker(\GL(R) \to \GL(k))$.
\begin{lemma}\label{lem:ConstructionNu}
There is a natural map of simplicial sets $\nu: B_{\dot}U_r(R) \to G_{r,\dot}/\GL_r(R)$, fitting in a commutative diagram 
\[\xymatrix{
& B_{\dot}U_r(R) \ar[dl]_{\nu} \ar@{^{(}->}[d]^{\mathrm{incl.}} \\
G_{r,\dot}/\GL_r(R) \ar[r]^{\rho} & B_{\dot}\GL_r(R).
}
\]
Explicitely $\nu$ is given in degree $p$ by
\[
\ul g = (g_1, \dots, g_p) \mapsto \nu(\ul g) = \sum_{i=0}^p x_ig_{i+1}\cdots g_p.
\]
Going to the limit $r \to \infty$ we get a map $B_{\dot}U(R) \to G_{\dot}/\GL(R)$, that induces a surjection 
\[
H_*(B_{\dot}U(R), \Q) \twoheadrightarrow H_*(G_{\dot}/\GL(R), \Q).
\]
\end{lemma}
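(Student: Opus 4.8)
The plan is to exhibit the stated formula as a well-defined simplicial map into the \emph{right} quotient $G_{r,\dot}/\GL_r(R)$, to check that it splits $\rho$ over $B_{\dot}U_r(R)$, and then to deduce the homological surjectivity from Theorem \ref{thm:RationalHomolIsom} together with a Hochschild--Serre argument. First I would verify that for $\ul g = (g_1,\dots,g_p)\in U_r(R)^p$ the matrix $\nu(\ul g)=\sum_{i=0}^p x_i\,g_{i+1}\cdots g_p$ really lies in $\GL_r(R_p^{\dag})$. Writing $h_i := g_{i+1}\cdots g_p$ (so $h_p=1$), each $g_j\in U_r(R)=1+\pi\Mat_r(R)$ reduces to the identity modulo $\pi$, hence so does every $h_i$; using $\sum_i x_i=1$ in $R_p^{\dag}$ one gets $\nu(\ul g)\equiv 1\pmod{\pi}$. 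Since $R_p^{\dag}$ is weakly complete, the Neumann series for $\nu(\ul g)^{-1}$ converges exactly as in the proof of Lemma \ref{lem:InvertiblePoly}, so $\nu(\ul g)\in G_{r,p}$.

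Next I would check simpliciality. Recall that on $G_{r,\dot}$ the operator $\del_i$ acts on the barycentric coordinates by $x_j\mapsto x_j$ ($j<i$), $x_i\mapsto 0$, $x_j\mapsto x_{j-1}$ ($j>i$), while $s_i$ acts by $x_j\mapsto x_j$ ($j<i$), $x_i\mapsto x_i+x_{i+1}$, $x_j\mapsto x_{j+1}$ ($j>i$). A direct reindexing of $\nu(\ul g)=\sum_i x_i h_i$ then shows $\del_i\nu_p(\ul g)=\nu_{p-1}(\del_i\ul g)$ for $0\le i\le p-1$ and $s_i\nu_p(\ul g)=\nu_{p+1}(s_i\ul g)$ for all $i$, \emph{as equalities in $G_{r,\dot}$}. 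For the remaining face one finds instead $\del_p\nu_p(\ul g)=\nu_{p-1}(\del_p\ul g)\cdot g_p$; since $g_p\in\GL_r(R)$ is a constant matrix, this identity holds only after passing to the right quotient $G_{r,\dot}/\GL_r(R)$. This both forces the right-quotient convention and finishes the proof that $\nu$ is a morphism of simplicial sets. Evaluating $\nu(\ul g)$ at the vertex $e_j$ (that is, $x_j=1$, $x_i=0$ for $i\ne j$) gives $\nu(\ul g)(e_j)=h_j=g_{j+1}\cdots g_p$, so with $\rho$ in its explicit vertex form $\sigma\mapsto(\sigma(e_0)\sigma(e_1)^{-1},\dots,\sigma(e_{p-1})\sigma(e_p)^{-1})$ the $j$-th coordinate of $\rho(\nu(\ul g))$ is $(g_j\cdots g_p)(g_{j+1}\cdots g_p)^{-1}=g_j$. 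Hence $\rho\circ\nu$ is the inclusion $B_{\dot}U_r(R)\hookrightarrow B_{\dot}\GL_r(R)$. All constructions are compatible with the stabilisation $r\mapsto r+1$, so passing to the colimit yields $\nu\colon B_{\dot}U(R)\to G_{\dot}/\GL(R)$ with $\rho\circ\nu=\iota$ the inclusion.

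It remains to treat the homological statement. On rational homology the relation $\rho_*\circ\nu_*=\iota_*$, combined with the fact that $\rho_*$ is an isomorphism (Theorem \ref{thm:RationalHomolIsom}), gives $\nu_*=\rho_*^{-1}\circ\iota_*$; therefore it suffices to prove that $\iota_*\colon H_*(U(R),\Q)\to H_*(\GL(R),\Q)$ is surjective. I would obtain this from the Hochschild--Serre spectral sequence of the extension $1\to U(R)\to\GL(R)\to\GL(k)\to 1$ (using $\GL(R)/U(R)=\GL(k)$). Because $\GL(k)=\dirlim_r\GL_r(k)$ is a filtered colimit of finite groups, its rational homology vanishes in positive degrees, so $E^2_{p,q}=H_p(\GL(k),H_q(U(R),\Q))=0$ for $p>0$. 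The sequence collapses and identifies $H_q(\GL(R),\Q)$ with the coinvariants $H_q(U(R),\Q)_{\GL(k)}$, the edge homomorphism being exactly $\iota_*$; since the projection onto coinvariants is surjective, so are $\iota_*$ and hence $\nu_*$.

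Apart from the purely combinatorial simplicial identities, the substance is concentrated in the final step: one must invoke that $\rho_*$ is a rational homology isomorphism (Theorem \ref{thm:RationalHomolIsom}, itself proved through the Serre spectral sequence and the vanishing of $H_{>0}(B_{\dot}\GL(k),\Q)$) and then extract surjectivity of $\iota_*$ from the collapse of the Hochschild--Serre spectral sequence over the finite-group homology of $\GL(k)$. I expect the only genuine care to be needed in recording that the relevant edge map is indeed $\iota_*$ and that the coinvariants projection is what yields surjectivity; the well-definedness of $\nu$ and the determination of the correct one-sided quotient are straightforward once the $\del_p$ computation is carried out.
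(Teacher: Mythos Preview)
Your proof is correct and follows essentially the same route as the paper: verify invertibility of $\nu(\ul g)$ via a convergent geometric series, check the simplicial identities (the paper also singles out $\del_p$ as the face requiring the right quotient), confirm $\rho\circ\nu$ is the inclusion, and then combine Theorem~\ref{thm:RationalHomolIsom} with the surjectivity of $\iota_*$ over $\Q$. The only minor variation is in this last point: the paper argues at each finite level using the transfer (restriction--corestriction) for the finite-index subgroup $U_r(R)\subset\GL_r(R)$ and then passes to the colimit, whereas you work directly at the infinite level via the Hochschild--Serre spectral sequence for $1\to U(R)\to\GL(R)\to\GL(k)\to 1$; both arguments are standard and equivalent here.
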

\begin{proof}
First of all we have to show, that the above formula for $\nu(\ul g)$ really defines an element in $\GL_r(R^{\dag}_p)$.
Thus, take $\ul g = (g_1, \dots, g_p) \in B_pU_r(R)$. 
Write $h_i := 1 - g_{i+1}\cdots g_p$. Since $g_i \in U_r(R)$ for all $i$, we have $h_i \in \pi\Mat_r(R)$. 
Define $h:=\sum_{i=0}^p x_ih_i \in \Mat_r(R[x_0, \dots, x_p])$ and denote its image in $\Mat_r(R^{\dag}_p)$ by the same letter. Then $\nu(\ul g) = \sum_{i=0}^p x_i(1- h_i) = 1 - h$ in $\Mat_r(R_p^{\dag})$.

Choose $1 < \rho < |\pi|^{-1}$ and consider the Banach algebra $T_{p+1}(\rho)$ in the variables $x_0, \dots, x_p$ with the $\rho$-norm $|\,.\,|_{\rho}$ (see section \ref{sec:PrelimAlgebras}). Define the norm $|\,.\,|_{\rho}$ on $\Mat_r(T_{p+1}(\rho))$ to be the maximum of the $\rho$-norms of the entries. Then $\Mat_r(T_{p+1}(\rho))$ obviously becomes a Banach algebra as well and $|h|_{\rho} \leq \max_i |x_ih_i|_{\rho} \leq \rho\cdot |\pi| < 1$ by definition of the $\rho$-norm and since $h_i\in \pi\Mat_r(R)$.
Hence $\sum_{k=0}^{\infty} h^k$ converges in $\Mat_r(T_{p+1}(\rho))\subset \Mat_r(K\<x_0, \dots, x_p\>^{\dag})$. Obviously, all the coefficients lie in $R$, hence $\sum_{k=0}^{\infty} h^k$ defines in fact an element in $\Mat_r(R\<x_0, \dots, x_p\>^{\dag})$. Its image in $\Mat_r(R_p^{\dag})$ clearly gives an inverse of $\nu(\ul g) = 1 - h$.

It is easy to check, that $\nu$ is a morphism of simplicial sets. For example, $\nu(\del_p(g_1, \dots, g_p)) = \sum_{i=0}^{p-1} x_ig_{i+1}\cdots g_{p-1} = \sum_{i=0}^{p-1} x_ig_{i+1}\cdots g_p = \del_p(\nu(g_1, \dots, g_p))$ in $\GL_r(R^{\dag}_{p-1})/\GL_r(R)$.

Recall, that  $\rho$ is given by $\sigma \mapsto (\sigma(e_0)\sigma(e_1)^{-1}, \dots, \sigma(e_{p-1})\sigma(e_p)^{-1})$. Clearly $\nu(\ul g)(e_{i-1})\nu(\ul g)(e_i)^{-1} = g_i$ and hence $\rho \circ \nu : B_{\dot}U_r(R) \to B_{\dot}\GL_r(R)$ is just the inclusion.

Since $k$ is finite, $U_r(R)$ has finite index in $\GL_r(R)$. Since $H_*(B_{\dot}U_r(R), \Q)$ is just group homology with rational coefficients, $H_*(B_{\dot}U_r(R), \Q) \to H_*(B_{\dot}\GL_r(R), \Q)$ is surjective by the usual restriction-corestriction argument. Going to the limit $r \to \infty$, $H_*(B_{\dot}U(R), \Q) \to H_*(B_{\dot}\GL(R), \Q)$ is surjective. Since $H_*(G_{\dot}/\GL(R), \Q) \to H_*(B_{\dot}\GL(R), \Q)$ is an isomorphism by theorem \ref{thm:RationalHomolIsom}, the claim follows.
\end{proof}

\subsection{The $p$-adic Borel regulator}

Here we recall the construction of the $p$-adic Borel regulator and the explicit description of the Lazard isomorphism. 

As before, $K$ denotes a finite extension of $\Q_p$ with ring of integers $R$ and uniformizer $\pi$. Recall, that $U_r(R) = 1 + \pi \Mat_r(R) \subset \GL_r(R)$. Denote by $\gl_r$ the $K$-Lie algebra of $\GL_r(R)$, viewed as a locally $K$-analytic Lie group, and by $\O^{\mathrm{la}}(X)$ the ring of locally analytic functions on a locally $K$-analytic manifold $X$. 
We denote by $H^*_{\mathrm{la}}(\GL_r(R), K)$ the \emph{locally analytic group cohomology} defined as the cohomology of the complex associated with the cosimplicial $K$-vector space $[p]\mapsto \O^{\mathrm{la}}(B_p\GL_r(R)) = \O^{\mathrm{la}}(\GL_r(R)^{\times p})$. Recall, that the Lie algebra cohomology $H^*(\gl_r, K)$ is the cohomology of the complex $\bigwedge^*\gl_r^{\vee}$ with differential induced by the Lie bracket (see e.g. \cite[Corollary 7.7.3]{Weibel}), where $\gl_r^{\vee}$ denotes the $K$-dual of $\gl_r$.

Huber and Kings prove the following version of Lazard's theorem:
\begin{thm}[Lazard, Huber--Kings]
There are isomorphisms 
\[
H^k_{\mathrm{la}}(\GL_r(R), K) \xrightarrow{\cong} H^k_{\mathrm{la}}(U_r(R), K) \xrightarrow{\cong} H^k(\gl_r, K).
\]
On the level of cochains the map to Lie algebra cohomology is induced by the map 
\[
\Phi: \O^{\mathrm{la}}(\GL_r(R)^{\times k}) \to \bigwedge^k \gl_r^{\vee},
\]
which is given on topological generators by $f_1\otimes \dots \otimes f_k \mapsto df_1(1) \wedge \dots \wedge df_k(1)$,
where $df(1)$ is the differential of $f$ at the unit element $1\in \GL_r(R)$.
\end{thm}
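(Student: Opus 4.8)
The statement to prove is Lazard's theorem in the Huber–Kings version: isomorphisms
\[
H^k_{\mathrm{la}}(\GL_r(R), K) \xrightarrow{\cong} H^k_{\mathrm{la}}(U_r(R), K) \xrightarrow{\cong} H^k(\gl_r, K),
\]
together with the explicit formula for the second map on cochains. Let me sketch how I would approach this.

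The plan is to treat the two isomorphisms separately, since they have genuinely different character. For the first map, restriction from $\GL_r(R)$ to the open (indeed finite-index, as $k$ is finite) subgroup $U_r(R) = 1 + \pi\Mat_r(R)$, I would use a restriction-corestriction argument in locally analytic cohomology: since $[\GL_r(R):U_r(R)]$ is finite and we work with $K$-coefficients (a field of characteristic $0$), the composition of restriction and the averaging corestriction is multiplication by the index, hence restriction is injective with a splitting; surjectivity follows because locally analytic cohomology of the finite quotient $\GL_r(k)$ vanishes in positive degrees (the quotient being finite, its locally analytic cohomology with $\Q$-coefficients is just group cohomology of a finite group, which is rationally trivial), so the Hochschild--Serre/inflation-restriction spectral sequence for $1 \to U_r(R) \to \GL_r(R) \to \GL_r(k) \to 1$ collapses. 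This is the easy half.

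For the second isomorphism, the heart of the matter, I would invoke Lazard's comparison theorem. Here $U_r(R) = 1 + \pi\Mat_r(R)$ is, for suitable normalization of $\pi$, a uniform (saturated, $p$-valuable) pro-$p$ group, and Lazard's fundamental result identifies its continuous/locally analytic cohomology with the Lie algebra cohomology $H^*(\gl_r, K)$ of its associated $K$-Lie algebra. The cleanest route is to cite the form of Lazard's isomorphism proved by Huber and Kings, which refines the classical statement by pinning down the comparison map on the level of cochains. I would first establish the abstract isomorphism $H^k_{\mathrm{la}}(U_r(R), K) \cong H^k(\gl_r, K)$ and only then verify that it is induced by the explicit map $\Phi$.

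The explicit description is where the real work lies, and I expect it to be the main obstacle. One must check that the map $\Phi\colon \O^{\mathrm{la}}(\GL_r(R)^{\times k}) \to \bigwedge^k \gl_r^{\vee}$ sending $f_1 \otimes \cdots \otimes f_k \mapsto df_1(1) \wedge \cdots \wedge df_k(1)$ is (i) a chain map from the locally analytic cochain complex to the Chevalley--Eilenberg complex, and (ii) induces the Lazard isomorphism in cohomology. For (i), I would compute the differential of a locally analytic group cochain, evaluate at the unit and differentiate, and match it against the Chevalley--Eilenberg differential; the group-cohomology coboundary terms involving products of group elements contribute, upon differentiation at $1$ and antisymmetrization, exactly the Lie-bracket terms, while the inhomogeneous terms vanish after taking differentials of constants. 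The antisymmetrization built into the wedge product is what converts the ordered tensor differentials into an alternating form and makes $\Phi$ descend to cohomology. For (ii), rather than reprove Lazard from scratch, I would rely on the Huber--Kings computation, which identifies precisely this $\Phi$ as a representative of the comparison map; the task then reduces to checking compatibility of normalizations. I would be careful that the formula is stated only on topological generators and extends by continuity using that $\O^{\mathrm{la}}$ is topologically generated by such elementary tensors, so that $\Phi$ is well defined and continuous. The composite isomorphism then follows by combining the two steps.
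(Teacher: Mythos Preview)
The paper does not give an independent proof of this statement: its entire proof is a citation to \cite[Theorems 1.2.1 and 4.7.1]{HK} and \cite[Theorem 4.3.1]{HKN}. The theorem is being \emph{quoted} as a known result of Lazard (in the explicit form due to Huber and Kings), not re-proved.

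Your proposal sketches substantially more than the paper does, and the outline is broadly reasonable as a road map to the cited results: the first isomorphism via finite index and vanishing of rational cohomology of the finite quotient $\GL_r(k)$, and the second via Lazard's comparison for $p$-valuable groups. Two caveats are worth noting. First, $U_r(R)=1+\pi\Mat_r(R)$ need not itself be $p$-saturated when $K/\Q_p$ is ramified; Huber and Kings handle this by working with a suitable smaller congruence subgroup, so your phrase ``for suitable normalization of $\pi$'' is hiding a genuine technical step. Second, for part (ii) of your plan you yourself say you would ``rely on the Huber--Kings computation'' rather than reprove Lazard; so in the end your argument collapses to the same citation the paper gives. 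In short, your proposal is not wrong, but it is an expanded gloss on references that the paper is content simply to cite.
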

\begin{proof}
This is proven in \cite[Theorems 1.2.1. and 4.7.1]{HK}. See also \cite[Theorem 4.3.1]{HKN}.
\end{proof}

\begin{dfn}[\cite{HK} Definitions 0.4.5 and 1.2.3]\label{def:PrimitiveElem}
For $n \leq r$ the (primitive) element $p_n = p_{n,r} \in H^{2n-1}(\gl_r, K)$ is the class represented by the cocycle
\[
X_1 \wedge \dots\wedge X_{2n-1} \mapsto \frac{((n-1)!)^2}{(2n-1)!} \sum_{\sigma\in\Sy_{2n-1}} \sgn(\sigma)\Tr(X_{\sigma(1)} \cdots X_{\sigma(2n-1)}).
\]
Here $\Sy_{2n-1}$ denotes the symmetric group on $2n-1$ elements.
Define $b_{n,r} \in H^{2n-1}(\GL_r(R), K)$ to be the image of $p_{n,r}$ under the composition
\[
H^{2n-1}(\gl_r, K) \xleftarrow{\cong} H^{2n-1}_{\mathrm{la}}(\GL_r(R), K) \to H^{2n-1}(B_{\dot}\GL_r(R), K),
\]
where the right hand map is the canonical map from locally analytic to discrete group cohomology. Obviously, the $b_{n,r}$ are compatible for different $r$.

The \emph{$p$-adic Borel regulator} is the composition
\[
r_p: K_{2n-1}(R) \xrightarrow{\mathrm{Hur.}} H_{2n-1}(\GL(R), \Q) = \dirlim_r H_{2n-1}(\GL_r(R), \Q) \xrightarrow{\dirlim_r b_{n,r}} K.
\]
\end{dfn}
\smallskip

For later use, we record the following alternative description of the map $\Phi$ in the theorem above.
Consider $\GL_r(R)$ as a $K$-Lie group and let $\exp$ be the exponential map of $\GL_r(R)$ defined on a neighbourhood of zero in $\gl_r$. For a
locally analytic function $f \in \mathcal{O}^{\mathrm{la}}(\GL_r(R)^{\times k})$ we define $\Delta f \in \bigwedge^k \gl_r^{\vee}$ by 
\begin{multline*}
\Delta f (X_1, \dots, X_k) := \\
\sum_{\sigma\in\Sy_k} \sgn(\sigma) \frac{d^k}{dt_1 \dots dt_k} f(\exp(t_1X_{\sigma 1}), \dots, \exp(t_k X_{\sigma k}))\Big|_{t_1 = \dots = t_n = 0}.
\end{multline*}
If $f$ is of the special form $f = f_1 \otimes \dots \otimes f_k$, one has 
\begin{multline*}
\frac{d}{dt_i}f(\exp(t_1X_{\sigma 1}), \dots, \exp(t_k X_{\sigma k}))\Big|_{t_i=0} =\\
=f_1(\exp(t_1X_{\sigma 1}))\cdots df_i(1)(X_{\sigma i})  \cdots f_k(\exp(t_kX_{\sigma k}))
\end{multline*}
and therefore
\begin{eqnarray*}
\Delta f (X_1, \dots, X_k) &=& \sum_{\sigma\in\Sy_k} \sgn(\sigma) df_1(1)(X_{\sigma 1})\cdots df_k(1)(X_{\sigma k})\\
&=& df_1(1)\wedge \dots \wedge df_k(1) (X_1, \dots, X_k) = \Phi(f) (X_1, \dots, X_k).
\end{eqnarray*}

The vector space $\O^{\mathrm{la}}(\GL_r(R)^k)$ carries a natural locally convex topology \cite[\S 12]{Schneider}. Using proposition 12.4 of {\it loc. cit.}, it is easy to see, that both, $\Phi$ and $\Delta$, are continuous for this topology.
Since moreover the functions of the form $f_1 \otimes \cdots \otimes f_k$ are topological generators of 
$\mathcal{O}^{\mathrm{la}}(\GL_r(R)^{\times k})$, we get:
\begin{cor}\label{cor:Lazard}
The Lazard isomorphism 
$H^k_{\mathrm{la}}(\GL_r(R), K) \xrightarrow{\isom} H^k(\gl_r, K)$
is induced by $\Delta: \mathcal{O}^{\mathrm{la}}(\GL_r(R)^{\times k}) \to \bigwedge^k \gl_r^{\vee}$.
\end{cor}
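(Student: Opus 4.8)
The plan is to deduce the corollary from the preceding theorem of Lazard--Huber--Kings by showing that the two maps $\Phi$ and $\Delta$ from $\O^{\mathrm{la}}(\GL_r(R)^{\times k})$ to $\bigwedge^k \gl_r^{\vee}$ are in fact \emph{equal}. Once this is established, the corollary is immediate: the theorem asserts that $\Phi$, applied in each cosimplicial degree, is a cochain map inducing the Lazard isomorphism $H^k_{\mathrm{la}}(\GL_r(R), K) \isom H^k(\gl_r, K)$, and since $\Delta = \Phi$ on the nose, $\Delta$ inherits both the cochain-map property and the induced map on cohomology.

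To prove $\Phi = \Delta$ I would invoke a density argument, assembling the three facts already at hand. First, both $\Phi$ and $\Delta$ are $K$-linear --- manifest from the defining formula for $\Delta$, and true for $\Phi$ by construction. Second, both are continuous for the natural locally convex topology on $\O^{\mathrm{la}}(\GL_r(R)^{\times k})$; this was established in the discussion preceding the statement via \cite[Proposition 12.4]{Schneider}. Third, the computation carried out just above shows that on every elementary tensor $f = f_1 \otimes \dots \otimes f_k$ one has $\Delta f = df_1(1) \wedge \dots \wedge df_k(1) = \Phi(f)$, so $\Phi$ and $\Delta$ agree on all such tensors. Since the elementary tensors are topological generators, i.e. their $K$-linear span is dense, and two continuous $K$-linear maps coinciding on a dense subspace coincide everywhere, I conclude $\Phi = \Delta$ on all of $\O^{\mathrm{la}}(\GL_r(R)^{\times k})$.

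There is no serious obstacle here: the one genuinely analytic input --- that $\Delta$, defined through differentiation along the one-parameter subgroups $t \mapsto \exp(tX)$, is continuous for the topology on $\O^{\mathrm{la}}$ --- has already been absorbed into the cited continuity statement. The remaining argument is purely formal (linearity, density, agreement on generators), so the corollary follows at once from the theorem.
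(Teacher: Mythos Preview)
Your argument is correct and is exactly the paper's own: it establishes $\Phi=\Delta$ by noting that both maps are continuous linear maps on $\O^{\mathrm{la}}(\GL_r(R)^{\times k})$ agreeing on the topological generators $f_1\otimes\cdots\otimes f_k$, whence the corollary follows immediately from the theorem.
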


The same description applies for $U_r(R)$ instead of $\GL_r(R)$.

\subsection{Local analyticity of the relative Chern character}

Recall that the relative Chern character $\Ch^{\rel}_{n,2n-1} : K^{\rel}_{2n-1}(\Spec(R)) \to H^0_{\rel}(\Spec(R)/R, n) = K$ is  determined by a compatible family of homomorphisms $H_{2n-1}(\mathscr F_r, \Z) \to K$ and that we have a natural homotopy equivalence $\eta_r: G_{r,\dot}/\GL_r(R) \to \mathscr F_r$. Similar as in the complex situation we have:
\begin{prop}\label{prop:ExplicitCocyclePAdicRelReg}
The composition
\[
H_{2n-1}(G_{r,\dot}/\GL_r(R), \Z) \xrightarrow{\cong} H_{2n-1}(\mathscr F_r, \Z) \xrightarrow{\widetilde\Ch_n^{\rel}(T_r, E_r, \alpha_r)} K
\]
is given by the cocycle 
\[
\sigma \mapsto (-1)^{n-1}\frac{(n-1)!}{(2n-1)!} \Tr \int_{\Delta^{2n-1}} (d\sigma\cdot\sigma^{-1})^{2n-1}.
\]
\end{prop}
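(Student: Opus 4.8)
The plan is to transfer the computation of the complex case (Proposition~\ref{prop:ExplicitCocycle}) verbatim to the $p$-adic setting, replacing the complex homotopy operator $K$ of Lemma~\ref{lem:RelChernCharForm} by the dagger homotopy operator $K$ of Lemma~\ref{lem:HomotopyOperator} and the integration over $\Delta^k$ by the map $\int_{\Delta^k}$ of Remark~\ref{rem:IntegrationStandardSimplex}. First I would fix $r$ and drop it from the notation. By Proposition~\ref{prop:ComparisonRelClassesRSchemes}(i) and the comparison of refined with Chern--Weil secondary classes, the map $\widetilde\Ch_n^{\rel}(T_r,E_r,\alpha_r/R)$ agrees, after composition with $H^{2n-1}_{\rel}(X\otimes\mathscr F_r/R,n)\to H^{2n-1}_{\rel}((\widehat{X\otimes\mathscr F_r})_K,n)$, with the Chern--Weil secondary class $\Ch_n^{\rel}(T,\hat E_K,\alpha)$ for the analytic bundle on the simplicial dagger space. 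For $X=\Spec(R)$ the dagger space is a point, so there is no difference between the refined and the Chern--Weil class, and the relative cohomology group is simply $K$.

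Next I would use the explicit diagram of topological morphisms preceding the statement, exactly as in the complex case: the composition in question is induced by $\Ch_n^{\rel}(\Gamma^T,\beta^*\Gamma^{\eta^*E},\beta)$, where $\beta$ is the map given in degree $p$ on $\sigma\in\GL_r(R^{\dag}_p)/\GL_r(R)$ by $\beta_i(\sigma)=\sigma(e_i)^{-1}\sigma$. Since the base is a point, the standard connection on $\eta^*E$ is the zero matrix (cf.\ Example~\ref{ex:standard_connection}), so its pullback via $\beta$ is $\beta_i^{-1}d\beta_i=\sigma^{-1}d\sigma$. The homotopy connection on $\pi^*(\eta^*E)$ over $(\GL_r(R)\backslash G_{r,\dot})\times I$ is $\Gamma_i=(1-t)\,\sigma^{-1}d\sigma$, and its curvature is
\[
R_i=d\Gamma_i+\Gamma_i^2=-dt\,(\sigma^{-1}d\sigma)+(t^2-t)(\sigma^{-1}d\sigma)^2,
\]
whence $R_i^n=(t^2-t)^n(\sigma^{-1}d\sigma)^{2n}-n\,dt\,(t^2-t)^{n-1}(\sigma^{-1}d\sigma)^{2n-1}$. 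Applying $K$ and then $\int_{\Delta^{2n-1}}$ reduces everything to the scalar integral $\int_0^1(t^2-t)^{n-1}dt=(-1)^{n-1}\frac{((n-1)!)^2}{(2n-1)!}$, giving the cocycle $\sigma\mapsto(-1)^n\frac{(n-1)!}{(2n-1)!}\Tr\int_{\Delta^{2n-1}}(\sigma^{-1}d\sigma)^{2n-1}$. Finally, to match the statement I would rewrite the trace form in terms of $d\sigma\cdot\sigma^{-1}$ instead of $\sigma^{-1}d\sigma$: the two Maurer--Cartan forms are conjugate, $d\sigma\cdot\sigma^{-1}=\sigma(\sigma^{-1}d\sigma)\sigma^{-1}$, so their $(2n-1)$-st powers have the same trace, and an overall sign $(-1)^{2n-1}=-1$ enters because each of the $2n-1$ factors $\sigma^{-1}d\sigma=-\sigma^{-1}(d\sigma)$ contributes under the substitution; this accounts for the change from $(-1)^n$ to $(-1)^{n-1}$ in the final formula.

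The one point requiring genuine care, rather than formal transcription, is the \emph{convergence} of the integration operator in the dagger context: unlike the smooth case, the operators $K$ and $\int_0^1\,.\,dt$ are only defined because of overconvergence (Lemma~\ref{lem:FormalIntegration}), and the outer integral $\int_{\Delta^{2n-1}}$ must be applied to a genuine overconvergent form. I would check that $\sigma^{-1}d\sigma\in\Mat_r(\Omega^1(\Delta^p\times\{pt\}))$ really is an overconvergent $1$-form on $\Delta^p$ with matrix entries in $R^{\dag}_p$ (which holds since $\sigma\in\GL_r(R^{\dag}_p)$), so that all the formal manipulations above take place in the Washnitzer algebra and the termwise integration of Lemma~\ref{lem:FormalIntegration} and Remark~\ref{rem:IntegrationStandardSimplex} is legitimate. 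This is the main obstacle in the sense that it is the only place where the $p$-adic proof is not a literal copy of the complex one; everything else is the same computation carried out with the $p$-adic integration theory established in Chapter~5.
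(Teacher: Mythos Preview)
Your overall strategy---reduce to the Chern--Weil secondary class via Proposition~\ref{prop:ComparisonRelClassesRSchemes} (the paper also invokes Remark~\ref{rem:RelCohomProper} for the identification over a proper base) and then rerun the computation of Proposition~\ref{prop:ExplicitCocycle} with the dagger homotopy operator---is exactly what the paper does. The convergence issue you flag is real but already handled by Lemma~\ref{lem:FormalIntegration} and Remark~\ref{rem:IntegrationStandardSimplex}, so no further argument is needed there.

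The genuine error is in your final sign paragraph. In the $p$-adic section the paper works with the \emph{right} quotient $G_{r,\dot}/\GL_r(R)$, not the left quotient $\GL_r(\C)\backslash G_{r,\dot}$ used in Section~\ref{sec:ExplicitCocycle} (see the footnote there and Lemma~\ref{lem:HomotopyFibres} in the Appendix). With the right-quotient conventions of Lemma~\ref{lem:HomotopyFibres}, the component of $\eta$ into $E_{\dot}G_{\dot}$ is $\beta_i(\sigma)=\sigma(e_i)\sigma^{-1}$, \emph{not} $\sigma(e_i)^{-1}\sigma$. Hence
\[
\beta_i^{-1}d\beta_i=\sigma\,\sigma(e_i)^{-1}\cdot\sigma(e_i)\,d(\sigma^{-1})=\sigma\,d(\sigma^{-1})=-\,d\sigma\cdot\sigma^{-1},
\]
and the extra sign $(-1)^{2n-1}=-1$ in the $(2n-1)$-st power is what turns the complex-case prefactor $(-1)^n$ into $(-1)^{n-1}$ and the form $\sigma^{-1}d\sigma$ into $d\sigma\cdot\sigma^{-1}$. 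This is precisely the ``up to a sign'' the paper alludes to. Your own conjugation argument already shows $\Tr\bigl((\sigma^{-1}d\sigma)^{2n-1}\bigr)=\Tr\bigl((d\sigma\cdot\sigma^{-1})^{2n-1}\bigr)$ with \emph{no} sign, so that cannot be the source; and the assertion ``$\sigma^{-1}d\sigma=-\sigma^{-1}(d\sigma)$'' is simply false. Fix the formula for $\beta_i$ to match the right-quotient convention and the sign falls out correctly; the rest of your proof is fine.
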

\begin{proof}
Write $X = \Spec(R)$.
For the proof note the following: The morphism $H_{2n-1}(\mathscr F_r, \Z) \to H^0_{\rel}(X/R, n)$ is induced by the class $\widetilde\Ch_n^{\rel}(T_r,E_r,\alpha_r/R) \in H^{2n-1}_{\rel}(X\otimes\mathscr F_r/R,n)$. By remark \ref{rem:RelCohomProper} this group is isomorphic to $H^{2n-1}_{\rel}((\widehat X)_K\otimes \mathscr F_r, n)$ and by proposition \ref{prop:ComparisonRelClassesRSchemes} the class $\widetilde\Ch_n^{\rel}(T_r,E_r,\alpha_r/R)$ corresponds under this isomorphism to the class $\Ch_n^{\rel}(T_r, (\widehat E_r)_K, \alpha_r)$, where $(\widehat E_r)_K$ is the bundle induced by $E_r$ on $(\widehat X)_K\otimes \mathscr F_r$. Hence we may work with this class constructed via Chern-Weil theory and there the same (up to a sign) computation as in the complex case (proposition \ref{prop:ExplicitCocycle}) applies.
\end{proof}

Next recall, that we constructed maps of simplicial sets $\nu : B_{\dot}U_r(R) \to G_{r,\dot}/\GL_r(R)$, which induce a surjection $H_*(B_{\dot}U(R), \Q) \to H_*(G_{\dot}/\GL(R), \Q)$. The composition
\[
H_{2n-1}(B_{\dot}U_r(R), \Q) \xrightarrow{\nu} H_*(G_{r,\dot}/\GL_r(R), \Q) \xrightarrow{\Ch_{n,2n-1}^{\rel}} K
\]
is then given by the cocycle
\begin{align}
& U_r(R)^{\times (2n-1)} \to K,\notag \\
& \ul g = (g_1, \dots, g_{2n-1}) \mapsto (-1)^{n-1}\frac{(n-1)!}{(2n-1)!}\Tr\int_{\Delta^{2n-1}} (d\nu(\ul g)\cdot\nu(\ul g)^{-1})^{2n-1} \label{eq:cocyclePAd}
\end{align}
where $\nu(\ul g) = \sum_{i=0}^{2n-1}x_ig_{i+1}\cdots g_{2n-1}$.

We want to show, that this cocycle is locally analytic, hence may be compared with the Lie algebra cocycle defining the $p$-adic Borel regulator using the Lazard isomorphism.

Let $\rho >1$ and consider the Banach algebra $T_{n+1}(\rho) = K\<\rho^{-1}x_0, \dots, \rho^{-1}x_n\>$. 
Write $\Omega^n(\Delta^n)_{\rho} := K\<\rho^{-1}\ul x\>/(\sum_i x_i-1) \otimes_K \bigwedge^n_K\frac{\bigoplus_{i=0}^nKdx_i}{\sum_i dx_i}$ (cf. remark \ref{rem:IntegrationStandardSimplex}). This is a $K$-Banach space and we have  a natural map $\Omega^n(\Delta^n)_{\rho} \to \Omega^n(\Delta^n)$, which may be composed with the integration map $\int_{\Delta^n}\colon \Omega^n(\Delta^n) \to K$.

\begin{lemma}\label{lem:IntegrationLocAn}
Let $M$ be a locally $K$-analytic manifold and $F : M \to \Omega^n(\Delta^n)_{\rho}$  a locally analytic function. Then 
\[
M \ni u \mapsto \int_{\Delta^n} F(u) \in K
\]
is also locally analytic and will be denoted by $\int_{\Delta^n} F$.

If $dF(u): T_uM\to \Omega^n(\Delta^n)_{\rho}$ denotes the differential of $F$ at $u \in M$ and $v$ is a tangent vector to $M$ at $u$, we have
\[
d\left(\int_{\Delta^n} F\right)(u)(v) = \int_{\Delta^n}\left(dF(u)(v)\right).
\]
\end{lemma}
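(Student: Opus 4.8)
The plan is to observe that the entire statement is a formal consequence of one fact already in hand: by the last assertion of remark~\ref{rem:IntegrationStandardSimplex}, namely the continuity of the map \eqref{eq:ContIntegration}, the integration operator
\[
\int_{\Delta^n}\colon \Omega^n(\Delta^n)_{\rho} \to K
\]
is a \emph{continuous $K$-linear} map of $K$-Banach spaces. The function under consideration is the composition of the locally analytic map $F$ with this continuous linear map, so the lemma follows from the general principle that post-composition with a continuous linear map preserves local analyticity and commutes with differentiation.

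First I would treat local analyticity. Fix $u_0 \in M$ and a chart identifying a neighbourhood of $u_0$ with a ball in some $K^m$ on which $F$ is given by a convergent power series $F(u) = \sum_{\alpha} c_{\alpha}(u-u_0)^{\alpha}$ with coefficients $c_{\alpha} \in \Omega^n(\Delta^n)_{\rho}$ (this being the definition of local analyticity for Banach-space valued functions, cf. \cite[\S 12]{Schneider}). Applying $\int_{\Delta^n}$ and using its continuity and $K$-linearity, one obtains $\int_{\Delta^n} F(u) = \sum_{\alpha} \bigl(\int_{\Delta^n} c_{\alpha}\bigr)(u-u_0)^{\alpha}$; the estimate $\bigl|\int_{\Delta^n}c_{\alpha}\bigr| \leq C\cdot \Norm{c_{\alpha}}$, with $C$ the operator norm of $\int_{\Delta^n}$, shows that this scalar series converges on the same ball, whence $u \mapsto \int_{\Delta^n}F(u)$ is locally analytic.

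For the differential formula I would invoke the chain rule together with the fact that a continuous $K$-linear map $\ell$ is its own derivative, i.e. its differential at every point equals $\ell$. Applying this to $\ell = \int_{\Delta^n}$ gives
\[
d\Bigl(\int_{\Delta^n} F\Bigr)(u)(v) = \int_{\Delta^n}\bigl(dF(u)(v)\bigr),
\]
as claimed. Concretely, the same identity can be read off the power series expansion above by comparing the linear terms in $(u-u_0)$.

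There is essentially no hard step here: all the analytic substance has been absorbed into the continuity statement \eqref{eq:ContIntegration} of remark~\ref{rem:IntegrationStandardSimplex}, and what remains is the standard stability of locally analytic functions under continuous linear operations. The only point requiring a little care is to ensure that the notion of ``locally analytic with values in a $K$-Banach space'' and the associated differential calculus are those for which these formal manipulations (term-by-term application of a bounded operator, and the chain rule) are legitimate; for this I would simply appeal to the relevant parts of \cite{Schneider}.
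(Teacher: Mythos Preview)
Your proposal is correct and follows essentially the same approach as the paper: the paper's proof simply notes that $\int_{\Delta^n}\colon \Omega^n(\Delta^n)_{\rho} \to K$ is continuous by remark~\ref{rem:IntegrationStandardSimplex}, hence the composition with the locally analytic $F$ is locally analytic, and the differential formula is ``simply the chain rule''. Your version just spells out these two steps in more detail.
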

Note, that the ``$d$'' is the differential on $M$ and has nothing to do with the differential on $\Omega^*(\Delta^n)$.
\begin{proof}
As noted in remark \ref{rem:IntegrationStandardSimplex} the composition $\Omega^n(\Delta^n)_{\rho} \to K, \omega \mapsto \int_{\Delta^n} \omega$ is continuous. Hence $u \mapsto \int_{\Delta^n} F(u)$ being the composition of a bounded linear map with a locally analytic map is locally analytic as well. The second assertion is simply the chain rule. 
\end{proof}

If $F : M \to \Mat_r(\Omega^n(\Delta^n)_{\rho})$ is a locally analytic function with values in the Banach space of $r\times r$-matrices with coefficients in $\Omega^n(\Delta^n)_{\rho}$, we get the locally analytic function $\int_{\Delta^n} F$ with values in $\Mat_r(K)$ applying the integral component-wise.

\begin{lemma}\label{lem:CocycleLocAn}
The cocycle \eqref{eq:cocyclePAd} is locally analytic.
\end{lemma}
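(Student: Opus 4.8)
The plan is to realize the cocycle \eqref{eq:cocyclePAd} as the composition of the integration map of Lemma \ref{lem:IntegrationLocAn} with a locally analytic map into a fixed matrix Banach space, so that local analyticity follows formally from continuity and analyticity of the constituent operations. Note first that $U_r(R)^{\times(2n-1)} = (1+\pi\Mat_r(R))^{\times(2n-1)}$ is an open subset of $\Mat_r(R)^{2n-1}$, hence a locally $K$-analytic manifold. Fix once and for all a real number $\rho$ with $1 < \rho < |\pi|^{-1}$, and, extending the notation of Remark \ref{rem:IntegrationStandardSimplex} to all degrees, write $\Omega^*(\Delta^{2n-1})_{\rho} := K\<\rho^{-1}\ul x\>/(\sum_i x_i - 1) \otimes_K \bigwedge\nolimits^*_K \frac{\bigoplus_{i=0}^{2n-1} K dx_i}{\sum_i dx_i}$ in the $2n$ variables $x_0, \dots, x_{2n-1}$. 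Since $K\<\rho^{-1}\ul x\>/(\sum_i x_i -1)$ is a Banach algebra and the exterior part is finite dimensional, wedge product makes $\bigoplus_j \Mat_r(\Omega^j(\Delta^{2n-1})_{\rho})$ into a Banach algebra with continuous multiplication.

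First I would check that the two elementary ingredients depend analytically on $\ul g$. The matrix entries of $\nu(\ul g) = \sum_{i=0}^{2n-1} x_i\, g_{i+1}\cdots g_{2n-1}$ are affine in the $x_i$ with coefficients polynomial in the entries of the $g_j$; hence $\ul g \mapsto \nu(\ul g) \in \Mat_r(\Omega^0(\Delta^{2n-1})_{\rho})$ is polynomial, and taking the exterior derivative in the simplex directions, $\ul g \mapsto d\nu(\ul g) = \sum_i dx_i\, g_{i+1}\cdots g_{2n-1} \in \Mat_r(\Omega^1(\Delta^{2n-1})_{\rho})$ is polynomial as well. In particular both are locally analytic on $U_r(R)^{\times(2n-1)}$.

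The key point is the local analyticity of $\ul g \mapsto \nu(\ul g)^{-1}$. Writing $\nu(\ul g) = 1 - h(\ul g)$ with $h(\ul g) = \sum_i x_i(1 - g_{i+1}\cdots g_{2n-1})$ exactly as in Lemma \ref{lem:ConstructionNu}, the estimate established there gives $\| h(\ul g)\|_{\rho} \leq \rho|\pi| < 1$ \emph{uniformly} in $\ul g$; thus $\nu(\ul g)$ is a unit of the one fixed Banach algebra $\Mat_r(\Omega^0(\Delta^{2n-1})_{\rho})$ for every $\ul g$, with $\nu(\ul g)^{-1} = \sum_{k\geq 0} h(\ul g)^k$. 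As the unit group of a Banach algebra is open and inversion there is locally analytic (near a unit $u_0$ one has $u^{-1} = (\sum_{k\geq 0}(1 - u_0^{-1}u)^k)u_0^{-1}$), and $\ul g \mapsto \nu(\ul g)$ is polynomial, the composite $\ul g \mapsto \nu(\ul g)^{-1}$ is locally analytic into $\Mat_r(\Omega^0(\Delta^{2n-1})_{\rho})$. This is the one step where overconvergence is genuinely used: it is what lets a single $\rho$ serve for all $\ul g$ and keeps every inverse in one and the same Banach space, and it is the main obstacle of the argument.

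It then remains only to assemble these facts. Since wedge multiplication on $\Mat_r(\Omega^*(\Delta^{2n-1})_{\rho})$ is continuous and bilinear, products of locally analytic Banach-space valued functions are again locally analytic, so $\ul g \mapsto \bigl(d\nu(\ul g)\cdot\nu(\ul g)^{-1}\bigr)^{2n-1} \in \Mat_r(\Omega^{2n-1}(\Delta^{2n-1})_{\rho})$ is locally analytic. Applying Lemma \ref{lem:IntegrationLocAn} entrywise to this matrix-valued top-degree form shows that $\ul g \mapsto \int_{\Delta^{2n-1}}(d\nu(\ul g)\cdot\nu(\ul g)^{-1})^{2n-1} \in \Mat_r(K)$ is locally analytic; composing with the continuous linear map $\Tr$ and with multiplication by the scalar $(-1)^{n-1}\frac{(n-1)!}{(2n-1)!}$ preserves local analyticity and yields precisely the cocycle \eqref{eq:cocyclePAd}. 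Everything outside the uniform control of the inverse is thus a formal bookkeeping of analytic and continuous operations.
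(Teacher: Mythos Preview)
Your argument is correct and follows essentially the same route as the paper: fix $1<\rho<|\pi|^{-1}$, use the uniform bound $\|h(\ul g)\|_{\rho}\le \rho|\pi|<1$ from Lemma~\ref{lem:ConstructionNu} to control $\nu(\ul g)^{-1}$ inside one fixed Banach algebra, then multiply and integrate via Lemma~\ref{lem:IntegrationLocAn}. The only cosmetic difference is that where you invoke the general principle ``inversion in a Banach algebra is locally analytic, and composition with a polynomial map preserves this,'' the paper makes this explicit by working in the Banach algebra $\mathcal F_{|\pi|}(K^{r^2(2n-1)},T_{2n}(\rho))$ of $|\pi|$-convergent power series and showing directly that $\sum_{k\ge 0}F^k$ converges there.
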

\begin{proof}
We introduce some more notation. For any $K$-Banach space $(V, \|\,.\,\|)$ and $\epsilon > 0$ we denote by $\mathcal F_{\epsilon}(K^m, V)$ the $K$-Banach space of $\epsilon$-convergent power series in $m$ variables with coefficients in $V$, i.e. formal power series $\sum_{\nu} v_{\nu}\ul x^{\nu}$, such that $\|v_{\nu}\|\epsilon^{|\nu|} \xrightarrow{|\nu|\to\infty} 0$, equipped with the norm $\|\sum_{\nu} v_{\nu}\ul x^{\nu}\|_{\epsilon} = \max_{\nu} \|v\|\cdot \epsilon^{|\nu|}$. If $A$ is a $K$-Banach algebra, then $\mathcal F_{\epsilon}(K^m, A)$, equipped with the usual multiplication of power series, becomes a $K$-Banach algebra as well. 

We show, that $\nu^{-1}: U_r(R)^{\times (2n-1)} \to \GL_r(R^{\dag}_{2n-1}) \subset \Mat_r(K^{\dag}_{2n-1})$, $\ul g \mapsto (\sum_i x_ig_{i+1}\cdots g_{2n-1})^{-1}$ factors through a locally analytic map $U_r(R)^{\times (2n-1)} \to \Mat_r(T_{2n}(\rho))$ for any $1<\rho<|\pi|^{-1}$. 

Thus fix $1 < \rho < |\pi|^{-1}$. Consider the locally analytic function 
\begin{align*}
h_i\colon&  U_r(R)^{\times(2n-1)} \to \pi\Mat_r(R) \subset \Mat_r(K),\\
&(g_1, \dots, g_{2n-1}) \mapsto 1 - g_{i+1}\cdots g_{2n-1}.
\end{align*}
Then $h := \sum_{i=0}^{2n-1} x_ih_i\colon U_r(R)^{\times(2n-1)}\to \Mat_r(T_{2n}(\rho))$ is also locally analytic and $\nu(\ul g)^{-1}$ is the image of $\sum_{k=0}^{\infty} h(\ul g)^k \in \Mat_r(T_{2n}(\rho))$ in $\Mat_r(K^{\dag}_{2n-1})$ (cf. the proof of lemma \ref{lem:ConstructionNu}). Hence we have to show, that $\sum_{k=0}^{\infty} h^k: U_r(R)^{\times (2n-1)} \to \Mat_r(T_{2n}(\rho))$ is locally analytic.
We have the chart $U_r(R)^{\times (2n-1)} \xrightarrow{\psi} \pi\Mat_r(R)^{\times (2n-1)} \subset \Mat_r(K)^{\times (2n-1)} \cong K^{r^2(2n-1)}$, whose inverse is given by $(M_1, \dots, M_{2n-1}) \mapsto (1 + M_1, \dots, 1+M_{2n-1})$. Then $h\circ \psi^{-1}$ is given by 
\[
(M_1, \dots, M_{2n-1})\mapsto \sum_i x_i\left(1-(1+M_{i+1})\cdots(1+M_{2n-1})\right).
\]
This map is clearly given by a power series $F$ (in fact a polynomial) in $\mathcal F_{|\pi|}\left(K^{r^2(2n-1)}, T_{2n}(\rho)\right)$ with $\|F\|_{|\pi|} \leq \rho\cdot |\pi|<1$ [Note that here the $x_i$'s are the coefficients, and the $M_i$'s are the variables. Since $1 - (1+M_{i+1})\cdots(1+M_{2n-1}))$ has no constant term and only integral coefficients, we have $\| 1 - (1+M_{i+1})\cdots(1+M_{2n-1}))\|_{|\pi|} \leq |\pi|$. On the other hand $|x_i|_{\rho} = \rho$.].
Consequently $\sum_{k=0}^{\infty} F^k$ converges in $\mathcal F_{|\pi|}\left(K^{r^2(2n-1)}, T_{2n}(\rho)\right)$ to a power series representing $(\sum_{k=0}^{\infty} h^k)\circ \psi^{-1}$, i.e. $\sum_{k=0}^{\infty} h^k$ is locally analytic.

Since sums and products of locally analytic functions with values in $T_{2n}(\rho)$ are again locally analytic, it follows, that 
$(d\nu \cdot \nu^{-1})^{2n-1} : U_r(R)^{\times(2n-1)} \to \Mat_r(\Omega^{2n-1}(\Delta^{2n-1})_{\rho})$ is locally analytic hence the cocycle \eqref{eq:cocyclePAd} is locally analytic by lemma \ref{lem:IntegrationLocAn}.
\end{proof}

\subsection{Comparison of the $p$-adic Borel regulator and the relative Chern character}

According to lemma \ref{lem:CocycleLocAn} the cocycle \eqref{eq:cocyclePAd} defines a class in $H^{2n-1}_{\mathrm{la}}(U_r(R), K)$ and we have:
\begin{thm}
The class of the cocycle \eqref{eq:cocyclePAd} is mapped to $\frac{(-1)^n}{(n-1)!}p_n$ by the Lazard isomorphism $H^{2n-1}_{\mathrm{la}}(U_r(R), K) \xrightarrow[\cong]{\Phi=\Delta} H^{2n-1}(\gl_r, K)$.
\end{thm}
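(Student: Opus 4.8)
The plan is to compute the image of the cocycle \eqref{eq:cocyclePAd} under the Lazard map $\Delta$ explicitly and compare it directly with the primitive element $p_n$ of Definition \ref{def:PrimitiveElem}. By Corollary \ref{cor:Lazard} the Lazard isomorphism is induced on cochains by the continuous map $\Delta: \O^{\mathrm{la}}(U_r(R)^{\times(2n-1)}) \to \bigwedge^{2n-1}\gl_r^{\vee}$, so it suffices to evaluate $\Delta$ applied to the locally analytic cocycle
\[
c(\ul g) = (-1)^{n-1}\frac{(n-1)!}{(2n-1)!}\Tr\int_{\Delta^{2n-1}}(d\nu(\ul g)\cdot\nu(\ul g)^{-1})^{2n-1},
\]
whose local analyticity was established in Lemma \ref{lem:CocycleLocAn}. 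Concretely, I must compute
\[
\Delta c(X_1,\dots,X_{2n-1}) = \sum_{\sigma\in\Sy_{2n-1}}\sgn(\sigma)\,\frac{d^{2n-1}}{dt_1\cdots dt_{2n-1}}\,c(\exp(t_1 X_{\sigma 1}),\dots,\exp(t_{2n-1}X_{\sigma(2n-1)}))\Big|_{t=0},
\]
and show the result represents $\frac{(-1)^n}{(n-1)!}p_n$ in $H^{2n-1}(\gl_r,K)$.

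\textbf{Carrying out the computation.} First I would use Lemma \ref{lem:IntegrationLocAn} to interchange the iterated derivative $\frac{d^{2n-1}}{dt_1\cdots dt_{2n-1}}$ with the integral $\int_{\Delta^{2n-1}}$, reducing the problem to differentiating the matrix-valued locally analytic function $\ul g \mapsto (d\nu(\ul g)\cdot\nu(\ul g)^{-1})^{2n-1}$ evaluated along the curves $g_i = \exp(t_i X_{\sigma i})$. The key simplification is that at $t_1=\dots=t_{2n-1}=0$ one has $g_i = 1$ for all $i$, hence $\nu(\ul g) = \sum_i x_i = 1$ and $\nu(\ul g)^{-1}=1$; moreover $\frac{d}{dt_i}\exp(t_i X_{\sigma i})|_{t_i=0} = X_{\sigma i}$. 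I expect that when the Leibniz rule is applied to the $(2n-1)$-fold product $(d\nu\cdot\nu^{-1})^{2n-1}$ and all $2n-1$ derivatives are distributed, only the terms in which each factor receives exactly one $t$-derivative survive at $t=0$ (terms with $\nu^{-1}$ differentiated, or with two derivatives hitting one factor, contribute forms of lower $x$-degree that integrate to zero or vanish after antisymmetrization). This should produce, after the alternating sum over $\sigma$, an expression of the shape
\[
\Delta c(X_1,\dots,X_{2n-1}) = (-1)^{n-1}\frac{(n-1)!}{(2n-1)!}\,C\sum_{\sigma\in\Sy_{2n-1}}\sgn(\sigma)\Tr(X_{\sigma 1}\cdots X_{\sigma(2n-1)}),
\]
where $C$ is a numerical constant arising from $\int_{\Delta^{2n-1}}$ of a product of the barycentric-coordinate differentials $dx_{i_1}\wedge\cdots\wedge dx_{i_{2n-1}}$.

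\textbf{Identifying the constant and concluding.} The heart of the matter is to pin down $C$. I would exploit the description of $\int_{\Delta^{2n-1}}$ from Remark \ref{rem:IntegrationStandardSimplex} via the morphism $\psi: I^{2n-1}\to\Delta^{2n-1}$ and the iterated formal integration $\int_0^1\cdots dt$ of Lemma \ref{lem:FormalIntegration}, reducing $\int_{\Delta^{2n-1}}$ of the relevant top form to the standard simplex-volume integral $\int_{\Delta^{2n-1}} dx_1\wedge\cdots\wedge dx_{2n-1} = \frac{1}{(2n-1)!}$. Comparing the total numerical factor with the normalization $\frac{((n-1)!)^2}{(2n-1)!}$ appearing in the cocycle representing $p_n$ in Definition \ref{def:PrimitiveElem}, I expect the bookkeeping of factorials and signs to collapse to exactly $\frac{(-1)^n}{(n-1)!}$. \textbf{The main obstacle} will be this careful combinatorial and sign bookkeeping: tracking which Leibniz terms survive at the identity, controlling the sign introduced by writing $d\nu\cdot\nu^{-1}$ (note the cocycle uses $d\nu\cdot\nu^{-1}$ whereas the complex computation in Proposition \ref{prop:ExplicitCocycle} used $\sigma^{-1}d\sigma$, so orientation and transpose-type signs must be handled consistently), and verifying that the integration constant genuinely matches. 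Since this computation essentially mirrors Burgos' Lie-algebra cocycle computation in Lemma \ref{lemma:ExplicitLieCocycle} in the complex case, I would borrow the same combinatorial identities, substituting $p$-adic integration for the real integral and keeping the extra factor $\frac{1}{(n-1)!}$ that distinguishes $p_n$ from the normalization of the relative Chern character cocycle.
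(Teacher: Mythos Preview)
Your proposal is correct and follows essentially the same route as the paper's proof: interchange differentiation and integration via Lemma \ref{lem:IntegrationLocAn}, apply the Leibniz rule to $(d\nu\cdot\nu^{-1})^{2n-1}$, observe that only the terms where each factor receives exactly one $t$-derivative survive at $t=0$, and then evaluate the resulting simplex integral to extract the numerical factor. The one point to sharpen is your justification for which Leibniz terms survive: it is not a ``lower $x$-degree'' or antisymmetrization argument, but simply that $d\nu|_{t=0}=\sum_i dx_i=0$ on $\Delta^{2n-1}$, so any factor left undifferentiated vanishes outright (and for the same reason the $d\nu\cdot\partial_j(\nu^{-1})$ piece dies); once you use this, the computation $\partial_j(d\nu\cdot\nu^{-1})|_{t=0}=\sum_{i<j}dx_i\cdot X_j$ and the integral $\int_{\Delta^{2n-1}}dx_0\cdots dx_{2n-2}=-\tfrac{1}{(2n-1)!}$ give exactly the claimed factor.
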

Here $p_n$ denotes the primitive element of definition \ref{def:PrimitiveElem}.
\begin{proof}
Denote the cocycle \eqref{eq:cocyclePAd} by $f$. 
We show that $\Delta(f) = (-1)^n\frac{1}{(n-1)!} p_n$. 
Write $\del_i$ instead of $\frac{d}{dt_i}$. We have
\begin{multline*}
\Delta(f)(X_1, \dots, X_{2n-1})=
(-1)^{n-1}\frac{(n-1)!}{(2n-1)!}\sum_{\sigma\in\Sy_{2n-1}} \sgn(\sigma) \del_1\dots\del_{2n-1}\Big|_{t_1 = \dots = 0}\\
 \Tr \int_{\Delta^{2n-1}} (d\nu \cdot \nu^{-1})^{2n-1} (\exp(t_1X_{\sigma 1}), \dots, \exp(t_{2n-1} X_{\sigma(2n-1)})).
\end{multline*}
By lemma \ref{lem:IntegrationLocAn} we may interchange differentiation and integration.  Let us first consider the $\sigma = 1$ summand. 
Write
\begin{align*}
\omega& := \sum_{i=0}^{2n-1}dx_i \exp(t_{i+1}X_{i+1})\cdots\exp(t_{2n-1}X_{2n-1}),\\
\omega'& := \sum_{i=0}^{2n-1}x_i \exp(t_{i+1}X_{i+1})\cdots\exp(t_{2n-1}X_{2n-1}).
\end{align*}
Then 
\[
(d\nu \cdot \nu^{-1})^{2n-1} (\exp(t_1X_1), \dots, \exp(t_{2n-1} X_{2n-1})) = (\omega\cdot\omega'^{-1})^{2n-1}.
\]
Note, that $\omega|_{t_1=\dots=t_{2n-1}=0} = \sum_{i=0}^{2n-1} dx_i = 0$. It follows, that when we calculate $\del_1\dots\del_{2n-1} (\omega\omega'^{-1})^{2n-1}$ using the Leibniz rule repeatedly and then set all the $t_i$ equal to zero, we get
\begin{multline*}
\del_1\dots\del_{2n-1} (\omega\omega'^{-1})^{2n-1}\Big|_{t_1=\dots=t_{2n-1}=0}=\\
\sum_{\tau\in\Sy_{2n-1}} \del_{\tau(1)}(\omega\omega'^{-1}) \cdots \del_{\tau(2n-1)}(\omega\omega'^{-1})\Big|_{t_1=\dots=t_{2n-1}=0}.
\end{multline*}
On the other hand, using $\omega'\big|_{t_1=\dots=0}= \sum_{i=0}^{2n-1} x_i = 1$ we get
\begin{align*}
\del_j \left(\omega\omega'^{-1}\right)\big|_{t_1=\dots=0} &= (\del_j\omega)\omega'^{-1}\big|_{t_1=\dots=0} + \omega(\del_j\omega'^{-1})\big|_{t_1=\dots=0}= \\
&=(\del_j\omega)\big|_{t_1=\dots=0}= \sum_{i=0}^{j-1} dx_i \cdot X_j.
\end{align*}
Alltogether we obtain
\begin{multline*}
\del_1\dots\del_{2n-1} (\omega\omega'^{-1})^{2n-1}\Big|_{t_1=\dots=t_{2n-1}=0}=\\
= \sum_{\tau \in\Sy_{2n-1}} \left(\sum_{i=0}^{\tau(1)-1} dx_i\cdot X_{\tau(1)}\right) \cdots \left(\sum_{i=0}^{\tau(2n-1)-1} dx_i\cdot X_{\tau(2n-1)}\right)\\
= \sum_{\tau \in\Sy_{2n-1}} X_{\tau(1)}\cdots X_{\tau(2n-1)} \left(\sum_{i=0}^{\tau(1)-1} dx_i\right) \cdots \left(\sum_{i=0}^{\tau(2n-1)-1} dx_i\right) \\
= \sum_{\tau \in\Sy_{2n-1}} \sgn(\tau) X_{\tau(1)}\cdots X_{\tau(2n-1)} dx_0 dx_1\dots dx_{2n-2}. 
\end{multline*}
It follows that 
\begin{multline*}
\sum_{\sigma\in\Sy_{2n-1}} \sgn(\sigma)\del_1\dots\del_{2n-1} \\
(d\nu \cdot \nu^{-1})^{2n-1} (\exp(t_1X_{\sigma 1}), \dots, \exp(t_{2n-1} X_{\sigma(2n-1)}))\Big|_{t_1=\dots=t_{2n-1}=0} \\ 
=\sum_{\sigma\in\Sy_{2n-1}}\sgn(\sigma)\sum_{\tau \in\Sy_{2n-1}} \sgn(\tau) X_{\sigma\tau(1)}\cdots X_{\sigma\tau(2n-1)} dx_0 dx_1\dots dx_{2n-2}\\
= (2n-1)! \sum_{\sigma\in\Sy_{2n-1}}\sgn(\sigma) X_{\sigma(1)}\cdots X_{\sigma(2n-1)} dx_0 dx_1\dots dx_{2n-2}.
\end{multline*}
Because 
\begin{align*}
\int_{\Delta^{2n-1}}dx_0\dots dx_{2n-2} &= -\int_{\Delta^{2n-1}} dx_{2n-1}dx_1\dots dx_{2n-2} \\
&= -\int_{\Delta^{2n-1}} dx_1\dots dx_{2n-1} = -\frac{1}{(2n-1)!}
\end{align*}
by a direct computation,
we finally obtain
\[
\Delta(f)(X_1, \dots, X_{2n-1}) = (-1)^n\frac{(n-1)!}{(2n-1)!} \sum_{\sigma\in\Sy_{2n-1}} \sgn(\sigma) \Tr(X_{\sigma 1}\cdots X_{\sigma(2n-1)}),
\]
that is $\Delta(f) = \frac{(-1)^n}{(n-1)!}p_n$.
\end{proof}

\begin{cor}\label{cor:ComparisonPAdBorel}
The diagram
\[\xymatrix{
K_{2n-1}^{\rel}(\Spec(R)) \ar[rr]\ar[dr]_{\Ch^{\rel}_{n,2n-1}} && K_{2n-1}(\Spec(R)) \ar[ld]^{\frac{(-1)^n}{(n-1)!}r_p}\\
& K
}
\]
commutes.
\end{cor}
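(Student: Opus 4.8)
The plan is to assemble the corollary from the explicit computations already carried out, via the models $G_{r,\dot}/\GL_r(R)$ and $B_{\dot}U_r(R)$ together with the Lazard isomorphism. Since the target $K$ is a $\Q$-vector space, both composites $K_{2n-1}^{\rel}(\Spec(R)) \to K$ in the diagram factor through rationalized homology. First I would note that each is defined through the Hurewicz map: $\Ch^{\rel}_{n,2n-1}$ factors through $K_{2n-1}^{\rel}(\Spec(R)) \to H_{2n-1}(\widetilde F,\Z) \cong H_{2n-1}(\mathscr F,\Z) \cong H_{2n-1}(G_{\dot}/\GL(R),\Z)$ followed by the relative Chern character cocycle of Proposition \ref{prop:ExplicitCocyclePAdicRelReg}, while $r_p$ factors through $K_{2n-1}(\Spec(R)) \to H_{2n-1}(B_{\dot}\GL(R),\Z)$ followed by $\dirlim_r b_{n,r}$. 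By naturality of the Hurewicz map and the description of the models (Lemma \ref{lem:HomotopyFibres} and the pullback diagrams defining $\widetilde F$ and $\mathscr F$, together with the homotopy commutative diagram \eqref{diag:hofib}), the map $K_{2n-1}^{\rel}(\Spec(R)) \to K_{2n-1}(\Spec(R))$ is compatible on rational homology with $\rho_*\colon H_{2n-1}(G_{\dot}/\GL(R),\Q) \to H_{2n-1}(B_{\dot}\GL(R),\Q)$. Hence it suffices to prove that for every $\xi \in H_{2n-1}(G_{\dot}/\GL(R),\Q)$ the pairing of the cocycle of Proposition \ref{prop:ExplicitCocyclePAdicRelReg} with $\xi$ equals $\frac{(-1)^n}{(n-1)!}$ times the pairing of $\dirlim_r b_{n,r}$ with $\rho_*(\xi)$.

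Next I would reduce to the unipotent subgroups. By Lemma \ref{lem:ConstructionNu} the map $\nu$ induces a surjection $\nu_*\colon H_{2n-1}(B_{\dot}U(R),\Q) \twoheadrightarrow H_{2n-1}(G_{\dot}/\GL(R),\Q)$, so I may write $\xi = \nu_*(\zeta)$ with $\zeta \in H_{2n-1}(B_{\dot}U_r(R),\Q)$ for some finite level $r$. Pulling the relative Chern character cocycle back along $\nu$ produces precisely the cocycle \eqref{eq:cocyclePAd} on $U_r(R)^{\times(2n-1)}$; denote its class by $[f] \in H^{2n-1}(B_{\dot}U_r(R),K)$. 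Thus the left hand pairing equals $\langle [f],\zeta\rangle$. Since $\rho\circ\nu$ is the inclusion $\iota\colon B_{\dot}U_r(R) \hookrightarrow B_{\dot}\GL_r(R)$ (Lemma \ref{lem:ConstructionNu}) and the $b_{n,r}$ are compatible for varying $r$, the right hand pairing equals $\frac{(-1)^n}{(n-1)!}\langle b_{n,r}, \iota_*\zeta\rangle = \frac{(-1)^n}{(n-1)!}\langle \iota^* b_{n,r},\zeta\rangle$. The whole statement therefore reduces to the identity of discrete group cohomology classes
\[
[f] = \frac{(-1)^n}{(n-1)!}\,\iota^* b_{n,r} \quad\text{in}\quad H^{2n-1}(B_{\dot}U_r(R),K).
\]

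Finally I would establish this identity using local analyticity and the injectivity of the Lazard map. By Lemma \ref{lem:CocycleLocAn} the cocycle \eqref{eq:cocyclePAd} is locally analytic, hence defines a class in $H^{2n-1}_{\mathrm{la}}(U_r(R),K)$ whose image under the canonical map to $H^{2n-1}(B_{\dot}U_r(R),K)$ is $[f]$. By the theorem preceding the corollary, the Lazard isomorphism $\Delta\colon H^{2n-1}_{\mathrm{la}}(U_r(R),K) \xrightarrow{\cong} H^{2n-1}(\gl_r,K)$ of Corollary \ref{cor:Lazard} sends this locally analytic class to $\frac{(-1)^n}{(n-1)!}p_n$. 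On the other hand, $b_{n,r}$ is by Definition \ref{def:PrimitiveElem} the image in discrete group cohomology of the unique locally analytic class on $\GL_r(R)$ mapping to $p_{n,r}$ under the Lazard isomorphism; since restriction along $U_r(R)\subset \GL_r(R)$ is compatible both with the canonical maps to discrete cohomology and with the Lazard isomorphisms (the two groups sharing the Lie algebra $\gl_r$, and $p_{n,r}=p_n$), the class $\iota^* b_{n,r}$ is the image in $H^{2n-1}(B_{\dot}U_r(R),K)$ of the locally analytic class mapping to $p_n$. Because $\Delta$ is an isomorphism, and in particular injective, the two locally analytic classes agree up to the factor $\frac{(-1)^n}{(n-1)!}$; pushing forward to discrete group cohomology yields the displayed identity. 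Combined with the surjectivity of $\nu_*$ and the isomorphism $\rho_*$ of Theorem \ref{thm:RationalHomolIsom}, this proves commutativity of the diagram. The substantive inputs are already in hand, so the real work is bookkeeping: checking that the topological map $K^{\rel}_{2n-1}\to K_{2n-1}$ matches $\rho_*$ on rational homology, and that restriction to $U_r(R)$ respects the Lazard isomorphisms — the latter being exactly where the injectivity of $\Delta$ is needed to promote the Lie-algebra-level equality to an equality of group cohomology classes. I expect this compatibility bookkeeping, rather than any single computation, to be the main obstacle.
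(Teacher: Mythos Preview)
Your proposal is correct and follows essentially the same route as the paper: reduce to the induced maps on rational homology via the Hurewicz map, use the surjection $\nu_*\colon H_{2n-1}(B_{\dot}U(R),\Q)\twoheadrightarrow H_{2n-1}(G_{\dot}/\GL(R),\Q)$ from Lemma~\ref{lem:ConstructionNu}, and then invoke the preceding theorem (the Lazard computation) together with the definition of $b_{n,r}$ to match the two cocycles on $B_{\dot}U_r(R)$. The paper's proof is more terse, but the structure---including your careful unwinding of the compatibility with $\rho_*$ and the use of the Lazard isomorphism to lift the Lie-algebra identity back to group cohomology---is exactly the same.
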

\begin{rems}
i) That there should be a direct connection between the $p$-adic Borel regulator and the relative Chern character was mentioned by Karoubi.

ii) A formula for the relative Chern character similar to \eqref{eq:cocyclePAd} has also been obtained by Hamida \cite{HamidaCR}.

iii) In connection with the work of Huber and Kings \cite{HK} this result implies the comparison of the syntomic Chern character with the relative Chern character in the case $X= \Spec(R)$. Another strategy for the proof of a general comparison result was given by Besser in his talk \cite{BesserTalk}.

iv) Note, that the horizontal map in the above diagram has finite kernel and cokernel. This follows from the long exact sequence connecting relative, algebraic and topological $K$-theory together with the fact, that $K_{\top}^{-(2n-1)}(R) \cong K_{2n-1}(k)$ for $n\geq 1$, hence is finite by Quillen's computation of the algebraic $K$-theory of finite fields.

v) The factor $\frac{(-1)^n}{(n-1)!}$ shows up, since the construction of the relative Chern character uses Chern character classes, whereas  the construction of the $p$-adic Borel regulator in \cite{HK} uses Chern classes. One could define a renormalized version of the $p$-adic Borel regulator using Chern character classes instead and then the factor would disappear (cf. the different normalizations of the primitive element $p_n$ in \cite[Definition 0.4.5]{HK} and \cite[Example 5.37]{Burgos}).

\end{rems}

\begin{proof}[Proof of the corollary]
By construction of the two regulators it suffices to show, that the diagram
\[\xymatrix{
H_{2n-1}(G_{\dot}/\GL(R), \Q) \ar[dr]_{\Ch_{n,2n-1}^{\rel}} \ar[rr]^{\cong} && H_{2n-1}(B_{\dot}\GL(R), \Q) \ar[dl]^{\frac{(-1)^n}{(n-1)!}r_p}\\
&K
}
\]
commutes. By lemma \ref{lem:ConstructionNu} we have a surjection $H_{2n-1}(B_{\dot}U(R), \Q) \twoheadrightarrow H_{2n-1}(G_{\dot}/\GL(R), \Q)$ and it follows from the last theorem and the definition of the $p$-adic Borel regulator, that the two possible compositions agree on $H_{2n-1}(B_{\dot}U_r(R), \Q)$ for any $r$, hence they agree on $H_{2n-1}(B_{\dot}U(R), \Q)$ and the claim follows.
\end{proof}

%%%%%%%%%%%%%%%%%%%%%%%%%%%%%%%%%%%%%%%%%%%%%%%%%
\appendix

\chapter{}
\section{Some homological algebra}
\label{app:HomolAlg}
Let $A, B, C$ be three (cohomological) complexes in an abelian category. Given morphisms $f: A \to C$ and $g : B \to C$, we define the \emph{quasi-pullback} $A \widetilde\times_{C} B$ to be the complex
\[
\Cone(A \oplus B \xrightarrow{f-g} C)[-1].
\]
We have the short exact sequence
\[
0 \to C[-1] \to A \widetilde\times_{C} B \xrightarrow{p_A\oplus p_B} A \oplus B \to 0.
\]
\begin{lemma}
The diagram 
\[\xymatrix{
A \widetilde\times_{C} B \ar[d]^{p_B}\ar[r]^-{p_A} & A \ar[d]^f\\
B \ar[r]^g & C
}
\]
commutes up to canonical homotopy. If $f$ is a quasiisomorphism, so is $p_B$.
\end{lemma}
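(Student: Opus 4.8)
The plan is to work entirely with the explicit description of $Q := A \widetilde\times_C B = \Cone(A \oplus B \xrightarrow{f-g} C)[-1]$ obtained by unwinding the Cone and shift conventions fixed in the Notations, where $f-g$ denotes $(a,b)\mapsto f(a)-g(b)$. First I would record that in degree $n$ one has $Q^n = A^n \oplus B^n \oplus C^{n-1}$ with differential
\[
d_Q(a,b,c) = (da,\; db,\; -dc + f(a) - g(b)),
\]
the overall sign coming from the $[-1]$ shift ($d_{M[-1]} = -d_M$). With this in hand the projections $p_A(a,b,c) = a$ and $p_B(a,b,c) = b$ are immediately morphisms of complexes, since $d_Q$ acts on the $A$- and $B$-components by the respective differentials.

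For the homotopy-commutativity of the square I would exhibit the canonical homotopy $s\colon Q^n \to C^{n-1}$, $(a,b,c) \mapsto c$, namely the structural projection onto the $C$-component. A one-line computation gives
\[
d_C\, s(a,b,c) + s\, d_Q(a,b,c) = dc + (-dc + f(a) - g(b)) = f(a) - g(b) = (f\circ p_A - g\circ p_B)(a,b,c),
\]
so that $f p_A - g p_B = d_C s + s d_Q$; the square commutes up to this $s$, which is canonical precisely because it is the tautological projection.

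For the second assertion I would introduce the homotopy fibre of $f$ alone, $F := \Cone(A \xrightarrow{f} C)[-1]$, with $F^n = A^n \oplus C^{n-1}$ and $d_F(a,c) = (da,\, -dc + f(a))$. The key observation is that the inclusion $\iota\colon F \to Q$, $(a,c) \mapsto (a,0,c)$, is a chain map (a direct check, using $g(0)=0$) whose image is exactly $\ker(p_B)$, yielding a short exact sequence of complexes
\[
0 \to F \xrightarrow{\iota} Q \xrightarrow{p_B} B \to 0.
\]
Now if $f$ is a quasi-isomorphism then $\Cone(f)$ is acyclic, hence so is its shift $F$; the long exact cohomology sequence attached to the displayed short exact sequence then forces $p_B$ to induce isomorphisms $H^n(Q) \xrightarrow{\cong} H^n(B)$ for all $n$, i.e. $p_B$ is a quasi-isomorphism.

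The content is entirely standard homological algebra, so there is no genuine conceptual obstacle; the only point requiring care is the sign bookkeeping in $d_Q$, $d_F$ and the homotopy $s$ forced by the specific Cone and shift conventions of the Notations, together with verifying that $\iota$ is a chain map with the claimed image. I would check these signs once and let the rest follow formally. As an alternative to constructing $F$, one could apply the five lemma to the long exact sequences coming from the stated sequence $0 \to C[-1] \to Q \to A \oplus B \to 0$ compared with the analogous sequence for $f$ alone; but the sequence $0 \to F \to Q \to B \to 0$ gives the result most directly.
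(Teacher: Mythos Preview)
Your proof is correct. The homotopy part is identical to the paper's: both use $s(a,b,c)=c$. For the second claim the paper instead works with the short exact sequence $0 \to C[-1] \to Q \xrightarrow{p_A\oplus p_B} A\oplus B \to 0$ already recorded before the lemma, writes out its long exact cohomology sequence, and finishes by a small diagram chase showing that surjectivity and injectivity of $p_B$ on cohomology follow from those of $f$. Your route via $0 \to F \to Q \to B \to 0$ with $F=\Cone(f)[-1]$ is a neat variant: it packages the hypothesis ``$f$ is a quasiisomorphism'' as ``$F$ is acyclic'' and then reads off the conclusion directly from the long exact sequence, avoiding any chase. You even note the paper's sequence as an alternative. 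Either argument is entirely standard; yours is arguably the more streamlined of the two.
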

\begin{proof}
The homotopy $h: (A \widetilde\times_{C} B)^n \to C^{n-1}$ is given explicitely by $(a, b, c) \mapsto c$.
The short exact sequence above yields the following exact sequence of cohomology groups
\begin{multline*}
H^{i-1}(A) \oplus H^{i-1}(B) \xrightarrow{f-g} H^{i-1}(C) \to H^i(A \widetilde\times_{C} B) \xrightarrow{p_A \oplus p_B}\\ 
\to H^i(A) \oplus H^i(B) \xrightarrow{f-g} H^i(C)
\end{multline*}
and by a little diagram chase it follows, that if $f$ is an isomorphism on cohomology groups, so is $p_B$.
\end{proof}

\begin{lemma}\label{lem:HomotopyOnCones}
Suppose given a  commutative diagram of complexes
\[
\xymatrix{
A \ar[r]^{\alpha}\ar[d]^f & B \ar@<-0.7ex>[d]_{g_0} \ar@<0.7ex>[d]^{g_1}\\
A' \ar[r]^{\alpha'} & B'
}
\]
and a homotopy $h$ between $g_0$ and $g_1$ \emph{under} $A$\footnote{i.e. $h(\alpha(a)) =0 \forall a\in A$}, the two maps $\Cone(A \to B) \rightrightarrows \Cone(A'\to B')$ induced by $g_0$ and $g_1$ respectively are homotopic.
\end{lemma}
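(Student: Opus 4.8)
The plan is to write down the two induced chain maps explicitly and to exhibit a chain homotopy between them by hand; the whole statement is formal, and the one substantive point is that the hypothesis ``$h$ is a homotopy under $A$'' is exactly what kills the cross term in the verification.

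First I would recall from the conventions that $\Cone(\alpha)$ is in degree $n$ the object $A^{n+1}\oplus B^n$ with differential $d(a,b)=(-d_A a,\, d_B b-\alpha(a))$, and likewise $\Cone(\alpha')^n = A'^{n+1}\oplus B'^n$. The map $G_i\colon \Cone(\alpha)\to\Cone(\alpha')$ induced by $g_i$ is then $(a,b)\mapsto (f(a), g_i(b))$, and I would first check that each $G_i$ is a chain map. This is routine bookkeeping: it follows from $f$ and $g_i$ being chain maps together with the commutativity relation $g_i\circ\alpha=\alpha'\circ f$ expressing that the square commutes; one checks $G_i(-d_A a, d_B b-\alpha(a)) = (-d_{A'}f a,\, d_{B'}g_i b - \alpha' f a)$ agrees with $d_{\Cone(\alpha')}(f a, g_i b)$.

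Next I would propose the homotopy $H\colon \Cone(\alpha)^n\to\Cone(\alpha')^{n-1}$ given by $H(a,b)=(0, h(b))$; the degrees match since $\Cone(\alpha')^{n-1}=A'^n\oplus B'^{n-1}$ and $h(b)\in B'^{n-1}$. Computing the two contributions yields $d_{\Cone(\alpha')}H(a,b)=(0, d_{B'}h(b))$ and $H\,d_{\Cone(\alpha)}(a,b)=\big(0,\, h(d_B b-\alpha(a))\big)=(0,\, h d_B b - h\alpha(a))$. This is where the hypothesis enters: because $h\circ\alpha=0$, the term $h\alpha(a)$ vanishes, and summing the two contributions gives $(0,\,(d_{B'}h + h d_B)(b))=(0,\,(g_1-g_0)(b))$ by the homotopy relation for $h$. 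Since $G_1(a,b)-G_0(a,b)=(0,(g_1-g_0)(b))$, this is precisely the identity $G_1-G_0 = d_{\Cone(\alpha')}H + H\,d_{\Cone(\alpha)}$, as required.

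There is no serious obstacle here, the argument being entirely formal. The only points demanding care are the sign convention for the cone differential (so that the $A'$-components cancel to $0$ and one is left only with the $B'$-entry) and the explicit use of the ``under $A$'' condition $h\alpha=0$, without which the cross term $h\alpha(a)$ would obstruct the homotopy. I would finally remark that the homotopy $H$ so constructed is natural, which is what justifies its repeated use in defining the pullback maps $f^*$ on the relative cohomology groups earlier in the text.
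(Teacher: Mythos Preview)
Your proof is correct and follows essentially the same route as the paper: both define the induced maps $(a,b)\mapsto(f(a),g_i(b))$, propose the homotopy $(a,b)\mapsto(0,h(b))$, and verify the homotopy identity using $h\circ\alpha=0$ to kill the cross term. Your write-up is more detailed (you explicitly check that the $G_i$ are chain maps and track degrees), but the argument is the same.
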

\begin{proof}
The induced maps on the cones are given by $(a, b) \mapsto (f(a), g_i(b))$, $i = 0,1$, and a homotopy between them is given by $(a,b) \mapsto (0, h(b))$. In fact, 
$
(dh+hd)(a,b) = d(0, h(b)) + h(-da, db-\alpha(a)) = (0, dh(b)) + (0, h(db) - h(\alpha(a))) = (0, dh(b) + h(db)) = (0, g_0(b) - g_1(b)) = (f(a), g_0(b)) - (f(a), g_1(b)).
$
\end{proof}

\begin{lemma}\label{lem:HomotopyEquivCones}
Let $A, I, J$ be non-negative complexes in a Grothendieck abelian category\footnote{a cocomplete abelian category satisfying AB5) and admitting a generator} and assume that $I$ and $J$ consist of injective objects. Let $A \trivcof I$ be an injective quasiisomorphism and $f: A \to J$ any morphism. Then the dotted arrow in the diagram
\[
\xymatrix{
A \ar@{^{(}->}[d]_{\sim} \ar[r]^f & J\\
I \ar@{.>}[ur]
}
\]
exists and is unique up to homotopy under $A$. 

Assume, that $g_0$ and $g_1$ are two morphisms $I \to J$ making the above diagram commute. The choice of a homotopy between $g_0$ and $g_1$ under $A$ determines a quasiisomorphism $\Cone(g_0) \xrightarrow{\sim} \Cone(g_1)$. This quasiisomorphism does up to homotopy not depend on the chosen homotopy between $g_0$ and $g_1$.
\end{lemma}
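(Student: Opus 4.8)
The plan is to carry everything out inside the total Hom complex $\Hom^*(-,J)$ of abelian groups, where $\Hom^*(X,J)^n = \prod_p \Hom(X^p, J^{p+n})$ with differential $d(\varphi) = d_J\varphi - (-1)^n\varphi\, d_X$. The key observation is that each $J^m$ is injective, so $\Hom(-,J^m)$ is exact; applying $\Hom^*(-,J)$ to the short exact sequence of complexes $0 \to A \xrightarrow{i} I \xrightarrow{\pi} Q \to 0$, where $Q := \coker(i)$ and $i$ is a monomorphism, therefore yields a short exact sequence of complexes of abelian groups
\[
0 \to \Hom^*(Q,J) \to \Hom^*(I,J) \xrightarrow{i^*} \Hom^*(A,J) \to 0 .
\]
Since $i$ is a quasiisomorphism, $Q$ is acyclic, and as $Q$ is bounded below and $J$ is a bounded-below complex of injectives, every chain map from $Q$ into a shift of $J$ is null-homotopic; hence $\Hom^*(Q,J)$ is acyclic. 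From the long exact cohomology sequence I then read off that $i^*\colon \Hom^*(I,J) \to \Hom^*(A,J)$ is a quasiisomorphism which is moreover surjective in every degree.

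For the first assertion I would argue as follows. Existence: $f$ is a degree-$0$ cocycle of $\Hom^*(A,J)$, and since $i^*$ induces an isomorphism on $H^0$ there is a chain map $\tilde f\colon I \to J$ with $\tilde f i$ homotopic to $f$, say $f - \tilde f i = d_J s + s\, d_A$; lifting $s$ through the degreewise surjection $i^*$ to some $\tilde s \in \Hom^{-1}(I,J)$ and replacing $\tilde f$ by $\tilde f + d(\tilde s)$ gives an extension restricting to $f$ \emph{on the nose}. Uniqueness up to homotopy under $A$: if $g_0, g_1$ both extend $f$, then $g_0 - g_1$ is a degree-$0$ cocycle of $\Hom^*(I,J)$ vanishing on $A$, hence lifts to a cocycle of $\Hom^*(Q,J)$, which by acyclicity is a coboundary; pulling this primitive back along $\pi$ produces a homotopy $h$ with $h i = 0$, i.e. a homotopy under $A$.

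For the second assertion I would give the cone map explicitly. With the paper's convention $\Cone(g_j)^n = I^{n+1}\oplus J^n$, $d(a,b) = (-d_I a,\, d_J b - g_j(a))$, and a homotopy $h$ satisfying $g_0 - g_1 = d_J h + h\, d_I$, I set
\[
\phi_h\colon \Cone(g_0) \to \Cone(g_1), \qquad \phi_h(a,b) = (a,\, b - h(a)).
\]
A direct check against the two cone differentials shows $\phi_h$ is a chain map precisely because of the homotopy relation, and since it admits the evident graded inverse $(a,b)\mapsto(a,b+h(a))$ it is an isomorphism of complexes, in particular a quasiisomorphism. Independence of the choice of $h$: given two homotopies $h,h'$ under $A$, the difference $k := h - h'$ is a degree-$(-1)$ cocycle of $\Hom^*(I,J)$ with $k i = 0$; exactly as in the uniqueness step it lifts to $\Hom^*(Q,J)$ and, by acyclicity, equals $d(\ell)$ for some $\ell \in \Hom^{-2}(I,J)$ with $\ell i = 0$, i.e. $k = d_J\ell - \ell\, d_I$. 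Then $H(a,b) := (0,\, -\ell(a))$ is a chain homotopy from $\phi_h$ to $\phi_{h'}$, as the short computation $d_{\Cone(g_1)}H + H d_{\Cone(g_0)} = (0,-(d_J\ell - \ell d_I)a) = (0,-k a) = \phi_h - \phi_{h'}$ confirms.

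I expect the only real obstacle to be bookkeeping rather than anything conceptual: one must keep every auxiliary map \emph{under $A$} (vanishing after precomposition with $i$) throughout, and be careful with the sign conventions in the Hom differential and in the cone differential so that the homotopy relations line up. The structural crux is the single fact that $\Hom^*(Q,J)$ is acyclic, which I invoke three times — once to split off a primary homotopy in the uniqueness statement, once to produce the homotopy $h$ itself, and once to produce the \emph{secondary} homotopy $\ell$ that witnesses independence of the cone isomorphism from the choice of $h$.
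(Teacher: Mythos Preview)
Your proof is correct and takes a genuinely different route from the paper's. The paper simply invokes the injective model structure on non-negative cochain complexes in a Grothendieck abelian category (citing \cite{Hovey}, \cite{Beke}): $A \trivcof I$ is a trivial cofibration, $J$ is fibrant, so existence and uniqueness of the lift up to homotopy under $A$ are automatic; for the second part it observes that $\Cone(g_i)$ is again fibrant and applies the same lifting argument to the trivial cofibrations $\Cone(f) \trivcof \Cone(g_i)$. You instead work entirely inside the total Hom complex, reducing everything to the single fact that $\Hom^*(Q,J)$ is acyclic for $Q = \coker(i)$, and then write down the cone map $\phi_h(a,b) = (a,\, b - h(a))$ and the secondary homotopy explicitly. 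Your approach buys you concrete formulas (so one sees, for instance, that $\phi_h$ is even an isomorphism of complexes, not merely a quasiisomorphism, and that it restricts to the identity on $\Cone(f)$ since $h$ vanishes on $A$) and avoids any appeal to the Grothendieck hypothesis --- the argument goes through in an arbitrary abelian category. The paper's approach is shorter and makes clear that the statement is a formal consequence of the model structure, hence transports to any setting where such a structure is available.
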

\begin{proof}
The first part simply follows from the fact, that the non-negative cochain complexes in a Grothendieck abelian category form a model category, where the weak equivalences are the quasiisomorphisms, the cofibrations are the injections and the fibrations are the morphisms, which are surjective in positive degree, and have degree-wise an injective kernel (cf. \cite[Theorem 2.3.13]{Hovey} for an explicit description in the case of $R$-modules and \cite[Proposition 3.13]{Beke} in the general case), and general facts on model categories.

For the second part, note that with $I$ and $J$ also $\Cone(g_i)$, $i=0,1$, is fibrant (w.r.t. to the aforementioned model structure) and that we have a commutative diagram
\[
\xymatrix{
\Cone(f) \ar@{^{(}->}[d]_{\sim} \ar@{^{(}->}[r]^{\sim} & \Cone(g_1)\\
\Cone(g_0). \ar[ur]
}
\]
Again by general facts on model categories, the diagonal arrow is a homotopy equivalence, which is well defined up to homotopy (under $\Cone(f)$).
\end{proof}

\section{Cohomology on strict simplicial (dagger) spaces}
\label{app:StrictSimplicial}

Recall, that $\Delta^{\mathrm{str}}$ denotes the subcategory of $\Delta$ with the same objects, but with morphisms the strictly increasing maps $[p]\hookrightarrow [q]$, and that a strict (co)simplicial object in a category $\mathscr C$ is a co- resp. contravariant functor $\Delta^{\mathrm{str}} \to \mathscr C$. 

The sheaf theory on  strict simplicial spaces is essentially the same as in the simplicial case (cf. \cite[\S\S 1 and 2]{Fri}, \cite[\S 5]{HodgeIII}).

Let $X_{\dot}$ be a strict simplicial (dagger) space. Let $T(X_{\dot})$ denote the category, whose objects are (admissible) opens $U \subset X_p$ for some $p\geq 0$, and whose morphisms are pairs $(U \to V, \phi)$, where $U \subset X_p, V\subset X_q, \phi:[q]\hookrightarrow [p]$ and $U \to V\subset X_q$ is the restriction of $\phi_X: X_p \to X_q$ to $U$. An \emph{(admissible) covering} of $U\subset X_p$ is a usual (admissible) covering $\{U_i\}_{i\in I}$ of $U$. This defines a Grothendieck topology on $T(X_{\dot})$.

By abuse of language we say ``(abelian) sheaf on $X_{\dot}$'' instead of ``(abelian) sheaf on $T(X_{\dot})$''. Explicitely, a sheaf $\mathscr F$ on $X_{\dot}$ is given by a family $\{\mathscr F_p\}_{p\geq 0}$ of sheaves $\mathscr F_p$ on $X_p$ together with morphisms $\phi_X^*: \phi_X^{-1}\mathscr F_q \to \mathscr F_p$ for any $\phi: [q]\hookrightarrow [p]$, which are compatible in an obvious sense \cite[(5.1.6)]{HodgeIII}.

As for any Grothendieck site, the category of abelian sheaves on $X_{\dot}$ is a Grothendieck abelian category \cite[Theorem 2.1.4]{Artin} and in particular has enough injectives. A sequence $\mathscr F' \to \mathscr F \to \mathscr F''$ of sheaves on $X_{\dot}$ is exact, if and only if each sequence $\mathscr F'_p \to \mathscr F_p \to \mathscr F''_p$ on $X_p$ is exact \cite[proof of proposition 2.2]{Fri}.

Let $\mathscr F$ be a sheaf on $X_{\dot}$. Its global sections are by definition 
\[
\Gamma(X_{\dot}, \mathscr F) := \ker(\Gamma(X_0, \mathscr F_0) \underset{\del_1^*}{\overset{\del_0^*}{\rightrightarrows}} \Gamma(X_1, \mathscr F_1)).
\]
If $\mathscr F$ is an \emph{abelian} sheaf, then $\Gamma(X_{\dot}, \mathscr F) = \Hom_{\mathrm{AbSh}(X_{\dot})}(\Z, \mathscr F)$, where $\Z$ is the constant abelian sheaf $\Z$, as one easily checks, and
\[
H^i(X_{\dot}, \mathscr F) := R^i\Gamma(X_{\dot}, \mathscr F) = \mathrm{Ext}^i_{\mathrm{AbSh}(X_{\dot})}(\Z, \mathscr F).
\]
We have the usual spectral sequence (the proof of \cite[Proposition 2.4]{Fri} also works in the strict case)
\[
E_1^{p,q} = H^q(X_p, \mathscr F_p) \Longrightarrow H^{p+q}(X_{\dot}, \mathscr F).
\]
The recipe \cite[(5.2.7)]{HodgeIII} for the computation of the hypercohomology of a complex of abelian sheaves on $X_{\dot}$ carries over to the strict case. Note, that the Godement resolution Deligne uses, which is not available for dagger spaces, is not needed, since we can clearly take injective resolutions instead. 

If $I$ is an injective sheaf on $X_{\dot}$, each $I_p$ on $X_p$ is also injective \cite[proof of proposition 2.4]{Fri}.

Now let $X_{\dot}$ be a \emph{simplicial} space and $X_{\dot}^{\mathrm{str}}$ the associated strict simplicial space. Obviously, the natural functor $U: \mathrm{AbSh}(X_{\dot}) \to \mathrm{AbSh}(X_{\dot}^{\mathrm{str}})$ is exact. If $\mathscr I$ is an injective abelian sheaf on $X_{\dot}$, each $I_p$ on $X_p$ is injective and the cochain complex associated with $[p] \mapsto \mathscr I(X_p)$ is acyclic (cf. the remark after proposition 2.4 in \cite{Fri}). By the  arguments of that remark, the same is true for $U(\mathscr I)$ and hence $U(\mathscr I)$ is an acyclic sheaf on $X_{\dot}^{\mathrm{str}}$. Since $\Gamma(X_{\dot}, \,.\,) = \Gamma(X_{\dot}^{\mathrm{str}}, U(\,.\,))$, we get, that the natural map $H^*(X_{\dot}, \mathscr F) \to H^*(X_{\dot}^{\mathrm{str}}, U(\mathscr F))$ is an isomorphism for all abelian sheaves $\mathscr F$ on $X_{\dot}$.

\section{Simplicial groups}
\label{app:SimplGrps}

For any group $G$, we define the simplicial sets $E_{\dot}G$ and $B_{\dot}G$ as in definition \ref{def:ClassSpace}.
Now let $G_{\dot}$ be a simplicial group. We define $E_{\dot}G_{\dot}$ to be the diagonal of the bisimplicial set $([p],[q])\mapsto E_pG_q$. Note, that $E_{\dot}G_{\dot}$ is itself a simplicial group, the multiplication being defined component-wise, and in particular is a Kan set \cite[Kap. I, Folgerung 9.6]{Lamotke}.

\begin{lemma}\label{lem:EGcontractible}
$E_{\dot}G_{\dot}$ is contractible.
\end{lemma}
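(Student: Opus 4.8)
The plan is to show that the Kan complex $E_{\dot}G_{\dot}$ is weakly contractible; since a weakly contractible Kan complex is contractible, this suffices. By definition $E_{\dot}G_{\dot}$ is the diagonal of the bisimplicial set $X_{\dot,\dot}$ with $X_{p,q} = E_pG_q$, where the first (the $E$-, or $p$-) direction carries the face and degeneracy operators \eqref{eq:DelEG}, \eqref{eq:SEG}, and the second (the $q$-) direction is induced entrywise by the simplicial structure maps of the simplicial group $G_{\dot}$, which are group homomorphisms.

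First I would record the classical fact that for an arbitrary group $H$ the simplicial set $E_{\dot}H$ is contractible, by exhibiting the extra degeneracy $s_{-1}: E_pH \to E_{p+1}H$, $(h_0, \dots, h_p) \mapsto (1, h_0, \dots, h_p)$. A direct check gives $\del_0 s_{-1} = \id$, $\del_{i+1}s_{-1} = s_{-1}\del_i$, and the corresponding identities with the degeneracies ($s_{i+1}s_{-1} = s_{-1}s_i$ and $s_0 s_{-1} = s_{-1}s_{-1}$), so that $s_{-1}$ is a simplicial contraction of $E_{\dot}H$ onto its basepoint (cf. the explicit homotopies in the proof of lemma \ref{lem:CohomEG}). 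The key point is that this construction is natural in $H$: a group homomorphism $\phi: H \to H'$ commutes with $s_{-1}$ because $\phi(1) = 1$. Applying this with $H = G_q$, I obtain for every fixed $q$ a contraction of the row $X_{\dot, q} = E_{\dot}G_q$, and these contractions commute with all structure maps in the $q$-direction, since the latter preserve the identity element.

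It then remains to pass from the row-wise contractions to the diagonal. Here I would invoke the bisimplicial realisation lemma: a map of bisimplicial sets which is a weak equivalence in each fixed second degree induces a weak equivalence on diagonals. Applied to the projection $X_{\dot,\dot} \to \ast$ onto the constant bisimplicial point --- which by the previous step is a weak equivalence $E_{\dot}G_q \to \ast$ for every $q$ --- this yields a weak equivalence $E_{\dot}G_{\dot} = \mathrm{diag}\, X_{\dot,\dot} \to \ast$, as desired.

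The main obstacle is precisely this last descent. One cannot simply transport the extra degeneracy to the diagonal: the naive composite of $s_{-1}$ in the first variable with a degeneracy in the second fails the simplicial identity relating the lowest face to the lowest degeneracy, so no contraction of the diagonal is visible by inspection. The honest content is therefore the gluing of the compatible row-contractions, which is exactly what the realisation lemma supplies (using the naturality in $q$ established above); an alternative, more hands-on route would be to build the simplicial homotopy operators $h_j$ on $E_{\dot}G_{\dot}$ directly in the spirit of \cite{May} and lemma \ref{lem:CohomEG}, but the bisimplicial argument is cleaner.
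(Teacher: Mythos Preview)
Your argument is correct. You establish row-wise contractibility of the bisimplicial set via the natural extra degeneracy $s_{-1}$ and then apply the realisation lemma to deduce weak contractibility of the diagonal; combined with the observation (made in the paper just before the lemma) that $E_{\dot}G_{\dot}$ is a Kan complex, this gives contractibility.

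The paper takes precisely the ``hands-on'' route you mention at the end as an alternative: it writes down explicit homotopy operators $h_i\colon E_pG_p \to E_{p+1}G_{p+1}$, $(g_0,\dots,g_p)\mapsto (s_i(g_0),\dots,s_i(g_i),1,\dots,1)$, and checks that these form a simplicial homotopy (in the sense of \cite[\S 5]{May}) between the identity and the constant map $1$. So your remark that ``no contraction of the diagonal is visible by inspection'' is a bit too pessimistic; the trick is not to transport $s_{-1}$ itself but to combine the simplicial degeneracies $s_i$ of $G_{\dot}$ in the entries with a truncation to $1$'s on the right. Your route trades this small combinatorial insight for the realisation lemma: it is more conceptual and requires no verification of identities, at the cost of invoking a nontrivial background result. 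The paper's route is entirely self-contained, once one is willing to check the May identities.
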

\begin{proof}
For $i=0, \dots, p$ define $h_i: E_pG_p \to E_{p+1}G_{p+1}$ by $(g_0 \dots, g_p) \mapsto (s_i(g_0), \dots, s_i(g_i), 1, \dots, 1)$. It is easy to see, that these define a simplicial homotopy between the constant map $1$ and the identity in the sense of \cite[\S 5]{May}.
\end{proof}

We define $B_{\dot}G_{\dot}$ to be the diagonal of the bisimplicial set $([p],[q])\mapsto B_pG_q$. Recall, that $G_{\dot}$ acts from the right on $E_{\dot}G_{\dot}$ and the map 
\[
E_{\dot}G_{\dot} \to B_{\dot}G_{\dot}, (g_0, \dots, g_p) \mapsto (g_0g_1^{-1}, \dots, g_{p-1}g_p^{-1})
\]
induces an isomorphism $E_{\dot}G_{\dot}/G_{\dot} \xrightarrow{\cong} B_{\dot}G_{\dot}$. By \cite[Kap. I, Satz 9.5]{Lamotke} $E_{\dot}G_{\dot}\to E_{\dot}G_{\dot}/G_{\dot} \cong B_{\dot}G_{\dot}$ is a Kan fibration, hence $B_{\dot}G_{\dot}$ is a Kan set by {\it loc. cit.} Kap. I, Folgerung 6.3.
From the contractibility of $E_{\dot}G_{\dot}$ and the long exact sequence of the fibration $E_{\dot}G_{\dot}\to B_{\dot}G_{\dot}$ we get isomorphisms 
\[
\pi_i(B_{\dot}G_{\dot}) \xrightarrow{\cong} \pi_{i-1}(G_{\dot}), \quad i \geq 1,
\]
where all simplicial sets occuring are equipped with the natural base point $1$.
The same conclusion holds for $B_{\dot}G_{\dot}$ replaced by the quotient of any contractible Kan set by a free $G_{\dot}$-operation, and we will call any such simplicial set a ``classifying space'' for $G_{\dot}$.

Next let $G=G_0$ considered as a constant simplicial subgroup of $G_{\dot}$. 
We want to study the homotopy fibre of the natural map $B_{\dot}G \to B_{\dot}G_{\dot}$.
With $G_{\dot}$ also $G$ operates freely from the right on $E_{\dot}G_{\dot}$, hence $E_{\dot}G_{\dot} \to E_{\dot}G_{\dot}/G$ is a Kan fibration and $E_{\dot}G_{\dot}/G$ is a model for the classifying space of $G$.
\begin{lemma}
The projection $E_{\dot}G_{\dot}/G \to E_{\dot}G_{\dot}/G_{\dot}$ is a Kan fibration.
\end{lemma}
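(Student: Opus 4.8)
The plan is to recognise the projection $r\colon E_{\dot}G_{\dot}/G \to E_{\dot}G_{\dot}/G_{\dot}$ as the bundle projection of the fibre bundle with fibre $G_{\dot}/G$ associated to the principal $G_{\dot}$-fibration $q\colon E_{\dot}G_{\dot} \to E_{\dot}G_{\dot}/G_{\dot}$, and then to invoke the general fact that a twisted cartesian product with Kan fibre is a Kan fibration. Recall from the discussion preceding the lemma that $G_{\dot}$ acts freely on the right on the Kan set $E_{\dot}G_{\dot}$, so that $q$ is a Kan fibration by \cite[Kap. I, Satz 9.5]{Lamotke}; being principal, it is isomorphic over the base $B:=E_{\dot}G_{\dot}/G_{\dot}$ to a twisted cartesian product $B\times_\tau G_{\dot}$ for a suitable twisting function $\tau$. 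A direct verification of the horn-lifting property is awkward here, because the fibre of $s\colon E_{\dot}G_{\dot}\to E_{\dot}G_{\dot}/G$ is the nontrivial discrete group $G$, so one cannot simply lift a whole horn along $s$; the associated-bundle route circumvents this.

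First I would check that $G_{\dot}/G$ (the right cosets $hG$, carrying the left $G_{\dot}$-action $k\cdot hG=(kh)G$) is a Kan complex. Indeed $G$ acts freely on the right on the simplicial group $G_{\dot}$, so $G_{\dot}\to G_{\dot}/G$ is a Kan fibration by \cite[Kap. I, Satz 9.5]{Lamotke}; since $G_{\dot}$ is a Kan set by \cite[Kap. I, Folgerung 9.6]{Lamotke}, its base $G_{\dot}/G$ is a Kan set by \cite[Kap. I, Folgerung 6.3]{Lamotke}.

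Next I would establish the identification
\[
E_{\dot}G_{\dot}/G \;\cong\; E_{\dot}G_{\dot}\times_{G_{\dot}}(G_{\dot}/G)
\]
over $B$, where the right-hand side is the quotient of $E_{\dot}G_{\dot}\times(G_{\dot}/G)$ by the (right) $G_{\dot}$-action $(x,hG)\cdot k=(xk,(k^{-1}h)G)$. The map $(x,hG)\mapsto(xh)G$ is well defined (replacing $h$ by $hg$ with $g\in G$ changes $xh$ by the right $G$-factor $g$) and invariant under this action, so it descends to a morphism $\bar\Psi$ out of the associated bundle; the assignment $xG\mapsto[(x,eG)]$ is its inverse (replacing $x$ by $xg$ with $g\in G$ gives $[(xg,eG)]=[(x,eG)\cdot g]=[(x,eG)]$), and one checks that both maps commute with the projections to $B$, since each sends a class to $xG_{\dot}=q(x)$. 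Under $q\cong B\times_\tau G_{\dot}$ this exhibits $r$ as the associated twisted cartesian product $B\times_\tau(G_{\dot}/G)$.

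Finally, since $G_{\dot}/G$ is a Kan complex, the projection of the twisted cartesian product $B\times_\tau(G_{\dot}/G)$ onto $B$ is a Kan fibration \cite{May}, which is precisely the assertion for $r$. The main obstacle is the bookkeeping in the middle step: one must fix the left/right conventions for the $G_{\dot}$- and $G$-actions so that $E_{\dot}G_{\dot}/G$ is correctly identified with the associated bundle, and so that this identification is compatible with the twisting-function description of the principal fibration $q$. Once these conventions are pinned down the fibration property is purely formal.
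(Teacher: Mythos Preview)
Your proof is correct, but it takes a different route from the paper's. The paper gives a short direct lifting argument: given a horn $\Lambda^n_k \to E_{\dot}G_{\dot}/G$ over a simplex $\Delta^n \to B_{\dot}G_{\dot}$, it first lifts a single vertex $* \in \Lambda^n_k$ to $E_{\dot}G_{\dot}$, then uses that $* \hookrightarrow \Lambda^n_k$ is a trivial cofibration and $E_{\dot}G_{\dot}\twoheadrightarrow E_{\dot}G_{\dot}/G$ is a Kan fibration (free $G$-action on a Kan set) to lift the whole horn to $E_{\dot}G_{\dot}$, and finally solves the resulting lifting problem against the fibration $E_{\dot}G_{\dot}\twoheadrightarrow B_{\dot}G_{\dot}$. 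So your remark that ``one cannot simply lift a whole horn along $s$'' is a bit off: one can, precisely because $s$ is itself a Kan fibration; the paper exploits exactly this.

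Your approach, by contrast, identifies $r$ as the associated-bundle projection $E_{\dot}G_{\dot}\times_{G_{\dot}}(G_{\dot}/G)\to B$ and then invokes the general fact that a twisted cartesian product with Kan fibre is a Kan fibration. This is more conceptual and situates the result within the classical theory of principal fibrations and associated bundles; the paper's argument is more self-contained, needing only the model-category axioms and the two fibrations $E_{\dot}G_{\dot}\to E_{\dot}G_{\dot}/G$ and $E_{\dot}G_{\dot}\to B_{\dot}G_{\dot}$, without any appeal to twisting functions or the existence of a pseudo-cross-section. Both are perfectly valid; the paper's is shorter and avoids the bookkeeping you flag in your last paragraph.
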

\begin{proof}
Here we use the fact, that simplicial sets form a model category with fibrations the Kan fibrations, cofibrations the monomorphisms and weak equivalences the maps, which become weak equivalences after geometric realization \cite[Theorem I.11.3]{GJ}.
A Kan fibration is by definition a morphism satisfying the right lifting property with respect to the (trivial) cofibrations $\Lambda^n_k \tilde\hookrightarrow \Delta^n$, where $\Lambda^n_k$ is the $k$-th $n$-horn ({\it loc. cit.} p. 6), and (as in any model category) has the right lifting property with respect to all trivial cofibrations. Choose any vertex $* \in \Lambda^n_k$. Then $* \hookrightarrow \Lambda^n_k$ is a trivial cofibration. 
Now the claim follows as pictured in the following diagram:
\[\xymatrix{
\ast \ar[r] \ar@{^{(}->}[d]^{\wr} & E_{\dot}G_{\dot}\ar@{->>}[d] \ar@{->>}@/^1cm/[dd]\\
\Lambda^n_k \ar[r]\ar@{^{(}->}[d]^{\wr} \ar@{.>}[ur]& E_{\dot}G_{\dot}/G\ar[d]\\
\Delta^n \ar[r] \ar@{.>}[uur]& B_{\dot}G_{\dot}
}
\]
The upper dotted arrow exists, since $E_{\dot}G_{\dot}\to E_{\dot}G_{\dot}/G$ is a fibration, and then the lower dotted arrow exists, since $E_{\dot}G_{\dot}\to E_{\dot}G_{\dot}/G_{\dot}$ is a fibration.
\end{proof}
In the following we use the standard notations to denote fibrations ($\twoheadrightarrow$), cofibrations ($\hookrightarrow$) and weak equivalences ($\xrightarrow{\sim}$).
There is a natural inclusion $B_{\dot}G \cong E_{\dot}G/G \hookrightarrow E_{\dot}G_{\dot}/G$, which is a weak equivalence as follows from the following morphism of fibre sequences
\[
\xymatrix{
G \ar@{^{(}->}[r] & E_{\dot}G_{\dot} \ar@{->>}[r] & E_{\dot}G_{\dot}/G\\
G \ar@{^{(}->}[r] \ar@{=}[u] & E_{\dot}G \ar@{^{(}->}[u]^{\sim} \ar@{->>}[r] & E_{\dot}G/G. \ar@{^{(}->}[u]
}
\]
Next we have a commutative diagram
\[
\xymatrix{
E_{\dot}G_{\dot}/G \ar@{->>}[dr]\\
B_{\dot}G \cong E_{\dot}G/G \ar@{^{(}->}[u]^{\sim} \ar[r] & E_{\dot}G_{\dot}/G_{\dot} \cong B_{\dot}G_{\dot},
}
\]
and hence the homotopy fibre of $B_{\dot}G \to B_{\dot}G_{\dot}$ can be taken to be the fibre of the map $E_{\dot}G_{\dot}/G \twoheadrightarrow E_{\dot}G_{\dot}/G_{\dot}$, that is $G_{\dot}/G$. Here $G_{\dot}/G$ is embedded in $E_{\dot}G_{\dot}/G$ diagonally ($[\sigma] \mapsto [(\sigma, \dots, \sigma)]$). Pulling back the fibration $E_{\dot}G_{\dot}/G \twoheadrightarrow E_{\dot}G_{\dot}/G_{\dot} \cong B_{\dot}G_{\dot}$ along $E_{\dot}G_{\dot} \twoheadrightarrow B_{\dot}G_{\dot}$, we get the fibration $E_{\dot}G_{\dot}/G \times_{B_{\dot}G_{\dot}} E_{\dot}G_{\dot} \twoheadrightarrow E_{\dot}G_{\dot}$. Since the base of this fibration is contractible, the inclusion of the fibre $G_{\dot}/G \hookrightarrow E_{\dot}G_{\dot}/G \times_{B_{\dot}G_{\dot}} E_{\dot}G_{\dot}$, which is given by $[\sigma] \mapsto ([(\sigma, \dots, \sigma)],(1, \dots, 1))$, is a weak equivalence.

Next, the inclusion $E_{\dot}G \hookrightarrow E_{\dot}G_{\dot}$ induces a weak equivalence 
$B_{\dot}G \times_{B_{\dot}G_{\dot}} E_{\dot}G_{\dot} \xrightarrow{\sim} E_{\dot}G_{\dot}/G \times_{B_{\dot}G_{\dot}} E_{\dot}G_{\dot}$
as follows from the morphism of fibre sequences
\[
\xymatrix{
G_{\dot} \ar@{^{(}->}[r] & E_{\dot}G_{\dot}/G \times_{B_{\dot}G_{\dot}} E_{\dot}G_{\dot} \ar@{->>}[r] & E_{\dot}G_{\dot}/G\\
G_{\dot} \ar@{^{(}->}[r] \ar@{=}[u] &  B_{\dot}G \times_{B_{\dot}G_{\dot}} E_{\dot}G_{\dot} \ar[u] \ar@{->>}[r] & B_{\dot}G. \ar@{^{(}->}[u]^{\sim}
}
\]
Hence we have weak equivalences 
\[
G_{\dot}/G \trivcof E_{\dot}G_{\dot}/G \times_{B_{\dot}G_{\dot}} E_{\dot}G_{\dot} \xleftarrow{\sim} B_{\dot}G \times_{B_{\dot}G_{\dot}} E_{\dot}G_{\dot}.
\]
For $\sigma \in G_p$ write $\sigma(e_i) := \tau_i^*\sigma \in G_0 = G$, with $\tau_i : [0]\to [p], 0\mapsto i$, $i = 0, \dots, p$. Then we define a map $\eta: G_{\dot}/G \to B_{\dot}G \times_{B_{\dot}G_{\dot}} E_{\dot}G_{\dot}$ by $\sigma \mapsto ((\sigma(e_0)\sigma(e_1)^{-1}, \dots, \sigma(e_{p-1})\sigma(e_p)^{-1}), (\sigma(e_0)\sigma^{-1}, \dots, \sigma(e_p)\sigma^{-1}))$.

\begin{lemma}\label{lem:HomotopyFibres}
The diagram
\[
\xymatrix{
G_{\dot}/G \ar@{^{(}->}[r]^-{\sim} & E_{\dot}G_{\dot}/G \times_{B_{\dot}G_{\dot}} E_{\dot}G_{\dot}\\
G_{\dot}/G \ar@{=}[u] \ar[r]^-{\eta} & B_{\dot}G \times_{B_{\dot}G_{\dot}} E_{\dot}G_{\dot} \ar@{^{(}->}[u]^{\sim}
}
\]
is homotopy commutative. In particular, $\eta$ is a weak equivalence, too.
\end{lemma}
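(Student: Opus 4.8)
The plan is to show that the two composites $G_{\dot}/G \to P := E_{\dot}G_{\dot}/G \times_{B_{\dot}G_{\dot}} E_{\dot}G_{\dot}$ differ by the action of an element of the \emph{contractible} simplicial group $E_{\dot}G_{\dot}$, and then to contract that element to the unit. Write $F$ for the top inclusion $[\sigma]\mapsto([(\sigma,\dots,\sigma)],(1,\dots,1))$ and $H$ for the composite $j\circ\eta$ of $\eta$ with the right-hand weak equivalence $j$. Recall that $E_{\dot}G_{\dot}$ is a simplicial group acting diagonally on the left on $P$, compatibly with both projections to $B_{\dot}G_{\dot}$: for $a=(a_0,\dots,a_p)\in E_pG_p$ one sets $a\cdot([(g_0,\dots,g_p)],(g_0',\dots,g_p')) = ([(a_0g_0,\dots,a_pg_p)],(a_0g_0',\dots,a_pg_p'))$, which is well defined since left multiplication commutes with the right $G$-action on the first factor and preserves the fibre-product condition.

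The key step is the identity $H = \gamma\cdot F$, where $\gamma\colon G_{\dot}/G\to E_{\dot}G_{\dot}$ is given in degree $p$ by $\gamma(\sigma)=(\sigma(e_0)\sigma^{-1},\dots,\sigma(e_p)\sigma^{-1})$. Note first that $\gamma$ is simplicial, being nothing but the second projection $p_2\circ H\colon G_{\dot}/G\to P\to E_{\dot}G_{\dot}$, and that it is well defined on the quotient $G_{\dot}/G$, since replacing $\sigma$ by $\sigma g$ with $g\in G$ changes $\sigma(e_i)$ into $\sigma(e_i)g$ and hence leaves $\sigma(e_i)\sigma^{-1}$ fixed. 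To verify the identity one computes, in degree $p$,
\[
\gamma(\sigma)\cdot F(\sigma) = \bigl([(\sigma(e_0),\dots,\sigma(e_p))],\,(\sigma(e_0)\sigma^{-1},\dots,\sigma(e_p)\sigma^{-1})\bigr).
\]
The second coordinate is exactly $p_2H(\sigma)$. For the first, right multiplication by $\sigma(e_p)\in G$ identifies $[(\sigma(e_0),\dots,\sigma(e_p))]$ with $[(\sigma(e_0)\sigma(e_p)^{-1},\dots,\sigma(e_{p-1})\sigma(e_p)^{-1},1)]$, and this is precisely the image of the first coordinate $(\sigma(e_0)\sigma(e_1)^{-1},\dots,\sigma(e_{p-1})\sigma(e_p)^{-1})$ of $\eta(\sigma)$ under the inclusion $B_{\dot}G\cong E_{\dot}G/G\hookrightarrow E_{\dot}G_{\dot}/G$, because that inclusion sends $(h_1,\dots,h_p)$ to $[(h_1\cdots h_p,\dots,h_p,1)]$ and the resulting products telescope. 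I expect this bookkeeping with the coset identification in the first factor to be the only delicate point of the argument.

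Finally I would contract $\gamma$. Since $E_{\dot}G_{\dot}$ is a contractible Kan complex, the maps $h_i$ of Lemma \ref{lem:EGcontractible} exhibit a simplicial homotopy between the identity of $E_{\dot}G_{\dot}$ and the constant map at the unit $1$; precomposing it with $\gamma$ (i.e.\ using $h_i\circ\gamma$) yields a homotopy $\Gamma\colon(G_{\dot}/G)\times\Delta[1]\to E_{\dot}G_{\dot}$ from $\gamma$ to the constant map $c_1$. Transporting $\Gamma$ through the action gives $\Theta := \mu\circ(\Gamma,\,F\circ\mathrm{pr})\colon (G_{\dot}/G)\times\Delta[1]\to P$, whose two endpoints are $\mu(\gamma,F)=\gamma\cdot F=H$ and $\mu(c_1,F)=1\cdot F=F$. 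Hence $F\simeq H=j\circ\eta$, so the square commutes up to homotopy. As $F$ and $j$ are weak equivalences and homotopic maps coincide in the homotopy category, the relation $F\simeq j\circ\eta$ together with two-out-of-three forces $\eta$ to be a weak equivalence as well.
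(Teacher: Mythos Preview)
Your proof is correct and follows essentially the same approach as the paper: use the left action of the contractible simplicial group $E_{\dot}G_{\dot}$ on the pullback $P$ to relate the two composites, then contract the auxiliary map into $E_{\dot}G_{\dot}$ to the constant map $1$. The only cosmetic difference is that the paper works with $\chi(\sigma)=(\sigma\sigma(e_0)^{-1},\dots,\sigma\sigma(e_p)^{-1})$ and the identity $\chi\cdot H=F$, whereas you use the inverse $\gamma=\chi^{-1}$ and the equivalent identity $H=\gamma\cdot F$; your added verification that the first coordinates match via the telescoping products is a detail the paper leaves implicit.
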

\begin{proof} (Cf. \cite[proof of proposition 6.16]{Kar87}) Define $\chi : G_{\dot}/G \to E_{\dot}G_{\dot}$ by $\sigma \mapsto (\sigma\sigma(e_0)^{-1}, \dots, \sigma\sigma(e_p)^{-1})$. Since $E_{\dot}G_{\dot}$ is contractible, $\chi$ is homotopic to the constant map $1$. Recall, that $E_{\dot}G_{\dot}$ is a simplicial group, which acts from the left on $E_{\dot}G_{\dot}/G$ and on $E_{\dot}G_{\dot}/G_{\dot}$ and, since all projections are equivariant for this action, also on $E_{\dot}G_{\dot}/G \times_{B_{\dot}G_{\dot}} E_{\dot}G_{\dot}$. 

The composition $G_{\dot}/G \xrightarrow{\eta} B_{\dot}G \times_{B_{\dot}G_{\dot}} E_{\dot}G_{\dot} \trivcof E_{\dot}G_{\dot}/G \times_{B_{\dot}G_{\dot}} E_{\dot}G_{\dot}$ is given by $\sigma\mapsto ([(\sigma(e_0), \dots, \sigma(e_p))], \sigma(e_0)\sigma^{-1}, \dots, \sigma(e_p)\sigma^{-1})$. Multiplying this composition from the left with $\chi$, we get the homotopic map $\sigma \mapsto ([(\sigma, \dots, \sigma)], 1, \dots, 1)$, which is precisely the upper horizontal map in the diagram.
\end{proof}

\bibliography{bib}
\bibliographystyle{smfalpha}

\end{document}